\newtheorem{theorem}{Theorem}[chapter]
\newtheorem{lemma}[theorem]{Lemma}
\newtheorem{cor}[theorem]{Corollary}
\newtheorem{prop}[theorem]{Proposition}
\newtheorem{proposition}[theorem]{Proposition}
\theoremstyle{definition}
\theoremstyle{remark}
\newtheorem{remark}[theorem]{Remark}
\newtheorem{remarks}[theorem]{Remarks}
\numberwithin{section}{chapter}
\numberwithin{equation}{chapter}
\DeclareMathOperator{\dvsr}{div}
\newcommand{\zero}{{\bf 0}}
\newcommand{\one}{{\bf 1}}
\newcommand{\Div}{\operatorname{Div}}
\newcommand{\divi}{\operatorname{div}}
\newcommand{\sep}{\operatorname{sep}}
\newcommand{\XminusT}{(x-T)}
\newcommand{\xminusT}{(x-T)'}
\newcommand{\GG}{\mathcal{G}}
\newcommand{\iso}{\stackrel{\sim}{\rightarrow}}
\newcommand{\val}{\operatorname{val}}
\newcommand{\pr}{\operatorname{pr}}
\newcommand{\rk}{\operatorname{rank}}
\newcommand{\vd}{\mathbf{d}}
\newcommand{\ve}{\mathbf{e}}
\DeclareMathOperator{\car}{Car}
\def\Hom{\mathrm{Hom}}
\def\tor{\mathrm{tor}}
\def\tr{\mathrm{tr}}
\DeclareMathOperator{\End}{End}
\def\Pic{\textnormal{Pic}}
\def\Aut{\textrm{\upshape Aut}}
\def\Gal{\textrm{\upshape Gal}}
\def\ord{\textrm{\upshape ord}}
\def\p{\mathbb{P}} 
\def\P{\mathbb{P}}
\def\F{\mathbb{F}}
\def\G{\mathbb{G}}
\def\Q{\mathbb{Q}}
\def\Z{\mathbb{Z}}
\def\II{\mathcal{I}}
\def\JJ{\mathcal{J}}
\def\OO{\mathcal{O}}
\newcommand{\n}{\mathfrak{n}}
\DeclareSymbolFont{cyrletters}{OT2}{wncyr}{m}{n}
\DeclareMathSymbol{\sha}{\mathalpha}{cyrletters}{"58}
\def\<{\langle}
\def\>{\rangle}
\def\into{\hookrightarrow}
\def\onto{\twoheadrightarrow}
\def\isoto{\tilde{\to}}
\def\tensor{\otimes}
\def\compose{\circ}
\def\sdp{{\rtimes}}
\def\nodiv{\nmid}
\def\PSL{\mathrm{PSL}}
\def\SL{\mathrm{SL}}
\def\GL{\mathrm{GL}}
\def\im{\text{im}}
\DeclareMathOperator{\res}{Res}
\DeclareMathOperator{\spec}{Spec}
\newcommand{\AC}{\mathcal{A}}
\newcommand{\CC}{\mathcal{C}}
\newcommand{\DD}{\mathcal{D}}
\renewcommand{\SS}{\mathcal{S}}
\newcommand{\TT}{\mathcal{T}}
\newcommand{\UU}{\mathcal{U}}
\newcommand{\VV}{\mathcal{V}}
\newcommand{\XX}{\mathcal{X}}
\newcommand{\YY}{\mathcal{Y}}
\newcommand{\ZZ}{\mathcal{Z}}
\newcommand{\EE}{\mathcal{E}}
\newcommand{\LL}{\mathcal{L}}
\newcommand{\Fp}{{\mathbb{F}_p}}
\newcommand{\Fq}{{\mathbb{F}_q}}
\newcommand{\Fqn}{{\mathbb{F}_{q^n}}}
\newcommand{\Fpbar}{{\overline{\mathbb{F}}_p}}
\newcommand{\Qbar}{{\overline{\mathbb{Q}}}}
\newcommand{\kbar}{{\overline{k}}}
\newcommand{\Kbar}{{\overline{K}}}
\newcommand{\ratto}{{\dashrightarrow}}
\newcommand{\Ql}{{\mathbb{Q}_\ell}}
\newcommand{\Zl}{{\mathbb{Z}_\ell}}
\newcommand{\Zp}{{\mathbb{Z}_p}}
\newcommand{\A}{\mathbb{A}}
\newcommand{\Ksep}{K^{sep}}
\newcommand\Fpt{\Fp(t)}
\newcommand\Fqu{\Fq(u)}
\newcommand\Cr{C_r}
\newcommand\Jr{J_r}
\newcommand\C{C}
\newcommand\J{J}
\newcommand\Jnew{{J_r^{\mathrm{new}}}}
\newcommand\rp{{r'}}
\newcommand\Crp{C_\rp}
\newcommand\Jrp{J_\rp}
\newcommand{\K}{K}
\newcommand{\psmat}[1]{\bigl(\begin{smallmatrix}#1\end{smallmatrix}\bigr)}
\newcommand{\pmat}[1]{\begin{pmatrix}#1\end{pmatrix}}
\DeclareMathOperator{\NS}{NS}
\DeclareMathOperator{\proj}{Proj}
\DeclareMathOperator{\ch}{Char}
\DeclareMathOperator{\cha}{char}
\newcommand{\Fl}{{\F_\ell}}
\newcommand{\Flbar}{{\overline{\F}_\ell}}
\DeclareMathOperator{\disc}{Disc}
\DeclareMathOperator{\MW}{MW}
\DeclareMathOperator{\gal}{Gal}
\DeclareMathOperator{\Fr}{Fr}
\DeclareMathOperator{\lcm}{lcm}
\DeclareMathOperator{\stab}{Stab}
\newcommand\Cbar{\overline{C}}
\newcommand\Pt{\P^1_t}
\newcommand\Pu{\P^1_u}
\newcommand\BB{\mathcal{B}}
\newcommand\FlG{\Fl[G]}
\newcommand\MWJ{\MW(J)}
\newcommand\NSS{\NS(S)}
\newcommand\Lone{L^1\NSS}
\newcommand\Ltwo{L^2\NSS}
\newcommand\Lonebar{\bar{L}^1\NSS}
\newcommand\Ltwobar{\bar{L}^2\NSS}
\newcommand\KSJ{\mathrm{KS}_\JJ}
\newcommand\KSY{\mathrm{KS}_\YY}
\newcommand\Lie{\underline{\mathrm{Lie}}}
\newcommand\czech{\v Cech}
\newcommand\dOne{{b_1}}
\newcommand\dTwo{{b_2}}
\newcommand\eOne{{a_1}}
\newcommand\eTwo{{a_2}}
\newcommand\eThree{{a_3}}
\newcommand\Neron{N\'eron}
\newcommand\RoneOmegaYtensorOmegaU{
  {\Omega^1_U \tensor_{\OO_U} R^1\pi_*\Omega^1_{\YY/U} }
}
\newcommand\RoneOYtensorOmegaU{
  {\Omega^1_U \tensor_{\OO_U} R^1\pi_*\OO_\YY}
}
\newcommand\RoneOJtensorOmegaU{{\Omega^1_U\tensor_{\OO_U}R^1\sigma_*\OO_\JJ}}
\begin{document}

\frontmatter

\title{Explicit arithmetic of Jacobians of generalized Legendre curves over global function fields}

\author[]{Lisa\ Berger}
\address{Department of Mathematics, 
Stony Brook University, 
Stony Brook, NY 11794, USA}
\email{lbrgr@math.sunysb.edu}

\author[]{Chris\ Hall}
\address{Department of Mathematics, 
Western University,
London, Ontario, N6A 5B7, Canada}
\email{chall69@uwo.ca}

\author[]{Ren\'{e}\ Pannekoek}
\address{
Department of Mathematics,
Imperial College London
SW7 2AZ, United Kingdom}
\email{pannekoek@gmail.com}

\author[]{Jennifer\ Park}
\address{Department of Mathematics, 
University of Michigan, 
Ann Arbor, MI 48109, USA }
\email{jmypark@umich.edu}

\author[]{Rachel\ Pries}
\address{Department of Mathematics, 
Colorado State University, 
Fort Collins, CO 80523, USA}
\email{pries@math.colostate.edu}

\author[]{Shahed\ Sharif}
\address{Department of Mathematics, 
CSU San Marcos, 
San Marcos, CA 92096, USA}
\email{ssharif@csusm.edu}

\author[]{Alice\ Silverberg}
\address{Department of Mathematics, 
UC Irvine, 
Irvine, CA 92697, USA}
\email{asilverb@math.uci.edu}

\author[]{Douglas\ Ulmer}
\address{School of Mathematics, 
Georgia Institute of Technology, 
Atlanta, GA 30332, USA}
\email{ulmer@math.gatech.edu}

\author[]{\vbox to 3in{}}

\thanks{ This project was initiated at the workshop on Cohomological
  methods in abelian varieties at the American Institute of
  Mathematics, March 26--30, 2012.  We thank AIM and the workshop
  organizers for making this paper possible.  The first seven authors
  thank the last for initiating this project at the AIM workshop and
  for his leadership of the project.  Author Hall was partially
  supported by Simons Foundation award 245619 and IAS NSF grant
  DMS-1128155.  Author Park was partially supported by NSF grant
  DMS-10-69236 and NSERC PGS-D and PDF grants.  Author Pries was
  partially supported by NSF grants DMS-11-01712 and DMS-15-02227.
  Author Silverberg was partially supported by NSF grant CNS-0831004.
  Author Ulmer was partially supported by Simons Foundation award
  359573.  Any opinions, findings, and conclusions or recommendations
  expressed in this material are those of the authors and do not
  necessarily reflect the views of the supporting agencies.  We also
  thank Karl Rubin and Yuri G.\ Zarhin for helpful conversations and
  an anonymous referee for a careful reading and productive comments.  }

\date{}

\subjclass[2010]{11G10, 11G30, (primary), 11G40, 14G05, 14G25, 14K15
  (secondary)} \keywords{curve, function field, Jacobian, abelian
  variety, finite field, Mordell-Weil group, torsion, rank,
  $L$-function, Birch and Swinnerton-Dyer conjecture, Tate-Shafarevich
  group, Tamagawa number, endomorphism algebra, descent, height,
  N\'eron model, Kodaira-Spencer map, monodromy}



\maketitle

\tableofcontents

\begin{abstract}
  We study the Jacobian $J$ of the smooth projective curve $C$ of
  genus $r-1$ with affine model $y^r = x^{r-1}(x + 1)(x + t)$ over the
  function field $\F_p(t)$, when $p$ is prime and $r\ge 2$ is an
  integer prime to $p$.  When $q$ is a power of $p$ and $d$ is a
  positive integer, we compute the $L$-function of $J$ over
  $\F_q(t^{1/d})$ and show that the Birch and Swinnerton-Dyer
  conjecture holds for $J$ over $\F_q(t^{1/d})$.  When $d$ is
  divisible by $r$ and of the form $p^\nu +1$, and $K_d :=
  \F_p(\mu_d,t^{1/d})$, we write down explicit points in $J(K_d)$,
  show that they generate a subgroup $V$ of rank $(r-1)(d-2)$ whose
  index in $J(K_d)$ is finite and a power of $p$, and show that the
  order of the Tate-Shafarevich group of $J$ over $K_d$ is
  $[J(K_d):V]^2$.  When $r>2$, we prove that the ``new''
  part of $J$ is isogenous over $\overline{\Fp(t)}$ to the square of a
  simple abelian variety of dimension $\phi(r)/2$ with endomorphism
  algebra $\Z[\mu_r]^+$.  For a prime $\ell$ with
  $\ell \nmid pr$, we prove that $J[\ell](L)=\{0\}$ for any abelian
  extension $L$ of $\overline{\F}_p(t)$.
\end{abstract}


\mainmatter


\chapter*{Introduction}

It is known that for every prime $p$ and every genus $g>0$, there
exist Jacobians $J$ of dimension $g$ over the rational function field
$K=\Fpt$ such that the rank of $J(K)$ is arbitrarily large
\cite{Ulmer07}.  One of the main goals in this work is to make this
phenomenon more explicit.  Specifically, for any prime number $p$ and
infinitely many positive integers $g$, we exhibit a curve of
genus $g$ over $K$ and explicit divisors on that curve 
that generate a subgroup $V$ of large rank in the Mordell-Weil group
of the Jacobian of the curve.  We also prove precise results on the
conjecture of Birch and Swinnerton-Dyer for these Jacobians, giving
information about the index of the subgroup $V$ in the Mordell-Weil group,
and about the Tate-Shafarevich group of the Jacobian.
 
All of this work generalizes previous results in the case $g=1$ from
\cite{Legendre, Legendre2, Legendre3}.  In those papers, the authors
analyze the arithmetic of the Legendre curve $y^2 = x(x+1)(x+t)$, an
elliptic curve defined over $K$.  For each field $K_d$ appearing in a
tower of field extensions of $K$, they prove that the
Legendre curve over $K_d$ satisfies the conjecture of Birch and
Swinnerton-Dyer.  Furthermore, for infinitely many $d$, they find
explicit divisors on the Legendre curve that generate a subgroup $V$ of
large rank in the Mordell-Weil group.  They bound the
index of the subgroup $V$ in the Mordell-Weil group and give results
about the Tate-Shafarevich group. 

The statements of the main results in this paper are quite parallel to
those for the Legendre elliptic curve.  However, since we
work in higher genus---where the curve and its Jacobian are distinct
objects---the proofs are more complicated and require more advanced
algebraic geometry.  For example, we have to construct the regular
minimal model of our curve from first principles (rather than relying
on Tate's algorithm), the relations among the points we write down are
less evident, and the analysis of torsion in the Jacobian requires
more work.  Moreover, our results cast new light on those of
\cite{Legendre} insofar as we determine the structure of the group of
points under consideration as a module over a suitable group ring.  

As part of our analysis, we prove several results in more generality
than needed here, and these results may be of use in analyzing the
arithmetic of other curves over function fields.  These include a
proof that the N\'eron-Severi group of a general class of surfaces is
torsion-free (Propositions~\ref{prop:geometric-method} and
\ref{prop:cohom-method}) and an integrality result for heights on
Jacobians (Proposition~\ref{prop:integrality}).  We also note that the
monodromy questions answered in the last chapter inspired a related
work \cite{Hall} in which a new method to compute monodromy groups of
superelliptic curves is developed.

\section*{Historical background}

Let $g$ be a positive integer.  Over a fixed number field, it is not
known whether there exist Jacobian varieties of dimension $g$ whose
Mordell-Weil groups have arbitrarily large rank.  In contrast, there
are several results of this type over a fixed function field, some of
which we describe below.

In \cite{TateShaf67}, Shafarevich and Tate construct elliptic curves
with arbitrarily large rank over $\Fq(t)$.  The curves in their
construction are isotrivial, i.e., each is isomorphic, after a finite
extension, to a curve defined over $\Fq$.

In \cite{Shioda86}, Shioda studies the elliptic curve over $k(t)$
defined by $y^2 = x^3 + at^nx + bt^m$ where $k$ is an arbitrary field
and $a,b\in k$ satisfy $ab(4a^3t^{3n}+27b^2t^{2m})\neq 0$.  When
$\text{char}(k)=0$, he proves the rank of the Mordell-Weil group has a
uniform upper bound of $56$, and he gives necessary and sufficient
conditions on $m$ and on $n$ for meeting this bound.  When
$\text{char}(k) = p \equiv -1 \pmod 4$ and when $d=(p^{\nu}+1)/2$ as
$\nu$ varies over positive odd integers, he proves that the elliptic
curves over $\kbar(t)$ defined by $y^2 = x^3 + x +t^{d}$ achieve
arbitrarily large rank.
These curves are given as examples of the main result of \cite{Shioda86},
in which Shioda computes the Picard number for Delsarte surfaces.
Fundamental to this work is the realization  of any Delsarte surface as a
quotient of a Fermat surface.  

Motivated by this work of Shioda, in \cite{UlmerDPCT} Ulmer proves
that the non-isotrivial elliptic curve $y^2 + xy = x^3 - t$ over
$\Fpt$ obtains arbitrarily large rank over the fields $\Fp(t^{1/d})$,
where $d$ ranges over divisors of $p^n +1$.  He realizes the
corresponding elliptic surface as a quotient of a Fermat surface; from
earlier work of Shioda and Katsura \cite{ShiodaKatsura}, this Fermat
surface admits a dominant rational map from a product of Fermat
curves.  It follows that this elliptic curve satisfies the conjecture
of Birch and Swinnerton-Dyer.  Furthermore, the zeta function of the
elliptic surface can be determined from that of the Fermat surface.
Using Jacobi sums, lower bounds are found for the rank of the elliptic
curve over towers of function fields.

The geometric construction in \cite{ShiodaKatsura} is later
generalized in the work of Berger \cite{Berger}, where towers of
surfaces dominated by products of curves are constructed as suitable
blow-ups of products of smooth curves.  In \cite{UlmerDPCT}, Ulmer
elaborates on the geometry and arithmetic of this construction,
proving a formula for the ranks of the Jacobians of the curves
constructed in \cite{Berger}.

In \cite{UlmerZarhin}, Ulmer and Zarhin combine this rank formula with
work on endomorphisms of abelian varieties. For $k$ a field of
characteristic zero, they construct absolutely simple Jacobians over
$k(t)$ with bounded ranks in certain towers of extensions of $k(t)$.
As one example, they prove that the Mordell-Weil group of the Jacobian
of the genus $g$ curve defined by $ty^2 = x^{2g+1} - x + t-1$ has rank
$2g$ over the field $\Qbar(t^{1/p^r})$ for any prime power $p^r$.  In
\cite{PriesUlmerAS}, Pries and Ulmer introduce an analogous
construction of surfaces that are dominated by a product of curves at
each layer in a tower of Artin-Schreier extensions.  They prove a
formula for the ranks of the Jacobians of their curves, and produce
examples of Jacobians with bounded and with unbounded ranks.

Another example from \cite{UlmerDPCT} is the curve over $k(t)$ defined by
$y^2 + xy + ty = x^3 + tx^2$.  For $k$ an algebraically closed field
of characteristic zero and $d$ a positive integer, the curve has rank zero
over the fields $k(t^{1/d})$.  For $k=\Fpbar$ and $d = p^n + 1$, the curve
has rank $d-2$ over $k(t^{1/d})$, and explicit generators are found.
Later, in  \cite{Legendre}, a 2-isogeny to the Legendre
curve $y^2 = x(x+1)(x+t)$ is obtained, 
and this construction motivates our work.

\section*{The main results}

Let $p$ be an odd prime, let $r \ge 2$ be an integer not divisible
by $p$, and let $K=\Fpt$.  Generalizing the results in \cite{Legendre},
\cite{Legendre2}, and \cite{Legendre3}, we consider the smooth projective
curve $\C=\Cr$ of genus $g=r-1$ over $K$ with affine model
$$
  y^r=x^{r-1}(x+1)(x+t).
$$
The Jacobian $\Jr$ of $\Cr$ is a principally polarized
abelian variety over $K$ of dimension $g$. 

We study the arithmetic of $J=\Jr$ over extensions of $K$ of the form
$\Fqu$ where $u\in\Kbar$ satisfies $u^d=t$, e.g.,~the extension
$K_d=\Fp(\mu_d,u)$.  Some of our results hold for general data
$p,q,r,d$, while others hold under specific constraints.  We first
state a result in a specific case:

\newtheorem{intro-thm}{Theorem}

\begin{intro-thm} \label{Tintrotheorem1} 
  Let $p$ be a prime number, let $d=p^\nu+1$ for some integer $\nu
  >0$, and let $r$ be a divisor of $d$.  Then there is an explicit
  group of divisors generating a subgroup $V\subset \J(K_d)$ with the
  following properties:
  \begin{enumerate}
  \item The $\Z$-rank of $V$ is $(r-1)(d-2)$ and the torsion of $V$ has order $r^3$.
  \item The index of $V$ in $\J(K_d)$ is finite and a power of $p$.
  \item The Tate-Shafarevich group $\sha(\J/K_d)$ of $\J/K_d$ is finite of order
  $$
    |\sha(\J/K_d)| = [\J(K_d):V]^2.
  $$
  \end{enumerate}
\end{intro-thm}

\noindent
We prove even more about $V$, describing it completely as a module
over a certain group ring and as a lattice with respect to the
canonical height pairing on $\J$.

In the general case, we compute the $L$-function and prove the BSD
conjecture:

\begin{intro-thm}
  Let $p$ be a prime number, let $q$ be a power of $p$, and let $r$
  and $d$ be positive integers not divisible by $p$.  Then:
  \begin{enumerate}
  \item The conjecture of Birch and Swinnerton-Dyer holds for $\J$ over $\Fqu$.
  \item The $L$-function of $\J/\Fqu$ can be expressed explicitly in terms of
    Jacobi sums.  (See Theorem~\ref{thm:L} below for the precise statement.)
  \item For sufficiently large $q$, the order of vanishing of $L(\J/\Fqu,s)$ at
    $s=1$ can be expressed in terms of the action on the set
    $(\Z/d\Z)\times(\Z/r\Z)$ of the subgroup of $(\Z/\lcm(d,r)\Z)^\times$
    generated by $p$.  (See Proposition~\ref{prop:rank} below for the precise
    statement.)
  \end{enumerate}
\end{intro-thm}

\noindent
The rank calculation in this result of course agrees with that given
by the explicit points in the case $d=p^\nu+1$, $r\mid d$, and
$\Fq=\Fp(\mu_d)$.  We expect that there are many other values of $q$,
$r$ and $d$ yielding large ranks, as in \cite{Legendre2}.

Finally, we prove very precise results about the decomposition of $\J$
up to isogeny into simple abelian varieties and about torsion in
abelian extensions.  To state them, note that if $\rp\mid r$, then
there is a surjective map of curves $\Cr\to\Crp$ and a corresponding
homomorphism of Jacobians $\Jr\to\Jrp$ induced by push-forward of
divisors.  We define $\Jnew$ to be the identity component of the
intersection of the kernels of these homomorphisms over all divisors
$\rp$ of $r$ with $\rp<r$.

\begin{intro-thm}\ 
  \begin{enumerate}
  \item If $r=2$, then $\Jnew$ equals $\Jr$, which is an abelian
    variety of dimension 1, and thus $\Jnew$ is absolutely simple.
  \item If $r>2$, then $\Jnew$ is simple over $\Fpbar(t)$, while over
    $\overline{\Fpt}$ it is isogenous to the square of a simple
    abelian variety of dimension $\phi(r)/2$ whose endomorphism algebra is
    the real cyclotomic field $\Q(\mu_r)^+$.
  \item If $L$ is an abelian extension of $\Fpbar(t)$ and if $\ell$ is
    prime with $\ell \nmid r$, then $\Jr(L)$ contains no non-trivial
    elements of order $\ell$.
  \end{enumerate}
\end{intro-thm}

\section*{Overview of the paper}

Our study involves more than one approach to the key result of part
(1) of Theorem \ref{Tintrotheorem1} (the lower bound on the rank of
$\J$ over $K_d$ when $d=p^\nu+1$).  Some of the arguments are more
elementary or less elementary than others, with correspondingly weaker
or stronger results.  We include these multiple approaches so that the
reader may see many techniques in action, and may choose the
approaches that suit his or her temperament and background.

In Chapter 1, we give basic information about the curve $\C$ and Jacobian
$\J$ we are studying.  We write down explicit divisors in the case
$d=p^\nu+1$, and we find relations satisfied by the classes of these
divisors in $\J$.  These relations turn out to be the only ones, but
that is not proved in general until much later in the paper.

In Chapter 2, we assume that $r$ is prime 
and use descent arguments to bound the rank of $\J$ from below in the
case when $d=p^\nu+1$.  The reader who is willing to assume $r$ is prime
need only read these first two chapters to obtain one of the main
results of the paper.

In Chapter 3, we construct the minimal, regular, proper model
$\XX\to\P^1$ of $\C/\Fqu$ for any values of $d$ and $r$.  In
particular, we compute the singular fibers of $\XX\to\P^1$.  This 
allows us to compute the component groups of the N\'eron model of $\J$.
We also give a precise connection between the model $\XX$ and a
product of curves.

In Chapter 4, we consider the case where $d=p^\nu+1$ and $r\mid d$, and we
compute the heights of the explicit divisors introduced in Chapter 1.  This
allows us to compute the rank of the explicit subgroup $V$ and its structure
over the group ring $\Z[\mu_r\times \mu_d]$.

In Chapter 5, we give an elementary calculation of the $L$-function of
$\J$ over $\Fqu$ (for any $d$ and $r$) in terms of Jacobi sums.  We
also show that the BSD conjecture holds for $\J$, and we give an
elementary calculation of the rank of $\J(\Fqu)$ for any $d$ and $r$
and all sufficiently large $q$.

In the fairly technical Chapters 6 and 7, we prove several results
about the surface $\XX$ that allow us to deduce that the index of $V$
in $\J(\K_d)$ is a power of $p$ when $d=p^\nu+1$ and $r$ divides $d$.
We also use the BSD formula to relate this index to the order of the
Tate-Shafarevich group.

In the equally technical Chapter 8, we prove strong results on the
monodromy of the $\ell$-torsion of $\J$ for $\ell$ prime to $pr$.
This gives precise statements about torsion points on $\J$ over
abelian or solvable extensions of $\Fpbar(t)$ and about the
decomposition of $\J$ up to isogeny into simple abelian varieties.

The methods of this paper can be used to study other curves as well.
We give an explicit family of curves in Section~\ref{s:other-curve}
and point out how some of the results of this paper extend to the
Jacobians of these curves.  At the request of the referee, we include
Appendix~\ref{Sappendix}, in which we give more details on these
examples.  Specifically, we prove a lower bound on the rank of the
Jacobian of the hyperelliptic curve $X$ over $\Fq(t)$ defined by
$ y^2 = x \prod_{i = 1}^g (x+a_i)(t+a_ix) $ with $g$ odd and with
distinct nonzero $a_i \in \Fq$ over fields of the form
$K_d = \Fq(\mu_d, t^{1/d})$ with $d=q^\nu+1$.  We also show that the
BSD conjecture holds for the Jacobian of $X$ over $\F_{q^n}(t^{1/d})$
for all $n$ and all $d$ prime to $p$, and we give an upper bound on
the rank using the $L$-function.

Recently, Ulmer and Voloch~\cite{UlmerVoloch} introduced a family of
curves generalizing those treated in this paper.  
They study curves defined by the equation 
$$y^r = h(x)h(t/x),$$
where $h$ is a polynomial that is not of the form $f^m$, for any
$m \neq 1$, $m \mid r$. (The generalized Legendre curves studied in
this paper and the hyperelliptic curves discussed in the appendix are
all examples from this family.)  They prove that the number of points
in an arithmetic family of such curves is unbounded, and they also
show that the surface over $k$ defined by this equation is dominated
by a product of curves.  The emphasis in \cite{UlmerVoloch} is on
rational points on the curves, whereas techniques from this paper may
be useful for proving interesting results on the Jacobians of these
curves.

\section*{Guide}
The leitfaden below indicates dependencies among the chapters of the
paper.  We also record here the chapters or sections needed to prove
various parts of the main results.

A proof of lower bounds as in Theorem~1(1) (i.e., that the rank of
$\J(K_d)$ is at least $(r-1)(d-2)$ and the torsion has order $r^3$) in
the case where $r$ is prime and divides $d=p^\nu+1$ is contained in
Chapters~1 and 2, and more specifically follows from
Proposition~\ref{prop:torsion} and Theorem~\ref{subgoal}.

The lower bounds of Theorem~1(1) in the case of general $r$ dividing
$d=p^\nu+1$ are proved in Section~\ref{ss:visible} using results from
Chapter 1, Section~\ref{sec:height-pairing}, and earlier parts of
Chapter~\ref{ch:heights}.  Theorem~1(1) is established in full
generality in Corollary~\ref{cor:visible}.

Parts (2) and (3) of Theorem~1 are proved in Chapter 7, specifically
in Theorem~\ref{thm:V-vs-MW} and Theorem~\ref{thm:sha} respectively,
using results from Chapters~1, 3, 4, 5, and 6.

Theorem~2 is proved in Chapter~\ref{ch:L} using results from
Chapters~1 and 3.

Finally, Theorem~3 is proved in Chapter~\ref{ch:monodromy} using
definitions from Chapter~1 and precise information on the N\'eron
model of $\J$ deduced from properties of the regular proper model
$\XX$ of Chapter~\ref{ch:models}.  (To be precise, the claim about
$p$-torsion is not treated in Chapter~\ref{ch:monodromy}, but a
stronger result is proved in Section~\ref{s:KS}.)

\tikzstyle{block} = [rectangle, draw,
    text width=8em, text centered, minimum height=4em]
\tikzstyle{line} = [draw, -latex']
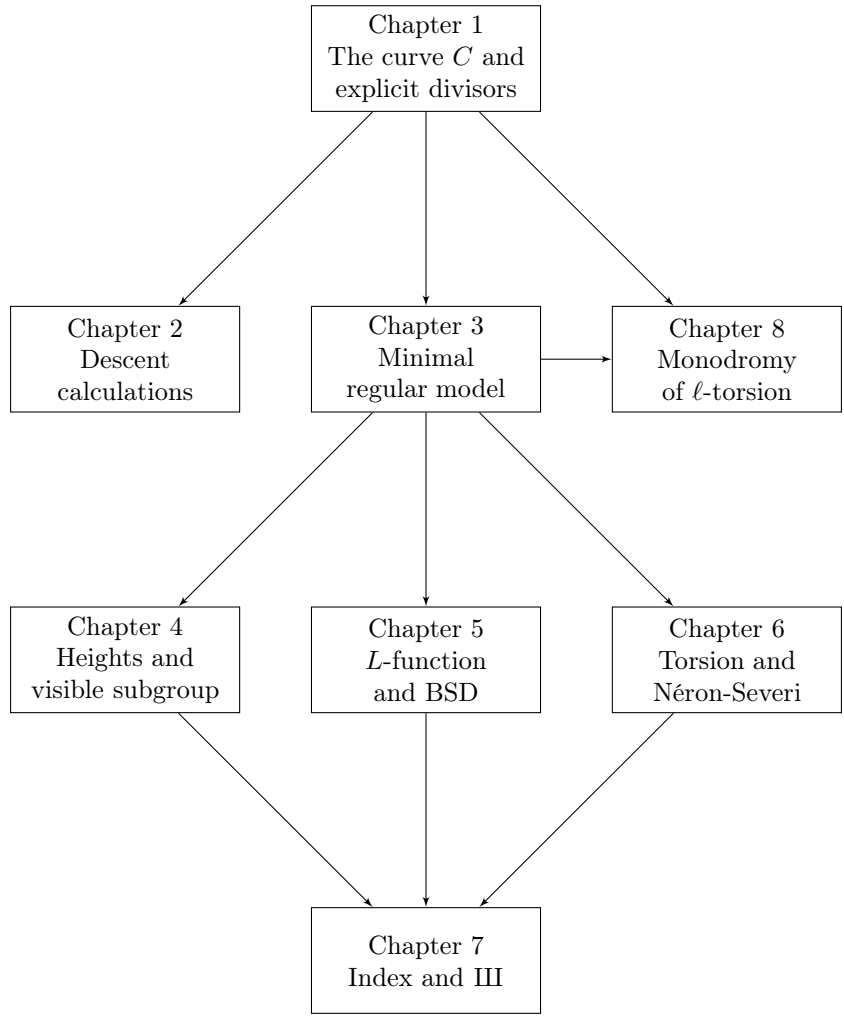
\begin{figure}
\begin{center}
\begin{tikzpicture}[node distance = 4cm]
    \node [block] (Ch1) {Chapter~1 \\ The curve $C$ and \\ explicit divisors};
    \node [block, below of=Ch1] (Ch3) {Chapter~3 \\ Minimal \\ regular model};
    \node [block, left of=Ch3] (Ch2) {Chapter~2 \\ Descent \\ calculations};
    \node [block, right of=Ch3] (Ch8) {Chapter~8 \\ Monodromy \\ of $\ell$-torsion};
    \node [block, below of=Ch2] (Ch4) {Chapter~4 \\
      Heights and \\ visible subgroup};
    \node [block, below of=Ch3] (Ch5) {Chapter~5 \\ $L$-function
      and BSD };
   \node [block, below of=Ch8] (Ch6) {Chapter~6 \\ Torsion and N\'eron-Severi};
    \node [block, below of=Ch5] (Ch7) {Chapter~7 \\  Index and $\sha$};
    \path [line] (Ch1) -- (Ch2);
    \path [line] (Ch1) -- (Ch3);
       \path [line] (Ch1) -- (Ch8);
     \path [line] (Ch3) -- (Ch8);
    \path [line] (Ch3) -- (Ch4);
    \path [line] (Ch3) -- (Ch5);
\path [line] (Ch3) -- (Ch6);
   \path [line] (Ch4) -- (Ch7);
   \path [line] (Ch5) -- (Ch7);
    \path [line] (Ch6) -- (Ch7);
\end{tikzpicture}
\caption{Leitfaden}    
\end{center}
\end{figure}

\newpage
\section*{Notation}

Throughout, $k$ is a field of characteristic $p\ge0$ and $K$ is the
rational function field $k(t)$.  We write $\Fq$ to denote a finite
ground field of cardinality $q$, with $q$ being a power of $p$.  If
$n$ is positive and not divisible by the characteristic $p$ of $k$, we
write $\mu_n$ for the group of $n$-th roots of unity in an algebraic
closure of $k$.  For a prime $p$ and a positive integer $d$ not
divisible by $p$, we write $K_d$ for the extension $\Fp(\mu_d,u)$ of
$\Fpt$ with $u^d=t$.  We view $k(u)$ as the function field of $\P^1_u$
where the subscript $u$ reminds us that the coordinate is $u$.




\chapter{The curve, explicit divisors, and relations}\label{ch:C}

In this chapter, we define a curve $C$ over $K=k(t)$ whose Jacobian
$J$ is the main object of study.  When $k=\Fp$, there is a rich supply
of explicit points on $C$ defined over certain extensions of $K$, and
the divisors supported on these points turn out to generate a subgroup
of $J$ of large rank.

More precisely, we study the arithmetic of $C$ and $J$ over extensions
of $\Fp(t)$ of the form $\Fq(t^{1/d})$ for $q$ a power of $p$ and $d
\in {\mathbb N}$ relatively prime to $p$.  Let $K=\Fp(t)$ and
$K_d=\Fp(\mu_d,u)$ where $\mu_d$ denotes the $d$-th roots of unity and
$u=t^{1/d}$.  These fields are the most important fields in the paper,
especially when $d$ has the form $d=p^\nu+1$ for an integer $\nu>0$,
although we consider more general extensions of the form
$\Fq(t^{1/d})$ as well.

\section{A generalization of the Legendre curve}\label{s:curve}
Choose a positive integer $r$ not divisible by $p=\cha(k)$.  We
consider the smooth, absolutely irreducible, projective curve $C$ over
$k(t)$ associated to the affine curve
\begin{equation}\label{equationOfC}
y^r = x^{r-1}(x+1)(x+t).
\end{equation}

Note that when $r=2$, this is an elliptic curve called the Legendre
curve which was studied in
\cite{Legendre}.

\subsection{Constructing a smooth model}\label{ss:C}
We explicitly construct the smooth projective model of $C$.  First,
consider the projective curve in $\p^2$ over $\Fp(t)$ given by
$$C':\qquad Y^rZ = X^{r-1}(X+Z)(X+tZ).$$

A straightforward calculation using the Jacobian criterion shows that
$C'$ is smooth when $r=2$, in which case we take $C=C'$.  If $r>2$,
then the Jacobian criterion reveals that $C'$ is singular at the point
$[0,0,1]$ and is smooth elsewhere.  We produce a smooth projective
curve by blowing up this point.

Let $V$ be the complement of $[0,0,1]$ in $C'$.  Let $U$ be the
affine curve with equation
$$v=u^{r-1}(uv+1)(uv+t).$$
Another Jacobian criterion calculation shows that $U$ is smooth.  The
map 
$$X=uv,\qquad Y=v,\qquad Z=1$$
gives an isomorphism $\pi$ between $U\setminus\{u=v=0\}$ and
$V\setminus\{[-1,0,1],[-t,0,1],[0,1,0]\}$.  Gluing $U$ and $V$ along
this map yields a smooth projective curve which we denote $C$.

We claim that $\pi:C\to C'$ is the normalization of $C'$.  Indeed,
$\pi$ factors through the normalization of $C'$ since $C$ is smooth
and thus normal.  Moreover, $\pi$ is visibly finite and birational.
Since a finite birational morphism to a normal scheme is an
isomorphism, $\pi:C\to C'$ is indeed the normalization of $C$.

Note that $\pi$ is a bijection as well.  In fact, it is a universal
homeomorphism\footnote{Indeed, $\pi$ is projective, so universally
  closed and surjective, and it is injective and induces isomorphisms
  on the residue fields, so is universally injective by 
\cite[3.5.8]{GrothendieckEGA1}.},
so for every field extension $L$ of $\Fp(t)$, there is a bijection of
rational points 
$$C(L)\ \isoto\ C'(L).$$  
It is convenient to specify
points of $C$ by giving the corresponding points of $C'$ using the
affine coordinates $x=X/Z$ and $y=Y/Z$.

The reader who prefers to avoid the abstraction in the last two
paragraphs is invited to work directly with the smooth curve $C$.
This adds no significant inconvenience to what follows.

\subsection{First points}
Let $Q_\infty$ be the point of $C$ corresponding to the point at
infinity on $C'$, namely $[0,1,0]$.  Let $Q_0$, $Q_1$, and $Q_t$ be
the points of $C$ given by $(x,y)=(0,0)$, $(-1,0)$, and $(-t,0)$
respectively.  (Here we use the convention mentioned at the end of the
preceding subsection, namely we define points of $C$ via $C'$.)

\subsection{Genus calculation}
\begin{lemma}\label{cjh:l40}
The curve $C$ has genus $g=r-1$.
\end{lemma}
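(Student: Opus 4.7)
The natural approach is to compute the genus via Riemann--Hurwitz applied to the degree $r$ map $\pi: C \to \P^1$ given by $(x,y)\mapsto x$. Since $p \nmid r$, this map is tamely ramified, so the contribution of each ramified point $P$ to the Riemann--Hurwitz formula is $e_P - 1$ where $e_P$ is the ramification index.

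First I would analyze the ramification over $x = 0, -1, -t, \infty$, which are the only candidates since $y^r = x^{r-1}(x+1)(x+t)$ takes a nonzero value elsewhere (so $y$ has $r$ distinct $r$-th roots above any other $x$-value). Over $x = -1$: setting $s = x+1$, we have $y^r = s \cdot u(s)$ for a unit $u$; since $\gcd(r,1) = 1$, there is a unique point above $x=-1$, namely $Q_1$, totally ramified with $e = r$. The same calculation at $x = -t$ gives one point $Q_t$ with $e = r$. Over $x = 0$: we have $y^r = x^{r-1}\cdot u(x)$ with $u$ a unit at $x=0$; since $\gcd(r, r-1) = 1$, again there is a unique point above, namely $Q_0$, totally ramified with $e = r$. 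Over $x = \infty$: using the local description of $Q_\infty$ in $C'$ via coordinates $\xi = X/Y,\ \zeta = Z/Y$, the defining equation becomes $\zeta = \xi^{r-1}(\xi + \zeta)(\xi + t\zeta)$, which near $\xi = \zeta = 0$ gives $\zeta \sim \xi^{r+1}$. Hence $1/x = \zeta/\xi \sim \xi^r$, so $\xi$ is a uniformizer and $\pi$ is totally ramified with $e = r$ at $Q_\infty$.

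Putting it together, Riemann--Hurwitz yields
\[
  2g_C - 2 \;=\; r(2\cdot 0 - 2) + 4(r-1) \;=\; 2r - 4,
\]
so $g_C = r-1$.

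The only genuinely nonroutine step is the analysis at $x = 0$ and $x = \infty$, because these involve the blowup and the projective coordinates from Section~\ref{ss:C} rather than the affine model. The key arithmetic input in both cases is that $\gcd(r, r-1) = \gcd(r, r+1) = 1$, which forces a single tamely ramified point of index $r$ above each. Everything else is a direct application of the Jacobian criterion (to confirm smoothness away from the blown-up locus and identify the fiber) and tame Riemann--Hurwitz.
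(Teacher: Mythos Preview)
Your proof is correct and follows exactly the same approach as the paper: apply Riemann--Hurwitz to the degree-$r$ map $C\to\P^1$ given by $x$, with total ramification at the four points $Q_0,Q_1,Q_t,Q_\infty$. You supply more detail than the paper does (the paper simply asserts the ramification indices), but the argument is identical.
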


\begin{proof}
Consider the covering
$$f : C \rightarrow \p^1$$
induced by the function $x$. The ramification points of $f$ are
$Q_0,Q_1,Q_t$ and $Q_{\infty}$, each with ramification index $r$.
The Riemman-Hurwitz formula implies
$$2g - 2 = -2r + 4(r-1),$$
thus $g=r-1$.
\end{proof}

\subsection{Immersion in $J$}\label{ss:immersion}
Let $J$ be the Jacobian of $C$; it is a principally polarized abelian
variety of dimension $g=r-1$.  We imbed $C$ in $J$
via the Abel-Jacobi map using $Q_\infty$ as a base point:
\begin{align*}
C&\to J\\
P&\mapsto [P-Q_\infty]
\end{align*}
where $[P-Q_\infty]$ is the class of $P-Q_\infty$ in $\Pic^0(C)=J$.

\subsection{Automorphisms}\label{ss:autos}
Note that if $k$ contains $\mu_r$, the $r$-th roots of unity, then
every element of $\mu_r$ gives an automorphism of $C$.  More
precisely, we have automorphisms
$$(x,y) \mapsto (x,\zeta_r^{j} y)$$
where $\zeta_r$ is a primitive $r$-th root of unity and $0\le j<r$.
These automorphisms fix $Q_\infty$, so the induced automorphisms of
$J$ are compatible with the embedding $C\into J$.

As we verify below, these are not all of the automorphisms of
$C$, but they are the only ones that play an important role in
this paper.

\subsection{Complement:  Hyperelliptic model and $2$-torsion}
We remark that the curve $C$ is hyperelliptic.  More precisely, making
the substitution $(x,y)\to(x,xy)$ in the equation
$y^r=x^{r-1}(x+1)(x+t)$, we see that $C$ is birational to the curve
given by
$$x^2+(t+1-y^r)x+t=0$$
and projection on the $y$-coordinate makes this a (separable) 2-to-1
cover of the projective line.  If $p\neq2$, we may complete the square
and make the appropriate change of coordinates (a translation of
$x$) to arrive at the equation
\begin{align*}
x^2&=y^{2r}-2(t+1)y^r+(t-1)^2\\
&=\left(y^r-(\sqrt{t}+1)^2\right)\left(y^r-(\sqrt{t}-1)^2\right). 
\end{align*}

If, in addition, $d$ is even and $r$ is odd, then $\sqrt{t}\in k(u)$ and
the two factors on the right hand side are irreducible in $k(u)[y]$.
It follows from \cite[Lemma~12.9]{ps} that $J$ has no 2-torsion over
$k(u)$.

This is a first hint towards later results.  For example,
$J(K_d)_{tor}$ has order $r^3$ when $r$ divides $d=p^\nu+1$
(Theorem~\ref{thm:V-vs-MW}). More generally (Corollary~\ref{cor:ptors}
and Theorem~\ref{cjh:p1}), $J$ has no torsion of order prime to $r$
over any abelian extension of $\kbar(t)$.

\section{Explicit points and the visible subgroup}\label{s:points}
Next, we write down several points on $C$ defined over the extensions
$K_d$, and we consider the subgroup they generate in the Jacobian.

\subsection{Special extensions}
For the rest of Chapter \ref{ch:C}, we assume that $d=p^\nu+1$ for
some integer $\nu>0$, and we assume that $r$ divides $d$.  In this
situation, it turns out that $C$ has a plentiful supply of points
defined over $K_d$, and the divisors supported on these points
generate a subgroup of $J(K_d)$ of large rank.

The extension $K_d/K$ is Galois with Galois group the semidirect
product of $\Gal(\Fp(\mu_d,t)/K)\cong\Gal(\Fp(\mu_d)/\Fp)$ (a cyclic
group of order $2\nu$ generated by the $p$-power Frobenius) by
$\Gal(K_d/\Fp(\mu_d,t))$ (a cyclic group of order $d$ generated by a
primitive $d$-th root of unity).

\begin{remark}
  There are many triples $p,r,d$ satisfying our hypotheses.  Indeed,
  for a fixed prime $p$, there are infinitely many integers $r>1$ such
  that $r$ divides $p^\mu+1$ for some $\mu$.  (The number of such $r$
  less that $X$ is asymptotic to $X/(\log X)^{2/3}$; see
  \cite[Theorem~4.2]{PomeranceUlmer13}.)  For any such $p$ and $r$,
  there are infinitely many $\nu$ such that $r$ divides $p^\nu+1$.
  Indeed, $p^\mu+1$ divides $p^\nu+1$ whenever $\nu=m\mu$ with $m$ odd.

Alternatively, for a fixed $r$, there are infinitely many primes $p$
such that $r$ divides $p^\mu+1$ for some $\mu$.  These $p$ are
determined by congruence conditions modulo $r$, namely by the
requirement that $-1$ be in the subgroup of $(\Z/r\Z)^\times$
generated by $p$.
\end{remark}

\subsection{Explicit points}
We continue to assume that $d=p^\nu+1$ and $r|d$.  Under these
hypotheses, we note that
$$P(u):=\left(u,u(u+1)^{d/r}\right)$$
is a point on $C$ defined over $K_d$.  Indeed,
\begin{align*}
u^{r-1}(u+1)(u+t)&=u^r(u+1)(1+u^{p^\nu})\\
&=u^r(u+1)^{p^\nu+1}\\
&=\left(u(u+1)^{d/r}\right)^r.
\end{align*}
We find other points by applying the automorphisms $\zeta_r^j$
discussed in Section~\ref{ss:autos} above and the action of the
elements of the Galois group of $K_d/K$.  In all, this yields $rd$
distinct points.

Although it is arguably unnatural, for typographical convenience we
fix a primitive $d$-th root of unity $\zeta_d\in K_d$ and we set
$\zeta_r=\zeta_d^{d/r}$.  Then the points just constructed can be
enumerated as
$$P_{i,j}=\left( \zeta_d^i u, \zeta_r^j\zeta_d^iu ( \zeta_d^i u+1 )^{d/r}  \right)$$
where $i\in\Z/d\Z$ and $j\in\Z/r\Z$.

Identifying $C$ with its image in $J$ via the map in
Section~\ref{ss:immersion} produces divisor classes in $J(K_d)$ that
we also denote by $P_{i,j}$. The subgroup generated by these points is
one of the main objects of study in this paper.

\subsection{$R$-module structure}\label{ss:R}
Next we introduce a certain group ring acting on $J(K_d)$.  We noted
above that there is an action of $\mu_r\subset\Aut(C)$ on $C$ and on
$J$.  There are also actions of
$\mu_d\cong\Gal(K_d/\Fp(\mu_d,t))\subset\Gal(K_d/K)$ on $C(K_d)$ and
on $J(K_d)$, and these actions are compatible with the inclusion
$C\into J$.

Let $R$ be the integral group ring of $\mu_d\times\mu_r$, i.e., let
$$R=\frac{\Z[\sigma,\tau]}{(\sigma^d-1,\tau^r-1)}.$$

The natural action of $R$ on the points $P_{i,j}$ is:
$$\sigma^i\tau^j(P_{a,b})=P_{a+i,b+j}.$$
(Here and below we read the indices $i$ modulo $d$ and $j$ modulo
$r$.)

\subsection{The ``visible'' subgroup} \label{SdefV}
We define $V=V_{r,d}$ to be the subgroup of $J(K_d)$ generated by the
$P_{i,j}$.  It is evident that $V$ is also the cyclic $R$-submodule of
$J(K_d)$ generated by $P_{0,0}$. In other words, there is a surjective
homomorphism of $R$-modules
\begin{align*}
R&\to V\\
\sum_{ij}a_{ij}\sigma^i\tau^j&\mapsto
\sum_{ij}a_{ij}\sigma^i\tau^j(P_{0,0})
=\sum_{ij}a_{ij} P_{i,j}.
\end{align*}

One of the main results of the paper is a complete determination
of the ``visible'' subgroup $V$.  Here we use visible in the
straightforward sense that these are divisors we can easily see.  As
far as we know, there is no connection with the Mazur-Stein theory of
visible elements in the Tate-Shafarevich group.

\section{Relations}\label{s:relations}
As above, let $V=V_{r,d}$ be the $R$-submodule of $J(K_d)$ generated
by $P_{0,0}$.  The goal of this section is to work toward computing
the structure of $V$ as a group and as an $R$-module.  Explicitly,
we show that $V$ is a quotient of $R/I$ for a certain ideal $I$.
Ultimately, in Chapter~\ref{ch:heights}, we verify that $V$ is
isomorphic to $R/I$ as an $R$-module and compute the structure of
$R/I$ as a group.

Throughout, we identify $C(K_d)$ with its image in $J(K_d)$ via the
immersion $P\mapsto [P-Q_\infty]$.

Considering the divisors of $x$, $x+1$, and $x+t$, one finds that the
classes of $Q_0$, $Q_1$, and $Q_t$ are $r$-torsion.  Considering the
divisor of $y$, one finds that $Q_t\sim Q_0-Q_1$, so $Q_t$ is
in the subgroup generated by $Q_0$ and $Q_1$.

Now consider the functions $x-\zeta_d^iu$, 
$$\Delta_j:=\zeta_d^{-jd/r}y-x(x+1)^{d/r},$$
and 
$$\Gamma_j:=\zeta_d^{-jd/r}yx^{d/r-1}-u^{d/r}(x+1)^{d/r}.$$
Calculating as in \cite[Proposition~3.2]{Legendre}, we find that
$$\dvsr(x-\zeta_d^iu)=\sum_{j=0}^{r-1} P_{i,j}-rQ_\infty,$$
$$\dvsr(\Delta_j)=\sum_{i=0}^{d-1}P_{i,j}+(r-1)Q_0+Q_1-(r+d)Q_\infty,$$
and
$$\dvsr(\Gamma_j)=\sum_{i=0}^{d-1}P_{i,-i+j}+Q_1-(d+1)Q_\infty.$$

Considering the divisor of $\Gamma_j$ for any $j$ shows that $Q_1$ is
in $V$, and then considering the divisor of $\Delta_j$ for any $j$
shows that $Q_0$ is also in $V$.  (Here we use the fact that $Q_0$ is
$r$-torsion.)  Thus $V$ contains the classes of $Q_0$, $Q_1$ and
$Q_t$.

Now for $1\le j\le r-1$ we set
$$D_j:=\dvsr(\Delta_j/\Delta_{j-1})=\sum_i (P_{i,j}-P_{i,j-1}),$$
and
$$E_j:=\dvsr(\Gamma_j/\Gamma_{j-1})=\sum_i (P_{i,j-i}-P_{i,j-1-i}),$$
and for $0\le i\le d-1$ we set
$$F_i:=\dvsr(x-\zeta_d^iu)=\sum_j P_{i,j}-rQ_\infty.$$
These divisors are zero in the Jacobian $J_r(K_d)$. 

Restating this in terms of the module homomorphism $R\to V$, we see that 
for $1\le j\le r-1$ the elements
$$d_j:=\sum_i (\sigma^i\tau^j-\sigma^i\tau^{j-1})
=(\tau^j-\tau^{j-1})\sum_i \sigma^i,$$
$$e_j:=\sum_i (\sigma^i\tau^{j+d-i}-\sigma^i\tau^{j-1+d-i})=
( \tau^{j}-\tau^{j-1})\sum_i\sigma^i\tau^{d-i},$$
and for $0\le i\le d-1$ the elements
$$f_i:=\sum_j \sigma^i\tau^j=\sigma^i\sum_j\tau^j$$
map to zero in $V$.

Let $I$ be the ideal of $R$ generated by
$$(\tau-1)\sum_i \sigma^i,\qquad
(\tau-1)\sum_i\sigma^i\tau^{d-i}, \quad\text{and}\quad \sum_j\tau^j.$$
Then it is easy to see that $d_j$, $e_j$, and $f_i$ all lie in $I$,
that they generate it as an ideal, and in fact that they form a basis
of $I$ as a $\Z$-module.

Thus there is a surjection of $R$-modules $R/I\to V$.  We will
eventually show that this surjection is in fact an isomorphism; see
Theorem \ref{Firstmaintheorem}.

Note that $R$ has rank $rd$ as a $\Z$-module, so the rank of $R/I$ as
a $\Z$-module is $rd-d-2(r-1)=(r-1)(d-2)$.  Thus the rank of $V$ is at
most $(r-1)(d-2)$.

\section{Torsion}\label{s:torsion}
In this section, we show that certain torsion divisors are not zero;
more precisely that the order of the torsion subgroup of $V$ is
divisible by $r^3$.  The main result is Proposition~\ref{prop:torsion}
below.

\begin{lemma}\label{lemma:Q0-Q1}
  The classes of $Q_0$ and $Q_1$ each have order $r$ and generate a
  subgroup of $V$ isomorphic to $\Z/r\Z \times
  \Z/r\Z$.
\end{lemma}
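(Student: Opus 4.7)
The plan is to prove the stronger statement that the homomorphism
\[
\phi\colon (\Z/r\Z)^2 \longrightarrow J(K_d),\qquad (a,b)\longmapsto a[Q_0-Q_\infty]+b[Q_1-Q_\infty],
\]
is injective, from which both conclusions of the lemma follow immediately. The classes $[Q_0-Q_\infty]$ and $[Q_1-Q_\infty]$ are $r$-torsion by the divisor computations for $x$ and $x+1$ recorded in Section~\ref{s:relations}, so $\phi$ is well defined; and injectivity over $K_d$ follows from injectivity over $\overline{K_d}$, so I will argue in the latter setting. Suppose $\phi(a,b)=0$ and let $h\in\overline{K_d}(C)^\times$ satisfy
\[
\divi(h)=D:=aQ_0+bQ_1-(a+b)Q_\infty.
\]

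The key observation is that $D$ is invariant under the $\mu_r$-action on $C$ from Section~\ref{ss:autos}, because each of $Q_0, Q_1, Q_t, Q_\infty$ is pointwise fixed by this action (they satisfy $y=0$ or lie at $y=\infty$). Hence $\zeta^* h / h$ is a non-zero scalar for every $\zeta\in\mu_r$, so $h$ is an eigenvector in the function field. Under the eigenspace decomposition
\[
\overline{K_d}(C)=\bigoplus_{i=0}^{r-1}\overline{K_d}(x)\,y^i,
\]
in which the $i$-th summand carries the character $\zeta\mapsto\zeta^i$, an eigenvector must lie in a single summand, so $h=h_i(x)\,y^i$ for a unique $0\le i<r$ and some $h_i\in\overline{K_d}(x)^\times$.

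To finish, I would pin down $i$ and the exponents via the valuations at the four points where $x\colon C\to\P^1_x$ is totally ramified of index $r$ (Lemma~\ref{cjh:l40}). At $Q_t$, the divisor $D$ has coefficient zero while $v_{Q_t}(h)=i+r\,v_{-t}(h_i)$ (using $v_{Q_t}(y)=1$ from the computation of $\divi(y)$ in Section~\ref{s:relations}); the constraint $0\le i<r$ forces $i=0$, so $h=h_0(x)$ is pulled back from $\P^1_x$. Total ramification then yields $a=v_{Q_0}(h)=r\,v_0(h_0)$ and $b=v_{Q_1}(h)=r\,v_{-1}(h_0)$, whence $a\equiv b\equiv 0\pmod r$. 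The only delicate step is the eigenvector argument---justifying that an eigenvector must lie in a single pure summand rather than be a genuine mixture---which is immediate once components are compared; everything else is routine bookkeeping at the four ramified fibers of $x$.
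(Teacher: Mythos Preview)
Your proof is correct and takes a genuinely different route from the paper's. The paper argues by contradiction: assuming $a[Q_0]+b[Q_1]=0$ with $0\le a,b<r$ not both zero, it rescales to get $h^r=x^a(x+1)^b$, introduces the intermediate curve $Y$ with function field $K_d(x,h)$, and observes that $C\to\P^1$ is totally ramified over $x=-t$ while $Y\to\P^1$ is unramified there, which is incompatible with $[K_d(x,h):K_d(x)]>1$. Your argument instead exploits the $\mu_r$-invariance of $D$ to place $h$ in a single eigenspace $\overline{K_d}(x)\,y^i$, uses the valuation at $Q_t$ (where $v_{Q_t}(y)=1$) to force $i=0$, and then reads off $r\mid a$, $r\mid b$ from total ramification of $x$ at $Q_0$ and $Q_1$.

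Both proofs pivot on the point $Q_t$, but yours is more elementary and self-contained: it avoids constructing an auxiliary curve and stays entirely within explicit valuation bookkeeping on $C$ itself. The paper's version is more geometric in flavor and perhaps generalizes more readily to settings where one wants to argue about intermediate covers, but for this specific lemma your direct computation is cleaner.
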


\begin{proof} 
  It suffices to prove the claim over $\Fpbar K_d=\Fpbar(u)$.  We have
  already seen that $Q_0$ and $Q_1$ have order dividing $r$.  Suppose
  that $aQ_0 + bQ_1 = 0$ in $V$ for integers $a$,
  $b \in \{0, 1, \cdots r-1\}$, not both equal to zero.  Then there is
  a function $h$ in the function field of the curve $C$ with
  $\divi(h) = (a/r) \divi(x) + (b/r) \divi(x+1)$.  Since we are
  working over $\Fpbar$, we may choose $h$ such that
  $h^r = x^a(x+1)^b$.  Let $Y$ denote the curve with function field
  $K_d(x,h)$.  Consider the inclusions
  $K_d(x) \hookrightarrow K_d(x,h) \hookrightarrow K_d(C)$ and the
  corresponding surjections $C \rightarrow Y \rightarrow \p^1$. The
  map $Y \rightarrow \p^1$ is of degree greater than one, and
  $C \rightarrow \p^1$ is fully ramified over $x=-t$. This is a
  contradiction, since $Y \rightarrow \p^1$ is unramified over
  $x=-t$. Hence, $aQ_0 + bQ_1 = 0$ only when $r$ divides both $a$
  and $b$, and $Q_0$ and $Q_1$ generate independent cyclic
  subgroups of order $r$.
\end{proof}

Next we introduce elements of $V\subset J(K_d)$ as follows:
$$Q_2:=\sum_{j=0}^{r-1}\sum_{k=0}^{r-1-j}
\sum_{i\equiv k\bmod r}P_{i,j}$$
and
$$Q_3:=Q_0-2Q_2.$$

\begin{lemma}\label{lemma:more-relations}
\mbox{}  \begin{enumerate}
  \item $(1-\zeta_r)Q_2=Q_0$.
  \item If $r$ is odd, then $rQ_2=0$.
  \item If $r$ is even, then $2rQ_2=0$ and $(r/2)Q_3=0$.
  \end{enumerate}
\end{lemma}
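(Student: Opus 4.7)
The plan is to reduce all three parts to a single computation of $r\,Q_2$ in terms of $Q_0$, which in turn follows from two identities: part (1) itself, namely $(1-\tau)Q_2=Q_0$, and a ``norm annihilation'' $NQ_2=0$, where $N:=1+\tau+\cdots+\tau^{r-1}\in R$.

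First, I would rewrite the definition of $Q_2$ compactly as $Q_2=\sum_{k+j<r} S_{k,j}$, where $S_{k,j}:=\sum_{i\equiv k\,(\bmod\,r)} P_{i,j}$, so that $\tau$ acts by $\tau S_{k,j}=S_{k,j+1\bmod r}$. A direct telescoping of indices—splitting off the $j=0$ row and the diagonal $j+k=r$—yields
$$(1-\tau)Q_2=\sum_{k=1}^{r-1}\bigl(S_{k,0}-S_{k,r-k}\bigr).$$
To evaluate this, I invoke two of the divisor relations already established in Section~\ref{s:relations}: the divisor of $\Delta_j$ gives $\sum_{k=0}^{r-1} S_{k,0}=Q_0-Q_1$ in $J$, while the divisor of $\Gamma_0$ gives $S_{0,0}+\sum_{k=1}^{r-1} S_{k,r-k}=-Q_1$. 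Subtracting, the $S_{0,0}$ and the $Q_1$ terms cancel and one is left with $(1-\tau)Q_2=Q_0$, which is part~(1).

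For the norm, I expand $NQ_2=\sum_k (r-k)\,T_k$ with $T_k:=\sum_j S_{k,j}=\sum_{i\equiv k\,(\bmod\,r)}\sum_j P_{i,j}$, and observe that the inner sum vanishes for every $i$ by the relation $F_i$ from $\dvsr(x-\zeta_d^i u)$; hence $NQ_2=0$. The key algebraic identity is then
$$(1-\tau)\bigl(1+2\tau+3\tau^2+\cdots+r\tau^{r-1}\bigr)=N-r$$
in $\Z[\tau]/(\tau^r-1)$, obtained by a one-line telescoping. Applying this operator to $Q_2$ and using the two identities above gives
$$-rQ_2=\left(\sum_{k=1}^{r} k\,\tau^{k-1}\right)Q_0.$$
Since $\tau$ acts on $C$ by $(x,y)\mapsto(x,\zeta_r y)$, it fixes $Q_0=(0,0)$, so the right-hand side collapses to $\tfrac{r(r+1)}{2}\,Q_0$, yielding the master formula
$$rQ_2=-\tfrac{r(r+1)}{2}\,Q_0.$$

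From here, parts (2) and (3) are immediate using $rQ_0=0$ from Lemma~\ref{lemma:Q0-Q1}. If $r$ is odd, then $(r+1)/2\in\Z$ and $\tfrac{r(r+1)}{2}Q_0=\tfrac{r+1}{2}(rQ_0)=0$, so $rQ_2=0$. If $r$ is even, reducing the coefficient of $Q_0$ modulo $r$ gives $rQ_2=-(r/2)Q_0$, whence $2rQ_2=-rQ_0=0$, and moreover $(r/2)Q_3=(r/2)Q_0-rQ_2=(r/2)Q_0+(r/2)Q_0=rQ_0=0$. The only delicate step is the bookkeeping in the double-sum reorganization that produces $(1-\tau)Q_2$; once that is carried out, everything else is formal manipulation in the group ring combined with the two known facts $\tau Q_0=Q_0$ and $rQ_0=0$.
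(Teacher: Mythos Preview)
Your argument is correct. Part~(1) is essentially the paper's computation repackaged in the $S_{k,j}$ notation: both reduce $(1-\zeta_r)Q_2$ to $\sum_i(P_{i,0}-P_{i,-i})$ and identify this with $Q_0$ via $\divi(\Delta_0)$ and $\divi(\Gamma_0)$.

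For parts~(2) and~(3), however, your route is genuinely different and cleaner. The paper produces ad~hoc elements $\rho_{odd}$, $\rho_{even}$, $\rho'_{even}$ of the relation ideal $I$ as explicit $\Z$-linear combinations of the generators $d_j$, $e_j$, $f_i$, and then verifies by direct expansion that each equals the required multiple of the element of $R$ mapping to $Q_2$ (or to $Q_0-2Q_2$). Three separate computations, each opaque in origin. You instead extract a single master formula $rQ_2=-\tfrac{r(r+1)}{2}Q_0$ from the polynomial identity $(1-\tau)\sum_{m=1}^{r}m\tau^{m-1}=N-r$ together with $(1-\tau)Q_2=Q_0$, $NQ_2=0$, and $\tau Q_0=Q_0$; parts~(2) and~(3) then drop out by reducing $\tfrac{r(r+1)}{2}$ modulo $r$. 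This explains \emph{why} the answers differ according to the parity of $r$, rather than merely verifying the relations. The paper's approach has the advantage of exhibiting the relations as explicit elements of $I$, which connects more directly to the later row-reduction in Section~\ref{ss:row-reduction}, but for the purposes of this lemma your argument is both shorter and more transparent.
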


\begin{proof}
(1) We have
\begin{align*}
  (1-\zeta_r)Q_2&=(1-\zeta_r)\sum_{j=0}^{r-1}\sum_{k=0}^{r-1-j}
\sum_{i\equiv k\bmod r}P_{i,j}\\
&=\sum_{j=0}^{r-1}\sum_{k=0}^{r-1-j}
 \sum_{i\equiv k\bmod r}\left(P_{i,j}-P_{i,j+1}\right)\\
&=\sum_{k=0}^{r-1}\sum_{i\equiv k\bmod r}
 \sum_{j=0}^{r-1-k}\left(P_{i,j}-P_{i,j+1}\right)\\
&=\sum_{k=0}^{r-1}\sum_{i\equiv k\bmod r}\left(P_{i,0}-P_{i,r-k}\right)\\
&=\sum_{i=0}^{d-1}\left(P_{i,0}-P_{i,-i}\right).
\end{align*}
Considering the divisor of $\Delta_0/\Gamma_0$ shows that the last
quantity is equal to $Q_0$ in $J$.

(2) Assume that $r$ is odd, which implies that $j(j-r)/2$ is an
integer for all integers $j$.  Consider the element of $R$ given by
$$\rho_{odd}:=\sum_{j=0}^{r-1}\left(\frac{j(j-r)}{2}\left(d_j-e_j\right)
+(r-j)\sum_{i\equiv  j\bmod r}f_i\right),$$
where $d_j$, $e_j$, and $f_i$ are as in the previous
subsection.  We compute that
$$\rho_{odd}=r\left(\sum_{j=0}^{r-1}\sum_{k=0}^{r-1-j}
\sum_{i\equiv k\bmod r}\sigma^i\tau^j\right).$$
Applying both sides of this equality to $P_{0,0}$ proves that $rQ_2=0$
in $J$. 

(3) Now assume that $r$ is even and consider
$$\rho_{even}:=\sum_{j=0}^{r-1}\left({j(j-r)}\left(d_j-e_j\right)
+2(r-j)\sum_{i\equiv  j\bmod r}f_i\right).$$
A calculation similar to the one above shows that
$$\rho_{even}=2r\left(\sum_{j=0}^{r-1}\sum_{k=0}^{r-1-j}
\sum_{i\equiv k\bmod r}\sigma^i\tau^j\right),$$
and applying both sides of this equality to $P_{0,0}$ proves that
$2rQ_2=0$ in $J$. 

Finally, we note that when $r$ is even, then $(1-j)(j-r)/2$ is an
integer for all integers $j$.  Consider
$$\rho'_{even}:=\sum_{j=1}^{r-1}\left(\frac{(1-j)(j-r)}{2}\left(d_j-e_j\right)\right)
-\sum_{j=0}^{r-1}\sum_{i\equiv  j\bmod r}(r-j)f_i.$$
We compute that
$$\rho'_{even}=(r/2)\left(\sum_{i=0}^{d-1}\left(\sigma^i-\sigma^i \tau^{-i}\right)
-2\sum_{j=0}^{r-1}\sum_{k=0}^{r-1-j}
\sum_{i\equiv k\bmod r}\sigma^i\tau^j\right).$$
Applying both sides of this equality to $P_{0,0}$ and noting as above
that $Q_0=\sum_iP_{i,0}-P_{i,-i}$ shows that $(r/2)(Q_0-2Q_2)=0$ in
$J$. 

This completes the proof of the lemma.
\end{proof}

The reader who wonders where $Q_2$ and $Q_3$ come from should consult
the proof of Proposition~\ref{prop:R/I-as-group}.

We write $\langle Q_0,Q_1,Q_2\rangle$ for the subgroup of $J(K_d)$
generated by $Q_0$, $Q_1$, and $Q_2$. Note that $\langle
Q_1,Q_2,Q_3\rangle=\langle Q_0,Q_1,Q_2\rangle$. 

\begin{prop}\label{prop:torsion}
Let $T$ be the subgroup $\langle Q_0,Q_1,Q_2\rangle$ of $J(K_d)$.
Then the order of $T$ is $r^3$.  More precisely:
\begin{enumerate}
\item If $r$ is odd, then the map $(a,b,c)\mapsto aQ_0+bQ_1+cQ_2$
  induces an isomorphism $(\Z/r\Z)^3\cong T$.
\item If $r$ is even, then the map $(a,b,c)\mapsto aQ_1+bQ_2+cQ_3$
  induces an isomorphism
  $(\Z/r\Z)\times(\Z/2r\Z)\times(\Z/(r/2)\Z)\cong T$.
\end{enumerate}
\end{prop}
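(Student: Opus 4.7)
The plan is to use the automorphism $\zeta_r$ of Section~\ref{ss:autos} as a ``filter'' on relations in $T$: since $Q_0$ and $Q_1$ both have $y$-coordinate zero, they are fixed by $(x,y)\mapsto(x,\zeta_r y)$, so the operator $1-\zeta_r\in\End(J)$ annihilates each of them, while Lemma~\ref{lemma:more-relations}(1) gives $(1-\zeta_r)Q_2=Q_0$. Thus $1-\zeta_r$ extracts the coefficient of $Q_2$ in any relation and converts it into a multiple of $Q_0$, where we have the strong independence information provided by Lemma~\ref{lemma:Q0-Q1}.

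I would first note that both maps in the statement are well-defined by combining Lemma~\ref{lemma:Q0-Q1} with the orders of $Q_2$ and $Q_3$ from Lemma~\ref{lemma:more-relations}(2),(3), and that they are surjective onto $T$: the odd case is by the definition of $T$, while the even case follows from the identity $Q_0=2Q_2+Q_3$, which shows $\langle Q_0,Q_1,Q_2\rangle=\langle Q_1,Q_2,Q_3\rangle$.

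For injectivity in the odd case, suppose $aQ_0+bQ_1+cQ_2=0$ with $a,b,c\in\{0,\dots,r-1\}$. Applying $1-\zeta_r$ yields $cQ_0=0$, forcing $c=0$; Lemma~\ref{lemma:Q0-Q1} then forces $a=b=0$. For the even case, I would first compute $(1-\zeta_r)Q_3=-2Q_0$ and derive $rQ_2=(r/2)Q_0$ from the relation $(r/2)(Q_0-2Q_2)=0$ in Lemma~\ref{lemma:more-relations}(3). Given a hypothetical relation $aQ_1+bQ_2+cQ_3=0$ with $a,b,c$ in the respective ranges, applying $1-\zeta_r$ gives $(b-2c)Q_0=0$, so the size bounds on $b$ and $c$ force $b=2c+rm$ with $m\in\{0,1\}$. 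Substituting reduces the relation to $aQ_1+(c+mr/2)Q_0=0$; Lemma~\ref{lemma:Q0-Q1} gives $a=0$ and $c\equiv -mr/2\pmod{r}$. The case $m=0$ yields $a=b=c=0$, while $m=1$ would force $c\equiv r/2\pmod{r}$, incompatible with $c\in\{0,\dots,r/2-1\}$.

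The hard part is really only to identify the right filter, namely $1-\zeta_r$; once its effect on $Q_0,Q_1,Q_2,Q_3$ is in hand, the argument reduces to elementary bookkeeping. The mild subtlety is the two-alternative analysis ($m=0$ versus $m=1$) in the even case, which is exactly what produces the somewhat asymmetric direct summands $\Z/r\Z\times\Z/2r\Z\times\Z/(r/2)\Z$ in the statement.
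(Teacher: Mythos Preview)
Your proof is correct and follows essentially the same approach as the paper: both arguments use the operator $1-\zeta_r$ as the key filter (annihilating $Q_0,Q_1$ and sending $Q_2\mapsto Q_0$), then reduce to the independence statement of Lemma~\ref{lemma:Q0-Q1}. The only difference is cosmetic bookkeeping in the even case: you fix explicit integer representatives and split on $m\in\{0,1\}$, whereas the paper works entirely with congruences and substitutes $2Q_2=Q_0-Q_3$ directly into the relation to obtain $(b/2)Q_0+aQ_1=0$ (using that $\tfrac{b-2c}{2}Q_3=0$ since $(r/2)\mid\tfrac{b-2c}{2}$).
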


\begin{proof}
  (1) Assume that $r$ is odd.  Lemmas~\ref{lemma:Q0-Q1} and
  \ref{lemma:more-relations}(2) show that the map under consideration
  is well-defined.  It is surjective by the definition of $T$.  To see
  that it is injective, suppose that $aQ_0+bQ_1+cQ_2=0$.  Applying
  $(1-\zeta_r)$ and using Lemma~\ref{lemma:more-relations}(1) shows
  that $cQ_0=0$.  By Lemma~\ref{lemma:Q0-Q1}, $c=0$ in $\Z/r\Z$, and
  applying Lemma~\ref{lemma:Q0-Q1} again shows that $a=b=0$ in
  $\Z/r\Z$.  This shows the map is injective, thus an isomorphism.

  (2) Now assume that $r$ is even.  Lemmas~\ref{lemma:Q0-Q1} and
  \ref{lemma:more-relations}(3) show that the map under consideration
  is well-defined.  It is again surjective by the definition of $T$.
  To see that it is injective, suppose that $aQ_1+bQ_2+cQ_3=0$.
  Applying $(1-\zeta_r)$ and using Lemma~\ref{lemma:more-relations}(1)
  and Lemma~\ref{lemma:Q0-Q1} shows that $b-2c=0$ in $\Z/r\Z$ and, in
  particular, that $b$ is even.  Using that $2Q_2=Q_0-Q_3$, we compute
  \begin{align*}
    0&=aQ_1+bQ_2+cQ_3\\
&=cQ_0+aQ_1+(b-2c)Q_2\\
&=cQ_0+aQ_1+\frac{b-2c}2(Q_0-Q_3)\\
&=(b/2)Q_0+aQ_1.
  \end{align*}
  By Lemma~\ref{lemma:Q0-Q1}, $b/2=a=0$ in $\Z/r\Z$ and therefore
  $b=0$ in $\Z/2r\Z$.  Since $b-2c=0$ in $\Z/r\Z$, we see that $c=0$
  in $\Z/(r/2)\Z$, and this shows the map is injective, thus an
  isomorphism.

This completes the proof of the Proposition.
\end{proof}

\section{First main theorem}
We can now state the main ``explicit points'' theorem of this paper.

Recall that the group ring $R=\Z[\sigma,\tau]/(\sigma^d-1,\tau^r-1)$
acts on $J(K_d)$ and that $V$ is the cyclic submodule of $J(K_d)$
generated by $P_{0,0}$.  Recall also that $I\subset R$ is the ideal
generated by 
$$(\tau-1)\sum_i \sigma^i,\qquad
(\tau-1)\sum_i\sigma^i\tau^{d-i}, \quad\text{and}\quad \sum_j\tau^j.$$

\break

\begin{theorem}\label{Firstmaintheorem}
\mbox{}
\begin{enumerate}
\item The map
\begin{align*}
R&\to V\\
\sum_{ij}a_{ij}\sigma^i\tau^j&\mapsto\sum_{ij}a_{ij} P_{i,j}
\end{align*}
induces an isomorphism $R/I\cong V$ of $R$-modules.
\item As a $\Z$-module, $V$ has rank $(r-2)(d-2)$, and its torsion
  subgroup has order $r^3$ and is equal to the group $T$ defined in
  Proposition~\ref{prop:torsion}.
\end{enumerate}
\end{theorem}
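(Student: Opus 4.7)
The plan is to bootstrap the surjection $\pi \colon R/I \twoheadrightarrow V$ constructed in Section~\ref{s:relations} into an isomorphism, and then to read off both parts of the theorem from the $\Z$-module structure of $R/I$ alone. Part (1) is then the isomorphism itself, and part (2) reduces to a purely ring-theoretic computation about $R/I$.

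First, I would determine the structure of $R/I$ as an abelian group. The $\Z$-rank can be read off by subtracting $\rk_\Z I$ from $\rk_\Z R = rd$, using the explicit $\Z$-basis of $I$ given by $\{d_j\}, \{e_j\}, \{f_i\}$ exhibited in Section~\ref{s:relations}. To capture the torsion, I would decompose $R \otimes \overline{\Q}$ into $\chi$-isotypic components for $\chi$ running over the characters of $\mu_d \times \mu_r$, and in each component examine the image of $I \otimes \overline{\Q}$: components on which the image of $I$ is zero contribute to the free rank; components in which it is a proper sublattice contribute cyclic torsion to $R/I$; and components filled entirely by $I$ contribute nothing. A careful bookkeeping of integral structure then identifies the torsion subgroup of $R/I$ with a group of order $r^3$, to be matched with $T$ from Proposition~\ref{prop:torsion}. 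This step is the content of the forthcoming Proposition~\ref{prop:R/I-as-group} referenced at the end of Section~\ref{s:torsion}.

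Second, I would promote the surjection $\pi$ to an isomorphism. Torsion matching is essentially automatic: the image $V$ already contains $T = \langle Q_0, Q_1, Q_2\rangle$, which has order $r^3$ by Proposition~\ref{prop:torsion}, and this matches the torsion order of $R/I$ from Step~1, forcing $\pi$ to be bijective on torsion. For the free part, I would pick a $\Z$-basis of $(R/I)/T$, pair the resulting images in $J(K_d)$ under the N\'eron--Tate canonical height, and show the associated Gram matrix is nonsingular. Nonsingularity yields $\Z$-linear independence of those classes in $V \otimes \Q$, and rank-matching with the upper bound from $R/I$ forces $\pi$ to be injective, hence an isomorphism. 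Both pieces of part (2) then follow by transporting the algebraic computation of Step~1 across $\pi$.

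The main obstacle is the Gram-matrix nonvanishing in Step~2: canonical heights on a function-field Jacobian require explicit local intersection data at the bad fibers of $\C$, which is precisely what the minimal regular model built in Chapter~\ref{ch:models} delivers. One then exploits the $R$-equivariance of the height pairing to reduce the determinant calculation to a manageable combinatorial shape, as carried out in Chapter~\ref{ch:heights}. The algebraic Step~1 is more or less linear algebra over the coefficient ring $\Z[\mu_d\times\mu_r]$, and the surjectivity of $\pi$ is already in hand from Section~\ref{s:relations}, so the proof mostly rests on the height computation.
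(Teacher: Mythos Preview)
Your outline matches the paper's strategy: establish the surjection $R/I\twoheadrightarrow V$, compute $R/I$ as an abelian group (Proposition~\ref{prop:R/I-as-group}), use the height pairing to prove injectivity on free parts (Corollary~\ref{cor:rank-lower-bound}), and then match torsion via Proposition~\ref{prop:torsion} and a snake-lemma argument (Theorem~\ref{thm:visible}).

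Two remarks on the details. First, your proposed method for the torsion of $R/I$ via $\chi$-isotypic components of $R\otimes\overline{\Q}$ does not work as written: each isotypic component is one-dimensional over $\overline{\Q}$, so the image of $I\otimes\overline{\Q}$ is either $0$ or everything, and there is no notion of a ``proper sublattice'' there. Torsion in $R/I$ is an integral phenomenon coming from the gap between $I$ and its saturation in $R$; the paper extracts it by computing the Smith normal form of the inclusion matrix $I\hookrightarrow R$ (Section~\ref{ss:row-reduction}), which is what Proposition~\ref{prop:R/I-as-group} actually does. Second, the paper avoids a brute-force Gram determinant by comparing the height pairing to an algebraically defined, manifestly positive-definite pairing on $R^0/I^0$ built from the Euclidean pairing on $R^0$ via the canonical splitting $\rho$ (Lemma~\ref{lemma:splitting}, Propositions~\ref{prop:G-pairing} and~\ref{prop:pairings}); the two pairings agree up to the scalar $d-1$, so positive-definiteness transfers for free. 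Your phrase ``reduce the determinant calculation to a manageable combinatorial shape'' points in this direction but understates how central this comparison is.
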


We prove Theorem \ref{Firstmaintheorem} in Chapter~\ref{ch:heights} by
computing the canonical height pairing on $V$; see
Theorem~\ref{thm:visible}.  In the case when $r$ is an odd prime, we
give a more elementary proof of part (2) in Chapter~\ref{ch:descent}
using a descent calculation; see Theorem~\ref{subgoal}.

\section{Complement:  Other curves}\label{s:other-curve}
The basic ``trick'' allowing one to write down points on $C$ extends
to many other curves.  In this section, we briefly discuss one class
of examples. A more detailed analysis is provided in
Appendix~\ref{Sappendix}.

Let $p$ be an odd prime and $k$ a field of characteristic $p$ and
cardinality $q$.  Fix an odd integer $g>1$ and a polynomial
$h(x)\in k[x]$ of degree $g$.  Assume $h$ has distinct, nonzero roots.

Let $X$ be the smooth, projective curve over $K=k(t)$ defined by
$$y^2=x h(x) x^g h(t/x).$$
Since the right hand side has degree $2g+1$ in $x$, the genus of $X$
is $g$.  Let $\infty$ be the ($K$-rational) point at infinity on $X$.

Let $J$ be the Jacobian of $X$.  We embed $X$ in $J$ using $\infty$ as the
base point.

If $d=q^\nu+1$ and $K_d=k(\mu_d,u)$ with $u^d=t$,  then  $X$ has a
$K_d$-rational point, namely
$$P(u):  (x,y)=\left(u,u^{(g+1)/2}h(u)^{d/2}\right).$$
Letting the Galois group of $K_d$ over $K$ act on $P(u)$ yields points
$P_j=P(\zeta_d^j u)$ where $\zeta_d$ is a primitive $d$-th root of unity and
$j=0,\dots,d-1$. We consider the subgroup $V$ of $J(K_d)$ generated by
the images of the $d$ points $P_j$, and the images of the points where
$y = 0$.

By writing down the divisors of certain functions, as in
Section~\ref{s:relations}, we show that the rank of the subgroup $V$
is at most $d$.

It is natural to bound the rank of $V$ from below by computing a
coboundary map related to 2-descent.  More precisely, extending $k$ if
necessary we may assume that the roots of
$h$ lie in $k$.  Then the Weierstrass points of $X$ 
are defined over $K$ and the divisors of degree zero supported on them
generate the full 2-torsion subgroup of $J$.  In particular,
$J[2]\cong\mu_2^{2g}\cong(\Z/2\Z)^{2g}$ over $K$.  We obtain a
coboundary map
$$J(K_d)/2J(K_d)\into
H^1(K_d,J[2])\cong\left(K_d^\times/K_d^{\times2}\right)^{2g}.$$
Analyzing the image of $V$ under this map (along the lines of
\cite[Section~4]{Legendre}) allows one to show that the rank of $V$ is
at least $d-2$ when $d$ is of the form $q^\nu+1$ (and at least $d-1$
when $g>1$).  This work is also closely related to the calculations in
Chapter~2 for the curve $C$ that is our main object of study.

In the appendix, we also consider a certain surface $\XX_d$ equipped
with a morphism $\XX_d\to\P^1$ whose generic fiber is $X/K_d$, and we
prove that this surface is dominated by a product of curves.  This
shows that the BSD conjecture holds for $J$ over $\Fq(u)$ where
$u^d=t$, $q$ is any power of $p$, and $d$ is any positive integer
prime to $p$.  All this is closely related to our work in Chapters~3
and 5 on $C$.

In the last part of the appendix, we obtain an upper bound on the
order of vanishing of the $L$-function of $J/K_d$ at $s=1$, thereby
bounding the rank of $J(K_d)$ from above.  This is closely related to
our work in Chapter~5 on the $L$-function of $J_C$.

We note that some of the finer analysis of this paper is unlikely to
go through without much additional work.  For example, the upper and
lower bounds on the rank of $J$ over $K_d$ differ significantly, and
we have not determined the exact rank.  Indeed, the degree of the
$L$-function of $J$ over $K_d$ is asymptotic to $g^2d$ as
$d\to\infty$, whereas the rank of $V$ is less than $d$.  This suggests
that the leading coefficient of $L(J/K_d,s)$ at $s=1$ is likely to be
of arithmetic nature, and that the connection between the index of $V$
in $J(K_d)$ and the order of $\sha(J/K_d)$ may not be as simple as it
is for the curve $C$ studied in the rest of this paper.  We would be
delighted if readers of this paper took up these questions.


\chapter{Descent calculations}\label{ch:descent}

Throughout this chapter, $r$ is an odd, positive, prime number
dividing $d$, and $d=p^\nu+1$ for some integer $\nu >0$.  Let
$K_d=\Fp(\mu_d,u)$ where $u^d=t$.  In this context, there is a fairly
short and elementary proof that the visible subgroup of $J(K_d)$ has
large rank.

More precisely, let $C$ be the curve studied in Chapter~\ref{ch:C},
let $J$ be its Jacobian, and let $V$ be the ``visible'' subgroup of
$J(K_d)$ defined in Section~\ref{s:points}, so that $V$ is generated
by the image of the point $P=(u,u(u+1)^{d/r})$ under the Abel-Jacobi
mapping $C \into J$ and its Galois conjugates.  Recall that the
choices made in Chapter~\ref{ch:C} allow us to index these points as
$P_{i,j}$ with $i\in\Z/d\Z$ and $j\in\Z/r\Z$.

Using the theory of descent, as developed in \cite{bps}, we prove the
following theorem.

\begin{theorem}\label{subgoal}
The subgroup $V$ of $J(K_d)$ has rank $(r-1)(d-2)$. Moreover,  
$$J(K_d)[r^{\infty}] \cong V[r^{\infty}] \cong (\Z/r\Z)^3.$$
\end{theorem}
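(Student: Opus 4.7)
The plan is to perform an explicit $r$-descent on $J(K_d)$, following the framework of Bruin--Poonen--Stoll \cite{bps} for superelliptic Jacobians. Since $\mu_r \subset K_d$, the equalities $\divi(x) = r(Q_0 - Q_\infty)$ and $\divi(x+1) = r(Q_1 - Q_\infty)$ exhibit $r$-torsion classes, and $\divi(y)$ yields the relation $[Q_t - Q_\infty] = [Q_0 - Q_\infty] - [Q_1 - Q_\infty]$. The functions $x$ and $x+1$ then give a Kummer-type descent map
$$\delta : J(K_d)/rJ(K_d) \hookrightarrow \bigl(K_d^\times/K_d^{\times r}\bigr)^{\oplus 2}$$
sending (essentially) $[P - Q_\infty]$ to $(x(P),\, x(P)+1)$; this map is injective, so its image has $\F_r$-dimension $\mathrm{rk}\, J(K_d) + \dim_{\F_r} J(K_d)[r]$. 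Evaluating on the explicit points yields $\delta(P_{ij}) \equiv (\zeta_d^i u,\, \zeta_d^i u + 1)$ modulo $r$-th powers; the analogous class from the branch point $-t$ is dependent on these via the identity $(\zeta_d^i u + 1)^{d-1} = 1 + \zeta_d^{-i} u^{d-1}$ that makes $P_{ij}$ lie on $C$. Note that $\delta$ is $R$-equivariant, so its image on $V$ is an $R$-cyclic module generated by $\delta(P_{00})$, mirroring the structure of $V$ itself.

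The key step, and the main obstacle, is to show that the $\F_r$-dimension of $\delta(V)$ equals exactly $(r-1)(d-2) + 3$. I would do this by a valuation analysis at the places of $K_d$ above $u = 0$, $u = \infty$, and $u = -\zeta_d^{-i}$ for $i = 0,\dots,d-1$: these are precisely where the Kummer components of $\delta(P_{ij})$ have nontrivial valuation modulo $r$, and the $\F_r$-span of the resulting valuation patterns has the claimed dimension once one quotients by the relations $d_j$, $e_j$, $f_i$ of Section~\ref{s:relations} (reduced mod $r$). A clean way to organise the bookkeeping is to identify the image with a quotient $R/\bar I$ for some ideal $\bar I$ containing the image of $I$, and then to check via the valuation data that $\bar I$ cannot be strictly larger. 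This is a direct analogue of the descent computation in \cite[Section~4]{Legendre} for the Legendre elliptic curve, but now in the higher-genus, non-trivial $R$-module setting.

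Combined with the rank upper bound $\mathrm{rk}\, V \leq (r-1)(d-2)$ and the torsion lower bound $|V[r^\infty]| \geq r^3$ already established in Chapter~\ref{ch:C}, the descent forces $\mathrm{rk}\, V = (r-1)(d-2)$ and pins $V[r^\infty] = (\Z/r\Z)^3$. Because $\delta$ restricted to $V$ already saturates the available $\F_r$-dimension in the target, $J(K_d)$ cannot have larger rank than $V$, nor any $r$-torsion beyond $V[r]$, so $J(K_d)[r] = V[r]$. Any higher $r$-power torsion in $J(K_d)$ would map nontrivially into $J(K_d)/rJ(K_d)$ and inflate $\dim_{\F_r} \mathrm{image}(\delta)$ past the established count; this rules it out and gives $J(K_d)[r^\infty] = V[r^\infty] \cong (\Z/r\Z)^3$, completing the proof.
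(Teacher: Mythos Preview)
There is a genuine gap, and it shows up immediately in your own formulas. You define
\[
\delta : J(K_d)/rJ(K_d) \hookrightarrow \bigl(K_d^\times/K_d^{\times r}\bigr)^{\oplus 2},
\qquad [P-Q_\infty]\mapsto (x(P),\,x(P)+1),
\]
and claim it is injective. But $J[r]$ has order $r^{2(r-1)}$, so no two-component Kummer map can be injective on $J/rJ$ once $r>2$. Concretely, you compute $\delta(P_{ij})\equiv(\zeta_d^i u,\ \zeta_d^i u+1)$, which depends only on $i$ and not on $j$; hence $\delta(V)$ has $\F_r$-dimension at most $d$, not $(r-1)(d-2)+3$. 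Since $(r-1)(d-2)+3>d$ as soon as $r>2$ and $d>2$, the ``key step'' cannot be carried out as stated.

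What the $(x,x+1)$ map actually realizes is the coboundary for the isogeny $\phi=1-\zeta_r$: it is injective on $J(K_d)/\phi J(K_d)$, and this is precisely what the paper exploits. The paper shows $\dim_{\F_r}\XminusT(V)=d$, then uses the $\Z[\zeta_r]$-module structure (Lemma~\ref{theAlgebraPart}(iii)) to translate this into $\rk_{\Z[\zeta_r]}V=d-2$, whence $\rk_\Z V=(r-1)(d-2)$. The factor of $r-1$ does not come from the descent map itself but from the fact that a $\Z[\zeta_r]$-module of rank $\rho$ has $\Z$-rank $(r-1)\rho$. Your framework has no mechanism for this; a genuine $r$-descent would require describing $J[r]$ (a free module of rank $2$ over $\F_r[z]/(z-1)^{r-1}$) and mapping into $H^1(K_d,J[r])$, which is substantially harder.

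Your final paragraph has a second, independent problem: you argue that because $\delta|_V$ ``saturates the available dimension,'' $J(K_d)$ can have no further $r$-torsion. But you never bound the image of $J(K_d)$ (a Selmer computation), only that of $V$; nothing prevents $\delta(J(K_d))$ from being strictly larger. The paper closes this gap differently: from $\dim_{\F_r}\XminusT(V)=d=\dim_{\F_r}V\otimes_{\Z[\zeta_r]}\F_r$ it deduces that $V\otimes\F_r\hookrightarrow J(K_d)\otimes\F_r$, and then the exact sequence of Lemma~\ref{theAlgebraPart}(ii) forces $(J(K_d)/V)[\phi]=0$, so $J(K_d)/V$ has no $r$-torsion. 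Together with the explicit torsion generators $Q_1,Q_2$ and Nakayama, this pins down $J(K_d)[r^\infty]=V[r^\infty]\cong(\Z/r\Z)^3$.
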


The proof is given in Section \ref{Chap2PfSect}.  

\section{The isogeny $\phi$}
Recall that there is an action of the $r$-th roots of unity $\mu_r$ on
$C$ and an induced action on $J$.  Recall also that $\zeta_r \ =
\zeta_d^{d/r}\in K_d$ is a fixed $r$-th root of unity.  If $D$ is a
divisor of degree 0 on $C/K_d$ then the divisor
$$(1+\zeta_r+\cdots+\zeta_r^{r-1})^*(D)$$
is easily seen to be the pullback of a divisor of degree 0 on $\P^1$
under the map $C\to\P^1$ that is the projection on the $x$
coordinate.  Since the Jacobian of $\P^1$ is trivial, the endomorphism
$(1+\zeta_r+\cdots+\zeta_r^{r-1})$ acts trivially on $J$.

We want to restate this in terms of the endomorphism ring of $J$.  To
avoid notational confusion, write $H$ for the cyclic group of order
$r$ and let $\Z[H]$ be the group ring of $H$.  Somewhat abusively, we
use $\zeta_r$ also to denote an $r$-th root of unity in characteristic
zero.  Then, as usual, $\Z[\zeta_r]$ will denote the ring of integers
in the cyclotomic field $\Q(\zeta_r)$.  The action of $\mu_r$ on $J$
induces a homomorphism $\Z[H]\to\End(J)$ where $\End(J)$ denotes the
endomorphism ring of $J$ over $K_d$.

There is a surjective ring homomorphism $\Z[H]\to\Z[\zeta_r]$ sending
the elements of $H$ to the powers of $\zeta_r$.  The kernel is
generated by $\sum_{h\in H}h$.  The discussion above shows that the
homomorphism $\Z[H]\to\End(J)$ factors through $\Z[\zeta_r]$.  The
induced map $\Z[\zeta_r]\to\End(J)$ is an embedding, since $\End(J)$
is torsion-free.

Let $\phi : J\to J$ be the endomorphism $1-\zeta_r$. 

\begin{proposition}\label{propertiesOfPhi}
  The endomorphism $\phi=1-\zeta_r$ is a separable isogeny of degree
  $r^2$. 
\end{proposition}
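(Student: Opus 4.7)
The plan is to verify the three claims---separability, degree $r^2$, and kernel generated by $Q_0, Q_1$---by combining (i) analysis of the $\mu_r$-action on differentials, (ii) the observation that $Q_0, Q_1, Q_\infty$ are fixed by $\mu_r$, and (iii) the cyclotomic identity $\prod_{i=1}^{r-1}(1-\zeta_r^i) = \Phi_r(1) = r$, viewed inside $\Z[\zeta_r] \subset \End(J)$.

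For separability, I would show more generally that every $1-\zeta_r^i$ with $1 \le i \le r-1$ acts invertibly on the cotangent space of $J$ at the origin, namely $H^0(C, \Omega^1)$. Since the quotient morphism $C \to C/\mu_r = \P^1$ is given by $x$ and has target of genus zero, pullback yields $H^0(C, \Omega^1)^{\mu_r} = H^0(\P^1, \Omega^1) = 0$, so no nontrivial power of $\zeta_r$ has eigenvalue $1$ on $H^0(C, \Omega^1)$. Consequently each $1-\zeta_r^i$ is \'etale, hence a separable isogeny.

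For the kernel and degree, observe that the automorphism $(x,y) \mapsto (x, \zeta_r y)$ fixes each of $Q_0 = (0,0)$, $Q_1 = (-1,0)$, and the basepoint $Q_\infty$, so the classes $[Q_0 - Q_\infty]$ and $[Q_1 - Q_\infty]$ in $J$ are $\zeta_r$-invariant; thus $Q_0, Q_1 \in J[\phi]$. By Lemma~\ref{lemma:Q0-Q1}, $\langle Q_0, Q_1 \rangle$ has order $r^2$, so $\deg(\phi) = |J[\phi]| \ge r^2$ by separability, and the same argument applied to every $\zeta_r^i$ gives $\deg(1-\zeta_r^i) \ge r^2$ for $1 \le i \le r-1$. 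The cyclotomic identity then yields $\prod_{i=1}^{r-1} \deg(1-\zeta_r^i) = \deg([r]_J) = r^{2g} = r^{2(r-1)}$. With $r-1$ factors each at least $r^2$, equality must hold throughout, forcing $\deg(\phi) = r^2$ and $J[\phi] = \langle Q_0, Q_1\rangle$.

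The only delicate step is the separability argument via differentials; the degree and kernel then fall out of the cyclotomic factorization by a pigeonhole count. I foresee no substantial obstacle, as all ingredients---the $\mu_r$-action on $H^0(C,\Omega^1)$, the fixed points of $\zeta_r$ on $C$, and the norm identity---are immediately available from the preceding material.
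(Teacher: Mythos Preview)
Your proof is correct. The overall architecture matches the paper's: both exploit the cyclotomic relation between $1-\zeta_r$ and $r$ inside $\End(J)$, then use Lemma~\ref{lemma:Q0-Q1} to pin down the kernel. The details differ in two places. First, the paper gets the degree in one stroke from the ideal equality $(1-\zeta_r)^{r-1}=(r)$ in $\Z[\zeta_r]$, so $\deg(\phi)^{r-1}=\deg([r])=r^{2(r-1)}$; you instead use the exact identity $\prod_{i=1}^{r-1}(1-\zeta_r^i)=r$ together with the lower bound $\deg(1-\zeta_r^i)\ge r^2$ for every $i$ and a pigeonhole squeeze. This is a harmless reorganisation of the same cyclotomic fact. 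Second, for separability the paper simply observes that $\phi$ factors through $[r]$, which is separable since $p\nmid r$; your argument via the $\mu_r$-action on $H^0(C,\Omega^1)$ is a genuinely different (and more geometric) route, and it has the minor bonus of simultaneously showing that each $1-\zeta_r^i$ is an isogeny without appealing to the ring structure. Either way works; the paper's version is a line shorter, yours is more self-contained.
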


\begin{proof}
  In $\Z[\zeta_r]$, there is an equality of ideals
  $(1-\zeta_r)^{r-1}=(r)$, i.e., the ratio of $(1-\zeta_r)^{r-1}$ and
  $r$ is a unit.  It follows that $(1-\zeta_r)^{r-1}$ and the
  separable isogeny $r:J\rightarrow J$ factor through each other.
  Therefore $1-\zeta_r$ is an isogeny, and
$$\deg(1-\zeta_r)^{r-1} = \deg {r} =  r^{2g} = r^{2(r-1)}.$$
Since $\phi=1-\zeta_r$, this proves that $\deg(\phi)=r^2$.
Since $r$ is prime to $p$, it follows that $\phi$ is separable.
\end{proof}

We write $J(K_d)[\phi]$ and $V[\phi]$ for the kernel of $\phi$ on
$J(K_d)$ and $V$ respectively.

\begin{cor}\label{cor:phi-torsion}
  $J(K_d)[\phi]$ is a two-dimensional vector space over $\F_r$ with
  basis $Q_0$ and $Q_1$.  Moreover, $V[\phi]=J(K_d)[\phi]$.
\end{cor}

\begin{proof}
For the first assertion, we verify that the divisor classes $Q_0$ and
$Q_1$ are contained in the kernel of $\phi$, and they generate a
subgroup of $J(K_d)$ of order $r^2$ by Lemma~\ref{lemma:Q0-Q1}. Since
$\phi$ has degree $r^2$, it follows that $Q_0$ and $Q_1$ generate the
kernel.  Since $Q_0$ and $Q_1$ lie in $V[\phi]\subset J(K_d)[\phi]$ we
also have $V[\phi]=J(K_d)[\phi]$.
\end{proof}

For any element $\epsilon \in \operatorname{End}(J)$, let
$\epsilon^\vee$ denote its Rosati dual, that is, its image under the
Rosati involution on $\operatorname{End}(J)$. If $\epsilon$ is an
automorphism of $J$ coming from an automorphism of $C$, then one has
$\epsilon^\vee = \epsilon^{-1}$. It follows that
$\phi^\vee=1-\zeta_r^{-1}$.

\begin{lemma}
\label{rosati}
We have $J[\phi] = J[\phi^{\vee}]$, as group subschemes of $J$.
\end{lemma}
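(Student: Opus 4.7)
The plan is to reduce the equality of group subschemes to the simple algebraic observation that, inside the subring $\Z[\zeta_r]\subset\End(J)$, the endomorphism $\phi^\vee$ differs from $\phi$ by multiplication by a unit.

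First, I would pin down what $\phi^\vee$ is. Since the Rosati involution is $\Z$-linear and, as noted in the paragraph preceding the lemma, sends an automorphism $\epsilon$ of $J$ coming from an automorphism of $C$ to $\epsilon^{-1}$, applying it to $\phi=1-\zeta_r$ (where $\zeta_r$ acts on $J$ via the automorphism of $C$ from Section~\ref{ss:autos}) gives
\[
\phi^\vee \;=\; 1 - \zeta_r^{-1}.
\]

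Next, I would exploit the identity
\[
1-\zeta_r^{-1} \;=\; -\zeta_r^{-1}(1-\zeta_r)
\]
in $\Z[\zeta_r]$, which transfers to $\End(J)$ via the embedding $\Z[\zeta_r]\hookrightarrow\End(J)$ established just before Proposition~\ref{propertiesOfPhi}. Thus $\phi^\vee = (-\zeta_r^{-1})\circ\phi$ as endomorphisms of $J$. Since $\zeta_r\in\Aut(J)$, so is $-\zeta_r^{-1}$; composing an isogeny with an automorphism does not change its kernel as a group subscheme, so
\[
J[\phi^\vee] \;=\; J[(-\zeta_r^{-1})\circ\phi] \;=\; J[\phi].
\]

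There is no real obstacle here: the only thing one has to be slightly careful about is that the Rosati involution is a priori an anti-involution, but since it preserves the commutative subring $\Z[\zeta_r]$ (being $\Z$-linear and determined on the generator $\zeta_r$), on this subring it is simply the ring involution sending $\zeta_r\mapsto\zeta_r^{-1}$, which justifies the formula $\phi^\vee=1-\zeta_r^{-1}$ without fuss.
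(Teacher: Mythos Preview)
Your proof is correct and is essentially the same as the paper's: both observe that $\phi^\vee=1-\zeta_r^{-1}$ differs from $\phi=1-\zeta_r$ by a unit of $\Z[\zeta_r]$ (you make the unit $-\zeta_r^{-1}$ explicit, while the paper just notes that $(1-\zeta_r)/(1-\zeta_r^{-1})$ is a unit), and conclude that the two endomorphisms factor through each other and hence have the same kernel.
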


\begin{proof}
  Since $(1-\zeta_r)/(1-\zeta_r^{-1})$ is a unit in $\Z[\zeta_r]$, it
  is clear that the endomorphisms $\phi = 1-\zeta_r$ and $\phi^\vee
  = 1-\zeta_r^{-1}$ factor through each other and thus have the same
  kernel.
\end{proof}

\section{The homomorphism $\XminusT$}
Let $\Delta=\{Q_0,Q_1,Q_t\}$, the set of affine ramification points of
the morphism $C\to\P^1$, $(x,y)\mapsto x$, which lie over $x=0$,
$x=-1$, and $x=-t$ respectively.  We write $\Div(C_{K_d})$ for the
$K_d$-rational divisors on $C$ and $\Div^0(C_{K_d})$ for those
of degree 0.  There is a canonical
surjective homomorphism $\Div^0(C_{K_d})\to J(K_d)$.

Following ideas from \cite{bps}, we define a homomorphism
$$\XminusT : \Div(C_{K_d}) \rightarrow 
\prod_{Q \in \Delta} K_d^{\times}/K_d^{\times r}$$
that plays a crucial role in the proof of Theorem~\ref{subgoal}.
Its properties are described in Proposition~\ref{propertiesOfCC}.
For an element $v \in \prod_{Q \in \Delta} K_d^{\times}/K_d^{\times r}$,
we write $v = (v_0,v_1,v_t)$, where $v_i$ is the coordinate
corresponding to $Q_i$.

Let $C^\circ \subset C$ be the complement of $\Delta \cup \{ Q_\infty
\}$. We define the homomorphism
$$\xminusT : \Div(C_{K_d}^\circ) \rightarrow 
\prod_{Q \in \Delta} K_d^{\times}/K_d^{\times r}$$
by setting
$$P \mapsto \left( x(P) - x(Q) \right)_{Q \in \Delta},$$
and defining $(x-T)'$ on divisors by multiplicativity.  (The
individual points $P$ in a divisor $D$ need not be $K_d$-rational, but
if $D$ is $K_d$-rational, then $\xminusT$ takes values in
$\prod K_d^{\times}/K_d^{\times r}$.)

We now define the homomorphism 
$$\XminusT : \Div(C_{K_d}) \rightarrow 
\prod_{Q \in \Delta} K_d^{\times}/K_d^{\times r}$$
as follows: let $D \in \Div(C_{K_d})$ be a divisor on $C_{K_d}$ and
choose $D' \in \Div(C_{K_d}^\circ) \subset \Div(C_{K_d})$ such that
$D'$ is linearly equivalent to $D$. Then set
$$\XminusT(D) := \xminusT(D').$$
For a proof that $\XminusT$ is
well-defined, see \cite[6.2.2]{bps}.

Fix a separable closure $K_d^{\sep}$ of $K_d$, and let $\GG$ be
$\Gal(K_d^{\sep}/K_d)$. For any $\GG$-module $M$ and integer $i \geq
0$, we abbreviate the usual notation $H^i(\GG,M)$ for the $i$-th
Galois cohomology group of $M$ to $H^i(M)$. For a finite $\GG$-module
$M$ of cardinality not divisible by $p$, we denote by $M^{\vee}$ the
dual $\GG$-module $\Hom(M,K_d^{\sep \times})$.

\begin{proposition}
\label{propertiesOfCC}
There is a homomorphism $\alpha : H^1(J[\phi]) \rightarrow \prod_{Q
  \in \Delta} K_d^{\times}/K_d^{\times r}$ such that:
\begin{enumerate}
\item there is a short exact sequence of $\GG$-modules
\[0 \rightarrow H^1(J[\phi]) \stackrel{\alpha}{\rightarrow}  
\prod_{Q \in \Delta} K_d^{\times}/K_d^{\times r} \stackrel{N}{\rightarrow} 
K_d^{\times}/K_d^{\times r} \rightarrow 0,\]
where $N$ is the map sending $(v_0,v_1,v_t)$ to $v_1v_t/v_0$; and
\item the homomorphism $\XminusT$ restricted to $\Div^0(C_{K_d})$ is
  the composition
\[\Div^0(C_{K_d})
\onto J(K_d)/\phi J(K_d) \stackrel{\partial}{\hookrightarrow}  
H^1(J[\phi]) \stackrel{\alpha}{\rightarrow}  
\prod_{Q \in \Delta} K_d^{\times}/K_d^{\times r},\]
where $\partial$ is induced by the Galois cohomology coboundary map
for $\phi$.
\end{enumerate}
\end{proposition}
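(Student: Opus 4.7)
The plan is to adapt the descent-via-isogeny framework of \cite{bps} to the isogeny $\phi = 1 - \zeta_r$. The map $\alpha$ will be built from the Weil pairing for $\phi$, which furnishes a perfect $\GG$-equivariant pairing $J[\phi] \times J[\phi^\vee] \to \mu_r$. By Lemma~\ref{rosati}, $J[\phi^\vee] = J[\phi]$, and by Proposition~\ref{propertiesOfPhi} the $K_d$-rational points $Q_0$, $Q_1$, $Q_t$ all lie in this common kernel. Each $Q \in \Delta$ therefore yields a $\GG$-equivariant character $\chi_Q : J[\phi] \to \mu_r$, $P \mapsto \langle P, Q\rangle_\phi$. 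Combining the three characters and using Kummer theory $H^1(\mu_r) \cong K_d^\times/K_d^{\times r}$ produces
$$\alpha : H^1(J[\phi]) \longrightarrow \prod_{Q \in \Delta} K_d^\times/K_d^{\times r}.$$

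For part (1), the relation $[Q_t] = [Q_0] - [Q_1]$ in $J(K_d)$ (coming from $\divi(y)$ in Section~\ref{s:relations}) combined with bilinearity of the Weil pairing gives $\chi_t = \chi_0 \chi_1^{-1}$, so the product character $(\chi_0, \chi_1, \chi_t) : J[\phi] \to \mu_r^3$ lands in $\ker N$. Since $Q_0$ and $Q_1$ generate $J[\phi^\vee]$ (Proposition~\ref{propertiesOfPhi}), non-degeneracy of the Weil pairing forces $(\chi_0, \chi_1)$ to be an isomorphism onto $\mu_r^2$; a cardinality count ($|J[\phi]| = r^2 = |\ker N|$) then upgrades the map to an isomorphism $J[\phi] \iso \ker N$, yielding a short exact sequence of $\GG$-modules
$$0 \longrightarrow J[\phi] \longrightarrow \mu_r^3 \stackrel{N}{\longrightarrow} \mu_r \longrightarrow 0.$$
Taking $\GG$-cohomology and using that $\mu_r \subset K_d$ makes $H^0(\mu_r^3) \to H^0(\mu_r)$ surjective, the long exact sequence collapses into the desired short exact sequence; surjectivity of the induced map on $H^1$ is immediate because a single coordinate projection $\mu_r^3 \to \mu_r$ already surjects on $H^1$.

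For part (2), the goal is to identify $\alpha \circ \partial$ with $\XminusT$ on $\Div^0(C_{K_d})$. Given $D \in \Div^0(C_{K_d})$ linearly equivalent to $D' = \sum n_P P \in \Div(C^\circ_{K_d})$, pick $E \in J(K_d^{\sep})$ with $\phi(E) = [D']$; then $\partial[D']$ is represented by the cocycle $\sigma \mapsto \sigma E - E \in J[\phi]$. The key computational input is the identity $\divi(x - x(Q)) = r(Q - Q_\infty)$ on $C$ for each $Q \in \Delta$, which exhibits $x - x(Q)$ as the natural Kummer witness for the class of $Q - Q_\infty$ in $J[\phi^\vee]$. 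Unwinding the function-theoretic description of $\langle \cdot, Q\rangle_\phi$ as in \cite[\S6]{bps}, one obtains $\chi_Q(\sigma E - E) = \sigma(g)/g$ for a function $g$ on $C_{K_d^\sep}$ satisfying $g^r = \prod_P (x(P) - x(Q))^{n_P}$; by Kummer theory this identifies $(\chi_{Q,*} \circ \partial)[D]$ with the class of $\prod_P (x(P) - x(Q))^{n_P}$ in $K_d^\times/K_d^{\times r}$, which is exactly $\xminusT(D')_Q = \XminusT(D)_Q$.

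The main technical obstacle is this final identification. One must carefully match the explicit function $x - x(Q)$ with the intrinsic function-theoretic definition of the $\phi$-Weil pairing (as opposed to the more familiar $[r]$-Weil pairing). The decisive input is the divisor identity $\divi(x - x(Q)) = r(Q - Q_\infty)$ for $Q \in \Delta$, combined with the factorization of $[r]$ through $\phi$ (via the relation $(1-\zeta_r)^{r-1} = r$ up to a unit in $\Z[\zeta_r]$). Once these are in place, the argument is a direct application of the general descent machinery of \cite{bps}, but the bookkeeping with auxiliary divisors on $J$ and Weil reciprocity is the most delicate part of the proof.
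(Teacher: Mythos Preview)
Your proposal is correct and is essentially the dual presentation of the paper's argument. The paper constructs the surjection $\alpha^\vee : (\Z/r\Z)^\Delta \to J[\phi]$, $h \mapsto \sum_Q h(Q)[Q]$, identifies its kernel as the line spanned by $(-1,1,1)$ (from $[Q_0]=[Q_1]+[Q_t]$), then applies $\Hom(-,\mu_r)$ and takes $H^1$; you instead build the dual injection $J[\phi]\hookrightarrow\mu_r^3$ directly via the $\phi$-Weil pairing with $Q_0,Q_1,Q_t$, which is exactly the same map once one identifies $J[\phi]^\vee\cong J[\phi^\vee]=J[\phi]$ through the principal polarization and Lemma~\ref{rosati}. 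For part~(2) both arguments ultimately rest on the descent machinery of \cite{bps}: the paper simply cites their Proposition~6.4, while you sketch the underlying computation (Weil-pairing cocycle $\leadsto$ Kummer class of $\prod_P(x(P)-x(Q))^{n_P}$ via $\divi(x-x(Q))=r(Q-Q_\infty)$). Your explicit identification of the technical crux---matching the $\phi$-Weil pairing rather than the $[r]$-pairing to the function $x-x(Q)$---is exactly what \cite[Prop.~6.4]{bps} packages, so there is no gap.
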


\begin{proof}
  The proof is an application of the general theory of descent as
  developed in \cite{bps}.

  Let $E$ be $(\Z/r \Z )^{ \Delta }$, the $\GG$-module of
  $\Z/r\Z$-valued functions on $\Delta$. Note that the $\GG$-action on
  $E$ is trivial. We define a $\GG$-module map $\alpha^{\vee} : E
  \rightarrow J[\phi]$ defined by $h \mapsto \sum_{Q \in \Delta} h(Q)
  \cdot [Q]$. Note that this is well-defined since $J[\phi]$ is
  annihilated by $r$. Proposition~\ref{propertiesOfPhi} shows that
  $\alpha^\vee$ is surjective. Its kernel $R_0$ is the
  $\Z/r\Z$-submodule of $E$ generated by the map $\rho$ that sends
  $Q_0 \mapsto -1, Q_1 \mapsto 1, Q_t \mapsto 1$.  The resulting short
  exact sequence of $\GG$-modules
\begin{equation}
\label{isacomplex}
0 \rightarrow R_0 \rightarrow  E \stackrel{\alpha^\vee}{\rightarrow}
 J[\phi] \rightarrow 0
\end{equation}
is split-exact, since it consists of modules that are free as
$\Z/r\Z$-modules and have trivial $\GG$-action. Dualizing
\eqref{isacomplex} and taking Galois cohomology, we
obtain\begin{equation}
\label{embedH1UsingE}
0 \rightarrow H^1(J[\phi]^\vee) \rightarrow H^1(E^{\vee}) 
\rightarrow H^1(R_0^\vee) \rightarrow 0,
\end{equation}
which is again split-exact by functoriality.  Then
$$H^1(J[\phi]^\vee) = H^1(J[\phi^\vee]) = H^1(J[\phi]),$$ 
where the last step follows from Lemma~\ref{rosati}. Next, we compute
that
\[H^1(E^{\vee}) = H^1(\mu_r^\Delta) = \prod_{Q \in \Delta}
K_d^{\times}/K_d^{\times r},\] the last step being a consequence of
Hilbert's Theorem 90. Choosing the isomorphism $\Z/r\Z \iso R_0$ given
by $1 \mapsto \rho$, we identify $H^1(R_0^\vee)$ with $H^1(\mu_r) =
K_d^{\times}/K_d^{\times r}$, where the last step again follows from
Hilbert's Theorem 90. With these identifications, the short exact
sequence \eqref{embedH1UsingE} becomes the short exact sequence in the
statement of part (1).
Part (2) follows from Proposition 6.4 in \cite{bps}.
\end{proof}

It follows from Proposition~\ref{propertiesOfCC} that $\XminusT$
induces a map
\[
J(K_d) \longrightarrow \prod_{Q \in \Delta} K_d^{\times}/K_d^{\times r}.
\]
We denote this map also by $\XminusT$. The map $\XminusT$ can be
seen as a computation-friendly substitute for the coboundary map
$\delta : J(K_d) \rightarrow H^1(J[\phi])$, since $\XminusT =
\alpha \circ \delta$, where $\alpha$ is an injection.

The rest of this section is devoted to the computation of
$\XminusT(Q)$ for $Q\in\Delta$, $Q=Q_\infty$, and
$Q=P_{i,j}$.

The following lemma states that $\XminusT$ can be ``evaluated on the
coordinates for which it makes sense to do so.''

\begin{lemma}
\label{evaluateWhereItMakesSense}
Suppose $Q\in\Delta$ and let $D \in \Div(C_{K_d})$ be a divisor with
support outside of $\{Q,Q_\infty\}$.  Then
$$\XminusT(D)_Q = \prod_P (x(P)-x(Q))^{\operatorname{ord}_P(D)},$$
where the product is taken over all points $P$ in the support of $D$.
\end{lemma}

\begin{proof}
  Choose $D' \in \Div(C_{K_d}^\circ)$ linearly equivalent to $D$ and
  $g \in K_d(C)^{\times}$ such that $D' = D+\divi(g)$. Observe that
  $\divi(g)$ is supported outside $Q$ and $Q_\infty$. Then
\begin{align*}
\XminusT(D)_Q =\xminusT(D')_Q
& = \prod_P (x(P)-x(Q))^{\operatorname{ord}_P(D')}\\
& =\prod_P (x(P)-x(Q))^{\operatorname{ord}_P(D+\divi(g))} \\
& = \prod_P (x(P)-x(Q))^{\operatorname{ord}_P(D)} 
\prod_P (x(P)-x(Q))^{\operatorname{ord}_P(g)}.
\end{align*}
In the last expression however, the contribution of the second product
is trivial:
$$\prod_P (x(P)-x(Q))^{\operatorname{ord}_P(g)} = 
\prod_{P} g(P)^{\operatorname{ord}_P(x-x(Q))} = g(Q)^r g(Q_\infty)^{-r} = 1,$$
where the first equality is due to Weil reciprocity and the second one
rests on the calculation that $\divi(x-x(Q)) = r \cdot Q - r \cdot
Q_\infty$ for $Q \in \Delta$.
\end{proof}

We end this section by applying Proposition~\ref{propertiesOfCC} and
Lemma~\ref{evaluateWhereItMakesSense} to compute the images under
$\XminusT$ of various divisors.

\begin{proposition}\label{computation}
We have:
\begin{align*}
\XminusT(Q_0) &= (t,1,t),\\
\XminusT(Q_1) &= (-1,1/(1-t),t-1),\\
\XminusT(Q_t) &= (-t,1-t,t/(t-1)),\\
\XminusT(Q_\infty) &= (1,1,1),\\
\noalign{\text{and}}
\XminusT(P_{i,j}) &= (\zeta_d^i u,\zeta_d^i u + 1,\zeta_d^i u + t).
\end{align*}
\end{proposition}

\begin{proof}
Let $\XminusT(Q_0)=(v_0,v_1,v_t)$.  By
Lemma~\ref{evaluateWhereItMakesSense}, $v_1=1$ and $v_t=t$.  By
Proposition~\ref{propertiesOfCC}, $v_1v_t/v_0=1$ in
$K_d^\times/K_d^{\times r}$, so $v_0=t$.  This shows that 
$\XminusT(Q_0) = (t,1,t)$.  The calculations for $Q_1$ and $Q_t$ are
similar and will be left as an exercise for the reader.  Using the
linear equivalence $(r+1)Q_\infty\sim(r-1)Q_0+Q_1+Q_t$ yields that
$\XminusT(Q_\infty) = (1,1,1)$.  Finally, the assertions for
$P_{i,j}$ follow immediately from the definition of $\XminusT$.
\end{proof}

\section{The image of $\XminusT$}

Recall that $V \subset J(K_d)$ is the subgroup generated by the
classes of $P_{i,j}$, where $i\in\Z/d\Z$ and $j\in\Z/r\Z$ and where we
identify $C$ with its image in $J$ by $P\mapsto [P-Q_\infty]$.
Observe that the known torsion elements $Q_0$, $Q_1$, $Q_t$ and $Q_2$
(with $Q_2$ defined as in Section~\ref{s:torsion}) are all contained
in $V$.  

\begin{proposition}
\label{dimensiond}
The dimension of $\XminusT(V)$ is 
$$\dim_{\F_r} (\XminusT(V)) = d.$$
\end{proposition}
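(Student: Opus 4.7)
The plan is to combine an easy upper bound with a concrete linear independence argument using only one of the three coordinates of $\XminusT$.

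For the upper bound, I would start from the formula
$$\XminusT(P_{i,j}-Q_\infty) = (\zeta_d^i u,\ \zeta_d^i u + 1,\ \zeta_d^i u + t)$$
given in Proposition~\ref{computation}. The key observation is that this expression does not depend on $j$. Since $V$ is generated by the classes $[P_{i,j}-Q_\infty]$ (recall that the torsion classes $Q_0, Q_1, Q_t$ already lie in the subgroup they generate, by the divisor computations in Section~\ref{s:relations}), it follows that $\XminusT(V)$ is generated by the $d$ elements
$$v_i := \XminusT(P_{i,0}-Q_\infty), \qquad i \in \Z/d\Z.$$
Hence $\dim_{\F_r}\XminusT(V) \le d$.

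For the lower bound, I would suppose that $\sum_{i=0}^{d-1} a_i v_i = 0$ in $\prod_{Q\in\Delta} K_d^\times/K_d^{\times r}$ for some $a_i \in \F_r$, and derive $a_i=0$ for all $i$ by projecting onto the middle coordinate (the one corresponding to $Q_1$). That projection says
$$\prod_{i=0}^{d-1} (\zeta_d^i u + 1)^{a_i} \in K_d^{\times r}.$$
Factoring $\zeta_d^i u + 1 = \zeta_d^i(u + \zeta_d^{-i})$ rewrites the left side as a constant in $\Fp(\mu_d)^\times$ times the polynomial $\prod_i (u+\zeta_d^{-i})^{a_i}$ in $\Fp(\mu_d)[u]$. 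An element of $\Fp(\mu_d)(u)^\times$ is an $r$-th power exactly when, written as constant times a product of distinct monic linear polynomials raised to integer powers, each such exponent is divisible by $r$ (and the constant lies in $\Fp(\mu_d)^{\times r}$). Since the $d$ roots $-\zeta_d^{-i}$ are distinct in $\Fp(\mu_d)$, each $a_i$ must be divisible by $r$, i.e., $a_i=0$ in $\F_r$.

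Combining the two bounds gives $\dim_{\F_r}\XminusT(V) = d$. The only mildly subtle point is the upper bound, where one must use that $V$ really is generated by the $P_{i,j}$'s alone (no need to throw in the torsion elements $Q_0, Q_1, Q_t, Q_2$ separately, because the relations in Section~\ref{s:relations} put them inside $\langle P_{i,j}\rangle$). Everything else reduces to the elementary fact that distinct linear polynomials over a field are multiplicatively independent modulo $r$-th powers, so no serious obstacle is expected.
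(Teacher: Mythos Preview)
Your proof is correct and follows essentially the same approach as the paper: both arguments obtain the upper bound from the $j$-independence of $\XminusT(P_{i,j}-Q_\infty)$, and both obtain the lower bound by projecting onto the $Q_1$-coordinate and exploiting the distinctness of the linear factors $u+\zeta_d^{-i}$. The only cosmetic difference is that the paper packages the lower bound as a surjective valuation map $\pr:(v_0,v_1,v_t)\mapsto(\val_{u+\zeta_d^{-i}}(v_1))_i$ onto $\F_r^d$, whereas you phrase the same content as multiplicative independence of distinct monic linear polynomials modulo $r$-th powers.
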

\begin{proof}
   First, $\dim_{\F_r} \XminusT(V) \leq d$ since 
   \[\XminusT(P_{i,j}-Q_\infty) 
= \XminusT(P_{i,0} - Q_\infty).\]
  To show that the dimension is precisely
  $d$, we project from $\prod_{Q \in \Delta} K_d^{\times} /
  K_d^{\times r}$ to a finite-dimensional quotient space of dimension
  $d$, and conclude by showing that the projection is surjective.

  For an irreducible polynomial $\pi$ in $k[u]$, the valuation it
  induces on $K_d^\times$ is denoted $\val_\pi : K_d^\times \rightarrow
  \Z$. We define the following map:
\begin{align*}
\pr : \prod_{Q \in \Delta} K_d^{\times} / K_d^{\times r} & \rightarrow \F_r^{d} \\\
(v_0,v_1,v_t)& \mapsto \left(\val_{u+\zeta_d^{-1}}(v_1),
\val_{u+\zeta_d^{-2}}(v_1),\ldots,\val_{u+\zeta_d}(v_1),\val_{u+1}(v_1)\right)
\end{align*}

By Proposition~\ref{computation}, $\XminusT(P_{i,j}-Q_\infty)
= (\zeta_d^i u,\zeta_d^i u + 1,\zeta_d^i u + t)$. We see that $\pr$
maps the image of $P_{i,j} - Q_\infty$ to the $i$-th basis
vector. Hence $\pr$ maps $\XminusT(V)$ surjectively onto
$\F_r^d$. This establishes the proposition.
\end{proof}

\begin{lemma}
\label{dimensiontorsion}
The images under $\XminusT$ of $Q_1$ and $Q_2$ are linearly
independent.
\end{lemma}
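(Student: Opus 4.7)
The plan is to compute $\pr(\XminusT(Q_1))$ and $\pr(\XminusT(Q_2))$ explicitly as vectors in $\F_r^d$, where $\pr$ is the projection introduced in the proof of Proposition~\ref{dimensiond}, and then verify $\F_r$-linear independence coordinate-by-coordinate. Linear independence of the images under $\pr$ forces linear independence of the images under $\XminusT$.

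First I would evaluate $\pr$ on $\XminusT(Q_1-Q_\infty)=(-1,1/(1-t),t-1)$ from Proposition~\ref{computation}. The middle coordinate is $1/(1-t)=-1/(u^d-1)$, and since $d=p^\nu+1$ is even (because $p$ is odd) we have $-1=\zeta_d^{d/2}$, so the factorization $u^d-1=\prod_{j=0}^{d-1}(u-\zeta_d^j)$ can be rewritten as (up to a unit) $\prod_{i=0}^{d-1}(u+\zeta_d^{-i})$. Thus each of the linear factors $u+\zeta_d^{-i}$ divides $1-t$ exactly once, and I conclude
$$
\pr(\XminusT(Q_1))=(-1,-1,\ldots,-1)\in\F_r^d.
$$

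Next I would compute $\pr(\XminusT(Q_2))$. Since $\XminusT(P_{i,j}-Q_\infty)=(\zeta_d^iu,\zeta_d^iu+1,\zeta_d^iu+t)$ is independent of $j$, unpacking the definition $Q_2=\sum_{j=0}^{r-1}\sum_{k=0}^{r-1-j}\sum_{i\equiv k\bmod r}P_{i,j}$ shows that for each $i$ the point $P_{i,0}-Q_\infty$ appears with multiplicity $r-(i\bmod r)$, so
$$
\XminusT(Q_2)=\sum_{i=0}^{d-1}(r-(i\bmod r))\,\XminusT(P_{i,0}-Q_\infty).
$$
Applying $\pr$, which sends $\XminusT(P_{i,0}-Q_\infty)$ to the $i$-th standard basis vector of $\F_r^d$ (as shown in the proof of Proposition~\ref{dimensiond}), and reducing the coefficients mod $r$, I obtain
$$
\pr(\XminusT(Q_2))=(0,-1,-2,\ldots,-(d-1))\in\F_r^d.
$$

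Finally, suppose $a\,\pr(\XminusT(Q_1))+b\,\pr(\XminusT(Q_2))=0$ in $\F_r^d$. Reading off the $i$-th coordinate gives $-a-bi\equiv 0\pmod r$ for each $0\le i\le d-1$; taking $i=0$ forces $a\equiv 0\pmod r$, and then taking $i=1$ (which is available since $d=p^\nu+1\ge 4$) forces $b\equiv 0\pmod r$. This establishes the lemma. The only real care point is ensuring the factorization of $1-u^d$ in terms of the chosen primes $u+\zeta_d^{-i}$—which needs the parity of $d$—and correctly tallying the multiplicity $r-(i\bmod r)$ of each $P_{i,0}-Q_\infty$ inside $Q_2$; neither is a deep obstacle.
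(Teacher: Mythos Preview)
Your proof is correct and follows essentially the same route as the paper: project via $\pr$ to $\F_r^d$, compute $\pr(\XminusT(Q_1))=(-1,\ldots,-1)$ from the factorization of $1-t$ and $\pr(\XminusT(Q_2))=(0,-1,\ldots,-(d-1))$ from the multiplicity count, then read off independence from the first two coordinates. Two tiny quibbles that do not affect validity: your parenthetical ``because $p$ is odd'' misses $p=2$, but in characteristic $2$ one has $-1=1$ so $u+\zeta_d^{-i}=u-\zeta_d^{-i}$ and the factorization issue evaporates; and your bound $d\ge 4$ should be $d\ge 2$ (e.g.\ $p=2,\nu=1$ gives $d=3$), which is all the argument actually needs.
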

\begin{proof}
  Since $\XminusT(P_{i,j}-Q_\infty) = \XminusT(P_{i,0} - Q_\infty)$,
  as noted in the proof of Proposition~\ref{dimensiond},
  the image of $Q_2=\sum_{j=0}^{r-1}\sum_{k=0}^{r-1-j} \sum_{i\equiv
    k\mod r} P_{i,j}$ is the
  same as that of $\sum_{i=0}^{d-1} (d-i) (P_{i,0} - Q_\infty)$. Using
  the notation of the proof of Proposition~\ref{dimensiond}, we
  find $\pr(\XminusT(Q_2)) = (-1,-2,\ldots,-d+1,0) \in \F_r^d$.

  Proposition~\ref{computation} gives
  $\XminusT(Q_1-Q_\infty)=(-1,1/(1-t),t-1)$.
  The factorization $1-t = \prod_{i=0}^{d-1} (1-\zeta_d^i u)$ in $K_d$ yields that
  $\pr(\XminusT(Q_1-Q_\infty)) = (-1,-1,-1,\ldots,-1)$. The lemma now
  follows.
\end{proof}

\section{Proof of the main theorem}
\label{Chap2PfSect}
We now use properties of $V$ as a module over $S=\Z[\zeta_r]$ to
prove Theorem~\ref{subgoal}.  Write $\phi=1-\zeta_r$ both for the
element of $S$ and for the corresponding isogeny of $J$.  Note that
$S/\phi S\cong\F_r$.

\begin{proposition}\label{Prop:V-dims}
$\dim_{\F_r}(V/\phi V)=d$ and $\dim_{\F_r}(V[\phi])=2$.
\end{proposition}

\begin{proof}
  $V$ is generated over $S$ by the $P_{i,0}$ with $i\in\Z/d\Z$, so
  $\dim_{\F_r}V/\phi V$ is at most $d$.  On the other hand, the map
$$\XminusT:V\to\prod_{Q\in\Delta}K_d^\times/K_d^{\times r}$$
factors through $V/\phi V$ (Proposition~\ref{propertiesOfCC} or
Proposition~\ref{computation}) and its image has $\F_r$-dimension $d$
(Proposition~\ref{dimensiond}), so $\dim_{\F_r}V/\phi V\ge d$ and
therefore $=d$.  That $\dim_{\F_r}V[\phi]=2$ was proven in
Corollary~\ref{cor:phi-torsion}.
\end{proof}

\begin{proof}[Proof of Theorem~\ref{subgoal}]
Let $V_{tor}$ be the torsion submodule of $V$.  Then $V/V_{tor}$ is
torsion-free, so projective over $S$, so locally free of some rank
$\rho$.  We use the preceding proposition to compute that $\rho=d-2.$

Let $S_{(\phi)}$ and $V_{(\phi)}$ denote the localizations of $S$ and
$V$ respectively at the (prime) ideal generated by $\phi$.  By the
structure theorem for modules over a PID, we have
$$V_{(\phi)}\cong S_{(\phi)}^\rho\oplus\bigoplus_{i=1}^tS/(\phi^{e_i})$$
for some integers $t$ and $e_1,\dots,e_t$ with the $e_i>0$.  Also,
$$\rho+t=\dim_{\F_r}V_{(\phi)}/\phi V_{(\phi)} = 
\dim_{\F_r}V/\phi V = d$$
and
$$t=\dim_{\F_r}V_{(\phi)}[\phi]=\dim_{\F_r}V[\phi]=2.$$
It follows that $\rho=d-2$.  Therefore
$$\rk_\Z V=(\rk_\Z S)(\rk_S V)=(r-1)(d-2).$$

For the torsion assertions, we note from Lemma~\ref{dimensiontorsion}
that $Q_1$ and $Q_2$ are linearly independent in $V/\phi V$, since
their images in $\prod_{Q\in\Delta}K_d^\times/K_d^{\times r}$ are
linearly independent. Thus they form a basis of
$V_{tor}/\phi V_{tor}$.  Moreover, since $\phi Q_1=0$, $\phi Q_2=Q_0$
(Lemma~\ref{lemma:more-relations}), and $\phi Q_0=0$, we have that
$$V[r^\infty]=V[\phi^\infty]\cong S/(\phi)\oplus S/(\phi^2)$$
as $S$-modules and 
$$V[r^\infty]\cong \F_r^3=(\Z/r\Z)^3$$
as $\Z$-modules.  Finally, since $J(K_d)[\phi]=V[\phi]$
(Corollary~\ref{dimensiontorsion}), we have that 
$$J(K_d)[r^\infty]=V[r^\infty]\cong (\Z/r\Z)^3.$$
This completes the proof of the theorem.
\end{proof}

\begin{remark}
  With very small changes, the proof of Theorem~\ref{subgoal} can be
  modified to handle the case where $r$ is an odd prime power. On
  other hand, these methods do not suffice to treat the general case,
  because if $r$ is divisible by two distinct odd primes, then
  $1-\zeta_r$ is a unit in $\Z[\zeta_r]$.
\end{remark}



\chapter[Minimal regular model]{Minimal regular model, local
  invariants, and domination by a product of curves}
\label{ch:models}

In Section~\ref{s:models} of this chapter, we construct two
useful models for the curve $C/K_d$ over $\P^1_u$, i.e., surfaces $\XX$
and $\YY$ equipped with projective morphisms to $\P^1_u$ with generic
fibers $C/K_d$.  The model $\XX$ is a smooth surface, whereas the
model $\YY$ is normal with mild singularities.  We also work
out the configuration of the components of the singular fibers of
$\XX\to\P^1_u$ (i.e., their genera, intersections, and
self-intersections).  The explicit model $\XX$ and the analysis of the
fibers play a key role in the height calculations of
Chapter~\ref{ch:heights} and in the monodromy calculations
of Chapter \ref{ch:monodromy}.

The analysis of the fibers of $\XX\to\P^1_u$ is used in
Section~\ref{s:local-invs} to obtain important local invariants of the
N\'eron model of $J$ including its component groups and the connected
component of the identity.  The local invariants of the N\'eron model
are used in our analysis of the $L$-function of $J$ in
Chapter~\ref{ch:L}.

Finally, in Section~\ref{s:DPC} we discuss a precise connection
between the model $\YY$ and a certain product of curves.  The fact
that $\XX$ and $\YY$ are birationally dominated by a product of
curves, as shown in Section~\ref{ss:DPC}, allows us to prove the BSD
conjecture for $J$.  The finer analysis of the geometry of the
dominating map, which occupies the rest of Section~\ref{s:DPC}, may be
of use in further study of explicit points on $C$, but it is not
crucial for the rest of the current paper and may be omitted by
readers not interested in the details.

\section{Models}\label{s:models}
In this section, $k$ is an arbitrary field.  We fix positive
integers $r$ and $d$ both prime to the characteristic of $k$, and we
let $C$ be the curve over $k(u)$ defined as in Section~\ref{s:curve}
where $u^d=t$.  In the applications later in the paper, $k$ is a
finite extension of $\Fp(\mu_d)$ for some prime $p$ not dividing $rd$.

For convenience, in the first part of this section, we assume that $d$
is a multiple of $r$.  The general case is treated in
Section~\ref{ss:general-d}.

The model $\YY$ we construct is a suitable compactification of a
blow-up of the irreducible surface in affine 3-space over $k$ defined
by $y^r=x^{r-1}(x+1)(x+u^d)$.  The model $\XX$ we construct is
obtained by resolving isolated singularities of $\YY$.

\subsection{Construction of $\YY$}\label{ss:YY}
Let $R=k[u]$, $U=\spec R$, $R'=k[u']$, and $U'=\spec R'$.  We glue $U$
and $U'$ via $u'=u^{-1}$ to obtain $\P^1_u$ over $k$. 

On $\P^1$ over $k$ define 
$$\EE=\OO_{\P^1}(d)\oplus\OO_{\P^1}(d+d/r)\oplus\OO_{\P^1}$$
so that $\EE$ is a locally free sheaf of rank 3 on $\P^1$.  Its
projectivization $\P(\EE)$ is a $\P^2$ bundle over $\P^1$.  We
introduce homogeneous coordinates $X,Y,Z$ on the part of $\P(\EE)$
over $U$ and homogeneous coordinates $X',Y',Z'$ on the part over $U'$.
Then $\P(\EE)$ is the result of gluing $\proj(R[X,Y,Z])$ and
$\proj(R'[X',Y',Z'])$ via the identifications $u=u^{\prime-1}$,
$X=u^dX'$, $Y=u^{d+d/r}Y'$, and $Z=Z'$.

Now define $\ZZ\subset\P(\EE)$ as the closed subset where
$$Y^rZ=X^{r-1}(X+Z)(X+u^dZ)$$
in $\proj(R[X,Y,Z])$ and
$$Y^{\prime r}Z'=X^{\prime r-1}(X'+u^{\prime d}Z')(X'+Z')$$
in $\proj(R'[X',Y',Z'])$.  Then $\ZZ$ is an irreducible, projective
surface equipped with a morphism to $\P^1_u$.  The generic fiber is the
curve denoted $C'$ in Section~\ref{s:curve}, which is singular at $[0,0,1]$.

We write $\ZZ_U$ and $\ZZ_{U'}$ for the parts of $\ZZ$ over $U$ and
$U'$ respectively.  Then $\ZZ_{U'}$ is isomorphic to $\ZZ_U$; indeed,
up to adding primes to coordinates, they are defined by the same
equation.  (This is why it is convenient to assume that $r$ divides
$d$.)  We thus focus our attention on $\ZZ_U$, i.e., on
$$\proj\left(R[X,Y,Z]/(Y^rZ-X^{r-1}(X+Z)(X+u^dZ))\right).$$

We next consider the standard cover of $\ZZ_U$ by affine opens
where $X$, $Y$, or $Z$ are non-vanishing.  These opens are
\begin{align*}
\ZZ_1&:=\spec\left(R[x_1,y_1]/(y_1^r-x_1^{r-1}(x_1+1)(x_1+u^d))\right),\\
\ZZ_2&:=\spec\left(R[x_2,z_2]/(z_2-x_2^{r-1}(x_2+z_2)(x_2+u^dz_2))\right),\\
\ZZ_3&:=\spec\left(R[y_3,z_3]/(y_3^rz_3-(1+z_3)(1+u^dz_3))\right).
\end{align*}

The surface $\ZZ_1$ is singular along the curve $x_1=y_1=0$, so we
blow up along that curve.  (Strictly speaking, $\ZZ_1$ is singular
along this curve only if $r>2$.  Nevertheless, we proceed as follows even
if $r=2$.)  More precisely, we define
\begin{align*}
\ZZ_{11}&:=\spec\left(R[x_{11},y_{11}]/
   (y_{11}-x_{11}^{r-1}(x_{11}y_{11}+1)(x_{11}y_{11}+u^d))\right),\\
\ZZ_{12}&:=\spec\left(R[x_{12},y_{12}]/
   (x_{12}y_{12}^r-(x_{12}+1)(x_{12}+u^d))\right),
\end{align*}
and let $\tilde\ZZ_1$ be the glueing of $\ZZ_{11}$ and $\ZZ_{12}$
given by $(x_{11},y_{11})=(1/y_{12},x_{12}y_{12})$.  The morphism
$\tilde\ZZ_1\to\ZZ_1$ defined by
$(x_1,y_1)=(x_{11}y_{11},y_{11})=(x_{12},x_{12}y_{12})$ is projective,
surjective, and an isomorphism away from $x_1=y_1=0$.

We define $\YY_U$ to be the glueing of $\ZZ_2$, $\ZZ_3$, and
$\tilde\ZZ_1$ by the identifications
\[(x_2,z_2)=(1/y_3,z_3/y_3)\quad\text{and}\quad
(y_3,z_3)=(1/x_{11},1/(x_{11}y_{11})).\] 
Define $\YY_{U'}$ similarly (by glueing opens $\ZZ_2'$, $\ZZ_3'$,
$\ZZ_{11}'$, and $\ZZ_{12}'$), and let $\YY$ be the glueing of $\YY_U$
and $\YY_{U'}$ along their open sets lying over $\spec k[u,u^{-1}]$.
The result of this glueing is a projective surface with a morphism to
$\P^1_u$ whose generic fiber is the curve $C/k(u)$.  Note that,
directly from its definition, $\YY$ is a local complete intersection.

It is easy to see that $\YY$ is already covered by the affine opens
$\ZZ_{11}$, $\ZZ_2$, and $\ZZ_3$, and we use this cover in some
calculations later in Section~\ref{s:KS}.  On the other hand, the
coordinates of $\ZZ_{12}$ are also convenient, which is why they are
included in the discussion.

A straightforward calculation with the Jacobian criterion
shows that $\YY_U\to U$ and $\YY_U\to\spec k$ are smooth except at the
points
$$u=y_{11}=0,\qquad x_{11}^r=1,$$ 
and, when $d>1$, at the points
$$u^d=1,\qquad y_3=0,\qquad z_3=-1.$$  
The fibers of $\YY_U\to U$ are irreducible except over $u=0$, where
the fiber has irreducible components $y_{11}=0$ and
$x_{11}^r(x_{11}y_{11}+1)=1$, both of which are smooth rational
curves.  The points of intersection of these irreducible components
are the singular points in the fiber over $u=0$.  Similar results hold
for $\YY_{U'}$.

To finish our analysis of $\YY$, we note that it satisfies Serre's
conditions $S_n$ for all $n\ge0$ since it is a local complete
intersection.  Moreover, it has isolated singularities, so it
satisfies condition $R_1$ (regularity in codimension 1).  It follows
from Serre's criterion that $\YY$ is normal.

Summarizing, the discussion above proves the following result.

\begin{prop}\label{prop:YY-props}
  The surface $\YY$ and morphism $\YY\to\P^1_u$ have the following
  properties: 
  \begin{enumerate}
  \item $\YY$ is irreducible, projective, and normal.
  \item The morphism $\YY\to\P^1_u$ is projective and generically
    smooth.
  \item The singularities of $\YY\to\P^1_u$ consist of $r$ points in
    the fiber over $u=0$, one point in each fiber over points $u \in \mu_d$ 
and $r$ points in the fiber over $u=\infty$.  When $d>1$,
    these are also the singularities of $\YY$, whereas if $d=1$, only
    the singularities of $\YY\to\P^1_u$ over points $u \in \mu_d$
    are singularities of $\YY$.
  \item The fibers of $\YY\to\P^1_u$ are irreducible except over
    $u=0,\infty$ where they are unions of two smooth rational curves
    meeting transversally in $r$ points.
  \item The generic fiber of $\YY\to\P^1_u$ is a smooth projective
    model of the curve defined by $y^r=x^{r-1}(x+1)(x+u^d)$ over
    $k(u)$.
  \end{enumerate}
 \end{prop}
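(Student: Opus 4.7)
The plan is to verify the five claims essentially chart by chart, leveraging the explicit open cover of $\YY$ by $\ZZ_{11}$, $\ZZ_{12}$, $\ZZ_2$, $\ZZ_3$ (and their primed versions over $U'$) already written down in the construction.

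For (1) and (2), irreducibility and projectivity come almost for free. The ambient $\P(\EE)$ is projective over $\P^1_u$, and the closed subscheme $\ZZ$ cut out by a single equation in each chart is irreducible because the defining polynomial $Y^rZ - X^{r-1}(X+Z)(X+u^dZ)$ is irreducible in $k(u)[X,Y,Z]$ (one can check this by treating it as a polynomial in $Y$). The blow-up of the curve $x_1=y_1=0$ is again irreducible and the gluing is along opens with the correct identifications, so $\YY$ is irreducible. Projectivity over $\P^1_u$ is preserved under the blow-up of a closed subscheme, and so $\YY\to\P^1_u$ is projective; generic smoothness is then immediate since the generic fiber, after pulling back to $\spec k(u)$, is isomorphic to the smooth curve $C$ constructed in Section~\ref{ss:C} (the blow-up $\tilde\ZZ_1\to\ZZ_1$ over the generic fiber reproduces exactly the blow-up of $[0,0,1]$ on $C'$ used there). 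This simultaneously proves (5).

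For (3) and (4) I would apply the Jacobian criterion to each of the three affine charts. On $\ZZ_{11}$, compute partial derivatives of $F:=y_{11}-x_{11}^{r-1}(x_{11}y_{11}+1)(x_{11}y_{11}+u^d)$ with respect to $u$, $x_{11}$, $y_{11}$ and find that simultaneous vanishing together with $F=0$ forces $u=y_{11}=0$ and $x_{11}^r=1$; this yields the $r$ listed singularities over $u=0$. On $\ZZ_3$, a similar computation on $y_3^rz_3 - (1+z_3)(1+u^dz_3)$ isolates the locus $u^d=1$, $y_3=0$, $z_3=-1$, giving one singularity in each fiber over $u\in\mu_d$. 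Chart $\ZZ_2$ contributes no new singularities. The primed charts handle the fiber over $u=\infty$ by symmetry. For the fiber over $u=0$, specializing $F=0$ in $\ZZ_{11}$ at $u=0$ factors as $y_{11}(1 - x_{11}^{r}(x_{11}y_{11}+1))=0$, revealing the two smooth rational components meeting at the $r$ points where $x_{11}^r=1$, and one checks these components extend through the other charts without introducing further components.

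Normality in (1) is the main place where a bit of theory enters, and I would close with Serre's criterion $R_1+S_2$. The scheme $\YY$ is a local complete intersection in each chart (cut out by one equation in a smooth three-dimensional scheme), hence Cohen-Macaulay, hence $S_n$ for all $n$. The singular locus identified above consists of finitely many closed points, so $\YY$ is regular in codimension one. Therefore $\YY$ is normal. The mildly subtle point---the only thing I would flag as requiring care---is verifying that no further singularities arise at the boundary between charts (e.g.~over the blow-up seam $x_{12}=0$ in $\ZZ_{12}$, or at the transitions $y_3=0$ where $\ZZ_3$ meets $\ZZ_{11}$), but each of these intersections is covered by one of the charts already analyzed, so the Jacobian computations in those charts suffice.
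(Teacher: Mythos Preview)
Your proposal is correct and follows essentially the same route as the paper: the Jacobian criterion applied chart by chart to locate the singularities and identify the fiber components, combined with Serre's criterion (local complete intersection $\Rightarrow S_n$, isolated singularities $\Rightarrow R_1$) for normality. The paper's argument is just the running discussion in Section~\ref{ss:YY} summarized into the proposition, and your outline reproduces that discussion faithfully.
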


\begin{remark}
  It is tempting to guess that $\YY$ is the normalization of $\ZZ$,
  but this is not correct.  Indeed, the morphism $\YY\to\ZZ$ contracts
  the curve $u=y_{11}=0$ in $\ZZ_{11}$, so is not finite.  It is not hard to
  check that the normalization of $\ZZ$ is in fact the surface
  obtained from $\YY$ by contracting this curve and the analogous
  curve over $u=\infty$.
\end{remark}

\subsection{Singularities of $\YY$}
We now show that $\YY$ has mild singularities.  Recall that
rational double points on surfaces are classified by Dynkin diagrams
of type $ADE$. (See for example \cite[3.31--32]{Badescu}.)  In
particular, to say that a point $y\in \YY$ is a rational double point of
type $A_n$ is to say that $y$ is a double point and that there is a
resolution $\XX\to\YY$ such that the intersection matrix of the fiber
over $y$ is of type $A_n$.

\begin{prop} \label{PsingularA} The singularities of
  $\overline{\YY}:=\YY\times_k\kbar$ are all rational double points.
  More precisely, the singularities in the fibers over $u=0$ and
  $u=\infty$ are analytically equivalent to the singularity
  $\alpha\beta=\gamma^d$ and are thus double points of type
  $A_{d-1}$.\footnote{Of course, a ``double point'' of type $A_0$ is
    in fact a smooth point.} The singularities over the points $u \in
  \mu_d$ are analytically equivalent to the singularity
  $\alpha\beta=\gamma^r$ and are thus double points of type $A_{r-1}$.
\end{prop}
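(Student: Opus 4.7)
The plan is to work analytically at each singular point of $\overline\YY$ listed in Proposition~\ref{prop:YY-props} and exhibit a regular system of parameters $(\alpha,\beta,\gamma)$ of the ambient smooth threefold in which the defining equation of $\YY$ takes the form $\alpha\beta=(\textrm{unit})\,\gamma^{n}$, with $n=d$ at the singular points over $u=0$ and $u=\infty$ and $n=r$ at the singular points over $u\in\mu_{d}$. Since $d$ and $r$ are prime to $\cha(k)$, the unit admits an $n$-th root in the completed local ring of $\A^{3}$, which may be absorbed into $\gamma$; the equation is then put into the standard form $\alpha\beta=\gamma^{n}$, and the singularity is identified as a rational double point of type $A_{n-1}$ via the classification cited in \cite[3.31--32]{Badescu}.

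For a singular point in the fiber over $u=0$, I work in the chart $\ZZ_{11}$ and rewrite the defining equation as
\[
 y_{11}\bigl(1-x_{11}^{r}(x_{11}y_{11}+1)\bigr)=x_{11}^{r-1}(x_{11}y_{11}+1)\,u^{d}.
\]
At $(x_{11},y_{11},u)=(\zeta,0,0)$ with $\zeta^{r}=1$, the coefficient $x_{11}^{r-1}(x_{11}y_{11}+1)$ equals $\zeta^{-1}$ and so is a unit in the completed local ring. Setting $\alpha=y_{11}$, $\beta=1-x_{11}^{r}(x_{11}y_{11}+1)$, and $\gamma=u$, the Jacobian of $(\alpha,\beta,\gamma)$ with respect to $(x_{11}-\zeta,\,y_{11},\,u)$ has determinant $r\zeta^{-1}$, which is nonzero because $r$ is prime to $\cha(k)$. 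Hence $(\alpha,\beta,\gamma)$ is a regular system of parameters of $\A^{3}$ at the singular point, and the reduction of the first paragraph applies. The singularities over $u=\infty$ are handled identically in the symmetric chart $\YY_{U'}$.

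For a singular point over $u=\eta\in\mu_{d}$, I work in the chart $\ZZ_{3}$ and substitute $w=z_{3}+1$ to center at the singular point. Expanding $y_{3}^{r}z_{3}=(1+z_{3})(1+u^{d}z_{3})$ yields
\[
 y_{3}^{r}(1-w)=w\bigl((u^{d}-1)-u^{d}w\bigr).
\]
Since $d$ is prime to $\cha(k)$, the derivative $d\eta^{d-1}$ is nonzero, so the linear parts of $\alpha=w$, $\beta=(u^{d}-1)-u^{d}w$, $\gamma=y_{3}$ at the singular point are $w$, $d\eta^{d-1}(u-\eta)-w$, and $y_{3}$, which are linearly independent. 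Thus $(\alpha,\beta,\gamma)$ is a regular system of parameters of $\A^{3}$, and since $1-w$ is a unit in the completed local ring, the equation becomes $\alpha\beta=(\textrm{unit})\,\gamma^{r}$. The main obstacle throughout is locating the correct rearrangement of the defining equation at each type of singular point so that the $A_{n-1}$-form becomes manifest; once that rearrangement is in hand, everything else is routine Jacobian computation and absorption of a unit via an $n$-th root, both of which are legitimate because $rd$ is prime to $\cha(k)$.
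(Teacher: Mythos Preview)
Your proof is correct and follows essentially the same approach as the paper: in each affine chart you rewrite the defining equation as $\alpha\beta=(\text{unit})\gamma^n$, verify via the Jacobian that $(\alpha,\beta,\gamma)$ is a regular system of parameters at the singular point, and absorb the unit into $\gamma$ using that $n$ is prime to $\cha(k)$. The paper's proof differs only cosmetically, building the $n$-th root of the unit directly into the definition of $\gamma$ rather than absorbing it at the end.
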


\begin{proof}
For notational simplicity, we assume that $k$ is algebraically closed,
so that $\overline{\YY}=\YY$.

First consider the fiber over $u=0$.  We use the coordinates of the
open $\ZZ_{11}$, that is, the hypersurface in $\A^3$ defined by
$y-x^{r-1}(xy+1)(xy+u^d)=0$.  (We drop the subscripts to lighten
notation.)  The singularities are at the points with $u=y=0$, and
$x^r=1$, and we work in the completed local ring of $\A^3$ at each one
of these points.  Choose an $r$-th root of unity $\zeta$ and change
coordinates $x=x'+\zeta$ so that $x'$, $y$, and $u$ form a system of
parameters at one of the points of interest.  In the completed local
ring, the element
$$x^{r-1}(xy+1)=(x'+\zeta)^{r-1}((x'+\zeta)y+1)$$ 
is a unit, and since $d$ is prime to the characteristic of $k$, it is
also a $d$-th power.  Defining $\gamma$ by
$u=\gamma(x^{r-1}(xy+1))^{-1/d}$, then $x'$, $y$, and $\gamma$ are a
system of parameters, and in these parameters, the defining equation
becomes
\[y\left(1-(x'+\zeta)^r((x'+\zeta)y+1)\right)-\gamma^{d}=0.\]
Finally, note that
\[(1-(x'+\zeta)^r((x'+\zeta)y+1)=-r\zeta^{r-1}x'-\zeta y+(\deg\ge2)\]
where ``$\deg\ge2$'' stands for terms of degree at least two in $x'$
and $y$.  Since $r$ is prime to the characteristic of $k$, the
coefficient of $x'$ is not zero so we may set
\[\alpha=y,\qquad \beta=(1-(x'+\zeta)^r((x'+\zeta)y+1),\]
and have $\alpha,\beta,\gamma$ as a system of parameters.  In these
coordinates, the defining equation becomes $\alpha\beta=\gamma^{d}$.
This proves that the singularities of $\YY$ over $u=0$ are
analytically equivalent to $\alpha\beta=\gamma^{d}$.

The argument for the points over $u=\infty$ is identical to the above.

Now consider the fiber over a point $u \in \mu_d$, using the
coordinates of the open $\ZZ_3$, that is, the hypersurface in $\A^3$
defined by $y^rz-(1+z)(1+u^dz)=0$.  (Again we omit subscripts to
lighten notation.)  Choose a $d$-th root of unity $\zeta$ and let
$u=u'+\zeta$.  The singular point over $u=\zeta$ has coordinates
$u'=y=0$, $z=-1$. Setting $z=\alpha-1$, the defining equation becomes
\[y^r(\alpha-1)-\alpha\left(1+(u'+\zeta)^d(\alpha-1)\right)= 0.\] 
As before, $\alpha-1$ is an $r$-th power in the completed local
ring, and we set $y=\gamma(\alpha-1)^{-1/r}$.  Moreover,
\[\left(1+(u'+\zeta)^d(\alpha-1)\right)=-d\zeta^{d-1}u'+\alpha+(\deg\ge2)\]
so we may set $\beta=\left(1+(u'+\zeta)^d(\alpha-1)\right)$ and have
$\alpha,\beta,\gamma$ as a set of parameters.  In these parameters,
the defining equation becomes $\gamma^r=\alpha\beta$.

To finish, it remains to observe that the singularity at the origin
defined by $\alpha\beta=\gamma^n$ is a rational double point of type
$A_{n-1}$.  This is classical and due to Jung over the complex
numbers.  That it continues to hold in any characteristic not dividing
$n$ is
stated in many references (for example \cite[Page~15]{Artin77}), but
we do not know of a reference for a detailed proof of this
calculation.\footnote{In connection with a related proof, Artin writes
  ``Following tradition, we omit the rather tedious verification of
  these results.''}
It is, however, a straightforward calculation, and we leave it as an
exercise for the reader.
\end{proof}

\begin{remark}  
  This paper contains two other proofs that the singularities of $\YY$
  are rational double points.  The first comes from resolving the
  singularity with an explicit sequence of blow-ups; see
  Section~\ref{ss:bad-fibers}.  Doing this reveals that the
  configurations of exceptional curves are those of rational double
  points of type $A_n$ with $n=d-1$ or $r-1$.  (It also reveals that
  the singularities are ``absolutely isolated double points,'' i.e.,
  double points such that at every blow up the only singularities are
  isolated double points.  This is one of the many characterizations
  of rational double points.)
The calculation in Section~\ref{ss:bad-fibers} is
independent of Proposition \ref{PsingularA}, so there is no circularity.

The second alternative proof (given in Section~\ref{s:DPC} below) uses
the fact that $\YY$ is a quotient of a smooth surface by a finite
group acting with isolated fixed points and cyclic stabilizers.  This
shows that the singularities of $\YY$ are cyclic quotient
singularities, therefore rational singularities, and it is clear from
the equations that they are double points.  (The action is explicit,
and we may also apply \cite[Exercise~3.4]{Badescu}.)
\end{remark}

\subsection{Construction of $\XX$}\label{ss:XX}
With $\YY$ in hand, $\XX$ is very simple to describe: We define
$\XX\to\YY$ to be the minimal desingulariztion of $\YY$.

Let us recall how to desingularize a rational double point of type
$A_n$.  The resolution has an exceptional divisor consisting of a
string of $n-1$ smooth, rational curves, each meeting its neighbors
transversally and each with self-intersection $-2$.  If $n$ is odd, we
blow up $(n-1)/2$ times, each time introducing 2 rational curves.  If
$n$ is even, the first $(n-2)/2$ blow-ups each introduce 2 rational
curves, and the last introduces a single rational curve.

\subsection{Fibers of $\XX\to\P^1_u$}\label{ss:bad-fibers}
In this subsection, we record the structure of the bad fibers of
$\XX\to\P^1_u$.  More specifically, we work out the configuration of
irreducible components in the fibers: their genera, intersection
numbers, and multiplicities in the fiber.

First consider the fiber of $\YY\to\P^1_u$ over $u=0$.  Using the
coordinates of the chart $\ZZ_{11}$ above, this fiber is the union of
two smooth rational curves $y=0$ and $1-x^r(xy+1)=0$ meeting at the
$r$ points $y=0$, $x^r=1$.  These crossing points are singularities of
$\YY$ of type $A_{d-1}$.  In the resolution $\XX\to\YY$, each of them
is replaced with a string of $d-1$ rational curves.  It is not hard to
check (by inspecting the first blow-up) that the components $y=0$ and
$1-x^r(xy+1)=0$ meet the end components of these strings transversely
and do not meet the other components.  We label the components so that
those in the range $j(d-1)+\ell$ with $1\le\ell\le d-1$ come from the
point with $x=\zeta_r^j$.

Resolving the singularities thus yields the configuration of curves
displayed in Figure~\ref{fig:u-equals-zero} below.  (This picture is
for $d>1$.  If $d=1$, then $\YY$ does not have singularities in the
fibers over $u=0$, and the fiber consists of a pair of smooth rational
curves meeting tranversally at $r$ points.)  In the figure, $C_0$ is
the strict transform of $1-x^r(xy+1)=0$, $C_{r(d-1)+1}$ is the strict
transform of $y=0$, and the other curves are the components introduced
in the blow-ups.  \setlength{\unitlength}{.3cm}
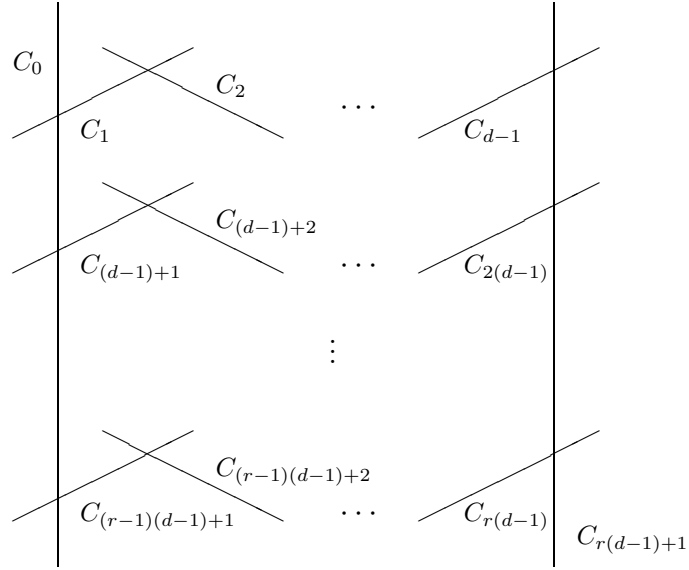
\begin{figure}[h]\centering
  \begin{picture}(26,25)
    \put(2,0){\line(0,1){25}} 
    \put(0,22){$C_0$}
    \put(24,0){\line(0,1){25}} 
    \put(25,1){$C_{r(d-1)+1}$}
    \put(0,19){\line(2,1){8}}
    \put(3,19){$C_1$}
    \put(4,23){\line(2,-1){8}}
    \put(9,21){$C_2$}
    \put(14.5,20){\Large$\cdots$}
    \put(18,19){\line(2,1){8}}
    \put(20,19){$C_{d-1}$}
    \put(0,13){\line(2,1){8}}
    \put(3,13){$C_{(d-1)+1}$}
    \put(4,17){\line(2,-1){8}}
    \put(9,15){$C_{(d-1)+2}$}
    \put(14.5,13){\Large$\cdots$}
    \put(18,13){\line(2,1){8}}
    \put(20,13){$C_{2(d-1)}$}
    \put(0,2){\line(2,1){8}}
    \put(3,2){$C_{(r-1)(d-1)+1}$}
    \put(4,6){\line(2,-1){8}}
    \put(9,4){$C_{(r-1)(d-1)+2}$}
    \put(14.5,2){\Large$\cdots$}
    \put(18,2){\line(2,1){8}}
    \put(20,2){$C_{r(d-1)}$}
    \put(14,9){\Large$\vdots$}
  \end{picture}
  \caption{Special fiber at $u=0$ for $d>1$.  We have $g(C_i)=0$ for
    all $i$, $C_0^2=C_{r(d-1)+1}^2=-r$, and $C_i^2=-2$ for $1\le i\le
    r(d-1)$.  All components are reduced in the fiber.}
\label{fig:u-equals-zero}
\end{figure}

Each component is a smooth rational curve, and all intersections are
transverse. The components introduced in the blow-up have
self-intersection $-2$. Since the intersection number of any component
of the fiber with the total fiber is 0, the self-intersections of the
strict transforms of $C_0$ and $C_{r(d-1)+1}$ are both $-r$. Those
components are reduced in the fiber of $\YY\to\P^1_u$, so they must
also be reduced in the fiber of $\XX\to\P^1_u$. It follows that all
components of the fiber of $\XX\to\P^1_u$ are reduced. We note that
the fiber at 0 is thus semi-stable.

As already noted, a neighborhood of $u=0$ in $\YY$ is isomorphic to a
neighborhoood of $u=\infty$ in $\YY$, so the the fiber at $u=\infty$
of $\XX\to\P^1_u$ is isomorphic to that at $u=0$.  (Note that $r$
divides $d$ in the construction of $\YY$ in Section~\ref{ss:YY}.  We
see in Section~\ref{ss:general-d} that the fibers over $u=0$ and
$u=\infty$ are not isomorphic for general $d$.)

We now turn to the fiber over a point $u \in \mu_d$.  Since
$\P^1_u\to\P^1_t$ is unramified over $t=1$, the fibers of
$\XX\to\P^1_u$ over the points with $u^d=1$ are independent of $d$,
and we may thus assume that $d=1$.  We work in the chart $\ZZ_3$ with
equation $y^rz-(1+z)(1+tz)=0$ where the singularity has coordinates
$t=1$, $y=0$, $z=-1$.  Replacing $t$ with $t+1$ and $z$ with $z-1$,
the equation becomes $y^r(z-1)-z(z-t+tz)=0$, and the singularity is at
the origin and is of type $A_{r-1}$.  The fiber is the curve
$y^r(z-1)=z^2$, which has geometric genus $(r-2)/2$ or $(r-1)/2$ as $r$
is even or odd, with a double point at $y=z=0$.

We know that the singular point blows up into a chain of $r-1$
rational curves, and our task now is to see how the proper transform
of the fiber intersects these curves.  Since the case $r=2$ already
appears in \cite{Legendre}, we assume $r>2$ for convenience.  It is
also convenient to separate the cases where $r$ is odd and where $r$
is even.

First consider the case where $r$ is odd.  After the first blow-up,
the relevant piece of the strict transform of $\YY$ has equation
$y^{r-2}-y^{r-1}z+z(z-t+tyz)=0$; the exceptional
divisor is $z(z-t)=0$, the union of two reduced lines meeting at the
origin; and the proper transform of the original fiber meets the
exceptional divisor at the origin.  The next blow up introduces two
lines meeting transversally at one point, and they have multiplicity 2
in the fiber.  The strict transform of the original fiber passes
through the intersection point and meets the components transversally.
This picture continues throughout each of the blowups, and after
$(r-1)/2$ steps the strict transform of the original fiber meets the
chain of $r-1$ rational curves at the intersection point of the middle
two curves, and it meets each of these curves transversally.  

The picture for $r$ odd is as in
Figure~\ref{fig:superelliptic-dual-graph-odd} below.  The curves $D_i$
and $E_i$ appear at the $i$-th blow up; their multiplicities in the
fiber are $i$; and the self intersections of each $D_i$ or $E_i$ is
$-2$, The curve $F$ is the proper transform of the original fiber, the
smooth projective curve of genus $(r-1)/2$ associated to
$y^r=x^{r-1}(x+1)^2$.  Also $F$ is reduced in the fiber and its
self-intersection is $-(r-1)$.  The intersections of distinct adjacent
components are transversal.

\setlength{\unitlength}{.4cm}
\begin{figure}[h]\centering
  \begin{picture}(26,12)
    \put(2,0){\line(0,1){12}}
    \put(.5,10){$F$}
    \put(0,4){\line(1,1){7.5}}
    \put(3.3,9){$D_s$}
    \put(6,11){\line(4,-1){6}}
    \put(8,9){$D_{s-1}$}
    \put(13.5,10){\Large$\dots$}
    \put(16,9.5){\line(4,1){6}}
    \put(18,9){$D_2$}
    \put(20,11){\line(4,-1){6}}
    \put(23,9){$D_1$}
    \put(0,8){\line(1,-1){7.5}}
    \put(3.3,2.8){$E_s$}
    \put(6,1){\line(4,1){6}}
    \put(8,2.8){$E_{s-1}$}
    \put(13.5,2){\Large$\dots$}
    \put(16,2.5){\line(4,-1){6}}
    \put(18,2.8){$E_2$}
    \put(20,1){\line(4,1){6}}
    \put(23,2.8){$E_1$}
  \end{picture}
  \caption{Special fiber when $u^d = 1$ and $r=2s+1$.
    Here $g(D_i)=g(E_i)=0$, $g(F)=(r-1)/2$, $D_i^2=E_i^2=-2$, and
    $F^2=1-r$. Multiplicities in the fiber are $m(D_i)=m(E_i)=i$ and
    $m(F)=1$.}
\label{fig:superelliptic-dual-graph-odd}
\end{figure}
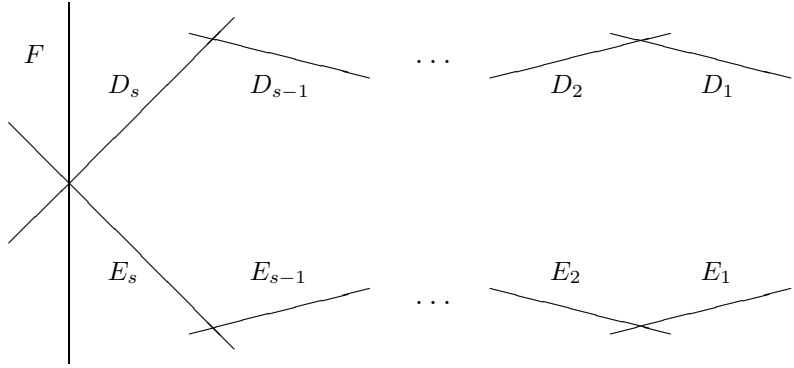

The case where $r$ is even is similar until the last stage. After
$(r-2)/2$ blow-ups, there is a chain of $r-2$ rational curves and the
strict transform of the original fiber passes through the intersection
point of the middle two curves.  The equation at this point is
$y^2-y^{(r+2)/2}z+z(z-t+ty^{(r-2)/2}z)=0$.  The tangent cone is
$y^2+z^2-tz=0$, a smooth irreducible conic, so the last blow up
introduces one smooth rational curve.  After the last blow-up, the
equation becomes $1-y^{r/2}z+z(z-t+ty^{r/2}z)=0$, and the strict
transform of the original fiber meets the last exceptional divisor in
two points namely $t=y=0$, $z=\pm1$.  (Note that $r$ even implies
$p\neq2$, so there really are two points of intersection.)  

The picture for $r$ even is given in
Figure~\ref{fig:superelliptic-dual-graph-even} below.  Again $D_i$ and
$E_i$ have multiplicity $i$ in the fiber and self-intersection $-2$.
The curve $G$ has multiplicity $s=r/2$ in the fiber and
self-intersection $-2$.  The curve $F$ is the strict transform of the
original fiber and is the smooth projective curve associated to
$y^r=x^{r-1}(x+1)^2$.  It is reduced in the fiber, has genus
$(r-2)/2$, and has self-intersection $-r$.

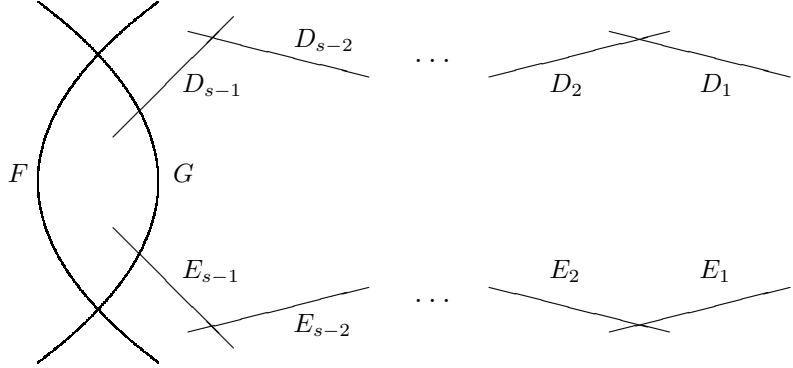
\begin{figure}[h]\centering
  \centering
  \begin{picture}(28,12)
    \qbezier(1,0)(9,6)(1,12)
    \put(5.5,6){$G$}
    \qbezier(5,0)(-3,6)(5,12)
    \put(0,6){$F$}
    \put(3.5,7.5){\line(1,1){4}}
    \put(5.8,9){$D_{s-1}$}
    \put(6,11){\line(4,-1){6}}
    \put(9.5,10.5){$D_{s-2}$}
    \put(13.5,10){\Large$\dots$}
    \put(16,9.5){\line(4,1){6}}
    \put(18,9){$D_2$}
    \put(20,11){\line(4,-1){6}}
    \put(23,9){$D_1$}
    \put(3.5,4.5){\line(1,-1){4}}
    \put(5.8,2.8){$E_{s-1}$}
    \put(6,1){\line(4,1){6}}
    \put(9.5,1){$E_{s-2}$}
    \put(13.5,2){\Large$\dots$}
    \put(16,2.5){\line(4,-1){6}}
    \put(18,2.8){$E_2$}
    \put(20,1){\line(4,1){6}}
    \put(23,2.8){$E_1$}
  \end{picture}
  \caption{Special fiber when $u^d=1$ and $r = 2s$.  We have
    $g(D_i)=g(E_i)=g(G)=0$, $g(F)=(r-2)/2=s-1$, $D_i^2=E_i^2=G^2=-2$,
    and $F^2=-r$.  Multiplicities in the fiber are $m(D_i)=m(E_i)=i$,
    $m(G)=s$, and $m(F)=1$.}
\label{fig:superelliptic-dual-graph-even}
\end{figure}

Note that the fibers of $\XX\to\P^1_u$ over points with $u^d=1$ are
not semi-stable.   However, it follows from \cite[Theorem 3.11]{Saito} 
that $C/K_d$ acquires semi-stable reduction at these places
after a tamely ramified extension.  All other fibers of $\XX\to\P^1_u$
are semi-stable.  This yields the second part of proposition below.

Summarizing this subsection:

\begin{prop}\label{prop:reduction}
  The configurations of components in the singular fibers of
  $\XX\to\P^1_u$ (genera, intersection numbers, and multiplicity in
  the fiber) are as described above and pictured in
  Figures~\ref{fig:u-equals-zero},
  \ref{fig:superelliptic-dual-graph-odd}, and
  \ref{fig:superelliptic-dual-graph-even}.  The action of
  $\gal(\Ksep/K)$ on $H^1(C\times_K\Kbar,\Ql)$ is at worst tamely
  ramified at every place of $K$.
\end{prop}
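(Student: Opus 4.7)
The plan is to treat the two assertions separately. The first, a summary of the explicit analysis carried out in Section~\ref{ss:bad-fibers}, follows by collating the blow-up computations. Using Proposition~\ref{PsingularA}, each singular point of $\YY$ is analytically isomorphic to a surface $A_n$ singularity $\alpha\beta=\gamma^{n+1}$ (with $n=d-1$ above $u=0,\infty$ and $n=r-1$ above $u\in\mu_d$), whose minimal resolution inserts the classical chain of $n$ smooth rational $(-2)$-curves between the strict transforms. Attaching these chains and computing self-intersections from the relations $C_i\cdot(\text{total fiber})=0$ yields precisely the configurations of Figures~\ref{fig:u-equals-zero}, \ref{fig:superelliptic-dual-graph-odd}, and \ref{fig:superelliptic-dual-graph-even}. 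Multiplicities of exceptional components are read off by tracking the order of vanishing of the pullback of the local uniformizer of $\P^1_u$ at each successive blow-up, and the genus and self-intersection of the strict transform of the original fiber over $u\in\mu_d$ follow from the equation $y^r(z-1)=z^2$ together with the fiber-component orthogonality relation.

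For the second assertion, I would argue place by place on $\P^1_u$. At every place not in $\{0,\infty\}\cup\mu_d$ the morphism $\XX\to\P^1_u$ is smooth, so $C$ has good reduction and inertia acts trivially on $H^1(C\times_K\Kbar,\Ql)$. At $u=0$ and $u=\infty$, the configuration in Figure~\ref{fig:u-equals-zero} shows that the special fiber is reduced, every component is a smooth rational curve, and the only singularities are ordinary nodes where neighboring components meet transversally; that is, $\XX\to\P^1_u$ has semi-stable reduction at these two points. By Grothendieck's semi-stable reduction theorem, the inertia at these places acts unipotently, and in particular tamely, on $H^1$.

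At a point $u_0\in\mu_d$, the figures show the fiber is not semi-stable, but Saito's theorem \cite[Theorem~3.11]{Saito} provides a tamely ramified extension $L$ of the completion $K_{u_0}$ over which $C$ acquires semi-stable reduction. Since $L/K_{u_0}$ is tame, the wild inertia subgroups coincide, $P_L=P_{K_{u_0}}$; over $L$ this wild inertia acts trivially on $H^1(C\times_K\Kbar,\Ql)$ by Grothendieck's theorem again, so it must already act trivially over $K_{u_0}$, giving tame ramification at these places as well. The main obstacle is in the first part: organizing the bookkeeping for self-intersections and multiplicities cleanly in the blow-ups over $u\in\mu_d$, especially the parity-dependent behavior when $r$ is even, where the tangent cone at the penultimate stage becomes a smooth irreducible conic and only a single exceptional curve appears. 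Once the configurations and potential semi-stability are established, the tame ramification statement is a formal consequence.
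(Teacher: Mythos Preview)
Your proposal is correct and follows essentially the same approach as the paper: the first assertion is a summary of the explicit blow-up analysis in Section~\ref{ss:bad-fibers}, and for tameness you use semi-stability at $u=0,\infty$ together with Saito's criterion at $u\in\mu_d$, exactly as the paper does. You spell out the wild-inertia argument (that $P_{K_{u_0}}=P_L$ acts trivially once semi-stable reduction is attained over the tame extension $L$) more explicitly than the paper, which simply asserts that Saito's theorem ``yields the second part,'' but the content is the same.
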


\subsection{General $d$}\label{ss:general-d}
Until now in this section, we have worked under the hypothesis that
$r$ divides $d$.  In this subsection, we briefly sketch the
construction of a regular minimal model $\XX\to\P^1_u$ for general
$d$.

In fact, the only issue is near $u=\infty$: The charts $\ZZ_2$,
$\ZZ_3$, and $\ZZ_{11}$ are well defined without assuming that $r$
divides $d$, and they glue as above to give an irreducible, normal
surface $\YY^o$ with a projective morphism $\YY^o\to\A^1_u$ that is a
model of $C$ over $k(u)$.  Over $u=0$ and $u \in \mu_d$, the same
steps as before lead to a regular, minimal model $\XX^o\to\A^1_u$.
This model is semi-stable at $u=0$ with reduction exactly as pictured
in Figure~\ref{fig:u-equals-zero}, and the reduction at points $u \in
\mu_d$ is as pictured in
Figures~\ref{fig:superelliptic-dual-graph-odd} and
\ref{fig:superelliptic-dual-graph-even}.

The situation over $u=\infty$ is more complicated, and the most
efficient way to proceed is to first ``go up'' to level $d'=\lcm(d,r)$
and then take the quotient by the roots of unity of order
$d'/d=r/\gcd(d,r)$. Let $\mathcal{H}=\mu_{d'/d}\subset\mu_{d'}$.

They key point to note is that in constructing the model
$\XX_{d'}\to\P^1_u$ where $u^{d'}=t$, we started with a completion of
the affine model $y=x^{r-1}(x+1)(x+u^{d'})$, made a change of
coordinates $u=u^{\prime-1}$, $x=u^{d'}x'$, $y=u^{d'+d'/r}y'$, and
then performed a blow-up by substituting $x'\to x'y'$, $y'\to y'$.
This yields the chart with equation $y'-x^{\prime r-1}(x'y'+u^{\prime
  d})(x'y'+1)=0$.  The action of $\mathcal{H}$ on these last
coordinates is thus $\zeta(u',x',y')=(\zeta^{-1}
u',\zeta^{d'/r}x',\zeta^{-d'/r}y')$.  Further blowing up yields the
regular minimal model $\XX_{d'}$ whose fiber over $u'=0$ is as
described in Figure~\ref{fig:u-equals-zero}.  The action of
$\mathcal{H}$ lifts canonically to the model $\XX_{d'}$.

We now consider the action of $\mathcal{H}$ on the special fiber over
$u'=0$.  This action preserves the end components and permutes the
horizontal chains with $\gcd(d,r)$ orbits.  The action has 4 isolated
fixed points, which are roughly speaking at the points where $x'$ or
$y'$ are $0$ or $\infty$.  (Specifying them exactly requires
considering other charts, and we omit the details since they are not
important for what follows.)  The exponents on the action on the
tangent space are $(1,1)$ or $(1,-1)$ with one of each type on each
component.  Resolving these quotient singularities leads to chains of
rational curves of length $1$ and $d'/d-1=r/\lcm(d,r)-1$ respectively.
(The configuration of the component is computed using the
``Hirzebruch-Jung continued fraction'' as in \cite[III.5-6]{BHPV}.)

The picture is given in Figure~\ref{fig:fiber-infinity-general-d}
below. In that picture, all components are smooth rational curves.
The components labeled $R_{i,j}$ with $1\le i\le\gcd(r,d)$ and $1\le
j\le d-1$ are the images of the components $C_\ell$ with $1\le\ell\le
r(d'-1)$ in Figure~\ref{fig:u-equals-zero}.  The components labeled
$D_{d'/d}$ and $E_{d'/d}$ are the images of $C_0$ and $C_{r(d'-1)+1}$
respectively.  The components $C_1$ and $C_2$ in
Figure~\ref{fig:fiber-infinity-general-d} come from resolving the
singularity with local exponents $(1,1)$, and the components $D_i$ and
$E_i$ with $1\le i\le d'/d-1$ come from resolving the singularities
with local exponents $(1,-1)$.

\setlength{\unitlength}{.4cm}
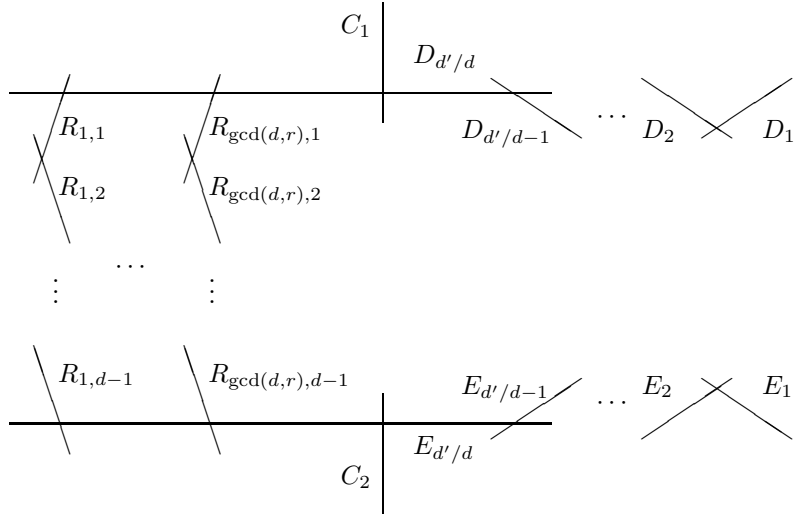
\begin{figure}[h]
  \centering
  \begin{picture}(27,18)
    \put(0,5){\line(1,0){18}}
    \put(13.4,4){$E_{d'/d}$}
    \put(0,16){\line(1,0){18}}
    \put(13.4,17){$D_{d'/d}$}
    \put(12.4,2){\line(0,1){4}}
    \put(11,3){$C_2$}
    \put(12.4,19){\line(0,-1){4}}
    \put(11,18){$C_1$}
    \put(2,4){\line(-1,3){1.2}}
    \put(1.6,6.4){$R_{1,{d-1}}$}
    \put(1.4,9){$\vdots$}
    \put(2,11){\line(-1,3){1.2}}
    \put(1.6,12.6){$R_{1,{2}}$}
    \put(0.8,13){\line(1,3){1.2}}
    \put(1.6,14.6){$R_{1,{1}}$}
    \put(3.5,10){$\cdots$}
    \put(7,4){\line(-1,3){1.2}}
    \put(6.6,6.4){$R_{{\gcd(d,r)},{d-1}}$}
    \put(6.6,9){$\vdots$}
    \put(7,11){\line(-1,3){1.2}}
    \put(6.6,12.6){$R_{{\gcd(d,r)},{2}}$}
    \put(5.8,13){\line(1,3){1.2}}
    \put(6.6,14.6){$R_{{\gcd(d,r)},{1}}$}
    \put(16,4.5){\line(3,2){3}}
    \put(15,6){$E_{d'/d-1}$}
    \put(19.5,5.5){$\cdots$}
    \put(21,4.5){\line(3,2){3}}
    \put(21,6){$E_2$}
    \put(23,6.5){\line(3,-2){3}}
    \put(25,6){$E_1$}
    \put(16,16.5){\line(3,-2){3}}
    \put(15,14.5){$D_{d'/d-1}$}
    \put(19.5,15){$\cdots$}
    \put(21,16.5){\line(3,-2){3}}
    \put(21,14.5){$D_2$}
    \put(23,14.5){\line(3,2){3}}
    \put(25,14.5){$D_1$}
  \end{picture}
  \caption{Special fiber at $u = \infty$, where $u^{d} = t$, $d$ not
    divisible by $r$. All components are smooth rational curves.  We
    have $R_{i,j}^2=-2$, $C_i^2=-d'/d$, $D_i^2=E_i^2=-2$ for $1\le
    i\le d'/d-1$ and $D_{d'/d}^2=E_{d'/d}^2=-\gcd(r,d)-1$. 
    Multiplicities in the fiber are $m(E_{i,j})=d'/d$, $m(C_i)=1$, and
    $m(D_i)=m(E_i)=i$ for $1\le i\le d'/d$.}
  \label{fig:fiber-infinity-general-d}
\end{figure}

Since the components of the fiber pictured in
Figure~\ref{fig:u-equals-zero} are reduced and the quotient map is
\'etale away from the isolated fixed points, the multiplicites in the
fiber of the components $R_{i,j}$, $D_{d'/d}$, and $E_{d'/d}$ are
$d'/d$ and the self-intersections of the $R_{i,j}$ are all $-2$.  The
components $C_1$ and $C_2$ are reduced in the fiber and have
self-intersection $-d'/d$.  The components $D_i$ and $D_i$ with $1\le
i\le d'/d-1$ have self-intersection $-2$ and multiplicity $i$ in the
fiber.  The components $D_{d'/d}$ and $E_{d'/d}$ have self
intersection $-\gcd(d,r)-1$ and multiplicity $d'/d$ in the fiber.

\begin{remark}\label{rem:rationality}
  The strings of rational curves in the fiber over $u=0$ correspond to
  $r$-th roots of unity, and the components in the string
  corresponding to $\zeta \in \mu_r$ are defined over $\Fp(\zeta)$.
  Similarly, the strings of curves $R_{ij}$ correspond to roots of
  unity of order $\gcd(d,r)$.  On the other hand, over places $u$
  corresponding to a $d$-th root of unity $\zeta' \in \mu_d$,
  components in the fibers are all rational over the field
  $\Fp(\zeta')$.  Also, when $r$ is even, the two points of
  intersection of the curves $F$ and $G$ over a place with $u =\zeta'$
  are defined over $\Fp(\zeta')$.
\end{remark}

\section{Local invariants of the N\'eron model}\label{s:local-invs}
In this section we record the local invariants of the N\'eron model of
$J$, i.e., its component group and connected component of the identity.

\subsection{Component groups}
The results of \cite{blr} Chapter 9, Section 6 allow us to read off
the group of components of the special fiber of the N\'eron model of
$J_C$ from our knowledge of the fibers of $\XX\to\P^1_u$.

\begin{prop}\label{prop:component-groups}
  Suppose that $r$ divides $d$ and consider the group of connected
  components of the N\'eron model of $J$ at various places of
  $\Fpbar(u)$.
\begin{enumerate}
\item At $u=0$ and $u=\infty$, the group of connected components is
  isomorphic to $(\Z/rd\Z)\times(\Z/d\Z)^{r-2}$.
\item At places where $u^d=1$, the group of connnected components is
  isomorphic to $\Z/r\Z$.
\end{enumerate} 
\end{prop}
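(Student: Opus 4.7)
The plan is to apply Raynaud's theorem (\cite{blr}, Chapter~9, \S6), which expresses the component group $\Phi$ of the N\'eron model of a Jacobian in terms of the intersection pairing on components of the special fiber of the minimal regular model. Concretely, if the geometric special fiber of $\XX \to \spec \OO$ decomposes as $\sum m_i C_i$ with intersection matrix $M=(C_i\cdot C_j)_{ij}$, then $\Phi$ is the torsion part of $\operatorname{coker}(M:\Z^n\to\Z^n)$ once the fiber class (the kernel direction) is factored out. By Remark~\ref{rem:rationality} all geometric components in the fibers of interest are rational over the residue field, so there is no extra Galois bookkeeping, and the problem reduces to explicit linear algebra with the intersection data recorded in Section~\ref{ss:bad-fibers}.

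For the fiber at $u=0$ (the case $u=\infty$ following by the isomorphism between neighborhoods of $u=0$ and $u=\infty$ in $\YY$ noted in Section~\ref{ss:YY}), Figure~\ref{fig:u-equals-zero} exhibits $r(d-1)+2$ reduced components: two end curves $C_0$ and $C_{r(d-1)+1}$ of self-intersection $-r$, joined by $r$ disjoint chains of $d-1$ smooth rational $(-2)$-curves each. The intersection matrix is $\mu_r$-equivariant under the action permuting the chains, so I would decompose the Smith normal form computation into isotypic pieces. Each nontrivial character piece reduces to a single chain whose contribution is cyclic of order $d$ (as in Kodaira type $I_n$), producing a factor of $(\Z/d\Z)^{r-1}$. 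The trivial-character piece, combined with the two end curves, must then be analyzed on its own; a direct Smith normal form computation of this reduced block produces one further cyclic factor of order $rd$, which absorbs one $\Z/d\Z$ from the $(\Z/d\Z)^{r-1}$ and yields the claimed decomposition $(\Z/rd\Z)\times(\Z/d\Z)^{r-2}$.

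For a fiber at a point $u \in \mu_d$, the intersection matrix has size only $r\times r$ and the calculation is correspondingly lighter, though the multiplicities are no longer uniform. For $r$ odd I would take the data of Figure~\ref{fig:superelliptic-dual-graph-odd} (with $m(F)=1$ and $m(D_i)=m(E_i)=i$), encode it as an $r\times r$ matrix, and check that after modding out the one-dimensional kernel (spanned by the fiber class) the cokernel is cyclic of order $r$. The case of $r$ even (Figure~\ref{fig:superelliptic-dual-graph-even}) proceeds analogously, with the extra component $G$ of multiplicity $s=r/2$ and its two intersection points with $F$ incorporated into the matrix; again the cokernel should come out to $\Z/r\Z$.

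The main obstacle lies in the $u=0$ case: the matrix has size $r(d-1)+2$, and while $\mu_r$-equivariance reduces it to $r$ smaller blocks, carefully stitching the trivial-character block together with the endpoint data to produce the cyclic factor $\Z/rd\Z$---rather than the \emph{a priori} plausible but incorrect $\Z/r\Z\oplus\Z/d\Z$, which has the same order but different invariant factors---requires attention to the precise entries of the Smith normal form. A useful sanity check throughout is that $|\Phi|$ should equal the absolute value of the determinant of $M$ restricted to the orthogonal complement of the fiber class divided by the number of components, and this quantity can be computed symbolically and compared against $r\,d^{r-1}$ in case one, and against $r$ in case two.
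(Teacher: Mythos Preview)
Your approach is correct and is essentially the same as the paper's: both invoke Raynaud's description of the component group from \cite{blr}, Chapter~9, \S6, applied to the intersection data computed in Section~\ref{ss:bad-fibers}. The paper is simply much terser. For part~(1) it cites \cite[9.6, Corollary~11]{blr} directly, since that corollary treats precisely the configuration of Figure~\ref{fig:u-equals-zero} (two end components joined by $r$ chains of length $d-1$) and already gives $(\Z/rd\Z)\times(\Z/d\Z)^{r-2}$. For part~(2) the paper observes that once the component $F$ meeting the section is removed, the remaining components form an $A_{r-1}$ chain in both the odd and even cases (in Figure~\ref{fig:superelliptic-dual-graph-odd} it is $D_1\text{--}\cdots\text{--}D_s\text{--}E_s\text{--}\cdots\text{--}E_1$; in Figure~\ref{fig:superelliptic-dual-graph-even} it is $D_1\text{--}\cdots\text{--}D_{s-1}\text{--}G\text{--}E_{s-1}\text{--}\cdots\text{--}E_1$), and the cokernel of the $A_{r-1}$ matrix is $\Z/r\Z$. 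This avoids your case split entirely.

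Two small corrections to your writeup. First, the $\mu_r$-equivariant splitting you propose for part~(1) is only clean over $\Z[1/r]$: the permutation module $\Z^r$ does not decompose integrally as trivial plus augmentation ideal, so ``each nontrivial character contributes $\Z/d\Z$'' does not follow directly over $\Z$. You are right that the delicate point is exactly distinguishing $\Z/rd\Z$ from $\Z/r\Z\oplus\Z/d\Z$ (these differ since $r\mid d$), and the equivariant shortcut does not resolve it; one really needs the Smith normal form or the cited corollary. Second, your sanity-check formula for $|\Phi|$ is misstated: the order is $|\det(B)|$ where $B$ is the intersection matrix with the row and column of the identity component deleted (cf.\ Lemma~\ref{lemma:block-Am-determinant}, which gives $|\det(-B)|=r\,d^{r-1}$ at $u=0$), not a determinant ``divided by the number of components.''
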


\begin{proof}
  Part (1) is exactly the situation treated as an example in
  \cite{blr}; see 9.6 Corollary 11.  Part (2) is an exercise using
  \cite[9.6, Theorem 1]{blr} and the well-known fact that the
  determinant of the matrix of a root system of type $A_{r-1}$ is $r$.
\end{proof}

\begin{remark}\label{rem:rationality-of-components}
All components of all fibers of $\XX\to\p^1_u$ are
rational over $\Fp(\mu_d)$.  It follows that the group of connected
components of $J_C$ at each place of $\Fp(\mu_d,u)$ is split, i.e.,
$\Gal(\Fpbar/\Fp(\mu_d))$ acts trivially on it.
\end{remark}

\subsection{Connected components}
Recall that the connected component of a smooth, commutative algebraic
group over a perfect field has a filtration whose subquotients are a
unipotent group (itself a repeated extension of copies of the additive
group $\G_a$), a torus, and an abelian variety.  For a place $v$ of
$K_d$, let $a_v$, $m_v$, and $g_v$ be the dimensions of the unipotent
(additive), toral (multiplicative), and abelian variety subquotients
of the connected component of the N\'eron model of $J_C$ at $v$.
Since $C$ has genus $r-1$, there is an equality $a_v+m_v+g_v=r-1$.  At
places of good reduction, $g_v=r-1$.

\begin{prop}\label{prop:connected-component}
Let $K_d = \Fp(\mu_d,u)$.
\begin{enumerate}
\item
If $v$ is the place of $K_d$ over $u=0$, then
$a_v=g_v=0$ and $m_v=r-1$.  

\item
If $v$ is a place of $K_d$ over $u \in \mu_d$
and $r$ is even, then $a_v=(r-2)/2$, $m_v=1$, and $g_v=(r-2)/2$.

\item 
If $v$ is a place of $K_d$ over $u \in \mu_d$
and $r$ is odd, then $a_v=(r-1)/2$, $m_v=0$, and $g_v=(r-1)/2$.
\item
If $v$ is the place of $K_d$ over $u=\infty$, then $a_v=r-\gcd(r,d)$,
$m_v=\gcd(r,d)$, and $g_v=0$.
\end{enumerate}
\end{prop}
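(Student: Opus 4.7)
The plan is to read off the Chevalley decomposition of $J^0_v$ from the minimal regular proper model $\XX \to \P^1_u$ constructed in Section~\ref{s:models}, via the identification (developed systematically in \cite{blr}, Chapter~9) of the identity component of the N\'eron model of $J$ with the relative Picard scheme of $\XX^{sm}$. This reduces the proposition to a geometric analysis of the special fibers of $\XX \to \P^1_u$, whose components, multiplicities, genera, and intersections have already been recorded in Figures~\ref{fig:u-equals-zero}--\ref{fig:fiber-infinity-general-d}.

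For part~(1), the fiber at $u=0$ is semistable: every component is a smooth rational curve appearing with multiplicity one, and all intersections are transverse. The standard formula for semistable reduction (\cite{blr}, 9.2, Example~8) gives $g_v = \sum_i g(X_i) = 0$ and $m_v = h^1(\Gamma)$, where $\Gamma$ is the dual graph. Since $\Gamma$ consists of two ``hub'' vertices joined by $r$ parallel chains of length $d$, we have $h^1(\Gamma) = r-1$, hence $m_v = r-1$ and $a_v = 0$.

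For parts~(2) and~(3), the fiber over $u \in \mu_d$ fails to be semistable because the chains $D_i, E_i$ (and, in the even case, $G$) carry multiplicity greater than one. My approach is to pass to the tamely ramified extension $L_v = K_d(w)$ with $w^r = u - \zeta$, after which $J$ acquires semistable reduction as in Proposition~\ref{prop:reduction}, and then to descend by taking $I(L_v/K_d)$-invariants: $g_v$ (respectively $m_v$) is the inertia-invariant part of the abelian (respectively toric) rank over $L_v$. Equivalently, one can use Raynaud's direct description, in which $g_v$ is the geometric genus of the ``stable skeleton'' obtained by contracting all non-reduced chains, and $m_v$ is the first Betti number of the resulting dual graph. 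In the odd case, the only multiplicity-one component is $F$ with $g(F) = (r-1)/2$, and the contraction yields a tree, so $g_v = (r-1)/2$, $m_v = 0$, $a_v = (r-1)/2$. In the even case, $F$ has genus $(r-2)/2$ and the two points where it meets the non-reduced component $G$ are identified after contraction of the $G$-chain, producing a single node; this gives $g_v = (r-2)/2$, $m_v = 1$, $a_v = (r-2)/2$.

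For part~(4), the fiber at $u=\infty$ consists entirely of rational curves (so $g_v = 0$), and an analogous contraction-of-chains analysis applied to Figure~\ref{fig:fiber-infinity-general-d} determines the toric and unipotent ranks: the $\gcd(r,d)$ parallel chains $R_{i,\bullet}$ joining $D_{d'/d}$ and $E_{d'/d}$, together with the reduced end components $C_1, C_2$, produce the cycles accounting for $m_v$, and the surviving non-reduced chains account for $a_v$. The main obstacle I anticipate is the precise identification of the toric contributions in the non-semistable cases, especially in part~(2), where two intersection points with a non-reduced component must be shown to contribute exactly one cycle after contraction. The cleanest rigorous verification is via the tame base change and Galois descent approach sketched above, which also handles cases~(1)--(4) in a uniform manner.
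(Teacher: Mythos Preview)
Your outline arrives at the correct numbers, but the route you propose for parts (2)--(4) is more circuitous than necessary and the ``contraction to a stable skeleton'' step, as you describe it, is not a standard result you can cite off the shelf. The paper bypasses both the tame base change and the contraction heuristic via a single key input you have not invoked: \cite[9.2, Proposition~5]{blr}, which says that the toric and abelian ranks $m_v$ and $g_v$ of $\Pic^0(\XX_v)$ depend only on the \emph{reduced} special fiber $(\XX_v)_{\mathrm{red}}$. Once you pass to the reduced fiber, everything is immediate from \cite[9.2, Proposition~10 and Example~8]{blr}: $g_v$ is the sum of the genera of the components; for even $r$ the reduced fiber is already nodal (hence semistable), and the double intersection $F\cap G$ gives $m_v=1$; for odd $r$ the reduced fiber has an ordinary triple point where $F$, $D_s$, $E_s$ meet, but its dual graph (in the sense relevant to Proposition~10, with a vertex for each singular point) is a tree, so $m_v=0$. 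Then $a_v = (r-1)-g_v-m_v$ in each case, with no need to track unipotent contributions separately.

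Your base-change-and-descent approach can be made rigorous, but the statement ``$g_v$ and $m_v$ are the inertia-invariant parts of the semistable ranks'' requires justification beyond what you sketch (it is not literally ``take $I$-invariants'' of a vector space; one must work with character lattices of tori and the filtration on the Tate module). The obstacle you anticipate in part~(2)---identifying the toric contribution from the two points where $F$ meets $G$---simply evaporates once you look at the reduced fiber directly, since there the configuration is nodal and Example~8 applies verbatim.
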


\begin{proof}
  It suffices to compute $m_v$ and $g_v$.  We note that
  \cite[Section 9.2]{blr} gives $g_v$ and $m_v$ in terms of the
  special fiber at $v$ of a minimal regular model of $C$, i.e., in
  terms of $\XX$.  Over $u=0$, where $\XX\to\P^1_u$ has semi-stable
  reduction, \cite[9.2, Example 8]{blr} shows that $g_v=0$ and
  $m_v=r-1$, proving part (1).  
  
  In general, both $m_v$ and $g_v$ only depend on the reduced curve
  underlying the fiber \cite[9.2, Proposition 5]{blr}. By \cite[9.2,
  Proposition 10]{blr}, $g_v$ is the sum of the genera of the
  irreducible components of the reduced special fiber.  When $r$ is
  even, the reduced fiber is semi-stable, and again \cite[9.2, Example
  8]{blr} shows that $m_v=1$, proving part (2).  When $r$ is odd,
  applying \cite[9.2, Proposition 10]{blr} (with $C'$ and $C$ the
  reduced special fiber, which is tree-like) shows that $m_v=0$,
  proving part (3).  Over $u=\infty$, all components are rational
  curves, so $g_v=0$.  The reduced fiber is semistable of arithmetic
  genus $\gcd(d,r)-1$, so $m_v=\gcd(d,r)-1$, which completes the proof
  of part (4).
\end{proof}

\section{Domination by a product of curves}\label{s:DPC}

In Section \ref{ss:DPC}, we show that the surface $\YY$ constructed in
Section \ref{ss:YY} is dominated by a product of curves.  In the
following subsections, we upgrade this to a precise isomorphism
between $\YY$ and a quotient of a product of curves by a finite group
in the style of Berger's construction \cite{Berger} and of
\cite{UlmerDPCT}.  This casts some light on the singularities of
$\YY$, and it may prove useful later for constructing explicit points
on $C$ over $K_d$ for values of $d$ other than divisors of $p^f+1$ as
in \cite[Section 10]{Legendre2}.

Throughout, $k$ is a field of characteristic $p\ge0$, and $r$ and $d$
are positive integers prime to $p$ such that $r$ divides $d$.  We
assume also that $k$ contains the $d$-th roots of unity.

\subsection{Domination of $\YY$ by a product of curves}
\label{ss:DPC}
The surface $\YY$ is birational to the affine surface over $k$ given
by $y^r=x^{r-1}(x+1)(x+u^d)$.  Consider the smooth projective curves
over $k$ given by
$$\CC=\CC_{r,d}: z^d=x^r-1\quad\text{and}\quad
\DD=\DD_{r,d}: w^d=y^r-1.$$
Then a simple calculation shows that the assignment
\begin{align}
\phi^*(u)&=zw,\notag\\
\phi^*(x)&=z^d,\label{eq:domination}\\
\phi^*(y)&=xyz^d\notag
\end{align}
defines a dominant rational map $\phi:\CC\times_k\DD\ratto\YY$.  

In the rest of this section, we analyze the geometry of
this map more carefully.

\subsection{Constructing $\CC$ with its $G$ action}
First, we construct a convenient model of the curve $\CC$ over $k$ with
equation $z^d=x^r-1$.  Namely, we glue the smooth $k$-schemes
$$\UU_1=\spec k[x_1,z_1]/\left(z_1^d-x_1^r+1\right)$$
and
$$\UU_2=\spec k[x_2,z_2]/\left(x_2^r(z_2^d+1)-1\right)$$
via the identifications $x_1=x_2^{-1}z_2^{-d/r}$ and $z_1=z_2^{-1}$.
The result is a smooth projective curve that we call $\CC$. 

There is an action of $G=\mu_r\times\mu_d$ on $\CC$
defined by
$$(\zeta_r,\zeta_d)(x_1,z_1)=(\zeta_rx_1,\zeta_dz_1)\quad\text{and}\quad
(\zeta_r,\zeta_d)(x_2,z_2)=(\zeta_r^{-1}\zeta_d^{-d/r}x_2,\zeta_d^{-1}z_2).$$

There are three collections of points on $\CC$ with non-trivial
stabilizers: the $r$ points where $z_1=0$ and $x_1^r=1$, which each
have stabilizer $1\times\mu_d$; the $d$ points where $x_1=0$ and
$z_1^d=-1$, which each have stabilizer $\mu_r\times1$; and the $r$
points where $z_2=0$ and $x_2^r=1$, which each have stabilizer
$$H:=\left\{\left.(\zeta_d^{-di/r},\zeta_d^i)\right|
0 \leq i \leq d-1\right\}.$$ 
We call these fixed points of type $\mu_d$, $\mu_r$, and $H$
respectively.

\subsection{$\CC\times_k\DD$ and its fixed points}
We let $\DD$ be the curve defined just as $\CC$ was, but with
opens $\VV_1$ and $\VV_2$ defined with coordinates $y_1$, $y_2$,
$w_1$, $w_2$ in place of $x_1$,\dots,$z_2$.  We let $G$ act
``anti-diagonally'' on the product surface $\CC\times_k\DD$, i.e.,
by the action on $\CC$ defined above, and by the inverse action on
$\DD$ (so that
$(\zeta_r,\zeta_d)(y_1,w_1)=(\zeta_r^{-1}y_1,\zeta_d^{-1}w_1)$).

If $(c,d)\in\CC\times_k\DD$, then the stabilizer of $(c,d)$ is the
intersection of the stabilizers $c$ and $d$ in $G$.  This yields the
following list of points $(c,d)$ of $\CC\times_k\DD$ with non-trivial
stabilizers:\\
\indent (i) if both $c$ and $d$ are fixed points of
type $\mu_d$, then $\stab(c,d)=\mu_d$;\\
\indent(ii) if both $c$ and $d$ are fixed points of
type $\mu_r$, then $\stab(c,d)=\mu_r$;\\
\indent(iii) if both $c$ and $d$ are fixed points of
type $H$, then $\stab(c,d)=H$;\\
\indent(iv) if $c$ is of type $\mu_d$ and $d$ is of type $H$,
then $\stab(c,d)=(1\times\mu_d)\cap H$, a cyclic group of order $d/r$;\\
\indent(v) if $c$ is of type $H$ and $d$ is of type $\mu_d$,
then $\stab(c,d)=(1\times\mu_d)\cap H$.

We call the fixed points of types (i)-(iii) ``unmixed'' and the fixed
points of types (iv) and (v) ``mixed.''  Note that at an unmixed fixed
point, the action on the tangent space in suitable coordinates is of
the form $(\zeta,\zeta^{-1})$, while at a mixed fixed point, the
action is by scalars $\zeta$.

\subsection{$\widetilde{\CC\times_k\DD}$ with its $G$-action}
We define $\widetilde{\CC\times_k\DD}$ to be the blow up of
$\CC_D\times_k\DD$ at each of its $2r^2$ mixed fixed points.  The
action of $G$ on $\CC_D\times_k\DD$ lifts uniquely to
$\widetilde{\CC\times_k\DD}$.  By the remark above about the action
of $G$ on the tangent space at the mixed fixed points, $G$ fixes the
exceptional divisor of
$\widetilde{\CC\times_k\DD}\to\CC\times_k\DD$ pointwise.  These
are ``divisorial'' fixed points.  The other fixed points of $G$ acting
on $\widetilde{\CC\times_k\DD}$ are the inverse images of the
unmixed fixed points of $\CC\times_k\DD$.

Now consider the quotient $\widetilde{\CC\times_k\DD}/G$.  It is
smooth away from the images of the unmixed fixed points.  Those of
type (i) fall into $r$ orbits and their images in the quotient are
rational double points of type $A_{d-1}$. Those of type (ii) fall into
$d$ orbits and their images in the quotient are rational double points
of type $A_{r-1}$.  Those of type (iii) fall into $r$ orbits and their
images in the quotient are rational double points of type $A_{d-1}$.

\subsection{An isomorphism}
The main goal of this section is the following isomorphism.  Recall
$\YY$, the model of $C/K_d$ defined in Section~\ref{ss:YY}.
\begin{prop}\label{prop:DPCiso}
  There is a unique isomorphism 
$$\rho:\left(\widetilde{\CC\times_k\DD}\right)/G\to\YY$$
such that the composition
$$\CC\times_k\DD\ratto\widetilde{\CC\times_k\DD}\to
\widetilde{\CC\times_k\DD}/G\to\YY$$ is the rational map
$\phi:\CC\times_k\DD\ratto\YY$ of equation~\eqref{eq:domination} in
Section \ref{ss:DPC}.
\end{prop}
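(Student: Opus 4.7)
The uniqueness of $\rho$ will be immediate: any two such isomorphisms would agree on the dense open subset where $\phi$ is defined, and $\YY$ is separated.

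For existence, I plan to proceed in three steps. First I will verify $G$-invariance of $\phi$ with respect to the trivial action on $\YY$: under $(\zeta_r,\zeta_d)\in G$ acting anti-diagonally one has $zw\mapsto\zeta_d z\cdot\zeta_d^{-1}w=zw$, $z^d\mapsto z^d$, and $xyz^d\mapsto xyz^d$. Second, I will compute the generic degree of $\phi$: for a generic point $(u,x,y)\in\YY$, the equation $z^d=x$ has $d$ solutions, then $x_\CC^r=z^d+1=x+1$ has $r$ more, and finally $w=u/z$ and $y_\DD=y/(xx_\CC)$ are uniquely determined. Hence $\deg\phi=rd=|G|$; combined with $G$-invariance this identifies $\phi^*k(\YY)$ with $k(\CC\times_k\DD)^G$, so $\phi$ descends to a birational rational map $(\CC\times_k\DD)/G\dashrightarrow\YY$.

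Next I plan to show that the base locus of $\phi$ consists precisely of the $2r^2$ mixed fixed points, and that the blow-up $\widetilde{\CC\times_k\DD}$ resolves the indeterminacy. Away from the mixed fixed points a chart-by-chart check using the description of $\YY$ in Section~\ref{ss:YY} shows $\phi$ is regular; on loci where some coordinate of $(u,x,y)$ tends to $\infty$, $\phi$ lands in the $u=0$ or $u=\infty$ charts of $\YY$. At a mixed fixed point of type~(iv), taking $z_1$ and $w_2$ as local parameters gives $\phi^*(u)=z_1/w_2$, which is indeterminate at the origin; a single blow-up of this point makes the pullback regular, and type~(v) fixed points are handled symmetrically. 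The resulting morphism $\widetilde{\CC\times_k\DD}\to\YY$ remains $G$-invariant and so factors through the desired $\rho:(\widetilde{\CC\times_k\DD})/G\to\YY$.

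Finally I will show $\rho$ is an isomorphism. It is a proper birational morphism between normal projective surfaces. By Proposition~\ref{PsingularA} the target $\YY$ has exactly $2r+d$ singular points, all rational double points, with types $A_{d-1}$ over $u=0$, $A_{r-1}$ over $u\in\mu_d$, and $A_{d-1}$ over $u=\infty$. The source has the same number of cyclic quotient singularities of matching types, namely $r$ quotients by $\mu_d$ from type~(i), $d$ quotients by $\mu_r$ from type~(ii), and $r$ quotients by $H\cong\mu_d$ from type~(iii) (one checks that $H$ acts with eigenvalues $(\zeta_d^{-1},\zeta_d)$ on the tangent space, giving type $A_{d-1}$). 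Any curve contracted by $\rho$ would create a singularity on $\YY$ outside this known list, so $\rho$ must be quasi-finite; by Zariski's main theorem a proper quasi-finite birational morphism between normal schemes is an isomorphism. The main obstacle will be the local chart verification in the extension argument, in particular confirming that blowing up each mixed fixed point exactly undoes the indeterminacy of $\phi$ and that the singularity structures of source and target match as claimed.
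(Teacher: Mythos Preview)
Your overall architecture matches the paper's: establish a morphism $\widetilde{\CC\times_k\DD}\to\YY$, descend to the quotient, and then conclude by ``proper $+$ quasi-finite $+$ birational $+$ normal target $\Rightarrow$ isomorphism.'' The divergence is in how quasi-finiteness is obtained. The paper packages both the extension of $\phi$ and its quasi-finiteness into a single lemma (Lemma~\ref{lemma:q-finite}), proved by writing down, chart by chart, explicit regular functions on each affine piece of $\widetilde{\CC\times_k\DD}$ mapping into a specific affine chart of $\YY$, and then bounding fiber cardinalities directly from those formulas. The proof of the proposition is then two lines: the resulting $\rho$ is finite birational onto a normal target, hence an isomorphism.

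Your proposed shortcut for quasi-finiteness --- matching up the $2r+d$ singularities of source and target and asserting that ``any curve contracted by $\rho$ would create a singularity on $\YY$ outside this known list'' --- does not work as stated. A proper birational morphism between normal surfaces can contract curves to \emph{smooth} points without producing any new singularity; the blow-up of $\YY$ at a smooth point already furnishes a counterexample with identical singularity lists on source and target. So counting singularities cannot by itself rule out an exceptional locus over the smooth part of $\YY$, nor does it rule out extra exceptional curves lying over the known singular points. To close this gap you would need an independent argument that the source has no contractible curves over $\YY$ (e.g.\ no $(-1)$-curves in its smooth locus, plus a local-isomorphism check near each singularity), and that is not easier than the direct verification. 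In practice, once you carry out the chart computations you already flag as ``the main obstacle'' to extend $\phi$ across the blow-ups, the same formulas give you quasi-finiteness for free --- which is exactly how the paper proceeds.
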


Uniqueness is clear.  The key point in the proof of existence is the
following lemma.

\begin{lemma}\label{lemma:q-finite}
  There exists a $G$-equivariant, quasi-finite morphism
  $\psi:\widetilde{\CC\times_k\DD}\to\YY$ \textup{(}with $G$ acting
  trivially on $\YY$\textup{)} inducing the rational map
  $\phi:\CC\times_k\DD\ratto\YY$.
\end{lemma}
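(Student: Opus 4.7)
My plan is to extend the rational map $\phi$ chart by chart to a morphism defined off the mixed fixed points, and then to verify that a single blow-up at each mixed fixed point resolves the remaining indeterminacy. The main obstacle will be the bookkeeping needed to match the four affine charts $\UU_i\times_k\VV_j$ of $\CC\times_k\DD$ with the various charts $\ZZ_2$, $\ZZ_3$, $\ZZ_{11}$, $\ZZ_{12}$ of $\YY$ built in Section~\ref{ss:YY}, together with their analogues over $U'$.

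First, on the open $\UU_1\times_k\VV_1$, the formulas of \eqref{eq:domination} give a morphism into $\ZZ_1$, as one checks by plugging into the defining equation of $\ZZ_1$ (this calculation was essentially done in Section~\ref{ss:DPC} to establish that $\phi$ is a rational map). The image is not generically contained in the singular locus $x=y=0$ of $\ZZ_1$, so by the universal property of the blow-up $\tilde\ZZ_1\to\ZZ_1$ this morphism lifts uniquely to $\YY$. A direct check shows that the pullbacks $zw$, $z^d$, and $xyz^d$ are all $G$-invariant, yielding $G$-equivariance on this open set.

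Next, I would repeat the computation on the remaining charts $\UU_i\times_k\VV_j$, pulling back via the other affine charts of $\YY$ and their counterparts over $U'$. For the unmixed fixed points of types (i)--(iii) and for points lying over the boundary divisors $u=0$, $u^d=1$, and $u=\infty$ of $\P^1_u$, I expect one of the charts of $\YY$ always to contain the image. The only genuine obstruction occurs at the $2r^2$ mixed fixed points. At a mixed fixed point of type (iv), with $c\in\CC$ of type $\mu_d$ (so $z(c)=0$) and $d\in\DD$ of type $H$ (so $w_2(d)=0$), local uniformizers are $z$ and $w_2$ (with $w=w_2^{-1}\cdot\text{unit}$), so $\phi^*(u)=zw$ has indeterminate form $0\cdot\infty$. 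Blowing up and working in the chart $z=sw_2$, the pulled-back $u$ becomes $s\cdot\text{unit}$ and is regular, while $\phi^*(x)=z^d$ and $\phi^*(y)=xyz^d$ (in source coordinates) vanish regularly along the exceptional divisor; hence $\psi$ extends across this $\P^1$ and maps it non-constantly into $\YY$ via $s$. The analysis for type (v) is symmetric.

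For $G$-equivariance of the extension, note that at each mixed fixed point the $G$-stabilizer acts on the tangent space by scalars (using the anti-diagonal action), hence fixes each exceptional $\P^1$ pointwise; combined with equivariance on the dense open part, this gives $G$-equivariance globally. For quasi-finiteness, $\psi$ is a dominant morphism between irreducible surfaces, so generic fibers are finite; by the blow-up computation the exceptional divisors are not contracted, and for each vertical fiber $\CC\times\{d_0\}$ the image has at least one of $\phi^*(u)=zw(d_0)$ or $\phi^*(x)=z^d$ varying (and symmetrically for horizontal fibers), so no curve can contract.
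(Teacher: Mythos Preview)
Your overall strategy matches the paper's: extend $\phi$ chart by chart and verify that one blow-up at each mixed fixed point removes the indeterminacy. But there are two genuine gaps in the execution.

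First, your invocation of the universal property of the blow-up is not correct as stated. Saying that the image is ``not generically contained in the singular locus $x=y=0$'' is not the criterion; you need the pullback ideal $f^*(x,y)\cdot\OO$ to be invertible. On $\UU_1\times_k\VV_1$ this happens to hold, since $(\phi^*x,\phi^*y)=(z_1^d,\,x_1y_1z_1^d)=(z_1^d)$, but you should check this rather than appeal to a false general principle. The paper sidesteps this by writing the lift to $\ZZ_{12}$ explicitly.

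Second, and more seriously, your treatment of the mixed fixed points only covers half of each exceptional $\P^1$. In the chart $z_1=sw_2$ you correctly get $\phi^*(u)=z_1w_1=z_1/w_2=s$, regular. But on the other chart $w_2=tz_1$ of the blow-up, $\phi^*(u)=1/t$, which is \emph{not} regular at $t=0$. What happens there is that the image goes off to $u=\infty$ in $\P^1_u$, so you must use the charts $\ZZ'_{11},\ZZ'_{12}$ of $\YY$ over $U'$ (with coordinate $u'=1/u$), and check that $\phi^*(u')=t$ together with the appropriate $x',y'$ pullbacks are regular. The paper does exactly this (the morphisms $\psi_{122ij}$). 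Without it, your $\psi$ is not defined on all of $\widetilde{\CC\times_k\DD}$.

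Finally, your quasi-finiteness argument checks exceptional divisors and horizontal and vertical fibers but does not explain why these suffice. The missing step is short: if $E\subset\widetilde{\CC\times_k\DD}$ is an irreducible curve whose projection to $\CC$ is non-constant, then $z_1|_E$ is non-constant (since $z_1:\CC\to\P^1$ has finite fibers), so $\phi^*(x)=z_1^d$ is non-constant on $E$ and $E$ is not contracted. Thus only curves lying over a point of $\CC$ or of $\DD$ (i.e., vertical or horizontal fibers, or exceptional divisors) need to be checked.
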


\begin{proof}
  The rational map $\phi$ induces a rational map
  $\widetilde{\CC\times_k\DD}\ratto\YY$, and what has to be shown is
  that there is a quasi-finite morphism $\psi$ representing this rational
  map.  To do so, we cover $\widetilde{\CC\times_k\DD}$ with affine
  opens and check that each is mapped by a quasi-finite morphism (the
  unique one compatible with $\phi$) into $\YY$.  The details are
  tedious but straightforward calculations with coordinates.

  It is helpful to have a standard representation of elements of
  the function field $\YY$.  Using the coordinates of $\ZZ_1$, 
  the field $k(\YY)$ is generated by $x$, $y$, and $u$ with relation
  $y^r=x^{r-1}(x+1)(x+u^d)$.  (We drop the subscripts $1$ to avoid
  confusion with coordinates on $\widetilde{\CC\times_k\DD}$ below.)
  Inclusion of the opens $\ZZ_2$, $\ZZ_3$, $\ZZ_{11}$, $\ZZ_{12}$,
  $\ZZ_2'$, $\ZZ_3'$, $\ZZ_{11}'$, and $\ZZ_{12}'$ into $\YY$ induces
  isomorphisms between the function fields of the opens and that of
  $\YY$.  This leads to the following equalities in $k(\YY)$:
\begin{align*}
x_2&=x/y, &y_3&=y/x, &x_{11}&=x/y, &x_{12}&=x,\\
z_2&=1/y, &z_3&=1/x, &y_{11}&=y, &y_{12}&=y/x,\\
u'&=u^{-1},\\
x_2'&=u^{d/r}x/y, &y_3'&=u^{-d/r}y/x, &x_{11}'&=u^{d/r}x/y, &x_{12}'&=u^{-d}x,\\
z_2'&=u^{d+d/r}/y, &z_3'&=u^d/x, &y_{11}'&=u^{-d-d/r}y, &y_{12}'&=u^{-d/r}y/x.
\end{align*}

Similarly, the function field of $\widetilde{\CC\times_k\DD}$ is
generated by $x_1,z_1,y_1,w_1$ with relations $z_1^d=x_1^r-1$ and
$w_1^d=y_1^r-1$.  Inclusion of the opens $\UU_i\times\VV_j$ leads to
the equalities:
\begin{align*}
z_2&=z_1^{-1}, &w_2&=w_1^{-1},\\
x_2&=z_1^{d/r}/x_1, &y_2&=w_1^{d/r}/y_1.
\end{align*}
The blowing up of points in $\UU_1\times\VV_2$ and $\UU_2\times\VV_1$
made to pass from $\CC\times_k\DD$ to $\widetilde{\CC\times_k\DD}$
leads to additional equalities stated below.

For the last key piece of data, we recall the field inclusion $\phi^*:k(\YY)\into
k(\CC\times_k\DD)$.  It yields equalities
$$\phi^*(u)=z_1w_1, \qquad\phi^*(x)=z_1^d, \qquad\phi^*(y)=x_1y_1z_1^d.$$

We now cover $\widetilde{\CC\times_k\DD}$ with affine opens (many
of them, unfortunately) and for each of them check that there is a
quasi-finite morphism from the open to $\YY$ that induces the field
inclusion $\phi^*$.  The compatibility with $\phi^*$ shows that these
morphisms agree on the overlaps, so this yields a global quasi-finite
morphism $\psi:\widetilde{\CC\times_k\DD}\to\YY$.  Since the image of
$\phi^*$ is generated by $z_1w_1$, $z_1^d$, and $x_1y_1$, it lies
inside the $G$-invariant subfield of $k(\widetilde{\CC\times_k\DD})$, and
this shows that $\psi$ collapses the orbits of $G$; this is the
claimed equivariance.

We now make the necessary coordinate calculations, starting with the
open $\UU_1\times\VV_1$.  The formulae above show that
$$\phi^*(u)=z_1w_1, \qquad \phi^*(x_{12})=z_1^d,
 \qquad\phi^*(y_{12})=x_1y_1.$$
This shows that there is a morphism
$\psi_{11}:\UU_1\times\VV_1\to\ZZ_{12}\into\YY$ inducing $\phi$.  To
see that $\psi_{11}$ is quasi-finite, we note that fixing the value of
$x_{12}$ implies at most $d$ choices for $z_1$, which in turn allows
for at most $r$ choices of $x_1$.  Fixing $y_{12}$ then determines
$y_1$ and fixing $u$ determines $w_1$.  This shows that $\psi_{11}$
has fibers of cardinality at most $rd$ (and generically equal to
$rd$).

The rest of the proof proceeds similarly with other affine opens of
$\widetilde{\CC\times_k\DD}$.  Considering $\UU_2\times\VV_2$, 
we note that
$$\phi^*(u')=z_2w_2, \qquad\phi^*(x'_{12})=w_2^d, \qquad
\phi^*(y'_{12})=x_2^{-1}y_2^{-1}.$$
Since $x_2$ and $y_2$ are units on $\UU_2\times\VV_2$, these formulae
define a morphism $\psi_{22}:\UU_2\times\VV_2\to\ZZ_{12}'\into\YY$
that is compatible with $\phi$.  The reader may check that $\psi_{22}$
and the other morphisms defined below are quasi-finite.

These formulae define $\psi$ away from the blow ups of the mixed fixed
points.

Now we focus our attention near a particular mixed fixed point in
$\UU_1\times_k\VV_2$, say $P_{ij}$ given by $x_1=\zeta_r^i$, $z_1=0$,
$y_2=\zeta_r^j$, and $w_2=0$.  Let
$$f=\frac{(x_1^r-1)}{(x_1-\zeta_r^i)}\frac{(y_2^r-1)}{(y_2-\zeta_r^j)}.$$
Inverting $f$ gives an affine open subset of $\UU_1\times_k\VV_2$ on
which the only solution of $z_1=w_2=0$ is $P_{ij}$.  We may cover the
blow up at $P_{ij}$ of this open with two affine opens:
$$T_{ij}^{1}=\spec \frac{k[x_1,s,y_2,w_2][1/f]}{(...)}$$
and
$$T_{ij}^{2}=\spec \frac{k[x_1,z_1,y_2,t][1/f]}{(...)}$$
where $z_1=sw_2$ on $T_{ij}^1$ and $w_2=tz_1$ on $T_{ij}^2$.

Noting that
$$\phi^*(u)=s, \qquad\phi^*(x_{11})=y_2w_2^{d/r}x_1^{-1}, \qquad
\phi^*(y_{11})=x_1s^dw_2^{d-d/r}y_2^{-1},$$ we define a morphism
$\psi_{121ija}$ from the open of $T_{ij}^1$ where $x_1\neq0$ to
$\ZZ_{11}$.  Noting that
$$\phi^*(u)=s, \qquad\phi^*(x_{12})=s^dw_2^d, \qquad
\phi^*(y_{12})=x_1y_2^{-1}w_2^{-d/r},$$ 
we define a morphism $\psi_{121ijb}$ from the open of $T_{ij}^1$ where
$w_2\neq0$ to $\ZZ_{12}$.  Since $w_2$ and $x_1$ do not vanish
simultaneously on $T_{ij}^1$, this defines a morphism
$\psi_{121ij}:T_{ij}^1\to\YY$.

Similarly, noting that 
$$\phi^*(u')=t, \qquad\phi^*(x_{12}')=t^dz_1^d, \qquad
\phi^*(y_{12}')=x_1y_2^{-1}z_1^{-d/r},$$ we define a morphism
$\psi_{122ija}$ from the open of $T_{ij}^2$ where $z_1\neq0$ to
$\ZZ_{12}'$.  Noting that
$$\phi^*(u')=t, \qquad\phi^*(x_{11}')=y_2z_1^{d/r}x_1^{-1}, \qquad
\phi^*(y_{11}')=t^dx_1z_1^{d-d/r}y_2^{-1},$$ 
we define a morphism $\psi_{122ijb}$ from the open of $T_{ij}^2$ where
$x_1\neq0$ to $\ZZ_{11}'$.  Since $z_1$ and $x_1$ do not vanish
simultaneously on $T_{ij}^2$, this defines a morphism
$\psi_{122ij}:T_{ij}^2\to\YY$.

The morphisms $\psi_{121ij}$ and $\psi_{122ij}$ for varying $ij$ patch
together to give a morphism $\psi_{12}$ from the part of
$\widetilde{\CC\times_k\DD}$ lying over $\UU_1\times\VV_2$ to $\YY$.

It remains to consider neighborhoods of the blow ups of the mixed
fixed points in $\UU_2\times_k\VV_1$.  Let $Q_{ij}$ be the point where
$x_2=\zeta_r^i$, $z_2=0$, $y_1=\zeta_r^j$, and $w_1=0$.  Let
$$g=\frac{(x_2^r-1)}{(x_2-\zeta_r^i)}\frac{(y_1^r-1)}{(y_1-\zeta_r^j)}.$$
Inverting $g$ gives an affine open subset of $\UU_2\times_k\VV_1$ on
which the only solution of $z_2=w_1=0$ is $Q_{ij}$.  We may cover the
blow up at $Q_{ij}$ of this open with two affine opens:
$$T_{ij}^{3}=\spec \frac{k[x_2,s,y_1,w_1][1/g]}{(...)}$$
and
$$T_{ij}^{4}=\spec \frac{k[x_2,z_2,y_1,t][1/g]}{(...)}$$
where $z_2=sw_1$ on $T^3_{ij}$ and $w_1=tz_2$ on $T^4_{ij}$.

Noting that 
$$\phi^*(u')=s, \qquad\phi^*(y_3')=y_1x_2^{-1}w_1^{-d/r}, \qquad
\phi^*(z_3')=w_1^{d},$$
we define a morphism $\psi_{213ija}$ from the open of $T_{ij}^3$ where
$w_1\neq0$ to $\ZZ_3'$.  Noting that 
$$\phi^*(u')=s, \qquad\phi^*(x_2')=x_2w_1^{d/r}y_1^{-1}, \qquad
\phi^*(z_2')=x_2w_1^{d+d/r}y_1^{-1},$$
we define a morphism $\psi_{213ijb}$ from the open of $T_{ij}^3$ where
$y_1\neq0$ to $\ZZ_2'$.  Since $w_1$ and $y_1$ do not vanish
simultaneously on $T_{ij}^3$, this defines a morphism
$\psi_{213ij}:T_{ij}^3\to\YY$.

Noting that
$$\phi^*(u)=t, \qquad\phi^*(x_2)=x_2z_2^{d/r}y_1^{-1}, \qquad
\phi^*(z_2)=x_2z_2^{d+d/r}y_1^{-1},$$
we define a morphism $\psi_{214ija}$ from the open of $T_{ij}^4$ where
$y_1\neq0$ to $\ZZ_2$.  Noting that 
$$\phi^*(u)=t, \qquad\phi^*(y_3)=y_1x_2^{-1}z_2^{-d/r}, \qquad
\phi^*(z_3)=z_2^d,$$
we define a morphism $\psi_{214ijb}$ from the open of $T_{ij}^4$ where
$z_2\neq0$ to $\ZZ_3$.  
Since $y_1$ and $z_2$ do not vanish
simultaneously on $T_{ij}^4$, this defines a morphism
$\psi_{214ij}:T_{ij}^4\to\YY$.

The morphisms $\psi_{213ij}$ and $\psi_{214ij}$ for varying $ij$ patch
together to give a morphism $\psi_{21}$ from the
part of $\widetilde{\CC\times_k\DD}$ lying over $\UU_2\times\VV_1$ to
$\YY$. 

Finally, the morphisms $\psi_{11}$, $\psi_{22}$, $\psi_{12}$, and
$\psi_{21}$ patch together to give a quasi-finite morphism
$\psi:\widetilde{\CC\rtimes\DD}\to\YY$ that collapses the orbits of
$G$ and induces $\phi$.  This completes the proof of the lemma.
\end{proof}

\begin{proof}[Proof of Proposition~\ref{prop:DPCiso}]
  By Lemma~\ref{lemma:q-finite}, there is a quasi-finite morphism
  $\psi:\widetilde{\CC\times_k\DD}\to\YY$ of generic degree $rd$.  By
  $G$-equivariance, this factors through the quotient to give a
  quasi-finite morphism $\rho:\widetilde{\CC\times_k\DD}/G\to\YY$.
  Considering degrees shows that $\rho$ is birational.  On the other
  hand, $\rho$ is proper (because $\CC\times_k\DD$ is projective) and
  quasi-finite, so finite.  But $\YY$ is normal and a birational,
  finite morphism to a normal scheme is an isomorphism.  This
  establishes that $\rho$ gives the desired isomorphism.
\end{proof}

\begin{remark}
Examining the morphism above shows that the fixed points of types (i)
and (iii) map to the singular points of $\YY$ in the fibers over $u=0$
and $\infty$.  The fixed points of type (ii) map to the singular
points in the fibers over points $u \in \mu_d$.
 This gives another proof
that the singularities of $\YY$ are rational double points of type
$A_{d-1}$ and $A_{r-1}$.
\end{remark}



\chapter{Heights and the visible subgroup}\label{ch:heights}

In this chapter, we work over $K_d=\Fp(\mu_d,u)$ with $u^d=t$, and we
assume that $d=p^\nu+1$ and $r$ divides $d$.  We have explicit points
$P_{ij}$ defined in Chapter~\ref{ch:C} and the subgroup $V$ of
$J(K_d)$ generated by their classes.  Our first main task is to
compute the N\'eron-Tate canonical height pairing on $V$.  We then
compare this with a group-theoretic pairing defined on $R/I$ where $R$
is the group ring $\Z[\mu_d\times\mu_r]$ and $I$ is the ideal defined
in Section~\ref{s:relations}.  This allows us to show that there is an
$R$-module isomorphism $V\cong R/I$.  We also compute the discriminant
of the height pairing on $V$.

\section{Height pairing}
\label{sec:height-pairing}

In this section, we compute the height pairing on various points of
$J(K_d)$. Recall that we identify $C$ with its image in $J$ by
$P\mapsto [P-Q_\infty]$.  We consider the N\'eron-Tate canonical
height pairing divided by $\log |\Fp(\mu_d)|$, as discussed for
example in \cite[Section 4.3]{CRM}.  This is a $\Q$-valued, non-degenerate,
bilinear pairing that is defined at the beginning of the next subsection.

We compute $\langle P_{ij}, P_{00}\rangle$ for $0\le i\le d-1$
and $0\le j\le r-1$. This determines the pairing, since its
compatibility with the action of $\mu_d\times\mu_r$ implies that
$\langle P_{ij},P_{i'j'}\rangle=\langle P_{i-i',j-j'}, P_{00}\rangle$.

\begin{theorem}\label{thm:height-pairing}
  The height pairing $\langle P_{ij}, P_{00}\rangle$ is given by
  \[
  \langle P_{ij}, P_{00}\rangle = -\frac{d-1}{rd} \cdot
  \begin{cases}
    -(r-1)(d-2) & \text{if } (i,j) = (0,0), \\
    r-2 & \text{if } i\not\equiv 0 \bmod{r}, j = 0, \\
    2r-2 & \text{if } i\neq 0, i \equiv 0 \bmod{r}, j = 0, \\
    d - 2 & \text{if } i = 0, j \neq 0, \\
    r - 2 & \text{if } i \neq 0, j \neq 0, i+j \equiv 0 \bmod{r}, \\
    -2 & \text{if } i \neq 0, j \neq 0, i+j \not\equiv 0 \bmod{r}.
  \end{cases}.
  \]
\end{theorem}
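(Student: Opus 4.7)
The plan is to compute the pairing via intersection theory on the regular proper model $\XX\to\P^1_u$ constructed in Chapter~\ref{ch:models}, following the standard recipe for N\'eron–Tate heights on Jacobians over function fields (as in Shioda's formalism extended from elliptic surfaces). For each point $P_{ij}\in C(K_d)$ let $\mathcal{P}_{ij}\subset\XX$ denote the horizontal divisor (closure of the section), and let $\mathcal{Q}_\infty$ denote the closure of the base point. Then
\[
\langle P_{ij},P_{i'j'}\rangle \;=\; -\bigl((\mathcal{P}_{ij}-\mathcal{Q}_\infty)+\Phi_{ij}\bigr)\cdot\bigl((\mathcal{P}_{i'j'}-\mathcal{Q}_\infty)+\Phi_{i'j'}\bigr),
\]
where $\Phi_{ij}$ is the unique $\Q$-divisor supported on components of singular fibers (not meeting $\mathcal{Q}_\infty$) that makes $(\mathcal{P}_{ij}-\mathcal{Q}_\infty)+\Phi_{ij}$ orthogonal to every fiber component. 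By compatibility with the $\mu_d\times\mu_r$-action it suffices to pair with $P_{00}$, so I would only compute $\langle P_{ij},P_{00}\rangle$.

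First I would compute the geometric intersection numbers on $\XX$. The sections $\mathcal{P}_{ij}$ and $\mathcal{P}_{00}$ meet the generic fiber in distinct points, so $\mathcal{P}_{ij}\cdot\mathcal{P}_{00}$ equals the number of values $u\in\overline{\Fp}$ where the two sections coincide, counted with multiplicity; using the explicit parametrization
$P_{ij}=(\zeta_d^i u,\zeta_r^j\zeta_d^iu(\zeta_d^iu+1)^{d/r})$ and working in the affine chart $\ZZ_{11}$ near $u=0$, this intersection can be read off directly. Similarly, the sections $\mathcal{P}_{ij}$ and $\mathcal{Q}_\infty$ only meet in the fiber over $u=\infty$, with multiplicity determined by the orders of vanishing of the coordinates on the curve; analogous analysis near $u=\infty$ (using the primed coordinates of Section~\ref{ss:YY}) yields these contributions. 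I expect $\mathcal{P}_{ij}\cdot\mathcal{Q}_\infty$ and $\mathcal{P}_{ij}\cdot\mathcal{P}_{00}$ to be nonzero only at $u=0$, $u=\infty$, and when $i=0,j=0$, giving the ``$-\tfrac{d-1}{rd}$'' overall normalization.

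Next I would compute the local correction terms $\Phi_{ij}\cdot(\mathcal{P}_{00}-\mathcal{Q}_\infty)$ fiber by fiber. Using the explicit configurations of Figures~\ref{fig:u-equals-zero}, \ref{fig:superelliptic-dual-graph-odd}, and \ref{fig:superelliptic-dual-graph-even}, I would first determine which component each section $\mathcal{P}_{ij}$ meets in each bad fiber. At $u=0$, the coordinates $(x_{11},y_{11})=(\zeta_d^{i-1},\zeta_r^j\zeta_d^{i-1}(\zeta_d^iu+1)^{d/r}/u)$ show that $\mathcal{P}_{ij}$ hits the end of the string of components indexed by $i \bmod r$ (and $\mathcal{Q}_\infty$ hits $C_0$ or $C_{r(d-1)+1}$), and analogously at $u=\infty$. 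At points $u\in\mu_d$ with $\zeta_d^i u = -1$, the section $\mathcal{P}_{ij}$ passes through the node $x=y=0$, i.e.\ meets the central component $F$ (with contributions depending on $j \bmod r$ through $\zeta_r^j$). The correction $\Phi_{ij}$ is then recovered by inverting the corresponding intersection matrices—type $A_{d-1}$ at $u=0,\infty$ and type $A_{r-1}$ at $u\in\mu_d$—whose Green's functions $(A_n^{-1})_{kl}$ have closed forms $-k(n+1-l)/(n+1)$ for $k\le l$.

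The main obstacle is bookkeeping: correctly identifying the component hit at each of the $d$ bad fibers over $u\in\mu_d$ (the distinction between $i+j\equiv 0\bmod r$ and $i+j\not\equiv 0\bmod r$ in the answer comes from the $\zeta_r^j$ factor deciding whether the section passes through the distinguished node $F\cap G$ or not, in the even-$r$ case) and combining the six independent local contributions into the stated six-case formula. After assembling, I would verify the totals by exploiting the consistency checks $\sum_{i,j}\langle P_{ij},P_{00}\rangle=0$ (since $\sum_{i,j} P_{ij}$ is torsion by the relation $f_0$ of Section~\ref{s:relations}) and $\sum_j\langle P_{ij},P_{00}\rangle=0$ for each $i$ (using the relation $F_i$); these force the linear combinations of the six values in the theorem to vanish and give strong numerical sanity checks on the case analysis.
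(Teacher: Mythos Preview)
Your overall strategy matches the paper's, but several of the local identifications are wrong and would derail the computation. First, $\mathcal{P}_{ij}\cdot\mathcal{Q}_\infty=0$ always: in the model $\YY$ the sections $P_{ij}$ live entirely in $\ZZ_{12}\cup\ZZ'_{12}$ while $Q_\infty$ lives in $\ZZ_2\cup\ZZ_2'$, so they never meet. By contrast, when $i=0$ and $j\neq 0$ the sections $\mathcal{P}_{0j}$ and $\mathcal{P}_{00}$ \emph{do} meet, over $u=-1$, with multiplicity $d/r-1$ (visible after the first blow-up), a contribution you have not accounted for. More seriously, your component assignments are switched. At $u=0$, in the chart $\ZZ_{12}$ the section specializes to $(x_{12},y_{12})=(0,\zeta_r^j)$, so the string of exceptional curves it meets is indexed by $j$, not by $i\bmod r$; one finds $P_{ij}$ lands on $C_{j(d-1)+1}$. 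At $u=\infty$ the analogous specialization is $(0,\zeta_r^{i+j})$, and it is \emph{here}---not at the fibers over $\mu_d$---that the case distinction $i+j\equiv 0$ versus $i+j\not\equiv 0\pmod r$ arises. At $u=\zeta_d^k$ the section meets the non-identity component $E_1$ precisely when $\zeta_d^{i+k}=-1$, a condition depending only on $i$, not on $j$ or on $i+j$.

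Second, the fiber over $u=0$ (likewise $u=\infty$) is not a single $A_{d-1}$ chain but $r$ such chains joining two rational end-components (Figure~\ref{fig:u-equals-zero}); after deleting the identity component the intersection matrix is the block matrix $B(d,\dots,d)$ of Lemma~\ref{lemma:block-Am-determinant}, and the correction factors require the cofactor determinants of Lemmas~\ref{lemma:block-Am-determinant} and~\ref{lemma:B'-det} rather than the $A_n$ Green's function you quote. Finally, for the $(0,0)$ case you also need the self-intersections $Q_\infty^2=-d/r$ and $P_{00}^2=d/r-d$, computed via the conormal bundle as in Section~\ref{sec:p_002-q_infty2}; these are not mentioned in your plan.
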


This was already proved in \cite[Section 8]{Legendre} in the case
$r=2$, so to avoid distracting special cases, we assume $r>2$ for the
rest of this section.

\subsection{Basic theory}
\label{sec:basic-theory}

Let $P$ and $P'$ be two points on $C(K_d)$ identified as usual with a
subset of $J(K_d)$ using $Q_\infty$ as a base point; we later set
$P = P_{00}$ and $P' = P_{ij}$. Then the height pairing is defined by
\begin{align*}
\langle P, P'\rangle &= -(P-Q_\infty-D_P)\cdot (P'-Q_\infty)\\
&=-P \cdot P' + P \cdot Q_\infty + P' \cdot Q_\infty 
- Q_\infty^2 - D_P \cdot P', 
\end{align*}
with notation as follows: we identify a point of $C$ with the
corresponding section of the regular proper model $\pi:\XX\to\P^1_u$
and the dot indicates the intersection pairing on $\XX$.  The divisor
$D_P$ is a divisor with $\Q$-coefficients that is supported on
components of fibers of $\pi$ and satisfies
$(P- Q_\infty+D_P)\cdot Z=0$ for every component $Z$ of every fiber of
$\pi$.  We may also insist that $D_P\cdot Q_\infty=0$ in which case
$D_P$ is uniquely determined.  The ``correction term'' $D_P \cdot P'$
is a sum of local terms that depend only on the components of the
fiber that $P$ and $P'$ meet.  The other intersection pairings can be
computed as sums of local terms, except the self-intersection
$Q_\infty\cdot Q_\infty$ and (when $P'=P$) $P\cdot P$.  The latter two
are computable in terms of the degree of a conormal bundle.

\subsection{Auxiliary results}
\label{sec:auxiliary-results}

The following results are useful for computing the various intersection
numbers.

In the first result, we focus attention on the special fiber at a
place $v$ with components $C_0, C_1, \dots, C_n$ and let $A_{ij}$
(with indices $0\le i,j\le n$) be the intersection matrix:
$A_{ij}=C_i\cdot C_j$.  We number the components so that $Q_\infty$
meets $C_0$.  We write $(D_P\cdot P')_v$ for the part of the
intersection multiplicity coming from intersections in the fiber over
$v$.  With these conventions, it is easy to see that if $P$ or $P'$
meets $C_0$, then $(D_P\cdot P')_v=0$.

\begin{lemma}\label{lemma:dp-dot-p-cofactor}
  Suppose that $P$ intersects $C_k$, and $P'$ intersects $C_\ell$, with
  $k,\ell> 0$. Let $B$ be the matrix obtained by deleting the $0$-th
  row and column from $A$. Let $B'$ be the submatrix obtained by
  deleting the $k$-th row and $\ell$-th column from $B$. Finally, let
  $D_P$ denote the fibral divisor satisfying the conditions described
  above.  Then
  \[
  D_P \cdot P' = (-1)^{k+\ell+1} \frac{\det (B')}{\det(B)}
   = (-1)^{k+\ell} \frac{\det (-B')}{\det(-B)}.
  \]
\end{lemma}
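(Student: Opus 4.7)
The plan is to solve for $D_P$ explicitly as a linear combination of fiber components and then to apply Cramer's rule. Write $D_P = \sum_{i=0}^n a_i C_i$. The condition $D_P\cdot Q_\infty = 0$, combined with the fact that $Q_\infty$ is a section meeting only $C_0$ among the fibral components, forces $a_0 = 0$. Since $P$ and $P'$ are sections of the regular proper model $\XX\to\P^1$, each meets exactly one component transversely, so $P\cdot C_j=\delta_{jk}$ and $P'\cdot C_j=\delta_{j\ell}$.

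The defining relations $(P-Q_\infty+D_P)\cdot C_j=0$ for $j=1,\ldots,n$ then read as $B\vec a=-\vec e_k$, where $\vec a=(a_1,\ldots,a_n)^T$ and $\vec e_k$ is the $k$-th standard basis vector of $\Q^n$. This system has a unique solution because $B$ is invertible: by Zariski's lemma the full intersection matrix $A$ is negative semi-definite with one-dimensional kernel spanned by the class of the fiber, and since every component appears with positive multiplicity in the fiber, the principal submatrix $B$ obtained by deleting the row and column corresponding to $C_0$ is in fact negative definite.

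The desired quantity is $D_P\cdot P'=\sum_i a_i(C_i\cdot P')=a_\ell$, using $a_0=0$. Cramer's rule applied to $B\vec a=-\vec e_k$ yields
\[
a_\ell \;=\; -(B^{-1})_{\ell k} \;=\; -\frac{(-1)^{k+\ell}\det(B')}{\det(B)} \;=\; (-1)^{k+\ell+1}\frac{\det(B')}{\det(B)},
\]
where $B'$ is the submatrix of $B$ obtained by deleting the $k$-th row and $\ell$-th column, so $\det(B')$ is the $(k,\ell)$-minor of $B$. The alternative form with $-B'$ and $-B$ follows at once from $\det(-B)=(-1)^n\det B$ and $\det(-B')=(-1)^{n-1}\det B'$, which produces a compensating sign. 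The only point requiring real care is the invertibility of $B$, which is handled by Zariski's lemma as above; everything else is linear algebra.
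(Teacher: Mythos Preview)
Your proof is correct and follows essentially the same approach as the paper's: write $D_P$ as a linear combination of components, use $D_P\cdot Q_\infty=0$ to eliminate the $C_0$ coefficient, reduce the defining conditions to the linear system $B\vec a=-\vec e_k$, and read off $a_\ell$ via Cramer's rule. You add a bit more detail than the paper does, namely an explicit justification of the invertibility of $B$ via Zariski's lemma and a verification of the sign in the second displayed equality, but the argument is otherwise identical.
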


\begin{proof}
  Write $D_P = \sum_{h=0}^{n} d_h C_h$ with $d_h\in\Q$.  The
  conditions on $D_P$ imply $D_P \cdot C_h = (Q_\infty - P) \cdot C_h$
  for all $h$. Also $d_0 = 0$ because $D_P \cdot Q_\infty =0$.  The
  intersection number $(D_P\cdot P')_v$ is just $d_\ell$.

  Writing $\vd = (d_1, \dots, d_n)^t$, the conditions on $D_P$ are
  equivalent to
$$  B\vd = -\ve_k,$$
where $\ve_k$ is the $k$-th standard basis vector.  Since $B$ is
non-singular, the unique solution $\vd$ is given by Cramer's rule, and
thus
$$(D_P\cdot P')_v=d_\ell=(-1)^{k+\ell+1}\frac{\det (B')}{\det(B)},$$
as desired.
\end{proof}

\begin{lemma}\label{lemma:am-defn-det}
  Let $A_m$ be the $m \times m$ root matrix of type $A$, in other
  words, the matrix whose entries are given by
  \[
  a_{ij} = \begin{cases}
    -2 & \text{if } i = j, \\
    1 & \text{if } |i - j| = 1,\\
    0 & \text{otherwise.}
  \end{cases}
  \]
  Then $\det (-A_m) =(m+1)$.
\end{lemma}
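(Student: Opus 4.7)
The plan is to proceed by induction on $m$, using cofactor expansion along the first row (or column) of $-A_m$ to obtain a two-term linear recurrence for the determinants.

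First I would check the base cases directly: $\det(-A_1) = 2$ and $\det(-A_2) = \det\bigl(\begin{smallmatrix}2 & -1 \\ -1 & 2\end{smallmatrix}\bigr) = 3$, which agree with the claimed values $1+1$ and $2+1$. Then, expanding $\det(-A_m)$ along the first row, the only nonzero entries are $2$ in position $(1,1)$ and $-1$ in position $(1,2)$. The $(1,1)$ minor is $\det(-A_{m-1})$. The $(1,2)$ minor, after deleting the first row and second column, is block upper-triangular with a $1 \times 1$ block $(-1)$ in the top-left corner and an $(m-2) \times (m-2)$ block equal to $-A_{m-2}$ in the bottom-right; hence this minor equals $-\det(-A_{m-2})$. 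Collecting signs yields
\[
\det(-A_m) = 2\det(-A_{m-1}) - \det(-A_{m-2}).
\]

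Since the sequence $a_m := m+1$ satisfies both the base cases ($a_1 = 2$, $a_2 = 3$) and the recurrence $a_m = 2a_{m-1} - a_{m-2}$, induction on $m$ gives $\det(-A_m) = m+1$, as required.

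I do not expect any real obstacle here: the only thing to be a little careful about is correctly tracking the sign when computing the $(1,2)$ cofactor, which is why I prefer working with $-A_m$ (all diagonal entries $+2$, off-diagonals $-1$) rather than $A_m$ itself. An alternative, essentially equivalent, approach would be to observe that $-A_m$ is the Gram matrix of the simple roots of the root system of type $A_m$, whose determinant is the order of the associated finite Weyl-group-related quotient and is classically known to be $m+1$; but the elementary recurrence argument above is self-contained and shorter.
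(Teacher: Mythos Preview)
Your proof is correct and is exactly the standard induction argument the paper has in mind; the paper's own proof simply says ``This is a standard exercise using induction on $m$'' and cites Humphreys.
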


\begin{proof}
  This is a standard exercise using induction on $m$.  See
  \cite[Page~63]{HumphreysILAART}.
\end{proof}

\begin{lemma}\label{lemma:block-Am-determinant}
  Let $d_1, \dots, d_r$ be positive integers, and let $B = B(d_1,
  \dots, d_r)$ be the block matrix
  \[
  \left[\begin{array}{ccccc}
    A_{d_{1}-1} & & & & e_{d_{1}-1} \\
    & A_{d_{2}-1} & & & e_{d_{2}-1} \\
    & & \ddots & & \vdots \\
    & & & A_{d_{r}-1} & e_{d_{r}-1} \\
    e_{d_{1}-1}^T & e_{d_{2}-1}^T & \cdots & e_{d_{r}-1}^T & -r
  \end{array}\right],
  \]
  where $A_{m}$ is the $m\times m$ root matrix discussed in
  Lemma~\ref{lemma:am-defn-det}, and $e_{m}$ is the column vector of
  length $m$ with a $1$ in the last spot and $0$ everywhere else.
  \textup{(}If $m=0$, then $A_{m}$ and $e_{m}$ are by convention empty
  blocks.\textup{)} Then
  \[
  \det (-B) = \bigg(\prod d_i\bigg)\bigg(\sum \frac{1}{d_i}\bigg).
  \]
\end{lemma}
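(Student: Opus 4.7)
The plan is to compute $\det(B)$ via the Schur complement formula applied to the natural $2\times 2$ block decomposition $B = \psmat{A & v \\ v^T & -r}$, where $A = \operatorname{diag}(A_{d_1-1}, \ldots, A_{d_r-1})$ is block diagonal and $v$ is the stacked column vector $(e_{d_1-1}^T, \ldots, e_{d_r-1}^T)^T$ appearing in the last column.  The matrix $A$ is invertible: Lemma~\ref{lemma:am-defn-det} gives $\det(A_m) = (-1)^m(m+1)\neq 0$, and empty blocks (arising when some $d_i = 1$) contribute trivially.  The Schur complement identity then yields
$$\det(B) = \det(A)\cdot\bigl(-r - v^T A^{-1} v\bigr),$$
so the problem reduces to evaluating $\det(A)$ and $v^T A^{-1} v$.

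For the quadratic form, I would exploit that $A^{-1}$ is block diagonal with blocks $A_{d_i-1}^{-1}$, reducing it to $v^T A^{-1} v = \sum_i e_{d_i-1}^T A_{d_i-1}^{-1} e_{d_i-1}$.  Each summand is precisely the bottom-right entry of $A_{d_i-1}^{-1}$, and by Cramer's rule this entry equals $\det(A_{d_i-2})/\det(A_{d_i-1})$.  A second application of Lemma~\ref{lemma:am-defn-det} evaluates this ratio as $-(d_i-1)/d_i$ (understood as $0$ when $d_i = 1$).  Summing,
$$v^T A^{-1} v = -\sum_i \frac{d_i-1}{d_i} = -r + \sum_i \frac{1}{d_i},$$
so that $-r - v^T A^{-1} v = -\sum_i 1/d_i$.

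Assembling the pieces, $\det(A) = \prod_i (-1)^{d_i-1}d_i$, and therefore
$$\det(B) = (-1)^{\sum_i (d_i-1)+1}\bigl(\textstyle\prod_i d_i\bigr)\bigl(\textstyle\sum_i 1/d_i\bigr).$$
Since $B$ has size $n = \sum_i(d_i-1) + 1$, the factor $(-1)^n$ in $\det(-B) = (-1)^n\det(B)$ cancels the accumulated sign, leaving
$$\det(-B) = \bigl(\textstyle\prod_i d_i\bigr)\bigl(\textstyle\sum_i 1/d_i\bigr),$$
as claimed.  The only points requiring any care are the sign bookkeeping and the convention for empty blocks; both are mechanical.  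I do not anticipate a substantive obstacle, as this is a pure linear algebra calculation built directly on the determinant evaluation of Lemma~\ref{lemma:am-defn-det}.
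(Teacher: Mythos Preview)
Your proof is correct, but takes a different route from the paper. The paper computes $\det(-B)$ directly by Laplace expansion: first across the bottom row, then down the rightmost column. The diagonal cofactor (from the entry $r$ in $-B$) contributes $r\prod_i d_i$, and the off-diagonal pairings contribute $-(d_i-1)\prod_{j\neq i}d_j$ when the row and column indices match (and zero otherwise), again via Lemma~\ref{lemma:am-defn-det}. Summing gives $r\prod_i d_i-\sum_i(d_i-1)\prod_{j\neq i}d_j=(\prod_i d_i)(\sum_i 1/d_i)$.

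Your Schur-complement approach packages the same cofactor information more cleanly: the quantity $v^TA^{-1}v$ absorbs exactly the off-diagonal contributions the paper computes by hand, and the block-diagonal structure of $A$ makes it transparent. The trade-off is that the paper's argument is slightly more elementary (no Schur complement needed), while yours avoids the explicit cofactor bookkeeping and the somewhat delicate tracking of which minor is which. Both rely on Lemma~\ref{lemma:am-defn-det} in the same essential way, so neither is deeper; yours is just more structured.
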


\begin{proof}
  We compute the determinant of $-B$ by applying Laplace (cofactor)
  expansion, first across the bottom row, and then down the rightmost
  column.  Using Lemma~\ref{lemma:am-defn-det}, the cofactor
  corresponding to the entry $r$ is $r\prod d_i$.  Another application
  of Lemma~\ref{lemma:am-defn-det} shows that the cofactor
  corresponding to removing the bottom row, the row containing the 1
  in $e_{d_i-1}$, the rightmost column, and the column containing the
  1 of $e^T_{d_j-1}$ is $-(d_i-1)\prod_{j\neq i}d_j$ if $i=j$ and zero
  otherwise.  This shows that the determinant of $-B$ is
$$r\prod_{i=1}^rd_i-\sum_{i=1}^r(d_i-1)\prod_{j\neq i}d_j,$$
which is equal to $(\prod d_i)(\sum 1/d_i)$ as desired.
\end{proof}

\begin{lemma}\label{lemma:B'-det}
  Let $d\ge2$ and $r\ge2$ be integers and let $B=B(d,d,\dots,d)$ \textup{(}with
  $d$ repeated $r$ times, using the notation of
  Lemma~\ref{lemma:block-Am-determinant}\textup{)}.
\begin{enumerate}
\item Let $B'$ be the matrix obtained from
  $B$ by deleting the first row and the $d$-th column.  Then
  $\det(-B')=(-1)^{d-1}d^{r-2}$.
\item Let $B''$ be the matrix obtained from
  $B$ by deleting row $d-1$ and column $2(d-1)$.  Then
  $\det(-B'')=(-1)^{d-1}(d-1)^2d^{r-2}$.
\end{enumerate}
\end{lemma}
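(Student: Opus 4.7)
My plan is to reduce both determinants to smaller ones via a common Schur-complement argument, and then evaluate those by Laplace expansion.

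I will partition the row and column indices of $-B'$ (respectively $-B''$) into $I \sqcup J$, where $J$ collects the indices of the $r-2$ diagonal blocks $-A_{d-1}$ left untouched by the deletion (those labeled $3,4,\ldots,r$), and $I$ collects everything else. The only connections between $I$ and $J$ pass through the last row and last column, so $M|_{I,J}$ has a single nonzero row (the last one, containing $-e_{d-1}^T$ for each block in $J$) and similarly for $M|_{J,I}$. Using $((-A_{d-1})^{-1})_{d-1,d-1} = (d-1)/d$ (a one-line cofactor computation from Lemma~\ref{lemma:am-defn-det}), the Schur-complement formula gives
\[
  \det(-B^{\dagger}) = d^{r-2}\,\det(S), \qquad S := M|_{I,I} - \tfrac{(r-2)(d-1)}{d}\,E,
\]
where $M = -B^{\dagger}$ with $\dagger \in \{\,',\,''\,\}$ and $E$ has a single $1$ in the corner entry (last row, last column) and zeros elsewhere. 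It remains to compute the $(2d-2) \times (2d-2)$ determinant $\det(S)$ in each case.

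For part (1), a direct inspection yields the block form
\[
  S = \begin{pmatrix} \mathrm{B}_1' & 0 & -e_{d-2} \\ 0 & \mathrm{B}_2' & -e_{d-1} \\ -e_{d-1}^T & -e_{d-2}^T & c \end{pmatrix},
\]
where $\mathrm{B}_1'$ (resp.\ $\mathrm{B}_2'$) is $-A_{d-1}$ with its first row (resp.\ first column) deleted, and $c = (r+2(d-1))/d$. I would apply Laplace expansion of $\det(S)$ along the $d-1$ columns of $\mathrm{B}_1'$. The nonzero entries in those columns lie only in the rows of $\mathrm{B}_1'$ and in the last row, so the unique row-subset $I_0$ producing a nonvanishing minor is the union of those $d-1$ rows. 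The resulting minor $S[I_0,\text{block-}1\text{ cols}]$ is upper triangular with $-1$ on the diagonal, and the complementary minor $S[\overline{I_0},\text{block-}2\text{ cols}\cup\{\text{last col}\}]$ is lower triangular with $-1$ on the diagonal, so each has determinant $(-1)^{d-1}$. A short sign computation shows the Laplace sign for this term is also $(-1)^{d-1}$, whence $\det(S) = (-1)^{d-1}$ and part (1) follows.

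For part (2), the analogous block form is
\[
  S = \begin{pmatrix} \mathrm{B}_1'' & 0 & 0 \\ 0 & \mathrm{B}_2'' & -e_{d-1} \\ -e_{d-1}^T & 0 & c \end{pmatrix},
\]
where $\mathrm{B}_1''$ (resp.\ $\mathrm{B}_2''$) is $-A_{d-1}$ with its last row (resp.\ last column) deleted. I would apply Laplace expansion along the $d-2$ columns of $\mathrm{B}_2''$. The $d-1$ row-subsets of $\mathrm{B}_2''$ of size $d-2$ all give a nonzero first minor, but only the one $I_1$ consisting of the first $d-2$ rows produces a nonvanishing complementary minor (any other choice leaves a zero row in the complement). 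For that choice, $\det(S[I_1,\text{block-}2\text{ cols}])=\det(-A_{d-2})=d-1$ by Lemma~\ref{lemma:am-defn-det}, while expanding the complementary $d \times d$ minor along its last column kills the $c$-contribution and leaves a single term equal to $-\det(-A_{d-2}) = -(d-1)$. Combined with a Laplace sign of $(-1)^d$, this gives $\det(S) = (-1)^{d-1}(d-1)^2$, as claimed.

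The main computational obstacle is pinpointing the unique row-subset contributing to each Laplace expansion and carefully tracking sign conventions; once that is done, the remaining minors reduce to triangular matrices or to $-A_{d-2}$, whose determinants are immediate from Lemma~\ref{lemma:am-defn-det}.
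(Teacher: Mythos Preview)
Your argument is correct and takes a genuinely different route from the paper's proof.

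The paper proves both parts by iterated single-column cofactor expansion: for part~(1) it peels off one column at a time from the first two mutilated $A_{d-1}$ blocks (using the auxiliary matrices $R_n$, $S_n$), tracking signs step by step until the matrix collapses to something whose determinant can be read off by a final cofactor expansion. Part~(2) is handled similarly, reducing first to a block-diagonal matrix with blocks $A_{d-2}$, $A_{d-2}$, and $r-2$ copies of $A_{d-1}$.

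Your approach instead uses a Schur complement with respect to the $r-2$ untouched $-A_{d-1}$ blocks, which immediately and transparently produces the factor $d^{r-2}$ and reduces the problem to the determinant of a single $(2d-2)\times(2d-2)$ matrix~$S$ independent of~$r$ (apart from the corner entry~$c$, whose value turns out to be irrelevant in both cases). You then finish with a generalized Laplace expansion along the columns of one of the two remaining mutilated blocks; in each case only a single row-subset contributes, and the surviving minors are triangular or reduce to $-A_{d-2}$.

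What each approach buys: the paper's method is completely elementary (only ordinary cofactor expansion and Lemma~\ref{lemma:am-defn-det}) but requires $2d{-}3$ reduction steps with careful bookkeeping. Your method front-loads the structure via the Schur complement, making the $d^{r-2}$ factor conceptually clear, at the cost of invoking generalized Laplace expansion and a slightly more delicate sign computation. One small point: your remark that all $d-1$ row-subsets of $\mathrm{B}_2''$ give a nonzero first minor is not actually verified (nor needed), since the complementary minor vanishes for all but the one subset you identify; the argument goes through regardless.
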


\begin{proof}
(1)  For $1\le n\le d-2$, let $R_n$ be the matrix obtained from $A_{d-1}$ by
  deleting the first $n$ rows and the first $n-1$ columns.  Similarly,
  let $S_n$ be the matrix obtained from $A_{d-1}$ by
  deleting the first $n$ columns and the first $n-1$ rows.  The matrix
  $B'$ under discussion is thus
$$B'=B'_1=\left[\begin{array}{cccccc}
R_1 & & & & & e_{d-2} \\
& S_1 & & & & e_{d-1} \\
& & A_{d-1} & & & e_{d-1} \\
& & & \ddots & & \vdots \\
& & & & A_{d-1} & e_{d-1} \\
e_{d-1}^T & e_{d-2}^T & e_{d-1}^T & \cdots & e_{d-1}^T & -r
\end{array}\right]$$
where there are $r-2$ blocks of $A_{d-1}$.  Note that if $d \geq 3$,
the upper left entry of $B'_1$ is 1 and the rest of the first column
is zero.  Expanding in cofactors down the first column shows that
$\det(B'_1)=\det(B'_2)$ where
$$B'_2=\left[\begin{array}{cccccc}
R_2 & & & & & e_{d-3} \\
& S_1 & & & & e_{d-1} \\
& & A_{d-1} & & & e_{d-1} \\
& & & \ddots & & \vdots \\
& & & & A_{d-1} & e_{d-1} \\
e_{d-2}^T & e_{d-2}^T & e_{d-1}^T & \cdots & e_{d-1}^T & -r
\end{array}\right].$$
Continuing in similar fashion for another $d-3$ steps shows that
$\det(B_1')=\det(B_{d-1}')$ where
$$B'_{d-1}=\left[\begin{array}{cccccc}
0 & S_1 & & & & e_{d-1} \\
& & A_{d-1} & & & e_{d-1} \\
& & & \ddots & & \vdots \\
& & & & A_{d-1} & e_{d-1} \\
1 & e_{d-2}^T & e_{d-1}^T & \cdots & e_{d-1}^T & -r
\end{array}\right].$$
Now we expand across rows of the $S_1$, finding that
$\det(B_1')=-\det(B_{d}')$ where 
$$B'_{d}=\left[\begin{array}{cccccc}
0 & S_2 & & & & e_{d-2} \\
& & A_{d-1} & & & e_{d-1} \\
& & & \ddots & & \vdots \\
& & & & A_{d-1} & e_{d-1} \\
1 & e_{d-3}^T & e_{d-1}^T & \cdots & e_{d-1}^T & -r
\end{array}\right].$$
Continuing in similar fashion for another $d-3$ steps shows that
$\det(B_1')=(-1)^d\det(B_{2d-3}')$ where
$$B'_{2d-3}=
\left[\begin{array}{cccccc}
    0 & & & & 1 \\
    & A_{d-1} & & & e_{d-1} \\
    & & \ddots & & \vdots \\
    & & & A_{d-1} & e_{d-1} \\
    1 & e_{d-1}^T & \cdots & e_{d-1}^T & -r
\end{array}\right].$$
Expanding in cofactors across the top row and then the leftmost column
shows that $\det(B'_{2d-3})=(-1)^{r(d-1)+1}d^{r-2}$.  Thus
$$\det(-B'_1)=(-1)^{r(d-1)}\det(B'_{1})=
(-1)^{r(d-1)+d}\det(B_{2d-3}')=
(-1)^{d-1}d^{r-2},$$ 
as desired.

(2) The matrix $B''$ has the form
$$\left[\begin{array}{ccccccc}
  A_{d-2} & e_{d-2} & & & & e_{d-2} \\
  & & A_{d-2} & & & & \\
  & & e_{d-2}^T & & & 1 \\
  & & & A_{d-1} & & e_{d-1} \\
  & & & & \ddots & \vdots \\
  & 1 & e_{d-2}^T & e_{d-1}^T & \cdots & -r
\end{array}\right].$$
To compute the determinant, we expand in cofactors along
$(d-1)^{\textup{st}}$ column.  There are two cofactors; the first one,
corresponding the the last entry of $e_{d-2}$, has the matrix $R'_{2}$
in the upper left corner, where $R'_n$ is defined like $R_n$ except we
delete the \emph{last} $n$ rows and the last $n-1$ columns. This
cofactor is zero since the corresponding matrix visibly has
less-than-maximal rank. The other term comes from the 1 in the last
row, yielding
$$\det (-B'') = (-1)^{r(d-1)+1}\det \left(-\left[
  \begin{array}{cccccc}
  A_{d-2} & & & & & \\
  & A_{d-2} & & & & \\
  & e_{d-2}^T & & & 1 \\
  & & A_{d-1} & & e_{d-1} \\
  & & & \ddots & \vdots \\
  & & & A_{d-1} & e_{d-1}
  \end{array}\right]\right).$$
Expanding in cofactors along $(2d-3)^{\textup{rd}}$ row and using similar
reasoning leads to
$$\det (-B'') = (-1)^{(d-1)}\det \left(-\left[
  \begin{array}{cccccc}
  A_{d-2} & & & & & \\
  & A_{d-2} & & & & \\
  & & A_{d-1} & & \\
  & & & \ddots &  \\
  & & & & A_{d-1}
  \end{array}\right]\right).$$
The claim now follows by Lemma~\ref{lemma:am-defn-det}.
\end{proof}

The next result is useful for finding the component that a section
meets at a bad fiber.  To set it up, let $k$ be a field, let
$R=k[u]_{(u)}$ (localization of the polynomial ring at $u=0$), and let
$\ZZ=\spec R[\alpha,\beta]/(\alpha\beta-u^n)$ where $n$ is prime to
the characteristic of $k$. Suppose that $\YY\to\spec R$ is a proper
relative curve and that $P$ is a point in the special fiber of $\YY$
near which $\YY$ is \'etale locally isomorphic to $\ZZ$.  More
precisely, we assume that there is a Zariski open neighborhood $U$ of
$P$ in $\YY$ and an \'etale $R$-morphism $\phi:U\to\ZZ$ sending $P$ to
the origin ($u=\alpha=\beta=0$) in $\ZZ$.  Let $f=\phi^*(\alpha)$ and
$g=\phi^*(\beta)$.  Let $\pi:\XX\to\YY$ be the minimal regular model
of $\YY$ and suppose that $s:\spec R\to\XX$ is a section such that
$\pi\compose s$ passes through $P$. Let $Q$ be the closed point of
$\spec R$.

\begin{lemma}\label{lemma:fg-equals-u-blow-up}
With the notation above:
  \begin{enumerate}
      \item The fiber of $\pi$ over $P$ consists of a chain of $n-1$
        rational curves $Z_1,\dots,Z_{n-1}$ that can be numbered so that
        $Z_i$ meets $Z_j$ if and only if $|i-j|=1$ and so that $E_1$
        meets the strict transform of $f=0$ in $\XX$.
      \item $s(Q)$ meets $E_i$ if and only if $g\compose s\in R$ has
        $\ord_u(g\compose s)=i$.
      \item $g/u^i$ restricted to $E_i$ induces an isomorphism
        $E_i\cong\P^1$.  In particular, two sections $s$ and $s'$
        meeting $E_i$ intersect there if and only if $g\compose
        s\equiv g\compose s'\mod{u^{i+1}}$.
  \end{enumerate}
\end{lemma}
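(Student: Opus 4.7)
The plan is to work \'etale-locally on $\ZZ$, to resolve the singularity $\alpha\beta = u^n$ at the origin by an explicit sequence of blow-ups, and then to translate the results back to $\YY$ via $\phi$. Since $\phi$ is \'etale and blow-ups commute with \'etale base change, the minimal regular model of $\ZZ$ pulls back to the portion of $\XX$ over a neighbourhood of $P$, with $f$ and $g$ playing the roles of $\alpha$ and $\beta$ on the \'etale cover. A section $s$ of $\XX \to \spec R$ with $\pi\compose s$ through $P$ then satisfies $(f\compose s)(g\compose s) = u^n$ in $R$; setting $b = \ord_u(g\compose s)$ and $a := n-b$, we obtain $g\compose s = u^b v$ and $f\compose s = u^a v^{-1}$ for a unit $v \in R^\times$.

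For part~(1), I resolve by iterated blow-ups of ideals $(\alpha^{(j-1)}, u)$, with $\alpha^{(0)} := \alpha$. At step~$j$ one chart $U_1^{(j)}$ with coordinates $\alpha^{(j-1)}, U^{(j)} := u/\alpha^{(j-1)}$ becomes smooth, while the other $U_2^{(j)}$ has coordinates $\alpha^{(j)} := \alpha^{(j-1)}/u, \beta, u$ and equation $\alpha^{(j)}\beta = u^{n-j}$; after $n-1$ such steps the surface is regular. The exceptional divisors $Z_1', \ldots, Z_{n-1}'$ are smooth rational $(-2)$-curves meeting in a chain, with $Z_j'$ transverse to $Z_{j+1}'$. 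A direct computation in $U_2^{(n-1)}$ (whose equation is $\alpha^{(n-1)}\beta = u$) identifies $Z_{n-1}'$ with $V(u, \beta)$ and the strict transform of $V(f)$ with $V(u, \alpha^{(n-1)})$; these meet transversely at the origin. Setting $Z_i := Z_{n-i}'$ therefore yields the ordering required in~(1).

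For part~(2), I follow $s(Q)$ through the sequence. From $\alpha = u^{j-1}\alpha^{(j-1)}$ we compute $\alpha^{(j-1)}\compose s = u^{a-j+1} v^{-1}$ and hence $U^{(j)}\compose s = u^{j-a} v$. For $j < a$ the latter is not regular, so $s$ cannot lift to $U_1^{(j)}$; instead $\alpha^{(j)}\compose s$, $\beta\compose s$, and $u$ all vanish at $Q$, placing $s(Q)$ at the singular point of $U_2^{(j)}$ which is the centre of the next blow-up. At $j=a$ we have $U^{(a)}\compose s = v \in R^\times$, so $s$ lifts to $U_1^{(a)}$ and meets the new exceptional $Z_a' = Z_b$ at the point with $U^{(a)}$-coordinate $v(Q)$. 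The remaining blow-ups (steps $j > a$) are centred away from $s(Q)$ and do not move it, so $s(Q)$ stays on $Z_b$, proving~(2).

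For part~(3), I compute $g/u^i$ on $Z_i = Z_{n-i}'$. Using $\beta = u^n/\alpha$ together with $\alpha = (\alpha^{(n-i-1)})^{n-i}(U^{(n-i)})^{n-i-1}$ in $U_1^{(n-i)}$ gives $\beta = (\alpha^{(n-i-1)})^{i}(U^{(n-i)})^{i+1}$ and $u^i = (\alpha^{(n-i-1)})^{i}(U^{(n-i)})^{i}$, so $g/u^i = U^{(n-i)}$; a parallel calculation in $U_2^{(n-i)}$ gives $g/u^i = 1/\alpha^{(n-i)}$, and the two agree on the overlap via $U^{(n-i)} = 1/\alpha^{(n-i)}$. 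This identifies $g/u^i$ with an isomorphism $Z_i \iso \P^1$. Applying this to the section $s$ of~(2) yields $(g/u^i)(s(Q)) = v(Q)$; hence two sections $s, s'$ meeting $Z_i$ intersect there iff $v(Q) = v'(Q)$, iff $g\compose s \equiv g\compose s' \pmod{u^{i+1}}$. The principal technical difficulty throughout is the careful bookkeeping of the iterated charts, together with checking that the relabelling $Z_i = Z_{n-i}'$ is compatible with both the endpoint condition in~(1) and the explicit formula for $g/u^i$ in~(3).
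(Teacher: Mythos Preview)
Your proof is correct and in fact more detailed than the paper's, which simply says the claims are ``easily checked by considering the explicit blow-ups used to pass from $\ZZ$ to $\tilde\ZZ$.'' Both arguments reduce to the model $\ZZ$ via the \'etale morphism $\phi$ and then resolve explicitly, but you use a genuinely different resolution procedure: the paper blows up the \emph{origin} of $\ZZ$ a total of $\lfloor n/2\rfloor$ times (each step producing two new exceptional curves, except possibly the last), whereas you blow up the ideal $(\alpha^{(j-1)},u)$---a Weil divisor that is not Cartier at the singular point---iterating $n-1$ times and producing one exceptional curve per step. Both sequences yield the same minimal resolution, of course, but your one-sided approach is particularly well adapted to the statement: it orders the $Z_i$ naturally from the $V(\beta)$ end to the $V(\alpha)$ end, and the identification $\alpha^{(j)}=\alpha/u^{j}$ makes the relation between the component index and $\ord_u(g\circ s)$ completely transparent. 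The paper's symmetric approach would require slightly more bookkeeping to extract part~(2), though it has the virtue of fewer blow-up steps. Your computation of $g/u^i$ as the affine coordinate $U^{(n-i)}$ (respectively $1/\alpha^{(n-i)}$) on $Z_i$ is clean and makes part~(3) immediate.
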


\begin{proof}
  Blowing up $\ZZ$ at the origin $\lfloor n/2\rfloor$ times yields a
  minimal resolution $\tilde\ZZ\to\ZZ$ with exceptional divisor a
  chain of rational curves $Z_1\cup\cdots\cup Z_{n-1}$ as in the
  statement, with $E_1$ meeting the strict transform of
  $\alpha=0$. The fiber product of $\tilde\ZZ\to\ZZ$ with $U\to\ZZ$ is
  isomorphic to a neighborhood of the inverse image of $P$ in $\XX$,
  and this gives the first claim.  Moreover, the other two claims are
  reduced to the analogous statements on $\ZZ$, and these are easily
  checked by considering the explicit blow-ups used to pass from $\ZZ$
  to $\tilde\ZZ$.
\end{proof}

\subsection{First global intersection numbers}\label{ss:first-int-numbers}
We abuse notation somewhat and use $P_{ij}$ and $Q_{\infty}$ to
denote the sections of $\XX\to\P^1$ or $\YY\to\P^1$ induced by the
$K_d$-rational points with those names, but we try to make clear
the context in each such case.

The section $P_{ij}$ of $\YY\to\P^1$ lies in the union of the opens
$\ZZ_{12}$ and $\ZZ'_{12}$ discussed in Section~\ref{ss:YY}, and it
has coordinates:
$$(x_{12},y_{12})=\left(\zeta_d^iu,\zeta_r^j(\zeta_d^iu+1)^{d/r}\right)
\quad\text{and}\quad
(x'_{12},y'_{12})=\left(\zeta_d^iu^{\prime d-1},\zeta_r^j(\zeta_d^i+u')^{d/r}\right).$$
Since the section $Q_\infty$ does not meet these opens, it follows
that the global intersection number $P_{ij}\cdot Q_\infty=0$ for all
$i$ and $j$.

Examining the coordinates above, it is clear that if $i\neq0$, then
$P_{ij}$ and $P_{00}$ do not meet in $\YY$ (and \emph{a fortiori} in
$\XX$) except possibly over $u=0$ or $u=\infty$.  Also, if $j\neq0$,
then $P_{0j}$ and $P_{00}$ visibly do not meet except possibly over
$u=-1$.  Thus to finish the height computation it suffices to compute
local intersection numbers for $u\in\{0,\mu_d,\infty\}$, the
``correction factors'' $D_{P_{00}}\cdot P_{ij}$ at those same places,
and the self-intersections $P_{00}^2$ and $Q_\infty^2$.

\subsection{Pairings at $u = 0$}\label{sec:pairing-at-u}
We now consider the configuration of $Q_\infty$ and the $P_{ij}$ with
respect to the components of the special fiber of $\XX\to\P^1$ over
$u=0$, which is pictured in Figure~\ref{fig:u-equals-zero} in
Chapter~\ref{ch:models}. 

First, we note that the component labeled $C_{0}$ is the strict
transform of the component $u=y_{12}^r-x_{12}-1=0$ in the chart
$\ZZ_{12}$ and also of the component $u=z_2-x_2^r(x_2+z_2)=0$ in the
chart $\ZZ_2$.  The point $Q_\infty$ extends to the section
$x_2=z_2=0$ in the chart $\ZZ_2$, so it lies on the component $C_0$.

Next, we note that the section $P_{ij}$ of $\YY\to\P^1$ specializes to
the point $x_{12}=0$, $y_{12}=\zeta_r^i$, so the corresponding section
of $\XX\to\P^1$ must meet one of the components $C_{j(d-1)+k}$ with
$1\le k\le d-1$.

To find the component that $P_{ij}$ meets, we use
Lemma~\ref{lemma:fg-equals-u-blow-up}.  To that end, let
$f=y_{12}^r-x_{12}-1$ and $g=x_{12}/(x_{12}+1)$.  In a neighborhood of
the points $u=x_{12}=y_{12}^r-1=0$, the equation defining $\ZZ_{12}$
is $fg=u^d$.  We claim that near each of these points, $f$ and $g$
define an \'etale morphism to the scheme $\ZZ$ defined just before
Lemma~\ref{lemma:fg-equals-u-blow-up}.  (Here by ``near'' we mean in a
Zariski open neighborhood $U$ of the point of interest in the fiber
product of $\YY\to\P^1$ and $\spec R\to\P^1$.)  The claim follows
easily from the Jacobian criterion, as discussed for example in
\cite[Definition~3, Page~36]{blr}.  Indeed, we define
$$\phi:U\to\A^2_{\ZZ}=\spec
R[\alpha,\beta,\gamma,\delta]/(\alpha\beta-u^d)$$ by
$\phi^*(\alpha)=y_{12}^r-x_{12}-1$, $\phi^*(\beta)=x_{12}/(x_{12}+1)$,
$\phi^*(\gamma)=x_{12}$, and $\phi^*(\delta)=y_{12}$.  Then in the
notation of \cite{blr}, the image of $\phi$ is cut out by
$g_1=\alpha-\delta^r-\gamma-1$ and $g_2=(\gamma+1)\beta-\gamma$, and
they have independent relative differentials of $\A^2_{\ZZ}/\ZZ$ wherever
$\beta\neq1$ and $\delta\neq0$, which is satisfied in a neighborhood
of the points of interest.

The upshot is that the hypotheses of
Lemma~\ref{lemma:fg-equals-u-blow-up} are satisfied.  Since
$g=x_{12}/(x_{12}+1)=\zeta_d^iu/(\zeta_d^i+1)$ has $\ord_u(g)=1$, 
it follows that $P_{ij}$ lands on component $C_{j(d-1)+1}$.  Note also that
the value of $g/u$ on $P_{ij}$ at $u=0$ is $\zeta_d^i$, so the
$P_{ij}$ all land on distinct points.  In other words, their local
intersection multiplicity is zero.

To finish the analysis, we need to compute the local correction factor
$(D_{P_{00}}\cdot P_{ij})_{u=0}$.
Recall that the matrix $B$ constructed in
Lemma~\ref{lemma:dp-dot-p-cofactor} is obtained by deleting the first
row and column from the intersection matrix for the special
fiber. Using the ordering given above for the components, then $B =
B(d, d, \dots, d)$ as in Lemma~\ref{lemma:block-Am-determinant}. There
are $r$ copies of $A_{d-1}$ in $B$, so that $B$ is an $m \times m$
matrix where $m = r(d-1) + 1$.

First suppose that $j = 0$. Let $B'$ be the matrix obtained by
deleting the first row and column from $B$; a straightforward
calculation shows that $B' = B(d-1, d, \dots,d)$ as in
Lemma~\ref{lemma:block-Am-determinant}. Therefore $\det (-B')$ is
equal to
$$(d-1)d^{r-1}\left(\frac{1}{d-1} + \frac{r-1}{d}\right) = d^{r-2}(rd-r+1).$$
Since $\det(-B)=rd^{r-1}$, applying
Lemma~\ref{lemma:dp-dot-p-cofactor} yields that
$$\left(D_{P_{00}} \cdot P_{ij}\right)_{u=0} =\det(-B')/\det(-B)
= \frac{d-1}{d} + \frac{1}{rd}.$$

Next we consider the case $j \neq 0$.  By symmetry, it suffices to
treat the case $j=1$.  Letting $B'$ be the matrix obtained by deleting
the first row and the $d$-th column of $B$,
Lemma~\ref{lemma:B'-det}(1) implies that
$\det(-B')=(-1)^{d-1}d^{r-2}$.  Applying
Lemma~\ref{lemma:dp-dot-p-cofactor} yields that
$$\left(D_{P_{00}} \cdot P_{ij}\right)_{u=0} =(-1)^{1+d}\det(-B')/\det(-B)
=\frac{1}{rd}.$$

Summarizing this subsection:

\begin{prop}\label{prop:dp-u-zero}
  The local intersection numbers $(P_{00}\cdot P_{ij})_{u=0}$ are zero
  for all $(i,j)\neq(0,0)$.  The local correction factor at $u=0$ is
  given by:
  $$\left(D_{P_{00}} \cdot P_{ij}\right)_{u=0} =
  \begin{cases}
    \frac{d-1}{d} + \frac{1}{rd} & \text{if } j = 0, \\
    \frac{1}{rd} & \text{if } j \neq 0.
  \end{cases}$$
\end{prop}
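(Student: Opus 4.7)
The plan is to read off the proposition directly from the structural analysis already completed in this subsection, together with the determinant computations of Section \ref{sec:auxiliary-results}. The heart of the matter is (a) identifying, via Lemma \ref{lemma:fg-equals-u-blow-up}, which irreducible component of the fiber at $u=0$ each section $P_{ij}$ meets, and then (b) plugging the relevant cofactor ratios into Lemma \ref{lemma:dp-dot-p-cofactor}.

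For step (a), I will use the local equation $fg = u^d$ near each node, with $f = y_{12}^r - x_{12} - 1$ and $g = x_{12}/(x_{12}+1)$, after checking via the Jacobian criterion that $(f,g)$ together with suitable auxiliary functions induce an \'etale morphism to $\spec R[\alpha,\beta]/(\alpha\beta - u^d)$. Then $g \circ P_{ij} = \zeta_d^i u/(\zeta_d^i + 1)$ has $u$-adic valuation exactly $1$, so Lemma \ref{lemma:fg-equals-u-blow-up}(2) places $P_{ij}$ on the component $C_{j(d-1)+1}$, namely the first component on the branch emanating from the $j$-th node of $C_0$. This immediately gives the first assertion of the proposition: if $j \neq 0$, then $P_{00}$ and $P_{ij}$ live on different branches of the fiber, so they cannot meet at $u=0$; if $j=0$ but $i\neq 0$, both meet $C_1$, but Lemma \ref{lemma:fg-equals-u-blow-up}(3) identifies the meeting point on $C_1$ with the value $(g/u)|_{u=0} = \zeta_d^i/(\zeta_d^i+1)$, which is different for $i$ and $0$, so again there is no intersection.

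For step (b), applying Lemma \ref{lemma:dp-dot-p-cofactor}, the relevant matrix $B$ obtained by deleting the row and column of $C_0$ from the intersection matrix is precisely the block matrix $B(d,d,\ldots,d)$ ($r$ copies of $A_{d-1}$) considered in Lemma \ref{lemma:block-Am-determinant}; that lemma yields $\det(-B) = rd^{r-1}$. When $j = 0$, both $P_{00}$ and $P_{i,0}$ meet $C_1$ (so $k = \ell = 1$), and the cofactor submatrix is $B(d-1, d, \ldots, d)$, whose determinant is computed again by Lemma \ref{lemma:block-Am-determinant} as $d^{r-2}(rd - r + 1)$; the ratio gives $(d-1)/d + 1/(rd)$. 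When $j \neq 0$, by $\mu_r$-symmetry I reduce to $j=1$, where $P_{ij}$ lands on $C_d$ so $k = 1$ and $\ell = d$; the cofactor submatrix is exactly the one computed in Lemma \ref{lemma:B'-det}(1) with determinant $(-1)^{d-1}d^{r-2}$, and combining with the sign $(-1)^{k+\ell} = (-1)^{1+d}$ from Lemma \ref{lemma:dp-dot-p-cofactor} produces $1/(rd)$.

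The only real obstacle is bookkeeping: choosing the numbering of components and the sign conventions consistently so that the various cofactor expansions and Cramer's rule signs cancel into manifestly positive rational correction factors. The geometric inputs (fiber structure, section extension into the minimal regular model, \'etale-local form at the $A_{d-1}$ singularities of $\YY$) are all already in hand from Chapter \ref{ch:models} and the earlier parts of Section \ref{sec:pairing-at-u}, so no further input is required.
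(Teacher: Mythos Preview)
Your proposal is correct and follows the paper's own argument essentially step for step: the same \'etale-local coordinates $(f,g)$ feeding into Lemma~\ref{lemma:fg-equals-u-blow-up} to place each $P_{ij}$ on $C_{j(d-1)+1}$, and the same cofactor ratios via Lemmas~\ref{lemma:dp-dot-p-cofactor}, \ref{lemma:block-Am-determinant}, and \ref{lemma:B'-det}(1). One minor slip: the value $(g/u)|_{u=0}$ is $\zeta_d^i$, not $\zeta_d^i/(\zeta_d^i+1)$, since $g=\zeta_d^i u/(\zeta_d^i u+1)$; this does not affect the argument as the values remain pairwise distinct.
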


\subsection{Pairings at $u=\infty$}
\label{sec:pairing-at-u=infty}
The argument here is very similar to that at $u=0$.  In particular,
the configuration of components is again given by
Figure~\ref{fig:u-equals-zero} in Chapter~\ref{ch:models} and the
section of $\XX\to\P^1$ corresponding to $Q_\infty$ meets the
component $C_0$.  The section $P_{ij}$ of $\YY\to\P^1$ specializes to
the point $x'_{12}=0$, $y'_{12}=\zeta_r^{i+j}$ so the corresponding
section of $\XX\to\P^1$ meets component $C_{(i+j)(d-1)+k}$ for some
$k$ with $1\le k\le d-1$.  (Here and below, we read $i+j$ modulo $r$
and take a representative in $\{0,\dots,r-1\}$.)

Applying Lemma~\ref{lemma:fg-equals-u-blow-up} with $f=y_{12}^{\prime
  r}-x'_{12}-1$ and $g=x'_{12}/(x'_{12}+1)$, we find that $P_{ij}$
meets component $C_{(i+j+1)(d-1)}$ and there are no intersections
among the distinct $P_{ij}$.

It remains to compute the correction factor $D_{P_{00}}\cdot
P_{ij}$ using the lemmas in
Section~\ref{sec:auxiliary-results}.  If $i+j\equiv0\mod r$, then
the matrix obtained by deleting row and column $d-1$ from $B$ has the
form:
$$B'=\left[\begin{array}{cc}
             A_{d-2} & 0 \\
             0 & B(0,d,\dots,d)
           \end{array}\right].$$ 

Applying Lemmas~\ref{lemma:am-defn-det} and
\ref{lemma:block-Am-determinant} shows that $\det(-B')$ is
$$(d-1) d^{r-1}\left(1 + \frac{1}{d} + \cdots + \frac{1}{d}\right)
  = (d-1)d^{r-1}\frac{d + r - 1}{d}.$$
Thus the local correction factor in this case is
$$\left(D_{P_{00}}\cdot P_{ij}\right)_{u=\infty}=
\det(-B')/\det(-B)=\frac{(d-1)(r+d-1)}{rd}.$$

If $i+j\not\equiv0\mod r$, by symmetry we may assume that
$i+j\equiv1\mod r$.  In this case, the matrix obtained by deleting row
$d-1$ and column $2(d-1)$ is the matrix $B''$ of
Lemma~\ref{lemma:B'-det}(2), which has
$\det(-B'')=(-1)^{d-1}(d-1)^2d^{r-2}$.  Thus
$$\left(D_{P_{00}}\cdot P_{ij}\right)_{u=\infty}=
(-1)^{d-1}\det(-B'')/\det(-B)=\frac{(d-1)^2}{rd}.$$

Summarizing this section:

\begin{prop}\label{prop:dp-u-infty}
  The local intersection numbers $(P_{00}\cdot P_{ij})_{u=\infty}$ are
  zero for all $(i,j)\neq(0,0)$.  The local correction factor at
  $u=\infty$ is given by
$$\left(D_{P_{00}} \cdot P_{ij}\right)_{u=\infty} =
   \begin{cases}
   \frac{(d-1)(r+d-1)}{rd} & \text{if } i+j \equiv 0 \bmod{r}, \\
   \frac{(d-1)^2}{rd} & \text{if } i+j \not\equiv 0 \bmod{r}.
\end{cases}$$
\end{prop}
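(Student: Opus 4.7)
The plan is to mirror the argument used at $u=0$ in Proposition~\ref{prop:dp-u-zero}, working this time in the chart $\ZZ'_{12}$. The key difference is that, in the primed coordinates, $P_{ij}$ specializes to $(x'_{12},y'_{12})=(0,\zeta_r^{i+j})$ rather than to $(0,\zeta_r^{j})$, which is precisely what forces the local correction factor to depend on $i+j \bmod r$ (rather than on $j$ alone as at $u=0$).

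First I would identify the component of the fiber over $u=\infty$ met by $P_{ij}$ and check that distinct sections meet distinct points. Applying Lemma~\ref{lemma:fg-equals-u-blow-up} with $f=y_{12}^{\prime r}-x'_{12}-1$ and $g=x'_{12}/(x'_{12}+1)$, after verifying the \'etale-local hypotheses via the Jacobian criterion exactly as in the $u=0$ case, the formula $g=\zeta_d^i u^{\prime d-1}/(1+\zeta_d^i u^{\prime d-1})$ shows that $\ord_{u'}(g\circ P_{ij})=d-1$. By the lemma, $P_{ij}$ meets the last component $C_{(i+j+1)(d-1)}$ in the chain of $d-1$ rational curves resolving the $(i+j)$-th singular point. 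Moreover, the values of $g/u^{\prime d-1}$ at $u'=0$, namely $\zeta_d^i$, are distinct as $i$ varies, so no two of the sections $P_{ij}$ meet in the fiber over $u=\infty$; this gives the first assertion of the proposition.

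For the correction factors I would apply Lemma~\ref{lemma:dp-dot-p-cofactor} with the same $B=B(d,d,\ldots,d)$ as in the $u=0$ computation, so that $\det(-B)=rd^{r-1}$ by Lemma~\ref{lemma:block-Am-determinant}. Now $P_{00}$ meets $C_{d-1}$ and $P_{ij}$ meets $C_{(i+j+1)(d-1)}$, so two cases arise. If $i+j\equiv 0\bmod r$, both sections meet $C_{d-1}$; deleting row and column $d-1$ from $B$ detaches an $A_{d-2}$ block from a residual matrix consisting of $r-1$ copies of $A_{d-1}$ still coupled to the bottom-right entry $-r$. A Laplace expansion along the bottom row (just as in the proof of Lemma~\ref{lemma:block-Am-determinant}) gives the determinant of this residual matrix as $(r+d-1)d^{r-2}$, hence $\det(-B')=(d-1)(r+d-1)d^{r-2}$, and the sign $(-1)^{2(d-1)}=1$ in Lemma~\ref{lemma:dp-dot-p-cofactor} yields the first case of the formula. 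If $i+j\not\equiv 0 \bmod r$, the $\mu_r$-symmetry (action by $\zeta_r$) permutes the strings of components and lets me reduce to $i+j\equiv 1\bmod r$, so that $P_{ij}$ meets $C_{2(d-1)}$; deleting row $d-1$ and column $2(d-1)$ from $B$ produces exactly the matrix $B''$ of Lemma~\ref{lemma:B'-det}(2), whose determinant is $(-1)^{d-1}(d-1)^2 d^{r-2}$. Combining this with the sign $(-1)^{3(d-1)}=(-1)^{d-1}$ from Lemma~\ref{lemma:dp-dot-p-cofactor} gives the second case.

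The main obstacle, as in Proposition~\ref{prop:dp-u-zero}, is the determinant bookkeeping when the two sections meet different components; however, Lemma~\ref{lemma:B'-det}(2) was tailored precisely to this configuration, so once the correct components have been identified by Lemma~\ref{lemma:fg-equals-u-blow-up} the computation is essentially mechanical.
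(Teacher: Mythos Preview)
Your proof is correct and follows essentially the same route as the paper: the paper likewise uses the chart $\ZZ'_{12}$, applies Lemma~\ref{lemma:fg-equals-u-blow-up} with the same $f,g$ to show $P_{ij}$ meets $C_{(i+j+1)(d-1)}$ with no mutual intersections, and computes the two correction factors via Lemma~\ref{lemma:dp-dot-p-cofactor} with the same minors (for $i+j\equiv0$ the paper phrases the residual matrix as the block $A_{d-2}\oplus B(0,d,\dots,d)$ and appeals to Lemmas~\ref{lemma:am-defn-det}--\ref{lemma:block-Am-determinant}, which amounts to your Laplace expansion; for $i+j\not\equiv0$ it invokes Lemma~\ref{lemma:B'-det}(2) exactly as you do).
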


\subsection{Pairings at $u = \zeta_d^k$}
\label{sec:pairing-at-u-1}
We now focus attention on the fiber of $\XX\to\P^1$ over
$u=\zeta_d^k$.  The configuration of components is given in
Figures~\ref{fig:superelliptic-dual-graph-odd} ($r$ odd) and
\ref{fig:superelliptic-dual-graph-even} ($r$ even) of
Chapter~\ref{ch:models}.  The component $F$ there is the strict
transform of the fiber of $\YY\to\P^1$ at $u=\zeta_d^k$, and the
section $Q_\infty$ of $\XX\to\P^1$ meets this component.

In the coordinates of the chart $\ZZ_{12}$, where
$P_{ij}=(\zeta_d^iu,\zeta_r^j(\zeta_d^iu+1)^{d/r})$, the section of
$\YY\to\P^1$ corresponding to $P_{ij}$ passes through the singular
point in the fiber if and only if $\zeta_d^{i+k}=-1$, or equivalently,
if and only if $d$ is even and $i+k\equiv d/2\mod d$.  In this case,
the section of $\XX\to\P^1$ corresponding to $P_{ij}$ meets one of the
components $D_i$, $E_i$, or $G$.  Since $P_{ij}$ is a section, it has
to meet a component of multiplicity one in the fiber, i.e., either
$D_1$ or $E_1$.  Which one it meets is a matter of labeling
conventions, but we need to show that all $P_{ij}$ with $\zeta_d^{i+k}
= -1$ land on \emph{the same} component, so we must work out a few
more details.

Dropping subscripts, consider the chart $\ZZ=\ZZ_{12}$ defined by the
equation $xy^r=(x+1)(x+u^d)$.  Changing coordinates $x=x'-1$ and
$u=u'+\zeta_d^k$, the equation is $(x'-1)y^r=x'(x'+u'v)$ where $v$ is
a unit in the local ring at $x'=y=u'=0$.  The section $P_{ij}$ of
$\YY\to\P^1$ has coordinates
$$\left(x'(P),y(P)\right)
=\left(\zeta_d^{i+k}+1
  +\zeta_d^iu',\zeta_r^j(\zeta_d^{i+k}+1+\zeta_d^iu')^{d/r}\right)
=\left(\zeta_d^iu',\zeta_r^{i+j}u^{\prime d/r}\right)$$ 
where the second equality uses that $\zeta_d^{i+k}=-1$.

Now we blow up the origin in $x',y,u'$ space and consider the chart
with coordinates $x'',y',u'$ where $x'=u'x''$ and $y=u'y' $.  The
strict transform of $\ZZ$ is defined by
\begin{equation}
  (u'x''-1)u^{\prime r-2}y^{\prime r}=x''(x''+v')\label{eq:blowup-zz}
\end{equation}
where $v'$ is a unit near the origin.  It is possible to check that
$v'$ reduces to $d\zeta_d^{k(d-1)}=\zeta_d^{-k}$ modulo the maximal
ideal.  The exceptional divisor is $u'=x''(x''+\zeta_d^{-k})=0$, with
two components that we label $D_1$ ($x''=0$) and $E_1$
($x''+\zeta_d^{-k}=0$).  Note that the original fiber of $\YY\to\P^1$
does not meet the chart under consideration.  The section $P_{ij}$ has
coordinates $x''(P_{ij})=\zeta_d^i$,
$y'(P_{ij})=\zeta_r^{i+j}u^{\prime d/r-1}$ and thus meets the
component $E_1$. Moreover, when $\zeta_d^k=-1$, then $P_{00}$ and
$P_{0j}$ intersect on $E_1$ with multiplicity $d/r-1$.

Recapping the geometry, the section $P_{ij}$ of $\XX\to\P^1$ meets the
component $E_1$ over $u=\zeta_d^k$ if and only if $\zeta_d^{i+k}=-1$,
otherwise it meets $F$.  The sections $P_{00}$ and $P_{ij}$
($(i,j)\neq(0,0)$) meet over $u=\zeta_d^k$ if and only if
$\zeta_d^k=-1$, $i=0$, and $d/r>1$, in which case their intersection
multiplicity is $d/r-1$.

It remains to compute the correction factor $D_{P_{00}}\cdot P_{ij}$.
It is zero except when $\zeta_d^k=-1$ and $i=0$, in which case both
$P_{00}$ and $P_{ij}$ meet component $E_1$.  The intersection matrix
of the fiber omitting the component $F$ is $B=A_{r-1}$, and $B'$ the
matrix obtained by deleting the last row and column of $B$ is
$A_{r-2}$.  Lemma~\ref{lemma:dp-dot-p-cofactor} implies that
$$D_{P_{00}}\cdot P_{0j}=\det(-B')/\det(-B)=(r-1)/r.$$

Summarizing this section:

\begin{prop}\label{prop:dp-u-zeta-k}
  The local intersection numbers at $u=\zeta_d^k$ are given by
  $$\left(P_{00} \cdot P_{ij}\right)_{u=\zeta_d^k}=
  \begin{cases}
    d/r-1&\text{if $i=0$, $j\neq0$, and $\zeta_d^k=-1$,}\\
    0&\text{if $i\neq0$ or $\zeta_d^k\neq-1$.}
  \end{cases}$$ 
  The local correction factor at $u=\zeta_d^k$ is given by
$$\left(D_{P_{00}} \cdot P_{ij}\right)_{u=\zeta_d^k} =
\begin{cases}
  (r-1)/r& \text{if $\zeta_d^k=-1$ and $i=0$,}\\
  0& \text{if $\zeta_d^k\neq-1$ or $i\neq0$.}
\end{cases}$$
\end{prop}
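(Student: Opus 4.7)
The plan is to verify the proposition by a case analysis based on which component of the special fiber each section $P_{ij}$ meets. Nearly all of the coordinate work has already been done in the discussion preceding the proposition; what remains is organization of the cases and a final application of the cofactor identity Lemma~\ref{lemma:dp-dot-p-cofactor}.

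First, I would summarize the component identifications. Using the coordinates $P_{ij}=(\zeta_d^iu,\zeta_r^j\zeta_d^iu(\zeta_d^iu+1)^{d/r})$ on $\ZZ_{12}$, the section of $\YY\to\P^1$ extending $P_{ij}$ passes through the singular point of the fiber over $u=\zeta_d^k$ if and only if $\zeta_d^{i+k}=-1$. When this fails, $P_{ij}$ meets the strict transform $F$ at a smooth point of the fiber. When $\zeta_d^{i+k}=-1$, the first blow-up (with $x'=u'x''$, $y=u'y'$) shows that the section lifts to the point $x''=-\zeta_d^{-k}$, $y'=0$, which by a direct Jacobian check is a smooth point of the strict transform lying on $E_1$. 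The section $Q_\infty$ meets $F$ as noted above.

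For the local intersection numbers, two sections can intersect in the fiber only if they meet the same component at a common point. If $i\neq 0$, the $x$-coordinates of $P_{00}$ and $P_{ij}$ differ (or one section meets $E_1$ and the other meets $F$), so the intersection vanishes. If $i=0$ and $\zeta_d^k\neq -1$, both sections meet $F$ at the point with $x=\zeta_d^k$, but their $y$-coordinates differ by the nonzero factor $\zeta_r^j(\zeta_d^k+1)^{d/r}\zeta_d^k$, so they meet $F$ at distinct points. The only nontrivial case is $i=0$ and $\zeta_d^k=-1$ with $j\neq 0$: both lifted sections pass through the common point $(x'',y',u')=(-\zeta_d^{-k},0,0)$ on $E_1$. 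Using the implicit function theorem to solve for $x''$ in terms of $(u',y')$, the two sections are locally the graphs $y'=u'^{d/r-1}$ and $y'=\zeta_r^ju'^{d/r-1}$. Since $1-\zeta_r^j$ is a unit, their scheme-theoretic intersection is $\spec k[u',y']/(u'^{d/r-1},y')$, which has length $d/r-1$.

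For the correction factors, I would appeal to the normalization $D_{P_{00}}\cdot Q_\infty=0$. Whenever $P_{00}$ meets $F$ (the component met by $Q_\infty$), the fibral divisor $D_{P_{00}}$ is supported trivially over $v$ and the correction is zero; symmetrically, if $P_{ij}$ meets $F$, the local correction vanishes because $D_{P_{00}}$ has coefficient $0$ on $F$. This leaves only $\zeta_d^k=-1$ and $i=0$, where both sections meet $E_1$. Deleting $F$ from the fiber leaves the chain $D_1-D_2-\cdots-D_s-E_s-\cdots-E_1$ (with the component $G$ in the middle when $r$ is even), whose intersection matrix is $A_{r-1}$; with $E_1$ placed last, the matrix $B'$ of Lemma~\ref{lemma:dp-dot-p-cofactor} is $A_{r-2}$. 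Combining Lemma~\ref{lemma:dp-dot-p-cofactor} with $\det(-A_n)=n+1$ from Lemma~\ref{lemma:am-defn-det} gives $(D_{P_{00}}\cdot P_{0j})_v=(r-1)/r$.

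The main obstacle is the case $\zeta_d^k=-1$, $i=0$, $j\neq 0$, where it must be checked carefully that (a) both sections really do meet the same reduced component $E_1$ rather than getting separated by further blow-ups required to fully resolve the $A_{r-1}$ singularity, and (b) the local intersection multiplicity is exactly $d/r-1$. The rest is bookkeeping, since the intersection matrix computations reduce to the known determinants of $A_n$-type matrices.
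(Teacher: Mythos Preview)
Your proposal is correct and follows essentially the same approach as the paper: identify which component each $P_{ij}$ meets via the first blow-up, compute the intersection multiplicity $d/r-1$ from the $y'$-coordinates $\zeta_r^{j}u'^{d/r-1}$, and obtain the correction factor from $\det(-A_{r-2})/\det(-A_{r-1})=(r-1)/r$ via Lemma~\ref{lemma:dp-dot-p-cofactor}. The only minor difference is that the paper argues $P_{ij}$ lands on a multiplicity-one component (hence $D_1$ or $E_1$) because it is a section, whereas you propose a direct Jacobian smoothness check after the first blow-up; both reach the same conclusion.
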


\begin{remark}
  Recall that $d=p^\nu+1$.  If $p$ is odd, then $d$ is even, and there
  is exactly one value of $k$ modulo $d$, namely $d/2$, such that
  $\zeta_d^k=-1$.  If $p=2$, then $-1=1$ and $\zeta_d^k=-1$ again for
  exactly one value of $k$ modulo $d$, namely $k=0$.  Thus for a fixed
  $P_{ij}$ with $i=0$, there is exactly one value of $k$ such that the
  intersection number is non-zero and the correction factor is
  non-zero at $u=\zeta_d^k$.
\end{remark}

\subsection{Self-intersections}
\label{sec:p_002-q_infty2}

We now compute the self-intersections of $P_{00}$ and $Q_\infty$,
proceeding as follows. For a point $P\in C(K_d)$, we continue to
identify $P$ with the corresponding section of $\XX\to\P^1$.  Let
$\II$ be the ideal sheaf of $P$, considered as a divisor on
$\XX$. Recall that the conormal sheaf to $P$ is the sheaf $\II/\II^2$
on $P$. By~\cite[V, 1.4.1]{hartshorne}, $P^2 = - \deg \II/\II^2$. Thus
the method is to compute the divisor of a global section of this
sheaf.

It is convenient to rephrase this in terms of differentials.  Because
$P$ is both a smooth subvariety of $\XX$ and a section of
$\XX\to\P^1$, the exact sequence
$$0\to\II/\II^2\to\left(\Omega^1_{\XX}\right)_{|P}\to\Omega^1_{P}\to0$$
splits canonically, and we obtain an identification
$\II/\II^2\cong(\Omega^1_{\XX/\P^1})_{|P}$.  In other words,
$\II/\II^2$ is identified with the sheaf of relative differentials
restricted to $P$.  For typographical convenience, we write $\omega_P$
for $(\Omega^1_{\XX/\P^1})_{|P}$.

Consider $Q_\infty$.  As a section of $\YY\to\P^1$, it is given by
$x_2$ in the chart $\ZZ_2$ and $x_2'$ in $\ZZ_2'$; these are related
by $x_2'=u^{d/r}x_2$ on the overlap.  It follows that $dx_2$ defines a
global section of $\omega_{Q_\infty}$ that generates it away from
$u=\infty$ and has a zero of order $d/r$ there.  We conclude
that $Q_\infty^2=-d/r$ in $\YY$.  Since $\XX\to\YY$ is an isomorphism
in a neighborhood of $Q_\infty$, the same equality holds in $\XX$.

Now consider $P_{00}$.  In the chart $\ZZ_{12}$, which is defined
(dropping subscripts) by $xy^r-(x+1)(x+u^d)$, there is an equality
$$0=(y^r-2x-u^d-1)dx+rxy^{r-1}dy$$
in $\Omega^1_{\ZZ_2/\P^1}$. It follows that $dx$ generates
$\Omega^1_{\ZZ_2/\P^1}$ wherever $xy\neq0$.  In particular, restricted
to $P_{00}$, it generates $\omega_{P_{00}}$ away from $u=0$, $u=-1$,
and $u=\infty$.  We extend $dx$ to a global section $s$ of
$\omega_{P_{00}}$ and compute its divisor.

Near $u=0$, passing from $\YY$ to $\XX$ requires several blow ups.  We
have already seen that after the first blow up, the strict transform
of $P_{00}$ lies in the smooth locus, so the rest of the blow ups are
irrelevant for the current calculation.  We make the first blow up
more explicit.  First, let $y=y'+1$, so the equation defining
$\ZZ_{12}$ is
$$x((y'+1)^r-1)-x^2-u^d-xu^d=0.$$
Blowing up the origin, the equation becomes
$$(x'(ry''+\cdots+u^{r-1}y''^{ r})-x^{\prime
  2}-u^{d-2}-x'u^{d-1}$$ 
and the section $P_{00}$ becomes $x'=1$, $y'=((u+1)^{d/r}-1)/u$.
Differentiating the equation, one checks that $dx'$ generates $\omega$
near $u=0$, and since $x=ux'$, it follows that $dx$ extends to a
section with a simple zero at $u=0$.

Near $u=-1$, several blow ups are required to pass from $\YY$ to
$\XX$.  After the first blow up, $P_{00}$ lies in the smooth locus and
the later blow ups are irrelevant for the current calculation.  The
relevant chart after the first blow up was given
in~\eqref{eq:blowup-zz}; for reference, we copy it here:
$$(u'x''-1)u^{\prime r-2}y^{\prime r}=x''(x''+v').$$
Differentiating this relation, one finds that the coefficient of
$dx''$ is non-zero near $u'=0,x''=1$, and this shows that shows that
$dy'$ generates $\omega_{P_{00}}$ there.  Considering the valuation of
the coefficient of $dy'$ shows that $dx''$ vanishes to order
$d-d/r-1$. Since $dx=dx'=u'dx''$, it follows that $dx$ vanishes to
order $d-d/r$.

Finally, near $u=\infty$, a calculation very similar to that near
$u=0$ shows that $dx$ has a simple pole there.  In all,
the divisor of $dx$ has degree $d-d/r$ and so $P_{00}^2=d/r-d$.

Summarizing this subsection:

\begin{prop}\label{prop:Q-infty-self-intersection}
  The self-intersections of $P_{00}$ and $Q_\infty$ are
$$P_{00}^2 = -d + \frac{d}{r} 
\quad\text{and}\quad 
Q_\infty^2 = -\frac{d}{r}.$$
\end{prop}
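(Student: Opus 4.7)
My plan is to use the canonical identification of the conormal sheaf of a section of a smooth morphism with the restriction of the sheaf of relative differentials. Specifically, if $P:\P^1_u\to\XX$ is a section, then the cotangent exact sequence of $\XX\to\P^1_u$ splits along $P$, yielding a canonical isomorphism $\II_P/\II_P^2\cong\omega_P$, where $\omega_P:=(\Omega^1_{\XX/\P^1_u})_{|P}$. Combined with the formula $P^2=-\deg(\II_P/\II_P^2)$ from \cite[V.1.4.1]{hartshorne}, this reduces each self-intersection to a degree computation: exhibit a rational section $s$ of $\omega_P$ and read off $-\deg\divi(s)$ along $P\cong\P^1_u$.

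For $Q_\infty$, I would first observe that $Q_\infty$ is disjoint from every singular point of $\YY$ (inspecting coordinates, it is supported on $\ZZ_2\cup\ZZ_2'$), so the calculation may be carried out on $\YY$. In $\ZZ_2$ the section is cut out by $x_2=z_2=0$, hence $dx_2$ is a local generator of $\omega_{Q_\infty}$. The transition $x_2=u^{-d/r}x_2'$ to the chart $\ZZ_2'$ shows that modulo $du$, $dx_2$ acquires a factor $u^{-d/r}$, so as a global section of $\omega_{Q_\infty}$ it vanishes to order exactly $d/r$ at $u=\infty$. This gives $Q_\infty^2=-d/r$.

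For $P_{00}$, I would use $dx$ in the chart $\ZZ_{12}$ as the test section. Differentiating the relation $xy^r=(x+1)(x+u^d)$ modulo $du$ shows that $dx$ generates $\omega$ wherever $xy^{r-1}\neq0$, which on $P_{00}$ is the complement of $\{u=0,-1,\infty\}$; these are exactly the three values at which $P_{00}$ meets a singular point of $\YY$. By the analyses in Sections~\ref{sec:pairing-at-u}--\ref{sec:pairing-at-u-1}, a single blow-up at each of these singularities already places the lift of $P_{00}$ into the smooth locus of $\XX$, so the later blow-ups needed to fully resolve the $A_{d-1}$ and $A_{r-1}$ singularities contribute trivially. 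I would then substitute the parametrization $P_{00}=(u,u(u+1)^{d/r})$ into the blown-up equations in each chart and compute the valuation of $dx$ with respect to a local generator of $\omega_{P_{00}}$. Near $u=0$, the substitution $x=ux'$ leaves $x'=1$ along $P_{00}$ and yields $dx\equiv u\,dx'\pmod{du}$, giving a simple zero; near $u=\infty$ a symmetric calculation produces a simple pole; near $u=-1$, the chart of \eqref{eq:blowup-zz} shows that $dy'$ generates $\omega_{P_{00}}$ and that $dx=u'\,dx''$ vanishes to order $d-d/r$ along the strict transform of $P_{00}$. Adding $1+(d-d/r)-1=d-d/r$ yields $P_{00}^2=d/r-d$.

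The main obstacle will be the local calculation at $u=-1$. There, a single blow-up of the $A_{r-1}$ singularity introduces two local coordinates $x''$ and $y'$, and one must correctly identify which of $dx''$, $dy'$ generates $\omega_{P_{00}}$ near the point of the strict transform before extracting the order of $dx$. The exponent $d-d/r$ reflects the combinatorial balance along $P_{00}$ at $u=-1$ between the vanishing orders of $x+1=u+1$ and of $y=u(u+1)^{d/r}$, combined with the rescaling produced by the blow-up of the singularity of type $\alpha\beta=\gamma^r$ from Proposition~\ref{PsingularA}.
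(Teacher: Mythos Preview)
Your proposal is correct and follows essentially the same approach as the paper: identifying the conormal sheaf with the restricted relative differentials, using $dx_2$ for $Q_\infty$ and $dx$ for $P_{00}$, and analyzing the local contributions at $u=0,-1,\infty$ after one blow-up each. The orders you obtain ($d/r$ for $Q_\infty$; $1$, $d-d/r$, $-1$ for $P_{00}$) and the reasoning behind them match the paper's computation exactly.
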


\subsection{Proof of Theorem~\ref{thm:height-pairing}}
\label{sec:proof-theor-refthm:h}

We now put all the calculations together. The local contributions to
$D_{P_{00}} \cdot P_{ij}$ were computed in
Propositions~\ref{prop:dp-u-zero}, \ref{prop:dp-u-infty}, and
\ref{prop:dp-u-zeta-k}; the results of these propositions are
summarized in Table~\ref{tab:local-contrib-dp-dot-p}. In that table,
all congruences are mod $r$.  In the third column, we sum all local
contributions over the places $u=\zeta_d^k$ with $k=0,\dots,d-1$.

\renewcommand\arraystretch{2}
\begin{table}[htb]
  \centering
  \[
  \begin{array}[c]{lccc}
    (i,j) & u = 0 & u = \infty & u^d = 1 \\ \toprule
    (0,0) & \dfrac{rd-r+1}{rd} & \dfrac{(d-1)(r+d-1)}{rd} & \dfrac{r-1}{r} \\
    i \not\equiv 0, j = 0 & \dfrac{rd - r + 1}{rd} & \dfrac{(d-1)^2}{rd} & 0 \\
    i \neq 0, i \equiv 0, j = 0 & \dfrac{rd - r + 1}{rd} & \dfrac{(d-1)(r+d-1)}{rd} & 0 \\
    i = 0, j \neq 0 & \dfrac{1}{rd} & \dfrac{(d-1)^2}{rd} & \dfrac{r-1}{r} \\
    i \neq 0, j \neq 0, i + j \equiv 0 & \dfrac{1}{rd} & \dfrac{(d-1)(r+d-1)}{rd} & 0 \\
    i \neq 0, j \neq 0, i + j \not\equiv 0 & \dfrac{1}{rd} & \dfrac{(d-1)^2}{rd} & 0 \\
    \bottomrule
  \end{array}
  \]
  \caption{Local contributions to $D_{P_{00}} \cdot P_{ij}$}
  \label{tab:local-contrib-dp-dot-p}
\end{table}
\renewcommand\arraystretch{1}

By summing the local contributions to the intersection numbers
$P_{00}\cdot P_{ij}$ given in Propositions~\ref{prop:dp-u-zero},
\ref{prop:dp-u-infty}, and \ref{prop:dp-u-zeta-k}, noting that
$P_{ij}\cdot Q_\infty=0$ for all $i$ and $j$ as in
Section~\ref{ss:first-int-numbers}, and recalling the
self-intersection numbers in the preceding subsection, we deduce that:
\begin{align*}
P_{ij}\cdot P_{00}&=
\begin{cases}
-d+\frac dr&\text{if $(i,j)=(0,0)$,}\\
\frac dr-1&\text{$i=0$, $j\neq0$,}\\
0&\text{if $i\neq0$,}
\end{cases}\\
P_{ij}\cdot Q_{\infty}&=0,\\
Q_{\infty}^2&=-\frac dr.
\end{align*}

Finally, recalling that
$$\langle {P_{00}}, P_{ij}\rangle 
= -P_{00}\cdot P_{ij} + P_{00}\cdot Q_\infty + P_{ij}\cdot Q_\infty 
- Q_\infty^2 - D_{P_{00}} \cdot P_{ij}$$
and summing the contributions above yields the theorem.
\qed

\begin{remark}
  At this point, it would be possible to deduce from
  Theorem~\ref{thm:height-pairing} and an elaborate exercise in row
  reduction that the rank of $V$ is equal to $(r-1)(d-2)$.  We take a
  slightly more indirect approach in the next two sections that
  yields more information about $V$, ultimately allowing us to
  determine $V$ precisely as a module over the group ring
  $R=\Z[\mu_d\times\mu_r]$.
\end{remark}

\section{A group-theoretic pairing}
Recall the group ring 
$$R=\Z[\mu_d\times\mu_r]\cong
\frac{\Z[\sigma,\tau]}{(\sigma^d-1,\tau^r-1)}$$ introduced in
Section~\ref{ss:R} of Chapter~\ref{ch:C} and the ideal $I\subset R$
introduced in Section~\ref{s:relations}.  In this section, we
define a positive definite bilinear form on $R/I$ and compare it with
the height pairing on $V$ via the map $R/I\to V$.  This comparison
plays a key role in showing that the map $R/I\to V$ is an
isomorphism and thus that $J(K_d)$ has large rank.

\subsection{A rational splitting}
\label{sec:rational-splitting}

For notational simplicity, in this and the following subsection we
write $G$ for $\mu_d\times\mu_r$.  Let $R^0=R\tensor\Q=\Q[G]$ be the
rational group ring.  Because $G$ is abelian, the regular
representation of $R^0$ on itself breaks up into $\Q$-irreducibles
each appearing with multiplicity one.
  As a result of the multiplicity condition, if $I^0$ is any ideal of $R^0$ and
  $\pi:R^0\to R^0/I^0$ is the projection, then there is a unique
  $G$-equivariant splitting $\rho:R^0/I^0\to R^0$.

  We work this out explicitly in the case where $I$ is as in
  Section~1.3  and $I^0=I\tensor\Q$.
  We write 
  $$s_j=\sum_{i\equiv j\bmod r}\sigma^i,$$ 
  so that
  $$\sum_{i=0}^{d-1}\sigma^i\tau^{d-i}=\sum_{j=0}^{r-1}s_j\tau^{r-j}.$$
  Recall that $I$ is the ideal of $R$ generated by 
  $$(\tau-1)\sum_{i=0}^{d-1}\sigma^i,\qquad(\tau-1)\sum_{j=0}^{r-1}s_j\tau^{r-j},
  \qquad\text{and}\qquad\sum_{j=0}^{r-1}\tau^j.$$

  \begin{lemma}\label{lemma:splitting}
    The unique $G$-equivariant splitting
    $\rho:R^0/I^0\to R^0$ is determined by
    $$\rho(\sigma^a\tau^b)=\sigma^a\tau^b
    \left(1+\frac2{rd}\sum_{i=0}^{d-1}\sigma^i\sum_{j=0}^{r-1}\tau^j
      -\frac1d \sum_{i=0}^{d-1}\sigma^i-\frac1d\sum_{j=0}^{r-1}s_j\tau^{r-j}
      -\frac1r\sum_{j=0}^{r-1}\tau^j\right).$$
  \end{lemma}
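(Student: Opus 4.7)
My plan is to exploit semisimplicity of $R^0=\Q[G]$ where $G=\mu_d\times\mu_r$. After base change to $\Qbar$, one has $R^0\otimes\Qbar=\bigoplus_{\chi\in\hat G}\Qbar\cdot e_\chi$, and the unique $G$-equivariant splitting $\rho$ of the projection $\pi:R^0\onto R^0/I^0$ is multiplication by the central idempotent $e=\sum_{\chi\notin T}e_\chi$, where $T\subseteq\hat G$ is the set of characters with $\chi(I^0)\neq 0$ (equivalently, with $e_\chi\in I^0\otimes\Qbar$). Because $e$ is central, the formula $\rho(\sigma^a\tau^b)=\sigma^a\tau^b\cdot e$ is automatically $G$-equivariant, so the content of the lemma is to compute $e$ explicitly and check that it equals the element in parentheses in the statement.

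First I would identify $T^c=\hat G\setminus T$. For a character $\chi$ with $\chi(\sigma)=a$ (a $d$-th root of unity) and $\chi(\tau)=b$ (an $r$-th root of unity), the hypothesis $r\mid d$ yields $b^{d-i}=b^{-i}$, so evaluation of the three generators of $I^0$ gives
\[
(b-1)\sum_{i=0}^{d-1}a^i,\qquad (b-1)\sum_{i=0}^{d-1}(a/b)^i,\qquad \sum_{j=0}^{r-1}b^j.
\]
These vanish simultaneously iff $b\neq 1$, $a\neq 1$, and $a\neq b$, so $T^c=\{\chi:\chi(\sigma)\neq 1,\ \chi(\tau)\neq 1,\ \chi(\sigma)\neq\chi(\tau)\}$.

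Next I would compute $1-e=\sum_{\chi\in T}e_\chi$ by inclusion--exclusion on the three bad loci $\{b=1\}$, $\{a=1\}$, $\{a=b\}$. The partial sums of primitive idempotents corresponding to these three conditions are
\[
\tfrac{1}{r}\sum_{j=0}^{r-1}\tau^j,\qquad \tfrac{1}{d}\sum_{i=0}^{d-1}\sigma^i,\qquad \tfrac{1}{d}\sum_{j=0}^{r-1}s_j\tau^{r-j},
\]
respectively; the third uses the orthogonality identity $\sum_{\zeta^r=1}\zeta^{-(i+j)}=r\cdot\mathbf{1}_{r\mid i+j}$ and the definition of $s_j$. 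All three pairwise intersections, as well as the triple intersection, force $a=b=1$, hence each equals the trivial-character idempotent $\tfrac{1}{rd}\sum_i\sigma^i\sum_j\tau^j$. Inclusion--exclusion produces a net coefficient $-3+1=-2$ on this idempotent, so
\[
1-e \;=\; \tfrac{1}{r}\sum_j\tau^j+\tfrac{1}{d}\sum_i\sigma^i+\tfrac{1}{d}\sum_j s_j\tau^{r-j}-\tfrac{2}{rd}\sum_i\sigma^i\sum_j\tau^j,
\]
which rearranges to exactly the bracketed expression in the lemma, completing the proof.

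The only real obstacle is the bookkeeping in the inclusion--exclusion step: checking that the $\{a=b\}$ idempotent really is $\tfrac{1}{d}\sum_j s_j\tau^{r-j}$ (where $r\mid d$ enters essentially, since $\chi(\sigma)=\chi(\tau)$ forces $\chi(\sigma)$ to be an $r$-th root of unity), that all four overlaps genuinely collapse onto the trivial character, and that the signs combine to produce a coefficient of $+2/(rd)$ on the $\sum_i\sigma^i\sum_j\tau^j$ term rather than $+1/(rd)$ or $-1/(rd)$.
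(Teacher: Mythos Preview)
Your proof is correct and takes a genuinely different route from the paper's. The paper simply \emph{verifies} the stated formula: it checks via five explicit product identities (such as $\left(\sum_i\sigma^i\right)^2=d\sum_i\sigma^i$ and $\left(\sum_i\sigma^i\right)\left(\sum_j s_j\tau^{r-j}\right)=\tfrac{d}{r}\left(\sum_i\sigma^i\right)\left(\sum_j\tau^j\right)$) that multiplication by the bracketed element annihilates $I^0$, and then observes that the bracketed element differs from $1$ by something in $I^0$, so it is a splitting. Your approach instead \emph{derives} the formula by recognizing the splitting as multiplication by a central idempotent, identifying which characters survive in $R^0/I^0$, and computing the complementary idempotent via inclusion--exclusion on the three ``bad'' loci $\{b=1\}$, $\{a=1\}$, $\{a=b\}$. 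Your method explains where the coefficient $2/(rd)$ comes from (namely $-3+1$ from the three pairwise and one triple overlap all collapsing to the trivial character), whereas the paper's verification leaves this opaque; on the other hand, the paper's argument is entirely elementary and avoids base change to $\Qbar$ and character orthogonality.
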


  \begin{proof}
    The formula defines a $G$-equivariant map $R\to R$.  We have to check
    that it kills the ideal $I^0$, so that it descends to $\rho:R^0/I^0\to R^0$,
    and that it is a splitting.  

    The fact that $\rho$ kills $I^0$ follows from the following easily checked
    identities in $R$:
    \begin{align*}
      \left(\sum_{i=0}^{d-1}\sigma^i\right)^2&=d \sum_{i=0}^{d-1}\sigma^i,\\
      \left(\sum_{i=0}^{d-1}\sigma^i\right)\left(\sum_{j=0}^{r-1}s_j\tau^{r-j}\right)
   &=\frac dr \left(\sum_{i=0}^{d-1}\sigma^i\right)\left( \sum_{j=0}^{r-1}\tau^j\right),\\
      \left(\sum_{j=0}^{r-1}s_j\tau^{r-j}\right)^2&=d \sum_{j=0}^{r-1}s_j\tau^{r-j},\\
      \left(\sum_{j=0}^{r-1}\tau^j\right)^2&=r \sum_{j=0}^{r-1}\tau^j,\\
      \left(\sum_{j=0}^{r-1}\tau^j\right)\left(\sum_{j=0}^{r-1}s_j\tau^{r-j}\right)
   &=\left(\sum_{i=0}^{d-1}\sigma^i\right)\left( \sum_{j=0}^{r-1}\tau^j\right).
    \end{align*}
Using these, it is a straightforward computation to check that
$\rho(I^0)=0$.

To see that $\rho$ is a splitting, it suffices to check that the
expression in parentheses on the right hand side of
Lemma~\ref{lemma:splitting} has the form $1+\iota$ where $\iota\in
I^0$.  But
$$r\sum\sigma^i
=(1+\tau+\cdots+\tau^{r-1})(1-\tau)(\sum\sigma^i)\in I$$
and 
$$r\sum s_j  \tau ^{r-j}
=(1+\tau+\cdots+\tau^{r-1})(1-\tau)(\sum s_j \tau ^{r-j})\in I,$$ so
$\sum\sigma^i$ and $\sum s_j \tau ^{r-j}$ lie in $I^0$.  Since
$\sum\tau^j$ also lies in $I^0$, it follows that $\rho$ has the form
$\rho(r)=r(1+\iota)$ with $\iota\in I^0$, so $\rho:R^0/I^0\to R^0$ is
a splitting.
\end{proof}

\subsection{A pairing}
Now we introduce an inner product on $R^0$ by declaring that
$$\left\langle\sum_g a_gg,\sum_gb_gg\right\rangle_{R^0}=\sum_g a_gb_g.$$
In other words $\langle g,h\rangle_{R^0}=\delta_{gh}$.  Crucially,
this inner product is \emph{positive definite}.

The splitting $\rho$ produces an inner product on $R^0/I^0$ that is
also positive definite.  Namely, we set
  $$\left\langle a,b\right\rangle_{R^0/I^0}:=
  \left\langle \rho(a),\rho(b)\right\rangle_{R^0}.$$ 
The values of this pairing are determined by the following
proposition and $G$-equivariance.

\begin{prop}\label{prop:G-pairing}
  With notation as above,
 \begin{multline}
   \left\langle \sigma^i\tau^j,1\right\rangle_{R^0/I^0}=\\
   \frac1{rd}\begin{cases}
     (r-1)(d-2)&\text{if $i=j=0$,}\\
     2-r&\text{if $i\not\equiv0\bmod r$, $j=0$,}\\
     2-2r&\text{if $i\equiv0\bmod r$, $i\not\equiv0\bmod d$, $j=0$,}\\
     2-d&\text{if $i=0$, $j\not\equiv0\bmod r$,}\\
     2-r&\text{if $i\not\equiv0\bmod r$, $i+j\equiv0\bmod r$,}\\
     2&\text{if $i\not\equiv0\bmod d$, $j\not\equiv0\bmod r$,
       $i+j\not\equiv0\bmod r$.}
      \end{cases}
\end{multline}
\end{prop}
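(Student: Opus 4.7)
The plan is to reduce the pairing to computing the coefficient of the identity in the single element $\sigma^i\tau^j\,\rho(1)$. Since $\rho$ is $G$-equivariant, $\rho(\sigma^i\tau^j) = \sigma^i\tau^j\,\rho(1)$, whence
$$\langle \sigma^i\tau^j, 1\rangle_{R^0/I^0}
 = \bigl\langle \sigma^i\tau^j\,\rho(1),\ \rho(1)\bigr\rangle_{R^0}.$$
Write $\epsilon\colon R^0 \to \Q$ for the linear map extracting the coefficient of the identity and let $\iota\bigl(\sum b_g g\bigr) := \sum b_g g^{-1}$; then $\langle x,y\rangle_{R^0} = \epsilon(x\,\iota(y))$. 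I would observe that each summand appearing in the formula for $\rho(1)$ in Lemma~\ref{lemma:splitting} --- namely $\Sigma_d := \sum_i\sigma^i$, $\Sigma_r := \sum_j\tau^j$, their product $\Sigma_d\Sigma_r$, and the element $\Delta := \sum_{j=0}^{r-1}s_j\tau^{r-j} = \sum_{i=0}^{d-1}\sigma^i\tau^{-i}$ --- is invariant under $\iota$ (for $\Delta$, reindex $i\mapsto -i$ and use $r\mid d$). Thus $\iota(\rho(1)) = \rho(1)$, and the pairing equals $\epsilon\bigl(\sigma^i\tau^j\,\rho(1)^2\bigr)$.

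The next step is to verify that $\rho(1)$ is an idempotent, so that $\rho(1)^2$ may be replaced by $\rho(1)$. Setting $X := \tfrac{2}{rd}\Sigma_d\Sigma_r - \tfrac{1}{d}\Sigma_d - \tfrac{1}{d}\Delta - \tfrac{1}{r}\Sigma_r$, one expands $\rho(1)^2 = 1 + 2X + X^2$ and applies the identities collected in the proof of Lemma~\ref{lemma:splitting}, supplemented by $\Delta^2 = d\Delta$, $\Sigma_d\Delta = (d/r)\Sigma_d\Sigma_r$, and $\Sigma_r\Delta = \Sigma_d\Sigma_r$ (all direct reindexings using $r\mid d$). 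Each monomial in $X^2$ becomes a rational multiple of one of $\Sigma_d$, $\Sigma_r$, $\Delta$, or $\Sigma_d\Sigma_r$, and the total sum collapses to $X^2 = -X$, yielding $\rho(1)^2 = \rho(1)$. Conceptually this is automatic, since $R^0 = \Q[G]$ is semisimple and $\rho$ realizes $R^0/I^0$ as a block summand whose identity is $\rho(1)$; but the direct calculation is short. We conclude
$$\langle \sigma^i\tau^j, 1\rangle_{R^0/I^0} = \epsilon\bigl(\sigma^i\tau^j\,\rho(1)\bigr).$$

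It remains to read the coefficient of the identity off term-by-term from the explicit formula for $\rho(1)$. The summand $1$ contributes $\delta_{(i,j),(0,0)}$; the summand $\tfrac{2}{rd}\Sigma_d\Sigma_r$ contributes $\tfrac{2}{rd}$ universally; $-\tfrac{1}{d}\Sigma_d$ contributes $-\tfrac{1}{d}$ exactly when $j\equiv 0\pmod r$; $-\tfrac{1}{r}\Sigma_r$ contributes $-\tfrac{1}{r}$ exactly when $i\equiv 0\pmod d$; and the most delicate summand $-\tfrac{1}{d}\Delta$ contributes $-\tfrac{1}{d}$ times the number of $a\in\Z/d\Z$ with $i+a\equiv 0\pmod d$ and $j-a\equiv 0\pmod r$ --- using $r\mid d$, this count is $1$ when $i+j\equiv 0\pmod r$ and $0$ otherwise. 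Summing these five indicator contributions and enumerating the six cases listed in the statement then yields the displayed formula.

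The only real obstacle is careful bookkeeping: neither the verification that $\rho(1)^2 = \rho(1)$ nor the six case distinctions involve ideas beyond those already used in the proof of Lemma~\ref{lemma:splitting}.
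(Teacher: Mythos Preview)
Your proposal is correct and arrives at exactly the same endpoint as the paper: the pairing $\langle\sigma^i\tau^j,1\rangle_{R^0/I^0}$ is the coefficient of the identity in $\sigma^i\tau^j\rho(1)$, and the six cases drop out by reading off the contributions of the five summands in Lemma~\ref{lemma:splitting}.

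The one small difference is in how you get there. The paper's hint uses the observation that $\langle\rho(a),\rho(b)\rangle_{R^0}=\langle\rho(a),\tilde b\rangle_{R^0}$ for \emph{any} lift $\tilde b$ of $b$ (since the image of $\rho$ is orthogonal to $I^0$ by the multiplicity-one decomposition of $R^0$), so taking $\tilde b=1$ immediately gives $\epsilon(\sigma^i\tau^j\rho(1))$ without having to check that $\rho(1)$ is an idempotent or that $\iota(\rho(1))=\rho(1)$. Your route via idempotency is equally valid and, as you note, conceptually automatic from semisimplicity; it just trades one short structural remark for another. Either way the bookkeeping in your final paragraph is exactly what the paper intends the reader to carry out.
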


We leave the proof as an exercise for the reader.  It is convenient
for the calculation to note that if $a$ and $b$ are in $R^0/I^0$ and
if $\tilde b$ is any lift of $b$ to $R^0$, then
$$\langle\rho(a),\rho(b)\rangle_{R^0}=\langle\rho(a),\tilde b\rangle_{R^0}.$$ 
This follows from the fact that the pairing is $G$-equivariant, plus
the fact that the irreducible subrepresentations of $R$ appear with
multiplicity one. Using this observation and $G$-equivariance shows
that computing the pairing on $R^0/I^0$ amounts to reading off the
coefficients of $\rho(1)$.

\subsection{Comparison of pairings}
We now compare the group-theoretic pairing of the preceding
subsections to the height pairing.

More precisely, there is a well-defined map $R^0/I^0\to
J(K_d)\tensor\Q$ given by $r\mapsto r(P_{00})$ whose image is by
definition $V\tensor\Q$.  There is a pairing on $R^0/I^0$ obtained by
using the map to $V\tensor\Q$ and the height pairing on
$J(K_d)\tensor\Q$.

Comparing the height pairing (computed in
Theorem~\ref{thm:height-pairing}) with the group-theoretic pairing
(computed in Proposition~\ref{prop:G-pairing}) shows that they are the
same up to a scalar: the height pairing is $(d-1)$ times the group
theoretic pairing.  More formally, we have shown the following.

\begin{prop}\label{prop:pairings}
For all $a,b\in R$, there is an equality
$$\left\langle a(P_{00}),b(P_{00})\right\rangle=
(d-1)\left\langle a,b\right\rangle_{R^0/I^0}.$$ 
Here, the left hand pairing is the height pairing on $J_r(K_d)$.
\end{prop}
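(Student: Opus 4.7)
The plan is to exploit $G$-equivariance of both pairings to reduce the identity to a finite case-check, and then verify it by direct inspection of Theorem~\ref{thm:height-pairing} and Proposition~\ref{prop:G-pairing}. Write $G=\mu_d\times\mu_r$ throughout.

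First, I would observe that the assignment $(a,b)\mapsto \langle a(P_{00}), b(P_{00})\rangle$ descends to a well-defined symmetric $\Q$-bilinear form on $R^0/I^0$. Indeed, the generators $d_j, e_j, f_i$ of $I$ were exhibited in Section~\ref{s:relations} as divisors of rational functions on $C$, so they act as zero on $P_{00}$ in $J(K_d)$ and in particular have canonical height zero. Both this induced form and the group-theoretic pairing $\langle\cdot,\cdot\rangle_{R^0/I^0}$ are $G$-invariant. For the height side, this follows because the action of $\mu_r\subset\Aut(C)$ from Section~\ref{ss:autos} and of $\mu_d\subset\Gal(K_d/\Fp(\mu_d,t))$ fixes $Q_\infty$ and so preserves the Abel-Jacobi embedding and the canonical height. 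For the group-theoretic side, it follows from the evident $G$-invariance of $\langle\cdot,\cdot\rangle_{R^0}$ together with the $G$-equivariance of the splitting $\rho$ produced in Lemma~\ref{lemma:splitting}.

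Next, by $\Q$-bilinearity the claimed identity on all of $R^0/I^0$ reduces to the case of basis pairs $a=\sigma^i\tau^j$, $b=\sigma^{i'}\tau^{j'}$. Applying $G$-invariance of both sides with $g=\sigma^{-i'}\tau^{-j'}$ reduces further to $b=1$. In this single-indexed case, the left-hand side equals $\langle P_{ij}, P_{00}\rangle$, since $\sigma^i\tau^j(P_{00})=P_{ij}$, which is exactly what is computed in Theorem~\ref{thm:height-pairing}, while the right-hand side is $(d-1)$ times the quantity $\langle\sigma^i\tau^j,1\rangle_{R^0/I^0}$ computed in Proposition~\ref{prop:G-pairing}.

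The proof then concludes by a direct comparison of the two tables. The six cases in Theorem~\ref{thm:height-pairing} partition $(\Z/d\Z)\times(\Z/r\Z)$ in the same way as the six cases of Proposition~\ref{prop:G-pairing}; for example, "$i\neq 0, j\neq 0, i+j\equiv 0\bmod r$" is equivalent to "$i\not\equiv 0\bmod r, i+j\equiv 0\bmod r$" (because $i+j\equiv 0\bmod r$ forces $i$ and $j$ to have the same order modulo $r$), and "$i\neq 0$" in the theorem means "$i\not\equiv 0\bmod d$" since $i$ is read mod $d$. Writing the theorem's value as $-\tfrac{d-1}{rd}\cdot A$ and the proposition's value as $\tfrac{1}{rd}\cdot B$ in the notation of those statements, one checks case-by-case that $B=-A$, which gives $(d-1)\cdot\tfrac{1}{rd}\cdot B=-\tfrac{d-1}{rd}\cdot A$ as required. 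Essentially all of the work has already been done in proving Theorem~\ref{thm:height-pairing} (the intersection-theoretic computation on the regular model $\XX$) and in constructing the splitting $\rho$ of Lemma~\ref{lemma:splitting}; once both are in hand, the only residual obstacle is matching up the case distinctions, which is a straightforward bookkeeping exercise.
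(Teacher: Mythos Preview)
Your proposal is correct and follows exactly the approach the paper takes: reduce by $G$-equivariance and bilinearity to the pairs $(\sigma^i\tau^j,1)$, then compare the tables in Theorem~\ref{thm:height-pairing} and Proposition~\ref{prop:G-pairing} case by case. The paper's own argument is in fact just the one-sentence observation preceding the proposition that the two computed tables differ by the scalar $d-1$; you have simply made explicit the routine reductions (well-definedness on $R^0/I^0$, $G$-invariance, matching of the case distinctions) that the paper leaves implicit.
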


\begin{cor}\label{cor:rank-lower-bound}
  The map $(R/I)/tor\to V/tor$ is injective and therefore an
  isomorphism.  The rank of $V$ is thus $(r-1)(d-2)$.
\end{cor}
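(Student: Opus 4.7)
The plan is to leverage Proposition~\ref{prop:pairings} together with the positive-definiteness of both pairings in sight. Recall from Section~\ref{sec:rational-splitting} that the pairing $\langle\cdot,\cdot\rangle_{R^0/I^0}$ is defined by pulling back the standard inner product on the rational group ring $R^0$ (for which distinct group elements form an orthonormal basis) via the $G$-equivariant splitting $\rho:R^0/I^0\to R^0$. Because $\rho$ is injective and the inner product on $R^0$ is positive definite, the induced pairing on $R^0/I^0$ is positive definite as well (in particular, non-degenerate). On the other hand, the N\'eron--Tate canonical height pairing is positive definite modulo torsion, so it gives a positive definite pairing on $V\otimes\Q$.

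Now consider the surjective $R$-module homomorphism $\pi:R/I\twoheadrightarrow V$ constructed in Section~\ref{s:relations}. Extending scalars to $\Q$ yields a surjection
\[
\pi_\Q:R^0/I^0\twoheadrightarrow V\otimes\Q.
\]
By Proposition~\ref{prop:pairings}, for all $a,b\in R^0/I^0$ we have
\[
\langle \pi_\Q(a),\pi_\Q(b)\rangle_{NT}=(d-1)\langle a,b\rangle_{R^0/I^0}.
\]
Suppose $v\in\ker(\pi_\Q)$. Then for every $w\in R^0/I^0$,
\[
(d-1)\langle v,w\rangle_{R^0/I^0}=\langle \pi_\Q(v),\pi_\Q(w)\rangle_{NT}=0,
\]
and since $d\ge 2$ and $\langle\cdot,\cdot\rangle_{R^0/I^0}$ is non-degenerate, this forces $v=0$. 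Hence $\pi_\Q$ is an isomorphism of $\Q$-vector spaces.

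Restricting $\pi_\Q$ to the images of the natural injections $(R/I)/\mathrm{tor}\hookrightarrow R^0/I^0$ and $V/\mathrm{tor}\hookrightarrow V\otimes\Q$, we see that the induced map $(R/I)/\mathrm{tor}\to V/\mathrm{tor}$ is the restriction of an isomorphism, hence injective; since $\pi$ is surjective, so is this induced map, and the first assertion follows. For the rank statement, the $\Q$-dimension of $R^0/I^0$ was computed in Section~\ref{s:relations} to be $rd-d-2(r-1)=(r-1)(d-2)$, and via the isomorphism $\pi_\Q$ this is also the $\Z$-rank of $V$. The content of the argument is almost entirely packaged into the positive-definiteness built into the construction of $\rho$ and the compatibility of pairings in Proposition~\ref{prop:pairings}; no further obstacle remains.
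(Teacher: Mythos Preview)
Your argument is correct and is essentially the same as the paper's: both use Proposition~\ref{prop:pairings} together with the positive-definiteness of the group-theoretic pairing on $R^0/I^0$ to conclude that the map $(R/I)/\mathrm{tor}\to V/\mathrm{tor}$ is injective, hence an isomorphism. The paper phrases this as ``the pulled-back pairing is positive definite, so the map is injective,'' while you unwind the same step via non-degeneracy and $\ker(\pi_\Q)=0$; these are the same idea.
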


\begin{proof}
  Proposition~\ref{prop:pairings} shows that the pairing on
  $(R/I)/tor$ induced by the homomorphism $(R/I)/tor\into R^0/I^0\to
  V\tensor\Q$ is positive definite.  It follows immediately that the
  homomorphism $(R/I)/tor\to V/tor$ is injective, and it is surjective
  by the definition of $V$, so it is an isomorphism.
\end{proof}

\section{Structure of the visible subgroup}
In this section, we complete our analysis of $V$ by showing that it is
isomorphic to $R/I$ as an $R$-module and by analyzing the torsion in
$R/I$ as an abelian group.

\subsection{$R/I$ as a group}\label{ss:row-reduction}
We noted in Section~\ref{s:relations} that $I$ is a free $\Z$-module
of rank $d+2(r-1)$, so $R/I$ has rank $(r-1)(d-2)$.  With more work we
can compute the torsion subgroup of $R/I$.

\begin{prop}\label{prop:R/I-as-group}
  There is an isomorphism of $\Z$-modules
  $$R/I\cong \Z^{(r-1)(d-2)}\oplus T$$ 
  where
  $$T=\begin{cases}
    (\Z/r\Z)^3&\text{if $r$ is odd,}\\
    \Z/(r/2)\Z\oplus\Z/r\Z\oplus\Z/(2r)\Z&\text{if $r$ is even.}
  \end{cases}$$
  Thus the torsion subgroup of $R/I$ has order $r^3$.
\end{prop}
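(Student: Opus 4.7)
The rank assertion is immediate from Section~\ref{s:relations}: $R$ is $\Z$-free of rank $rd$ and $I$ is $\Z$-free of rank $d + 2(r-1)$, so the free rank of $R/I$ is $(r-1)(d-2)$.

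For the torsion subgroup, the first step is to show that the torsion is $r$-primary. Inverting $r$, the element $\tfrac{1}{r}N_\tau$, where $N_\tau = \sum_{j=0}^{r-1}\tau^j$, becomes an idempotent of $R\otimes\Z[1/r]$, producing a splitting $R[1/r] \cong R_+\oplus R_-$ in which $\tau$ acts as $1$ on $R_+$ and has minimal polynomial $\Phi(\tau) = 1+\tau+\cdots+\tau^{r-1}$ on $R_-$. Since $N_\tau \in I$ restricts to the unit $r$ on $R_+$, the quotient $R_+/I_+$ vanishes. On $R_-$, the surviving generators $(\tau-1)N_\sigma$ and $(\tau-1)N'_\sigma$ can be analyzed character-by-character in $\sigma$; because $(\tau-1)$ is a non-zero-divisor in each nontrivial $\tau$-character component, the quotient is $\Z[1/r]$-torsion-free. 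Hence all torsion of $R/I$ is $r$-primary.

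Next I would produce torsion of order at least $r^3$ with the claimed structure by lifting the elements $Q_0, Q_1, Q_2 \in V$ of Section~\ref{s:torsion} to
$$q_0 := \sum_i(\sigma^i - \sigma^i\tau^{-i}), \quad q_1 := -\sum_i\sigma^i\tau^{-i}, \quad q_2 := \sum_{j=0}^{r-1}\sum_{k=0}^{r-1-j}\sum_{i\equiv k\,(r)}\sigma^i\tau^j \pmod{I}.$$
The key observation is that the identities $\rho_{\mathrm{odd}}$, $\rho_{\mathrm{even}}$, $\rho'_{\mathrm{even}}$ constructed in the proof of Lemma~\ref{lemma:more-relations} are algebraic identities in $R$ that exhibit $rq_2$, $2rq_2$, and $(r/2)(q_0-2q_2)$ explicitly as $\Z$-linear combinations of the basis elements $d_j, e_j, f_i$ of $I$. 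These identities descend to $R/I$ and yield the same relations on $q_0, q_1, q_2$ as those satisfied by $Q_0, Q_1, Q_2$ in Lemmas~\ref{lemma:Q0-Q1} and~\ref{lemma:more-relations}, producing a subgroup of $R/I$ abstractly isomorphic to the group $T$ described in the proposition.

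The main obstacle is the matching upper bound: showing no further torsion is present. My plan is to tensor with $\Z_r$ and decompose $R\otimes\Z_r$ via the idempotents lifting the $r$-adic factorizations of $\tau^r - 1$ and, on the $\tau$-trivial factor, of $\sigma^d - 1$. On each piece, the images of $(\tau-1)N_\sigma$, $(\tau-1)N'_\sigma$, and $N_\tau$ can be computed in terms of the $r$-adic valuations attached to that character; the cokernel in each piece is then either trivial or a cyclic $\Z_r$-module of explicit length. Adding up contributions shows that the total $r$-primary torsion has order exactly $r^3$, and that it decomposes as $(\Z/r\Z)^3$ in the $r$ odd case while for $r$ even a distinguished character where $\tau$ acts as $-1$ contributes the extra $2$-primary summand producing the group $\Z/(r/2)\Z\oplus\Z/r\Z\oplus\Z/(2r)\Z$. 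The delicate point is to keep track of this $2$-primary contribution in the even case uniformly with the rest of the character-wise analysis.
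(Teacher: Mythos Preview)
Your approach differs from the paper's, which writes down the matrix of $I \hookrightarrow R$ in explicit $\Z$-bases and row-reduces to read off the invariant factors $1,\dots,1,r,r,r$ (odd case) or $1,\dots,1,r/2,r,2r$ (even case) directly.

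Steps 1--2 are fine. In step 3, the identities from Lemma~\ref{lemma:more-relations} show only that the $q_i$ are \emph{killed} by the stated integers; to get their exact orders you need independence, and Lemma~\ref{lemma:Q0-Q1} is a geometric statement about $V$, not $R/I$. You can recover the lower bound non-circularly via the surjection $R/I \twoheadrightarrow V$ together with Proposition~\ref{prop:torsion}, but this should be stated explicitly.

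The real gap is step 4. For a prime $\ell \mid r$, the idempotent decomposition you propose does not exist: since $\ell \mid r$, the polynomial $\tau^r - 1$ is inseparable mod $\ell$, so its Hensel factors over $\Z_\ell$ give local rings of the form $\Z_\ell[\tau]/g(\tau)$ with $g \equiv (\tau-1)^{\ell^a} \pmod{\ell}$, not DVRs or fields. Worse, since $r \mid d$, the same obstruction hits $\sigma^d - 1$. Concretely, for $r$ even and $\ell = 2$, the factors $\tau - 1$ and $\tau + 1$ are not coprime in $\Z_2[\tau]$, so there is no idempotent over $\Z_2$ isolating the character ``$\tau = -1$'' that you want to blame for the $2r$-summand. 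Your character-by-character analysis thus breaks down exactly at the primes where the torsion lives; repairing it would require computing $\dim_{\F_\ell}(R/I \otimes \F_\ell)$ and its higher $\ell$-power analogues by other means, at which point the paper's explicit row reduction is the simpler route.
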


\begin{proof}
  The plan for the proof is to choose bases of $R$ and $I$ as
  $\Z$-modules, use them to write down the matrix of the inclusion of
  $\Z$-modules $I\to R$, and use row operations to compute the
  invariant factors of this matrix.

  Here is some useful notation.  Let $\phi:\Z^r\to\Z^d$ be the
  homomorphism
    $$\phi(a_1,\dots,a_r)=(a_1,\dots,a_r,a_1,\dots,a_r,\dots,a_1,\dots,a_r).$$
    In words, $\phi$ simply repeats its argument $d/r$ times.  Let
    $\psi:\Z^r\to\Z^{dr}$ be the homomorphism
    $$\psi(a_1,\dots,a_r)=
    (\phi(a_1,\dots,a_r),\phi(a_2,a_3,\dots,a_r,a_1),\dots,
    \phi(a_r,a_1,\dots,a_{r-1})).$$ 
    In words, $\psi$ rotates its argument $r$ times and repeats each
    result $d/r$ times. It is convenient to apply $\psi$ to an
    $s\times r$ matrix, by applying it to each row, thus obtaining a
    map from $s\times r$ matrices to $s\times dr$ matrices. Let $I_d$
    denote the $d\times d$ identity matrix; let $\zero_r$ denote the
    zero vector in $\Z^r$, and let $\one_r$ denote the vector
    $(1,1,\dots,1)\in\Z^r$.

    As an ordered basis of $R$ we choose
    $$1,\sigma,\dots,\sigma^{d-1},\tau,\sigma\tau,\dots,\sigma^{d-1}\tau,
    \tau^2,\dots,\sigma^{d-1}\tau^{r-1}.$$
    As an ordered basis of $I$ we choose 
    $$f_0,f_1,\dots,f_{d-1},d_1,\dots,d_{r-1},e_1,\dots,e_{r-1},$$
    defined as in Section~\ref{s:relations}.

    With respect to these bases, the first $d+r-1$ rows of the matrix of the
    inclusion $I\to R$ have the form\\
    \[
    \begin{array}{cccccc}
      I_d&I_d&I_d&\cdots&I_d&I_d\\
      \phi(-\one_r)&\phi(\one_r)&\phi(\zero_r)&\cdots&\phi(\zero_r)&\phi(\zero_r)\\
      \phi(\zero_r)&\phi(-\one_r)&\phi(\one_r)&\cdots&\phi(\zero_r)&\phi(\zero_r)\\
      \vdots&\vdots&\vdots&\ddots&\vdots&\vdots\\
      \phi(\zero_r)&\phi(\zero_r)&\phi(\zero_r)&\cdots&\phi(-\one_r)&\phi(\one_r)\\
    \end{array}.
    \]\\

    The last $r-1$ rows are $\psi$ applied to the $(r-1)\times r$ matrix:\\
    \[
    \left(
      \begin{array}{rrrrr}
        -1&1&0&\cdots&0\\
        0&-1&1&\cdots&0\\
        \vdots&\vdots&\vdots&\ddots&\vdots\\
        0&\cdots&0&-1&1\\
      \end{array}
    \right).
    \]\\

    We refer to the rows by the names of the corresponding generators of
    $I$.  Thus $f_i$ for $i=0,\dots,d-1$ refers to the first $d$
    rows, $d_j$ for $j=1,\dots,r-1$ refers to the next $r-1$ rows, and
    $e_j$ for $j=1,\dots,r-1$ refers to the last $r-1$ rows.

    We now perform row operations on this matrix as follows.  First, we
    replace row $d_1$ with 
    $$\sum_{j=1}^{r-1}jd_j+\sum_i f_i,$$
    which has the effect of replacing row $d_1$ with
    \[
    \begin{array}{cccccc}
      \phi(\zero_r)&\phi(\zero_r)&\phi(\zero_r)&\cdots&\phi(\zero_r)&\phi(r\one_r).\\
    \end{array}
    \]
    Next, we replace row $e_1$ with 
    $$\sum_{j=1}^{r-1}je_j,$$
    which has the effect of replacing row $e_1$ with
    $$\psi(-1,-1,\dots,-1,r-1).$$

    Now we replace row $e_2$ with
    $$\sum_{j=2}^{r-1}\binom j2 e_j,$$
    which has the effect of replacing row $e_2$ with
    $$\psi\left(0,-1,-2,\dots,-(r-2),\binom{r-1}2\right).$$

    Now we subtract a suitable combination of the $f_i$ rows from the last
    $r-1$ rows so as to make the lower left $(r-1)\times d$ block
    identically zero. The last $r-1$ rows $e_1, \dots, e_{r-1}$ then take the
    form
    {\small
      \[
      \begin{array}{lllll}
        \phi(0)&\multicolumn{2}{l}{\phi(0,\dots,0,r,-r)}& \, & {\phi(0,\dots,0,r,0,-r)} \\
               & \quad & \dots \quad & \multicolumn{2}{l}{\phi(r,0,\dots,0,-r)} \\
        \noalign{\medskip}
        \phi(0)& \multicolumn{2}{l}{\phi\big(-1,-1,\dots,\binom r2-1,
                 \frac{-2-r(r-3)}{2}\big)}&\, &{\phi\big(-2,\dots,\binom r2-2,r-2,
                                                \frac{-4-r(r-3)}{2}\big)} \\
               & \quad & \dots \quad & \multicolumn{2}{l}{\phi\big(1-r+\binom
                                       r2,1,\dots, 1,\frac{2(1-r)-r(r-3)}{2}\big)} \\
        \noalign{\medskip}
        \phi(0)&\phi(0,-1,2,-1,0\dots,0)& \, & \, \dots&\\
        \noalign{\medskip}
        \phi(0)&\phi(0,0,-1,2,-1,0\dots,0)& \, & \, \dots&\\
               & \qquad \vdots&&&\\
        \noalign{\medskip}
        \phi(0)&\phi(0,\dots,-1,2,-1)& \, & \, \dots.&
      \end{array}
      \]} \\

    Now we replace row $e_2$ with  $e_2-\sum_{j=2}^{r-1}\binom j2 d_j$,
    which yields

    {\small
      \[
      \begin{array}{llllll}
        \phi(0)&\multicolumn{2}{l}{\phi(0,\dots,\binom r2,\frac{r(3-r)}{2})}
        &\multicolumn{2}{l}{\phi(0,\dots,\binom r2,r,\frac{r(3-r)}{2})}&\dots \\
               & \multicolumn{2}{l}{\phi(0,\binom r2,r,\dots,r,\frac{r(3-r)}{2})}
        &\phi(0,\frac{r(3-r)}{2},\dots,\frac{r(3-r)}{2},r(2-r)).
      \end{array}
      \]
    }\\

    We now divide into two cases according to the parity of $r$.  If $r$
    is odd, we replace $e_2$ with 
    $$e_2-\frac{r-1}2e_1+\frac{r-1}2d_1,$$
    which yields
    \[
    \begin{array}{ccccc}
      \phi(0)&\phi(0,\dots,0,r)&\phi(0,\dots,r,r)&
                                                   \dots&\phi(0,r,\dots,r).
    \end{array}
    \]
    Note that every entry in this vector is divisible by $r$.
    Arranging the rows in the order
    $$f_0,\dots,f_{d-1},d_2,e_3,\dots,e_{r-1},e_1,e_2,d_3,\dots,d_{r-1}, d_1$$
    yields a matrix in row-echelon form and with the property that the
    leading entry of each row divides every entry to the right.  Looking
    at the leading terms then reveals that the invariant factors are $1$
    repeated $d+2r-5$ times and $r$ repeated 3 times.  

    Now we turn to the case when $r$ is even.  Replacing row $e_2$ with 
    $$e_2-\frac r2e_1$$
    yields
    \[
    \begin{array}{llll}
      \phi(0) & \phi(0,\dots,0,-\frac r2,\frac{3r}2) & 
                    \multicolumn{2}{l}{\phi(0,\dots,-\frac r2,r,\frac{3r}2)} \\
              & & \, \dots & \phi\big(-\frac{r^2}2,-\frac{r(r-3)}2,\dots,
                             -\frac{r(r-3)}2,-\frac{r(r-4)}2\big).
    \end{array}
    \]
    Note that every entry in this vector is divisible by $r/2$.  

    Now we replace $e_1$ with 
    $$e_1+2e_2+(r-1)d_1,$$
    which yields
    {\small
      $$\xymatrix{
        \phi(0) &\phi(0,\dots,0,2r)&\phi(0,\dots,0,2r,2r)&
        \dots&\phi(0,2r,2r,\dots,2r).
      }$$}
    Note that every entry in this vector is divisible by $2r$.  
    Arranging the rows in the order
    $$f_0,\dots,f_{d-1},d_2,e_3,\dots,e_{r-1},e_2,e_1,d_3,\dots,d_{r-1},d_1$$
    yields a matrix in row-echelon form and with the property that the
    leading entry of each row divides every entry to the right.
    Looking at the leading terms then reveals that the invariant
    factors are $1$ repeated $d+2r-5$ times and $r/2$, $r$, and $2r$
    each appearing once.

    This completes the proof of the theorem.
  \end{proof}

We record the torsion classes provided by the proof.  They are not
used later in the paper, but they help explain the definition of the
elements $Q_2$ and $Q_3\in J(K_d)$ introduced in
Section~\ref{s:torsion}.

 \begin{prop}\label{prop:torsion-in-R/I}
   If $r$ is odd, the classes of 
   $$\sum_i\sigma^i,\qquad\sum_i\sigma^i\tau^{d-i},\quad\text{and}\quad
   \sum_{j=0}^{r-1}\sum_{k=0}^{r-1-j}\sum_{i\equiv k\bmod r}\sigma^i\tau^j$$
   in $R/I$ are torsion of order $r$ and generate a group of order $r^3$.
    If $r$ is even, the classes of
    \begin{multline*}
      \sum_i\sigma^i,\qquad
      \sum_{j=0}^{r-1}\sum_{k=0}^{r-1-j}\sum_{i\equiv k\bmod r}\sigma^i\tau^j,
      \quad\text{and}\quad\\
      -\sum_{j=0}^{r-2}\sum_{i\equiv r-1-j\bmod r}\sigma^i\tau^j
      +\sum_{i\not\equiv0\bmod r}\sigma^i\tau^{r-1}
      +2\sum_{j=1}^{r-1}\sum_{k=r-j}^{r-1}\sum_{i\equiv k\bmod
        r}\sigma^i\tau^j
    \end{multline*}
    in $R/I$ are torsion of orders $r$, $2r$, and $r/2$ respectively,
    and they generate a group of order $r^3$.
  \end{prop}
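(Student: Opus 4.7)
My plan is to combine an upper bound on each class's order, obtained from explicit identities in the group ring, with a lower bound obtained by mapping into $V \subseteq J(K_d)$ and applying Proposition~\ref{prop:torsion}. Since Proposition~\ref{prop:R/I-as-group} gives $|T(R/I)| = r^3$, any subgroup of the torsion of order at least $r^3$ is all of $T(R/I)$, and this forces the orders to be exactly as claimed.

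For the upper bounds, I would first show that for each candidate class $x$ with prescribed order $n$, the element $n x$ lies in $I$. For $x_1 = \sum_i \sigma^i$, the identity $r = \sum_j \tau^j - (\tau-1)\sum_{j=0}^{r-2}(r-1-j)\tau^j$ gives $r x_1 \in I$ directly, since $\sum_j \tau^j \in I$ and $(\tau-1)\sum_i \sigma^i \in I$.  The same argument applied to $(\tau-1)\sum_i \sigma^i \tau^{d-i} \in I$ handles the odd-case element $\sum_i \sigma^i \tau^{d-i}$.  For the triangular sum $Q_2$, the explicit elements $\rho_{odd}$ and $\rho_{even}$ constructed in the proof of Lemma~\ref{lemma:more-relations} give $r Q_2 \in I$ when $r$ is odd and $2 r Q_2 \in I$ when $r$ is even.

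The main obstacle is the verification that $(r/2)\alpha \in I$ for the complex sum $\alpha$ in the even case.  A direct enumeration over $(j,k)$-pairs, together with $\sum_j \tau^j \cdot \sum_i \sigma^i \in I$, produces the congruence
\[
\alpha \equiv \tau^{r-1}\bigg(\sum_i \sigma^i - \sum_i \sigma^i \tau^{-i}\bigg) - 2 Q_2 \pmod{I}.
\]
A short calculation in $R$ gives $(\tau-1) Q_2 = -\bigl(\sum_i \sigma^i - \sum_i \sigma^i \tau^{-i}\bigr)$.  Combined with $\rho_{even} \in I$ this yields $(\tau-1)\cdot r Q_2 \in I$, and multiplying by $1 + \tau + \cdots + \tau^{r-2}$ gives $(\tau^{r-1} - 1) r Q_2 \in I$.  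On the other hand, $\rho'_{even} \in I$ gives $(r/2)\bigl(\sum_i \sigma^i - \sum_i \sigma^i \tau^{-i}\bigr) \equiv r Q_2 \pmod{I}$, so multiplying the displayed congruence by $r/2$ reduces $(r/2)\alpha$ modulo $I$ to $(\tau^{r-1} - 1) r Q_2$, which we have just shown lies in $I$.

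For the lower bounds, I apply the $R$-module homomorphism $R/I \to V \subseteq J(K_d)$ sending $\phi \mapsto \phi(P_{00})$ and use the divisors of $\Delta_j$ and $\Gamma_j$ from Section~\ref{s:relations}, together with the fact that $\tau$ fixes $Q_0$, to compute the images: $\sum_i \sigma^i \mapsto Q_0 - Q_1$; $\sum_i \sigma^i \tau^{d-i} \mapsto -Q_1$; $Q_2 \mapsto Q_2$; and $\alpha \mapsto Q_0 - 2 Q_2 = Q_3$.  In either parity, Proposition~\ref{prop:torsion} shows that these three images generate the full torsion subgroup $T \subseteq V$, which has order $r^3$, and that each image has order equal to the prescribed $n$.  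Combined with the upper bounds above, this forces each class in $R/I$ to have exactly the prescribed order.  The generated subgroup of $R/I$ surjects onto $T$ and is contained in $T(R/I)$, both of order $r^3$, so it equals $T(R/I)$; the orders then multiply to $r^3$, forcing the direct-product structure stated in the proposition.
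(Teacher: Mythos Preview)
Your argument is correct and complete, but it follows a genuinely different route from the paper's own proof.

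The paper does not re-derive upper and lower bounds separately.  Instead it simply reads the torsion generators off the row-echelon form produced in the proof of Proposition~\ref{prop:R/I-as-group}.  After the row operations there, the rows labelled $d_1$, $e_1$, $e_2$ (in the appropriate order for each parity) become $n\cdot x$ for explicit $x\in R$ with $n\in\{r,r,r\}$ or $\{r/2,r,2r\}$; since these rows occupy distinct pivot columns in an echelon matrix, the classes $x$ are automatically independent in $R/I$ and hence generate a group of the full order $r^3$.  The displayed elements in the statement are then obtained from these $x$'s by harmless simplifications modulo $I$.  So the paper's proof is really a corollary of the Smith-normal-form calculation and stays entirely inside Chapter~\ref{ch:heights}.

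Your approach instead pairs explicit membership-in-$I$ identities (the elements $\rho_{odd},\rho_{even},\rho'_{even}$ of Lemma~\ref{lemma:more-relations}, together with the simple identity $r=\sum_j\tau^j-(\tau-1)\sum_k(r-1-k)\tau^k$) with the geometric lower bound coming from the surjection $R/I\to V$ and Proposition~\ref{prop:torsion}.  This has the advantage of making transparent that the three torsion classes in $R/I$ are precisely preimages of $Q_0-Q_1$, $-Q_1$, $Q_2$ (odd case) or $Q_0-Q_1$, $Q_2$, $Q_3$ (even case), which is exactly the content of the remark preceding the proposition.  The cost is that your proof imports results from Chapter~\ref{ch:C} rather than being self-contained within the row reduction.

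One small expository point: in your even-case upper bound, the phrase ``Combined with $\rho_{even}\in I$ this yields $(\tau-1)\cdot rQ_2\in I$'' is misleading.  The relation $\rho_{even}=2rQ_2$ does not by itself give $(\tau-1)rQ_2\in I$; what actually gives it is your identity $(\tau-1)Q_2=-(\sum_i\sigma^i-\sum_i\sigma^i\tau^{-i})$ together with the already-established facts $r\sum_i\sigma^i\in I$ and $r\sum_i\sigma^i\tau^{d-i}\in I$ (the second of which holds for all $r$, not just odd $r$, by the same argument you gave).  The remainder of your chain, through $\rho'_{even}$ and the congruence for $\alpha$, is correct as written.
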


  \begin{proof}
    Considering the row $d_1$, after row reduction as above, we find
    that $\sum_i\sigma^i\tau^{r-1}$ is $r$ torsion, and this element
    is equivalent in $R/I$ to $\sum_i\sigma^i$.

    Assume $r$ is odd. Considering the row $e_1$, we see that 
    $$\sum_{j=1}^{r-1}\left(\sum_{i\equiv r-1-j\bmod r}\sigma^i-
      \sum_{i\equiv r-1\bmod r}\sigma^i\right)\tau^j$$
    is $r$-torsion.  Adding $\sum_{i\equiv r-1\bmod r}f_i$, one checks that this
    is equivalent in $R/I$ to
    $$\sum_{j=0}^{r-1}\sum_{i\equiv r-1-j\bmod r}\sigma^i\tau^j,$$
    which in turn is equivalent to $\sum_i\sigma^i\tau^{d-i}$.
    Also, from the row $e_2$, we can see that
    $$\sum_{j=1}^{r-1}\sum_{k=r-j}^{r-1}\sum_{i\equiv k\bmod r}\sigma^i\tau^j$$
    is $r$-torsion.  The negative of this element is equivalent in $R/I$ to
    $$\sum_{j=0}^{r-1}\sum_{k=0}^{r-1-j}\sum_{i\equiv k\bmod r}\sigma^i\tau^j.$$

    Since the three $r$-torsion elements just exhibited are associated to
    distinct rows of a matrix in row-echelon form, they are independent,
    i.e., they generate a subgroup of order $r^3$.  This completes the
    proof in the case that $r$ is odd.

    When $r$ is even, the proof for the first class is as in the case for
    $r$ odd.  From the relation from row $e_1$, we can conclude that 

    $$\sum _{j=1}^{r-1} \sum _{i=r-j}^{r-1} \sum _{i \equiv k\bmod r} \sigma^i\tau^j$$
    is $2r$-torsion.
    Since $\sum _j \tau ^j = 0$ in $R/I$, the negative of this is
    equivalent to  

    $$\sum_{j=0}^{r-1} \sum _{k=0}^{r-1-j} \sum_{i \equiv k\bmod r} \sigma^i\tau^j.$$
    Combining the relation from row $e_2$ with the fact that $\sum _i
    \sigma ^i$ is $r$-torsion, we can check that  
    \begin{equation*}
      2 \sum_{\substack{1 \leq j \leq r-1 \\ r-j \leq k \leq r-1 \\ i
          \equiv k \bmod r}} \sigma ^i \tau ^j + \sum_{\substack{i \equiv
          r-1 \bmod r \\ 0 \leq j \leq r-1}} \sigma ^i \tau ^j \\ + \sum
      _{i \not \equiv 0\bmod r} \sigma ^i \tau 
      ^{r-1} - \sum_{\substack{0 \leq j \leq {r-2} \\ i \equiv r - 1 - j
          \bmod r}} \sigma ^i \tau ^j 
    \end{equation*}
    is $\frac{r}{2}$-torsion.   Since $\sum _j \tau ^j = 0$ in $R/I$, the
    second term is zero, and the result follows as above.  
  \end{proof}

  \subsection{$R/I$ and $V$}\label{ss:visible}
  We can now finish the proof that $V$ is isomorphic as an $R$-module to
  $R/I$.

  \begin{theorem}\label{thm:visible}
    The projection $R/I\to V$ defined by $r\mapsto r(P_{00})$ is an
    isomorphism.
  \end{theorem}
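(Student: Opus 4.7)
The map $\phi\colon R/I\to V$ is surjective by the very definition of $V$ as the cyclic $R$-submodule of $J(K_d)$ generated by $P_{0,0}$, and it is well-defined because the generators $d_j$, $e_j$, $f_i$ of $I$ were shown in Section~\ref{s:relations} to act trivially on $P_{0,0}$. The plan is to verify injectivity in two stages: first modulo torsion, then on the torsion subgroup.

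By Corollary~\ref{cor:rank-lower-bound}, the induced map on free quotients $(R/I)/\mathrm{tor}\to V/\mathrm{tor}$ is already an isomorphism, and in particular the $\Z$-ranks of $R/I$ and $V$ both equal $(r-1)(d-2)$. Consequently $\ker\phi$ has rank zero, i.e.\ $\ker\phi\subset (R/I)_{\mathrm{tor}}$. A standard argument using that $\phi$ is surjective and that $\ker\phi$ is torsion shows that $\phi$ restricts to a surjection $(R/I)_{\mathrm{tor}}\twoheadrightarrow V_{\mathrm{tor}}$: indeed, for any $v\in V_{\mathrm{tor}}$ choose a lift $x\in R/I$; some multiple $nx$ lies in $\ker\phi\subset(R/I)_{\mathrm{tor}}$, so $x$ itself is torsion.

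It therefore suffices to show that $|V_{\mathrm{tor}}|\ge |(R/I)_{\mathrm{tor}}|$, and then the forced equality of cardinalities together with the surjectivity above will make $\phi$ an isomorphism on torsion, hence on all of $R/I$. By Proposition~\ref{prop:R/I-as-group}, $|(R/I)_{\mathrm{tor}}|=r^3$, and by Proposition~\ref{prop:torsion}, the subgroup $T=\langle Q_0,Q_1,Q_2\rangle\subset V$ already has order $r^3$. Combining these two inputs with the surjection $(R/I)_{\mathrm{tor}}\twoheadrightarrow V_{\mathrm{tor}}\supseteq T$ of order-$r^3$ groups yields $\ker\phi=0$ and, as a bonus, $V_{\mathrm{tor}}=T$.

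The only substantive content of the argument is already packaged in Corollary~\ref{cor:rank-lower-bound} (which rests on the height computation of Theorem~\ref{thm:height-pairing} and the comparison with the group-theoretic pairing in Proposition~\ref{prop:pairings}) and in the parallel torsion computations of Propositions~\ref{prop:torsion} and~\ref{prop:R/I-as-group}. The potential obstacle is conceptual rather than computational: one must be sure that the torsion elements $Q_0,Q_1,Q_2$ detected in $V$ really account for \emph{all} of $V_{\mathrm{tor}}$, but this is exactly what the equality of cardinalities $|(R/I)_{\mathrm{tor}}|=|T|=r^3$ forces once the rank-equality step has reduced us to the torsion comparison.
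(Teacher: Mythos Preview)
Your proof is correct and follows essentially the same approach as the paper: reduce to the torsion comparison via Corollary~\ref{cor:rank-lower-bound}, then match the order-$r^3$ lower bound from Proposition~\ref{prop:torsion} against the order-$r^3$ upper bound from Proposition~\ref{prop:R/I-as-group}. The paper packages the same steps via the snake lemma applied to the obvious diagram of short exact sequences, but the content is identical.
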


  \begin{proof}
    Write $W$ for $R/I$.  We have a commutative diagram with exact rows:
    $$\xymatrix{
      0\ar[r]&W_{tor}\ar[r]\ar[d]&W\ar[r]\ar[d]&W/W_{tor}\ar[r]\ar[d]&0\\
      0\ar[r]&V_{tor}\ar[r]&V\ar[r]&V/V_{tor}\ar[r]&0.}$$ By definition,
    the middle vertical arrow is surjective, thus so is the right vertical
    arrow.  By Corollary~\ref{cor:rank-lower-bound}, the right vertical
    arrow is injective, so it is an isomorphism.  The snake lemma then
    shows that the left vertical arrow is surjective.  But
    Proposition~\ref{prop:R/I-as-group} shows that $W_{tor}$ has order
    $r^3$, whereas Proposition~\ref{prop:torsion} shows that $V_{tor}$ has
    order at least $r^3$.  It follows that the left vertical arrow is also
    an isomorphism.  Now another application of the snake lemma shows the
    middle vertical arrow is an isomorphism as well, and this is our
    claim.
  \end{proof}

  \begin{cor}\label{cor:visible}
    The subgroup $V$ of $J(K_d)$, generated by $P_{00}$ and its conjugates
    under $\gal(K_d/K)$, is isomorphic as a $\Z$-module to
    $$\Z^{(r-1)(d-2)}\oplus\begin{cases}
      (\Z/r\Z)^3&\text{if $r$ is odd,}\\
      \Z/(r/2)\Z\oplus\Z/r\Z\oplus\Z/(2r)\Z&\text{if $r$ is even.}
    \end{cases}$$
  \end{cor}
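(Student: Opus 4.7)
The plan is immediate: this corollary is just the combination of two results already established in this chapter. By Theorem~\ref{thm:visible}, the $R$-module homomorphism $R/I \to V$ sending $r \mapsto r(P_{00})$ is an isomorphism, so in particular $V \cong R/I$ as abelian groups. By Proposition~\ref{prop:R/I-as-group}, the underlying $\Z$-module of $R/I$ is isomorphic to $\Z^{(r-1)(d-2)}$ plus the prescribed torsion subgroup $T$, whose structure depends on the parity of $r$.

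Thus the plan is simply to cite these two results and transport the $\Z$-module structure from $R/I$ to $V$ through the isomorphism of Theorem~\ref{thm:visible}. I would spell out the diagram
\[
V \;\cong\; R/I \;\cong\; \Z^{(r-1)(d-2)} \,\oplus\, T,
\]
and observe that by definition of $V$ as the subgroup of $J(K_d)$ generated by the $\gal(K_d/K)$-conjugates of $P_{00}$ (equivalently, the cyclic $R$-submodule generated by $P_{00}$), the $\Z$-module description matches the statement of the corollary.

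There is no real obstacle here; all the substantive work has already been done. The rank $(r-1)(d-2)$ was obtained in Corollary~\ref{cor:rank-lower-bound} via the comparison of the height pairing on $V$ with the positive definite group-theoretic pairing on $R^0/I^0$, and the torsion computation in $R/I$ was handled by the explicit row reduction in the proof of Proposition~\ref{prop:R/I-as-group}. The only thing to verify is that no additional torsion is gained or lost under the isomorphism $R/I \to V$, but that is automatic since it is an isomorphism of $R$-modules and hence of $\Z$-modules. Therefore the corollary requires only a brief citation-style proof.
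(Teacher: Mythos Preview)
Your proposal is correct and matches the paper's approach exactly: the paper states Corollary~\ref{cor:visible} without proof immediately after Theorem~\ref{thm:visible}, treating it as the evident combination of that theorem with Proposition~\ref{prop:R/I-as-group}. Your explanation is precisely the intended justification.
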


  \begin{remark}
    It would be possible at this point to give lower bounds on the rank of
    $J$ over various subfields of $\Fpbar(t^{1/d})$, along the lines of
    \cite[Corollary~4.4]{Legendre}.  However, we delay the discussion of
    ranks until the end of the following chapter, where it is possible to
    give exact values for the rank.
  \end{remark}

\section{Discriminants}
In this section we work out the discriminant of the height pairing on
$V/tor$.  This is used in Chapter~\ref{ch:Sha} to obtain information
on the index of $V$ in $J(K_d)$ and on the Tate-Shafarevich group of
$J/K_d$.

With notation as in the previous subsection, let $W=R/I$.  Recall that
there is a canonical $G$-equivariant splitting $\rho:W\to R^0$ and a
pairing on $W$ given by $\langle a,b\rangle_{R^0/I^0}=\langle
\rho(a),\rho(b)\rangle_{R^0}$ where the second pairing is the
Euclidean pairing on $R^0$.  Recall that, up to a scalar ($d-1$), the
pairing on $W$ is the canonical height pairing on $V$.

Write $\det(W/tor)$ for the discriminant of this pairing on $W$ modulo
torsion and $\det(I)$ for the discriminant of the pairing on $I$
induced by that on $R$.

We would like to relate these discriminants to each other. To that
end, we consider a slightly more general situation: let $H$ be an
arbitrary ideal of $R$, and $U = R^0/H^0$. One still has a
$G$-equivariant splitting $\varrho: U \to R^0$ and an induced pairing
on $U$.

\begin{prop} With notation as above,
    $$\det(H)=\frac{|U_{tor}|^2}{\det(U/tor)}.$$
\end{prop}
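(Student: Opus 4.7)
The plan is to use the $G$-equivariant splitting $\varrho$ to realize $R^0$ as an \emph{orthogonal} direct sum, and then compare discriminants via the short exact sequence of lattices it induces.

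First I would observe that the Euclidean pairing on $R^0$ is $G$-invariant (since $G$ permutes the standard basis orthonormally) and that $R^0 = \Q[G]$ decomposes as a multiplicity-free sum of $\Q$-irreducible $G$-representations. Consequently, any two complementary $G$-stable subspaces of $R^0$ are orthogonal; applied to the splitting $R^0 = H^0 \oplus \varrho(U \otimes \Q)$, where $U := R/H$ and $\varrho$ is viewed as a splitting of $R^0 \twoheadrightarrow R^0/H^0$, this produces an orthogonal decomposition $R^0 = H^0 \perp L^0$ with $L^0 := \varrho(U \otimes \Q)$.

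Next I would introduce the saturation $H^{\mathrm{sat}} := R \cap H^0$, which satisfies $H^{\mathrm{sat}}/H \cong U_\tor$, giving $\det(H) = |U_\tor|^2 \det(H^{\mathrm{sat}})$. Letting $\pi_2 \colon R^0 \to L^0$ denote the orthogonal projection, its restriction to $R$ has kernel $H^{\mathrm{sat}}$ and surjects onto a lattice $L \subset L^0$. Choosing a $\Z$-basis of $R$ consisting of a basis of $H^{\mathrm{sat}}$ together with lifts of a basis of $L$, the matrix representing it in an orthonormal basis adapted to $R^0 = H^0 \perp L^0$ is block upper triangular with diagonal blocks whose squared determinants are $\det(H^{\mathrm{sat}})$ and $\det(L)$. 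Since the standard basis of $R$ is orthonormal for the Euclidean pairing, $\det(R) = 1$, and hence $\det(H^{\mathrm{sat}}) = 1/\det(L)$.

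Finally, I would identify $L$ isometrically with $U/\tor$ equipped with the pairing $\langle u, v \rangle_U := \langle \varrho(u), \varrho(v) \rangle_{R^0}$. For $r \in R$, writing $r = h + \varrho(x)$ with $h \in H^0$ and $x \in U \otimes \Q$ gives $\pi_2(r) = \varrho(x)$, and $x$ equals the image of $r \bmod H$ in $U \otimes \Q$. Hence $\pi_2$ factors as $R \twoheadrightarrow U/\tor \xrightarrow{\;\varrho\;} L$, and the induced pairing on $L$ pulls back to the defining pairing on $U/\tor$; in particular $\det(L) = \det(U/\tor)$. Combining the three steps yields $\det(H) = |U_\tor|^2/\det(U/\tor)$. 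The only step where the special structure of the setup truly enters is the orthogonality in paragraph two, via the multiplicity-free $G$-module structure of $\Q[G]$; the rest is standard lattice bookkeeping.
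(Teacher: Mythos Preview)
Your proof is correct and takes essentially the same approach as the paper: both exploit the orthogonality of $H^0$ and $\varrho(R^0/H^0)$ (coming from the $G$-invariance of the Euclidean pairing together with the multiplicity-free $G$-module structure of $R^0$) to block-decompose the Gram determinant of $R$, and both handle torsion via the saturation of $H$. The only difference is organizational---the paper first treats the saturated case directly and then reduces, whereas you perform the saturation reduction upfront and package the saturated case as the identification $L \cong U/\tor$.
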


\begin{proof}
  First suppose that $H$ is saturated, i.e., that $U$ is torsion-free.
  Let $e_1,\dots,e_k$ be a $\Z$-basis of $H$ and extend it to a
  $\Z$-basis $e_1,\dots,e_n$ of $R$.  Write $\overline e_i$ for the
  image of $e_i$ in $U$, so that $\overline e_{k+1},\dots,\overline
  e_n$ is a $\Z$-basis of $U$.  Because the pairing on $R$ is the
  Euclidean pairing, the discriminant
    $$\left|\det\left(\langle e_i,e_j\rangle\right)\right|=1.$$
    Now let
    $$f_i=\begin{cases}
      e_i&\text{if $i\le k$,}\\
      \varrho(\overline e_i)&\text{if $i>k$.}
    \end{cases}
    $$
  This is a $\Q$-basis of $R^0$.  The change of basis matrix is
  upper triangular with 1's on the diagonal, so it has determinant 1
  and
    $$\left|\det\left(\langle f_i,f_j\rangle\right)\right|=
    \left|\det\left(\langle e_i,e_j\rangle\right)\right|=1.$$ Now
    $\varrho(U)$ is orthogonal to $H$, so the new Gram matrix
    $\left(\langle f_i,f_j\rangle\right)$ is block diagonal.  Its upper left
    $k\times k$ block is just $\left(\langle e_i,e_j\rangle\right)$ and
    the determinant of this block is $\pm\det(H)$.  The lower right
    $(n-k)\times(n-k)$ block is just $\left(\langle \varrho(\overline
      e_i),\varrho(\overline e_j)\rangle\right)$ and the determinant of
    this block is $\pm\det(U)=\pm\det(U/tor)$.  Thus these two
    discriminants are reciprocal and this proves the claim in the case when
    $H$ is saturated.

    For general $H$, let $H'$ be the saturation, so that $|H'/H|=|U_{tor}|$
    and $R/H'=U/tor$.  Then 
    $$\det(H)=|H'/H|^2\det(H')=|U_{tor}|^2\det(H')=\frac{|U_{tor}|^2}{\det(U/tor)},$$
    as desired.
  \end{proof}

  \begin{prop} We have 
    $$\det(I)=r^{d+2}d^{2r-2}.$$
  \end{prop}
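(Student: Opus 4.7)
The strategy is to compute $\det(I)$ directly from the explicit $\Z$-basis
\[
f_0,\ldots,f_{d-1},\ d_1,\ldots,d_{r-1},\ e_1,\ldots,e_{r-1}
\]
of $I$ displayed in Section~\ref{s:relations}, exploiting the fact that the Euclidean pairing on $R$ makes $\{\sigma^a\tau^b : a\in\Z/d\Z,\ b\in\Z/r\Z\}$ into an orthonormal basis. The first task is to show that the Gram matrix in this basis of $I$ is block diagonal with an $f$-block, a $d$-block, and an $e$-block; the second is to compute each block's determinant.

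For orthogonality between the three types, I would expand each cross pairing and check that the Kronecker deltas cancel. The pairings $\langle f_i,d_j\rangle$ and $\langle f_i,e_j\rangle$ reduce to the inner sum $\sum_{j'}(\delta_{j',j}-\delta_{j',j-1})=0$ over $j'\in\{0,\ldots,r-1\}$. The pairing $\langle d_j,e_{j'}\rangle$ is slightly subtler: expanding gives a sum over $i\in\{0,\ldots,d-1\}$ of four Kronecker deltas asserting congruences modulo $r$, and using $r\mid d$ each congruence class contributes $d/r$ copies, so the four terms cancel in pairs.

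For the block determinants, $\langle f_i,f_{i'}\rangle=r\delta_{ii'}$, so the $f$-block is $rI_d$ with determinant $r^d$. Expanding $\langle d_j,d_{j'}\rangle$ yields $d\cdot(2\delta_{jj'}-\delta_{|j-j'|,1})$ for $j,j'\in\{1,\ldots,r-1\}$, i.e., $d$ times the matrix $-A_{r-1}$ of Lemma~\ref{lemma:am-defn-det}, giving block determinant $d^{r-1}\cdot r$. For the $e$-block, the shift $\tau^{d-i}$ reduces to $\tau^{-i\bmod r}$ (again using $r\mid d$), and after summing over $i$ the very same tridiagonal matrix $d\cdot(-A_{r-1})$ reappears, with determinant $d^{r-1}\cdot r$. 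Multiplying the three block determinants gives
\[
\det(I)=r^d\cdot(d^{r-1}\cdot r)^2=r^{d+2}d^{2r-2}.
\]

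The only real obstacle is the bookkeeping in the $\langle d_j,e_{j'}\rangle$ calculation, where one must carefully distinguish indices modulo $r$ from those modulo $d$ and keep track of how the factor $d/r$ appears; once the orthogonality is in place, the determinant falls out immediately from Lemma~\ref{lemma:am-defn-det}.
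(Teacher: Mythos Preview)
Your proof is correct. The paper follows the same outline---show the Gram matrix is block diagonal in three blocks and multiply the block determinants---but with a slightly different basis: keeping $\alpha_i=f_i$, it replaces $d_j,e_j$ by the telescoped elements $\beta_j=(\tau^j-1)\sum_i\sigma^i$ and $\gamma_j=(\tau^j-1)\sum_i\sigma^i\tau^{d-i}$. In that basis the two $(r-1)\times(r-1)$ blocks become $d(I_{r-1}+J_{r-1})$ (identity plus all-ones, Gram entries $d(\delta_{jj'}+1)$), whose determinant $d^{r-1}r$ the paper computes by an easy induction. Your basis gives tridiagonal blocks $d(-A_{r-1})$ and reuses Lemma~\ref{lemma:am-defn-det} instead. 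The two bases differ by a unimodular change of variables, so the block determinants agree; your route has the mild advantage of using the basis already established in Section~\ref{s:relations} and a lemma already in hand, while the paper's basis makes the off-block orthogonality $\langle\beta_j,\gamma_{j'}\rangle=0$ a hair more transparent.
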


  \begin{proof}
    It is not hard to check that the following is a $\Z$-basis for $I$:
    \begin{align*}
      \alpha_i&=\sigma^i\sum \tau^j\qquad i=0,\dots,d-1,\\
      \beta_j&=(\tau^j-1)\sum \sigma^i\qquad j=1,\dots,r-1,\\
      \gamma_j&=(\tau^j-1)\sum \sigma^i\tau^{d-i}\qquad j=1,\dots,r-1.
    \end{align*}

    The values of the pairing are
    \begin{align*}
      \langle\alpha_i,\alpha_{i'}\rangle&=r\delta_{ii'},\\ 
      \langle\alpha_i,\beta_j\rangle&=0,\\ 
      \langle\alpha_i,\gamma_j\rangle&=0,\\ 
      \langle\beta_j,\beta_{j'}\rangle&=d(\delta_{jj'}+1),\\ 
      \langle\beta_j,\gamma_{j'}\rangle&=0,\\
      \langle\gamma_j,\gamma_{j'}\rangle&=d(\delta_{jj'}+1),
    \end{align*}
    so the Gram matrix for this basis of $I$ is block diagonal.  An
    inductive argument shows that if $A$ is the sum of an identity matrix
    of size $a\times a$ and a matrix of the same size with all entries 1,
    then $\det(A)=a+1$.  Thus $\det(I)=r^{d+2}d^{2r-2}$
    as desired.
  \end{proof}

\begin{cor} \label{cor:det(W/tor)} \label{cor:Vdet}
If $W=R/I$, then $$\det(W/tor)=r^{4-d}d^{2-2r}.$$  
Also $$\det(V/tor)=(d-1)^{(r-1)(d-2)}r^{4-d}d^{2-2r}.$$
\end{cor}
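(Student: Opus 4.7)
The proof will be an immediate assembly of the preceding two propositions together with Theorem~\ref{thm:visible} and Proposition~\ref{prop:pairings}; there is no real obstacle remaining.

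The plan is to first apply the proposition $\det(H)=|U_{tor}|^2/\det(U/tor)$ with $H=I$ and $U=W=R/I$. The hypothesis $|W_{tor}|=r^3$ is Proposition~\ref{prop:R/I-as-group}, and $\det(I)=r^{d+2}d^{2r-2}$ is the displayed computation just above. Substituting,
\[
r^{d+2}d^{2r-2}=\det(I)=\frac{|W_{tor}|^{2}}{\det(W/tor)}=\frac{r^{6}}{\det(W/tor)},
\]
so $\det(W/tor)=r^{6}/(r^{d+2}d^{2r-2})=r^{4-d}d^{2-2r}$, proving the first assertion.

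For the second assertion, I invoke Theorem~\ref{thm:visible} to identify $V$ with $W=R/I$ as $R$-modules (so in particular $V/tor\cong W/tor$ as abelian groups of rank $(r-1)(d-2)$, by Corollary~\ref{cor:rank-lower-bound} and Proposition~\ref{prop:R/I-as-group}). Under this identification, Proposition~\ref{prop:pairings} asserts that the N\'eron--Tate canonical height pairing on $V$ equals $(d-1)$ times the group-theoretic pairing $\langle\cdot,\cdot\rangle_{R^{0}/I^{0}}$ on $W$. Hence the Gram matrix for any $\Z$-basis of $V/tor$ is $(d-1)$ times the Gram matrix for the corresponding basis of $W/tor$, and multiplying an $n\times n$ matrix by a scalar $c$ multiplies its determinant by $c^{n}$ with $n=(r-1)(d-2)$. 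Therefore
\[
\det(V/tor)=(d-1)^{(r-1)(d-2)}\det(W/tor)=(d-1)^{(r-1)(d-2)}r^{4-d}d^{2-2r},
\]
which is the claimed formula.
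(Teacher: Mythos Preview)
Your proof is correct and follows essentially the same route as the paper: combine the relation $\det(I)=|W_{\tor}|^2/\det(W/\tor)$ with the computed values $\det(I)=r^{d+2}d^{2r-2}$ and $|W_{\tor}|=r^3$, and then use Theorem~\ref{thm:visible} and Proposition~\ref{prop:pairings} to pass from $W$ to $V$ via the scalar $(d-1)$ on a lattice of rank $(r-1)(d-2)$.
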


\begin{proof}
  The first claim follows from Proposition~\ref{prop:R/I-as-group}.
  The second follows from Theorem~\ref{thm:visible} and
  Corollary~\ref{cor:det(W/tor)}, keeping in mind the scalar $(d-1)$
  relating the group-theoretic and height pairings as in
  Proposition~\ref{prop:pairings}.
\end{proof}



\chapter{The $L$-function and the BSD conjecture}\label{ch:L}

In this chapter, we compute the Hasse-Weil $L$-function of the
Jacobian $J$ of $C$ over certain extensions of $\Fp(t)$ and prove the
conjecture of Birch and Swinnerton-Dyer for $J$.  This leads to a
combinatorial calculation of the rank of $J$.  We use the refined BSD
conjecture in Chapter~\ref{ch:Sha} to relate the Tate-Shafarevich
group of $J$ to the visible subgroup $V$ defined in
Section~\ref{SdefV}.

We work in the context of general $r$ and $d$ in this chapter; namely,
$k=\Fq$ is any finite field of characteristic $p$, $d$ is any integer
prime to $p$, $K=k(u)$ with $u=t^{1/d}$, $r$ is any integer prime to
$p$, $C$ is the curve of genus $r-1$ over $K$ defined as in
Section~\ref{s:curve} of Chapter 1, and $J$ is the Jacobian of $C$.
Unless stated otherwise we do not assume that $r$ divides $d$ nor that
$d$ divides $q-1$.

\section{The $L$-function}

\subsection{Definition and first properties}
We fix a prime $\ell\neq p$ and consider 
$$H^1(C\times\Kbar,\Ql)\cong H^1(J\times\Kbar,\Ql)$$ 
as a representation of $\gal(\Ksep/K)$ where $K=\Fq(u)$.

The corresponding $L$-function $L(J/K,s)=L(C/K,s)$ is defined by the
Euler product
$$L(J/K,s)=\prod_v
\det\left(1-\Fr_v\,q_v^{-s}\left|H^1(J\times\Kbar,\Ql)^{I_v}\right.\right)^{-1}.$$
Here $v$ runs through the places of $K$, $\Fr_v$ is the (geometric) Frobenius
element at $v$, $q_v$ is the cardinality of the residue field at $v$,
$I_v$ is the inertia group at $v$, and $H^1(J\times\Kbar,\Ql)^{I_v}$
is the subspace of $H^1(J\times\Kbar,\Ql)$ invariant under $I_v$.

It is known that $L(J/K,s)$ is a rational function in $q^{-s}$ (where
$q = \# k = \# \F_q$).  Proposition~\ref{prop:K/k-trace} in the next
chapter shows that the $K/k$-trace of $J$ vanishes. This implies that
$L(J/K,s)$ is in fact a polynomial in $q^{-s}$.

The Grothendieck-Ogg-Shafarevich formula gives the degree of
$L(J/K,s)$ as a rational function in $q^{-s}$ (and therefore as a
polynomial in our case) in terms of the conductor of the
representation $H^1(J\times\Kbar,\Ql)$.  We review this in
Section~\ref{ss:GOS} below.  

See \cite[Section~6.2]{CRM} for more details and references about the
preceding two paragraphs.  We do not need to go into details about
these assertions here, because we give an elementary calculation of
$L(J/K,s)$ from its definition in Section~\ref{s:L-calc} below that
shows that it is a polynomial of known degree.

\subsection{Analysis of local factors}\label{ss:local-Ls}
In this subsection, we make the local factor 
$$L_v:=\det\left(1-\Fr_v\,q_v^{-s}\left|H^1(J\times\Kbar,\Ql)^{I_v}\right.\right)$$
more explicit using the regular proper model $\XX$ constructed in
Section~\ref{s:models}.  Roughly speaking, the familiar fact that we
may calculate the local $L$-factor at places of good reduction by
counting points continues to hold at all places.  Some care is
required because the genus is greater than $1$ and the result
ultimately depends on delicate properties of the N\'eron model.

\begin{prop}\label{prop:invariants}
  For a place $v$ of $K=k(u)$, let $D_v$ be a decomposition group at
  $v$ and let $I_v \subset D_v$ be the corresponding inertia group.
  Let $\XX_v$ be the fiber of $\XX\to\P^1_u$ over the corresponding
  point of $\P^1_u$.  Then there is a canonical isomorphism
$$H^1(J\times\Kbar,\Ql)^{I_v}\cong H^1(\XX_v\times\kbar,\Ql)$$
that is compatible with the actions of $D_v/I_v\cong\gal(\kbar/k)$. 
\end{prop}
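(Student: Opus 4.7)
The plan is to combine proper base change, the assertion from Proposition~\ref{prop:reduction} that the representation $H^1(C\times_K\Kbar,\Ql)$ is at worst tamely ramified, and the explicit information about fibers of $\XX\to\P^1_u$ and the N\'eron model of $J$ obtained in Chapter~\ref{ch:models}.

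First I would localize. Let $R^{sh}$ be the strict henselization of $\OO_{\P^1_u,v}$, and set $\XX^{sh}=\XX\times_{\P^1_u}\spec R^{sh}$, with generic fiber $\XX^{sh}_\eta$ (a form of $C$ over the fraction field of $R^{sh}$) and special fiber $\XX^{sh}_s=\XX_v\times_k\kbar$. Since $\XX^{sh}\to\spec R^{sh}$ is proper, proper base change produces a canonical isomorphism $H^i(\XX^{sh},\Ql)\cong H^i(\XX_v\times\kbar,\Ql)$. On the other hand, pulling back along the inclusion of the geometric generic point yields a restriction map $H^i(\XX^{sh},\Ql)\to H^i(\XX^{sh}_{\bar\eta},\Ql)=H^i(C\times_K\Kbar,\Ql)$. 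Because $R^{sh}$ is strictly henselian, the inertia group $I_v$ acts trivially on the source, so for $i=1$ the restriction factors through a canonical, $D_v/I_v$-equivariant map
\[
\rho_v:H^1(\XX_v\times\kbar,\Ql)\longrightarrow H^1(C\times_K\Kbar,\Ql)^{I_v}\cong H^1(J\times_K\Kbar,\Ql)^{I_v},
\]
where the second identification is the classical comparison between $H^1$ of a curve and $H^1$ of its Jacobian.

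The heart of the proof is showing that $\rho_v$ is an isomorphism. The abstract input is the following standard consequence of the theory of vanishing cycles on a regular proper family (see SGA~7, Exp.~IX and its applications to N\'eron models): if $\XX/R^{sh}$ is a regular proper model of a curve whose geometric generic fiber has tame $\ell$-adic $H^1$, then $\rho_v$ is an isomorphism. Proposition~\ref{prop:reduction} supplies the tameness hypothesis, and the construction of $\XX$ in Section~\ref{s:models} supplies regularity and properness. Concretely, $\rho_v$ can be identified, via Poincar\'e duality and Grothendieck's theorem that $T_\ell(J)^{I_v}$ is the Tate module of the identity component of the N\'eron model of $J$, with the comparison between $H^1(\XX_v\times\kbar,\Ql)$ and $H^1$ of $\Pic^0_{\XX_v/\kbar}$; the latter is the identity component of the N\'eron fiber because $\XX$ is a regular proper model.

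The main obstacle is verifying bijectivity rather than mere functorial existence of $\rho_v$, and the cleanest way to do this is a dimension count. The dimension of the target is $2g_v+m_v$, where $g_v$ and $m_v$ are the abelian and toric invariants of the connected component of the N\'eron model at $v$ (the unipotent part contributes zero $I_v$-invariants). The dimension of the source is computed directly from the combinatorics of the fibers displayed in Figures~\ref{fig:u-equals-zero}--\ref{fig:superelliptic-dual-graph-even} and \ref{fig:fiber-infinity-general-d}: using the normalization exact sequence for the (possibly singular, possibly reducible) curve $\XX_v\times\kbar$, one finds $\dim H^1(\XX_v\times\kbar,\Ql)=2\sum_ig(C_i)+b_1(\Gamma_v)$, where the $C_i$ are the components and $\Gamma_v$ the dual graph. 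Proposition~\ref{prop:connected-component} then matches this with $2g_v+m_v$ place by place, which together with the natural Galois-equivariance of $\rho_v$ completes the proof.
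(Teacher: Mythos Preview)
Your middle paragraph---identifying $\rho_v$ with the comparison between $V_\ell\Pic^0(\XX_v)$ and $V_\ell\JJ_v^0$ via \cite[9.5, Theorem~4b]{blr} and the result $(V_\ell J)^{I_v}\cong V_\ell\JJ_v^0$ of \cite{SerreTate68}---is exactly the paper's proof, and once stated it finishes the argument on its own. The hypotheses needed are that $\XX$ is a regular proper model and that the gcd of multiplicities in each fiber is $1$ (guaranteed by the section); tameness plays no role. Your framing of tameness as the ``abstract input'' from SGA~7 is a misattribution: Proposition~\ref{prop:reduction} is used in Section~\ref{ss:GOS} for the conductor computation, not for this isomorphism, and the cospecialization map is injective on $H^1$ for any regular proper surface over a trait simply because $R^0\Phi\Ql=0$.

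The dimension count in your final paragraph has a genuine gap. The formula $\dim H^1(\XX_v\times\kbar,\Ql)=2\sum_i g(C_i)+b_1(\Gamma_v)$ is valid only for nodal fibers, and the fibers at $u^d=1$ with $r$ odd are not nodal: the three components $F,D_s,E_s$ all pass through a single point (see the discussion preceding Figure~\ref{fig:superelliptic-dual-graph-odd}). The naive dual graph with an edge for each intersecting pair then has $b_1=1$, while $m_v=0$ by Proposition~\ref{prop:connected-component}(3), so your formula gives $r$ for the source but $r-1$ for the target. One can repair this by replacing $b_1(\Gamma_v)$ with $\sum_P(b_P-1)-(n-1)$ coming from the normalization sequence, but that is just a recomputation of the toric rank of $\Pic^0(\XX_v)$, and at that point the N\'eron-model route you already sketched is shorter and cleaner.
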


\begin{proof}
  This seems to be well-known to experts, but it is hard to find an
  early reference.  A recent preprint of Bouw and Wewers
  \cite{BouwWewers} has a nice exposition that we include here for
  the convenience of the reader.\footnote{The main point of
    \cite{BouwWewers} is that local $L$-factors can be computed
    efficiently from semi-stable models rather than regular models,
    especially for superelliptic curves.  This is relevant for our
    work, but we need the regular proper model $\XX$ for other
    reasons, e.g., computing heights, so the approach of
    \cite{BouwWewers} would not in the end save us anything.}

  First, we have the standard fact that $H^1$ is closely connected to
  the Picard group: Writing $V_\ell$ for the Tate module, then
$$H^1(J\times\Kbar,\Ql)\cong V_\ell \Pic^0(C)\quad\text{and}\quad
H^1(\XX_v\times\kbar,\Ql)\cong V_\ell\Pic^0(\XX_v).$$ 
Second, let $\JJ\to\P^1_u$ be the N\'eron model of the Jacobian $J$,
and let $\JJ_v^0$ be the connected component of the identity of the
fiber at $v$.  Then by \cite[Lemma 2]{SerreTate68},
$$\left(V_\ell \Pic^0(C)\right)^{I_v}\cong V_\ell\JJ_v^0.$$
Finally, and this is the delicate point, the hypotheses of \cite[9.5,
Theorem~4b]{blr} are satisfied and this implies that
$$\JJ_v^0\cong\Pic^0(\XX_v).$$
(Roughly speaking, this result says that the N\'eron model represents
the relative Picard functor.  In order to apply it, we need to know
that $\XX$ is a regular proper model and that the $\gcd$ of the
multiplicites of the components of $\XX_v$ is one.  This last point
was shown directly in Section~\ref{s:models}, and it also follows from
the fact that $C/K$ has a rational point so $\XX\to\P^1$ has a
section.)

Combining the displayed isomorphisms completes the proof.
\end{proof}

Next we make the connection with point counting.  Write $k_v$ for the
residue field at $v$ and $k_{v,n}$ for the extension of $k_v$ of
degree $n$.  Then the Grothendieck-Lefschetz trace formula applied to
$\XX_v$ says:
\begin{equation}\label{eq:Lefschetz}
|\XX_v(k_{v,n})|=\sum_{i=0}^2(-1)^i
\tr\left(\Fr_v^n|H^i(\XX_v\times\kbar,\Ql)\right).
\end{equation}

The fibers $\XX_v$ are connected, so $H^0(\XX_v\times\kbar,\Ql)=\Ql$
with trivial Frobenius action.  On the other hand,
$H^2(\XX_v\times\kbar,\Ql)$ has dimension equal to the number of
irreducible components of $\XX_v\times\kbar$ and is isomorphic to
$\Ql(-1)$ tensored with a permutation representation keeping track of
the action of Frobenius on the set of irreducible components.  In
particular, the trace of $\Fr_v^n$ on $H^2(\XX_v\times\kbar,\Ql)$ is
equal to $c_{v,n}|k_{v,n}|$ where $c_{v,n}$ is the number of
irreducible components of $\XX_v\times\kbar$ that are rational over
$k_{v,n}$ and $|k_{v,n}|$ is the cardinality of $k_{v,n}$.  Thus,
computing $H^1(\XX_v\times\kbar,\Ql)$ with its Frobenius action is
reduced to counting points.  The end result is recorded in
Proposition~\ref{prop:Lv} below once we establish the necessary
notation for characters.

\subsection{Conductors and degree of the $L$-function}
\label{ss:GOS}
We write $c_v$ for the exponent of the conductor of the representation
$H^1(J\times\Kbar,\Ql)$ at $v$.  Since the latter is tamely ramified
(by Proposition~\ref{prop:reduction}), the conductor at $v$ is simply the
codimension of $H^1(J\times\Kbar,\Ql)^{I_v}$ in
$H^1(J\times\Kbar,\Ql)$.  Using Proposition~\ref{prop:invariants},
$c_v$ is the difference between the $\Ql$-dimension of the Tate module
of the generic fiber and that of the special fiber.  In terms of the
notation in Proposition~\ref{prop:connected-component}, the dimension
of the Tate module of the special fiber is $2g_v+m_v$.  It follows
that $c_v=2(r-1)-2g_v-m_v$, and using
Proposition~\ref{prop:connected-component}, we find that
\begin{equation}\label{eq:conductor}
c_v=\begin{cases}
r-1&\text{if $v$ lies over $t=0$ or $t=1$,}\\
2r-\gcd(d,r)-1&\text{if $v$ lies over $t=\infty$,}\\
0&\text{otherwise.}
\end{cases}
\end{equation}

Assuming Proposition~\ref{prop:K/k-trace} below, we know that the
$L$-function is a polynomial in $T=q^{-s}$.  In this case, the
Grothendieck-Ogg-Shafarevich formula gives its degree as
\begin{equation}\label{eq:L-degree}
\deg L(J/K,T)=-4(r-1)+\sum_v c_v=(d-1)(r-1)-(\gcd(d,r)-1).
\end{equation}

We confirm this below with a more elementary proof that avoids
the forward reference to Proposition~\ref{prop:K/k-trace}.

\section{The conjecture of Birch and Swinnerton-Dyer for $J$}

In this section we continue studying the arithmetic of $J$ in the case
of general $r$ and $d$, so $K=k(u)$ with $u^d=t$, and $k$ is finite of
characteristic $p$ not dividing $rd$.  As above, let $L(J/K,s)$ be the
Hasse-Weil $L$-function of $J$.  We write $L^*(J/K,1)$ for the leading
coefficient in the Taylor expansion of $L(J/K,s)$ near $s=1$.  (This
is defined because we know that $L(J/K,s)$ is a rational function
that is regular in a neighborhood of $s=1$.)

We let $\sha(J/K)$ be the Tate-Shafarevich group of $J$.  This is not
yet known \emph{a priori} to be finite, but we show that it is
finite in our case.  We let $R$ be the determinant of the canonical height
pairing on $J(K)$ modulo torsion.  (This is $(\log q)^{\rk
  J(K)}$ times the determinant of the $\Q$-valued pairing discussed in
Chapter~\ref{ch:heights}.)  Finally, we let $\tau=\tau(J/K)$ be the
Tamagawa number associated to $J$.  This is defined precisely and
computed explicitly in Section~\ref{s:tamagawa}.

\begin{theorem}\label{thm:BSD}
  The conjecture of Birch and Swinnerton-Dyer holds for $J$ over
  $K=\Fq(t^{1/d})$.  More precisely, we have
$$\ord_{s=1}L(J/K,s)=\rk J(K),$$
and $\sha(J/K)$ is finite, and
$$L^*(J/K,1)=\frac{|\sha(J/K)|\,R\,\tau}{|J(K)_{tor}|^2}.$$
\end{theorem}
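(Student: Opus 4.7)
The plan is to deduce the theorem from the Tate conjecture for the smooth projective surface $\XX$ constructed in Chapter~\ref{ch:models}. By the Artin--Tate theorem, BSD for $J$ over $K = \Fq(u)$ (with leading term formula, finiteness of $\sha$, and the rank equality) follows once the Tate conjecture is known for $\XX$ over $\Fq$ — more precisely, once one knows that the rank of the N\'eron--Severi group of $\XX_{\overline{\Fq}}$ equals the order of the pole of the zeta function $Z(\XX,q^{-s})$ at $s=1$. This is the standard bridge used, for example, in \cite{UlmerDPCT}, and it applies here because $\XX$ is smooth, projective, and comes equipped with the fibration $\pi:\XX\to\P^1_u$ whose generic fiber is $C$, together with the section $Q_\infty$ needed for the Artin--Tate comparison.

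To verify the Tate conjecture for $\XX$, the plan is to exploit Section~\ref{s:DPC}: we showed there that $\XX$ is birationally dominated by the product of curves $\CC\times_k\DD$, via the map $\phi$ of \eqref{eq:domination} factoring through the blow-up $\widetilde{\CC\times_k\DD}$ and its quotient by the finite group $G = \mu_d\times\mu_r$. Because Tate's conjecture is preserved under dominant rational maps between smooth projective surfaces (the cycle class map is compatible with push-forward/pull-back, which surject on the relevant pieces of $\ell$-adic cohomology and on N\'eron--Severi after tensoring with $\Q_\ell$), it suffices to prove Tate for $\CC\times_k\DD$. This last case is a classical theorem of Tate: the Tate conjecture holds for any product of smooth projective curves over a finite field. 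Thus the Tate conjecture holds for $\XX$.

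The remaining step is to invoke the Artin--Tate theorem, which for a fibered surface $\pi : \XX\to\P^1$ with a section and generic fiber $C$ having Jacobian $J$ asserts that, conditional on the Tate conjecture for $\XX$ (equivalently, the finiteness of the Brauer group of $\XX$), one has $\ord_{s=1}L(J/K,s)=\rk J(K)$, the group $\sha(J/K)$ is finite, and the refined BSD formula holds in the form stated, with the various local contributions matching the product of Tamagawa numbers computed in Section~\ref{s:tamagawa}. The main obstacle — or rather, the step requiring the most care — is not Tate for the product of curves (which is classical) but the bookkeeping in Artin--Tate: one must match the leading coefficient on the surface side (involving $|\mathrm{Br}(\XX)|$, $\det$ of the N\'eron--Severi intersection pairing modulo numerical equivalence, etc.) with the arithmetic invariants on the Jacobian side ($|\sha|$, the height regulator $R$, $|J(K)_{\tor}|^2$, and $\tau$). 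This matching is the content of \cite[Section~6.3]{CRM} in the elliptic case and extends to our setting because we have already computed, in Chapter~\ref{ch:models}, the component groups (Proposition~\ref{prop:component-groups}), the connected components of the N\'eron model (Proposition~\ref{prop:connected-component}), and the fact that the genus-zero components of singular fibers are all rational over $\Fp(\mu_d)$ (Remark~\ref{rem:rationality-of-components}); these inputs ensure that the Artin--Tate formula specializes cleanly to the stated BSD identity.
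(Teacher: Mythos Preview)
Your proposal is correct and follows essentially the same route as the paper: domination of $\XX$ by a product of curves (Section~\ref{s:DPC}) $\Rightarrow$ Tate conjecture for $\XX$ $\Rightarrow$ BSD for $J$, with the implications referenced to \cite{CRM} (and, for the leading-term formula, to \cite{KT}). One small overreach: in your final paragraph you suggest that the Artin--Tate/BSD matching ``extends to our setting because'' of the explicit computations of component groups and connected components in Chapter~\ref{ch:models}, but the implication Tate $\Rightarrow$ BSD (including the leading-coefficient formula) is a general theorem that does not require those inputs---they are used later, in Chapter~\ref{ch:Sha}, to \emph{evaluate} the terms in the formula, not to establish that the formula holds.
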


\begin{proof}
  We saw in Section~\ref{s:DPC} that the surface $\XX$ is dominated by
  a product of curves.  This implies the Tate conjecture for $\XX$ and
  therefore the BSD conjecture for $J$.  See \cite[Sections~8.2 and
  6.3]{CRM} for more details on these implications.
\end{proof}

\begin{remark}
  The most complete reference for the leading term part of the BSD
  conjecture (i.e., the second displayed equation in the Theorem) is
  \cite{KT}.  The formulation in \cite{KT} differs slightly from that
  above.  We compare the two formulations and show they are equivalent
  in Section~\ref{ss:BSDs} below.
\end{remark}

\section{Elementary calculation of the $L$-function}
\label{s:L-calc}

In this section we calculate the Hasse-Weil $L$-function of $J$ in
terms of Jacobi sums.  The arguments here are quite parallel to those
in Section~3 of \cite{Legendre2}, so we use some of the
definitions and notations of that paper, and we omit some of the
details.

\subsection{Characters and Jacobi sums}
Let $\Qbar$ be an algebraic closure of $\Q$, and let $\OO_{\Qbar}$ be
the ring of integers of $\Qbar$.  Choose a prime
$\mathfrak{p}\subset\OO_{\Qbar}$ over $p$ and define
$\Fpbar:=\OO_{\Qbar}/\mathfrak{p}$, so that $\Fpbar$ is an algebraic
closure of $\Fp$.  All finite fields in this section are considered as
subfields of $\Fpbar$. Reduction modulo $p$ defines an isomorphism
between the roots of unity with order prime to $p$ in
$\OO_{\Qbar}^\times$ and $\overline{\F}_p^\times$. The Teichm\"uller
character $\tau:\overline{\F}_{p}^\times\to\OO_{\Qbar}^\times$ is the
unique homomorphism that gives a right inverse to the reduction map.

Consider a multiplicative character $\chi: k^\times\to\Qbar^\times$
for the finite field $k$.  We employ the usual convention
that $\chi(0)=0$ if $\chi$ is non-trivial, and $\chi_{triv}(0)=1$.

If $\chi_1$ and $\chi_2$ are multiplicative
characters $k^\times\to\Qbar^\times$,
we define a Jacobi sum
$$J(\chi_1,\chi_2):=\sum_{u+v+1=0}\chi_1(u)\chi_2(v)$$
where the sum is over $u,v\in k$. 
If we need to emphasize the underlying field, we write
$J_k(\chi_1,\chi_2)$.

\subsection{Orbits and Jacobi sums}
\label{sec:orbits-jacobi-sums}

We write $\<a\>$ for the fractional part of a rational number $a$, so
that $\<a\>\in[0,1)$ and $a-\<a\>\in\Z$.  If $i\in\Z/n\Z$, and if
$\tilde \imath$ and $\tilde \imath'\in\Z$ are representatives of $i$, then
$\<\tilde \imath/n\>=\<\tilde \imath'/n\>$, so we may unambiguously define
$\<i/n\>$ as $\<\tilde \imath/n\>$.

Define
\begin{equation} \label{DdefS}
S=\left\{(i,j)\in\Z/d\Z\times\Z/r\Z\,\left|\, i\neq0, j\neq0,
    \left\<\frac id\right\>+\left\<\frac jr\right\>\not\in\Z\right.\right\}.
    \end{equation}
Then $(\Z/\lcm(d,r)\Z)^\times$ acts on $S$ diagonally by 
$$t \cdot (i,j)=(ti,jj)\quad\text{for}\quad t \in (\Z/\lcm(d,r)\Z)^\times.$$
We write $O$ for the set of orbits of $S$ under 
the diagonal action of the cyclic subgroup of
$(\Z/\lcm(d,r)\Z)^\times$ generated by $q$.  

If $o\in O$ is an orbit, we write $|o|$ for the cardinality of $o$.
Define a Jacobi sum by
\begin{equation} \label{DdefJ}
J_o=J(\chi_i,\rho_j),
\end{equation}
where $(i,j)\in o$, where the sum is over $\F_{q^{|o|}}$, and where
\[\chi_i=\tau^{i(q^{|o|}-1)/d}, \ \rho_j=\tau^{j(q^{|o|}-1)/r}.\]
Well-known properties of Jacobi sums show that $J_o$ is independent of
the choice of $(i,j)$ and that it is a Weil integer of size $q^{1/2}$.

\subsection{The $L$-function in terms of Jacobi sums}


  \begin{theorem}\label{thm:L}
    With notations as above, the Hasse-Weil $L$-function of $J/K$ is
    $$L(J/K,s)=\prod_{o\in O}\left(1-J_o^2q^{-|o|s}\right).$$
  \end{theorem}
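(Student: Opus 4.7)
The plan is to compute $L(J/K,s)$ directly from its Euler product definition by point-counting on the fibers of $\XX\to\P^1_u$, organizing the resulting character sums into Jacobi sums, and then assembling them according to Frobenius orbits. The roadmap below follows the strategy of \cite[Section~3]{Legendre2}, generalizing it from $r=2$ to arbitrary $r$.

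First, I would use Proposition~\ref{prop:invariants} together with the Grothendieck-Lefschetz trace formula \eqref{eq:Lefschetz} to express each local factor $L_v$ in terms of $|\XX_v(k_{v,n})|$, with the known trivial contributions from $H^0$ and $H^2$ subtracted off. Equivalently, I would count points on the affine model $y^r=x^{r-1}(x+1)(x+u_0^d)$ as $u_0$ ranges over the $\F_{q^n}$-points of $\P^1_u$, and track the discrepancy between these naive counts and the counts on the smooth model $\XX$ by using the explicit description of singular fibers from Section~\ref{ss:bad-fibers} and Section~\ref{ss:general-d}.

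For the point count, I would use the standard character identity $|\{y\in\F_{q^n}:y^r=a\}|=\sum_{\rho^r=1}\rho(a)$ (for $a\neq 0$, with $\rho(0):=0$ for non-trivial $\rho$), so that
\[
|C^{\mathrm{aff}}(\F_{q^n}(u_0))|=\sum_{x}\sum_{\rho^r=1}\rho^{-1}(x)\rho(x+1)\rho(x+u_0^d),
\]
having used $\rho(x)^{r-1}=\rho(x)^{-1}$. Summing over $u_0\in\F_{q^n}$, a second application of the identity $|\{u_0:u_0^d=c\}|=\sum_{\chi^d=1}\chi(c)$ expresses the double sum as $\sum_{\chi,\rho}\chi$-and-$\rho$-twisted triple sums. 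The inner sums can be recognized, after suitable substitutions and use of the identity $\sum_x\rho^{-1}(x)\rho(x+1)\rho(x+c)$ type manipulations, as products of Jacobi sums $J(\chi_i,\rho_j)$; the Hasse--Davenport relation will be used to pass between ground fields $\F_q$ and $\F_{q^{|o|}}$.

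Finally, I would sort characters by orbits of the subgroup of $(\Z/\lcm(d,r)\Z)^\times$ generated by $q$. Trivial characters ($i=0$, $j=0$, or both) and "degenerate" pairs with $\<i/d\>+\<j/r\>\in\Z$ would contribute to the $H^0$ and $H^2$ pieces and cancel against the corrections for singular fibers computed in Section~\ref{ss:bad-fibers}; the numerical match would be checked via \eqref{eq:L-degree}, since $|S|=(d-1)(r-1)-(\gcd(d,r)-1)$. The non-degenerate orbits remaining are precisely $O$, and each contributes a factor $1-J_o^2q^{-|o|s}$. The main obstacles I expect are twofold: (i) the appearance of the \emph{square} $J_o^2$ rather than a single Jacobi sum, which I expect to trace to the "doubled" character structure forced by the exponents in $y^r=x^{r-1}(x+1)(x+t)$ --- namely that both factors $(x+1)$ and $(x+t)$ play symmetric roles, each contributing one Jacobi sum to the product --- and which is consistent with the fact (see the introduction's Theorem~3) that the new part of $J$ is geometrically isogenous to a square; and (ii) the bookkeeping at the bad places $u=0,\infty$ and $u^d=1$, where the corrections between the naive affine count and the true contribution to $H^1$ must match precisely the degenerate characters that we must discard.
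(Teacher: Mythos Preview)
Your proposal is correct and follows essentially the same route as the paper: expand the logarithm of the $L$-function as a sum over $\beta\in\P^1(\F_{q^n})$ of Frobenius traces, compute these traces by character sums (Proposition~\ref{prop:Lv}), group $\beta$'s by $\alpha=\beta^d$ via a second character expansion, and reorganize into Jacobi sums using Hasse--Davenport. One small remark: the bad-fiber bookkeeping you flag as obstacle (ii) is handled quite cleanly in the paper by establishing a \emph{uniform} character-sum formula for $a_{\beta,q^n}$ valid at all finite $\beta$ (including $\beta=0$ and $\beta^d=1$), so that the degenerate characters and the $a_{\infty,q^n}$ term cancel in a single line; and the square $J_o^2$ arises exactly as you anticipate, via the substitution $\alpha\mapsto\alpha\gamma$ which factors the double sum into two identical Jacobi sums coming from the symmetric roles of $(x+1)$ and $(x+t)$.
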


  The proof of Theorem~\ref{thm:L} is given in
  Section~\ref{ss:Lproof} after some preliminaries in the next
  subsection. 

  \begin{remark}
    Note that the degree of $L(J/K,s)$ as a polynomial in $q^{-s}$ is the
    cardinality of $S$, namely $(d-1)(r-1)-(\gcd(d,r)-1)$.  This
    confirms the calculation of the degree in Section~\ref{ss:GOS}.
  \end{remark}

  \subsection{Explicit local $L$-factors}\label{ss:localL}
  We now turn to some preliminaries toward the proof of
  Theorem~\ref{thm:L}. 

  If $\beta$ is an $\Fqn$-rational point of $\P^1_u$ and $v$ is the
  place of $k=\Fq(u)$ under $\beta$, we write $a_{\beta,q^n}$ for the
  trace of the $q^n$-power Frobenius on $H^1(J\times\Kbar,\Ql)^{I_v}$,
  or equivalently (by Proposition~\ref{prop:invariants}) on
  $H^1(\XX_v\times\kbar,\Ql)$.  We may compute this trace using
  Equation~\eqref{eq:Lefschetz} and the remarks in the paragraph
  following it.

  \begin{prop}\label{prop:Lv}
    Let $s=\gcd(r,q^n-1)$ and $\phi=\tau^{(q^n-1)/s}$.  For all
    $\beta\in\Fqn$, we have 
    $$a_{\beta,q^n}=-\sum_{j=1}^{s-1}\sum_{\gamma\in\Fqn}
    \phi^j\left(\gamma^{r-1}(\gamma+1)(\gamma+\alpha)\right)$$ 
    where $\alpha=\beta^d$.  If $\beta=\infty$, then
    $$a_{\beta,q^n}=\gcd(d,s)-1=\gcd(d,r,q^n-1)-1.$$
  \end{prop}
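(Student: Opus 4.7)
The strategy is to apply Grothendieck--Lefschetz directly to the fiber $\XX_v$. By Proposition~\ref{prop:invariants}, $a_{\beta,q^n}$ equals the trace of the $q^n$-power Frobenius on $H^1(\XX_v\times\kbar,\Q_\ell)$, so
\[
|\XX_v(\Fqn)|=1+q^n\,c_{\beta,n}-a_{\beta,q^n},
\]
where $c_{\beta,n}$ is the number of geometrically irreducible components of $\XX_v$ defined over $\Fqn$ (all components are rational curves, except possibly the curve $F$ at $\beta^d=1$, so $H^2$ behaves as a twisted permutation module on the set of components). The task reduces to computing $|\XX_v(\Fqn)|$ and $c_{\beta,n}$ in each case.

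For a smooth fiber ($\beta\in\Fqn$ with $\beta\ne 0$ and $\beta^d\ne 1$), $c_{\beta,n}=1$ and $\XX_v$ is the smooth projective model of $y^r=x^{r-1}(x+1)(x+\alpha)$ with one point $Q_\infty$ at infinity. Using the identity $|\{y\in\Fqn:y^r=w\}|=\sum_{j=0}^{s-1}\phi^j(w)$ (with conventions $\phi^0(0)=1$ and $\phi^j(0)=0$ for $j\ne 0$), the affine point count equals $q^n+\sum_{j=1}^{s-1}\sum_{\gamma\in\Fqn}\phi^j(\gamma^{r-1}(\gamma+1)(\gamma+\alpha))$; adding $Q_\infty$ and substituting into the Lefschetz equation yields the first formula.

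For the singular fibers at $\beta\in\{0\}\cup\mu_d(\Fqn)$ and for $\beta=\infty$, we use the explicit descriptions from Section~\ref{ss:bad-fibers}. The count is $|\XX_v(\Fqn)|=\sum_{C \text{ fixed}}|C(\Fqn)|-\sum_{P \text{ fixed}}(m_P-1)$, where $P$ runs over $\Fqn$-rational singular points and $m_P$ is the number of components through $P$, while $c_{\beta,n}$ equals the number of Frobenius-fixed components. Since Frobenius permutes the chains of exceptional divisors via its action on $\mu_r$ (or $\mu_{\gcd(r,d)}$ at infinity), both counts reduce to $\gcd$ expressions in $q^n-1$. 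For $\beta=0$, the identity $\phi^{jr}=1$ on $\Fqn^\times$ collapses the character sum to $-\sum_{j=1}^{s-1}\sum_{\delta\ne 0,1}\phi^j(\delta)=s-1$, matching the combinatorial count; for $\beta=\infty$ an analogous computation using Figure~\ref{fig:fiber-infinity-general-d} yields $\gcd(d,r,q^n-1)-1$. The main obstacle is the case $\beta^d=1$: here $\XX_v$ contains the smooth component $F$ of genus $\lfloor(r-1)/2\rfloor$ (the smooth model of $y^r=x^{r-1}(x+1)^2$) meeting two exceptional divisors at a triple intersection, and the $\Fqn$-point count on $F$ must be expanded as a sum of Jacobi sums and combined with the chain contributions to produce exactly $-\sum_j\sum_\gamma\phi^j(\gamma^{r-1}(\gamma+1)^2)$. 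This is a classical but delicate calculation that lies at the heart of the proof.
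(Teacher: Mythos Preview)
Your approach is the same as the paper's: split on the type of $\beta$, read off $|\XX_v(\Fqn)|$ and the number $c_{\beta,n}$ of rational components from the explicit fibers in Section~\ref{ss:bad-fibers}, and solve the Lefschetz equation for $a_{\beta,q^n}$. Two places where the paper is sharper than your sketch.

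First, at $\beta^d=1$ you must separate $r$ odd from $r$ even. For $r$ odd the picture is as you describe (Figure~\ref{fig:superelliptic-dual-graph-odd}: $F$, $D_s$, $E_s$ pass through a common point), and inclusion--exclusion gives $|\XX_v(\Fqn)|=(r-1)q^n+|F(\Fqn)|$. For $r$ even there is an additional rational component $G$ meeting $F$ transversally at \emph{two} points (Figure~\ref{fig:superelliptic-dual-graph-even}), and the count becomes $|\XX_v(\Fqn)|=(r-1)q^n-1+|F(\Fqn)|$. Correspondingly, the singular affine model $y^r=x^{r-1}(x+1)^2$ has a two-branch singularity at $(-1,0)$ when $r$ is even (unibranch when $r$ is odd), which shifts $|F(\Fqn)|$ by exactly $1$ relative to the character sum and cancels the extra $-1$. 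Your ``triple intersection'' description only covers the odd case; without the parity split the even case comes out wrong by $1$.

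Second, the $\beta^d=1$ step is more direct than you suggest: no Jacobi-sum expansion is required. Since $F$ is the smooth model of $y^r=x^{r-1}(x+1)^2$, the affine point count is $q^n+\sum_{j=1}^{s-1}\sum_\gamma\phi^j(\gamma^{r-1}(\gamma+1)^2)$ by the \emph{same} identity $|\{y:y^r=w\}|=\sum_j\phi^j(w)$ used at the good fibers. Correcting for the branch structure at the singularities and adding the point at infinity gives $|F(\Fqn)|$, and plugging into the Lefschetz equation produces the formula immediately. The ``delicate'' part is only the bookkeeping of branches, not any character-sum manipulation.
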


  \begin{proof}
    If $\beta\not\in\{0,\mu_d,\infty\}$, then the fiber $\XX_v$ is the
    smooth projective model of the affine curve
    $y^r=x^{r-1}(x+1)(x+\beta^d)$ with one point at infinity.  A standard
    exercise gives the number of points as an exponential sum:
    \begin{align*}
      |\XX_v(\Fqn)|&=1+\sum_{j=0}^{s-1}\sum_{\gamma\in\Fqn}
                     \phi^j\left(\gamma^{r-1}(\gamma+1)(\gamma+\alpha)\right)\\
                   &=q^n+1+\sum_{j=1}^{s-1}\sum_{\gamma\in\Fqn}
                     \phi^j\left(\gamma^{r-1}(\gamma+1)(\gamma+\alpha)\right).
    \end{align*}
    Since $\XX_v\times\kbar$ is irreducible, using 
    Equation~\eqref{eq:Lefschetz} and the remarks in the paragraph
    following it shows that
    $$a_{\beta,q^n}=-\sum_{j=1}^{s-1}\sum_{\gamma\in\Fqn}
    \phi^j\left(\gamma^{r-1}(\gamma+1)(\gamma+\alpha)\right),$$ 
    as claimed.

    If $\beta=0$, then the calculations in Section~\ref{ss:bad-fibers}
    (see Figure~\ref{fig:u-equals-zero}) show that 
    $$|\XX_v(\Fqn)|=\left(s(d-1)+2\right)q^n+2-s.$$
    On the other hand, the number $c_{v,n}$ of rational components is
    $s(d-1)+2$, so the trace is $s-1$.  The displayed formula in the
    Proposition simplifies:
    \begin{align*}
      -\sum_{j=1}^{s-1}\sum_{\gamma\in\Fqn}
      \phi^j\left(\gamma^{r-1}(\gamma+1)(\gamma+\alpha)\right)
      =&  -\sum_{j=1}^{s-1}\sum_{\gamma\in(\Fqn)^\times}
         \phi^j(\gamma+1)\\
      =&s-1,
    \end{align*}
    so the exponential sum is the trace, as desired.

    If $\beta^d=1$, we consider the cases $r$ odd
    (\S{}\ref{ss:bad-fibers},
    Figure~\ref{fig:superelliptic-dual-graph-odd}) and $r$ even
    (\S{}\ref{ss:bad-fibers},
    Figure~\ref{fig:superelliptic-dual-graph-even}) separately.  Let
    $F$ be the smooth projective model of the curve
    $y^r=x^{r-1}(x+1)^2$.  In both cases, the number $c_{v,n}$ of
    irreducible components is $r$.  When $r$ is odd, the number of
    $\Fqn$-rational points is $(r-1)q^n+|F(\Fqn)|$, and the curve $F$
    has unibranch singularities at $(0,0)$ and $(-1,0)$ and one point
    at infinity.  We see that
    $$q^n+1-a_{\beta,q^n}=|F(\Fqn)|=q^n+1+
    \sum_{j=1}^{s-1}\sum_{\gamma\in\Fqn}
    \phi^j\left(\gamma^{r-1}(\gamma+1)^2\right),$$
    and this gives the desired result.  
    If $r$ is even, we have
    $$|\XX_v(\Fqn)|=(r-1)q^n-1+|F(\Fqn)|,$$
    and the curve $F$ has a unibranch singularity at
    $(0,0)$, a singularity with two branches at $(-1,0)$, and one point at
    infinity.  We see that
    $$q^n+1-a_{\beta,q^n}=|F(\Fqn)|-1=q^n+1+
    \sum_{j=1}^{s-1}\sum_{\gamma\in\Fqn}
    \phi^j\left(\gamma^{r-1}(\gamma+1)^2\right)$$
    and this gives the desired result.  

    Finally, at $\beta=\infty$
    (\S\ref{ss:bad-fibers}, Figure~\ref{fig:fiber-infinity-general-d}), we have that
    $c_{v,n}=2d'/d+2+\gcd(d,r,q^n-1)$ and
    $|\XX_v(\Fqn)|=c_{v,n}q^n+2-\gcd(d,r,q^n-1)$, so we find that
    $a_{\beta,q^n}=\gcd(d,r,q^n-1)-1$, as desired.

    This completes the proof of the Proposition.
  \end{proof}

  \subsection{Proof of Theorem~\ref{thm:L}}\label{ss:Lproof}

  The proof is very similar to that of \cite[Theorem~3.2.1]{Legendre2}, so
  we omit many details.  We keep the notation of
  Proposition~\ref{prop:Lv}. 

  By a standard unwinding, we have
  \begin{equation}\label{eq:L1}
    \log L(J/K,T)=
    \sum_{n\ge1}\frac{T^n}n\sum_{\beta\in\P^1(\Fqn)}a_{\beta,q^n}  
  \end{equation}
  where, as in the previous subsection, $a_{\beta,q^n}$ is the trace of
  the $q^n$-power Frobenius on $H^1(\overline{C},\Ql)^{I_v}$ with $v$
  the place of $K=\Fq(u)$ under $\beta$.

  Now let $e=\gcd(d,q^n-1)$ and $\psi=\tau^{(q^n-1)/e}$.  Grouping
  points $\beta\in\P^1(\Fqn)$ by their images under
  $\beta\mapsto\alpha=\beta^d$ and using Proposition~\ref{prop:Lv}, we
  have
  $$\sum_{\beta\in\P^1(\Fqn)}a_{\beta,q^n}=
  a_{\infty,q^n}-\sum_{\alpha\in\Fqn}\sum_{i=0}^{e-1}\psi^i(\alpha)
  \sum_{j=1}^{s-1}\sum_{\gamma\in\Fqn}
  \phi^j\left(\gamma^{r-1}(\gamma+1)(\gamma+\alpha)\right).$$
  Changing the order of summation and replacing $\alpha$ with
  $\alpha\gamma$, the last displayed quantity is equal to
  $$a_{\infty,q^n}-\sum_{i=0}^{e-1}\sum_{j=1}^{s-1}J_{\Fqn}(\psi^i,\phi^j)^2.$$

  Note that $a_{\infty,q^n}=\gcd(e,s)-1$; $J(\psi^0,\phi^j)=0$ for
  $0<j<s$; $J(\psi^i,\phi^j)=\pm1$ when $0<i<e$, $0<j<s$; and
  $\<i/e\>+\<j/s\>\in\Z$.  We find that
  \begin{equation}\label{eq:L2}
    \sum_{\beta\in\P^1(\Fqn)}a_{\beta,q^n}=-\sum_{\substack{0<i<e \\ 0<j<s\\
        \<i/e\>+\<j/s\>\not\in\Z}}
    J_{\Fqn}(\psi^i,\phi^j)^2.  
  \end{equation}

  On the other hand,
  \begin{equation}\label{eq:L3}
    \log\prod_{o\in O}\left(1-J_o^2T^{|o|}\right)=
    -\sum_{n\ge1}\frac{T^n}n\sum_{\substack{o\text{ such that }\\|o|\text{ divides }n}}
    J_o^{2n/|o|}|o|.  
  \end{equation}

  The coefficient of $T^n/n$ can be rewritten as
  $$\sum_{\substack{(i,j)\in S\\ (q^n-1)(i,j)=(0,0)}}
  J_{\F_{q^{|o|}}}\left(\tau^{i(q^{|o|}-1)/d},\tau^{j(q^{|o|}-1)/r}\right)^{2n/|o|}.$$ 
  Using the Hasse-Davenport relation, we have
  \begin{multline*}
    \sum_{\substack{(i,j)\in S\\ (q^n-1)(i,j)=(0,0)}}
    J_{\F_{q^n}}\left(\tau^{i(q^n-1)/d},\tau^{j(q^n-1)/r}\right)^{2}
    \\=\sum_{\substack{i\in(0,e),\ j\in(0,s)\\ \<i/e\>+\<j/s\>\not\in\Z}}
    J_{\F_{q^n}}\left(\tau^{i(q^n-1)/e},\tau^{j(q^n-1)/s}\right)^{2}.
  \end{multline*}
  Therefore
  \begin{equation}\label{eq:L4}
    \sum_{\substack{o\text{ such that }\\|o|\text{ divides }n}}
    J_o^{2n/|o|}|o|
    =\sum_{\substack{i\in(0,e),\ j\in(0,s)\\ \<i/e\>+\<j/s\>\not\in\Z}}
    J_{\F_{q^n}}\left(\tau^{i(q^n-1)/e},\tau^{j(q^n-1)/s}\right)^{2}.
  \end{equation}

  Comparing \eqref{eq:L4} and \eqref{eq:L3} with
  \eqref{eq:L2} and \eqref{eq:L1} gives the desired equality.
  \qed

  \section{Ranks}
  We give a combinatorial formula for the rank of $J(K)$ where
  $K=\Fq(t^{1/d})$ for general $d$ when $q$ is sufficiently large.  We
  also consider special values of $d$ where we have better control on
  the variation of the rank with $q$.
  Recall that $K=\Fq(u)$ and $K_d=\Fp(u, \mu_d)$ where $u=t^{1/d}$.

  \subsection{The case when $r$ divides $d$ and $d=p^\nu+1$}

  \begin{cor}\label{cor:exact-rank}
    If $r$ divides $d$, $d=p^\nu+1$, and $d$ divides $q-1$, then
    $$\rk_\Z V=\rk_\Z J(\Fq(u))=\ord_{s=1}L(J/\Fq(u),s)=(r-1)(d-2).$$
    In particular, the index of $V$ in $J(K_d)$ is finite.  
    Moreover, the
    leading term of the $L$-function satisfies
    $$L^*(J/\Fq(u),1)=(\log q)^{(r-1)(d-2)}.$$
  \end{cor}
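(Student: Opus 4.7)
The plan is to combine the explicit product formula for the $L$-function from Theorem~\ref{thm:L} with the Birch--Swinnerton-Dyer conjecture (Theorem~\ref{thm:BSD}) and the rank computation for the visible subgroup (Corollary~\ref{cor:visible}).

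First I would analyze the orbit set $O$ of the $\langle q\rangle$-action on $S$. Since $r \mid d$ we have $\lcm(d,r) = d$, and $d \mid q-1$ forces $q \equiv 1 \pmod{d}$, so the action is trivial and every orbit is a singleton. Thus $|O| = |S| = (d-1)(r-1) - (\gcd(d,r)-1) = (r-1)(d-2)$, using $\gcd(d,r) = r$. Next I would invoke the supersingular behavior of Jacobi sums attached to characters of order dividing $d = p^\nu + 1$: since $p^\nu \equiv -1 \pmod d$, Frobenius acts by inversion on such characters, which forces $J_o$ to lie in $\Q$, and combined with the Weil estimate $|J_o| = q^{|o|/2} = q^{1/2}$ this gives $J_o^2 = q$ for every $o \in O$ (this is the content of the arguments recalled in \cite[Section~3]{Legendre2}). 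Consequently
$$L(J/\Fq(u),s) = \prod_{o \in O}\bigl(1 - q^{1-s}\bigr) = (1 - q^{1-s})^{(r-1)(d-2)}.$$

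Expanding $1 - q^{1-s} = (s-1)\log q + O\bigl((s-1)^2\bigr)$ around $s=1$ yields at once
$$\ord_{s=1} L(J/\Fq(u),s) = (r-1)(d-2) \quad\text{and}\quad L^*(J/\Fq(u),1) = (\log q)^{(r-1)(d-2)},$$
proving the $L$-function assertions. The BSD conjecture (Theorem~\ref{thm:BSD}), which is applicable in our setting, then identifies this order of vanishing with $\rk_\Z J(\Fq(u))$.

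Finally, since $d \mid q-1$ implies $\mu_d \subset \Fq$, we have $K_d \subset \Fq(u)$ and hence the chain $V \subseteq J(K_d) \subseteq J(\Fq(u))$. Corollary~\ref{cor:visible} provides $\rk_\Z V = (r-1)(d-2)$, matching the rank of $J(\Fq(u))$ from above, so all three ranks coincide and $[J(K_d):V]$ is finite. The only step requiring substantive input is the supersingularity claim $J_o^2 = q$; the rest is essentially bookkeeping, combining an orbit count, a Taylor expansion, and the inclusion $K_d \subset \Fq(u)$.
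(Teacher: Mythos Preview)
Your argument is correct, but it proceeds in the opposite direction from the paper's. You compute the $L$-function explicitly by showing that each orbit is a singleton and that every Jacobi sum satisfies $J_o^2=q$ (the supersingularity forced by $p^\nu\equiv -1\pmod d$; the paper records this same fact in the remark following Proposition~\ref{prop:rank}, citing \cite[Section~8]{Ulmer02}), then invoke the full BSD equality of Theorem~\ref{thm:BSD} to read off the rank, and finally match it with $\rk_\Z V$ from Corollary~\ref{cor:visible}. The paper instead runs a squeeze: the degree formula~\eqref{eq:L-degree} gives $\ord_{s=1}L\le (r-1)(d-2)$, Corollary~\ref{cor:rank-lower-bound} gives $\rk_\Z V=(r-1)(d-2)$, and the \emph{a priori} chain $\rk V\le\rk J(K_d)\le\rk J(\Fq(u))\le\ord_{s=1}L$ (the last step being only the easy inequality of BSD) forces equality throughout; the exact shape $L=(1-q^{1-s})^{(r-1)(d-2)}$ is then deduced \emph{a posteriori} from degree equals order of vanishing. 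Your route buys a direct, self-contained identification of $L$ but uses heavier inputs (the Jacobi-sum evaluation and the full BSD theorem); the paper's route is more economical, needing neither. One small imprecision: the argument that $-1\in\langle p\rangle\subset(\Z/d\Z)^\times$ shows $J_o$ is fixed by complex conjugation, hence \emph{real}, not necessarily in $\Q$; but real together with $|J_o|=q^{1/2}$ already gives $J_o^2=q$, so your conclusion stands.
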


  \begin{proof}
    When $r \mid d$, then 
    $\ord_{s=1}L(J/\Fq(u),s)\le(r-1)(d-2)$ by the calculation of the
    degree of the $L$-function in \eqref{eq:L-degree}.
    Note that $K_d \subset K$ since $d \mid (q-1)$.  
    Thus we have \emph{a priori} inequalities 
    $$\rk_\Z V\le \rk_\Z J(K_d) \le \rk_\Z J(K)\le\ord_{s=1}L(J/K,s),$$
    where the right hand inequality relies on the
    known part of the BSD conjecture for abelian varieties over function
    fields, see \cite[Proposition~6.7]{CRM} for example.  We saw in
    Corollary~\ref{cor:rank-lower-bound} that $V$ has rank $(r-1)(d-2)$,
    so the inequalities are all equalities.

    For the assertion on the leading coefficient, we simply note that the
    equalities in the preceding paragraph show that 
    $$L(J/\Fq(u)),s)=\left(1-q^{1-s}\right)^{(r-1)(d-2)}.$$
    One then computes the leading term by taking the $(r-1)(d-2)$-th
    derivative. 
  \end{proof}

  \subsection{The case when $r$ and $d$ divide $p^\nu+1$}
  We have seen that the rank of $J(K_d)$ is large when $r$ divides $d$
  and $d$ has the form $p^\nu+1$.  In this subsection, we show that the
  rank is also large over various subfields of $K_d$, along the lines of
  \cite[Corollary~4.4]{Legendre}.  The case of $\Fp(t^{1/d})$ is of
  particular interest.

  We write $\varphi(e)$ for Euler's $\varphi$ function, i.e., for the
  cardinality of $(\Z/e\Z)^\times$.  If $q$ and $e$ are relatively prime
  positive integers, let $o_q(e)$ denote the order of $q$ in
  $(\Z/e\Z)^\times$.

  \begin{cor}\label{cor:smallerfldthm}
    Suppose that $r$ and $d$ divide $p^\nu+1$ for some $\nu$.  Then the
    rank of $J$ over $\Fq(t^{1/d})$ is equal to
    \[\sum_{\substack{e|d\\1<s|r}}\frac{\varphi(e)\varphi(s)}{o_q(\lcm(e,s))}
      -2\sum_{1<s|r}\frac{\varphi(s)}{o_q(s)}.\] 
    In particular, for every $p$, and every genus $g=r-1$ with $r$
    dividing $p^\nu+1$, the rank over $\Fp(u)$ of Jacobians of curves of
    genus $g$ is unbounded.
  \end{cor}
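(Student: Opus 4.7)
The plan is to combine the BSD conjecture (Theorem~\ref{thm:BSD}) with the $L$-function formula of Theorem~\ref{thm:L}. Together they give
\[
  \rk J(\Fq(u)) \;=\; \ord_{s=1} \prod_{o \in O} \bigl(1 - J_o^2\, q^{-|o|s}\bigr)
  \;=\; \#\bigl\{\,o \in O : J_o^2 = q^{|o|}\,\bigr\},
\]
since the factor at $o$ vanishes at $s=1$ precisely when $J_o^2 = q^{|o|}$ (in which case it contributes order exactly one). The strategy is first to show that every $J_o$ satisfies $J_o^2 = q^{|o|}$ under the hypothesis, whence $\rk J(\Fq(u)) = |O|$, and then to count $|O|$ combinatorially.

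For the Jacobi-sum step, put $N = \lcm(d, r)$. The hypothesis $r, d \mid p^\nu + 1$ forces $N \mid p^\nu + 1$, so $p^\nu \equiv -1 \pmod N$, and complex conjugation on $\Q(\mu_N)$ is realized by the Galois automorphism $\sigma_{p^\nu}$. Each $J_o \in \Z[\mu_N]$ is a Weil integer of absolute value $q^{|o|/2}$, and under the $(\Z/N\Z)^\times$-action the orbits $\{o, -o\}$ are paired with $J_{-o} = \overline{J_o}$ and $J_o J_{-o} = q^{|o|}$. Combining this pairing with the fact that $\sigma_p$ permutes the collection $\{J_o\}$ (via $\sigma_p(J_o) = J_{p \cdot o}$) and that $\sigma_{p^\nu}$ induces complex conjugation yields that each $J_o$ is real, hence $J_o = \pm q^{|o|/2}$; this is the ``supersingular Jacobi sum'' phenomenon familiar from Shioda's work on Delsarte surfaces. \emph{The main obstacle} is handling $q = p^f$ when $f$ does not divide $\nu$, in which case $p \notin \langle q \rangle \pmod N$ and the Galois orbits of Jacobi sums are finer than the $\langle q\rangle$-orbits; this can be resolved either by a Hasse--Davenport descent to the subfield over which $\sigma_p \in \langle \sigma_q\rangle$, or by invoking Stickelberger's theorem to pin down the prime-above-$p$ valuations of each $J_o$ and combining with the absolute value to force $J_o = \pm q^{|o|/2}$.

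For the combinatorial count, I would stratify $(\Z/d\Z) \times (\Z/r\Z)$ by the orders $e = d/\gcd(i,d)$ and $s = r/\gcd(j,r)$. Each type $(e, s)$ contains $\varphi(e)\varphi(s)$ elements falling into $\varphi(e)\varphi(s)/o_q(\lcm(e,s))$ orbits of size $o_q(\lcm(e,s))$. The first sum in the formula enumerates orbits with $s > 1$ and arbitrary $e \mid d$; the $e = 1$ stratum corresponds to $i = 0$ and is spurious, contributing $\sum_{1 < s \mid r}\varphi(s)/o_q(s)$. The excluded locus $\{0 < i < d,\ 0 < j < r : i/d + j/r = 1\}$, under the implicit assumption $r \mid d$ from the paper's running setup, consists of pairs $(d(r-k)/r,\, k)$ for $1 \le k \le r-1$, each with $\ord(i) = \ord(j) \mid r$; their orbits contribute the \emph{same} quantity $\sum_{1 < s \mid r}\varphi(s)/o_q(s)$. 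Subtracting both overcounts from the first sum gives the factor of $2$ in the stated formula. For the unboundedness claim, fix $p$ and $r$ with $r \mid p^\mu + 1$; choosing $d = p^{m\mu} + 1$ for odd $m$ yields $r \mid d \mid p^{m\mu}+1$, so the hypothesis applies to $(r,d)$. The formula at $q = p$ contains the term $\varphi(d)\varphi(r)/o_p(\lcm(d, r)) \ge \varphi(d)\varphi(r)/(2m\mu)$, and since $\varphi(d)$ grows essentially like $d/\log\log d$ while $d = p^{m\mu}+1$ grows exponentially in $m\mu$, the rank is unbounded as $m \to \infty$.
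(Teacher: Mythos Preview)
Your approach is correct and genuinely different from the paper's. The paper does not apply Theorem~\ref{thm:L} directly at level $d$; instead it ascends to $d'=p^\nu+1$, invokes Corollary~\ref{cor:exact-rank} to identify $J(K_{d'})\otimes\Q$ with the explicit module $R_{d'}^0/I_{d'}^0$ coming from the visible points of Chapter~\ref{ch:C}, and then computes the dimension of the $\Gal(\Fq(\mu_{d'},u')/\Fq(u))$-invariants by counting $\Fr_q$-orbits on permutation bases of $\Q[\mu_d\times\mu_r]$ and of $I_d^0$. Your route---BSD plus the $L$-function formula, then an orbit count on $S$---is more self-contained within Chapter~\ref{ch:L}; the paper itself acknowledges it as an alternative in the second remark following Proposition~\ref{prop:rank}, citing \cite[Section~8]{Ulmer02} for the Jacobi-sum step. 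What the paper's route buys is a direct link between the rank formula and the explicit points, whereas yours avoids the machinery of Chapters~\ref{ch:C} and \ref{ch:heights}.

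One simplification to your Jacobi-sum step: the ``main obstacle'' you flag is not actually present. For any power $q$ of $p$ and any orbit $o$, the $p$-power Frobenius is a bijection of $\F_{q^{|o|}}$ preserving the relation $u+v+1=0$, so $J(\chi^p,\rho^p)=J(\chi,\rho)$ directly; combined with $\sigma_p\bigl(J(\chi,\rho)\bigr)=J(\chi^p,\rho^p)$ this gives $\sigma_p(J_o)=J_o$ for every $q$, with no reference to whether $p\in\langle q\rangle$ modulo $N$. Since $p^\nu\equiv -1\pmod N$, complex conjugation lies in $\langle\sigma_p\rangle$ and therefore fixes $J_o$, so $J_o\in\R$ and $J_o^2=q^{|o|}$. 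No Hasse--Davenport descent or Stickelberger is needed. Your flag that the diagonal orbit count uses $r\mid d$ is well taken; both your argument and the paper's description of $I_d^0$ rely on this.
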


  The conclusion in the last sentence is known for every $p$ and every
  genus $g$ by \cite{Ulmer07}, but the ideas of this paper give
  a new, constructive, and relatively elementary proof.

  \begin{proof}
    Choose an integer $\nu$ such that $d$ and $r$ divide $d'=p^\nu+1$.
    Let $u^d=(u^{\prime})^{d'}=t$.  We have field containments
    $\Fq(u)\subset \Fq(\mu_{d'},u')$ and $K_{d'}=\Fp(\mu_{d'},u')\subset
    \Fq(\mu_{d'},u')$, and an equality
    $$J(\Fq(u))\tensor\Q\cong \left(J(\Fq(\mu_{d'},u'))\tensor\Q\right)^{G}$$
    where $G=\Gal(\Fq(\mu_{d'},u')/\Fq(u))$.  To
    bound $\rk J(\Fq(u))=\dim_\Q J(\Fq(u))\tensor\Q$ we just need to
    compute the dimension of a space of invariants.
    Moreover, by Corollary~\ref{cor:exact-rank}, 
    $$J(\Fq(\mu_{d'},u'))\tensor\Q=J(K_{d'})\tensor\Q.$$
    Thus, without loss we may replace $q$ with $\gcd(q,|\Fp(\mu_{d'})|)$,
    so that $\Fq(u)$ is a subfield of $K_{d'}$.  

    Our task then is to compute
    $$\dim_\Q \left(J(K_{d'})\tensor\Q\right)^{G}
    =\dim_\Q \left(V_{d'}\tensor\Q\right)^{G}$$ where
    $G=\Gal(K_{d'}/\Fq(u))$ and $V_{d'}\subset J(K_{d'})$ is the explicit
    subgroup.  We have that $V_{d'}\tensor\Q\cong R_{d'}^0/I_{d'}^0$ where
    $R^0_{d'}$ and $I^0_{d' }$ are as in Sections~\ref{s:relations} and
    \ref{sec:rational-splitting}, with $d$ replaced by $d'$.

    Now $G$ is the semi-direct product of the normal
    subgroup $d\Z/d'\Z$ by $\<q\>$, the cyclic subgroup of
    $(\Z/d'\Z)^\times$ generated by $q$.  The action of $d$ sends $P_{ij}$
    to $P_{i+d,j}$ and the action of $q$ sends $P_{ij}$ to $P_{qi,qj}$.
    Transfering this action to $R_{d'}^0/I_{d'}^0$, and noting that 
    \[\left(R_{d'}^0\right)^{d\Z/d'\Z}\cong R^0_d,\]
    we see that the dimension of $\left(R_{d'}^0/I_{d'}^0\right)^G$ is
    equal to the dimension of the $\Fr_q$-invariants on
    $$\Q[\mu_d\times\mu_r]/I_d^0$$
    where $I_d^0$ is the $\Q$-subspace of the group ring
    $\Q[\mu_d\times\mu_r]\cong\Q[\sigma,\tau]/(\sigma^d-1,\tau^r-1)$
    generated by the elements
    \begin{multline*}
      (\tau^j-1)\sum\sigma^i\ (j=1,\dots,r-1),\qquad
      (\tau^j-1)\sum\sigma^i\tau^{d-i}\ (j=1,\dots,r-1),\\
      \text{and}\qquad\sigma^i\sum\tau^j\ (i=0,\dots,d-1),
    \end{multline*}
    as in Section~\ref{s:relations}.

    Now both $\Q[\mu_d\times\mu_r]$ and $I_d$ have bases that are
    permuted by $\Fr_q$, so to compute the dimension of the space of
    invariants, we just need to count the number of orbits of $\Fr_q$ on
    the basis.  One sees easily that the space of $\Fr_q$ invariants on
    $\Q[\mu_d\times\mu_r]$ has dimension
    \[\sum_{\substack{e|d\\s|r}}\frac{\varphi(e)\varphi(s)}{o_q(\lcm(e,s))},\]
    and the space of $\Fr_q$ invariants on $I_d^0$ has dimension
    \[\sum_{e|d}\frac{\varphi(e)}{o_q(e)}+2\sum_{1<s|r}\frac{\varphi(s)}{o_q(s)}.\]
    Subtracting the last displayed quantity from the previous gives the desired
    dimension as stated in the Corollary.

    To establish the last sentence of the statement, it suffices to note
    that for a fixed $q=p$ and $r$, the dimension computed above is
    unbounded as $d$ varies through numbers of the form $p^\nu+1$
    divisible by $r$.
    Indeed, the negative terms depend only on $p$ and $r$ and the ``main''
    term in the first sum is 
    \[\phi(p^\nu+1)\phi(r)/o_p(p^\nu+1)\ge\phi(p^\nu+1)\phi(r)/(2\nu),\]
    and this is clearly unbounded as $\nu$ varies.
  \end{proof}

  \subsection{General $r$, $d$, $q$} \label{SdefAB}
  Now we treat the most general case, but with slightly less control on
  the rank as a function of $q$.  

  Recall the set $S \subset \Z/d\Z \times \Z/r\Z$ from \eqref{DdefS} in Section~\ref{sec:orbits-jacobi-sums}. We decompose
  $S$ into two disjoint pieces, $S=A\cup B$ where
  \begin{align*}
    A&=\{(i,j)\in S \mid \<i/d\>+\<j/r\>>1\},\\
    B&=\{(i,j)\in S \mid \<i/d\>+\<j/r\><1\}.
  \end{align*}
  Consider $\langle p \rangle \subset (\Z/\lcm(d,r)\Z)^\times$.
  We say that an element $(i,j) \in S$ is \emph{balanced} 
  if, for every $t \in (\Z/\lcm(d,r)\Z)^\times$, the 
  set $\langle p \rangle t (i,j)$ is evenly divided between $A$ and $B$, i.e.,
  \[|\langle p \rangle t (i,j) \cap A|= |\langle p \rangle t (i,j) \cap B|.\]
  
  
  Recall that $O$ is the set of orbits of $S$ under $\langle q \rangle$.
  Note that $(i,j)$ is balanced if and only if $(qi,qj)$ is balanced.
  We say that $o \in O$ is balanced if each $(i,j) \in o$ is balanced and not balanced otherwise. 
  
  \begin{prop}\label{prop:rank}
    Let $K=\Fq(t^{1/d})$.  The order of vanishing $\ord_{s=1}L(J/K,s)$
    \textup{(}and therefore the rank of $J(K)$\textup{)} is at most the
    number of orbits $o\in O$ that are balanced in the sense above.  If
    $\Fq$ is a sufficiently large extension of $\Fp$ \textup{(}depending
    only on $d$ and $r$\textup{)}, then the rank is equal to the number
    of balanced orbits.
  \end{prop}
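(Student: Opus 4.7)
The plan is to combine the product formula of Theorem~\ref{thm:L} with Stickelberger's theorem. By Theorem~\ref{thm:L},
$$L(J/K,s) = \prod_{o \in O}\bigl(1 - J_o^2\, q^{-|o|s}\bigr),$$
so $\ord_{s=1} L(J/K,s)$ equals the number of orbits $o \in O$ for which $J_o^2 = q^{|o|}$. Since $J_o$ is a Weil number of absolute value $q^{|o|/2}$ lying in the cyclotomic field $\Q(\zeta_e)$ with $e = \lcm(d,r)$, the ratio $\lambda_o := J_o^2/q^{|o|}$ is a root of unity in $\Q(\zeta_e)$; the task is to identify combinatorially when $\lambda_o = 1$.

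For the upper bound, I would apply Stickelberger's theorem to the standard decomposition $J(\chi_i,\rho_j) = g(\chi_i)\,g(\rho_j)/g(\chi_i\rho_j)$. Writing $\mathfrak{P}_t$ for the prime of $\Q(\zeta_e)$ over $p$ indexed by a coset representative $t \in (\Z/e\Z)^\times/\<p\>$, the valuation formula for Gauss sums expresses $v_{\mathfrak{P}_t}(J_o)$, up to a positive scalar, as a sum over $(i',j') \in t \cdot o$ of $\<i'/d\> + \<j'/r\> - \<i'/d + j'/r\>$. Because $(i',j') \in S$ has $\<i'/d\> + \<j'/r\> \notin \Z$, this expression equals $1$ when $(i',j') \in A$ and $0$ when $(i',j') \in B$. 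Comparing with $v_{\mathfrak{P}_t}(q^{|o|/2})$, one finds that $v_{\mathfrak{P}_t}(\lambda_o) = 0$ is equivalent to the equation $|(t \cdot o) \cap A| = |(t \cdot o) \cap B|$. Letting $t$ range over $(\Z/e\Z)^\times$ (using Galois conjugation on $\lambda_o$, which permutes the primes $\mathfrak{P}_t$) recovers the balance condition for every $t$, giving $\ord_{s=1} L(J/K,s) \le \#\{\text{balanced orbits}\}$.

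For the lower bound when $q$ is sufficiently large, conversely suppose $o$ is balanced. The Stickelberger computation just described shows that $\lambda_o$ is a $\mathfrak{P}$-adic unit at every prime above $p$, so $\lambda_o$ is a root of unity in $\Q(\zeta_e)$ whose order divides a fixed integer $N = N(d,r)$ depending only on $d$ and $r$. Invoking the Hasse-Davenport relation (as already used in the proof of Theorem~\ref{thm:L}), the Jacobi sum $J_o$ over $\F_{q^{|o|}}$ is, up to a sign, a fixed power of a Jacobi sum over a smaller fixed field, so $\lambda_o$ depends only on $\log_p q$ modulo $N$. Consequently, after replacing $\Fq$ by a bounded extension whose degree depends only on $d,r,p$, one has $\lambda_o = 1$ simultaneously for every balanced orbit, yielding the matching lower bound.

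The main obstacle will be the careful Stickelberger bookkeeping required to justify the precise formula for $v_{\mathfrak{P}_t}(J_o)$ in terms of the combinatorial imbalance $|(t\cdot o)\cap A| - |(t\cdot o) \cap B|$, including tracking how the Galois action on primes above $p$ interacts with the $\<q\>$-orbit structure on $S$ and how the sign ambiguity from Hasse-Davenport is controlled in the lower-bound step; once that identity is in hand, both directions of the proposition fall out cleanly.
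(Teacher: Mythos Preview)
Your approach is essentially the same as the paper's: reduce via Theorem~\ref{thm:L} to counting orbits with $J_o^2=q^{|o|}$, invoke Stickelberger to equate the vanishing of all $p$-adic valuations of $J_o^2/q^{|o|}$ with the balanced condition, and use Hasse--Davenport to raise the remaining root of unity to a high power for the lower bound. One correction, though: your assertion in the first paragraph that $\lambda_o=J_o^2/q^{|o|}$ is \emph{always} a root of unity is false. Having archimedean absolute value $1$ is not enough; $\lambda_o$ can have nonzero valuation at primes above $p$, and indeed your own Stickelberger computation shows this happens exactly when $o$ is unbalanced. The correct statement, which the paper makes its central claim, is that $\lambda_o$ is a root of unity \emph{if and only if} $o$ is balanced. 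Your second and third paragraphs actually prove this correctly, so the slip is harmless to the argument, but it muddles the logical structure. The paper's lower-bound step is also a bit cleaner than yours: rather than tracking $\log_p q$ modulo some $N$ and worrying about Hasse--Davenport signs, it simply notes that passing from $q$ to $q^n$ replaces $J_o^2$ by $J_o^{2n}$, so $\lambda_o$ becomes $\lambda_o^n$, and any bound on the orders of the relevant roots of unity (e.g., the degree of the $L$-function) gives a uniform $n$ that works.
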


  This generalizes \cite[Theorem~2.2]{Legendre2} except that we have less
  control on how large $q$ should be to have equality.

  \begin{proof}
    We use the notations of the earlier parts of this chapter, in
    particular the Jacobi sums $J_o$ from \eqref{DdefJ} in Section \ref{s:L-calc}.  
    By Theorem~\ref{thm:L}, the order
    of vanishing of $L(J/K,s)$ at $s=1$ is equal to the number of orbits
    $o\in O$ such that $J_o^2=q^{|o|}$.  The proposition follows from
    the claim that $J_o$ is a root of unity times $q^{|o|/2}$ if and
    only if the orbit $o$ is balanced. Indeed, the number of $o$ such
    that $J_o^2=q^{|o|}$ is certainly at most the number of $o$ where
    $J_o$ is a root of unity times $q^{|o|/2}$, and this gives the
    asserted inequality.  Moreover, if we replace $q$ with $q^n$, each
    $J_o$ is replaced with $J_o^n$, so if $q$ is a sufficiently large
    power of $p$, any $J_o$ that is a root of unity times $q^{|o|/2}$
    satisfies $J_o^2=q^{|o|}$.  Here ``sufficiently large'' is certainly
    bounded by the degree of the $L$-function as a polynomial in $T$,
    and this is a function only of $r$ and $d$.
     
     We finish by proving the claim that $J_o$ is a root of unity times $q^{|o|/2}$ if and
    only if $o$ is balanced.  If $|o|$ is odd, then the orbit cannot be balanced and the proof below
    will show it is also impossible for $J_o$ to be a root of unity times $q^{|o|/2}$.  For this reason, we focus on the 
    case that $|o|$ is even.
    
    The argument generalizes that of
    \cite[Proposition~4.1]{Legendre2}, which is the special case $r=2$.
    Recall the set $S\subset \Z/d\Z \times \Z/r\Z$ from \eqref{DdefS} 
    and its decomposition $S=A \cup B$ as in Section \ref{SdefAB},
   where \[A=\{(i,j)\in S \mid \<i/d\>+\<j/r\>>1\}, \ B=\{(i,j)\in S \mid \<i/d\>+\<j/r\><1\}.\] 
    
 Given $(i,j) \in S$, let $i' =i/{\rm gcd}(d,i)$ and $j'=j/{\rm gcd}(r,j)$. 
 Let $d'=d/{\rm gcd}(d,i)$ and $r'=r/{\rm gcd}(r, j)$.  Let $e={\rm lcm}(d', r')$.
 Recall that 
 \[J_o=J(\chi_i,\rho_j) = J(\tau^{i(q^{|o|}-1)/d}, \tau^{j(q^{|o|}-1)/r}).\]
Thus $J_o \in \Q(\mu_e)$.  
 For $a \in (\Z/e\Z)^\times$, let $\sigma_a \in {\rm Gal}(\Q(\mu_e)/\Q)$ be the automorphism with 
 $\sigma_a(\zeta_e) = \zeta_e^a$.
 
 Let $\nu$ be such that $q^{|o|} = p^\nu$.
 Write $\mathfrak{p}$ for the prime of $\Q(\mu_e)$ 
 induced by the fixed prime $\mathfrak{p}$ of $\overline{\Q}$.
 By Stickelberger's Theorem (e.g., \cite[Thm.\ 3.6.6 and Prop.\ 2.5.14]{CohenI}), if the valuation of $p$ is $1$, 
 then the valuation of $J_o$ at the prime $\sigma_a(\mathfrak{p})$ is
\begin{equation}\label{Estickelberger}
-\nu + \sum_{\ell=0}^{\nu-1} \< \frac{a j' p^\ell}{r'} \> + \<\frac{a i' p^\ell}{d'} \> + 
\<\frac{a(-i' r' - j' d')p^\ell}{r'd'} \>.
\end{equation}
Since $J_o^2/q^{|o|}$ is a unit away from primes over $p$, it is a root of unity if and only if its valuation
at every prime over $p$ is $0$.  This is equivalent to the property that the quantity in \eqref{Estickelberger} equals
$\nu/2$ for each $a \in (\Z/e\Z)^\times$.  

Fix $a$ and $\ell$.
The sum of the three fractional parts in \eqref{Estickelberger} is either $1$ or $2$ since the three 
fractions add up to $0$.  The sum is $1$ if and only if 
$\< \frac{a j' p^\ell}{r'} \> + \<\frac{a i' p^\ell}{d'} \> < 1$.
Choose a representative $t \in \Z/{\rm gcd}(d,r)\Z$ for $a$ and note
that the fractional terms in the last inequality do not depend on this choice.
Thus the sum is $1$ if and only if $t p^\ell \cdot (i,j)$ is in $B$.

For fixed $a$, it follows that the quantity in \eqref{Estickelberger} equals $\nu/2$ if and only if
exactly half of the values of $\ell \in \{0, \ldots, \nu-1\}$ have the property that
$t p^\ell \cdot (i,j)$ is in $B$,
which is the definition of $(i,j)$ being balanced.
Thus the quantity in \eqref{Estickelberger} equals $\nu/2$ for all $a \in (\Z/e\Z)^\times$ if and only if 
$o$ is balanced.
  \end{proof}

\begin{remarks}\mbox{}
\begin{enumerate}
\item When $r=2$, it is proved in \cite[Proposition~4.1]{Legendre}
  that the order of vanishing is always the number of balanced orbits,
  i.e., there is no need to enlarge $q$.  Numerical experiments show
  that this is no longer the case for $r>2$.  It would be interesting
  to have a sharp bound on the value of $q$ needed to obtain the
  maximal rank for a given $r$ and $d$.
\item If $r$ divides $d$ and $d$ divides $p^\nu+1$, then it is easy to
  see that every orbit $o$ is balanced, and the argument in
  \cite[Section~8]{Ulmer02} shows that $J_o^2=q$ for all $o$ and any
  $q$.  Thus in this case we get an exact calculation of the rank, 
  which the reader may check agrees with
  Corollary~\ref{cor:smallerfldthm}.
\end{enumerate}
\end{remarks}





\chapter{Analysis of $J[p]$ and $\NS(\XX_d)_{\tor}$} \label{cp:NS}

In this chapter, we investigate more deeply the arithmetic and
geometry of the smooth projective curve $C: y^r= x^{r-1}(x+1)(x+t)$ of
genus $g=r-1$ and its Jacobian $J$.  We prove several technical
results about the minimal regular model $\XX$ and the \Neron{} model
$\JJ\to\P^1$.  Specifically, in Section~\ref{s:KS}, we analyze the
Kodaira-Spencer map to show that the Jacobian $J$ of $C$ has no
$p$-torsion over any separable extension of $K=\Fq(t)$ (see
Corollary~\ref{cor:ptors}).  In Section~\ref{S:NStf}, we prove that the
\Neron-Severi group of $\XX_d$ is torsion-free (see
Theorem~\ref{thm:NS-tor}).  These results will be used in
Chapter~\ref{ch:Sha} to understand the index of the visible subgroup
$V$ in $J(K_d)$.


\section{Kodaira-Spencer and $p$-torsion}\label{s:KS}

Our goal in this section is to show that the Jacobian $J$ of $C$ has
no $p$-torsion over any separable extension of $K=\Fq(t)$, a result
stated more formally as follows:

\begin{cor}\label{cor:ptors}
The $p$-torsion of $J$ satisfies
$J(K)[p]=J(K^{sep})[p]=0$.
\end{cor}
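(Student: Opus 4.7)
The plan is to reduce the statement to the vanishing of the $p$-rank of $J$, and then to invoke the Kodaira--Spencer computation carried out in the earlier parts of Section~\ref{s:KS}.

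For the reduction, the finite flat $K$-group scheme $J[p]$ fits into its canonical connected-\'etale sequence
\[
0 \to J[p]^0 \to J[p] \to J[p]^{et} \to 0.
\]
The connected part $J[p]^0$ has only the identity section over any field extension $F/K$: its coordinate ring is a local Artin $K$-algebra with residue field $K$, so any $K$-algebra map to a field must kill the maximal ideal. Consequently $J(\Ksep)[p] = J[p]^{et}(\Ksep)$, which has $\F_p$-dimension equal to the $p$-rank $f$ of $J$. The statement $J(K)[p] = J(\Ksep)[p] = 0$ is therefore equivalent to $f = 0$.

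Suppose for contradiction that $f > 0$. After a finite \'etale base change of a non-empty open $U \subset \P^1$ over which the N\'eron model $\JJ$ is abelian, a nonzero element of $J[p]^{et}(\Ksep)$ spreads out to a constant \'etale subgroup scheme $\Z/p \hookrightarrow \JJ[p]$ over $U$. The corresponding rank-one sub of the Dieudonn\'e crystal of $\JJ[p^\infty]$ is horizontal for the Gauss--Manin connection on $H^1_{dR}(\JJ/U)$, and its interaction with the Hodge filtration forces a nontrivial constraint on the Kodaira--Spencer map
\[
\mathrm{KS}: \omega_{\JJ/U} \to R^1\pi_*\OO_\JJ \otimes \Omega^1_{U/k}.
\]
I would then use the explicit formula for $\mathrm{KS}$ obtained in the earlier parts of the section---computed in the charts $\ZZ_{11}$, $\ZZ_2$, $\ZZ_3$ of $\YY$ from Chapter~\ref{ch:models} using a basis of $H^0(C, \Omega^1)$ adapted to the superelliptic equation $y^r = x^{r-1}(x+1)(x+t)$---to rule this out, concluding that $f = 0$.

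The main obstacle is the explicit KS computation itself, which constitutes the bulk of the section and is technical though essentially mechanical given the nice form of the equation for $C$. A secondary subtlety is that $\JJ$ is only abelian over an open $U$ (away from $t \in \{0, -1, \infty\}$), so one must verify that a hypothetical \'etale $p$-torsion section defined over $\Ksep$ extends over $U$---this is automatic, since \'etale torsion extends uniquely across codimension-one points in the base.
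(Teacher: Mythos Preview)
Your reduction step contains a fundamental error. From the connected--\'etale sequence
\[
0 \to J[p]^0 \to J[p] \to J[p]^{et} \to 0
\]
and the vanishing $J[p]^0(\Ksep)=0$, taking $\Ksep$-points gives only an \emph{injection}
\[
J(\Ksep)[p]\hookrightarrow J[p]^{et}(\Ksep)\cong(\Z/p\Z)^f,
\]
not an equality. Surjectivity would require $H^1_{fppf}(\Ksep,J[p]^0)=0$, but $\Ksep$ is not perfect: already $H^1_{fppf}(\Ksep,\mu_p)=(\Ksep)^\times/((\Ksep)^\times)^p\neq 0$. So your claimed equivalence ``$J(\Ksep)[p]=0 \iff f=0$'' is false.

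In fact the paper proves in Proposition~\ref{prop:ordinary} that $J$ is \emph{ordinary}, i.e.\ $f=g=r-1>0$. So $J[p]^{et}(\Ksep)$ has $p^{r-1}$ elements, yet the corollary asserts $J(\Ksep)[p]=0$; both hold simultaneously precisely because the injection above is the zero map. Your proposed contradiction argument (nonzero \'etale $p$-torsion forces a constraint on Kodaira--Spencer) therefore cannot work as stated: here $f>0$ and the Kodaira--Spencer map \emph{is} an isomorphism, with no contradiction.

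The paper's actual argument is quite different. It invokes a result of Voloch: for an abelian variety over a global function field, \emph{ordinariness} together with the Kodaira--Spencer map being generically an isomorphism implies $J(\Ksep)[p]=0$. The two hypotheses are supplied by Proposition~\ref{prop:ordinary} (Cartier operator is bijective) and Theorem~\ref{thm:KS} (KS is an isomorphism over $U$), respectively. The point is that the \'etale $p$-torsion, while nonzero over $\overline{K}$, is not split off by any \emph{separable} extension; Voloch's criterion is exactly what detects this.
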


To prove Corollary~\ref{cor:ptors}, we apply a result of Voloch after
showing that the Kodaira-Spencer map of the \Neron{} model $\JJ\to\Pt$
is generically an isomorphism and that $J$ is ordinary.


\subsection{Background on the Kodaira-Spencer map 
and $p$-torsion}\label{ss:KS-background}
Before launching into the technicalities of the proof of
Corollary~\ref{cor:ptors}, we provide some background on the
Kodaira-Spencer map and its connection to $p$-torsion of abelian
varieties.  This subsection is purely motivational and nothing in it
will be used later in the paper.  Thus the expert or impatient reader
may skip directly to Subsection~\ref{ss:KS}.

Consider a non-isotrivial elliptic curve $E$ over a function field
$K=\Fq(\CC)$ of characteristic $p$.  It is known
\cite[Prop.~I.7.3]{UlmerPCMI} that if $E(K)[p]$ is non-trivial, then
the $j$-invariant of $E$ is a $p$-th power, i.e., $j(E)\in K^p$.
Since $E$ is non-isotrivial, the $j$-invariant is non-constant, and it
is a $p$-th power if and only if the morphism $j:\CC\to\P^1$ is
inseparable, if and only if its derivative vanishes identically.
Passing to the contrapositive, we see that if the derivative of
$j:\CC\to\P^1$ does not vanish identically, then $E(K)$ has no
non-trivial $p$-torsion.  In \cite{Voloch}, Voloch extends this result
to higher-dimensional abelian varieties, where ``derivative of $j$''
is replaced with a suitable Kodaira-Spencer map.

Next, we give a brief overview of the Kodaira-Spencer map for a family
of abelian varieties.  More precisely, let $B$ be a smooth (possibly
non-projective) curve over an algebraically closed field $k$ and let
$\pi:\AC\to B$ be an abelian scheme over $B$.  Fix a closed point
$b\in B$ and let $A$ be the fiber $\pi^{-1}(b)$.  There is an exact
sequence of tangent sheaves on $A$:
$$0\to T_A\to \left.(T_{\AC})\right|_A\to \left.(\pi^*(T_{B}))\right|_A \to 0.$$
Taking cohomology, we see that
$H^0(A,\left.(\pi^*(T_{B}))\right|_A)\cong T_{B,b}$ as $k$-vector
spaces, and the coboundary map is a homomorphism
\begin{equation}\label{eq:KS-first-version}
T_{B,b}\to H^1(A,T_A).
\end{equation}
This should be thought of as the derivative at the point $b$ 
of the map from $B$ to the
moduli space of abelian varieties.  Indeed,
$H^1(A,T_A)$ is the space of first-order deformations of $A$, i.e.,
the tangent space to the moduli space at $A$, and the map
\eqref{eq:KS-first-version} measures the variation of the family
$\pi:\AC\to B$ at the point $b$.  (For more details, we suggest the
following references: In \cite[Ch.~4]{Kodaira}, one of the originators
of the theory explains the interpretation of the $H^1$ and the map
above in the context of complex varieties; \cite[III.9.1]{Voisin}
gives a compact but clear presentation of the same material; and
\cite[III.9.13.2]{hartshorne} gives the interpretation of the $H^1$ in
the context of schemes.)

We now reformulate \eqref{eq:KS-first-version}.  The
tangent bundle to an abelian variety is trivial, so
$$H^1(A,T_A)\cong H^1(A,T_{A,0}\tensor_k\OO_A)\cong 
T_{A,0}\tensor_k H^1(A,\OO_A).$$
Thus the map \eqref{eq:KS-first-version} can be rewritten as an
element of
$$\Hom_k\left(T_{B,b},T_{A,0}\tensor_k H^1(A,\OO_A)\right)
\cong
\Hom_k\left(\Omega^1_{A,0},
\Omega^1_{B,b}\tensor_k H^1(A,\OO_A)\right).$$

Next, we note that $\Omega^1_{A,0}$ is canonically isomorphic to
the stalk of $\pi_*\Omega^1_{\AC/B}$ at $b$, and $H^1(A,\OO_A)$
is the stalk at $b$ of $R^1\pi_*\OO_{\AC}$.
If we now let the point $b$ vary, the last description of the
derivative \eqref{eq:KS-first-version} globalizes to a morphism
$$KS:\pi_*\Omega^1_{\AC/B}\to
\Omega^1_B\tensor_{\OO_B} R^1\pi_*\OO_{\AC}$$
of $\OO_B$-modules.

Voloch's theorem then states that if the Kodaira-Spencer map $KS$ is
generically an isomorphism (i.e., an isomorphism over a dense open
subset of $B$), then the generic fiber of $\AC$ over $B$ has no
$p$-torsion points over the field $k(B)$.

In the next subsection, we restart from the beginning, defining the
Kodaira-Spencer map for our context, and in the following subsections
we prove that it is generically an isomorphism.


\subsection{The Kodaira-Spencer map for $\JJ$ and $\YY$}\label{ss:KS}

In this subsection we work over $\Fq(u)$ where $u^d=t$ and $r$ and $d$
are relatively prime to $p$.  We make no further assumptions on $r$,
$d$, or $q$.

Let $U\subseteq\Pu$ be the open subset where
$u^d\not\in\{0,1,\infty\}$.  In Section~\ref{ss:YY} we constructed a
proper smooth model $\pi:\YY\to U$ of $C/\F_q(u)$, i.e., a scheme with
a proper smooth morphism to $U$ whose generic fiber is $C$.  The
N\'eron model $\sigma:\JJ\to U$ is an abelian scheme whose fiber over
a point of $U$ is just the Jacobian of the fiber of $\pi$ over that
point.

Consider the sheaves of relative differentials (of 1-forms)
$$
	\Omega^1_U,\ \Omega^1_\JJ,\ \Omega^1_{\JJ/U}
$$
on the schemes $U/\Fq$, $\JJ/\Fq$, and $\JJ/U$ respectively (see
\cite[\S 6.1.2]{Liu}).  The following lemma, applied with
$S=\spec(\Fq)$ and $f=\sigma$, implies there is an exact sequence of
locally free $\OO_\JJ$-modules
\begin{equation}\label{eqn:differential-exact-sequence}
  0 \to \sigma^*\Omega^1_U \to \Omega^1_{\JJ} \to \Omega^1_{\JJ/U} \to 0
\end{equation}
since $\JJ/\Fq$, $U/\Fq$, and $\sigma$ are smooth and of finite type.

\begin{lemma}\label{lem:differential-exact-sequence}
  Let $X$, $Y$, and $S$ be locally Noetherian schemes and let
  $f\colon X\to Y$ and $g\colon Y\to S$ be smooth morphisms of finite
  type.  
  Then there is an exact sequence
$$
	0\to f^*\Omega^1_{Y/S} \to \Omega^1_{X/S} \to \Omega^1_{X/Y} \to 0
$$
of locally free $\OO_X$-modules.
\end{lemma}

\begin{proof}
First, there is an exact sequence
\begin{equation}\label{eqn:basic-differential-exact-sequence}
	f^*\Omega^1_{Y/S} \to \Omega^1_{X/S} \to \Omega^1_{X/Y} \to 0
\end{equation}
of $\OO_X$-modules since $X,Y,S$ are all schemes (see
\cite[6.1.24]{Liu}).  We must show that the terms of this sequence are
all locally free as $\OO_X$-modules and that the first map is
injective.

Let $x\in X$ be a geometric point, and let $y=f(x)$ and $s=g(y)$.
Then the fibers 
$$
	\Omega^1_{X/S,x},\ \Omega^1_{Y/S,y},\ \Omega^1_{X/Y,x}
$$
are smooth of ranks
$$
	\dim_x X_s,\ \dim_y Y_s,\ \dim_x X_y
$$
respectively, since $gf$, $g$, $f$ are smooth (see \cite[6.2.5]{Liu}).
Hence $\Omega^1_{X/S}$ and $\Omega^1_{X/Y}$ are locally free
$\OO_X$-modules and $f^*\Omega^1_{Y/S}$ is a locally free
$f^*\OO_Y$-module (and thus a locally free $\OO_X$-module).  Finally, 
$$
	\dim_x X_s = \dim_x X_y + \dim_y Y_s
$$
so the first map of \eqref{eqn:basic-differential-exact-sequence} is
injective as claimed. 
\end{proof}

Taking the direct image of \eqref{eqn:differential-exact-sequence}
under $\sigma$ and applying the projection formula (see
\cite[5.2.32]{Liu}) leads to a morphism
$$
  \KSJ : \sigma_*\Omega^1_{\JJ/U} \to \RoneOJtensorOmegaU
$$
which is the ``Kodaira-Spencer map'' of the family $\sigma:\JJ\to U$.
Similarly, Lemma~\ref{lem:differential-exact-sequence} implies there
is an exact sequence of $\OO_\YY$-modules 
\begin{equation}\label{eq:Omega_Y}
  0 \to \pi^*\Omega^1_U \to \Omega^1_\YY \to \Omega^1_{\YY/U} \to 0
\end{equation}
and a morphism
$$
  \KSY : \pi_*\Omega^1_{\YY/U} \to \RoneOYtensorOmegaU.
$$
The main technical point of this section is the following.

\begin{theorem}\label{thm:KS}
  The maps $\KSJ$ and $\KSY$ are isomorphisms of locally free
  $\OO_U$-modules of rank $r-1$.
\end{theorem}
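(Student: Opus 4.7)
The plan is to identify $\KSJ$ with $\KSY$, then to reduce the statement about $\KSY$ to a pointwise nonvanishing check on each $\mu_r$-isotypic piece of a geometric fiber, and finally to verify this nonvanishing via an explicit \v{C}ech cocycle computation. Since $\pi:\YY\to U$ is smooth and proper with a section $Q_\infty$, the Abel-Jacobi morphism $\YY \hookrightarrow \JJ$ over $U$ induces canonical identifications $\sigma_*\Omega^1_{\JJ/U} \iso \pi_*\Omega^1_{\YY/U}$ and $R^1\sigma_*\OO_\JJ \iso R^1\pi_*\OO_\YY$ that intertwine $\KSJ$ with $\KSY$, so it is enough to treat $\KSY$. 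By cohomology and base change for the smooth proper $\pi$ of relative dimension one with genus-$(r-1)$ fibers, both $\pi_*\Omega^1_{\YY/U}$ and $R^1\pi_*\OO_\YY$ are locally free of rank $r-1$ on $U$, and $\Omega^1_U$ is invertible; thus $\KSY$ is a map of locally free $\OO_U$-modules of equal rank, and surjectivity can be verified at each geometric point of $U$.

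Next I would exploit the $\mu_r$-symmetry. The automorphism $(x,y) \mapsto (x,\zeta_r y)$ of $C$ extends canonically to $\YY$ over $U$ and fixes $Q_\infty$, so $\KSY$ is $\mu_r$-equivariant. Since $C/\mu_r \cong \P^1$, the $\mu_r$-invariant pieces of both $H^0(C_{u_0},\Omega^1)$ and $H^1(C_{u_0},\OO)$ vanish. An explicit eigenbasis of $H^0(C,\Omega^1_{C/K})$ is
$$\omega_j \;=\; x^{j-1}\, y^{-j}\, dx, \qquad j = 1, \ldots, r-1,$$
with $\omega_j$ an eigenvector of eigenvalue $\zeta_r^{-j}$; regularity of $\omega_j$ is a standard local calculation at the ramification points $Q_0, Q_1, Q_t, Q_\infty$ using $\ord_{Q_0}(x) = \ord_{Q_1}(x+1) = \ord_{Q_t}(x+t) = r$ and $\ord_{Q_\infty}(x) = -r$. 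By Serre duality $H^1(C_{u_0},\OO)$ carries the same $\mu_r$-character decomposition, so each of the $r-1$ nontrivial eigenspaces on either side is one-dimensional, and $\KSY$ splits as a direct sum of $r-1$ maps of one-dimensional $\OO_U$-modules. The problem reduces to showing $\KSY(\omega_j)\neq 0$ for each $j$.

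Finally, I would compute $\KSY(\omega_j)$ explicitly by a \v{C}ech calculation on the affine cover $\{\ZZ_{11},\ZZ_2,\ZZ_3\}$ of $\YY$ recorded in Section~\ref{ss:YY}. Differentiating $y^r = x^{r-1}(x+1)(x+u^d)$ in $\Omega^1_\YY$ gives
$$r\, y^{r-1}\, dy \;-\; f'(x)\, dx \;=\; d\cdot x^{r-1}(x+1)\, u^{d-1}\, du,$$
with $f(x) = x^{r-1}(x+1)(x+u^d)$; the crucial feature is that the right-hand side is generically nonzero on $\YY$. Using this relation to lift each $\omega_j$ to a regular section of $\Omega^1_\YY$ on each chart, taking differences on overlaps, and projecting onto $\pi^*\Omega^1_U \otimes \Omega^1_{\YY/U}$ via \eqref{eq:Omega_Y} produces an explicit \v{C}ech 1-cocycle representing $\KSY(\omega_j)$. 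The expected main obstacle is this last step: one must choose the chart-wise lifts to be regular everywhere and keep track of the overlaps carefully enough to certify that the resulting cocycle is not a coboundary. The $\mu_r$-equivariance collapses all $r-1$ cases to a single template computation up to re-indexing, with nonvanishing ultimately traceable to the $du$-term in the identity displayed above.
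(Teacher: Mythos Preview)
Your proposal is correct and follows essentially the same route as the paper: reduce $\KSJ$ to $\KSY$ via the Abel--Jacobi map, use the explicit eigenbasis $\omega_j=x^{j-1}\,dx/y^j$, lift on the affine cover $\{\ZZ_{11},\ZZ_2,\ZZ_3\}$, and check nonvanishing via a \v{C}ech cocycle.

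The one genuine difference is packaging. The paper does not invoke $\mu_r$-equivariance; instead it defines a ``Kodaira--Spencer pairing'' $(\omega_i,\omega_j)=\mathrm{tr}(\KSY(\omega_i)\cup\omega_j)$ and computes the full matrix, finding $(\omega_i,\omega_j)=r\,dt/(t(t-1))$ when $i+j=r$ and $0$ otherwise. Your equivariance argument explains \emph{a priori} why this matrix is anti-diagonal, reducing the problem directly to $r-1$ scalar nonvanishing checks; the paper instead discovers the anti-diagonal shape by calculation. Either way the endgame is the same residue computation: the paper shows the naive lifts of $\omega_i$ are already regular in $\Omega^1_\YY$ on $\ZZ_{11}$ and $\ZZ_2$, while on $\ZZ_3$ a correction term $-(1+z_3)y_3^{-i}\,dt/(t-1)$ is needed, and the resulting cocycle pairs with $\omega_{r-i}$ to give a single nonzero residue $r/t$ at the point $z_3=-1/t$. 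Your remark that nonvanishing is ``traceable to the $du$-term'' is the right intuition, and the specific location of the correction (on $\ZZ_3$ only) is what makes the computation clean.
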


\noindent
The proof is given in the remaining part of this section.  The
key point is to explicitly calculate the ``Kodaira-Spencer
pairing'' on
$$
	H^0(U,\Omega^1_{\YY/U})\times H^0(U,\Omega^1_{\YY/U})
$$
and to show that it is non-degenerate.

Our motivation for considering the Kodaira-Spencer map $\KSJ$ is that
we use Theorem~\ref{thm:KS} to prove Corollary~\ref{cor:ptors}.

\begin{proof}[Proof that Theorem~\ref{thm:KS} implies
  Corollary~\ref{cor:ptors}] 
  The statement over $K$ follows from that over $K^{sep}$.  The latter
  follows from \cite[Page~1093, Proposition]{Voloch}, which says that
  if an abelian variety over a global function field is ordinary and
  its Kodaira-Spencer map is generically an isomorphism, then it has
  no $p$-torsion over any separable extension.
  Proposition~\ref{prop:ordinary} (see Section~\ref{ss:ordinary})
  states that $J$ is ordinary, and Theorem~\ref{thm:KS} states that
  the Kodaira-Spencer map is generically an isomorphism.
\end{proof}

\begin{remark} 
  The proof that $J$ has no $p$-torsion over $K^{sep}$ via
  Kodaira-Spencer is not so simple. The ideas of
  \cite[9.4]{Legendre3}, also not so simple, yield a proof that $J$
  has no $p$-torsion over $\Fq(u)$ where $u^d=t$ and $d=p^\nu+1$.  The
  more straightforward idea of using $p$-descent (i.e., calculating
  the $p$-Selmer group and comparing with the rank) is simpler, but
  yields a much weaker result, namely that $J$ has no torsion over
  $\Fq(u)$ where $u^d=t$ and $d=p^\nu+1$ with $\nu\le2$.  As soon as
  $\nu>2$, the $p$-part of the Tate-Shafarevich group is non-trivial
  and the $p$-descent strategy fails.
\end{remark}


\subsection{Reductions to $\YY$}

The following statement is probably
well-known, but we have not found a suitable reference.

\begin{prop}\label{prop:KS-reduction-to-YY}
  There are isomorphisms
  $$
    \sigma_*\Omega^1_{\JJ/U}\cong\pi_*\Omega^1_{\YY/U}
    \quad\text{and}\quad
    R^1\sigma_*\OO_{\JJ}\cong R^1\pi_*\OO_{\YY}
  $$ 
   of locally free $\OO_U$-modules of rank $g$ 
  such that the following diagram commutes:
  $$\xymatrix{
    \sigma_*\Omega^1_{\JJ/U}\ar[r]^{\KSJ\qquad}\ar[d]
  	& \RoneOJtensorOmegaU\ar[d] \\
    \pi_*\Omega^1_{\YY/U}\ar[r]^{\KSY\qquad}
  	& \RoneOYtensorOmegaU.
  }$$
  In particular, $\KSJ$ is an isomorphism of $\OO_U$-modules if and
  only if $\KSY$ is.
\end{prop}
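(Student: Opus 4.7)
The plan is to build the two vertical isomorphisms from the Abel--Jacobi morphism $\iota : \YY \to \JJ$ (which exists because $\pi$ has a section, e.g.\ $Q_\infty$) and to deduce the commutativity of the square from the functoriality of the Kodaira--Spencer construction applied to this morphism of smooth $U$-schemes.

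First I would identify $\sigma_*\Omega^1_{\JJ/U}$ with $e^*\Omega^1_{\JJ/U}$, where $e : U \to \JJ$ is the identity section; this is the usual rigidification of the sheaf of invariant differentials on an abelian scheme. Pulling back along $\iota$ yields a map $\sigma_*\Omega^1_{\JJ/U} \to \pi_*\Omega^1_{\YY/U}$, which on each geometric fiber recovers the classical identification of invariant $1$-forms on the Jacobian with regular $1$-forms on the curve. Both sheaves are locally free of rank $g=r-1$ over $U$, so the fiberwise isomorphism statement upgrades to an isomorphism of $\OO_U$-modules.

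For the second horizontal identification, I would use $R^1\sigma_*\OO_\JJ \cong \Lie(\JJ/U) \cong (\sigma_*\Omega^1_{\JJ/U})^\vee$ for the abelian scheme $\JJ/U$, together with relative Serre duality for $\pi$, which gives $R^1\pi_*\OO_\YY \cong (\pi_*\Omega^1_{\YY/U})^\vee$. Dualizing the first isomorphism then produces the second.

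Finally, for the compatibility with the Kodaira--Spencer maps, I would view both $\KSJ$ and $\KSY$ as the connecting map $\Gr^1 \to \Gr^0 \otimes \Omega^1_U$ of the Hodge filtration on relative de Rham cohomology (the Gauss--Manin differential on the $E_1$ page of the spectral sequence of the filtered complex $\Omega^\bullet_{\JJ}$ resp.\ $\Omega^\bullet_{\YY}$). Since $\iota : \YY \to \JJ$ is a morphism of smooth $U$-schemes, the relative cotangent exact sequences
\[
0 \to \pi^*\Omega^1_U \to \Omega^1_\YY \to \Omega^1_{\YY/U} \to 0
\]
and its analogue for $\JJ/U$ fit into a morphism of short exact sequences; taking $\pi_*$ and comparing long exact sequences yields the commutativity of the square. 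The last sentence of the proposition is then immediate.

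The main obstacle, as I see it, is the bookkeeping needed to verify that the dualization used to identify $R^1\sigma_*\OO_\JJ$ with $R^1\pi_*\OO_\YY$ is genuinely compatible with both Kodaira--Spencer maps — i.e.\ that the Serre duality trace for $\pi$ agrees under $\iota$ with the polarization pairing used to produce the Lie algebra of $\JJ$. The cleanest way to sidestep this is to invoke the fact that $\iota^* : R^1\sigma_*\Omega^\bullet_{\JJ/U} \to R^1\pi_*\Omega^\bullet_{\YY/U}$ is an isomorphism of filtered modules with Gauss--Manin connection (standard for Jacobians of smooth proper families of curves with a section), from which both identifications and the compatibility of connecting maps fall out simultaneously.
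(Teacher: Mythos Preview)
Your proposal is correct and follows the same overall strategy as the paper: use the Abel--Jacobi map from a section, identify invariant differentials on $\JJ$ with $\pi_*\Omega^1_{\YY/U}$ fiberwise, and obtain commutativity from the morphism of relative cotangent sequences.

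The one genuine difference is how you build the second isomorphism $R^1\sigma_*\OO_\JJ\cong R^1\pi_*\OO_\YY$. You construct it by dualizing the first isomorphism via $R^1\sigma_*\OO_\JJ\cong(\sigma_*\Omega^1_{\JJ/U})^\vee$ and relative Serre duality for $\pi$; this is what forces you into the compatibility check you flag, and then into the heavier de Rham/Gauss--Manin machinery to resolve it. The paper avoids this entirely by observing that the Abel--Jacobi map $\iota$ is a closed immersion, so $\iota_*$ is exact and $R^1\pi_*\OO_\YY\cong R^1\sigma_*(\iota_*\OO_\YY)$; the structure map $\OO_\JJ\to\iota_*\OO_\YY$ then directly induces the map on $R^1\sigma_*$. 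With this choice, both vertical maps come from applying $\sigma_*$ to a single morphism of short exact sequences of $\OO_\JJ$-modules (the cotangent sequence for $\JJ$ mapping to its tensor with $\iota_*\OO_\YY$, which is $\iota_*$ of the cotangent sequence for $\YY$), and commutativity of the square is immediate from the long exact sequence and the projection formula. Your approach works, but the paper's is shorter and bypasses exactly the obstacle you identified.
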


\begin{proof}
\newcommand\AJ{\mathrm{AJ}}
  The map $\pi:\YY\to U$ admits a section $U\to\YY$ since it is proper and
  its generic fiber $C$ has a rational point.  The section can be used to
  construct a map $\AJ:\YY\to\JJ$, the so-called Abel-Jacobi map.  It is a
  closed immersion (cf.~\cite[Proposition~2.3]{MilneJV}), and thus $\AJ_*$ is
  exact.  Therefore there are isomorphisms of $\OO_U$-modules
  $$ R^i\sigma_*(\AJ_*\OO_{\YY})\cong R^i\pi_*\OO_{\YY},\quad
     \sigma_*(\AJ_*\Omega^1_{\YY/U})\cong\pi_*\Omega^1_{\YY/U},
  $$
  isomorphisms of $\OO_{\JJ}$-modules
  $$ \AJ_*(\pi^*\Omega^1_U)\cong \AJ_*(\AJ^*(\sigma^*\Omega^1_U))
     \cong\sigma^*\Omega^1_U\otimes_{\OO_\JJ}\AJ_*\OO_{\YY},
  $$
  and an exact sequence of $\OO_{\JJ}$-modules
  $$ 0\to \sigma^*\Omega^1_U\otimes_{\OO_\JJ}\AJ_*\OO_{\YY}
      \to \AJ_*\Omega^1_\YY\to \AJ_*\Omega^1_{\YY/U}\to0.
  $$
  
  The structure map $\OO_\JJ\to \AJ_*\OO_{\YY}$ associated to
  $\AJ:\YY\to\JJ$ induces a morphism
  $R^1\sigma_*\OO_{\JJ}\to R^1\sigma_*\left(\AJ_*\OO_\YY\right)$ and
  thus a morphism
  $$ R^1\sigma_*\OO_{\JJ}\to R^1\pi_*\OO_\YY. $$
  Similarly, the pull-back map on 1-forms
  $\Omega^1_{\JJ/U}\to \AJ_*\Omega^1_{\YY/U}$ induces a morphism
  $$ \sigma_*\Omega^1_{\JJ/U}\to\pi_*\Omega^1_{\YY/U}. $$ 
  Both displayed morphisms are isomorphisms of locally free
  $\OO_U$-modules of rank $g$ since the respective fibers at each
  $x\in U$ are isomorphisms of $g$-dimensional vector spaces
  (cf.~\cite[Proposition~2.1 and Proposition~2.2]{MilneJV}).
  
  The displayed exact sequence lies in a commutative diagram
  $$\xymatrix{
	0\ar[r] & \sigma^*\Omega^1_U\ar[r]\ar[d]
		& \Omega^1_\JJ\ar[r]\ar[d] & \Omega^1_{\JJ/U}\ar[r]\ar[d] & 0 \\
	0\ar[r] & \sigma^*\Omega^1_U\otimes_{\OO_U}\AJ_*\OO_{\YY}\ar[r]
		& \AJ_*\Omega^1_\YY\ar[r]&\AJ_*\Omega^1_{\YY/U}\ar[r] & 0
  }$$
  of $\OO_{\JJ}$-modules whose first row is exact and where the right two
  vertical maps are pull-back maps on 1-forms.  Applying $\sigma_*$, 
  the projection formula, and the isomorphisms displayed above yields 
  a commutative diagram whose rows are long exact sequences of
  $\OO_U$-modules and a portion of which is the desired diagram
  $$\xymatrix{
  \sigma_*\Omega^1_{\JJ/U}\ar[r]\ar[d]
  	& \RoneOJtensorOmegaU\ar[d] \\
  \pi_*\Omega^1_{\YY/U}\ar[r] & \RoneOYtensorOmegaU.
  }$$
\end{proof}


\subsection{Reduction to the Kodaira-Spencer pairing}

For the rest of this section, we suppose that $d=1$.  This suffices to
prove Theorem~\ref{thm:KS} since $U$ is an \'etale cover of
$\Pt\smallsetminus\{0,1,\infty\}$.

Rather than showing that $\KSY$ is an isomorphism directly, it is
more convenient for us to consider the ``Kodaira-Spencer pairing'' on
global 1-forms 
$$\begin{array}{ccc}
  H^0(U,\pi_*\Omega^1_{\YY/U}) \times H^0(U,\pi_*\Omega^1_{\YY/U})
  	& \longrightarrow &  H^0(U,\Omega^1_U)\cong R\,dt \\
  \omega_i \times \omega_j
    & \longmapsto & (\omega_i,\omega_j)\qquad
\end{array}$$
where $R=H^0(U,\OO_U)$.  The pairing is defined by taking the cup
product 
$$
  \KSY(\omega_i)\cup\omega_j \in H^0(U,\RoneOmegaYtensorOmegaU)
$$
followed by the map
$$
  H^0(U,\RoneOmegaYtensorOmegaU) \quad \isoto \quad
  H^0(U,\Omega^1_U\tensor_{\OO_U}\OO_U)\cong H^0(U,\Omega^1_U)
$$
induced by the relative trace
$$
	R^1\pi_*\Omega^1_{\YY/U}\quad\isoto\quad \OO_U.
$$
In particular, to show that $\KSY$ is an isomorphism is the same as to
show that the Kodaira-Spencer pairing is a perfect pairing of free
$R$-modules.  Proposition~\ref{prop:KS-reduction-to-YY} then implies
that $\KSJ$ is an isomorphism, completing the proof of
Theorem~\ref{thm:KS}.

After some preparatory material, a proof that the pairing is perfect is
given in Section~\ref{ss:ks-pairing}.


\subsection{Relative 1-forms}\label{ss:1-forms-on-C}

Recall that $d=1$ and thus $R=H^0(U,\OO_U)=\F_q[t][1/(t(t-1))]$.  Recall also
that $C$ is the smooth proper curve over $K$ associated to the affine curve
$y^r=x^{r-1}(x+1)(x+t)$, that $\YY\to U$ is a proper smooth map with generic
fiber $C$, and that $\YY$ is covered by the sets
\begin{align*}
    \YY_1&:=
\spec\left(R[x_{11},y_{11}]/(y_{11}-x_{11}^{r-1}(x_{11}y_{11}+1)(x_{11}y_{11}+t))\right),\\
    \YY_2&:=\spec\left(R[x_2,z_2]/(z_2-x_2^{r-1}(x_2+z_2)(x_2+tz_2))\right), \\
    \YY_3&:=\spec\left(R[y_3,z_3]/(y_3^rz_3-(1+z_3)(1+tz_3))\right)
\end{align*}
(cf.~Section~\ref{ss:YY}).  These coordinates are related by the identities
$$
	(x,y) = (x_{11}y_{11},y_{11}) = (x_2/z_2,1/z_2) = (1/z_3,y_3/z_3).
$$

For $1\le i\le r-1$, the expression $x^{i-1}dx/y^i$ corresponds to a unique
meromorphic 1-form $\omega_i$ on $\YY$.  The 
restrictions of $\omega_i$ to the open sets $\YY_1$, $\YY_2$, $\YY_3$ are
$$ \frac{x_{11}^idy_{11}}{y_{11}} + x_{11}^{i-1}dx_{11},\quad
   x_2^i\left(\frac{dx_2}{x_2}-\frac{dz_2}{z_2}\right),\quad
   -\frac{dz_3}{y_3^iz_3}
$$
respectively.  These are clearly non-zero forms.

\begin{lemma}\label{lem:regular-omega_i}
  $\omega_i$ is everywhere regular, i.e., is an element of
  $H^0(\YY,\Omega^1_{\YY/U})$.
\end{lemma}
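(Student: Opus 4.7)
\medskip

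\noindent\textbf{Plan of proof.} The assertion is local on $\YY$, so the strategy is to verify regularity on each of the three affine charts $\YY_1$, $\YY_2$, $\YY_3$ listed above, using the explicit restrictions of $\omega_i$ recorded immediately before the lemma. Away from the vanishing loci of $y_{11}$ (on $\YY_1$), of $z_2$ (on $\YY_2$), and of $y_3$ and $z_3$ (on $\YY_3$), the three expressions are manifestly regular. Thus it suffices to analyze the behaviour of $\omega_i$ on those loci.

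On $\YY_1$, the defining equation gives $y_{11}=x_{11}^{r-1}(x_{11}y_{11}+1)(x_{11}y_{11}+t)$, so if $y_{11}=0$ then $tx_{11}^{r-1}=0$; since $t$ is a unit on $U$, this forces $x_{11}=0$. Moreover, I will read off from the same equation that near such a point we have $y_{11}=x_{11}^{r-1}u$ for a unit $u$ (the value $u=(x_{11}y_{11}+1)(x_{11}y_{11}+t)$ reduces to $t$ at the origin). Substituting $dy_{11}/y_{11}=(r-1)\,dx_{11}/x_{11}+du/u$ into the formula for $\omega_i|_{\YY_1}$ collapses the would-be polar term, yielding $\omega_i|_{\YY_1}=r x_{11}^{i-1}\,dx_{11}+x_{11}^{i}\,du/u$, which is visibly regular for $1\le i\le r-1$.

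On $\YY_2$ the analogous analysis applies: the relation $z_2=x_2^{r-1}(x_2+z_2)(x_2+tz_2)$ shows that the locus $z_2=0$ coincides with $x_2=0$, and the leading term of the equation gives $z_2=x_2^{r+1}v$ for a unit $v$, so $dz_2/z_2$ is a combination of $dx_2/x_2$ and a regular form. Plugging in kills the pole. On $\YY_3$, the equation $y_3^r z_3=(1+z_3)(1+tz_3)$ has no solution with $z_3=0$, so $z_3$ is already a unit on $\YY_3$; the only remaining locus to check is $y_3=0$, which forces $z_3\in\{-1,-1/t\}$. A local expansion around each such point, using that $1-t$ and $t-1$ are units on $U$, shows $z_3-c=y_3^r w$ with $w$ a unit; this gives $\omega_i|_{\YY_3}=-(r y_3^{r-1-i}w\,dy_3+y_3^{r-i}\,dw)/z_3$, again regular for $1\le i\le r-1$.

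The main (but mild) technical obstacle is simply tracking the correct leading exponents in the three local computations, since they differ from chart to chart ($r-1$, $r+1$, and $r$ respectively). The range $1\le i\le r-1$ is exactly what is needed so that every remaining exponent stays non-negative. No deeper input (e.g.\ the genus count or duality) is used; once the three local computations are in hand, the lemma follows by gluing.
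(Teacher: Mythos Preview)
Your proof is correct and follows essentially the same chart-by-chart strategy as the paper. The only stylistic difference is on $\YY_3$: rather than writing $z_3-c=y_3^r w$ for a unit $w$, the paper differentiates the defining relation $y_3^r z_3=(1+z_3)(1+tz_3)$ directly to obtain $ry_3^{r-1}z_3\,dy_3 + (y_3^r-(1+tz_3)-t(1+z_3))\,dz_3=0$, and reads off from this that $dz_3$ vanishes to order at least $r-1$ along $y_3=0$; your explicit factorization amounts to the same thing.
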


\begin{proof}
  On the one hand, $\omega_i$ is clearly regular on $\YY_1$ away from
  $y_{11}= 0$.  On the other hand, if $y_{11}=0$, then
  $dy_{11}/y_{11}$ has a pole of order one and $x_{11}^i$ vanishes to
  order $i$, so $\omega_i$ is regular on $\YY_1$.  For use just below,
  we record that $\omega_i$ vanishes to order exactly $i-1$ along the
  divisor $x_{11}=0$.

  Similarly, if $x_2$ or $z_2$ vanish, then both vanish and $\omega_i$
  is regular on $\YY_2$.

  Finally, $z_3$ never vanishes, and if $y_3=0$, then the identity
  \[
  	  ry_3^{r-1}z_3dy_3 + (y_3^r - (1+tz_3) - t(1+z_3))dz_3
	  = 0
  \]
  on $\YY_3$ shows that $dz_3$ has a zero of order at least $r-1$.
  More precisely, the coefficient of $dz_3$ is a unit in a
  neighborhood of $y_3=0$ while the coefficient of $dy_3$ has a zero
  of order at least $r-1$.  Hence $\omega_i$ is regular on $\YY_3$.
\end{proof}

\begin{lemma}\label{lemma:R-basis-of-1-forms}
  The relative 1-forms $\omega_i$ form an $R$-basis of
  $H^0(\YY,\Omega^1_{\YY/U})$.
\end{lemma}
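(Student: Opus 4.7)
The plan is to establish the claim by showing: (i) $M := H^0(\YY, \Omega^1_{\YY/U})$ is a free $R$-module of rank $r-1$; (ii) the $\omega_1, \ldots, \omega_{r-1}$ are $R$-linearly independent; and (iii) they generate $M$. For (i), the smoothness and properness of $\pi : \YY \to U$ together with Lemma~\ref{cjh:l40} imply that $\pi_*\Omega^1_{\YY/U}$ is a locally free $\OO_U$-module of rank $r-1$. Since (with $d=1$) the ring $R = \F_q[t][1/(t(t-1))]$ is a localization of $\F_q[t]$, hence a principal ideal domain, projective $R$-modules are free, so $M$ is free of rank $r-1$.

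For (ii), I restrict to the generic fiber $C/K$, on which $\omega_i$ becomes $x^{i-1}\,dx/y^i$. Using $\divi(x) = r(Q_0 - Q_\infty)$, the analogous formulas for $\divi(x+1)$ and $\divi(x+t)$, and the defining equation $y^r = x^{r-1}(x+1)(x+t)$, one finds
$$\divi(y) = (r-1)Q_0 + Q_1 + Q_t - (r+1)Q_\infty.$$
A Riemann-Hurwitz calculation applied to the degree-$r$ morphism $x : C \to \P^1$, which is fully ramified at $Q_0, Q_1, Q_t, Q_\infty$, gives
$$\divi(dx) = (r-1)(Q_0 + Q_1 + Q_t) - (r+1)Q_\infty.$$
Combining these yields
$$\divi(\omega_i) = (i-1)(Q_0 + Q_\infty) + (r-1-i)(Q_1 + Q_t).$$
Since the coefficient of $\omega_i$ at $Q_0$ takes the values $0, 1, \ldots, r-2$ as $i$ runs through $1, \ldots, r-1$, the $\omega_i$ have pairwise distinct orders at $Q_0$ and are therefore linearly independent in $H^0(C, \Omega^1_{C/K})$, hence over $R$.

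For (iii), let $N := R\omega_1 + \cdots + R\omega_{r-1} \subseteq M$. Since $N$ is a torsion-free $R$-module of rank $r-1$ contained in the free rank-$r-1$ module $M$, the quotient $M/N$ is a finitely generated torsion $R$-module. By Nakayama it suffices to show $M/N$ vanishes modulo each maximal ideal of $R$, i.e., that for every closed point $x \in U$ the reductions $\bar\omega_i$ form a basis of the fiber $M \otimes_R k(x)$. Because $\pi$ is flat and proper with geometrically connected fibers of constant arithmetic genus $r-1$, cohomology and base change yields a canonical isomorphism $M \otimes_R k(x) \iso H^0(\YY_x, \Omega^1_{\YY_x / k(x)})$. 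The fiber $\YY_x$ is the smooth proper genus $r-1$ curve over $k(x)$ with affine equation $y^r = x^{r-1}(x+1)(x+t(x))$, with $t(x) \notin \{0,1\}$, so the divisor computation of paragraph (ii) applies verbatim over $k(x)$ and shows that the restrictions of $\omega_1, \ldots, \omega_{r-1}$ are linearly independent regular differentials on $\YY_x$. Since $\dim_{k(x)} H^0(\YY_x, \Omega^1_{\YY_x/k(x)}) = r-1$, they form a basis, as required.

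The only step that is not a routine formal manipulation is the base-change identification in (iii), but this is standard given the smoothness and properness of $\pi$; the rest amounts to a divisor computation on the generic fiber that, crucially, specializes without change to every fiber because the explicit local expressions for $\omega_i$ in the charts $\YY_1, \YY_2, \YY_3$ depend on $t$ only through the defining equation of the family.
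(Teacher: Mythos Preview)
Your proof is correct and follows essentially the same approach as the paper. Both arguments reduce to checking that the $\omega_i$ restrict to a basis of $H^0(\YY_x,\Omega^1_{\YY_x/k(x)})$ at every point $x\in U$, and both establish fiberwise linear independence via the fact that $\omega_i$ vanishes to order exactly $i-1$ at $Q_0$; the paper reads this off directly from the local expression $\omega_i = x_{11}^i\,dy_{11}/y_{11} + x_{11}^{i-1}\,dx_{11}$ recorded in the proof of Lemma~\ref{lem:regular-omega_i}, while you obtain it from the global divisor computation $\divi(\omega_i)=(i-1)(Q_0+Q_\infty)+(r-1-i)(Q_1+Q_t)$ on the generic fiber and then specialize.
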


\begin{proof}
  There is an isomorphism
  $H^0(\YY,\Omega^1_{\YY/U})\cong H^0(U,\pi_*\Omega^1_{\YY/U})$.
  Since $\pi$ is a family of smooth projective curves of genus
  $g=r-1$, the sheaf $\pi_*\Omega^1_{\YY/U}$ is a locally free sheaf
  of $\OO_U$-modules of rank $r-1$ whose fiber at a closed point
  $u\in U$ is $H^0(\pi^{-1}(u),\Omega^1_{\pi^{-1}(u)/\kappa(u)})$.
  This last is a vector space of dimension $r-1$ over the residue
  field $\kappa(u)$, and to prove the lemma it suffices to show that
  the images of the $\omega_i$ in
  $H^0(\pi^{-1}(u),\Omega^1_{\pi^{-1}(u)/\kappa(u)})$ form a
  $\kappa(u)$ basis for all $u\in U$.  But, as mentioned above,
  $\omega_i$ has a zero of order $i-1$ at the point $x_{11}=y_{11}=0$
  in each fiber, so the restrictions of the $\omega_i$ to the fibers
  are linearly independent.  Since there are $r-1$ of them, they form
  a basis.
\end{proof}

Let $\Cbar=C\times_K\Kbar$.

\begin{cor}\label{cor:Kbar-basis-of-1-forms}
  The relative 1-forms $\omega_1,\ldots,\omega_{r-1}$ form a
  $\K$-basis of $H^0(C,\Omega^1_{C/K})$ and a $\Kbar$-basis of
  $H^0(\Cbar,\Omega^1_{\Cbar/\Kbar})$.
\end{cor}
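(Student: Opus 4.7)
The plan is to deduce the corollary directly from Lemma~\ref{lemma:R-basis-of-1-forms} by base change, since the generic fiber of $\pi:\YY\to U$ is $C/K$.

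First, I would recall that $C$ is the generic fiber of the proper smooth morphism $\pi:\YY\to U$, so $K$ is the fraction field of $R=H^0(U,\OO_U)$. Since $\pi$ is proper and smooth of relative dimension one with fibers of genus $g=r-1$, the sheaf $\pi_*\Omega^1_{\YY/U}$ is a locally free $\OO_U$-module of rank $r-1$, and its formation commutes with arbitrary base change. In particular, tensoring over $R$ with $K$ yields
\[
H^0(U,\pi_*\Omega^1_{\YY/U})\otimes_R K \;\isoto\; H^0(C,\Omega^1_{C/K}).
\]
By Lemma~\ref{lemma:R-basis-of-1-forms}, $\omega_1,\ldots,\omega_{r-1}$ form an $R$-basis of the left-hand factor, so their images form a $K$-basis of $H^0(C,\Omega^1_{C/K})$.

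For the second statement, I would tensor further with $\Kbar$ over $K$, using that $\Omega^1_{\Cbar/\Kbar}$ is obtained from $\Omega^1_{C/K}$ by the flat base change $\spec\Kbar\to\spec K$. This gives a $\Kbar$-basis of $H^0(\Cbar,\Omega^1_{\Cbar/\Kbar})$.

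There is no real obstacle here; the corollary is essentially a restatement of Lemma~\ref{lemma:R-basis-of-1-forms} at the generic point, combined with the fact that cohomology and differentials of smooth proper morphisms commute with flat base change.
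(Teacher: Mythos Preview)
Your proposal is correct and follows essentially the same approach as the paper: the paper's proof is simply the observation that $H^0(C,\Omega^1_{C/K})\cong H^0(\YY,\Omega^1_{\YY/U})\tensor_R K$ and $H^0(\Cbar,\Omega^1_{\Cbar/\Kbar})\cong H^0(\YY,\Omega^1_{\YY/U})\tensor_R \Kbar$, so the result is immediate from Lemma~\ref{lemma:R-basis-of-1-forms}. Your added justification via flat base change for the cohomology of a proper smooth family is exactly the content behind these isomorphisms.
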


\begin{proof}
This is immediate from Lemma~\ref{lemma:R-basis-of-1-forms} since 
$$H^0(C,\Omega^1_{C/K})\cong H^0(\YY,\Omega^1_{\YY/U})\tensor_R K$$
and
$$H^0(C,\Omega^1_{\Cbar/\Kbar})\cong 
H^0(\YY,\Omega^1_{\YY/U})\tensor_R \Kbar.$$
\end{proof}


\subsection{Lifting 1-forms}

Recall that there is an exact sequence of $\OO_\YY$-modules
$$
  0 \to \pi^*\Omega^1_U \to \Omega^1_\YY \to \Omega^1_{\YY/U} \to 0
$$
and that $\YY_1\cup\YY_2\cup\YY_3$ is an open affine cover of
$\YY\to U$.  In this subsection, we regard $\omega_i$ as a section in
$H^0(\YY,\Omega^1_{\YY/U})$ and find, for each $j=1,2,3$, a lift of
$\omega_i$ to a section in $H^0(\YY_j,\Omega^1_{\YY})$ so that we can
calculate $\KSY(\omega_i)$.

\begin{prop}\label{prop:lifts}
  The 1-forms 
  $$
	\frac{x_{11}^idy_{11}}{y_{11}}+x_{11}^{i-1}dx_{11},\quad
	x_2^i\left(\frac{dx_2}{x_2}-\frac{dz_2}{z_2}\right),\quad
	-\frac{dz_3}{y_3^iz_3}-\frac{1+z_3}{y_3^i}\frac{dt}{t-1}.
  $$
are sections in $H^0(\YY_j,\Omega^1_{\YY})$ for $j=1,2,3$
respectively, and each lifts $\omega_i$.
\end{prop}

\noindent
The proof occupies the remainder of this subsection.

First consider $\YY_1$, where (dropping subscripts) there is an equality
\begin{equation}\label{eq:X1}
  0 = y - x^{r-1}(xy+1)(xy+t),
\end{equation}
the differential of which leads to the relation
\begin{multline}\label{eq:dX1}
 0 = \left(1-x^r(xy+t)-x^r(xy+1)\right)dy
   - x^{r-1}(xy+1)\,dt \\
   - x^{r-2}\left((r-1)(xy+1)(xy+t)+xy(xy+t)+(xy+1)xy\right)dx.
\end{multline}
Now consider the naive lift of $\omega_i$ to a 1-form on $\YY_1$
$$
  \frac{x^{i-1}d(xy)}{y}=\frac{x^idy}{y}+x^{i-1}dx.
$$
This is obviously regular away from $y=0$.  The equality \eqref{eq:X1}
shows that, in an open neighborhood of $y=0$, the function $y$ is a
unit times $x^{r-1}$.  Also, the coefficients of $dx$ and $dt$ in
\eqref{eq:dX1} are divisible by $x^{r-2}$ and, near $y=0$, the
coefficient of $dy$ is a unit.  Therefore, we may rewrite $x^idy$
(with $i\ge1$) as a regular 1-form times $x^{r-1}$, and thus
$x^{i}dy/y$ is everywhere regular on $\YY_1$.  This shows that the
naive lift of $\omega_i$ is a section in $H^0(\YY_1,\Omega^1_{\YY})$.

Next we turn to $\YY_2$, where (dropping subscripts) there is an
equality
\begin{equation}\label{eq:X2}
  0 = z - x^{r-1}(x+z)(x+zt),
\end{equation}
the differential of which leads to the relation
\begin{multline}\label{eq:dX2}
  0 = \left(1 - x^{r-1}(x+zt) - x^{r-1}(x+z)t\right)dz 
    - x^{r-1}(x+z)z\,dt \\
    - x^{r-2}\left((r-1)(x+z)(x+zt) + x(x+zt) + x(x+z)\right)dx.
\end{multline}
Now consider the naive lift of $\omega_i$ to a 1-form on $\YY_2$:
$$
 \frac{x^{i-1}d(x/z)}{1/z} = x^i\left(\frac{dx}{x}-\frac{dz}{z}\right).
$$
This is obviously regular away from $z=0$.  Near $z=0$, the equality
\eqref{eq:X2} shows that $z$ is a unit times $x^{r+1}$.  Also, near
$z=0$, the coefficient of $dz$ in \eqref{eq:dX2} is a unit and the
coefficients of $dx$ and $dt$ are divisible by $x^{r}$.  Therefore, we
may rewrite $x^idz$ (with $i\ge1$) as a regular 1-form times
$x^{r+1}$, and thus $x^idz/z$ is everywhere regular on $\YY_2$.  This
shows that the naive lift of $\omega_i$ is a section in
$H^0(\YY_2,\Omega^1_{\YY})$.

Finally, we turn to $\YY_3$, where (dropping subscripts) there is an
equality
\begin{equation}\label{eq:X3}
  0 = y^rz-(1+z)(1+tz),
\end{equation}
the differential of which leads to the relation
\begin{equation}\label{eq:dX3}
  0 = \left(ry^{r-1}z\right)dy
    + \left(y^r-(1+tz)-(1+z)t\right) dz
    - \left((1+z)z\right)dt.
\end{equation}
This time it is necessary to work harder since the naive lift of
$\omega_i$ turns out not to be regular on all of $\YY_3$.  Instead of
it, we add a term involving $dt$ and consider
$$
  \frac{-dz}{y^iz}-\frac{1+z}{y^i}\frac{dt}{t-1}.
$$
This is regular where $y\neq 0$ since $t-1$ and $z$ are units on
all of $\YY_3$, so it remains to show it is regular in a neighborhood
of $y=0$.  The equations \eqref{eq:X3} and \eqref{eq:dX3}
and some algebra allow us to rewrite this lift as
$$
  \frac{ry^{r-i-1}}{f}\,dy
    - \frac{1+z}{y^i}\left(\frac{1}{f}+\frac{1}{t-1}\right)\,dt
  = \frac{y^{r-1-i}}{f}\left(r\,dy + \frac{y(z-1)}{t-1}\,dt\right)
$$
where $f=y^r-(1+tz)-(1+z)t$.  The right side is regular in a
neighborhood of $y=0$ since then $t-1$ and $f$ are units.  Therefore
this lift of $\omega_i$ is a section in $H^0(\YY_3,\Omega^1_\YY)$.


\subsection{Computing the Kodaira-Spencer pairing}
\label{ss:ks-pairing}

In this section we calculate the pairing
$$
  \begin{array}{ccc}
    H^0(U,\pi_*\Omega^1_{\YY/U}) \times H^0(U,\pi_*\Omega^1_{\YY/U})
      & \to & H^0(U,\Omega^1_U) \\
  \omega_i \times\omega_j
      & \mapsto & (\omega_i,\omega_j).
  \end{array}
$$
The proof of the following proposition occupies the remainder of this
subsection:

\begin{prop} \label{P:ww}
  $(\omega_i,\omega_j) = \frac{r\,dt}{t(t-1)}$ if $i+j=r$,
  and otherwise $(\omega_i,\omega_j)=0$.
\end{prop}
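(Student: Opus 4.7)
My plan is to compute the Kodaira-Spencer pairing $(\omega_i,\omega_j)$ directly from the Čech definition, using the explicit lifts $\tilde\omega_{i,1},\tilde\omega_{i,2},\tilde\omega_{i,3}$ provided by Proposition~\ref{prop:lifts} on the open cover $\YY_1,\YY_2,\YY_3$. First, I would compute the Čech 1-cocycle $\{\tilde\omega_{i,\alpha}-\tilde\omega_{i,\beta}\}$ representing $\KSY(\omega_i)$ in $\pi^*\Omega^1_U$ on the pairwise overlaps. A direct substitution using $(x_{11},y_{11})=(x_2,1/z_2)$ shows that on $\YY_1\cap\YY_2$ the difference vanishes, so the Kodaira-Spencer class is supported on the other overlaps. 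Working in the $(x,y)$ coordinates of the generic fiber (using $x=x_{11}y_{11}$, $y=y_{11}$ on $\YY_1$ and $y_3=y/x$, $z_3=1/x$ on $\YY_3$), a short calculation converts both $\tilde\omega_{i,1}-\tilde\omega_{i,3}$ and $\tilde\omega_{i,2}-\tilde\omega_{i,3}$ to the common expression
\[
\frac{x^{i-1}(x+1)}{y^i}\cdot\frac{dt}{t-1}.
\]
This means that, as a Čech cocycle, $\KSY(\omega_i)$ refines to the 1-cocycle for the 2-cover $\{\YY_1\cup\YY_2,\YY_3\}$ given by the above expression.

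Next I would form the cup product with $\omega_j=x^{j-1}\,dx/y^j$, obtaining on the overlap $(\YY_1\cup\YY_2)\cap\YY_3$ the relative 1-form
\[
\eta_{ij}\;=\;\frac{x^{i+j-2}(x+1)\,dx}{y^{i+j}}\otimes\frac{dt}{t-1}.
\]
The image of $\eta_{ij}$ under the trace $R^1\pi_*\Omega^1_{\YY/U}\to\OO_U$ is computed fiberwise by Serre duality on the smooth projective curve $\YY_{t_0}$: since $\YY_1\cup\YY_2=C\setminus\{Q_1,Q_t\}$ and $\YY_3=C\setminus\{Q_0,Q_\infty\}$ (as I would verify from the coordinate descriptions), the trace equals the sum of residues of $\eta_{ij}|_{t=t_0}$ at $Q_0$ and $Q_\infty$ (with the appropriate sign convention from Serre duality). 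So the proof reduces to a local residue computation at those two points.

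For the local computation, I would exploit the ramification of $x\colon C\to\P^1_x$. Near $Q_0$ choose a uniformizer $u$ with $x=u^r$ exactly, so that the defining equation forces $y=u^{r-1}g(u)$ where $g(u)^r=(u^r+1)(u^r+t_0)$; in particular $g$ is a regular function of $u^r$ with $g(0)^r=t_0$. Substituting gives
\[
\eta_{ij}\big|_{Q_0}=\frac{r(u^r+1)}{g(u)^{i+j}}\,u^{\,i+j-r-1}\,du,
\]
and since $1/g^{i+j}$ is a power series in $u^r$, the coefficient of $u^{-1}$ is nonzero only when $i+j=r$, in which case it equals $r/t_0$. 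A similar analysis at $Q_\infty$, using the uniformizer $v=x_2$ on $\YY_2$ (so that $x=v^{-r}f$ and $y=v^{-(r+1)}g$ with $f,g$ units at $v=0$), yields $\eta_{ij}\big|_{Q_\infty}=v^{i+j-1}\cdot(\text{regular})\,dv$, which vanishes to positive order and contributes no residue. Combining these with the global factor $dt/(t-1)$ gives $(\omega_i,\omega_j)=r\,dt/(t(t-1))$ when $i+j=r$ and $0$ otherwise.

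The main obstacles will be verifying the sign conventions for the trace map in the 3-cover Čech setup (resolved by refining to the 2-cover $\{\YY_1\cup\YY_2,\YY_3\}$, where the residue theorem pins down the sign) and the bookkeeping showing that the higher-order terms of $1/g^{i+j}$ at $Q_0$ contribute no $u^{-1}$ term when $i+j<r$. The latter is the key arithmetic input: it relies on the fact that $g^r$ is a polynomial in $u^r$, which comes directly from the equation $y^r=x^{r-1}(x+1)(x+t)$ together with $x=u^r$.
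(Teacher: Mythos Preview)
Your proposal is correct and follows essentially the same route as the paper: compute the \v Cech cocycle for $\KSY(\omega_i)$ from the lifts of Proposition~\ref{prop:lifts} (finding it supported on the overlaps with $\YY_3$), cup with $\omega_j$, and evaluate the relative trace via residues of a meromorphic splitting. The only difference is the choice of splitting: the paper takes $\sigma_1=\sigma_2=0$ and $\sigma_3=-h_{13}$ and computes residues at $Q_1,Q_t\in\YY_3$, whereas you take $\sigma=\eta_{ij}$ on $\YY_1\cup\YY_2$ and compute the complementary residues at $Q_0,Q_\infty$; both give $r/t$ when $i+j=r$ and $0$ otherwise, and your use of the uniformizer with $x=u^r$ at $Q_0$ (exploiting that $g$ is a power series in $u^r$) is a clean way to see why only the case $i+j=r$ contributes.
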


\noindent
In particular, Proposition~\ref{P:ww} and
Corollary~\ref{lemma:R-basis-of-1-forms} together imply that the
pairing is a perfect pairing of free $R$-modules since $r/t(t-1)$ is a
unit in $R$.

Recall that $H^i(\YY,\mathcal{F})\cong H^0(U,R^i\pi_*\mathcal{F})$ for
any coherent sheaf $\mathcal{F}$ on $\YY$ since $U$ is affine.  Recall
also that there is a long exact sequence of $\OO_U$-modules
$$
	\cdots\to R^i\pi_*\pi^*\Omega^1_U
	      \to R^i\pi_*\Omega^1_\YY
	      \to R^i\pi_*\Omega^1_{\YY/U}
	      \to\cdots
$$
obtained by applying $\pi_*$ and its right derived functors to
\eqref{eq:Omega_Y}.
Therefore the corresponding sequence of global sections
$$
	\cdots\to H^0(U,R^i\pi_*\pi^*\Omega^1_U)
	      \to H^0(U,R^i\pi_*\Omega^1_\YY)
	      \to H^0(U,R^i\pi_*\Omega^1_{\YY/U})
	      \to\cdots
$$
is equal to the long exact cohomology sequence
$$
	\cdots\to H^i(\YY,\pi^*\Omega^1_U)
	      \to H^i(\YY,\Omega^1_\YY)
	      \to H^i(\YY,\Omega^1_{\YY/U})
	      \to\cdots.
$$
In particular, $\KSY$ induces a map
$$
	H^0(U,\pi_*\Omega^1_{\YY/U})\to H^1(U,R^1\pi_*\pi^*\Omega^1_U),
$$
which is the boundary map of cohomology
$$
	H^0(\YY,\Omega^1_{\YY/U})\to H^1(\YY,\pi^*\Omega^1_U).
$$

Fixing $i$ and taking differences, on $\YY_j\cap\YY_k$, for
$j,k\in\{1,2,3\}$, of the lifts in Proposition~\ref{prop:lifts} yields the
following \czech{} cocycle in $H^1(\YY,\pi^*\Omega^1_U$)
representing $\KSY(\omega_i)$:
$$ g_{12} = g_{21} = 0, \qquad
   g_{23} = -g_{32} = g_{13} = -g_{31} = \frac{1+z_3}{y_3^i}\frac{dt}{t-1}
$$
where $g_{jk}$ is a section in $H^0(\YY_j\cap\YY_k,\pi^*\Omega^1_U)$.

Taking the cup product of $\KSY(\omega_i)$ with $\omega_j$ yields a
class in
\begin{align*}
  H^1(\YY,\pi^*\Omega^1_U\otimes_{\OO_Y}\Omega^1_{\YY/U})
  & \cong H^0(U,\RoneOmegaYtensorOmegaU) \\
  & \cong H^0(U,\Omega^1_U)\otimes_R H^0(U,R^1\pi_*\Omega^1_{\YY/U})
\end{align*}
given by the product of $\frac{dt}{t-1}$ and the class $h$ in
$H^0(U,R^1\pi_*\Omega^1_{\YY/U})$ represented by the \czech{} cocycle
$$ h_{12} = h_{21} = 0, \qquad
   h_{23} = -h_{32} = h_{13} = -h_{31}
   = \frac{1+z_3}{y_3^{i+j}}\frac{dz_3}{z_3}.
$$
It remains to calculate the image of $h$ via the relative trace map
$$
  H^0(U,R^1\pi_*\Omega^1_{\YY/U}) \to H^0(U,\OO_U).
$$

Consider, for $j=1,2,3$, the \emph{meromorphic} relative 1-forms
$\sigma_j$ on $\YY_j$ given by
$$
  \sigma_1 = \sigma_2 = 0, \qquad
  \sigma_3 = -\frac{1+z_3}{y_3^{i+j}}\frac{dz_3}{z_3}.
$$
On $\YY_j\cap\YY_k$, they satisfy $h_{jk}=\sigma_j-\sigma_k$.
Therefore, for $z\in U$ and $P\in\YY_{j,z}$, the residue
$r_P=\res_P(\sigma_j)$ satisfies $r_P=\res_P(\sigma_k)$ if
$P\in\YY_{k,z}$.  In particular, the relative trace of $h$ is the
global section of $\OO_U$ whose restriction to $\OO_{U,z}$ is
$ \sum_{P\in\YY_z}r_P.  $

It is clear that $r_P=0$ except possibly at the points
$(y_3,z_3)=(0,-1)$ and $(0,-1/t)$ in $\YY_{3,z}$.  The identities
$$
	y_3^rz_3 - (1+z_3)(1+tz_3) = 0 \quad \text{and} \quad
	ry_3^{r-1}z_3\,dy_3 = (y_3^r - (1+tz_3) - t(1+z_3))\,dz_3
$$
allow us to rewrite $\sigma_3$ as
$$
	-\frac{rz_3(1+z_3)}{1-z_3^2t}\frac{dy_3}{y_3^{1+i+j-r}}
	= -\frac{rz_3^2}{(1+tz_3)(1-z_3^2t)}\frac{dy_3}{y_3^{1+i+j-2r}}.
$$
The specializations of the left and right at $z_3=-1/t$
and $z_3=-1$ are
$$
	-\frac{r(-1/t)(1+(-1/t))}{1-(1/t^2)t}\frac{dy_3}{y_3^{1+i+j-r}}
	= \frac{r}{t}\frac{dy_3}{y_3^{1+i+j-r}}$$
	and
$$	-\frac{r}{(1-t)(1-t)}\frac{dy_3}{y_3^{1+i+j-2r}}
$$
respectively.  In particular, if $P\in\YY_3$, then
$$
	r_P = \begin{cases}
      \frac{r}{t} & \text{if $P=(0,-1/t)$ and $i+j=r$,} \\
      0           & \text{otherwise,}
    \end{cases}
$$
since $1+i+j-2r\leq -2$.  Therefore
$$
	(\omega_i,\omega_j) = \begin{cases}
	  \frac{dt}{t-1}\frac{r}{t} & \mbox{if $i+j=r$,} \\
	  0                         & \mbox{otherwise,}
	\end{cases}
$$
as claimed.

\begin{remark}
  Variants of this calculation for $r=2$, i.e., for the Legendre
  curve, go back to the origins of hypergeometric functions and appear
  in many places in the literature, sometimes lifted to the level of
  the Gauss-Manin connection, and with varying conventions, bases, and
  signs.
\end{remark}


\subsection{$J$ is ordinary}\label{ss:ordinary}

We recall that ``$J$ is ordinary'' means that $J(\Kbar)[p]$
has cardinality $p^g$ where $g=r-1$ is the dimension of $J$.  This
property is obviously preserved under change of ground field.

Recall (e.g., \cite[Section~2]{Serre58}) that the Cartier operator 
$$
  \car : H^0(\Cbar,\Omega^1_{\Cbar/\Kbar})
     \to H^0(\Cbar,\Omega^1_{\Cbar/\Kbar})
$$ 
is a semi-linear operator satisfying
$$
  \car(\omega + \omega') = \car(\omega) + \car(\omega')
  \ {\rm and} \
  \car(f^p \omega) = f \car(\omega)
$$
for all $f$ in the function field $\Kbar(\Cbar)$.  Also, for
$x\in\Kbar(\Cbar)$, 
$$
  \car\bigg(\frac{x^i dx}{x}\bigg) = 0\ \text{if $p \nodiv i$}
  \ \text{and} \ 
  \car\bigg(\frac{dx}{x}\bigg) = \frac{dx}{x}.
$$ 
It is known that $J$ being ordinary is equivalent to the Cartier
operator of $C$ being an isomorphism.  (This can be deduced from
\cite[Proposition~10, page~41]{Serre58}.)

\begin{proposition}\label{prop:ordinary}
  The operator
  $\car:H^0(\Cbar,\Omega^1_{\Cbar/\Kbar}) 
   \to H^0(\Cbar,\Omega^1_{\Cbar/\Kbar})$
  is an isomorphism.  In particular, the Jacobian $J$ of $C$ is ordinary.
\end{proposition}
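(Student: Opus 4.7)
The plan is to compute the Cartier operator $\car$ explicitly in the basis $\omega_1,\dots,\omega_{r-1}$ of Corollary~\ref{cor:Kbar-basis-of-1-forms} and show each eigenvalue is non-zero. The starting point is the $\mu_r$-action $(x,y)\mapsto(x,\zeta y)$ on $\Cbar$, under which $\omega_i = x^{i-1}dx/y^i$ transforms by the character $\zeta\mapsto\zeta^{-i}$, so the $\omega_i$ span the $r-1$ distinct non-trivial $\mu_r$-eigenspaces of $H^0(\Cbar,\Omega^1)$, each one-dimensional. Functoriality of $\car$ together with its $\Kbar$-semi-linearity $\car(a\omega)=a^{1/p}\car(\omega)$ forces $\car(\omega_i)$ into the $\zeta\mapsto\zeta^{-i/p}$-eigenspace, so
\[
  \car(\omega_i) \;=\; c_i\,\omega_{j(i)}
\]
for some $c_i\in\Kbar$, where $j(i)\in\{1,\dots,r-1\}$ is the unique representative of $p^{-1}i\bmod r$.

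Next I would compute $c_i$ explicitly. Setting $h(x)=x^{r-1}(x+1)(x+t)$ and $k=(pj(i)-i)/r$ (which one verifies is a non-negative integer using $i,j(i)\in\{1,\dots,r-1\}$), the identity $y^{pj(i)} = y^i h(x)^k$ gives
\[
  \omega_i \;=\; \Bigl(\frac{1}{y^{j(i)}}\Bigr)^{\!p}\cdot x^{i-1}h(x)^k\,dx,
\]
so by $\car(f^p\omega)=f\,\car(\omega)$ we obtain $\car(\omega_i)=y^{-j(i)}\,\car\bigl(x^{i-1}h(x)^k\,dx\bigr)$. Using the elementary formulas $\car(x^{mp-1}dx)=x^{m-1}dx$ and $\car(x^\ell dx)=0$ when $p\nmid\ell+1$, one expands $x^{i-1}h(x)^k=\sum_\ell a_\ell\,x^\ell$ and finds $\car(x^{i-1}h(x)^k\,dx)=\sum_m a_{mp-1}^{1/p}\,x^{m-1}\,dx$. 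Matching this against $\car(\omega_i)=c_i\,x^{j(i)-1}dx/y^{j(i)}$, clearing $y^{j(i)}$, and using the injection $\Kbar[x]\hookrightarrow\Kbar(\Cbar)$ to equate coefficients of powers of $x$ identifies $c_i^p$ with the coefficient of $x^{pj(i)-1}$ in $x^{i-1}h(x)^k=x^{pj(i)-1-k}(x+1)^k(x+t)^k$, i.e., with the coefficient of $x^k$ in $(x+1)^k(x+t)^k$:
\[
  c_i^p \;=\; \sum_{\ell=0}^{k}\binom{k}{\ell}^{\!2}\,t^\ell \;\in\; \Fp[t].
\]

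The non-vanishing is then immediate: this polynomial in $t$ has constant term $\binom{k}{0}^2=1$ and leading coefficient $\binom{k}{k}^2=1$, both non-zero in $\Fp$, so it is a non-zero element of $\Fp[t]\subset\Kbar$, and its unique $p$-th root $c_i\in\Kbar$ is non-zero. Hence $\car$ permutes the $\omega_i$ up to non-zero scalars and is an isomorphism, which is equivalent to $J$ being ordinary. The main technical obstacle lies in the middle step: verifying that $k\ge 0$ in all cases, justifying the application of $\car(f^p\omega)=f\,\car(\omega)$ with $f$ a meromorphic function on $\Cbar$, and checking that the one-variable formula for $\car(x^{mp-1}dx)$ transports correctly into $\Omega^1(\Cbar)$. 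Each of these is routine once the a priori shape $\car(\omega_i)=c_i\,\omega_{j(i)}$ has been pinned down by the $\mu_r$-equivariance argument.
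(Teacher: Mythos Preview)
Your proof is correct and essentially the same as the paper's. The paper solves $ap-br=i$ with $0\le b<p$ and $0<a<r$ (so your $j(i)=a$ and $k=b$), rewrites $\omega_i=h(x)y^{-ap}\,dx/x$ with $h(x)=x^{i+(r-1)b}(x+1)^b(x+t)^b$, and then observes directly that the exponents of $x$ in $h$ lie in $[ap-b,ap+b]$, so that only the $x^{ap}$ term survives under $\car$; this yields $\car(\omega_i)=c^{1/p}\omega_a$ with $c=\sum_{j=0}^{b}\binom{b}{j}^2 t^j$, exactly your formula. Your use of $\mu_r$-equivariance to pin down the shape $\car(\omega_i)\in\Kbar\cdot\omega_{j(i)}$ in advance is a pleasant conceptual shortcut, but it replaces a step the paper handles by the elementary exponent-range observation, and the core computation and non-vanishing argument are identical.
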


\begin{proof}
  Corollary~\ref{cor:Kbar-basis-of-1-forms} says that
  $\omega_1,\ldots,\omega_{r-1}$ form a $\K$-basis of
  $H^0(\Cbar,\Omega^1_{\Cbar/\Kbar})$.
  We show, for all $1 \le i \le r-1$, that $\car(\omega_i)$ is a non-zero
  multiple of $\omega_a$ where $pa\equiv i\mod r$.  This implies that
  $\car$ is an isomorphism, as required.

  Since $p\nodiv r$, given $i$ with $1 \le i \le r-1$, we may solve
  $ap-br=i$ in integers $a,b$.  Moreover, adjusting $a,b$ by $mr,mp$,
  for some $m$, we may assume that $0\le b<p$, and having done this,
  it follows that $0<a<r$.  We then have
  $$
	\omega_i
		= \left(\frac xy\right)^i\frac{dx}x
		= \frac{x^iy^{br}}{y^{ap}}\frac{dx}x
		= \frac{h(x)}{y^{ap}}\frac{dx}x
  $$
  where $h(x)=x^{i+(r-1)b}(x+1)^b(x+t)^b$.  Thus
  $$
    \car(\omega_i)=y^{-a}\car\left(h(x)\frac{dx}x\right).
  $$
  Now the exponents of $x$ appearing in $h$ are in the range
  $$
    [i+(r-1)b, i+(r+1)b]=[ap-b, ap+b],
  $$
  and the only multiple of $p$ in this range is $ap$.  Letting $c$ be the
  coefficient of $x^{ap}$ in $h(x)$, then
  $\car(h(x)dx/x)=c^{1/p}x^a\,dx/x$ and 
  $$
    \car(\omega_i) = c^{1/p}(x/y)^a dx/x = c^{1/p}\omega_a.
  $$
  Thus it remains to prove that $c\neq0$.

  It is clear that $c$ is the coefficient of $x^b$ in $(x+1)^b(x+t)^b$,
  and so
  $$
    c = \sum_{j=0}^b {b \choose j}^2 t^j
      = 1 + b^2 t + \cdots + b^2t^{b-1} + t^b.
  $$
  Since $t$ is transcendental over $\Fp$, this expression is not zero in
  $\Kbar$, and this completes the proof.
\end{proof}


\section{N\'eron-Severi of $\XX_d$ is torsion-free} \label{S:NStf}

In this section, we assume that $k$ is a perfect field of
characteristic $p\geq 0$ not dividing $d$ and containing $\mu_d$ and
that $r$ divides $d$.  Let $\XX_d\to\Pu$ be the minimal regular model
of $C/K_d$ constructed in Section~\ref{s:models}.  We regard $\XX_d$
as a surface over $k$.  Our aim in this section is to prove the
following result.

\begin{theorem}\label{thm:NS-tor}
  The \Neron-Severi group of $\XX_d$ is torsion-free.
\end{theorem}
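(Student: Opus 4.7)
The plan is to deduce Theorem~\ref{thm:NS-tor} by verifying the hypotheses of the general torsion-freeness criteria established earlier in this chapter (Propositions~\ref{prop:geometric-method} and \ref{prop:cohom-method}), applied to the specific surface $\XX_d$. The strategy splits naturally according to whether we are bounding $\ell$-torsion for $\ell\neq p$ or $p$-torsion, because very different tools are available in the two cases.

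For the prime-to-$p$ torsion, the natural approach is via \'etale cohomology. Since $NS(\XX_d)_{\tor}[\ell]$ is controlled by $H^1_{\et}(\XX_d,\mathbb{Z}/\ell)$ and the torsion in $H^2_{\et}(\XX_d,\mathbb{Z}_\ell(1))$, the goal is to show these cohomology groups have no prime-to-$p$ torsion. Here I would exploit Proposition~\ref{prop:DPCiso}, which identifies $\YY_d$ with $(\widetilde{\CC\times_k\DD})/G$, and the fact that $\XX_d\to\YY_d$ is a resolution of rational double points (Proposition~\ref{PsingularA}). Since resolution of rational singularities does not change $\pi_1^{\et}$ or introduce torsion in $H^2_{\et}$ with $\mathbb{Z}_\ell$-coefficients, and since blow-ups at points similarly do not introduce torsion, the calculation reduces to understanding the $G$-equivariant \'etale cohomology of $\CC\times_k\DD$. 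But the cohomology of a product of smooth projective curves is torsion-free for $\ell\neq p$ by K\"unneth, and under the $G$-action the invariants and coinvariants (together with the contributions of the exceptional divisors of the resolution, which form chains of rational curves contributing torsion-free summands) remain torsion-free; the detailed configuration was worked out in Section~\ref{ss:bad-fibers} and Section~\ref{ss:general-d}.

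For the $p$-part, I would invoke Corollary~\ref{cor:ptors} from the preceding section, which shows $J(K^{sep})[p]=0$. Combined with the structure of the N\'eron model $\JJ\to\Pu$ obtained in Section~\ref{s:local-invs}, where the component groups (Proposition~\ref{prop:component-groups}) are of orders involving only $r$ and $d$, both coprime to $p$, we get that there is no $p$-torsion anywhere in the geometric Picard group of $\XX_d$. More precisely, the short exact sequence
\[
0\to\Pic^0(C)\to \Pic(\XX_d)/\langle\text{fibral divisors}\rangle\to \Z\to 0,
\]
the vanishing of $J[p]$ over $K^{sep}$, and the fact that the fibral contribution to $\Pic(\XX_d)$ is a finitely generated free abelian group with quotient equal to the component groups of $\JJ$ (which have order prime to $p$), together imply that $NS(\XX_d)[p]=0$.

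The main obstacle will be the careful bookkeeping required to verify that the contributions from the blow-ups used to construct $\widetilde{\CC\times_k\DD}$, the quotient by $G$ (with its mixed and unmixed fixed points), and the resolution of the resulting rational double points can all be written as explicit combinations of classes in $NS$ without introducing torsion relations. In particular, the components appearing in Figures~\ref{fig:u-equals-zero}, \ref{fig:superelliptic-dual-graph-odd}, \ref{fig:superelliptic-dual-graph-even}, and \ref{fig:fiber-infinity-general-d} must be reconciled with the images of divisors pulled back from $\CC\times_k\DD$ via the rational map of Section~\ref{ss:DPC}, and this reconciliation is precisely what the general criterion of Proposition~\ref{prop:geometric-method} or~\ref{prop:cohom-method} is designed to streamline.
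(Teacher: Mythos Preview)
Your treatment of the $p$-torsion is essentially the paper's: invoke Corollary~\ref{cor:ptors} and feed it into the Shioda--Tate filtration. The clean way to package this is Proposition~\ref{prop:NS_tor-into-J_tor}, which says directly that $\NS(\XX_d)_\tor\hookrightarrow J(K)_\tor$ once $\Pic^0(\XX_d)=0$; your short exact sequence is a slightly garbled paraphrase of this, and you have not addressed why $\Pic^0(\XX_d)=0$ (this is Proposition~\ref{prop:K/k-trace}, itself a consequence of Lemma~\ref{lem:vanishing-Pic^0}).

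The genuine gap is in your prime-to-$p$ argument. You write that ``under the $G$-action the invariants and coinvariants \dots\ remain torsion-free,'' but this is precisely the crux and is \emph{false} in general when $\ell$ divides $|G|=rd$. K\"unneth gives torsion-free cohomology for $\CC\times\DD$, but passing to the quotient by $G=\mu_r\times\mu_d$ is a Hochschild--Serre computation in which the differentials and higher group cohomology can (and in other settings do) produce torsion when $\ell\mid|G|$. Your proposal does not explain why this does not happen here, and treating it as ``bookkeeping'' misidentifies the difficulty.

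The paper's proof addresses this by a two-step quotient that you have not articulated. First, apply Proposition~\ref{prop:cohom-method} to $\SS=\CC_d\times\DD_d$ with $G=\mu_r$: this is where the delicate spectral-sequence argument lives, and it yields $\NS(\SS/\mu_r)[\ell]=\Pic(\SS/\mu_r)[\ell]=0$ for \emph{all} $\ell\neq p$, including $\ell\mid r$. Second, resolve $\SS/\mu_r$ to a smooth $\SS_1$ (birational invariance preserves the vanishing), and now apply Proposition~\ref{prop:geometric-method} with $G=\mu_d$: its hypothesis $\Pic(\SS_1)[\ell]^G=0$ holds trivially because $\Pic(\SS_1)[\ell]=0$ from step one. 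The point is that the geometric method alone cannot handle primes dividing $r$ (you would need $\Pic(\SS)[\ell]^G=0$, which fails), while the cohomological method is tailored to the specific $\mu_r$-cover structure; the order of the two steps matters.
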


\noindent
Along the way we work in more generality so that the same result
may be deduced for most surfaces related to the Berger construction.
It would be possible to remove the restrictions that $r$ divides $d$
and that $k$ contains $\mu_d$, but we leave this as an exercise for
the reader.

We occasionally refer to the \Neron-Severi and Picard groups
of certain singular surfaces.  We recall here three familiar facts
that continue to hold for singular but normal surfaces.  Namely, if
$\SS$ is a projective, normal, geometrically irreducible surface over
a perfect field $k$, then the Picard functor $\Pic_{\SS/k}$ is
represented by a scheme locally of finite type over $k$, the identity
component is represented by a projective algebraic group, and the
tangent space at the identity is canonically isomorphic to
$H^1(\SS,\OO_\SS)$.  See 9.4.8, 9.5.4, and 9.5.11 in \cite{KleimanPicard}
and recall that $\SS$ is integral and normal over $\kbar$ since it is
integral and normal over the perfect field $k$.

By definition, the \Neron-Severi group of a projective, normal,
irreducible surface $\SS$ over a field $k$ is the image $\NS(\SS)$ of
$\Pic(\SS)$ in 
$$
  \NS(\SS\times_k\kbar) :=
  \Pic(\SS\times_k\kbar)/\Pic^0(\SS\times_k\kbar).
$$ 
Thus $\NS(\SS)$ is a subgroup of $\NS(\SS\times_k\kbar)$.  Therefore,
to prove Theorem~\ref{thm:NS-tor} it suffices to treat the case where
$k$ is algebraically closed; in this section, we assume $k=\kbar$ when
convenient, but in some places we consider more general fields k.


\subsection{Shioda-Tate isomorphism}\label{ss:shioda-tate}

Let $k$ be a perfect field, let $\BB$ be a smooth, projective,
geometrically irreducible curve over $k$, and let $\SS$ be a smooth,
projective, geometrically irreducible surface over $k$ equipped with a
generically smooth and surjective morphism $\pi:\SS\to\BB$.  Let
$K=k(\BB)$ be the function field of $\BB$, let $J/K$ be the Jacobian of
the generic fiber of $\pi$, and let $(A,\tau)$ be the $K/k$-trace of
$J$.

Recall that $L^1\Pic(\SS)$ is the subgroup of $\Pic(\SS)$ consisting
of classes of divisors orthogonal to a fiber of $\pi$, that
$L^2\Pic(\SS)$ is the subgroup of $L^1\Pic(\SS)$ consisting of classes
of divisors supported in the fibers of $\pi$, and that $L^i\NS(\SS)$,
for $i=1,2$ is the corresponding subgroup of $\NS(\SS)$.

\begin{prop}\label{prop:shioda-tate}
  There is a homomorphism
  $$
   \frac{L^1\NS(\SS)}{L^2\NS(\SS)}
   \to
   \MW(J) = \frac{J(K)}{\tau A(k)}
  $$
  with finite kernel and cokernel.  It is an isomorphism if $\pi$ has a
  section and if $k$ is either finite or algebraically closed.
\end{prop}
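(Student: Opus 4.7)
The plan is to construct the map by restriction to the generic fiber, then analyze kernel and cokernel separately using the exact sequence $0\to\Pic^0(\SS)\to\Pic(\SS)\to\NS(\SS)\to 0$ together with the universal property of the $K/k$-trace.

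First I would define the map. A divisor $D$ on $\SS$ whose class lies in $L^1\Pic(\SS)$ satisfies $D\cdot F=0$ for a fiber $F$ of $\pi$, which is to say that its restriction $D|_C$ to the generic fiber $C$ has degree zero, hence gives a class in $\Pic^0(C)=J(K)$. Classes in $L^2\Pic(\SS)$ are represented by fibral divisors and restrict trivially to $C$, so this yields a well-defined homomorphism $L^1\Pic(\SS)/L^2\Pic(\SS)\to J(K)$. To pass to $\NS$, I need to identify the image of $\Pic^0(\SS)$ in $J(K)$. The relative Picard scheme $\Pic^0_{\SS/k}$ is an extension of an abelian variety by an affine group, and its generic-fiber restriction factors through $J=\Pic^0_{C/K}$; by the universal property of the $K/k$-trace $(A,\tau)$, the image of $\Pic^0(\SS)(k)$ inside $J(K)$ is precisely $\tau A(k)$ (up to finite kernel/cokernel, which will contribute only finitely to our map). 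Quotienting, I obtain the desired homomorphism into $J(K)/\tau A(k)=\MW(J)$.

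Next I would analyze the kernel. Suppose $[D]\in L^1\NS(\SS)$ restricts to an element of $\tau A(k)$ in $J(K)$. Then up to modifying $D$ by a representative of $\Pic^0(\SS)$, we may assume $D|_C=0$, so $D$ is supported in fibers, i.e. $[D]\in L^2\NS(\SS)$. The ambiguity in this modification is controlled by the finite kernel of $A(k)\to J(K)$ (an isogeny onto its image has finite kernel) together with the finite group $\Pic^0(\SS)(k)/\bigl(\text{image of }A(k)\bigr)$, and these produce only a finite kernel of the map in the proposition.

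For the cokernel, I would start with a class $[d]\in J(K)$ represented by a degree-zero divisor $\overline d$ on $C$, and take $\tilde d$ on $\SS$ to be the closure (a Weil divisor supported on the closures of the points of $\overline d$). Then $\tilde d\cdot F=0$ for the generic fiber class $F$, but the intersection numbers $\tilde d\cdot Z$ against fibral components $Z$ need not vanish. The key fact is that, within each reducible fiber $F_v=\sum m_i Z_i$, the intersection form on $\bigoplus\Q Z_i$ is negative semidefinite with radical spanned by $F_v$ itself; in particular it is nondegenerate on the quotient, so one can solve, with $\Q$-coefficients, for a fibral divisor $E_v$ so that $\tilde d+\sum_v E_v$ is orthogonal to every component of every fiber. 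Clearing denominators produces a $\Z$-divisor in $L^1$ whose restriction to $C$ equals a positive integer multiple of $\overline d$; this shows the cokernel is killed by a universal integer and is therefore finite.

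Finally, for the isomorphism statement, when $\pi$ admits a section $s$, the sublattice $\bigoplus_v\bigoplus_{Z\subsetneq F_v\text{ not meeting }s}\Z Z$ carries a negative definite intersection form of integer determinant (the local component-group determinants), and the solution of the linear system above can be chosen over $\Z$ rather than over $\Q$, giving surjectivity. When $k$ is finite or algebraically closed, standard exactness of $\Pic^0_{\SS/k}(k)\to \Pic^0(\SS)(k)/(\text{connected-component effects})$ holds (by Lang's theorem or trivially), and the map $A(k)\to\tau A(k)$ is bijective on equivalence classes, so the kernel also collapses and the homomorphism is an isomorphism. The main obstacle in the general argument is the bookkeeping of the extension $\Pic^0(\SS)\to A$ and its finite quotient; the integer-versus-rational correction step is straightforward but tedious, and the identification of the image of $\Pic^0(\SS)(k)$ with $\tau A(k)$ is the technical crux.
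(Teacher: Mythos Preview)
The paper does not prove this proposition; it simply cites \cite[Proposition~4.1]{CRM}. Your sketch follows the standard Shioda--Tate argument, but it contains a genuine confusion that distorts both the cokernel analysis and the isomorphism claim.

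The issue is the definition of $L^1$: a class lies in $L^1\NS(\SS)$ if it is orthogonal to \emph{one fiber} $F$ of $\pi$, not to every irreducible component of every fiber. Consequently, once you take the closure $\tilde d$ of a degree-zero $K$-rational divisor $\overline d$ on $C$, you already have $\tilde d\cdot F=\deg(\overline d)=0$, so $\tilde d\in L^1\Pic(\SS)$ and it maps to $[\overline d]$ in $J(K)$. No fibral correction is needed to land in $L^1$; the correction you describe is the one used to compute \emph{heights} (as in Section~\ref{sec:height-pairing}), not to establish membership in $L^1$. Thus your cokernel argument works harder than necessary while still leaving a gap: the map $L^1\Pic(\SS)\to J(K)$ is surjective as soon as every class in $J(K)=\Pic^0_{C/K}(K)$ is represented by a $K$-rational divisor on $C$, and that is guaranteed by a section but not in general (the obstruction lies in $\mathrm{Br}(K)$). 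Your sketch silently assumes this representability in the general case.

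The same confusion infects your isomorphism argument. The section does not help by making the fibral correction integral; it helps because (i) it guarantees that every element of $J(K)$ comes from a $K$-rational divisor, giving surjectivity of $L^1\Pic(\SS)/L^2\Pic(\SS)\to J(K)$ on the nose, and (ii) together with the hypothesis that $k$ is finite or algebraically closed, it lets you identify the image of $\Pic^0(\SS)(k)$ in $J(K)$ with $\tau A(k)$ exactly (via Lang's theorem in the finite case, trivially in the algebraically closed case), rather than only up to a finite discrepancy. The ``$\Z$ versus $\Q$'' distinction you invoke plays no role here.
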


\noindent
See \cite[Proposition~4.1]{CRM}.


\subsection{$\NS_\tor$ and $J_\tor$}

We continue the notation of the previous section.  We further assume
that $k$ is finite or algebraically closed.

If $\Pic^0(\SS)=0$, then $\NS(\SS)\cong\Pic(\SS)$ and the $K/k$-trace
of $J$ vanishes.  Therefore Proposition~\ref{prop:shioda-tate} implies
that there is an isomorphism
$$
  \frac{L^1\Pic(\SS)}{L^2\Pic(\SS)}\cong J(K).
$$
If, moreover, $\pi$ admits a section, then $L^2\Pic(\SS)$ is
torsion-free.  (See, for example, Section~4.1 in \cite{CRM}.)
Finally, $\NS(\SS)_\tor$ is contained in $L^1\Pic(\SS)$ since its
elements are numerically equivalent to zero.  Therefore we conclude
the following:

\begin{prop}\label{prop:NS_tor-into-J_tor}
  If $\Pic^0(\SS)=0$ and if $\pi$ admits a section, then the
  Shioda-Tate isomorphism induces an injection
  $\NS(\SS)_\tor\to J(K)_\tor$.
\end{prop}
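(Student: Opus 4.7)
The plan is to chain together the four observations made in the paragraph immediately preceding the statement: (i) $\Pic^0(\SS)=0$ forces $\Pic(\SS)\iso\NS(\SS)$ and kills the $K/k$-trace of $J$; (ii) Proposition~\ref{prop:shioda-tate} then becomes a clean isomorphism $L^1\Pic(\SS)/L^2\Pic(\SS)\iso J(K)$; (iii) the presence of a section forces $L^2\Pic(\SS)$ to be torsion-free; (iv) any torsion class in $\NS(\SS)$ automatically lies in $L^1\NS(\SS)$ because its intersection with a fiber must vanish. Putting these together yields the desired injection.

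More concretely, I would proceed as follows. First, record that the vanishing of $\Pic^0(\SS)$ implies both that $\NS(\SS)=\Pic(\SS)$ and that the Albanese variety of $\SS$ is trivial, whence the $K/k$-trace $(A,\tau)$ of $J$ is zero (via the standard embedding of the $K/k$-trace into the Albanese of $\SS$). With $\tau A(k)=0$, the Shioda-Tate homomorphism of Proposition~\ref{prop:shioda-tate} gives a map with finite kernel and cokernel from $L^1\NS(\SS)/L^2\NS(\SS)$ to $J(K)$; since $k$ is finite or algebraically closed and $\pi$ has a section, this map is actually an isomorphism (the hypotheses of the ``moreover'' part of the proposition are in force).

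Second, observe that the intersection pairing on $\NS(\SS)$ is $\Z$-valued, so any torsion class $\alpha\in\NS(\SS)_\tor$ satisfies $\alpha\cdot F=0$ for $F$ a fiber of $\pi$; hence $\NS(\SS)_\tor\subset L^1\NS(\SS)$. Composing the inclusion with the Shioda-Tate isomorphism gives a homomorphism $\NS(\SS)_\tor\to J(K)$, whose image is automatically torsion. Its kernel is $\NS(\SS)_\tor\cap L^2\NS(\SS)$, which vanishes because $L^2\NS(\SS)=L^2\Pic(\SS)$ (by $\Pic^0(\SS)=0$) is torsion-free: with a section of $\pi$ in hand, the fibral lattice has a basis consisting of components of fibers indexed so that one component per fiber is omitted, and there are no nontrivial relations among such classes.

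The only slightly delicate step, and the one I would expect to be the main obstacle were it not already indicated by the surrounding text, is the torsion-freeness of $L^2\Pic(\SS)$ under the section hypothesis; this is the fact flagged in Section~4.1 of \cite{CRM}, and reducing to it is the whole point of assuming that $\pi$ admits a section. Everything else is formal once $\Pic^0(\SS)=0$ has been used to trivialize both the Picard/N\'eron-Severi distinction and the $K/k$-trace.
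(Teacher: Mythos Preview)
Your proposal is correct and follows essentially the same approach as the paper: the paper's proof is precisely the four observations you list (the argument is in fact given in the paragraph immediately preceding the proposition), including the reference to \cite{CRM} for torsion-freeness of $L^2\Pic(\SS)$ when $\pi$ has a section.
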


Later in the paper, we use Proposition~\ref{prop:NS_tor-into-J_tor}
and bounds on $\NS(\XX)_\tor$ to bound $J(K)_\tor$.  The reverse is
also possible: good control on $J(K)_\tor$ suffices to bound
$\NS(\XX)_\tor$.


\subsection{Birational invariance}

\begin{prop}\label{prop:birational-invariance}
  Suppose $\SS_1$ and $\SS_2$ are projective, normal surfaces over $k$
  and $f:\SS_1\to\SS_2$ is a birational map.  Then
  $\NS(\SS_1)_\tor\cong\NS(\SS_2)_\tor$ and
  $\Pic^0(\SS_1)\cong\Pic^0(\SS_2)$.
\end{prop}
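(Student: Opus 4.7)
The plan is to reduce the assertion in two stages to the case of a proper birational morphism from a smooth projective surface to a normal projective surface, where the result follows from excision together with the negative-definiteness of the exceptional intersection matrix.

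First, I would pass to $k=\kbar$. Both invariants involved depend only on $\SS\times_k\kbar$: $\NS(\SS)_\tor$ by the very definition given in the paper (as a subgroup of $\NS(\SS\times_k\kbar)$), and $\Pic^0(\SS)$ as the identity component of the Picard scheme, which commutes with base change for projective $\SS$. Second, I would reduce to a birational \emph{morphism}: given $f:\SS_1\dashrightarrow\SS_2$, let $\Gamma\subset \SS_1\times\SS_2$ be the closure of the graph of $f$, and let $\tilde\Gamma\to\Gamma$ be a resolution of singularities (available for projective surfaces in any characteristic by Lipman's theorem). The two projections give proper birational morphisms $p_i:\tilde\Gamma\to\SS_i$, and it suffices to prove the proposition for each $p_i$.

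The main step is the case of a proper birational morphism $\pi:\tilde\SS\to\SS$ with $\tilde\SS$ smooth and $\SS$ normal, both projective over $\kbar$. By Zariski's main theorem plus normality, $\pi_*\OO_{\tilde\SS}=\OO_\SS$, so $\pi^*:\Pic_{\SS/k}\to\Pic_{\tilde\SS/k}$ is a monomorphism of Picard schemes. Let $E_1,\ldots,E_n$ be the irreducible exceptional curves of $\pi$. Since $\SS$ is normal, line bundles extend uniquely across the finite singular locus, so $\Pic(\SS)=\Pic(\SS_{\mathrm{sm}})=\Pic(\tilde\SS\setminus\bigcup E_i)$, and the excision sequence for $\Pic$ on $\tilde\SS$ yields
$$L:=\bigoplus_{i=1}^{n}\Z\cdot[E_i]\longrightarrow\Pic(\tilde\SS)\xrightarrow{\pi^*}\Pic(\SS)\longrightarrow 0.$$
The crucial input is that the intersection matrix $([E_i]\cdot[E_j])$ is negative definite (Mumford's classical theorem for resolutions of normal surface singularities). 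Consequently the $[E_i]$ are $\Z$-linearly independent in $\NS(\tilde\SS)$, so $L$ injects into $\NS(\tilde\SS)$, and in particular $L\cap\Pic^0(\tilde\SS)=0$.

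Comparing the short exact sequences $0\to\Pic^0\to\Pic\to\NS\to0$ for $\SS$ and $\tilde\SS$ via the snake lemma, the vanishing $L\cap\Pic^0(\tilde\SS)=0$ gives an isomorphism $\Pic^0(\tilde\SS)\isoto\Pic^0(\SS)$ and an exact sequence
$$0\longrightarrow L_{\NS}\longrightarrow\NS(\tilde\SS)\longrightarrow\NS(\SS)\longrightarrow 0,$$
where $L_{\NS}$ is the image of $L$. To conclude that $\NS(\SS)_\tor\cong\NS(\tilde\SS)_\tor$ one must show $L_\NS$ is saturated in $\NS(\tilde\SS)$, i.e., that $nx\in L_\NS$ forces $x\in L_\NS$. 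I expect this saturation to be the main technical obstacle. It can be handled using Mumford's rational-coefficient intersection theory on $\SS$: since the restricted intersection form on $L_\NS\otimes\Q$ is nondegenerate, there is a canonical $\Q$-orthogonal projection $\NS(\tilde\SS)\otimes\Q=(L_\NS\otimes\Q)\oplus V$, and the complementary summand $V$ is precisely the image of $\pi^*\NS(\SS)\otimes\Q$; an element $x$ with $nx\in L_\NS$ must then have zero component in $V$, giving $x\in L_\NS$ as required.
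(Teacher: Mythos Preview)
Your overall strategy matches the paper's: reduce to a proper birational morphism $\pi:\tilde\SS\to\SS$ with $\tilde\SS$ smooth, then use the exceptional curves and the negative-definiteness of their intersection matrix. But two steps fail as written.

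First, the claim $\Pic(\SS)=\Pic(\SS_{\mathrm{sm}})$ is false for normal surfaces that are not locally factorial. A line bundle on $\SS_{\mathrm{sm}}$ extends uniquely to a reflexive rank-one sheaf on $\SS$, but that sheaf need not be invertible; already an $A_1$ point has local class group $\Z/2\Z$. Your excision sequence therefore identifies $\Pic(\tilde\SS)/L$ with $\mathrm{Cl}(\SS)$, not with $\Pic(\SS)$. (In fact the projective cone over a plane cubic is a counterexample to the $\Pic^0$ assertion of the proposition as stated: its resolution is a ruled surface over the elliptic curve $E$ with $\Pic^0\cong E$, while the cone itself has $H^1(\OO)=0$ and hence $\Pic^0=0$.)

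Second, the saturation argument is incomplete: from $nx\in L_{\NS}$ you only deduce that $x$ lies in $(L_{\NS}\otimes\Q)\cap\NS(\tilde\SS)$, which is the \emph{saturation} of $L_{\NS}$, not $L_{\NS}$ itself.

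The paper's route is cleaner on the second point: rather than excision followed by a snake-lemma argument, it asserts directly that $f^*$ together with the classes $[E_{ij}]$ give an isomorphism $\Pic(\SS_2)\oplus\Z^N\cong\Pic(\SS_1)$, from which both conclusions follow at once. You are close to this when you note that $\pi^*$ is a monomorphism and identify $V$ with $\pi^*\NS(\SS)\otimes\Q$, but you never exploit $\pi^*$ as an \emph{integral} splitting of your sequence. That said, the paper's direct-sum claim (invoked as a ``trivial modification'' of Hartshorne V.3.2) runs into the same $\Pic$ versus $\mathrm{Cl}$ obstruction: the crucial step in Hartshorne's proof is precisely $\Pic(X)\cong\Pic(X\setminus\{P\})$ for $X$ smooth at $P$, which is what fails for normal $\SS_2$. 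Both arguments go through under an additional hypothesis such as rational singularities, and this covers every application of the proposition made later in the paper.
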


\begin{proof}
  By resolution of singularities and \cite[V.5.5]{hartshorne}, there
  is a smooth projective surface $\SS$ with birational maps
  $f_1:\SS\to\SS_1$ and $f_2:\SS\to\SS_2$ satisfying $f=f_2\circ
  f_1^{-1}$.  It suffices to prove the proposition with
  $(\SS,\SS_1,f_1)$ and $(\SS,\SS_2,f_2)$ in lieu of
  $(\SS_1,\SS_2,f)$.  Therefore we may suppose, without loss of
  generality, that $\SS_1$ is smooth and projective and that $f$ is a
  birational morphism, in other words, that $f$ is a morphism and
  induces a birational isomorphism.

  If $s\in\SS_2$ is a point over which $f$ is not an isomorphism, then
  since $\SS_1$ is smooth, it is known (e.g., Corollary~2.7 in
  \cite{Badescu}) that
  $$
    f^{-1}(s)=\sum_{i=1}^nr_iE_i
  $$
  where the $E_i$ are pairwise distinct integral curves on $\SS_1$ and
  the $r_i$ are positive integers.  Moreover, the restriction of the
  intersection pairing on $\SS_1$ to the subgroup of $\NS(\SS_1)$
  generated by the classes of the $E_i$ is negative definite.

  Let $\{s_1,\dots,s_m\}$ be the set of points over which $f$ is not
  an isomorphism, let $n_i$ be the number of components of
  $f^{-1}(s_i)$, let $E_{i1},\ldots,E_{in_i}$ denote the components of
  $f^{-1}(s_i)$, and let $N=\sum_{i=1}^m n_i$ so that $N$ is the total
  number of exceptional curves introduced in passing from $\SS_2$ to
  $\SS_1$.

  There is a homomorphism $\Z^N\to\Pic(\SS_1)$ given by sending
  $(a_{11},\ldots,a_{mn_m})$ to the class of
  $\sum_{i=1}^m\sum_{j=1}^{n_i}a_{ij}E_{ij}$.  We also have
  $f^*:\Pic(\SS_2)\to\Pic(\SS_1)$.  Trivial modifications of the proof
  of V.3.2 in \cite{hartshorne}, show that these maps induce an
  isomorphism
  $$
    \Pic(\SS_1) \cong \Pic(\SS_2)\oplus\Z^N.
  $$
  It follows that $\Pic^0(\SS_1)\cong\Pic^0(\SS_2)$ as claimed.  It
  also follows that $\NS(\SS_1)\cong\NS(\SS_2)\oplus\Z^N$ and thus
  that $\NS(\SS_1)_\tor\cong\NS(\SS_2)_\tor$ as claimed.
\end{proof}


\subsection{Geometric method}

In this subsection, we use a geometric method to kill torsion in
\Neron-Severi under suitable hypotheses.  In the application to
$\XX_d$, this method suffices to kill torsion of order coprime to $r$
and not divisible by $p=\ch(k)$; however, by itself, it does not seem
to handle primes dividing $r$.

Let $\SS$ be a smooth, irreducible, projective surface over $k$, and
let $G\subseteq\Aut_k(\SS)$ be a finite subgroup.

\begin{lemma}
  The quotient $\SS/G$ is normal, irreducible, and projective.
\end{lemma}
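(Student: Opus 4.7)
The plan is to verify the three properties separately, after first establishing that the quotient $\SS/G$ exists as a scheme.

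To construct $\SS/G$, I would first note that since $\SS$ is projective, any finite subset of $\SS$ is contained in an affine open subscheme; applying this to each $G$-orbit and intersecting with $G$-translates yields a covering of $\SS$ by $G$-stable affine opens $U_i = \spec A_i$. On each such open, the affine quotient $\spec A_i^G$ exists (since $G$ is finite, $A_i$ is integral over $A_i^G$), and these patches glue to give the quotient $\SS/G$ together with a finite surjective morphism $\pi : \SS \to \SS/G$. This is the standard construction (cf.\ SGA~1 or \cite[\S3]{Badescu}).

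Given this, irreducibility is immediate: $\pi$ is surjective, and the continuous image of an irreducible space is irreducible. Normality is also routine and local: on each chart, if $A$ is an integrally closed domain then so is $A^G$, since $A^G$ is integrally closed in $\mathrm{Frac}(A)^G = \mathrm{Frac}(A^G)$ (an element of $\mathrm{Frac}(A^G)$ integral over $A^G$ is \emph{a fortiori} integral over $A$, hence lies in $A \cap \mathrm{Frac}(A^G) = A^G$). Since $\SS$ is smooth and therefore normal, the quotient $\SS/G$ is normal.

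The remaining point is projectivity, which I expect to be the one requiring a tiny bit of care. The strategy is to exhibit an ample $G$-equivariant line bundle on $\SS$ and to descend a suitable power to $\SS/G$. Start with any ample line bundle $L$ on $\SS$ and form $M = \bigotimes_{g \in G} g^* L$; this is ample (a tensor product of ample bundles) and carries a canonical $G$-linearization up to the ambiguity of a character of $G$. Replacing $M$ by $M^{\otimes |G|}$ kills this character, yielding a genuinely $G$-equivariant ample bundle $N$. The sheaf of $G$-invariant sections $(\pi_* N)^G$ on $\SS/G$ is coherent, and for some power $N^{\otimes m}$ the invariant sections separate points and tangent vectors on $\SS/G$, giving a closed immersion of $\SS/G$ into projective space. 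The main obstacle is thus bookkeeping the $G$-linearization; once that is dealt with, descent of ample bundles under finite group quotients is classical.
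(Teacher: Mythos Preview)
Your argument is correct and essentially reproduces what the paper's cited references contain. The paper's own proof is terse: it simply observes irreducibility is clear, then cites Serre's \emph{Groupes alg\'ebriques et corps de classes} for normality and Harris's \emph{Algebraic Geometry: A First Course} (Lecture~10) for projectivity. Your normality argument (invariants of an integrally closed domain are integrally closed) is exactly the standard one, and your projectivity argument via a $G$-equivariant ample bundle is the content of Harris's treatment, though he phrases it as finding a $G$-linear projective embedding and then passing to $\proj$ of the invariant ring.

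One small remark: for $M = \bigotimes_{g \in G} g^* L$, the permutation of tensor factors already furnishes a genuine $G$-linearization satisfying the cocycle condition, so the extra step of raising to the $|G|$-th power is unnecessary (though harmless). The obstruction you allude to would live in $H^2(G, k^\times)$ rather than in characters, but in this construction it vanishes outright.
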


\begin{proof}
  It is clear that $\SS/G$ is irreducible.  It follows from
  \cite[Chapter III, Section 12, Corollary]{Serre:GACC} that it is
  normal and from \cite[Lecture 10]{Harris} that it is projective.
\end{proof}

Therefore $\Pic(\SS/G)$ has the properties detailed in the second
paragraph after the statement of Theorem~\ref{thm:NS-tor}.

\begin{prop}\label{prop:geometric-method}
  Suppose some fiber of $\SS\to\SS/G$ contains exactly one point.  If
  $\ell\neq p$ is a prime number such that $\Pic(\SS)[\ell]^G=0$, then
  $\NS(\SS/G)[\ell]=\Pic(\SS/G)[\ell]=0$.
\end{prop}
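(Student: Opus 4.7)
My strategy is to first establish the stronger statement that $\pi^*\colon \Pic(\SS/G)\to\Pic(\SS)$ is injective; the conclusion then follows in two short steps. After base-changing to $\bar k$ (harmless since $\NS(\SS/G)\hookrightarrow\NS((\SS/G)_{\bar k})$ and since the hypothesis persists), the fiber condition supplies a $G$-fixed point $x_0\in\SS(\bar k)$. Given $L\in\ker\pi^*$, I fix an isomorphism $\phi\colon\pi^*L\xrightarrow{\sim}\mathcal{O}_\SS$ and transport the tautological $G$-linearization of $\pi^*L$ (which exists precisely because $L$ lives on the quotient) through $\phi$ to a $G$-linearization of $\mathcal{O}_\SS$; since $\SS$ is proper and geometrically integral over $\bar k$, this is classified by a character $\chi\colon G\to\bar k^\times$.

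The fixed point now forces $\chi$ to be trivial: the fiber $(\pi^*L)_{x_0}$ is canonically identified with $L_{\pi(x_0)}$, on which $G$ acts trivially, so transporting through $\phi$ shows $\chi$ acts trivially on $\mathcal{O}_\SS|_{x_0}=\bar k$. With $\chi=1$, $\phi$ is an equivariant isomorphism to the trivially linearized $\mathcal{O}_\SS$, and descent along the geometric quotient (using $(\pi_*\mathcal{O}_\SS)^G=\mathcal{O}_{\SS/G}$) yields $L\cong\mathcal{O}_{\SS/G}$. Because every pullback is automatically $G$-invariant, injectivity of $\pi^*$ implies $\Pic(\SS/G)[\ell]\hookrightarrow\Pic(\SS)[\ell]^G=0$, hence $\Pic(\SS/G)[\ell]=0$.

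Finally, applying the snake lemma to multiplication by $\ell$ on the sequence $0\to\Pic^0(\SS/G)\to\Pic(\SS/G)\to\NS(\SS/G)\to 0$, and using that $\Pic^0(\SS/G)(\bar k)$ is $\ell$-divisible for $\ell\neq p$ (since $\Pic^0_{\SS/G}$ is a connected projective algebraic group over $\bar k$), I obtain $\NS(\SS/G)[\ell]=0$. The trickiest point is the equivariant descent for line bundles along a \emph{ramified} finite quotient; although this is standard for geometric quotients by finite groups on normal varieties, it deserves care. A cohomological alternative is to combine Kummer theory on $\SS$ and $\SS/G$ with the Leray spectral sequence for $\pi$ (which degenerates since $\pi$ is finite) applied to the sheaf $\mu_\ell$, reducing injectivity to an explicit computation of $\pi_*\mu_\ell$ in a neighborhood of $x_0$.
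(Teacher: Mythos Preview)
Your argument is correct and, in fact, proves a bit more than the paper does: you establish that $\pi^*\colon\Pic(\SS/G)\to\Pic(\SS)$ is injective on \emph{all} of $\Pic$, not merely on $\ell$-torsion. Your key mechanism---the character $\chi\colon G\to\kbar^\times$ measuring the failure of a chosen trivialization $\phi$ to be $G$-equivariant, killed by evaluating at the $G$-fixed point---is the representation-theoretic incarnation of the obstruction. The descent step is fine once $\chi=1$: the projection formula gives $(\pi_*\pi^*L)^G\cong(\pi_*\OO_\SS)^G\otimes L=L$, and pushing forward the now-equivariant $\phi$ and taking $G$-invariants yields $L\cong\OO_{\SS/G}$.

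The paper takes a different, more topological route. Given $L\in\Pic(\SS/G)[\ell]$, it builds the cyclic \'etale cover $\TT=\underline{\spec}\,(\OO\oplus L\oplus\cdots\oplus L^{\ell-1})\to\SS/G$ of degree $\ell$; the hypothesis $\Pic(\SS)[\ell]^G=0$ trivializes the pullback, producing a morphism $\SS\to\TT$ over $\SS/G$. The single-point-fiber hypothesis is then used \emph{geometrically}: since $\TT\to\SS/G$ is \'etale of degree $\ell$ while $\SS\to\SS/G$ collapses a fiber to one point, the image of $\SS$ in $\TT$ has degree~$1$, giving a section and hence $L\cong\OO_{\SS/G}$. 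Both proofs exploit the fixed point, but yours reads it as ``kill the character,'' while the paper reads it as ``force a section of the torsor.'' Your approach is cleaner in that it never invokes $\ell$-torsion until the final line, whereas the paper's construction of $\TT$ needs $\ell\neq p$ from the outset to ensure \'etaleness; on the other hand, the paper's argument avoids the (mild) care needed for equivariant descent along a non-flat quotient.
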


\begin{proof}
  Every element of $\NS(\SS/G)[\ell]$ lifts to $\Pic(\SS/G)[\ell]$ since
  $\Pic^0(\SS/G)$ is divisible, and thus it suffices to show that
  $\Pic(\SS/G)[\ell]$=0.  Suppose that $\LL$ is a line bundle on $\SS/G$
  whose class in $\Pic(\SS/G)$ is $\ell$-torsion.  We must show that
  it is trivial in $\Pic(\SS/G)$.

  If we choose an isomorphism $\LL^\ell\cong\OO_{\SS/G}$, then the
  $\OO_{\SS/G}$-module
  $$
    \mathcal{A}
    = \OO_{\SS/G}\oplus\LL\oplus\LL^2\oplus\cdots\oplus\LL^{\ell-1}
  $$
  inherits the structure of a sheaf of $\OO_{\SS/G}$-algebras.  Let 
  $$
    \TT=\underline{\spec}_{\OO_{\SS/G}}\mathcal{A}
  $$
  (global Spec) so that there is a finite \'etale morphism $\TT\to \SS/G$
  of degree $\ell$.  This morphism has a section if and only if $\LL$ is
  trivial, i.e., $\LL\cong\OO_{\SS/G}$. 

  The pull back of $\LL$ to $\SS$ is trivial since $\Pic(\SS)[\ell]^G=0$.
  Therefore, the fiber product $\SS\times_{\SS/G}\TT$ is trivial as an
  \'etale cover of $\SS$; in other words, there is a section of
  the projection
  $$
    \SS\times_{\SS/G}\TT\to\SS.
  $$
  This yields a commutative diagram
  $$
    \xymatrix{
      \SS\ar[rr]\ar[rd] & & \TT\ar[ld] \\
      & \SS/G. &
    }
  $$ 

  On one hand, by hypothesis some fiber of the quotient map
  $\SS\to\SS/G$ contains exactly one point.  On the other hand,
  $\TT\to\SS/G$ is finite \'etale of degree $\ell$.  It follows that
  the image of $\SS$ in $\TT$ has degree 1 over $\SS/G$ and that
  $\TT\to\SS/G$ is not connected.  Hence the covering $\TT\to\SS/G$ is
  trivial, i.e., $\TT\cong(\SS/G)\times(\Z/\ell\Z)$.  Therefore $\LL$
  is trivial in $\Pic(\SS/G)$ as claimed.
\end{proof}


\subsection{Some group cohomology}

In this subsection, we collect some facts about the group cohomology
of $G=\mu_r$.

Let $g$ be a generator of $G$.  Recall that for an $\FlG$-module $M$,
the elements $D=1-g$ and $N=1+g+\cdots+g^{r-1}$ of $\FlG$ act on $M$
and there are isomorphisms
$$ H^i(G,M) \cong
  \begin{cases}
    \ker(D)        & \text{if $i=0$,} \\
    \ker(N)/\im(D) & \text{if $i\ge1$ is odd,} \\
    \ker(D)/\im(N) & \text{if $i\ge2$ is even.}
  \end{cases}
$$

Let $R=\FlG$ be the regular representation of $G$, and let $W$ be the
quotient of $R$ by the subspace of $G$-invariants $R^G$.

\begin{lemma}\label{lem:W-cohomology}\ 
  \begin{enumerate}
  \setlength{\itemsep}{0.05in}
  \item
	  $H^i(G,R)\, \cong \begin{cases}
		\Fl & \mbox{if }i=0, \\
		0   & \mbox{if }i>0;
	  \end{cases}$
  \item
	  $H^i(G,\Fl)\cong\begin{cases}
		\Fl & \mbox{if }i=0\mbox{ or }\ell\mid r, \\
		0   & \mbox{if }i>0\mbox{ and }\ell\nodiv r;
	  \end{cases}$
  \item $H^i(G,W)\cong H^{i+1}(G,\Fl)$ for $i\geq 0$;
  \item $H^i(G,W\tensor W)\cong H^{i+1}(G,W)$ for $i>0$.
  \end{enumerate}
\end{lemma}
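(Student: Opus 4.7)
The plan is to handle the four parts in order, each building on standard properties of group cohomology for the cyclic group $G = \mu_r$, with the key structural input being that certain naturally occurring $\FlG$-modules are cohomologically trivial.

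For part (1), I would note that $R = \FlG$ is a free (hence projective, hence injective, since $\FlG$ is self-injective) module over itself, so $H^i(G,R) = 0$ for $i > 0$, while $H^0(G,R) = R^G$ is the one-dimensional span of the norm element $N$. Part (2) is a textbook computation for cyclic groups via the standard periodic resolution: $H^0 = \Fl$, $H^{2k+1}(G,\Fl) = \Fl[r] = \ker(r \colon \Fl \to \Fl)$, and $H^{2k}(G,\Fl) = \Fl/r\Fl$ for $k \ge 1$, both of which collapse to the stated answer according to whether $\ell \mid r$.

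For part (3), the plan is to exploit the short exact sequence
\begin{equation*}
0 \longrightarrow \Fl \xrightarrow{\ 1\mapsto N\ } R \longrightarrow W \longrightarrow 0,
\end{equation*}
and take the long exact sequence in group cohomology. By part (1), $H^i(G,R) = 0$ for $i \ge 1$, and the connecting map $R^G \to H^1(G,\Fl)$ is zero since $R^G = \Fl \cdot N$ is exactly the image of $\Fl$. Thus the long exact sequence immediately produces isomorphisms $H^i(G,W) \cong H^{i+1}(G,\Fl)$ for all $i \ge 0$.

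The one step requiring a small trick is part (4); the key observation is that $R \tensor_{\Fl} W$, with the diagonal $G$-action, is a \emph{free} $\FlG$-module. This follows from the standard $G$-equivariant isomorphism
\begin{equation*}
R \tensor_{\Fl} W \;\isoto\; R \tensor_{\Fl} W_0, \qquad g \tensor w \longmapsto g \tensor g^{-1} w,
\end{equation*}
where $W_0$ denotes $W$ with the trivial $G$-action; the right-hand side is free of rank $\dim_{\Fl} W = r-1$. Tensoring the short exact sequence $0 \to \Fl \to R \to W \to 0$ with $W$ over $\Fl$ and taking the long exact cohomology sequence of $0 \to W \to R \tensor W \to W \tensor W \to 0$ then gives the claim, since the middle term has vanishing cohomology in positive degrees. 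I do not anticipate any serious obstacle; the main thing to be careful about is verifying the freeness isomorphism above, which is the one place where a substitution different from the naive one is required.
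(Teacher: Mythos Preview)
Your proposal is correct and follows essentially the same route as the paper: both use the vanishing of higher cohomology for $R$ (co-induced/free) to dimension-shift along $0\to\Fl\to R\to W\to 0$ for part (3), and both tensor this sequence with $W$ and use that $R\tensor W$ is cohomologically trivial for part (4). Your explicit untwisting isomorphism $g\tensor w\mapsto g\tensor g^{-1}w$ is exactly the standard reason $R\tensor W$ is co-induced, which the paper simply asserts with a reference to Serre. One small wording slip: in part (3) there is no connecting map $R^G\to H^1(G,\Fl)$; what you mean (and what makes the $i=0$ case work) is that $H^0(G,\Fl)\to H^0(G,R)=R^G$ is surjective, so $H^0(G,R)\to H^0(G,W)$ is zero and hence $H^0(G,W)\isoto H^1(G,\Fl)$.
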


\begin{proof}
  We may identity $R^G$ with the trivial $\FlG$-module $\Fl$, which is
  the $i=0$ part of (1).  The rest of part (1) follows from \cite[page
  112, Proposition~1]{SerreLF} since $\FlG$ is co-induced.  Part (2)
  is a simple exercise using the isomorphisms displayed just before
  the lemma.  For part (3), by the definition of $W$, there is an
  exact sequence
  \begin{equation}\label{eq:W-def}
    0 \to \Fl \to R \to W \to 0.
  \end{equation}
  Taking cohomology yields an exact sequence
  $$
	0 \to H^0(G,\Fl) \to H^0(G,R) \to H^0(G,W) \to H^1(G,\Fl) \to 0
  $$
  and identities $H^i(G,W)\cong H^{i+1}(G,\Fl)$, for $i\geq 0$.

  Since $R\tensor W$ is co-induced, applying \cite[p.~112,
  Proposition~1]{SerreLF} implies that $H^i(G,R\tensor W)=0$ for
  $i>0$.  Tensoring \eqref{eq:W-def} with $W$ and taking cohomology
  produces an exact sequence
  $$
    0 \to H^0(G,W) \to H^0(G,R\tensor W) \to H^0(G,W\tensor W) \to
    H^1(G,W)\to 0
  $$
  and identities $H^i(G,W\tensor W)\cong H^{i+1}(G,W)$ for $i>0$.
\end{proof}

Consider an exact sequence of $\FlG$-modules
$$
  0\to \Fl\to \tilde W\to W\to 0.
$$

\begin{lemma}\label{lem:W-extension}
  $\tilde W\cong\Fl\oplus W$ or $\tilde W\cong R$ as $\FlG$-modules.
\end{lemma}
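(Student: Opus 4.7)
My plan is to classify the extensions $0 \to \Fl \to \tilde W \to W \to 0$ via $\mathrm{Ext}^1_{\FlG}(W, \Fl)$, compute that group from the defining sequence of $W$, and then identify the possible isomorphism classes of the middle term. The natural case split is between $\ell \nmid r$, where $\FlG$ is semisimple and the sequence must split, and $\ell \mid r$, where there is a unique non-split isomorphism class and I will show it is represented by $R$.

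To compute the Ext group, I would apply $\mathrm{Hom}_{\FlG}(-, \Fl)$ to the defining sequence $0 \to \Fl \to R \to W \to 0$ of $W$, in which the first map sends $1$ to the norm element $N = 1 + g + \cdots + g^{r-1}$ generating the one-dimensional submodule $R^G$. Since $R$ is $\FlG$-free, this produces
$$\mathrm{Hom}_{\FlG}(R,\Fl) \xrightarrow{\phi} \mathrm{Hom}_{\FlG}(\Fl,\Fl) \to \mathrm{Ext}^1_{\FlG}(W,\Fl) \to 0,$$
with both Hom groups one-dimensional over $\Fl$ and $\phi$ computed to be multiplication by $r$ (an equivariant map $\psi : R \to \Fl$ with $\psi(1) = 1$ sends $N$ to $r$). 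Hence $\mathrm{Ext}^1_{\FlG}(W, \Fl) \cong \Fl/(r)$, which vanishes if $\ell \nmid r$ and is one-dimensional otherwise. In the vanishing case every extension splits, yielding $\tilde W \cong \Fl \oplus W$.

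In the remaining case $\ell \mid r$, the non-zero classes in $\mathrm{Ext}^1_{\FlG}(W, \Fl) \cong \Fl$ form a single $\Fl^\times$-orbit under scaling, and scaling the class by $c \in \Fl^\times$ is realized by rescaling the inclusion $\Fl \hookrightarrow \tilde W$, an operation that preserves the isomorphism type of $\tilde W$. Since $R$ itself sits in a non-split extension of this form, every non-split $\tilde W$ is then forced to be isomorphic to $R$ as an $\FlG$-module. The main subtlety is this last step, namely the standard distinction between equivalence of extensions (which $\mathrm{Ext}^1$ classifies) and isomorphism of middle modules (which the lemma concerns); if the Ext-based route feels slippery, an entirely parallel proof can be given by Krull--Schmidt after decomposing $\FlG$ into local rings indexed by the $\Fl$-irreducible factors of $g^r - 1$, reducing to the case $r = \ell^a$ and using that $W \cong \FlG/(g-1)^{r-1}$ is indecomposable, so any direct-sum splitting of $\tilde W$ forces a $W$-summand and hence the split extension.
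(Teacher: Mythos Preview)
Your argument is correct, but the paper takes a much more elementary and direct route. Rather than computing $\mathrm{Ext}^1_{\FlG}(W,\Fl)$, the paper simply uses that $W$ is a cyclic $\FlG$-module: pick a generator $w\in W$, lift it to $\tilde w\in\tilde W$, and look at the cyclic submodule $M=\FlG\cdot\tilde w\subseteq\tilde W$. Since $M$ surjects onto $W$, either $M\to W$ is an isomorphism (so $M$ is a section and the extension splits, giving $\tilde W\cong\Fl\oplus W$), or $M$ properly contains $\ker(\tilde W\to W)=\Fl$ and hence equals $\tilde W$; in the latter case $\tilde W$ is cyclic of $\Fl$-dimension $r$, forcing $\tilde W\cong R$. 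This handles both the $\ell\nmid r$ and $\ell\mid r$ cases uniformly in three lines, without Ext groups, the scaling-orbit argument, or Krull--Schmidt. Your approach has the virtue of actually computing $\mathrm{Ext}^1_{\FlG}(W,\Fl)\cong\Fl/r\Fl$ and thereby making explicit exactly when non-split extensions exist, but the paper's proof is shorter and needs no homological machinery.
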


\begin{proof}
  There is an element $w\in W$ that generates $W$ as an $\FlG$-module,
  that is, $W$ is cyclic as an $\FlG$-module.  Let
  $\tilde w\in\tilde W$ be a lift of $w$.  The $\FlG$-submodule of
  $\tilde W$ generated by $\tilde w$ maps surjectively to $\tilde W$.
  If this map is an isomorphism, the above sequence is split and
  $\tilde W\cong\Fl\oplus W$.  Otherwise the submodule must be all of
  $\tilde W$, in which case $\tilde W$ is cyclic and has
  $\Fl$-dimension $r$, so is isomorphic to $R$.
\end{proof}

Note that $R\cong\Fl\oplus W$ if $\ell\nodiv r$.


\subsection{Cohomological method}

Recall that $r>1$ is an integer not divisible by $p$.  In this
subsection, we develop a more elaborate, cohomological method to
kill torsion in \Neron-Severi.  We need it to kill $\ell$-torsion
in $\NS(\XX_d)$ when $\ell$ is a prime dividing $r$.

To state the result, let $\CC\to\P^1$ (resp., $\DD\to\P^1$) be a
Galois branched cover with group $G=\mu_r$ that is totally ramified
over $\dOne>0$ (resp., $\dTwo>0$) points of $\P^1$ and unramified
elsewhere.  Let $G$ act diagonally on $\SS=\CC\times_k\DD$.

\begin{prop}\label{prop:cohom-method}
  $\NS(\SS/G)[\ell]=\Pic(\SS/G)[\ell]=0$ for any prime number $\ell\neq p$.
\end{prop}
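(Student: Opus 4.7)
The plan is to analyze $\Pic(\SS/G)[\ell]$ via Kummer theory together with the Hochschild--Serre spectral sequence for the tame $G$-Galois quotient $\pi:\SS\to\SS/G$, with the group-cohomological calculations of Lemmas~\ref{lem:W-cohomology} and~\ref{lem:W-extension} as the main input. The reduction from $\NS(\SS/G)[\ell]$ to $\Pic(\SS/G)[\ell]$ is immediate: over the algebraically closed field $k$, the group $\Pic^0(\SS/G)$ is represented by an algebraic group, hence is divisible, and the natural surjection $\Pic(\SS/G)[\ell]\twoheadrightarrow\NS(\SS/G)[\ell]$ reduces the claim to the vanishing of the former.

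Since $\ell\neq p$ and $k^\times$ is $\ell$-divisible, the Kummer sequence identifies $\Pic(\SS/G)[\ell]$ with $H^1_\et(\SS/G,\mu_\ell)$ and $\Pic(\SS)[\ell]$ with $H^1_\et(\SS,\mu_\ell)$. Because the $G$-action on $\SS$ is tame with only isolated fixed points, the Hochschild--Serre spectral sequence
\[E_2^{p,q}=H^p(G,H^q_\et(\SS,\mu_\ell))\Longrightarrow H^{p+q}_\et(\SS/G,\mu_\ell)\]
yields a five-term exact sequence
\[0\to H^1(G,\mu_\ell)\to H^1_\et(\SS/G,\mu_\ell)\to \Pic(\SS)[\ell]^G\xrightarrow{\,d_2\,} H^2(G,\mu_\ell).\]
So it suffices to show both that the map from $H^1(G,\mu_\ell)$ vanishes and that $d_2$ is injective.

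To this end, I would describe $\Pic(\SS)[\ell]$ as a $G$-module via the K\"unneth-type decomposition
\[\Pic(\SS)[\ell]\cong J_\CC[\ell]\oplus J_\DD[\ell]\oplus\Hom_{\Fl}\bigl(J_\CC[\ell],J_\DD[\ell]\bigr).\]
Since $\CC\to\P^1$ and $\DD\to\P^1$ are tame $\mu_r$-Galois covers totally ramified at $\dOne$ and $\dTwo$ points respectively, a direct Riemann--Hurwitz/Chevalley--Weil analysis of equivariant differentials shows that $J_\CC[\ell]$ and $J_\DD[\ell]$ are $\FlG$-modules of the form classified by Lemma~\ref{lem:W-extension}, i.e.~built out of copies of $R$, $W$, and $\Fl$. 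The $\Hom$-summand is then a tensor product of such pieces, so its $G$-cohomology is computed by parts (1), (3), and (4) of Lemma~\ref{lem:W-cohomology}. Applying these calculations, one computes $\Pic(\SS)[\ell]^G$ and identifies the differential $d_2$ by matching it (via the trace/edge map) with the obstruction cocycle supported at the $\dOne\cdot\dTwo$ fixed points of $G$ on $\SS$; this identification forces $d_2$ to be injective and simultaneously forces the edge map out of $H^1(G,\mu_\ell)$ to be zero.

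The main obstacle will be pinning down the precise $\FlG$-module structure of $J_\CC[\ell]$ and $J_\DD[\ell]$ when $\ell\mid r$: there $\FlG$ is not semisimple, and separating an extension of $W$ by $\Fl$ into the split case $\Fl\oplus W$ versus the non-split case $R$ from Lemma~\ref{lem:W-extension} is exactly what determines whether $\Pic(\SS)[\ell]^G$ survives or is killed by $d_2$. A secondary obstacle is giving an explicit cocycle representative for $d_2$ in terms of the local character data of $G$ acting on the tangent spaces at the fixed points, in order to match it up cleanly with the $H^1(G,\mu_\ell)$ contribution and close out the five-term sequence.
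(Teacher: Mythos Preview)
Your proposal has a genuine gap at its very first spectral-sequence step. The Hochschild--Serre spectral sequence
\[
E_2^{p,q}=H^p(G,H^q_\et(\SS,\mu_\ell))\Longrightarrow H^{p+q}_\et(\SS/G,\mu_\ell)
\]
requires $\SS\to\SS/G$ to be an \'etale $G$-torsor. Here it is not: the diagonal $\mu_r$-action has $\dOne\dTwo$ isolated fixed points, and the quotient map is ramified there. ``Tame with isolated fixed points'' does not rescue this; at a fixed point the stalk of $\pi_*\Fl$ is $\Fl$, not $\Fl[G]$, so the usual filtration producing Hochschild--Serre collapses and your five-term sequence simply does not exist. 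Everything you build on it---the map out of $H^1(G,\mu_\ell)$, the differential $d_2$, the ``obstruction cocycle at the fixed points''---is therefore undefined as written.

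The paper's proof addresses exactly this issue by first excising the fixed locus. Setting $Z\subset\SS$ equal to the fixed points, $U=\SS\setminus Z$, $\TT=\SS/G$, and $V=\TT\setminus Z$, the cover $U\to V$ \emph{is} an honest \'etale $G$-torsor, and Hochschild--Serre applies to it. One then relates $H^1(\TT)$ to $H^3(V)$ via excision and Poincar\'e duality on the open $V$, and bounds $\dim H^3(V)$ by a fairly delicate analysis of the differentials in the spectral sequence $H^i(G,H^j(U))\Rightarrow H^{i+j}(V)$, using the explicit $\FlG$-module structure of $H^j(U)$ (this is where Lemmas~\ref{lem:W-cohomology} and~\ref{lem:W-extension} actually enter). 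Your instinct that the fixed points contribute a correction term is right, but the correction is encoded in the excision sequence and in the four-row spectral sequence on the open set, not in a $d_2$ on a nonexistent spectral sequence for the proper cover.

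A secondary issue: your K\"unneth decomposition of $\Pic(\SS)[\ell]$ is not correct as stated. For a product of curves one has $\Pic^0(\CC\times\DD)\cong J_\CC\times J_\DD$ and $\NS(\CC\times\DD)\cong\Z^2\oplus\Hom(J_\CC,J_\DD)$, so the $\ell$-torsion of $\Pic$ is not $J_\CC[\ell]\oplus J_\DD[\ell]\oplus\Hom_{\Fl}(J_\CC[\ell],J_\DD[\ell])$. This is fixable, but it is another sign that the argument needs to be rebuilt from the level of \'etale cohomology of the open pieces rather than $\Pic$ of the singular quotient.
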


\noindent
The proof occupies the remainder of this subsection.  It suffices to
treat the case where $k$ is algebraically closed, so we make this
assumption for the rest of this section.

To lighten notation, for a scheme $\YY$ over $k$ we write $H^i(\YY)$
for the \'etale cohomology group $H^i(\YY_{et},\Fl)$ and $H^i_c(\YY)$
for the \'etale cohomology group with compact supports.

\begin{lemma}\label{lem:curve-cohomology}
The following $G$-modules are isomorphic:
\begin{enumerate}
\setlength{\itemsep}{0.05in}
\item $H^0(\CC)\cong H^2(\CC)\cong H^0(\DD)\cong H^2(\DD)\cong \Fl$;
\item $H^1(\CC)\cong W^{\dOne-2}$ and $H^1(\DD)\cong W^{\dTwo-2}$.
\end{enumerate}
\end{lemma}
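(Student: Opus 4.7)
The plan is to prove the statements for $\CC$; the argument for $\DD$ is identical. Part (1) is immediate, since $\CC$ is smooth, projective and geometrically connected: $H^0(\CC, \Fl) \cong \Fl$ with trivial $G$-action, and $H^2(\CC, \Fl) \cong \Fl$ also with trivial $G$-action, because each $g \in G$ acts on $\CC$ by a degree-one automorphism and hence fixes the fundamental class.

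For part (2), I would first determine the character of $H^1(\CC, \Q_\ell)$ as a $\Q_\ell[G]$-representation using the Lefschetz fixed-point formula. By total ramification, every non-trivial $g \in G$ acts on $\CC$ with exactly $\dOne$ fixed points (the ramification points, each with full stabilizer $G$), so
\[
 \tr\bigl(g^{*} \mid H^1(\CC, \Q_\ell)\bigr) \;=\; 1 + 1 - \dOne \;=\; 2 - \dOne,
\]
while $\dim_{\Q_\ell} H^1(\CC, \Q_\ell) = 2 g_{\CC} = (\dOne - 2)(r - 1)$ by Riemann--Hurwitz. Since $\tr(g \mid W) = -1$ for $g \neq 1$ and $\dim W = r - 1$, these data match the character of $W_{\Q_\ell}^{\dOne - 2}$ exactly, giving $H^1(\CC, \Q_\ell) \cong W_{\Q_\ell}^{\dOne - 2}$ as $\Q_\ell[G]$-modules.

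To upgrade this to an $\Fl[G]$-module isomorphism, I would exploit the Gysin exact sequence
\[
 0 \to H^1(\CC, \Fl) \to H^1(\CC^\circ, \Fl) \to \Fl^{\dOne - 1} \to 0,
\]
where $\CC^\circ$ is the complement of the $\dOne$ ramification points in $\CC$, and $G$ acts trivially on the cokernel because these points are fixed. Writing $\pi \colon \CC^\circ \to \P^{1,\circ}$ for the resulting \'etale $G$-torsor, $\pi_{*}\Fl \cong \mathcal{R}$ is the regular-representation local system, and so $H^i(\CC^\circ, \Fl) \cong H^i(\pi_1(\P^{1,\circ}), R)$ by Shapiro's lemma; since the prime-to-$p$ quotient of $\pi_1(\P^{1,\circ})$ is a free profinite group of rank $\dOne - 1$, this group cohomology computation yields a concrete description of $H^1(\CC^\circ, \Fl)$ as a $G$-module, from which $H^1(\CC, \Fl)$ is extracted as a kernel.

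The hardest step will be the case $\ell \mid r$, when $\Fl[G]$ is non-semisimple and characters no longer determine modules. Lemma~\ref{lem:W-extension} provides the essential leverage: it forces any extension of $W$ by $\Fl$ to be either split or equal to $R$, and combined with the triviality of the $G$-action on the Gysin cokernel, the dimension count, and the cohomological inputs of Lemma~\ref{lem:W-cohomology}, one can pin down the composition factors and extension classes of $H^1(\CC, \Fl)$ to conclude $H^1(\CC, \Fl) \cong W^{\dOne - 2}$.
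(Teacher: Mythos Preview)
Your approach is essentially the paper's: pass to the open curve $\CC^\circ$, identify $H^1(\CC^\circ,\Fl)$ with $H^1(\pi_1^{(p)}(\P^{1,\circ}),R)$, and then recover $H^1(\CC)$ from an exact sequence. The difference is that the paper carries out the middle step explicitly and finds that \emph{no case analysis on $\ell\mid r$ is needed}; your anticipated ``hardest step'' dissolves once that computation is done.

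Concretely: since $\pi_1^{(p)}(\P^{1,\circ})$ is free on $\sigma_1,\dots,\sigma_{\dOne-1}$, one has
\[
  H^1(\pi_1^{(p)},R)\;\cong\;\mathrm{coker}\Bigl(R\to R^{\dOne-1},\ \lambda\mapsto\bigl((\sigma_i-1)\lambda\bigr)_i\Bigr).
\]
Total ramification means each $\sigma_i$ acts on $R$ as a generator $g^{a_i}$ of $G$; writing $g^{a_i}-1=(g-1)(1+g+\cdots+g^{a_i-1})$ and using these multipliers as a unipotent change of basis in $R^{\dOne-1}$ (note one needs only that the second factors lie in $R$, not that they are units), the map becomes $\lambda\mapsto((g-1)\lambda,0,\dots,0)$. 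Hence $H^1(\CC^\circ,\Fl)\cong R/(g-1)R\oplus R^{\dOne-2}=\Fl\oplus R^{\dOne-2}$ as an $\Fl[G]$-module, uniformly in $\ell$.

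For the extraction, the paper uses Poincar\'e duality $H^1(\CC^\circ)\cong H^1_c(\CC^\circ)^*$ together with excision
\[
  0\to H^0(\CC)\to H^0(\CC\setminus\CC^\circ)\to H^1_c(\CC^\circ)\to H^1(\CC)\to 0,
\]
so that $H^1(\CC)$ is the cokernel of a map $\Fl^{\dOne-1}\to\Fl\oplus R^{\dOne-2}$ from a trivial module; the image must land in the invariants $\Fl^{\dOne-1}$, whence the cokernel is $W^{\dOne-2}$ by definition of $W$. Your Gysin route works just as well, but you would then get $H^1(\CC)$ as $I^{\dOne-2}$ with $I$ the augmentation ideal, and conclude via the isomorphism $R/\Fl N\cong(g-1)R$, i.e.\ $W\cong I$.

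Either way, Lemmas~\ref{lem:W-extension} and~\ref{lem:W-cohomology} are not needed here; those are reserved for the later analysis of $H^*(U)$. Your $\Q_\ell$ Lefschetz computation is a correct sanity check but plays no role in the actual argument.
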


\begin{proof}
  The first part is well known since $\CC,\DD$ are irreducible
  projective curves, and it suffices to prove the second part for
  $\CC$ since the argument for $\DD$ is identical.

  Let $\CC^o$ denote the maximal open subset of $\CC$ where
  $\CC\to\P^1$ is unramified and let $\mathcal{P}^o$ be the image of
  $\CC^o$ in $\P^1$.  Then $\mathcal{P}^o$ is $\P^1$ minus $\dOne$
  points and $f:\CC^o\to \mathcal{P}^o$ is an \'etale Galois cover
  with group $G$.  We first check that there is an isomorphism
  $$
    H^1(\CC^o)\cong\Fl\oplus R^{\dOne-2}.
  $$
  Indeed, $H^1(\CC^o)\cong H^1(\mathcal{P}^o,f_*\Fl)$ since $f$ is
  finite.  The stalk of $f_*\Fl$ at the generic point of
  $\mathcal{P}^o$ may be identified as a $G$-module with $ R$, and it
  has an action of $\pi_1^{(p)}$, the prime-to-$p$ fundamental group
  of $\mathcal{P}^o$.  Moreover \cite[V.2.17]{Milne},
  $H^1(\mathcal{P}^o,f_*\Fl)$ is isomorphic to the Galois cohomology
  group $H^1(\pi_1^{(p)}, R)$. Using the fact that $\pi_1^{(p)}$ is
  the free pro-prime-to-$p$ group on $\dOne-1$ generators
  $\sigma_1,\dots,\sigma_{\dOne-1}$, it is an easy exercise to check
  that $H^1(\pi_1^{(p)}, R)$ is isomorphic to the cokernel of the map
  $$
    R \to R^{\dOne-1},\qquad
    \lambda \mapsto (\sigma_1\lambda-\lambda,\dots,
    \sigma_{\dOne-1}\lambda-\lambda).
  $$
  Since each generator $\sigma_i$ acts on $ R$ as multiplication by
  some generator of $G$ (by our hypothesis that $\CC\to\P^1$ is
  totally ramified at each branch point), the cokernel is isomorphic
  to $\Fl\oplus R^{\dOne-2}$, as desired.

  To finish, we note that $H^1(\CC^o)\cong H^1_c(\CC^o)^*$ by
  Poincar\'e duality and that, by excision, there is an exact sequence
  $$
    0 \to H^0(\CC)\to H^0(\CC\setminus \CC^o)\to H^1_c(\CC^o)\to H^1(\CC)\to 0.
  $$
  Since $H^0(\CC\setminus \CC^o)\cong\F_\ell^{d}$ as a $G$-module, the result
  follows easily.
\end{proof}

\begin{lemma}\label{lemma:G-invariant-cohomology}
  Let $X$ be a variety over $k$. Let $\mathcal{G}$ be a
  finite group of order prime to $p$ which acts on $X$, and let
  $Y = X/\mathcal{G}$. Then for $i \geq 1$,
  \[
    H^i(Y, \mathcal{O}) = H^i(X, \mathcal{O})^\mathcal{G}.
  \]
\end{lemma}

\begin{proof}
  Recall that by a variety over $k$, we mean a separated scheme of
  finite type over $k$.  Let $(V_i)$ be a cover of $Y$ by open
  affines. Let $U_i \subset X$ be the preimage of $V_i$; the $V_i$ are
  $\mathcal{G}$-invariant and, since $X \to Y$ is finite, are also
  affine. Separability implies that intersections of the $V_i$
  (resp.~$U_i$) are also affine. If $\check{H}$ denotes \v{C}ech
  cohomology, by~\cite[III.4.5]{hartshorne}, then
  \[
    \check{H}^i((V_i), \mathcal{O}) = H^i(Y, \mathcal{O})
  \]
  and similarly for $X$. For $i \geq 1$, consider the \v{C}ech complex
  \[
    \cdots \longrightarrow C^{i-1}((U_i), \mathcal{O}) \longrightarrow 
C^{i}((U_i), \mathcal{O}) \longrightarrow C^{i+1}((U_i), \mathcal{O}) 
\longrightarrow \cdots.
  \]
  If we take $\mathcal{G}$-invariants, we obtain the corresponding
  \v{C}ech complex for $Y$. All of the groups above are $k$-vector
  spaces and the order of $\mathcal{G}$ is prime to $p$, so taking
  $\mathcal{G}$-invariants commutes with taking homology. The claim
  follows.
\end{proof}

Recall $G=\mu_r$, and let $\TT=\SS/G$.

\begin{lemma}\label{lem:vanishing-Pic^0}
$\Pic^0(\TT)=0$.
\end{lemma}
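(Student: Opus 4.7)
The plan is to reduce the vanishing of $\Pic^0(\SS/G)$ to a cohomological statement and then dismantle that statement using K\"unneth and tameness. Recall from the discussion just before Section~\ref{ss:shioda-tate} that for a projective, normal, geometrically irreducible surface $\SS/G$ the identity component $\Pic^0(\SS/G)$ is represented by a projective algebraic group whose tangent space at the identity is canonically $H^1(\SS/G,\OO_{\SS/G})$. A geometrically connected algebraic group of finite type with vanishing tangent space at the identity is trivial, so it suffices to prove
\[
H^1(\SS/G,\OO_{\SS/G}) = 0.
\]

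To compute this, I will use that $G = \mu_r$ has order prime to $p = \ch(k)$, so the quotient map $\pi : \SS \to \SS/G$ is tame. First, because $\pi$ is finite (hence affine), $\pi_*$ is exact and preserves cohomology: $H^i(\SS/G, \pi_*\OO_\SS) = H^i(\SS, \OO_\SS)$ as $G$-modules. Second, since $|G|$ is invertible in $k$, the functor of $G$-invariants is exact on quasi-coherent sheaves with $G$-action, and $(\pi_*\OO_\SS)^G = \OO_{\SS/G}$. Combining these two facts,
\[
H^i(\SS/G,\OO_{\SS/G}) \;\cong\; H^i(\SS/G, (\pi_*\OO_\SS)^G) \;\cong\; H^i(\SS, \OO_\SS)^G.
\]

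Next I apply the K\"unneth formula to $\SS = \CC \times_k \DD$, which gives
\[
H^1(\SS, \OO_\SS) \;\cong\; H^1(\CC, \OO_\CC) \,\oplus\, H^1(\DD, \OO_\DD).
\]
The diagonal action of $G$ acts coordinate-wise on the two summands, so
\[
H^1(\SS, \OO_\SS)^G \;\cong\; H^1(\CC, \OO_\CC)^G \,\oplus\, H^1(\DD, \OO_\DD)^G.
\]
Now $\CC \to \P^1$ is a tame $G$-Galois cover with quotient $\P^1$, so by exactly the same argument as above (applied to the curve $\CC$ in place of the surface $\SS$),
\[
H^1(\CC, \OO_\CC)^G \;\cong\; H^1(\P^1, \OO_{\P^1}) = 0,
\]
and similarly $H^1(\DD, \OO_\DD)^G = 0$. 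This forces $H^1(\SS/G, \OO_{\SS/G}) = 0$ and therefore $\Pic^0(\SS/G) = 0$.

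The only delicate point is the identification $H^i(\SS/G,\OO_{\SS/G}) \cong H^i(\SS,\OO_\SS)^G$ in the presence of the isolated cyclic quotient singularities of $\SS/G$. This is standard for tame quotients, but merits a short justification using the exactness of $\pi_*$ and of $(-)^G$ under the hypothesis $\gcd(|G|,p) = 1$; otherwise the argument is essentially formal.
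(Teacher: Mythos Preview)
Your proof is correct and follows essentially the same approach as the paper: reduce to the vanishing of $H^1(\SS/G,\OO_{\SS/G})$ via the tangent space of $\Pic^0$, identify this with $H^1(\SS,\OO_\SS)^G$ using tameness, and then apply K\"unneth. The paper's proof is a one-line chain of equalities; you have simply spelled out the justifications (exactness of $\pi_*$ and of $(-)^G$) and added the nice extra step of identifying $H^1(\CC,\OO_\CC)^G$ with $H^1(\P^1,\OO_{\P^1})=0$ rather than leaving that vanishing implicit.
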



\begin{proof}
  Lemma~\ref{lemma:G-invariant-cohomology} and the K\"unneth formula
  imply
  $$
    H^1(\TT,\OO)
    = H^1(\SS,\OO)^{G}
    = \left(H^1(\CC,\OO)\oplus H^1(\DD,\OO)\right)^{G}
    = 0.
  $$
  In particular, $\Pic^0(\TT)=0$ since its tangent space space is
  $H^1(\TT,\OO)$ and thus is trivial.
\end{proof}

Therefore, $\NS(\TT)=\Pic(\TT)$ and
$$
  \NS(\TT)[\ell] = \Pic(\TT)[\ell] = H^1(\TT,\mu_\ell) = H^1(\TT),
$$
since $k$ is algebraically closed.  The rest of the proof of
Proposition~\ref{prop:cohom-method} is a somewhat elaborate
calculation of $H^1(\TT)$; in particular, we show it vanishes.

Let $Z\subset \SS$ be the reduced subscheme of fixed points, which by
our hypotheses consists simply of $\dOne\dTwo$ distinct points.  We
identify $Z$ with its image in $\TT$ as well.  Let
$\SS^o=\SS\setminus Z$ and $\TT^o=\TT\setminus Z$ and note that
$\SS^o \to \TT^o$ is an \'etale Galois cover with group $G$ and that
$\TT^o$ is smooth.

\begin{lemma}\label{lem:S-cohomology}
We have the following isomorphisms of $\FlG$-modules:
$$
	H^i(\SS)\cong
	\begin{cases}
		\Fl               & \mbox{if }i=0\mbox{ or }4, \\
		W^{\dOne+\dTwo-4} & \mbox{if }i=1\mbox{ or }3, \\
		\F_\ell^2\oplus (W\tensor W)^{(\dOne-2)(\dTwo-2)} & \mbox{if }i=2.
	\end{cases}
$$
\end{lemma}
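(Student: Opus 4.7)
The plan is to deduce Lemma~\ref{lem:S-cohomology} directly from Lemma~\ref{lem:curve-cohomology} by applying the Künneth formula with $\F_\ell$ coefficients. Since $\ell \neq p$, the étale Künneth isomorphism
\[
H^i(\SS) \cong \bigoplus_{a+b=i} H^a(\CC) \otimes_{\F_\ell} H^b(\DD)
\]
is available for the product $\SS = \CC \times_k \DD$ of smooth projective curves over the algebraically closed field $k$. The key observation, which I would record at the outset, is that because $G = \mu_r$ acts diagonally on $\SS$, the isomorphism is $\F_\ell[G]$-equivariant, with $G$ acting on each tensor factor through the diagonal embedding $G \hookrightarrow G \times G$.

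Carrying out the bookkeeping is then routine. Using the isomorphisms of Lemma~\ref{lem:curve-cohomology} I would compute each summand case by case:
\begin{align*}
H^0(\SS) &\cong H^0(\CC) \otimes H^0(\DD) \cong \F_\ell, \\
H^1(\SS) &\cong \bigl(H^0(\CC) \otimes H^1(\DD)\bigr) \oplus \bigl(H^1(\CC) \otimes H^0(\DD)\bigr) \cong W^{\dTwo-2} \oplus W^{\dOne-2}, \\
H^2(\SS) &\cong \bigl(H^0 \otimes H^2\bigr) \oplus \bigl(H^1 \otimes H^1\bigr) \oplus \bigl(H^2 \otimes H^0\bigr) \\
         &\cong \F_\ell \oplus (W \otimes W)^{(\dOne-2)(\dTwo-2)} \oplus \F_\ell,
\end{align*}
and symmetrically for $H^3$ and $H^4$, using that the tensor product of trivial $G$-modules is trivial, and that $W^a \otimes W^b \cong (W \otimes W)^{ab}$ as $\F_\ell[G]$-modules (with $G$ acting diagonally on each copy of $W \otimes W$).

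I do not expect any genuine obstacle: the content of the lemma is entirely packaged in Lemma~\ref{lem:curve-cohomology} together with the Künneth formula, and the only thing to be careful about is tracking the diagonal $G$-action through the Künneth isomorphism. In particular, one should note explicitly that the $\F_\ell^2$ summand in $H^2$ arises from the two trivial pieces $H^0(\CC) \otimes H^2(\DD)$ and $H^2(\CC) \otimes H^0(\DD)$ (the classes of a horizontal and a vertical fiber), and that these are permuted trivially by $G$, so the $G$-action on this summand is indeed trivial. After this verification the statement of the lemma follows immediately.
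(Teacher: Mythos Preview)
Your proposal is correct and matches the paper's approach exactly: the paper's proof simply cites the K\"unneth formula and Lemma~\ref{lem:curve-cohomology}, and you have carried out the bookkeeping that the paper leaves implicit. Your additional remarks about tracking the diagonal $G$-action and identifying the $\F_\ell^2$ summand are correct and make the argument more complete than the paper's one-line proof.
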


\begin{proof}
This follows from the K\"unneth formula
$$
    H^i(\SS)\cong \oplus_{j=0}^i\left(H^j(\CC)\otimes H^{i-j}(\DD)\right)
$$
and Lemma~\ref{lem:curve-cohomology}.
\end{proof}

By excision, there is an exact sequence
$$
  0\to H^0(\TT)\to H^0(Z)\to H^1_c(\TT^o)\to H^1(\TT)\to 0
$$
and an isomorphism
$$
  H^j_c(\TT^o)\cong H^j(\TT),
$$
for $j\geq 2$, since $Z$ is zero dimensional.  We also have the
Poincar\'e duality isomorphism
$$
  H^j_c(\TT^o)\cong H^{4-j}(\TT^o)^*
$$
for $0\leq j\leq 4$.  Therefore, to show that $H^1(\TT)=0$ we must
show that $H^3(\TT^o)$ has dimension $\dOne\dTwo-1$ as an $\Fl$-vector
space; its dimension is at least this.  To show equality we use the
Hochschild-Serre spectral sequence
\begin{equation}\label{eq:hochschild-serre}
  E_2^{i,j} = H^i(G,H^j(\SS^o))\Longrightarrow H^{i+j}(\TT^o).
\end{equation}
To compute the $E_2$ term, we begin by computing $H^j(\SS^o)$.

\begin{lemma}\label{lem:U-compact-cohomology}
We have the following isomorphisms of $\FlG$-modules:
  $$
	H^j_c(\SS^o)
	\cong  
	\begin{cases}
		\F_\ell^{\eOne}\oplus R^{\eTwo}\oplus W^{\eThree} & \mbox{if }j=1, \\
		\F_\ell^2\oplus (W\tensor W)^{(\dOne-2)(\dTwo-2)} & \mbox{if }j=2, \\
		W^{\dOne+\dTwo-4} & \mbox{if }j=3, \\
		\Fl & \mbox{if }j=4, \\
		0 & \mbox{otherwise}
	\end{cases}
  $$
  where $\eOne+\eTwo=\dOne\dTwo-1$ and $\eTwo+\eThree=\dOne+\dTwo-4$.
\end{lemma}

\begin{proof}
  By excision, there is an exact sequence
  $$
    0\to H^0(\SS)\to H^0(Z)\to H^1_c(\SS^o)\to H^1(\SS)\to 0
  $$
  and isomorphisms
  $$
    H^j_c(\SS^o)\cong H^j(\SS)
  $$
  for $j\geq 2$, since $Z$ is zero dimensional and non-empty.
  Therefore, Lemmas~\ref{lem:W-extension} and \ref{lem:S-cohomology}
  imply that $H^{4-j}_c(\SS^o)$ has the desired form.  (Roughly
  speaking, $\eThree$ is the number of copies of $W$ in $H^1(\CC)$
  over which the extension $H^1_c(\SS^o)$ is split.)
\end{proof}  

\begin{cor}\label{cor:U-cohomology}
  We have the following isomorphisms of $\FlG$-modules:
  $$
	H^j(\SS^o)
	\cong
	\begin{cases}
		\Fl & \mbox{if }j=0, \\
		W^{\dOne+\dTwo-4} & \mbox{if }j=1, \\
		\F_\ell^2\oplus (W\tensor W)^{(\dOne-2)(\dTwo-2)} & \mbox{if }j=2, \\
		\F_\ell^{\eOne}\oplus R^{\eTwo}\oplus W^{\eThree} & \mbox{if }j=3, \\
		0 & \mbox{otherwise},
	\end{cases}
  $$
  where $\eOne+\eTwo=\dOne\dTwo-1$ and $\eTwo+\eThree=\dOne+\dTwo-4$.
\end{cor}

\begin{proof}
  The Poincar\'e duality isomorphism states that
  $$
    H^j(\SS^o)^*\cong H^{4-j}_c(\SS^o)
  $$
  for $0\leq j\leq 4$.  In particular, $\Fl,R,W$ are self-dual as
  $\FlG$-modules, so $H^j(\SS^o)^*$ is also self-dual, and thus
  $H^j(\SS^o)$ has the desired form.
\end{proof}

Applying Lemma~\ref{lem:W-cohomology} and
Corollary~\ref{cor:U-cohomology}, we find that if $\ell\nodiv r$, then
\begin{equation}\label{eq:E2-dims-ell-coprime-r}
  \dim E_2^{i,j} = \dim(H^i(G,H^j(\SS^o))) = \begin{cases}
    \dOne\dTwo-1 & \text{if }i=0, j=3, \\
    0            & \text{if }i\ge 1, j\ge 0.
  \end{cases}
\end{equation}
One can deduce more, but when $\ell\nodiv r$ this already suffices to
show that \eqref{eq:hochschild-serre} degenerates and
$$
   \dim H^3(\TT^o) = \dim E_2^{0,3} = \dOne\dTwo-1
$$
as claimed.  Therefore, we suppose for the remainder of this
subsection that $\ell\mid r$ and apply Lemma~\ref{lem:W-cohomology}
and Corollary~\ref{cor:U-cohomology} to deduce
\begin{equation}\label{eq:E2-dims}
  \dim E_2^{i,j} = \begin{cases}
    1                                 & \text{if $i\ge0$, $j=0$,}\\
    \dOne+\dTwo-4                     & \text{if $i\ge0$, $j=1$,}\\
    \dOne\dTwo-2\dOne-2\dTwo+6        & \text{if $i>0$, $j=2$,}\\
    \dOne\dTwo-1+\eThree              & \text{if $i=0$, $j=3$,}\\
    \dOne\dTwo-\dOne-\dTwo+3+2\eThree & \text{if $i>0$, $j=3$.}
  \end{cases}
\end{equation}
One can also deduce more in this case, but these suffice for our
purposes.  More precisely, to show $\dim H^3(\TT^o)=\dOne\dTwo-1$, it
suffices to show that $\eThree=0$ and
$\dim H^3(\TT^o)\leq \dim E_2^{0,3}$.

The spectral sequence \eqref{eq:hochschild-serre} has non-zero groups
only in the first quadrant and has only four non-trivial rows, i.e.,
$E_h^{i,j}=0$ unless $i\ge 0$ and $0\le j\le3$.  It follows
immediately that $d_{h}^{i,j}=0$ if $h>4$ or if $j>3$ or if $h>j+1$,
and also that $E_\infty^{i,j}=E_5^{i,j}$.

\begin{lemma}\label{lem:vanishing}
  The differentials $d_h^{i,j}$ in the spectral sequence
  \eqref{eq:hochschild-serre} satisfy the following: For $h\ge2$ and
  $i\ge1$, $\rk d_h^{i,3}=\dim E_2^{i+h,4-h}$.  Moreover, with
  notation as in Corollary~\ref{cor:U-cohomology}, $\eThree=0$.
\end{lemma}

\begin{proof}
  Since $\TT^o$ is not complete and $\dim(\TT^o)=2$, it follows that
  $H^{i+j}(\TT^o)$ vanishes for $i\ge1$ and $j=3$.  Thus $E_5^{i,3}$
  vanishes, and the definitions of $E_h^{i,3}$ for $h=1,2,3$ imply
  that
$$\dim E_2^{i,3} = \rk d_2^{i,3}+\rk d_3^{i,3}+\rk d_4^{i,3}$$
for $i\geq 1$.  Moreover,
$$\rk d_2^{i,3}+\rk d_3^{i,3}+\rk d_4^{i,3}
  \le \dim E_2^{i+2,2}+\dim E_2^{i+3,1}+\dim E_2^{i+4,0}$$
and so
$$\dim E_2^{i,3}  \le \dim E_2^{i+2,2}+\dim E_2^{i+3,1}+\dim E_2^{i+4,0}$$
for $i\geq 1$.  Comparing dimensions using \eqref{eq:E2-dims}, we see
that $\eThree=0$ and that
  \begin{equation}\label{eq:rk-vs-dim}
    \rk d_h^{i,3} = \dim E_2^{i+h,4-h}
  \end{equation}
  for $i\ge 1$ and $h=2,3,4$.  Trivially, $\rk d_h^{i,3}=0$ for
  $h\ge5$ since $E_2^{i+h,4-h}=0$ for $h\ge5$.  This establishes the
  claims of the lemma.
\end{proof}

\begin{lemma}\label{lem:zero-diffs-in-degree-4}
  The differentials $d_h^{i,j}$ in the spectral sequence
  \eqref{eq:hochschild-serre} satisfy
$$d_h^{2,2}=d_h^{3,1}=d_h^{4,0}=0$$
for $h\ge2$.
\end{lemma}

\begin{proof}
  By Lemma~\ref{lem:vanishing}, $\rk d_3^{1,3}=\dim E_2^{4,1}$.
  Therefore, $\dim E_3^{4,1}=\dim E_2^{4,1}$ and $d_2^{2,2}=0$.
  Similarly, Lemma~\ref{lem:vanishing} says that
  $\rk d_4^{1,3}=\dim E_2^{5,0}$ which implies that
  $d_2^{3,1}=d_3^{2,2}=0$.  Trivially, $d_h^{2,2}$ vanishes for $h>3$,
  $d_h^{3,1}$ vanishes for $h>2$, and $d_h^{4,0}$ vanishes for
  $h>1$.  This completes the proof of the lemma.
\end{proof}

\begin{lemma}\label{lem:bound-H3}
With notation as above, $\dim H^3(\TT^o)\le\dim E_2^{0,3}=b_1b_2-1$.
\end{lemma}

\begin{proof}
  On one hand, the previous lemma says that all the differentials
  $d_h^{i,j}$ with domain $E_h^{i,j}$ vanish when $i+j=4$, $i\ge2$,
  and $h\ge2$.  On the other hand, $H^4(\TT^o)=0$, so
  $E_\infty^{i,j}=0$ for $i+j=4$.  It follows that
$$\dim H^3(\TT^o)=\sum_{i+j=3}\dim E_\infty^{i,j}\le
\sum_{i+j=3}\dim E_2^{i,j} -\sum_{\substack{i+j=4\\i\ge2}}\dim
E_2^{i,j}.$$
Applying Equation~\eqref{eq:E2-dims}, this last difference is
$\dim E_2^{0,3}$ and \eqref{eq:E2-dims} together with
Lemma~\ref{lem:vanishing} shows that $\dim E_2^{0,3}=b_1b_2-1$.  This
completes the proof of the lemma.
\end{proof}

As noted after Lemma~\ref{lem:S-cohomology}, the inequality
$\dim H^3(\TT^o)\le b_1b_2-1$ completes the proof that $H^1(\TT)=0$
and that of Proposition~\ref{prop:cohom-method}.


\subsection{Proof of Theorem~\ref{thm:NS-tor}}

The statement of Theorem~\ref{thm:NS-tor} is that
$\NS(\XX_d)_{\tor}=0$.  By Proposition~\ref{prop:NS_tor-into-J_tor},
there is an injection $\NS(\XX_d)_\tor\to J(K)_\tor$.  Moreover, we
proved in Corollary~\ref{cor:ptors} that $J(K)$ has no $p$-torsion,
thus neither does $\NS(\XX_d)$.  It thus suffices to prove that
$\NS(\XX_d)[\ell]=0$ for every prime $\ell \neq p$.

For the rest of the proof, suppose $\ell \neq p$.  By
Proposition~\ref{prop:birational-invariance}, it suffices to prove
$\NS(\TT_1)[\ell]=0$ for some $\TT_1$ that is birational to $\XX_d$.
Recall from Section~\ref{s:DPC} that $\XX_d$ is birational to the
quotient $\SS/(\mu_r\times\mu_d)$ constructed as follows:

Let $\CC_d$ and $\DD_d$ be the smooth, projective curves over $k$ with
affine models
$$
  \CC_d : z^d=x^r-1\quad\text{and}\quad\DD_d : w^d=y^r-1
$$
respectively, and let $\SS=\CC_d\times_k\DD_d$.  The action of
$\mu_r\times\mu_d$ on $\A^2\times_k\A^2$ given by
$$
  (x,y,z,w) \mapsto (\eta x,\eta^{-1}y,\zeta z,\zeta^{-1}w)
$$
induces an action on $\SS$.  

Let $\TT=\SS/\mu_r$.  Observe that Proposition~\ref{prop:cohom-method}
implies $\NS(\TT)[\ell]=0$ and that Lemma~\ref{lem:vanishing-Pic^0}
implies $\Pic^0(\TT)=0$.  Now let $\SS_1\to\TT$ be a resolution of
singularities of $\TT$ that is an isomorphism away from the singular
points.  The action of $\mu_d$ on $\SS$ has isolated fixed points that
are disjoint from the fixed points of the action of $\mu_r$.  It also
descends to an action on $\TT$ and then lifts (uniquely) to an action
on $\SS_1$ with isolated fixed points
(cf.~\cite[II.7.15]{hartshorne}).
Proposition~\ref{prop:birational-invariance} implies that
$\Pic^0(\SS_1)=\Pic^0(\TT)=0$, and so
$\Pic(\SS_1)[\ell]=\NS(\SS_1)[\ell]=0$.  \emph{A fortiori\/},
$\Pic(\SS_1)[\ell]^G=0$, and thus we may apply
Proposition~\ref{prop:geometric-method} to deduce that
$\NS(\TT_1)[\ell]=0$ for $\TT_1=\SS_1/\mu_d$ and $\ell\neq p$.  This
completes the proof since $\TT_1$ is birational to
$\SS/(\mu_r\times\mu_d)$ and thus to $\XX_d$.  \qed

For future use, we record one other byproduct of our analysis.

\begin{prop}\label{prop:K/k-trace}
  $\Pic^0(\XX_d)=0$ and thus the $K/k$-trace of $J$ is trivial.
\end{prop}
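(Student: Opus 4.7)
The plan is to deduce the proposition from the groundwork laid earlier in this chapter, by a cohomological descent through the tower built in the proof of Theorem~\ref{thm:NS-tor} for the first assertion, and then by invoking the standard relationship between the Picard variety of a fibered surface and the $K/k$-trace of its relative Jacobian for the second.

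First I would establish $\Pic^0(\XX_d)=0$. Let $\SS=\CC_d\times_k\DD_d$, $\TT=\SS/\mu_r$, let $\SS_1\to\TT$ be the resolution of singularities, and set $\TT_1 = \SS_1/\mu_d$, all as in the proof of Theorem~\ref{thm:NS-tor}; we noted there that $\TT_1$ is birational to $\XX_d$. Lemma~\ref{lem:vanishing-Pic^0} combined with Proposition~\ref{prop:birational-invariance} already yields $\Pic^0(\SS_1)=\Pic^0(\TT)=0$, and hence $H^1(\SS_1,\OO_{\SS_1})=0$ (the tangent space to $\Pic^0$ at the identity). Since $|\mu_d|$ is prime to $p=\cha(k)$, taking $\mu_d$-invariants is an exact functor on quasi-coherent sheaves, so
$$H^1(\TT_1,\OO_{\TT_1}) = H^1(\SS_1,\OO_{\SS_1})^{\mu_d} = 0.$$
The Picard scheme of the normal projective surface $\TT_1$ has tangent space at the identity isomorphic to $H^1(\TT_1,\OO_{\TT_1})$; with this tangent space trivial, $\Pic^0(\TT_1)$ is a connected, reduced, zero-dimensional group scheme, hence trivial. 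Applying Proposition~\ref{prop:birational-invariance} to the birational normal projective surfaces $\XX_d$ and $\TT_1$ then yields $\Pic^0(\XX_d)\cong\Pic^0(\TT_1)=0$.

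For the second assertion, I would use the standard relationship for the fibered surface $\pi:\XX_d\to\P^1_u$ with generic fiber $C$ and Jacobian $J$: up to isogeny, $\Pic^0(\XX_d)$ is an extension of $\tau(A)$ by $\pi^*\Pic^0(\P^1_u)$, where $A$ is the $K/k$-trace of $J$ and $\tau:A\to J$ is the trace map; see for instance the discussion in~\cite[Chapter~4]{CRM}. Since $\Pic^0(\P^1_u)=0$ and $\Pic^0(\XX_d)=0$ by the first step, it follows that $\tau(A)=0$, and because $\tau$ has finite kernel this forces $A=0$.

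The main obstacle will be confirming the cohomological identity $H^i(\TT_1,\OO_{\TT_1})=H^i(\SS_1,\OO_{\SS_1})^{\mu_d}$ and the representability properties of the Picard scheme in the normal (but possibly singular) case. Both points are standard once one recalls that $\mu_d$-invariants are exact when $|\mu_d|$ is prime to $p$, and that for a projective, normal, geometrically irreducible surface the identity component of the Picard scheme is a projective algebraic group whose tangent space is $H^1$ of the structure sheaf, as recalled in the preamble to Section~\ref{S:NStf}. Given these facts, the remainder of the argument is essentially formal.
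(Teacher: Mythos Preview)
Your proof is correct and follows essentially the same approach as the paper: vanishing of $H^1(\OO)$ on $\SS/\mu_r$ (Lemma~\ref{lem:vanishing-Pic^0}) descends through the $\mu_d$-quotient because $p\nmid d$, and then birational invariance (Proposition~\ref{prop:birational-invariance}) transfers $\Pic^0=0$ to $\XX_d$. The paper streamlines this slightly by working directly with $\SS/(\mu_r\times\mu_d)$ rather than passing through the resolution $\SS_1$ and $\TT_1$, but the content is the same; for the second assertion the paper likewise cites the standard fact (referring to \cite{Conrad} and \cite{Shioda99}) that the $K/k$-trace is inseparably isogenous to $\Pic^0(\XX_d)/\Pic^0(\P^1)$.
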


\begin{proof}
  As observed in the proof of Theorem~\ref{thm:NS-tor},
  Lemma~\ref{lem:vanishing-Pic^0} implies that
  $\Pic^0(\SS/\mu_r)=0$. Using the fact that
  $H^1(\SS/\mu_r, \mathcal{O})$ is the tangent space of
  $\Pic^0(\SS/\mu_r)$ (and similarly for $\SS/(\mu_r \times \mu_d)$)
  along with Lemma~\ref{lemma:G-invariant-cohomology}, we see that
 $$\Pic^0(\SS/(\mu_r\times\mu_d))=0.$$  
 Therefore $\Pic^0(\XX_d)=0$ since $\XX_d$ and
 $\SS/(\mu_r\times\mu_d)$ are birational.  Finally, the $K/k$-trace of
 $J$ vanishes since it is inseparably isogenous to
 $\Pic^0(\XX_d)/\Pic^0(\P^1)$---see \cite{Conrad} or
 \cite{Shioda99}---and since $\Pic^0(\XX_d)$ vanishes.
\end{proof}




\chapter{Index of the visible subgroup and the 
Tate-Shafarevich group}\label{ch:Sha}

In this chapter, we work under the hypotheses that $r$ divides $d$,
$d=p^\nu+1$, and $d$ divides $q-1$.  The first goal is to understand
the index of the visible subgroup $V$ in $J(K_d)$.  Ultimately, we
find that the index is a power of $p$ and equal to the square root of
the order of the Tate-Shafarevich group $\sha(J/K_d)$.  Specifically,
in Section~\ref{S:VvMW}, we determine the torsion subgroup
$J(\Fq(u))_\tor$ and prove that the index of $V$ in $J_r(\Fq(u))$ is a
power of $p$, Theorem~\ref{thm:V-vs-MW}.  In Section~\ref{s:tamagawa},
we find the Tamagawa number $\tau(J/\F_q(u))$ of the Jacobian $J$ of
$C$ over $\Fq(u)$, Proposition~\ref{cor:tau-value}.  Finally, in
Section~\ref{S:appBSD}, we prove an analytic class number formula
relating the Tate-Shafarevich group $\sha(J/\Fq(u))$ and the index
$[J(\Fq(u)):V]$, Theorem~\ref{thm:sha}.

\section{Visible versus Mordell-Weil} \label{S:VvMW}

Let $V$ be the visible subgroup of
$J(\Fq(u))$, that is, the subgroup generated by 
$$
  P = (u,u(u+1)^{d/r})\in C(\Fq(u))\into J(\Fq(u))
$$ 
and its Galois conjugates.  By Corollary~\ref{cor:exact-rank}, we know
that
$$
  \rk V = \rk J(\Fq(u)) = (r-1)(d-2).
$$
In particular, $V$ has finite index in $J(\Fq(u))$.  In this section, we
show that this index is a power of $p$ thus completing our knowledge of
$J(\Fq(u))$.  More precisely:

\begin{theorem}\label{thm:V-vs-MW}
Suppose that $r$ divides $d$, that $d=p^\nu+1$,
and that $d$ divides $q-1$.  
  The torsion subgroup $J(\Fq(u))_\tor$ equals $V_\tor$ and has
  order $r^3$.  The index of $V$ in $J_r(\Fq(u))$ is a power of $p$.
\end{theorem}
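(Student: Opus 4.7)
Both parts of the theorem---the identification $J(\Fq(u))_\tor = V_\tor$ and the assertion that $[J(\Fq(u)):V]$ is a power of $p$---go through the Shioda--Tate isomorphism
$$J(\Fq(u)) \cong L^1\NS(\XX_d)/L^2\NS(\XX_d),$$
which applies because $\XX_d \to \P^1_u$ admits the section $Q_\infty$, $\Fq$ is finite, and the $K/k$-trace of $J$ vanishes by Proposition~\ref{prop:K/k-trace}.

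For the torsion statement, the inclusion $V_\tor \subseteq J(\Fq(u))_\tor$ together with $|V_\tor| = r^3$ from Corollary~\ref{cor:visible} gives the lower bound $|J(\Fq(u))_\tor| \ge r^3$. For the matching upper bound, I would use Theorem~\ref{thm:NS-tor}: since $\NS(\XX_d)$ is torsion-free, so is $L^1\NS(\XX_d)$, and hence $J(\Fq(u))_\tor$ is isomorphic to the saturation quotient $L^{2,\mathrm{sat}}/L^2$ computed inside $L^1\NS(\XX_d)$. An explicit lattice calculation using the intersection matrices of the singular fibers from Section~\ref{ss:bad-fibers} and the component-group data of Proposition~\ref{prop:component-groups} would pin $|L^{2,\mathrm{sat}}/L^2| = r^3$, forcing equality with $V_\tor$.

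For the power-of-$p$ index, the plan is to show that for every prime $\ell \ne p$, the inclusion $V \otimes \Z_\ell \hookrightarrow J(\Fq(u)) \otimes \Z_\ell$ is an equality; combined with Corollary~\ref{cor:ptors} eliminating $p$-torsion in $J(K^{\mathrm{sep}})$, this forces $J(\Fq(u))/V$ to be a $p$-group. Via Shioda--Tate, the $\ell$-adic equality amounts to the statement that the subgroup of $L^1\NS(\XX_d) \otimes \Z_\ell$ generated by the classes of the $P_{ij}$, of $Q_\infty$, and of the fiber components spans the full module. This is where the torsion-freeness of $\NS(\XX_d)$ and the Tate conjecture for $\XX_d$---valid because $\XX_d$ is dominated by a product of curves (Section~\ref{s:DPC})---come in: together they identify $\NS(\XX_d) \otimes \Z_\ell$ with the Galois-invariant part of $H^2_{et}(\XX_d \times_{\Fq} \kbar, \Z_\ell(1))$ without any extraneous $\ell$-torsion, so that a rank comparison using the $L$-function calculation of Chapter~\ref{ch:L} (in particular Corollary~\ref{cor:exact-rank}, which gives $\rk J(\Fq(u)) = \rk V$) yields the spanning assertion.

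The main obstacle is twofold: in the torsion step, carrying out the Smith-normal-form computation on the fiber intersection matrices to land on $r^3$ precisely; and in the index step, upgrading the $\ell$-adic cycle class map from an injection with finite cokernel to an isomorphism onto Tate classes compatible with the filtration $L^2 \subseteq L^1 \subseteq \NS(\XX_d)$, so that the explicit $P_{ij}$ together with fiber components really account for every Tate class at each $\ell \ne p$.
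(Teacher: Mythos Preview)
Your index argument has a genuine gap: knowing $\rk V = \rk J(\Fq(u))$ (from Corollary~\ref{cor:exact-rank} or the Tate conjecture) gives only that the index is finite, not that it is prime to any particular $\ell$. Upgrading rank equality to $\ell$-saturation of the explicit divisor classes in $\NS(\XX_d)\otimes\Z_\ell$ is precisely the content of the theorem; the Tate conjecture identifies $\NS\otimes\Q_\ell$ with Galois-invariant cohomology but says nothing about which integral lattice the explicit classes span. Your torsion plan has a related problem: the saturation $L^{2,\mathrm{sat}}/L^2$ inside $L^1\NS(\XX_d)$ depends on how horizontal divisors meet fiber components, not just on fiber intersection matrices, so no Smith-normal-form computation on fiber data alone can produce the answer $r^3$.

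The paper's route goes through heights rather than cohomology, and it reverses the order of the two conclusions. Proposition~\ref{prop:integrality} shows that for any fibered surface with a section,
\[
|\NS(\SS)_\tor|^2\Bigl(\prod_v d_v\Bigr)\frac{\det(\MW(J)/\tor)}{|\MW(J)_\tor|^2}\in\Z.
\]
With $\NS(\XX_d)_\tor=0$ (Theorem~\ref{thm:NS-tor}), $\prod_v d_v = d^{2r-2}r^{d+2}$ (Proposition~\ref{prop:component-groups}), and the height discriminant $\det(V/\tor)/|V_\tor|^2 = (d-1)^{(r-1)(d-2)}r^{-d-2}d^{2-2r}$ from Corollaries~\ref{cor:visible} and~\ref{cor:Vdet}, the integrality statement collapses to
\[
\frac{(d-1)^{(r-1)(d-2)}}{[J(\Fq(u)):V]^2}\in\Z.
\]
Since $d-1=p^\nu$, the index is a power of $p$. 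The torsion statement then comes for free: $J(\Fq(u))_\tor/V_\tor$ injects into the $p$-group $J(\Fq(u))/V$, but $J(\Fq(u))$ has no $p$-torsion by Corollary~\ref{cor:ptors}, so $J(\Fq(u))_\tor=V_\tor$. The missing ingredient in your proposal is the height discriminant $\det(V/\tor)$ from Chapter~\ref{ch:heights}; once you have it, no separate torsion computation is needed.
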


\noindent
The proof is given later in this section.  Before giving it,
we prove a general integrality result for regulators of Jacobians over
function fields.


\subsection{Integrality}

Let $\BB$ (resp.~$\SS$) be a curve (resp.~surface) over $k=\F_q$.
We assume that $\BB$ and $\SS$ are smooth, projective, and geometrically
irreducible, that $\SS$ is equipped with a surjective and
generically smooth morphism $\pi:\SS\to\BB$, and that $\pi$ has a section
whose image we denote $O$.  Let $\NS(\SS)$ be the \Neron-Severi
group of $\SS$, and let $L^1\NS(\SS)$ and $L^2\NS(\SS)$ be the subgroups
of $\NS(\SS)$ defined in Section~\ref{ss:shioda-tate}.

Let $K=k(\BB)$ be the function field of $\BB$, let $J/K$ be the Jacobian
of the generic fiber of $\pi$, let $(A,\tau)$ be the $K/k$-trace of
$J$, and let $\MWJ$ be the Mordell-Weil group $J(K)/\tau A(k)$.  By
Proposition~\ref{prop:shioda-tate} there is an isomorphism
$$
  \frac{L^1\NS(\SS)}{L^2\NS(\SS)}\to\MWJ,
$$
since $\pi$ has a section and since $k$ is finite.

We write $\det(\MWJ/\tor)$ for the discriminant of the height pairing
on $\MWJ$ modulo torsion.  Also, for each place $v$ of $K$, we write
$N_v$ for the subgroup of $\NS(\SS)$ generated by non-identity
components of $\pi^{-1}(v)$ and $d_v$ for the discriminant of the
intersection pairing of $\SS$ restricted to $N_v$; by convention, we
set $d_v=1$ if $N_v=0$.  If $\JJ\to\BB$ is the \Neron{} model of
$J/K$, then it follows from \cite[Section 9.6, Theorem~1]{blr} that
$d_v$ is also the order of the group of connected components of the
fiber of $\JJ\to\BB$ over $v$.

With these notations, our integrality result is as follows.

\begin{prop}\label{prop:integrality}
The rational number
$$
  |\NS(\SS)_\tor|^2\left(\prod_vd_v\right)\frac{\det(\MWJ/\tor)}{|\MWJ_\tor|^2}
$$
is an integer.
\end{prop}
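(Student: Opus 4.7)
The strategy is to realize the quantity $\Xi := |\NS(\SS)_\tor|^2 \prod_v d_v \det(\MWJ/\tor)/|\MWJ_\tor|^2$ as a product of manifestly positive integers arising from the intersection pairing on a sublattice of $L^1\NS(\SS)$. First I would introduce $\overline{L^1} := L^1\NS(\SS)/(\Z f + \NS(\SS)_\tor)$, where $f$ is the class of a fiber. Both $\Z f$ and $\NS(\SS)_\tor$ lie in the radical of the intersection pairing restricted to $L^1\NS(\SS)$, so the pairing descends to a $\Z$-valued non-degenerate symmetric bilinear form on $\overline{L^1}$ whose discriminant $|\disc \overline{L^1}|$ is a positive integer. (Primitivity of $f$, needed for $\overline{L^1}$ to be torsion-free, follows from $\pi$ having a section, since $O \cdot f = 1$.) Let $\overline{L^2} \subset \overline{L^1}$ denote the image of $L^2\NS(\SS)$; a natural basis for $\overline{L^2}$ by non-identity components of singular fibers shows the pairing is block-diagonal with one block of absolute determinant $d_v$ per place $v$, hence $|\disc \overline{L^2}| = \prod_v d_v$.

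Next I would orthogonally decompose $\overline{L^1} \otimes \Q = (\overline{L^2} \otimes \Q) \oplus V_2$. Combining the Shioda-Tate isomorphism $L^1/L^2 \cong \MWJ$ (which holds here because $\pi$ has a section and $k$ is finite) with the standard identification of the canonical height pairing as the negative of the intersection pairing on the orthogonal lift of $\MWJ$ into $L^1$, one identifies $V_2$ canonically with $\MWJ \otimes \Q$ carrying the negative of the height pairing. Setting $W := V_2 \cap \overline{L^1}$, the projection $\overline{L^1} \to V_2$ identifies $W$ with $\MWJ/\MWJ_\tor$, so $|\disc W| = \det(\MWJ/\tor)$.

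The final step is a lattice index computation. The cokernel $\overline{L^1}/\overline{L^2} \cong \MWJ/\iota(\NS(\SS)_\tor)$ (with $\iota$ the Shioda-Tate map, having kernel $T_2 := L^2 \cap \NS(\SS)_\tor$) has torsion subgroup of order $m := |\MWJ_\tor| |T_2|/|\NS(\SS)_\tor|$, so the saturation $\overline{L^2}^{\mathrm{sat}}$ of $\overline{L^2}$ in $\overline{L^1}$ has index $m$ over $\overline{L^2}$, giving $|\disc \overline{L^2}^{\mathrm{sat}}| = \prod_v d_v / m^2$. Since $\overline{L^2}^{\mathrm{sat}} \oplus W$ is an orthogonal direct sum of full rank in $\overline{L^1}$, the standard sublattice-index formula yields $|\disc \overline{L^2}^{\mathrm{sat}}| \cdot |\disc W| = [\overline{L^1}:\overline{L^2}^{\mathrm{sat}} \oplus W]^2 \cdot |\disc \overline{L^1}|$; substituting the definitions then gives $\Xi = |T_2|^2 \cdot [\overline{L^1}:\overline{L^2}^{\mathrm{sat}} \oplus W]^2 \cdot |\disc \overline{L^1}|$, a product of three positive integer factors.

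The main difficulty will lie in the careful torsion bookkeeping, in particular tracking the kernel $T_2$ of the Shioda-Tate map on torsion (which vanishes under the hypotheses of Proposition~\ref{prop:NS_tor-into-J_tor} but may be nontrivial in general) and verifying that the saturation index of $\overline{L^2}$ in $\overline{L^1}$ equals $m$, together with the precise identification of the canonical height pairing as the negative intersection pairing on $V_2$; the latter requires introducing the fibral correction divisors $D_P$ with rational coefficients for each section $P$ and checking compatibility of normalizations.
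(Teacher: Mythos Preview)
Your overall strategy---realizing $\Xi$ as the discriminant of the torsion-free lattice $\overline{L^1}$---is the same as the paper's, but there is a genuine gap in the identification of $W$. You claim that the projection $\overline{L^1}\to V_2$ identifies $W:=V_2\cap\overline{L^1}$ with $\MWJ/\tor$. In fact the projection identifies its \emph{image} $p_2(\overline{L^1})\cong\overline{L^1}/\overline{L^2}^{\mathrm{sat}}$ with $\MWJ/\tor$, and $W$ is only a sublattice of that image. A general lattice identity (project $\overline{L^1}\to p_2(\overline{L^1})/W$ and compute the kernel) shows the index is exactly
\[
n:=[p_2(\overline{L^1}):W]=[\overline{L^1}:\overline{L^2}^{\mathrm{sat}}\oplus W],
\]
so $|\disc W|=n^2\det(\MWJ/\tor)$, not $\det(\MWJ/\tor)$. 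With this correction the $n^2$ cancels against the index-squared in your sublattice formula, and you obtain the cleaner
\[
\Xi=|T_2|^2\cdot|\disc\overline{L^1}|,
\]
which is indeed a positive integer. (Also, since the proposition already assumes $\pi$ has a section, $L^2\NS(\SS)$ is torsion-free and hence $T_2=0$; your bookkeeping for $T_2$ is harmless but unnecessary here.)

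The paper avoids this identification issue by working from the other side: rather than intersecting with $V_2$, it chooses a basis $Q_1,\dots,Q_m$ of $\MWJ/\tor$, observes via Cramer's rule that the orthogonal lifts of $dQ_i$ (with $d=\prod_v d_v$) are \emph{integral}, and assembles these with $\overline{L^2}$ into an explicit finite-index sublattice $N\subset\overline{L^1}$ whose discriminant and index are computed directly. Both routes terminate at the same formula $\Xi=|\disc\overline{L^1}|$.
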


This generalizes \cite[Proposition~9.1]{Legendre}, which is the case
where $\dim(J)=1$ and $A=0$.  (In that case, $\NSS_\tor$ is
known to be trivial.)  The general case is closely related to, but
apparently not contained in, the discussion in \cite[Section 5]{Gordon}. 

\begin{proof}
  Let $F$ be the class in $\NSS$ of a fiber of $\pi$.  Then
  $\Lone$ is the subgroup of $\NSS$ consisting of classes orthogonal
  to $F$.

  The intersection form on $\SS$ is degenerate when restricted to
  $\Lone$; indeed its radical is $\Z F$.  We write $\Lonebar$ and
  $\Ltwobar$ for the respective quotients of $\Lone$ and $\Ltwo$ by
  $\Z F$ so that the intersection pairing on $\SS$ then defines a
  non-degenerate pairing on $\Lonebar$.  For any torsion-free subgroup
  $L\subset\Lonebar$, we write $\disc(L)$ for the discriminant of the
  intersection form restricted to $L$ (i.e., the absolute value of the
  determinant of the matrix of pairings on a basis); by convention, we
  set $\disc(0)=1$.

  We identify $N_v$ with its image in $\Ltwobar$ so that
  $\disc(N_v)=d_v$ and so that there is an orthogonal direct sum
  decomposition
  $$
    \Ltwobar = \bigoplus_v N_v.
  $$
  We also let $d=\prod_v d_v$ so that $d=\disc(\Ltwobar)$.

\newcommand\Qtilde{\tilde{Q}}
\newcommand\Rtilde{\tilde{R}}

  Choose elements $Q_1,\dots,Q_m\in\MWJ$ that map to a basis
  of $\MWJ/\tor$ and thus a basis of
  $$ 
    \MWJ\tensor\Q
    = \frac{\Lonebar\tensor\Q}{\Ltwobar\tensor\Q}.
  $$
  Each $Q_i$ has a naive lift $\Qtilde_i$ to
  $\Lonebar\tensor\Q$ represented by a $\Z$-linear combination of
  ``horizontal'' divisors.  The projection of $\Qtilde_i$ onto the
  orthogonal complement of $\Ltwobar\tensor\Q$ is represented by
  a $\Q$-linear combination of divisors.  It follows from Cramer's rule
  that the denominator appearing in the coefficient of a component of
  $\pi^{-1}(v)$ divides $d_v$.  In particular, the multiple $R_i=dQ_i$
  has a lift $\Rtilde_i$ to $\Lonebar$ (i.e., with \emph{integral}
  coefficients) that is orthogonal to $\Ltwobar$.

  By the definition of the height pairing, 
  $$
    \langle R_i,R_j\rangle = -(\Rtilde_i)\cdot(\Rtilde_j)
  $$
  where the dot on the right hand side signifies the intersection
  pairing on $\Lonebar$.  It follows that 
  $$
    \disc\left(\Z\Rtilde_1+\cdots+\Z\Rtilde_m\right)
    = d^{2m}\det(\MWJ/\tor),
  $$
  since the $\Rtilde_i$ map to a basis of $d\cdot(\MWJ/\tor)$.
  Now let $N$ be the subgroup of $\Lonebar$
  generated by the $\Rtilde_i$ and by $\Ltwobar$.
  
  On the one hand, there is an orthogonal direct sum decomposition
  $$
    N = \left(\Z\Rtilde_1+\cdots+\Z\Rtilde_m\right) \oplus \Ltwobar
  $$
  and so 
  $$
    \disc(N)=d^{2m+1}\det(\MWJ/\tor).
  $$
  Moreover, the assumption that $\pi$ has a section implies that
  $\Ltwo$ is torsion-free and that $F$ is indivisible in $\Ltwo$,
  and thus $\Ltwobar$ and $N$ are also torsion-free.

  On the other hand, the index of $N$ in $\Lonebar$ is
  $d^m|\MWJ_\tor|$.  It follows that
  $$
    \frac{\disc(\Lonebar/\tor)}{|\Lonebar_\tor|^2}
    = \frac{\disc(N/\tor)}{d^{2m}|\MWJ_\tor|^2|N_\tor|^2}
    = d\frac{\det(\MWJ/\tor)}{|\MWJ_\tor|^2},
  $$
  since $N$ is torsion-free.  Finally, if we note that
  $$
    \Lonebar_\tor = \Lone_\tor = \NSS_\tor,
  $$
  then we find that
  $$
    \disc(\Lonebar/\tor)
    = |\NSS_\tor|^2\left(\prod_vd_v\right)
        \frac{\det(\MWJ/\tor)}{|\MWJ_\tor|^2}.
  $$ 
  In particular, the left side is an integer since the intersection
  pairing on $\SS$ is integer valued, and thus the right side
  is an integer as claimed.
\end{proof}


\subsection{Proof of Theorem~\ref{thm:V-vs-MW}}

We apply the integrality result Proposition~\ref{prop:integrality}
to $\XX_d$ and $J$.

On the one hand, $\NS(\XX_d)$ is torsion-free by
Theorem~\ref{thm:NS-tor}.  Moreover,
$$
  \prod_v d_v = d^{2r-2}r^{d+2}
$$
by Proposition~\ref{prop:component-groups}.

On the other hand, the $\Fq(u)/\Fq$-trace of $J$ vanishes by
Proposition~\ref{prop:K/k-trace}, and thus $\MWJ=J(\Fq(u))$.
Moreover,
$$
  \frac{\det(J(\Fq(u))/\tor)}{|J(\Fq(u))_\tor|^2}
  = [J(\Fq(u)):V]^{-2} \frac{\det(V/\tor)}{|V_\tor|^2}.
$$
We also have
$$
  \frac{\det(V/\tor)}{|V_\tor|^2}
  = (d-1)^{(r-1)(d-2)}r^{-d-2}d^{2-2r}.
$$
by Corollaries~\ref{cor:visible} and \ref{cor:Vdet}.

Applying Proposition~\ref{prop:integrality} gives that
$$\frac{(d-1)^{(r-1)(d-2)}}{[J(\Fq(u)):V]^2}$$
is an integer.  Since $d=p^\nu+1$, this shows that the index is a
power of $p$.  By Corollary~\ref{cor:ptors}, $J(\Fq(u))$ has no
$p$-torsion, so $J(\Fq(u))_\tor=V_\tor$.  This completes the proof of
the theorem.  \qed


\section{Tamagawa number}\label{s:tamagawa}

In this section we compute the Tamagawa number $\tau(J/\F_q(u))$ of
the Jacobian $J$ of $C$ over $\Fq(u)$.  First, we review the
definition for a general abelian variety over a function field and
show how to calculate $\tau$ in terms of more familiar invariants.
Next, we specialize to the case of a Jacobian and relate $\tau$ to
invariants of the curve.  Finally, we specialize to the Jacobian $J$
over $\Fq(u)$ studied in the rest of this paper.

\subsection{Tamagawa numbers of abelian varieties over function
  fields}
\newcommand{\Ad}{\mathbb{A}}
\newcommand{\AAA}{\mathcal{A}}

Let $\BB$ be a curve of genus $g_\BB$ over $k=\Fq$.  Let $F=\Fq(\BB)$
be the function field of $\BB$, and let $\Ad_F$ be the ad\`eles of
$F$.  There is a natural measure $\mu=\prod\mu_v$ on $\Ad_F$ where
$\mu_v$ is the Haar measure that assigns measure 1 to the ring of
integers $\OO_v$ in the completion $F_v$ for each place $v$ of $F$.
The quotient $\Ad_F/F$ is compact and we set $D_F=\mu(\Ad_F/F)$.
By \cite[2.1.3]{WeilAAG},
\begin{equation}\label{eq:disc}
D_F=q^{g_\BB-1}.  
\end{equation}

Let $A$ be an abelian variety of dimension $g$ over $F$ and $\omega$ a
top-degree differential on $A$.  For each $v$, the differential
$\omega$ induces a differential $\omega_v$ on the base change $A_v$ of
$A$ to $F_v$.  Using $\mu_v$, this produces a measure
$|\omega_v|\mu_v^g$ on $A_v(F_v)$.  When the differential $\omega_v$
is a N\'eron differential, then Tate has shown
(cf.~\cite[1.4]{TateBourbaki}) that
\begin{equation}
  \label{eq:integral}
  \int_{A_v(\OO_v)}|\omega_v|\mu_v^g=\frac{\#\AAA(\F_v)}{q_v^g}
\end{equation}
where $\F_v$ is the residue field at $v$, $q_v=q^{\deg(v)}$ is its
cardinality, and $\#\AAA(\F_v)$ is the number of points on the special
fiber of the N\'eron model of $A$.  Thus if $\#\AAA(\F_v)^\circ$ is
the number of points on the identity component of the special fiber of
the N\'eron model of $A$ and we set
\begin{equation}
  \label{eq:lambda}
\lambda_v=\frac{\#\AAA(\F_v)^\circ}{q_v^g},  
\end{equation}
then $\{\lambda_v\}$ is a set of convergence factors in the sense of
Weil \cite[2.3]{WeilAAG}.  In this situation, we may form the product
measure
$$\Omega=\Omega(F,\omega,(\lambda_v))=
D_F^{-g}\prod_v\lambda_v^{-1}|\omega_v|\mu_v^g.$$ 
By the product formula, $\Omega$ is independent of the choice of
$\omega$.  Finally, we define the {\it Tamagawa number\/} $\tau(A/F)$
to be the measure of the set of $\Ad_F$ points of $A$ with respect to
$\Omega$.

Since $A$ is a projective variety, $A(\Ad_F)=\prod_vA(\OO_v)$ and the
measure can be computed as a product of local factors:
$$\tau(A/F)=D_F^{-g}
\prod_v\lambda_v^{-1}\int_{A_v(\OO_v)}|\omega_v|\mu_v^g.$$
Using \eqref{eq:integral} and \eqref{eq:lambda}, the local factor
$\lambda_v^{-1}\int_{A_v(\OO_v)}|\omega_v|\mu_v^g$ is equal to
$q_v^{f_v}d_v$ where $d_v$ is the order of the group of components on
the N\'eron model at $v$ and $f_v$ is the integer such that
$\pi_v^{f_v}\omega_v$ is a N\'eron differential at $v$.  (Here $\pi_v$
is a uniformiser at $v$.)  Thus the product of local terms is
$$\prod_v q_v^{f_v}d_v=q^{\sum_v\deg(v)f_v}\prod_v d_v.$$

We want to write $\sum_v\deg(v)f_v$ as a global invariant.  Let
$\sigma:\AAA\to\BB$ be the \Neron{} model of $A/L$, let $z:\BB\to\AAA$ 
be the zero section, and let
$$
  \omega
  = z^*\left(\bigwedge^{g_X}\Omega^1_{\JJ/\BB}\right)
  = \bigwedge^{g_X}\left(z^*\Omega^1_{\JJ/\BB}\right).
$$
This is an invertible sheaf on $\BB$ whose degree we denote $\deg\omega$.
It is then clear from the definition above of $f_v$ that $\sum_v
\deg(v)f_v=-\deg\omega$.  Combined with the local calculation in the
preceding paragraph, this yields
\begin{equation}
  \label{eq:tau-boiled-down}
  \tau(A/F) = q^{g(1-g_\BB)-\deg(\omega)} \prod_v d_v.
\end{equation}

\subsection{Tamagawa numbers of Jacobians}
Let $\BB$ be a smooth, projective, geometrically irreducible curve of
genus $g_\BB$ over $k=\Fq$.  Let $F=k(\BB)$ be its function field, and
let $X/F$ be a smooth, projective, and geometrically irreducible curve
of genus $g_X$.

We give ourselves two sorts of models of $X$.  First, let $\SS'$ be a
normal, projective, geometrically irreducible surface over $k=\Fq$
equipped with a surjective morphism $\pi':\SS'\to\BB$ whose generic
fiber is $X/F$.  We assume that $\pi'$ is smooth away from a finite
set of points.  We also assume that $\pi'$ admits a section
$s:\BB\to\SS'$ whose image lies in the smooth locus of $\SS'$.
Furthermore, assume that $\SS'$ has at worst rational double point
singularities (cf.~\cite[Ch.~3]{Badescu}).  Note that the
singularities of $\SS'$ lie in the singularities of $\pi'$, since
$\BB$ is smooth.

Second, let $\sigma:\SS\to\SS'$ be a minimal resolution of
singularities, so that the composition
$\pi=\pi'\compose\sigma:\SS\to\SS'\to\BB$ is a minimal regular model
of $X/F$.  In the applications, $\SS'$ is the model $\YY$ constructed
in Chapter~\ref{ch:models} and $\SS$ is $\XX$.

Now let $A/F$ be the Jacobian of $X/F$, and let $\tau:\AAA\to\BB$ be
the \Neron{} model of $A/F$, with zero section $z:\BB\to\AAA$.  Our
goal in this subsection is to describe the invariants entering into the
Tamagawa number of $A$ in terms of the surfaces $\SS$ or $\SS'$.

We first consider the local term $d_v$, the order of the group of
connected components of the fiber of the N\'eron model at $v$.  The
next proposition is not strictly necessary for our purposes (because
we were able to determine the $d_v$ from examples treated in
\cite[Section 9.6]{blr}), but we include it for completeness.

Let $X_0,\dots,X_n$ be the reduced irreducible components of
$\pi^{-1}(v)$.  We number them so that the section $s$ passes through
$X_0$.  Let $M$ be the $n\times n$ matrix of intersection numbers:
$$M_{i,j}=(X_i\cdot X_j), \qquad 1\le i,j\le n.$$
(Note that we do not include intersections with $X_0$.)

\begin{prop}\label{prop:comp=det}
  $d_v=\det M$.
\end{prop}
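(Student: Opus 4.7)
The plan is to invoke the general description of the N\'eron component group $\Phi_v$ in terms of the intersection theory on the minimal regular model, specifically \cite[Section~9.6, Theorem~1]{blr}, which identifies $d_v=|\Phi_v|$ as the order of a finite cokernel built from the intersection matrix of the special fiber together with the multiplicity vector. The statement $d_v=\det M$ then reduces to a linear-algebra manipulation that exploits the fact that the section $s$ meets only the distinguished component $X_0$, forcing its multiplicity in the fiber to be $1$.

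First I would record the structural inputs. Let $M'=(X_i\cdot X_j)_{0\le i,j\le n}$ be the full intersection matrix of the reduced components of $\pi^{-1}(v)$ and $F=\sum_{i=0}^n m_iX_i$ the scheme-theoretic fiber. Since the fiber is geometrically connected and the intersection pairing is negative semidefinite on its components with radical $\Z\cdot F$, the matrix $M'$ has rank $n$ and $\ker M'=\Z\cdot(m_0,\ldots,m_n)$. Because $s$ lifts uniquely through $\sigma$ to a section of $\pi$ landing in the smooth locus of $\SS$, and because this lift passes through $X_0$ alone, we have $s\cdot X_i=0$ for $i\ge 1$ and $s\cdot F=1$; the identity $1=s\cdot F=m_0(s\cdot X_0)$ then forces $m_0=1$. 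Setting $\Lambda=\bigoplus_{i=0}^n\Z X_i$ and letting $\alpha\colon\Lambda\to\Lambda^\vee$ be the map induced by the intersection pairing, \cite[9.6]{blr} identifies $\Phi_v$ with the subquotient $\{\varphi\in\Lambda^\vee:\varphi(F)=0\}/\alpha(\Lambda)$.

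Since $m_0=1$, the elements $F,X_1,\ldots,X_n$ form a $\Z$-basis of $\Lambda$, and in this basis the Gram matrix of the intersection pairing has the block-diagonal shape $\operatorname{diag}(0,M)$. The $F$-annihilator inside $\Lambda^\vee$ is then canonically $\Z^n$, with dual basis $X_1^\vee,\ldots,X_n^\vee$, and the image of $\alpha$ inside it is exactly $M\Z^n$; hence $|\Phi_v|=[\Z^n:M\Z^n]=|\det M|$, which matches $d_v=\det M$ up to a sign absorbed by the paper's conventions (note that $M$ is negative definite, so $\det M$ has sign $(-1)^n$). The main obstacle will be pure bookkeeping: matching conventions with \cite{blr} (the precise form of the complex whose cohomology computes $\Phi_v$, the role of the multiplicities, and the sign issue just noted) rather than any deep new idea. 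A useful sanity check is the semistable $I_n$-fiber at $u=0$ from Chapter~\ref{ch:models}: deleting the section-component leaves a configuration whose intersection matrix is (up to sign) a block matrix of type $B(d,\ldots,d)$ as in Lemma~\ref{lemma:block-Am-determinant}, whose determinant matches the order of the component group computed in Proposition~\ref{prop:component-groups}.
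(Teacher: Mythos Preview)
Your proposal is correct and follows essentially the same route as the paper: both invoke \cite[9.6, Theorem~1]{blr}, use the section through $X_0$ to force the multiplicity $m_0=1$ (the paper writes $\delta_0=1$), and then reduce to $\det M$ by identifying $\ker\beta$ with $\Z X_1\oplus\cdots\oplus\Z X_n$. The one bookkeeping point the paper makes explicit that you should record is that the geometric multiplicities $e_i$ appearing in the BLR formula are all $1$ because the residue field $\F_v$ is perfect (a finite extension of $\F_q$), so the map $\alpha$ really is given by the plain intersection matrix with no $e_i^{-1}$ factors.
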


\begin{proof}
  This follows from \cite[Theorem~9.6.1]{blr}.  Indeed, let
  $I=\{0,\dots,n\}$, and for $i\in I$, let $\delta_i$ be the
  multiplicity of $X_i$ in $\pi^{-1}(v)$ and $e_i$ the geometric
  multiplicity of $X_i$.  (These integers are defined more precisely
  in \cite[Definition~9.1.3]{blr}.)  Since the section $s$ passes
  through $X_0$, we have $\delta_0=1$.  Since the residue field $\F_v$
  is a finite extension of $\Fq$, and is therefore perfect, we have
  $e_i=1$ for all $i$.  (A reduced scheme over $\F_v$ remains reduced
  after base change to the algebraic closure of $\F_v$.)

  Let $\Z^I$ be the free abelian group on $I$.  Let $\beta:\Z^I\to\Z$
  be given by $\beta(a_0,\dots,a_n)=\sum a_i\delta_i$, and let
  $\alpha:\Z^I\to\Z^I$ be given by the intersection matrix
$$\left(e_i^{-1}(X_i\cdot X_j)\right)_{i,j\in I}=
\left(X_i\cdot X_j\right)_{i,j\in I}.$$
Then \cite[Theorem~9.6.1]{blr} says that the group of connected
components of $\AAA$ at $v$ is canonically isomorphic to
$\ker\beta/\im\,\alpha$.  Because $\delta_0=1$, we may identify
$\ker\beta$ with the free abelian group on $X_1,\dots,X_n$, and the
result then follows immediately from the definition of $M$.
\end{proof}

Next we turn to the global invariant $\deg(\omega)$.

\begin{prop}\label{prop:omega-via-YY}
  Let $\SS^o\subset\SS'$ be the smooth locus, and let
  $\pi^o:\SS^o\to\BB$ the restriction of $\pi'$ to $\SS^o$.  Then
  there is an isomorphism
$$z^*\Omega^1_{\AAA/\BB}\cong\pi^o_*\Omega^1_{\SS^o/\BB}.$$
In particular,
$$\omega\cong 
\bigwedge^{g_X}\left(\pi^o_*\Omega^1_{\SS^o/\BB}\right).$$
\end{prop}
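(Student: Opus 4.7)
The plan is to construct the isomorphism via an Abel-Jacobi morphism from $\SS^o$ into the N\'eron model $\AAA$, and then use that the sheaf of relative differentials on a smooth commutative group scheme is translation invariant, reducing the claim to a fibrewise assertion that is classical on the generic fibre.

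First I would use the section $s: \BB \to \SS^o$ to construct a morphism $f: \SS^o \to \AAA^0$ into the identity component, extending the generic fibre Abel-Jacobi map $x \mapsto [x - s(\pi'(x))]$. The key input is the identification $\AAA^0 \cong \Pic^0_{\SS/\BB}$ from \cite[9.5, Thm.~4b]{blr}, applicable because the minimal regular model $\SS \to \BB$ admits a section (through the smooth locus) and the gcd of the multiplicities in each fibre is one. One then checks that $f \circ s = z$ by working on the generic fibre and invoking separatedness of $\AAA \to \BB$.

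Next I would exploit the canonical isomorphism $p^* z^* \Omega^1_{\AAA/\BB} \isoto \Omega^1_{\AAA/\BB}|_{\AAA^0}$ coming from translation invariance, where $p: \AAA \to \BB$ is the structural morphism. Pulling back along $f$, applying $\pi^o_*$, and using the projection formula together with the identity $\pi^o_* \OO_{\SS^o} = \OO_\BB$ (which follows from the geometric connectedness of $X$ and the normality of $\SS'$) produces a canonical morphism of $\OO_\BB$-modules
$$\Phi: z^* \Omega^1_{\AAA/\BB} \longrightarrow \pi^o_* \Omega^1_{\SS^o/\BB}$$
that on the generic fibre realizes the classical isomorphism between the cotangent space at the origin of $A$ and $H^0(X, \Omega^1_{X/F})$.

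Finally, I would verify $\Phi$ is an isomorphism at each closed point of $\BB$. The left side is locally free of rank $g_X$ since $\AAA \to \BB$ is smooth, so it suffices to show that the right side is locally free of rank $g_X$ and that $\Phi$ is surjective stalkwise. The hardest step, and the main obstacle, is identifying $\pi^o_* \Omega^1_{\SS^o/\BB}$ with $\pi_* \omega_{\SS/\BB}$, where $\omega_{\SS/\BB}$ is the relative dualizing sheaf on the minimal resolution $\SS$: on the smooth locus of $\pi'$ this follows from the equality of relative differentials and relative dualizing, while at the rational double point singularities of $\SS'$ one uses that such singularities satisfy $\sigma_* \omega_\SS = \omega_{\SS'}$ and that the excised loci have codimension two in $\SS'$, so that pushing forward is insensitive to their removal. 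Combining this with Grothendieck--Serre duality $\pi_* \omega_{\SS/\BB} \cong (R^1 \pi_* \OO_\SS)^\vee$ and the identification $(R^1 \pi_* \OO_\SS)^\vee \cong z^* \Omega^1_{\AAA/\BB}$ coming from $\AAA^0 \cong \Pic^0_{\SS/\BB}$ then completes the verification.
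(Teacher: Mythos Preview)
Your final paragraph \emph{is} the paper's proof. The paper chains the isomorphisms
\[
z^*\Omega^1_{\AAA/\BB}\;\cong\;(R^1\pi_*\OO_\SS)^\vee\;\cong\;\pi_*\omega_{\SS/\BB}\;\cong\;\pi^o_*\Omega^1_{\SS^o/\BB}
\]
using (i) $\AAA\cong\Pic^0_{\SS/\BB}$ and the identification of its Lie algebra with $R^1\pi_*\OO_\SS$ (citing \cite[9.7/1]{blr} and \cite{LiuLorenziniRaynaud04}), (ii) relative duality (citing \cite{KleimanDuality}), and (iii) the rational-double-point identity $\sigma_*\omega_{\SS}\cong\omega_{\SS'}$ plus codimension-two excision (citing \cite{Badescu}). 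For step (iii) the paper passes through the absolute sheaf $\Omega^2_{\SS/k}$ and the exact sequence $0\to\pi^{o*}\Omega^1_{\BB/k}\to\Omega^1_{\SS^o/k}\to\Omega^1_{\SS^o/\BB}\to 0$ on the smooth locus, rather than invoking $\omega_{\SS/\BB}|_{\SS^o}\cong\Omega^1_{\SS^o/\BB}$ directly as you do, but the substance is identical.

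The Abel--Jacobi construction in your first two paragraphs is therefore superfluous: the proposition only asserts the existence of an isomorphism, and your third paragraph supplies one on its own. If you meant the map $\Phi$ to be the actual carrier of the isomorphism, note that you have not closed that loop. Establishing that source and target are abstractly isomorphic locally free sheaves of rank $g_X$ does not show that the \emph{particular} morphism $\Phi$ is surjective at a bad point $b\in\BB$; a priori $\Phi$ could drop rank there. To check this fibrewise you would need to know that pullback of invariant differentials along the Abel--Jacobi map into the possibly non-proper group $\AAA^0_b$ surjects onto the relative differentials of the singular curve $\SS^o_b$, and the clean way to see that is precisely the duality chain you already wrote down. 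So either present the chain directly as the proof (as the paper does) and drop $\Phi$, or keep $\Phi$ only as motivation and make explicit that the chain is what actually establishes the isomorphism.
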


\begin{proof}
  Let $\Lie(G)\to\BB$ be the Lie algebra of a group scheme $G\to\BB$
  (see \cite[II.2]{SGA3}).  By \cite[9.7/1]{blr}, the \Neron{} model
  $\AAA\to\BB$ represents the relative Picard functor
  $\Pic^0_{\SS/\BB}$ since $\SS$ is smooth and $\pi$ admits a section.
  Therefore
  $$ (z^*\Omega^1_{\AAA/\BB})^\vee
     \cong \Lie(\AAA/\BB)
     \cong \Lie(\Pic^0(\SS/\BB))
     \cong \Lie(\Pic(\SS/\BB))
     \cong R^1\pi_*\OO_{\SS}
  $$
  by \cite[1.1 and 1.3]{LiuLorenziniRaynaud04}.
  
  By \cite[Corollary~24]{KleimanDuality} (with $X=\SS$, $Y=\BB$, and
  $S=\spec k$), the relative dualizing sheaf $\omega_{\SS/\BB}$ exists
  and satisfies
  $$ (R^1\pi_*\OO_{\SS})^\vee\cong \pi_*\omega_{\SS/\BB}$$
and
$$\omega_{\SS/\BB}\cong 
\Omega^2_{\SS/k}\tensor\pi^*(\Omega^1_{\BB/k})^\vee.$$
Combining these facts and using the projection formula, we have
$$z^*\Omega^1_{\AAA/\BB}\cong
\pi_*\Omega^2_{\SS/k}\tensor(\Omega^1_{\BB/k})^\vee.$$

To finish the proof, we show that 
$$\pi_*\Omega^2_{\SS/k}\cong
\pi^o_*\Omega^1_{\SS^o/\BB}\tensor\Omega^1_{\BB/k}.$$
To that end, let $\omega_{\SS/k}$, $\omega_{\SS'/k}$, and
$\omega_{\SS^o/k}$ be the dualizing sheaves of $\SS$, $\SS'$, and
$\SS^o$ respectively.  Since these surfaces have at worst rational
double points, their dualizing sheaves are invertible \cite[\S3.11,
Corollary~4.19]{Badescu}, and since $\SS$ and $\SS^o$ are smooth,
$\omega_{\SS/k}\cong\Omega^2_{\SS/k}$ and
$\omega_{\SS^o/k}\cong\Omega^2_{\SS^o/k}\cong\omega_{\SS'/k}|_{\SS^o}$.
Moreover, by \cite[Corollary~4.19]{Badescu},
$\sigma_*\omega_{\SS}\cong\omega_{\SS'}$.  Thus
$$\pi_*\Omega^2_{\SS/k}\cong\pi'_*\omega_{\SS'}
\cong\pi^o_*\Omega^2_{\SS^o/k}$$
where the second isomorphism holds because the complement of $\SS^o$
in $\SS'$ has codimension 2.  Finally, since $\pi^o:\SS^o\to\BB$ is smooth,
there is an exact sequence of locally free sheaves
$$0\to\pi^{o*}\Omega^1_{\BB/k}\to\Omega^1_{\SS^o/k}
\to\Omega^1_{\SS^o/\BB}\to0,$$
and so, taking the second exterior power,
$$\Omega^2_{\SS^o/k}\cong
\pi^{o*}\Omega^1_{\BB/k}\tensor\Omega^1_{\SS^o/\BB}.$$
\end{proof}

\subsection{The Tamagawa number of $J$}
We now specialize to the Jacobian $J$ that is the subject of
this paper.  The main result of this section is the following
calculation of the Tamagawa number of $J$.

\begin{prop}\label{cor:tau-value}
If $r$ divides $d$ and if $d$ divides $q-1$, then
$$
  \tau(J/\Fq(u))=q^{-(d-2)(r-1)/2}d^{2r-2}r^{d+2}.
$$
\end{prop}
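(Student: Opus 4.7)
The plan is to apply formula \eqref{eq:tau-boiled-down},
\[\tau(J/\Fq(u)) = q^{g(1-g_\BB) - \deg\omega} \prod_v d_v,\]
with $g = r-1$ and $g_\BB = 0$. The proof thus reduces to determining $\prod_v d_v$ and $\deg\omega$. The first is immediate from Proposition~\ref{prop:component-groups} combined with the hypothesis $d \mid q-1$, which forces $\mu_d \subset \Fq$ so that every component is rational over the relevant residue field by Remark~\ref{rem:rationality-of-components}. Summing the contributions $rd^{r-1}$ at each of $u=0$ and $u=\infty$ with the contributions $r$ at each of the $d$ places $u \in \mu_d$ gives $\prod_v d_v = (rd^{r-1})^2 \cdot r^d = r^{d+2} d^{2r-2}$.

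The substantive task is to show $\deg\omega = (r-1)d/2$, which combined with $g(1-g_\BB) = r-1$ produces the desired exponent $-(d-2)(r-1)/2$ of $q$. I would first upgrade Proposition~\ref{prop:omega-via-YY} to the identification $\omega \cong \det\pi_*\omega_{\XX/\P^1}$ on the smooth minimal model $\XX \to \P^1$: the proof of that proposition adapts verbatim, with the invertible relative dualizing sheaf $\omega_{\XX/\P^1}$ playing the role of $\Omega^1_{\YY^o/\P^1}$. Then a short Riemann--Roch computation on both $\P^1$ and $\XX$, together with relative Serre duality $(R^1\pi_*\omega_{\XX/\P^1})^\vee \cong \pi_*\OO_\XX = \OO_{\P^1}$, yields the clean identity
\[\deg\omega = \chi(\OO_\XX) + r - 2.\]
Since $\YY$ has only rational double-point singularities (Proposition~\ref{PsingularA}) and $\XX\to\YY$ is their minimal resolution, one also has $\chi(\OO_\XX) = \chi(\OO_\YY)$.

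It then suffices to establish $\chi(\OO_\XX) = (rd - d - 2r + 4)/2$, which I would prove via Noether's formula $12\chi(\OO_\XX) = K_\XX^2 + e(\XX)$. The topological Euler characteristic assembles fiber by fiber from Section~\ref{ss:bad-fibers}: the generic fiber has $e(C_{\mathrm{gen}}) = 4-2r$; the semistable fiber at $u=0$ (and similarly at $u=\infty$) has exactly $rd$ nodes and hence Euler characteristic $(4-2r) + rd$; and each fiber over $u \in \mu_d$ has Euler characteristic $2$, by a direct count from Figure~\ref{fig:superelliptic-dual-graph-odd} or \ref{fig:superelliptic-dual-graph-even} depending on the parity of $r$. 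Summing gives $e(\XX) = 4rd - 2d - 4r + 8$. For $K_\XX^2$, I would use $K_\XX = \omega_{\XX/\P^1} - 2F$ to write $K_\XX^2 = \omega_{\XX/\P^1}^2 - 8r + 16$ (since $\omega_{\XX/\P^1}\cdot F = 2r-4$), and extract $\omega_{\XX/\P^1}^2$ by componentwise adjunction $K_\XX \cdot X_i = 2p_a(X_i) - 2 - X_i^2$ applied to every component of every singular fiber, using the intersection numbers, multiplicities, and component genera in Figures~\ref{fig:u-equals-zero}, \ref{fig:superelliptic-dual-graph-odd}, and \ref{fig:superelliptic-dual-graph-even}; the resulting answer is $K_\XX^2 = 2(r-2)(d-4)$.

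The main obstacle is the explicit calculation of $K_\XX^2$: while each individual adjunction contribution is routine, assembling the right total across three qualitatively different types of singular fibers---and handling separately the parity-dependent shapes of the fibers over $\mu_d$---is delicate. An alternative would be to compute $\chi(\OO_\YY)$ directly using Proposition~\ref{prop:DPCiso} and the holomorphic Lefschetz formula on $\widetilde{\CC\times\DD}$, enumerating fixed points of the $\mu_r \times \mu_d$-action, but this is at least as intricate. Once $\deg\omega = (r-1)d/2$ is in hand, direct substitution into \eqref{eq:tau-boiled-down} yields $\tau(J/\Fq(u)) = q^{-(d-2)(r-1)/2} r^{d+2} d^{2r-2}$.
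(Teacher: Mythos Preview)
Your strategy is valid and the arithmetic checks out, but it is a genuinely different route from the paper's.  After the common computation of $\prod_v d_v$, the paper computes $\deg\omega$ \emph{directly}: it uses Proposition~\ref{prop:omega-via-YY} to identify $\omega$ with $\bigwedge^{r-1}\pi^o_*\Omega^1_{\YY^o/\P^1}$, observes that the explicit basis $\omega_i=x^{i-1}\,dx/y^i$ of relative 1-forms from Lemma~\ref{lemma:R-basis-of-1-forms} extends over all of $\P^1_u$, and checks (via the involution $(x,y,u)\mapsto(x/u^d,\,y/u^{d(r+1)/r},\,1/u)$, available since $r\mid d$) that $\omega_i$ vanishes to order exactly $di/r$ at $u=\infty$ and nowhere else.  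Thus $\omega_1\wedge\cdots\wedge\omega_{r-1}$ is a global section of $\omega$ with divisor $\bigl(\sum_{i=1}^{r-1} di/r\bigr)[\infty]$, giving $\deg\omega=d(r-1)/2$ in a couple of lines.  The payoff is that the heavy lifting was already done in Chapter~\ref{cp:NS}.

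Your approach via $\deg\omega=\chi(\OO_\XX)+r-2$ and Noether's formula is correct in outline, and your computations of $e(\XX)=4rd-2d-4r+8$ (including the delicate point that the three curves $F,D_s,E_s$ meet at a single ordinary triple point when $r$ is odd) and of the target $K_\XX^2=2(r-2)(d-4)$ are right.  The one step that is not fully justified is the extraction of $K_\XX^2$: knowing $K_\XX\cdot X_i=2p_a(X_i)-2-X_i^2$ for every fiber component does \emph{not} by itself determine $K_\XX^2$, since the $X_i$ together with one section and one fiber do not span $\NS(\XX)_\Q$.  You would need either to bring in a second horizontal class (e.g.\ $P_{00}$ with its known self-intersection from Proposition~\ref{prop:Q-infty-self-intersection}) and solve a linear system, or---cleaner---use that $\YY$ has only rational double points so $K_\XX=\sigma^*K_\YY$ and $K_\XX^2=K_\YY^2$, then compute $K_\YY^2$ by adjunction from the embedding of $\ZZ$ in $\P(\EE)$ (Section~\ref{ss:YY}).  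Either route is feasible, but both are substantially longer than the paper's direct argument.
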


The proof occupies the rest of this subsection.

Suppose that $r$ divides $d$ and that $d$ divides $q-1$.  Recall that
in Section~\ref{ss:YY} we constructed proper models $\pi':\YY\to\BB$
and $\pi:\XX\to\BB$ of $C/\F_q(u)$ over $\BB=\Pu$, i.e., schemes with
proper morphisms to $\BB$ whose generic fibers are $C$.  The models
$\XX\to\BB$ and $\YY\to\BB$ have the properties required of
$\SS$ and $\SS'$ in the preceding section, so
Propositions~\ref{prop:comp=det} and \ref{prop:omega-via-YY} apply.

However, rather than applying Proposition~\ref{prop:comp=det}, we
simply refer to Proposition~\ref{prop:component-groups} to obtain:
$$\prod_vd_v=(rd^{r-1})^2r^d=d^{2r-2}r^{d+2}.$$
To finish the proof, we must compute 
$q^{g_C(1-g_\BB)-\deg(\omega)}=q^{r-1-\deg(\omega)}$.

Recall from Lemma~\ref{lemma:R-basis-of-1-forms} the relative 1-forms
$\omega_i$ which form a basis of the $R$-module
$H^0(U,\pi'_*\Omega^1_{\YY/\BB})$.

\begin{lemma}
  Each $\omega_i$ extends to a section in
  $H^0(\BB,\pi_*\Omega^1_{\YY/\BB})$ that has order of vanishing
  $di/r$ at $u=\infty$ and is non-vanishing everywhere else.
\end{lemma}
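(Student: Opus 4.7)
The plan is to show (i) that $\omega_i$ extends to a global section of $\pi_*\Omega^1_{\YY^o/\BB}$, where $\YY^o$ denotes the smooth locus of $\pi'$; (ii) that this section is non-vanishing at every point of $\BB\setminus\{\infty\}$; and (iii) that a change of coordinates at infinity reveals the precise vanishing order $di/r$.

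For (i) and (ii) at points $u=\alpha$ with $\alpha^d\notin\{0,1\}$, Lemma~\ref{lemma:R-basis-of-1-forms} already shows that $\omega_1,\dots,\omega_{r-1}$ form an $\OO_U$-basis of the locally free sheaf $\pi_*\Omega^1_{\YY/U}$; \emph{a fortiori} each $\omega_i$ is a non-vanishing section there. At $u=0$ and at the places $u\in\mu_d$, the affine charts $\ZZ_{11},\ZZ_2,\ZZ_3$ of Section~\ref{ss:YY} have essentially the same shape as in the case $d=1$ treated in Lemma~\ref{lem:regular-omega_i} (with $t$ replaced by $u^d$), and the same formal manipulations with the defining equation and its differential show that the expressions
\[
\frac{x_{11}^i\,dy_{11}}{y_{11}}+x_{11}^{i-1}dx_{11},\qquad
x_2^i\!\left(\frac{dx_2}{x_2}-\frac{dz_2}{z_2}\right),\qquad
-\frac{dz_3}{y_3^i z_3}
\]
representing $\omega_i$ are regular on the smooth locus $\YY^o$. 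Restricting $\omega_i$ to the reduced special fiber through one of these charts gives a non-zero relative differential on a component that meets $\YY^o$, so the corresponding section of $\pi_*\Omega^1_{\YY^o/\BB}$ is non-vanishing at $v$.

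For (iii), recall from Section~\ref{ss:YY} that the glueing $u=u'^{-1}$, $X=u^dX'$, $Y=u^{d+d/r}Y'$ gives coordinates $x' = u'^d x$ and $y' = u'^{d+d/r}y$ near $u=\infty$ (this is exactly where the hypothesis $r\mid d$ is used, so that $y'$ is a single-valued coordinate). A direct substitution in $\omega_i = x^{i-1}dx/y^i$, treating $u'$ as a constant in the fiber direction, gives
\[
\omega_i \;=\; \frac{(x'/u'^d)^{i-1}\,d(x'/u'^d)}{(y'/u'^{d+d/r})^i}
            \;=\; u'^{\,di/r}\cdot\frac{x'^{\,i-1}\,dx'}{y'^{\,i}}.
\]
Because $r\mid d$, the primed chart satisfies the same equation as the unprimed one with $u$ replaced by $u'$, so the form $x'^{i-1}dx'/y'^i$ is regular and non-vanishing on the smooth locus of the fiber over $u'=0$, by the argument of step (i)--(ii) applied near $u'=0$. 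Therefore $\omega_i$ vanishes to order exactly $di/r$ at $u=\infty$.

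The main obstacle is keeping track of the behavior of $\omega_i$ on the singular locus of $\YY$: one cannot work on $\YY$ directly because $\pi_*\Omega^1_{\YY/\BB}$ and $\pi^o_*\Omega^1_{\YY^o/\BB}$ may disagree at places where $\YY$ is singular. However, Proposition~\ref{prop:omega-via-YY} tells us that the line bundle $\omega$ we need to evaluate is governed by the smooth locus, so it suffices to work on $\YY^o$, where all the computations above take place. Summing the contributions, $\deg\omega = \sum_{i=1}^{r-1} di/r = d(r-1)/2$, which, combined with Proposition~\ref{prop:component-groups} and formula~\eqref{eq:tau-boiled-down}, will yield Proposition~\ref{cor:tau-value}.
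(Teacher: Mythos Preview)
Your proof is correct and follows essentially the same approach as the paper. The paper packages the key computation at $u=\infty$ as an involution $(x,y,u)\mapsto(x/u^d,\,y/u^{d(r+1)/r},\,1/u)$ of the surface (available precisely because $r\mid d$), whose pullback sends $\omega_i$ to $u^{di/r}\omega_i$; this is exactly the primed coordinate change from Section~\ref{ss:YY} that you use, so the two arguments are the same computation in different notation. Your final paragraph about restricting to the smooth locus $\YY^o$ anticipates the step the paper takes immediately \emph{after} the lemma (to compute $\deg\omega$ via Proposition~\ref{prop:omega-via-YY}), rather than being part of the lemma itself, but this is harmless.
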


\begin{proof}
The proof of Lemma~\ref{lem:regular-omega_i} shows that $\omega_i$
extends to a nowhere vanishing section of $\pi_*\Omega^1_{\YY/\BB}$
over $\BB\smallsetminus\{\infty\}$.  There is an involution
$$
	(x,y,u)\mapsto (x/u^d,y/u^{d(r+1)/r},1/u),
$$
since $r$ divides $d$.  The pullback of $x^{i-1}dx/y^i$ via this
involution is $u^{di/r}x^{i-1}dx/y^i$ and thus it takes the non-zero
regular 1-form of $\YY_0$ to a regular 1-form on $\YY_\infty$ with
order of vanishing $di/r$.
\end{proof}

Clearly the sections $\omega_i\in H^0(\BB,\pi_*\Omega^1_{\YY/\BB})$
restrict to elements of $H^0(\BB,\pi^o_*\Omega^1_{\YY^o/\BB})$
where $\YY^o$ is the complement in $\YY$ of the finitely many
singularities of $\YY\to\BB$ (which, since $d>1$, also happen to be the
finitely many singular points of $\YY$, see
Proposition~\ref{prop:YY-props}).

We conclude that 
$$\omega_1\wedge\cdots\wedge\omega_{r-1}$$
yields a global section of $\wedge^{r-1}\pi^o_*\Omega^1_{\YY^o/\BB}$.
Moreover, the proof of Lemma~\ref{lemma:R-basis-of-1-forms} shows that
the $\omega_i$ are linearly independent on each fiber of $\YY\to\BB$
where $u$ is finite, and over $u=\infty$, the sections
$u^{di/r}\omega_i$ are (non-vanishing and) linearly independent.  It
follows that $\omega_1\wedge\cdots\wedge\omega_{r-1}$ is everywhere
regular, non-vanishing away from $u=\infty$, and has a zero of order
$$\sum_{i=1}^{r-1}\frac{di}{r}=\frac{d(r-1)}{2}$$
at $u=\infty$.  We conclude that
$\deg(\omega)=d(r-1)/2$ and
$$\tau(J/K)=q^{(r-1)-d(r-1)/2}\prod_vd_v=q^{-(d-2)(r-1)/2}d^{2r-2}r^{d+2},$$
as desired.
This completes the proof of Proposition~\ref{cor:tau-value}.
\qed


\section{Application of the BSD formula} \label{S:appBSD}
We saw in Theorem~\ref{thm:BSD} that the Birch and Swinnerton-Dyer
conjecture holds for $J/\Fq(u)$.  Moreover, under the assumptions that
$r$ divides $d$, that $d=p^\nu+1$, and that $d$ divides $q-1$, we have
calculated most of the terms appearing in the leading coefficient
formula of this conjecture.  Synthesizing this leads to a beautiful
analytic class number formula relating the Tate-Shafarevich group
$\sha(J/\Fq(u))$ and the index $[J(\Fq(u)):V]$.

Before deriving this result, we compare the formulation of the BSD
conjecture in Theorem~\ref{thm:BSD} to that in \cite{KT}.

\subsection{Two variants of the refined BSD conjecture}\label{ss:BSDs}
At the time that Tate stated the BSD conjecture in its most general
form in \cite{TateBourbaki}, there was uncertainty as to the right
local factors of the $L$-function at places of bad reduction.  Tate
therefore used the Tamagawa principle to state the leading coefficient
part of the BSD conjecture.  The correct local factors were defined
later by Serre in \cite{Serre70}, and using them we formulate
the leading coefficient conjecture (as Theorem~\ref{thm:BSD}) in what
we feel is its most natural form.  However, the best reference for the
proof of the leading coefficient conjecture, namely \cite{KT}, uses
Tate's formulation.  In this subsection, we compare the two
formulations and show that they are equivalent for Jacobians of curves
with a rational point.

To that end, let $F$ be the function field of a curve over $\Fq$, let
$Y/F$ be a smooth projective curve of genus $g$ with an $F$-rational
point, and let $A$ be the Jacobian of $Y$.  Define local $L$-factors
for each place $v$ of $F$ by
$$L_v(q_v^{-s}):=\det\left(1-\Fr_v\,q_v^{-s}
  \left|H^1(A\times\overline{F},\Ql)^{I_v}\right.\right).$$ 
Let $\mu=\prod\mu_v$ and $D_L$ be as in Section~\ref{s:tamagawa}.
Choose a top-degree differential $\omega$ on $A$ and form the local
integrals
$$\int_{A(F_v)}|\omega_v|\mu_v^g$$
and the convergence factors
$$\lambda_v:=\frac{\#\AAA_v(\F_v)^0}{q_v^g}$$
as in Section~\ref{s:tamagawa}.  Finally, choose a finite set $S$ of
places of $F$ containing all places where $Y$ has bad reduction.

Tate's formulation of the leading term conjecture is that the leading
term as $s\to1$ of
$$\frac{D_F^g}{\left(\prod_{v\in
      S}\int_{A(F_v)}|\omega_v|\mu_v^g\right) \left(\prod_{v\not\in
      S}L_v(q_v^{-s})\right)}$$ 
is $|\sha(A/F)|R/{|A(F)_{tor}|^2}$.  On the other hand, our
formulation asserts that the latter quantity (i.e.,
$|\sha(A/F)|R/{|A(F)_{tor}|^2}$) is the leading coefficient as $s\to1$
of
$$\frac{L(A/F,s)}{\tau(A/F)}=\left(\prod_v L_v(q_v^{-s})^{-1}\right)
D_F^g\left(\prod_v\frac{\lambda_v}{\int_{A(F_v)}|\omega_v|\mu_v^g}\right)$$
where both products are over all places of $F$.  The factor on the
right is 1 if $v\not\in S$, so to see that the two formulations are
equivalent, it will suffice to show that
$$L_v(q_v^{-1})=\lambda_v$$
for all $v\in S$.  

In fact this equality holds for all $v$.  Indeed, \cite[Lemma, page
182]{Milne72} implies that $L_v(q_v^{-1})$ is equal to
$\#\Pic^0(Y_v)/q_v^g$ where $Y_v$ is the fiber at $v$ of a regular
minimal model of $Y$.  But as we noted in the proof of
Proposition~\ref{prop:invariants}, the assumption that $Y$ has a
rational point implies that $\Pic^0(Y_v)$ is the group of $\F_v$
rational points on $\AAA_v^0$, the identity component of the N\'eron
model of $A$ at $v$.  Thus
$$L_v(q_v^{-1})=\frac{\#\Pic^0(Y_v)}{q_v^g}
=\frac{\#\AAA_v^0(\F_v)}{q_v^g}=\lambda_v.$$
This completes the verification that the two formulations of the BSD
conjecture are equivalent.

\subsection{An analytic class number formula}
Now we turn to the application of the BSD conjecture to a formula for
the order of $\sha(J/\Fq(u))$.

\begin{theorem}\label{thm:sha}
  Assume that $r$ divides $d$, that $d=p^\nu+1$, and that $d$ divides
  $q-1$.  Then the Tate-Shafarevich group $\sha(J/\Fq(u))$ has order
$$|\sha(J/\Fq(u))|=[J(\Fq(u)):V]^2\left(\frac q{p^{2\nu}}\right)^{(r-1)(d-2)/2}.$$
In particular, its order is a power of $p$.  In the special case
$\Fq(u)=K_d$, then
$$|\sha(J/K_d)|=[J(K_d):V]^2.$$
\end{theorem}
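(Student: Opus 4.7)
The plan is to deduce the order of $\sha(J/\Fq(u))$ from the refined BSD formula in Theorem~\ref{thm:BSD}, by substituting in all the invariants that we have already computed. The BSD identity reads
$$
L^*(J/\Fq(u),1) \;=\; \frac{|\sha(J/\Fq(u))|\,R\,\tau(J/\Fq(u))}{|J(\Fq(u))_{\tor}|^2}.
$$
Each of the four factors on the right (other than $\sha$) has been computed elsewhere in the paper, so rearranging gives a closed formula for $|\sha|$. First I would recall from Corollary~\ref{cor:exact-rank} that $L(J/\Fq(u),s)=(1-q^{1-s})^{(r-1)(d-2)}$, so that $L^*(J/\Fq(u),1)=(\log q)^{(r-1)(d-2)}$.

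Next I would handle the regulator. Theorem~\ref{thm:V-vs-MW} says that $V_{\tor}=J(\Fq(u))_{\tor}$ has order $r^3$, so the short exact sequences $0\to V_{\tor}\to V\to V/\tor\to 0$ and $0\to J(\Fq(u))_{\tor}\to J(\Fq(u))\to J(\Fq(u))/\tor\to 0$ yield $[J(\Fq(u)):V]=[J(\Fq(u))/\tor:V/\tor]$. Hence $\det(V/\tor)=[J(\Fq(u)):V]^2\det(J(\Fq(u))/\tor)$, where both discriminants are computed with respect to the $\Q$-valued pairing of Chapter~\ref{ch:heights}. Since the BSD regulator is $R=(\log q)^{(r-1)(d-2)}\det(J(\Fq(u))/\tor)$, we obtain
$$
R \;=\; \frac{(\log q)^{(r-1)(d-2)}\,\det(V/\tor)}{[J(\Fq(u)):V]^2}.
$$
Substituting this into BSD, the factors $(\log q)^{(r-1)(d-2)}$ cancel and I am left with
$$
|\sha(J/\Fq(u))| \;=\; \frac{[J(\Fq(u)):V]^2\,|J(\Fq(u))_{\tor}|^2}{\det(V/\tor)\,\tau(J/\Fq(u))}.
$$

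Finally, I would plug in the three remaining explicit values: $|J(\Fq(u))_{\tor}|^2=r^6$ from Theorem~\ref{thm:V-vs-MW}; $\det(V/\tor)=(d-1)^{(r-1)(d-2)}r^{4-d}d^{2-2r}$ from Corollary~\ref{cor:Vdet}; and $\tau(J/\Fq(u))=q^{-(d-2)(r-1)/2}d^{2r-2}r^{d+2}$ from Proposition~\ref{cor:tau-value}. The powers of $r$ and $d$ miraculously cancel, leaving
$$
|\sha(J/\Fq(u))| \;=\; [J(\Fq(u)):V]^2\;\frac{q^{(r-1)(d-2)/2}}{(d-1)^{(r-1)(d-2)}}.
$$
Since $d=p^\nu+1$, we have $(d-1)^{(r-1)(d-2)}=p^{2\nu\cdot(r-1)(d-2)/2}$, giving the displayed formula. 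For the special case $\Fq(u)=K_d=\Fp(\mu_d)(u)$, I would note that $p^\nu\equiv-1\pmod d$ and no smaller power works (because $p^{\nu'}<d$ for $\nu'<\nu$), so the multiplicative order of $p$ modulo $d$ is exactly $2\nu$ and hence $q=|\Fp(\mu_d)|=p^{2\nu}$; the extra factor becomes $1$.

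There is no real obstacle: all inputs are already established and the proof is essentially an arithmetic simplification. The one point that requires minor care is the normalization of the canonical height (the $\Q$-valued pairing of Chapter~\ref{ch:heights} differs from the BSD regulator by a factor of $(\log q)^{\rk}$), which needs to be tracked correctly so that the $(\log q)$-powers cancel between $L^*$ and $R$.
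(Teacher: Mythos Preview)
Your proposal is correct and follows essentially the same route as the paper: substitute the known values of $L^*$ (Corollary~\ref{cor:exact-rank}), $\det(V/\tor)$ (Corollary~\ref{cor:Vdet}), $\tau$ (Proposition~\ref{cor:tau-value}), and $|J(\Fq(u))_{\tor}|$ (Theorem~\ref{thm:V-vs-MW}) into the BSD formula of Theorem~\ref{thm:BSD}, track the $(\log q)$ normalization, and simplify. The only minor omission is the ``in particular'' clause, which follows immediately since $[J(\Fq(u)):V]$ is a power of $p$ by Theorem~\ref{thm:V-vs-MW} and $q/p^{2\nu}$ is a power of $p$.
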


\begin{proof}
By Corollary~\ref{cor:exact-rank} the leading coefficient of the
$L$-function is 
$$L^*(J/\Fq(u),1)=(\log q)^{(r-1)(d-2)}.$$  
Taking into account the factor of $\log q$ relating the $\Q$-valued
height pairing of Chapter~\ref{ch:heights} and the \Neron-Tate
canonical height, the BSD formula for the leading coefficient says
$$1=\frac{|\sha(J/\Fq(u))|\,\det(J(\Fq(u))/\tor)\,\tau(J/\Fq(u))}
{|J(\Fq(u)))_\tor|^2}.$$

Using that 
$$\det(J(\Fq(u))/\tor)=\frac{\det(V/\tor)}{[J(\Fq(u)):V]^2}$$ 
and our calculations
$$\det(V/\tor)=(d-1)^{(r-1)(d-2)}r^{4-d}d^{2-2r}$$
(Corollary~\ref{cor:Vdet}),
$$\tau(J/\Fq(u))=q^{-(d-2)(r-1)/2}d^{2r-2}r^{d+2}$$
(Proposition~\ref{cor:tau-value}),
and
$$|J(\Fq(u))_\tor|=r^3$$
(Theorem~\ref{thm:V-vs-MW}),
we find
$$|\sha(J/\Fq(u))|=[J(\Fq(u)):V]^2\left(\frac
  q{p^{2\nu}}\right)^{(r-1)(d-2)/2},$$
as desired.  

We showed in Theorem~\ref{thm:V-vs-MW} that $[J(\Fq(u)):V]$ is a
power of $p$, so the same is true of $|\sha(J/\Fq(u))|$.

The assertion for the special case $\Fq(u)=K_d$ follows from the fact
that the field of constants of $K_d$ is $\Fp(\mu_d)=\F_{p^{2\nu}}$.
\end{proof}

\begin{remark}
  Under the hypotheses of this section, it is possible to describe
  $\sha(J/\Fq(u))$ and $J(\Fq(u))/V$ as modules over the group ring
  $\Zp[\Gal(\Fq(u)/\Fp(t))]$ in terms of the combinatorics of the
  action by multiplication of the cyclic group $\langle
  p\rangle\subset(\Z/d\Z)^\times$ on the set $\Z/d\Z\times\mu_r$.
  See \cite[Section 9.4]{Legendre3} for details.
\end{remark}



\chapter{Monodromy of $\ell$-torsion and 
decomposition of the Jacobian}\label{ch:monodromy}


In this chapter, we consider the action of Galois on torsion points of
the Jacobian $J$ and use the results to understand the decomposition
of $J$ up to isogeny into a sum of simple abelian varieties.  Our results
depend heavily on knowledge of the regular proper model $\XX\to\P^1$
constructed in Chapter~\ref{ch:models}.  Interested readers are
referred to \cite{Hall}, where a general technique for computing
monodromy groups of certain superelliptic curves is developed.  The
methods of \cite{Hall} yield results similar to those in this chapter
in a more general context without the need to construct regular
models.

\section{Statement of results}\label{cjh:sec:intro}

Let $k$ be an algebraically closed field of characteristic $p\geq 0$,
let $r\geq 2$ be an integer not divisible by $p$, and let $\ell$ be a
prime satisfying $\ell\neq p$ and $\ell\nmid r$.  As in the rest of
this paper, let $C=C_r$ be the smooth, projective curve over $K=k(t)$
birational to the affine curve given by
\begin{equation}\label{cjh:eqn1}
	y^r = x^{r-1}(x+1)(x+t),
\end{equation}
let $J$ be its Jacobian, and let $J[\ell]$ be the Galois module of
$\ell$-torsion.  

In this chapter, we study the structure of a monodromy group, namely
the Galois group of $K(J[\ell])$ over $K$.  We use the results about
the monodromy group to bound $\ell$-torsion over solvable extensions
of $K$ and to determine how $J$ decomposes up to isogeny into a sum of
simple abelian varieties, both over $K$ and over $\Kbar$.

We first state the consequences of the monodromy result that
motivated its study, then discuss the monodromy result itself.

\begin{theorem}\label{cjh:p1}
  If $L/K$ is an abelian extension, then $J[\ell](L)=\{0\}$.  If
  $\ell>3$ or $r$ is odd, then the same holds for any solvable extension
  $L/K$.
\end{theorem}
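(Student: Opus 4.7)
The plan is to reduce Theorem~\ref{cjh:p1} to a statement about the global monodromy representation $\rho_\ell : \Gal(\Ksep/K) \to \Aut(J[\ell])$ and then to exhibit enough elements of its image. Set $G = \rho_\ell(\Gal(\Ksep/K))$. For any Galois extension $L/K$, $\Gal(\Ksep/L)$ maps onto a normal subgroup $H \triangleleft G$ whose quotient $G/H$ is a quotient of $\Gal(L/K)$, and $J[\ell](L) = J[\ell]^H$. Thus the theorem reduces to showing that $J[\ell]^{[G,G]}=0$ (handling abelian $L/K$), and more strongly that $J[\ell]^{G^{(\infty)}}=0$ when $\ell>3$ or $r$ is odd (handling solvable $L/K$).

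Next I would exploit the $\mu_r$-action on $J$ from Section~\ref{ss:autos}. Since $\ell\nmid pr$, the group ring $\F_\ell[\mu_r]$ is semisimple and decomposes as a product of finite fields indexed by Galois orbits of non-trivial characters of $\mu_r$, yielding
\[
  J[\ell] \;=\; \bigoplus_{\chi} J[\ell]_\chi,
\]
and a dimension count shows each $J[\ell]_\chi$ is free of rank $2$ over $\F_\ell(\chi)$. Since $G$ commutes with the $\mu_r$-action, it preserves this decomposition and acts $\F_\ell(\chi)$-linearly on each piece; let $G_\chi$ denote its image in $\GL_2(\F_\ell(\chi))$. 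The principal polarization of $J$ together with the $\mu_r$-action gives a Weil-type pairing identifying $J[\ell]_\chi$ with $\Hom_{\F_\ell(\chi)}(J[\ell]_{\chi^{-1}},\F_\ell(\chi))$, so $G_\chi$ lies in the appropriate similitude group.

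The heart of the proof is then to show that for every $\chi$ the image $G_\chi$ contains $\SL_2(\F_\ell(\chi))$. This is where I would use Chapter~\ref{ch:models}: by Proposition~\ref{prop:reduction}, $J$ has tame reduction at every place, so each local inertia group is cyclic and its image in $G$ is determined by the structure of the N\'eron model at that place. From Proposition~\ref{prop:connected-component}, the reduction at $t=0$ is purely multiplicative, so inertia there acts on each $J[\ell]_\chi$ by a $\chi$-dependent non-trivial diagonal element; at $t=1$, the unipotent part of the local monodromy acts as a non-trivial transvection on each $J[\ell]_\chi$ (coming from the single multiplicative direction when $r$ is even, and, when $r$ is odd, from the abelian-variety part of the reduction combined with tame inertia). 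The combination of a non-scalar diagonal element and a transvection is well-known to generate $\SL_2$ of the relevant field via Dickson's classification of subgroups of $\GL_2$ over a finite field.

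With $G_\chi\supseteq\SL_2(\F_\ell(\chi))$ for all $\chi$, the conclusions follow quickly. For the abelian case, $[G,G]$ surjects onto $[G_\chi,G_\chi]\supseteq[\SL_2(\F_\ell(\chi)),\SL_2(\F_\ell(\chi))]$, which acts irreducibly with no non-zero fixed vector on $J[\ell]_\chi$ (the commutator subgroup of $\SL_2(\F)$ always contains the unipotent subgroup, which has no fixed vector), giving $J[\ell]^{[G,G]}=0$. For the solvable case, $\SL_2(\F)$ is perfect as soon as $|\F|>3$, so its derived series stabilizes at itself. If $\ell>3$ this holds for every $\chi$. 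If $\ell\in\{2,3\}$ and $r$ is odd, then the subgroup of $(\Z/r\Z)^\times$ generated by $\ell$ has even order on every non-trivial character orbit (a parity check, since $r$ odd forces each $\chi\neq 1$ to satisfy $\chi^\ell\neq\chi$ unless $\ell\equiv 1\bmod r$, in which case one checks the orbit has size $\geq 2$), so $\F_\ell(\chi)$ has cardinality $>3$ and again $\SL_2(\F_\ell(\chi))$ is perfect. The main obstacle will be the inertia computation at $t=1$ for small $\ell$ and $r$, because that is precisely the place whose reduction is not semistable and whose monodromy on the abelian part of the N\'eron model must be extracted carefully from the singular fiber described in Figures~\ref{fig:superelliptic-dual-graph-odd}--\ref{fig:superelliptic-dual-graph-even}.
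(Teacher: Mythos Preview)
Your reduction to monodromy and the $\mu_r$-eigenspace decomposition are correct and match the paper, but the local monodromy analysis is wrong in a way that breaks the argument. Purely toric reduction yields \emph{unipotent} inertia, not diagonal: at $t=0$, on each $J[\lambda]\cong\F_\lambda^2$ the generator $\gamma_0$ acts as $\psmat{1&1\\0&1}$ (Proposition~\ref{prop:local-monodromy}). At $t=1$ with $r>2$, inertia acts \emph{semi-simply} with eigenvalues $1$ and $\zeta^2$: one eigenvalue is forced by $\dim_{\F_\lambda} J[\lambda]^{I_1}=1$ (which one can indeed read off the N\'eron model), the other by $\det g_1=(\det g_0\det g_\infty)^{-1}$. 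In particular, when $r$ is odd there is no multiplicative part at $t=1$ (Proposition~\ref{prop:connected-component}), so your proposed source for a transvection there does not exist; you have essentially swapped the types at the two places. The paper instead introduces a Kummer \emph{twist} of $C$ so that both $\gamma_0$ and $\gamma_\infty$ become unipotent with distinct fixed lines, and then Dickson's theorem applied to this pair of unipotents (with parameter $c=\zeta+\zeta^{-1}-2$) yields $G_{\chi,\lambda}\cong\SL_2(\F_{\lambda^+})$ with $\F_{\lambda^+}=\F_\ell(\zeta+\zeta^{-1})$. This is in general strictly smaller than your asserted $\SL_2(\F_\ell(\chi))$---the extra involution $\sigma$ on the twisted curve pins the monodromy to the ``real'' subfield---though either group would suffice for the torsion statement once correctly established.

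Separately, the Dickson route does not cover $\ell=2$: there the paper uses the explicit hyperelliptic model of the twisted curve to identify $G_\chi$ with the dihedral group $D_{2r}$, whose commutator $C_r$ acts on $J[2]$ by nontrivial characters of order $r$ and hence has no invariants. Without a separate argument of this kind, the abelian case at $\ell=2$ remains unproven in your outline.
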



In the following section we define the ``new part'' of $J$,
denoted $J^{new}_r$, and we show that there is an isogeny 
\begin{equation}\label{eq:isog}
\bigoplus_{s|r}J^{new}_s\longrightarrow J
\end{equation}
over $K$, where the sum runs over positive divisors $s$ of $r$ and
$J^{new}_s$ is the new part of the Jacobian of $C_s$.  It turns out
that $J^{new}_1=J_1=0$ and that $J^{new}_s$ has dimension $\phi(s)$
where $\phi(s)$ is the cardinality of $(\Z/s\Z)^\times$ when $s >1$.
Moreover, the action of $\mu_r$ on $C_r$ induces an action of the
ring of integers $\Z[\zeta_r]\subset\Q(\zeta_r)$ on $J^{new}_r$.  Our
second main result says that $J^{new}_r$ does not decompose further
over certain extensions of $K$:

\begin{theorem}\label{cjh:p2}
  The new part $J^{new}_r$ is simple over $K$, and
  $\End_K(J^{new}_r)\cong\Z[\zeta_r]$.  The same conclusions hold over
  $K(u)$ where $u^d=t$ for any positive integer $d$ not divisible by $p$.
\end{theorem}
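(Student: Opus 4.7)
The plan is to reduce both assertions of the theorem to the single statement $\End_{K(u)}(J^{new}_r) = \Z[\zeta_r]$. The $\mu_r$-action on $C_r$ from Section~\ref{ss:autos}, after passing to the quotient of $\Z[\mu_r]$ by the norm element, furnishes a canonical embedding $\Z[\zeta_r]\into\End_K(J^{new}_r)\subseteq\End_{K(u)}(J^{new}_r)$. Since $\dim J^{new}_r = \phi(r) = [\Q(\zeta_r):\Q]$, the subfield $\Q(\zeta_r)$ is a maximal commutative subalgebra of $\End_{K(u)}(J^{new}_r)\tensor\Q$. Once the reverse inclusion is established, $\End_{K(u)}(J^{new}_r)\tensor\Q = \Q(\zeta_r)$ is a field, so $J^{new}_r$ admits no nontrivial isogeny decomposition over $K(u)$ and must be $K(u)$-simple (hence $K$-simple), and the full endomorphism assertion over $K$ follows automatically.

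To prove $\End_{K(u)}(J^{new}_r)\subseteq\Z[\zeta_r]$, I would study the mod-$\ell$ Galois representation on $J^{new}_r[\ell]$ for a single carefully chosen prime $\ell\nmid pr$. The $\ell$-adic Tate module $V_\ell(J^{new}_r)$ is free of rank $2$ over $\Q(\zeta_r)\tensor_\Q\Q_\ell$, so
\[
J^{new}_r[\ell] \;\cong\; \bigoplus_{\lambda\mid\ell}\F_\lambda^{\,2}
\]
as a module over $\Z[\zeta_r]/\ell\cong\prod_\lambda\F_\lambda$, and the associated representation takes the shape
\[
\rho_\ell\colon\gal(\Kbar/K(u))\longrightarrow\prod_{\lambda\mid\ell}\GL_2(\F_\lambda).
\]
A standard Schur-type argument shows that if $\im(\rho_\ell)$ contains $\SL_2(\F_\lambda)$ in each factor, then the $\rho_\ell$-equivariant endomorphisms of $J^{new}_r[\ell]$ form precisely the subring $\prod_\lambda\F_\lambda$. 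Running this through Tate's theorem on endomorphisms of abelian varieties (known over function fields by Zarhin) then yields $\End_{K(u)}(J^{new}_r)\tensor\Z_\ell\subseteq\Z_\ell[\zeta_r]$, which combined with the first paragraph gives the required equality.

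The main obstacle is proving that $\im(\rho_\ell)$ is this large, and here I would lean on the minimal regular model $\XX\to\P^1_u$ of Chapter~\ref{ch:models}. At $u=0$ the reduction is semistable with $r-1$ vanishing cycles (Section~\ref{ss:bad-fibers}), so by Picard--Lefschetz the local inertia at $u=0$ acts on $J[\ell]$ as a product of commuting symplectic transvections; the $\mu_r$-equivariance of the whole set-up should distribute a nontrivial transvection into each $\lambda$-factor of $J^{new}_r[\ell]$. By Dickson's classification of subgroups of $\GL_2(\F_\lambda)$ --- in the form packaged in \cite{Hall} --- any subgroup containing a nontrivial transvection and acting absolutely irreducibly contains $\SL_2(\F_\lambda)$, so it remains only to secure absolute irreducibility on each $\lambda$-factor after the base change $K\subseteq K(u)$. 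This is the most delicate point: I would supply it by exhibiting Frobenius at a suitable place of good reduction whose characteristic polynomial, computed from the Jacobi-sum formula of Chapter~\ref{ch:L}, has distinct roots generating a proper extension of $\F_\lambda$ for a positive density of places. All of these ingredients (semistable vanishing cycle, Frobenius eigenvalues) are geometric in nature, which is what allows the argument to go through uniformly for every $d$ prime to $p$ simultaneously.
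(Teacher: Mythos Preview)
Your overall strategy---reduce to a centralizer computation via Zarhin's theorem and control the mod-$\ell$ monodromy of $J^{new}_r[\ell]$---is reasonable and close in spirit to what the paper does. However, there is a concrete error in your monodromy input: the Dickson criterion you invoke is false. An absolutely irreducible subgroup of $\GL_2(\F_\lambda)$ containing a nontrivial transvection contains $\SL_2(\F')$ for \emph{some} subfield $\F'\subseteq\F_\lambda$, not necessarily $\SL_2(\F_\lambda)$ itself. And in fact the image here is strictly smaller: the paper computes (Proposition~\ref{prop:lambda-monodromy} and Theorem~\ref{thm:monodromy}) that the projection of the monodromy to $\GL_2(\F_\lambda)$ is $\mu_r\cdot\SL_2(\F_{\lambda^+})$, where $\F_{\lambda^+}$ is the residue field of the real subring $\Z[\zeta_r]^+$---a proper subfield of $\F_\lambda$ in the inert case. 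This smallness is not accidental; it is forced by the extra involution $\sigma$ on the twist $C_\chi$, which is exactly what produces the $\Kbar$-decomposition $J^{new}_r\sim B^2$ of Theorem~\ref{thm:decomp}. Separately, your Frobenius argument for irreducibility cannot be run as stated, since in this chapter $k$ is taken to be algebraically closed.

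The paper's route differs from yours in a structural way. Rather than bounding $\End_{K(u)}$ directly from a mod-$\ell$ centralizer, it first computes the monodromy exactly (using the local monodromy at all three cusps $0,1,\infty$, not just the transvection at $u=0$), uses this to determine $\End_{\Kbar}(J^{new}_r)\otimes\Q=M_2(\Q(\zeta_r)^+)$ via Theorem~\ref{thm:decomp}, and then descends: over any $F$ linearly disjoint from $K((1-t)^{1/r})$ the monodromy still contains the scalar $\zeta_r$, which does not commute with the extra generator $\sigma$, so $\sigma\notin\End_F(J^{new}_r)$ and only $\Z[\zeta_r]$ survives. Your approach can be repaired---once one knows the image is $\mu_r\cdot\SL_2(\F_{\lambda^+})$, the $\F_\ell$-centralizer is indeed $\prod_\lambda\F_\lambda$, since the $\mu_r$-scalars act by pairwise distinct characters on distinct $\lambda$-factors and hence kill all cross terms---but establishing that requires the same precise local-monodromy computation the paper carries out, not merely a transvection at one cusp.
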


If $r=2$, $J^{new}_r$ is an elliptic curve, so is obviously absolutely
simple.  Moreover, it is non-isotrivial, so $\End_{\overline
  K}(J^{new}_r)=\Z$.  For $r>2$, although $J^{new}_r$ is simple over
many extensions of $K$, we see below it is not absolutely
simple.

Write $\Z[\zeta_r]^+=\Z[\zeta_r+\zeta_r^{-1}]$ for the ring of
integers in the real cyclotomic field $\Q(\zeta_r)^+$.

\begin{theorem}\label{thm:decomp}
Suppose that $r>2$, and let $K'=K((1-t)^{1/r})$.  Then there is an
abelian variety $B$ defined over $K$ such that:
\begin{enumerate}
\item There is an isogeny $J^{new}_r\to B^2$ defined over
  $K'$ whose kernel is killed by multiplication by $2r$.
\item $\End_K(B)=\End_{\overline K}(B)=\Z[\zeta_r]^+$, and $B$ is
  absolutely simple.
\end{enumerate}
In particular, $\End_{\overline K}(J^{new}_r)$ is isomorphic to an
order in $M_2(\Z[\zeta_r]^+)$.
\end{theorem}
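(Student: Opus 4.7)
The plan is to produce the decomposition by exhibiting an explicit involution of $C_r$ defined over $K'$ which, together with the $\mu_r$-action, generates a split quaternion algebra in $\End^0_{K'}(J^{new}_r)$. An idempotent decomposition then yields the isogeny $J^{new}_r\to B^2$; the harder part is descending $B$ from $K'$ to $K$ and ruling out additional endomorphisms over $\overline K$.

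Concretely, I would first construct an automorphism $\sigma$ of $C_r$ by the M\"obius substitution $\sigma(x)=-t(x+1)/(x+t)$, which permutes the ramification set $\{0,-1,-t,\infty\}$ of the map $(x,y)\mapsto x$ as $(0\ {-1})(-t\ \infty)$. A direct computation in the affine model shows that the pullback of $x^{r-1}(x+1)(x+t)$ under $\sigma$ equals $(-t)^r(t-1)^2\bigl(x(x+1)/(x+t)\bigr)^r\cdot y^{-r}$, so $\sigma$ extends to a curve automorphism by setting
\[
  \sigma(y)\;=\;\frac{-t\,(1-t)^{2/r}\,x(x+1)}{(x+t)\,y},
\]
for a suitable choice of $r$-th root. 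Since $r>2$, $(1-t)^{2/r}\in K'=K((1-t)^{1/r})$, so $\sigma$ is defined over $K'$. A short calculation verifies $\sigma^2=\mathrm{id}$ on $C_r$ and $\sigma\circ\zeta_r=\zeta_r^{-1}\circ\sigma$ in $\Aut_{K'}(C_r)$. Since $\sigma$ preserves the tower of intermediate covers $C_r\to C_{s}$ (it respects the ramification structure in a way compatible with the projections of Section 1), it preserves the new-part decomposition \eqref{eq:isog} and induces an involution of $J^{new}_r$, still denoted $\sigma$, satisfying $\sigma\zeta_r\sigma=\zeta_r^{-1}$.

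The subalgebra of $\End^0_{K'}(J^{new}_r)$ generated by $\zeta_r$ and $\sigma$ is then the $\Q(\zeta_r)^+$-algebra with presentation $\sigma^2=1$, $\sigma\zeta_r=\zeta_r^{-1}\sigma$; writing $c=2\zeta_r-(\zeta_r+\zeta_r^{-1})$ one finds $c^2=(\zeta_r+\zeta_r^{-1})^2-4\in\Q(\zeta_r)^+$ and $c\sigma=-\sigma c$, exhibiting it as the quaternion algebra $\bigl((\zeta_r+\zeta_r^{-1})^2-4,\,1\bigr)_{\Q(\zeta_r)^+}$, which is split, i.e.\ isomorphic to $M_2(\Q(\zeta_r)^+)$. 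The idempotent $e=(1+\sigma)/2$ and its conjugate $1-e$ are rank-one idempotents in this $M_2$, and multiplication by $r\cdot 2e$ and $r\cdot 2(1-e)$ (the factor $r$ coming from the projector onto $J^{new}_r$ inside $J_r$, cf.\ \eqref{eq:isog}) gives honest endomorphisms of $J^{new}_r$ whose images are isogenous abelian subvarieties $B,B'\subset J^{new}_r$ over $K'$, each of dimension $\phi(r)/2$. Conjugation by $\zeta_r-1\in\Q(\zeta_r)$ carries $e$ to $1-e$, so $B\sim B'$ over $K'$, producing an isogeny $J^{new}_r\to B^2$ over $K'$ whose kernel is annihilated by $2r$. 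Taking commutants in $M_2(\Q(\zeta_r)^+)$ shows $\End^0_{K'}(B)\supseteq\Q(\zeta_r)^+$, while Theorem \ref{cjh:p2} over $K(u)$ with $u^r=1-t$ forces the reverse containment, so $\End_{K'}(B)=\Z[\zeta_r]^+$.

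The remaining and most delicate step is to descend $B$ from $K'$ to $K$. Let $\gamma$ generate $\Gal(K'/K)\cong\mu_r$, acting by $(1-t)^{1/r}\mapsto\zeta_r(1-t)^{1/r}$. One checks from the explicit formula that $\gamma(\sigma)=\zeta_r^2\cdot\sigma$ as elements of $\End^0_{K'}(J^{new}_r)$ (the factor $\zeta_r^2$ coming from $\gamma\bigl((1-t)^{2/r}\bigr)=\zeta_r^2(1-t)^{2/r}$), so that the quaternion subalgebra is $\Gal(K'/K)$-stable and its center $\Q(\zeta_r)^+\subset M_2(\Q(\zeta_r)^+)$ is already defined over $K$. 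The two orthogonal idempotents $e$ and $1-e$ are permuted by a nontrivial element of $\Gal(K'/K)$, and this Galois cocycle is a coboundary in $M_2(\Q(\zeta_r)^+)^\times$ after passing to an inner form; by Weil descent for abelian varieties, this produces $B$ over $K$ together with an isomorphism $J^{new}_r\otimes_K K'\sim (B\otimes_K K')^2$. Absolute simplicity of $B$ and $\End_{\overline K}(B)=\Z[\zeta_r]^+$ then follow because any enlargement of $\End_{\overline K}(B)$ would enlarge $\End^0_{\overline K}(J^{new}_r)$ beyond $M_2(\Q(\zeta_r)^+)$, contradicting the bound on $\End^0_{\overline K}(J^{new}_r)$ obtained from the monodromy computations of this chapter (namely that the image of Galois on $T_\ell J^{new}_r$ contains a torus whose commutant in $M_{2\phi(r)}(\Q_\ell)$ is exactly $\Q(\zeta_r)\otimes\Q_\ell\cdot\sigma+\Q(\zeta_r)\otimes\Q_\ell$). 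The main obstacle is the Galois-descent step: constructing $B$ over $K$ from the $K'$-data and verifying the $2r$-bound on the kernel is tight, since the naive descent produces an inner twist whose isomorphism class over $K$ requires the triviality of a class in $H^1(\Gal(K'/K),\mathrm{PGL}_2(\Q(\zeta_r)^+))$.
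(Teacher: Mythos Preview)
Your overall strategy—find an involution $\sigma$ on $C_r$ with $\sigma\zeta_r=\zeta_r^{-1}\sigma$, use the idempotents $(1\pm\sigma)/2$ to split $J_r^{new}$ up to isogeny, then bound the endomorphism ring via monodromy—matches the paper's. But the paper avoids the descent problem you flag as ``the main obstacle'' by a simple change of model: it works not with $C_r$ but with the twist $C_\chi$ given by $(1-t)xy^r=(x+1)(x+t)$, on which the involution
\[
\sigma(x,y)=\Bigl(\tfrac{-x-t}{x+1},\ \tfrac{1}{y}\Bigr)
\]
is defined over $K$. Then $B:=(1+\sigma)A_\chi$ is already a $K$-abelian variety, and since $A_\chi\cong J_r^{new}$ over $K'$, the isogeny $J_r^{new}\to B^2$ over $K'$ follows immediately—no Weil descent, no cocycle to trivialize. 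Your M\"obius involution on $C_r$ is exactly what you get by transporting $\sigma$ across the $K'$-isomorphism $C_\chi\cong C_r$, which is why you pick up the factor $(1-t)^{2/r}$ and are forced over $K'$.

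As written, your descent step has a genuine gap. You assert that the idempotents $e,1-e$ are ``permuted'' by $\Gal(K'/K)$, but from $\gamma(\sigma)=\zeta_r^2\sigma$ one gets $\gamma(e)=(1+\zeta_r^2\sigma)/2$, which is a conjugate of $e$ (by $\zeta_r$), not $1-e$. One can build coherent descent data from this, but you have not done so, and the sentence about $H^1(\Gal(K'/K),\mathrm{PGL}_2(\Q(\zeta_r)^+))$ is a placeholder rather than an argument. There are also smaller slips: the element carrying $e$ to $1-e$ is $\delta=\zeta_r-\zeta_r^{-1}$ (which anticommutes with $\sigma$), not $\zeta_r-1$; the factor $r$ in the $2r$ bound comes from the norm of $\delta$ in $\Z[\zeta_r]$, not from the new-part projector; and Theorem~\ref{cjh:p2} is stated only for extensions $K(t^{1/d})$, so it does not directly apply over $K'=K((1-t)^{1/r})$. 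For part~(2) the paper instead picks a prime $\ell$ large compared to $[F:K]$ (where $F$ is a field of definition for $\End_{\overline K}(B)$), shows the monodromy of $A_\chi[\ell]$ over $F$ is still the full $\SL_2(\OO^+/\ell)$, and uses the double centralizer theorem in $M_{\phi(r)}(\Fl)$ to bound $\dim_{\Fl}(\End_F(B)/\ell)\le\phi(r)/2$, whence $\End_{\overline K}(B)=\Z[\zeta_r]^+$.
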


In Section~\ref{ss:twist} below, we introduce a twist $C_\chi$ of $C$
(closely related to the extension $K'/K$) with Jacobian $J_\chi$ and
new part $A_\chi:=J_\chi^{new}$.  The curve $C_\chi$ has an involution
$\sigma$ that allows us to show that $A_\chi$ is isogenous to
$B\times B$ over $K$.  Since $A_\chi$ becomes isomorphic to
$J_r^{new}$ over $K'$, this explains the factorization in
Theorem~\ref{thm:decomp}.

The theorems above are applications of results on the monodromy groups
of $J[\ell]$ and $J_\chi[\ell]$, in other words on the image of the
natural homomorphisms from $\gal(K^{sep}/K)$ to
$\Aut_{\F_\ell}(J[\ell])$ and $\Aut_{\F_\ell}(J_\chi[\ell])$.  Our
detailed knowledge of the regular proper model $\XX\to\P^1$ of $C$ and
of the N\'eron model of $J$ (in Chapter~\ref{ch:models}) together with
some group theory allow us to determine the monodromy groups.

To define the group-theoretic structure of the monodromy group,
consider $\Lambda=\Fl[z]/(z^{r-1}+\cdots+1)$, which is a quotient of
the group ring of $\mu_r$ over $\Fl$.  The torsion points $J[\ell]$
and $J_\chi[\ell]$ have natural structures of free, rank 2 modules
over $\Lambda$, and $J_\chi[\ell]$ admits an action of $\sigma$ that
``anti-commutes'' with the $\mu_r$ action.  We ultimately find that
for $\ell>3$, the monodromy group of $J_\chi[\ell]$ is
$$\SL_2(\Lambda^+)\subset\GL_2(\Lambda)$$
where $\Lambda^+$ is the subring of $\Lambda$ generated by
$\zeta+\zeta^{-1}$ and $\zeta$ is the class of $z$ in $\Lambda$.  This
is very natural, because $\SL_2(\Lambda^+)$ is the commutator subgroup
of the centralizer in $\GL_2(\Lambda)$ of the semi-direct product
$\mu_r\sdp\langle\sigma\rangle$.  The results of \cite{Hall} extend
this conclusion to a broad class of superelliptic Jacobians.


\section{New and old}
In this section, we establish the decomposition of $J$ into new and old
parts, leading to the isogeny~\eqref{eq:isog}.

It is convenient to work with coordinates on $C$ different than
those in \eqref{cjh:eqn1}.  Namely, for $s$ a positive divisor of $r$,
let $C_s$ be the smooth, projective curve birational to the affine
curve
\begin{equation}\label{eq:Cs}
x_sy_s^s=(x_s+1)(x_s+t).
\end{equation}
(For $s=r$, the coordinates here are related to those in
\eqref{cjh:eqn1} by $(x,y)=(x_r,x_ry_r)$.)  For positive divisors $s$
and $s'$ of $r$ with $s'$ dividing $s$, there is a morphism
$\pi_{s,s'}:C_s\to C_{s'}$ defined by
$\pi_{s,s'}:(x_s,y_s)\mapsto(x_{s'},y_{s'})=(x_s,y_s^{s/s'})$.

Let $J_s$ be the Jacobian of $C_s$; it is a principally polarized
abelian variety of dimension $s-1$.  By Albanese functoriality (push
forward of divisors), $\pi_{s,s'}$ induces a map $J_s\to J_{s'}$, which
we denote again by $\pi_{s,s'}$.  Picard functoriality (pull back of
divisors) induces another map $\pi^*_{s,s'}:J_{s'}\to J_s$.
Considering $\pi_{s,s'}$ and $\pi^*_{s,s'}$ at the level of divisors
shows that the endomorphism $\pi_{s,s'}\compose\pi^*_{s,s'}$ of
$J_{s'}$ is multiplication by $s/s'$.

The group $\mu_r\subset k^\times$ acts on $C_r$ by
$\zeta_r(x_r,y_r)=(x_r,\zeta_ry_r)$.  We let $\mu_r$ act on $C_s$ via
the quotient map $\mu_r\to\mu_s$, so that
$\zeta_r(x_s,y_s)=(x_s,\zeta_r^{r/s}y_s)$.  With these definitions,
the induced maps $\pi_{s,s'}:J_s\to J_{s'}$ and
$\pi^*_{s,s'}:J_{s'}\to J_s$ are equivariant for the $\mu_r$ actions.

Let $R$ be the group ring $\Z[\mu_r]$.  (This agrees with the
definition of $R$ in Section~\ref{ss:R} since $d=1$.)  Then
$\pi_{s,s'}$ and $\pi^*_{s,s'}$ are homomorphisms of $R$-modules.

Now we define $J^{new}_s$ as the identity component of the
intersection of the kernels of $\pi_{s,s'}$ where $s'$ runs through
positive divisors of $s$ strictly less than $s$:
$$J^{new}_s:=\left(\bigcap_{s'<s}\ker
    \left(\pi_{s,s'}:J_s\to J_{s'}\right)\right)^0.$$
Note that $J^{new}_s$ is preserved by the action of $\mu_r$ on $J_s$.

The main result of this section is a decomposition of $J_r$ up to
isogeny. 

\begin{proposition}\label{prop:isog}
For $s>1$, the dimension of $J^{new}_s$ is $\phi(s)$ and
$J^{new}_1=J_1=0$.
The homomorphism
\begin{align*}
\bigoplus_{s|r}J^{new}_s&\to\quad J\\
(z_s)\qquad&\mapsto\quad\sum_{s|r}\pi^*_{r,s}(z_s)
\end{align*}
is an isogeny whose kernel is killed by multiplication by $r$.
\end{proposition}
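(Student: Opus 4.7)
The plan is to argue by strong induction on $r$. The easy observations come first: $J_1=0$ since the equation $xy=(x+1)(x+t)$ gives a rational parametrization of $C_1$, and the case when $r$ is prime is trivial, since then $J_r^{\mathrm{new}}=J_r$ and the sum map $\phi$ is the identity on a single nontrivial summand.

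For the dimension count and the isogeny assertion, I would decompose by $\mu_r$-characters. By an analog of Lemma~\ref{lemma:R-basis-of-1-forms}, the space $H^0(C_s,\Omega^1)$ has a basis of $\mu_s$-eigenvectors realizing each of the $s-1$ non-trivial $\mu_s$-characters with multiplicity one. Since $\pi_{r,s}:C_r\to C_s$ is Galois with group $\mu_{r/s}$, the pullback $\pi_{r,s}^*$ identifies $\mathrm{Lie}(J_s)$ with the subspace of $\mathrm{Lie}(J_r)$ spanned by those $\mu_r$-characters of order dividing $s$; taking the connected intersection of kernels of the $\pi_{s,s'}$ then identifies $\mathrm{Lie}(J_s^{\mathrm{new}})$ with the span of characters of exact order $s$, so $\dim J_s^{\mathrm{new}}=\phi(s)$ for $s>1$. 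Since $\sum_{1<s\mid r}\phi(s)=r-1=\dim J_r$ and the tangent map of $\phi$ is a sum of identifications on disjoint character spaces, $\phi$ is an isogeny with finite kernel.

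To bound the kernel, let $(z_s)\in\ker\phi$ and apply $\pi_{r,s}$ to $\sum_{s'\mid r}\pi_{r,s'}^*(z_{s'})=0$ for each proper divisor $s$ of $r$. The key computation is the base-change identity
$$\pi_{m,s}\pi_{m,s'}^*=\pi_{s,d}^*\pi_{s',d}\qquad(\text{as maps }J_{s'}\to J_s),$$
with $m=\lcm(s,s')$ and $d=\gcd(s,s')$; this rests on the fact that (since $\gcd(s/d,s'/d)=1$) the curve $C_m$ is the fiber product $C_s\times_{C_d}C_{s'}$. Combined with $\pi_{r,s}\pi_{r,s}^*=[r/s]$ and the functoriality $\pi_{r,s}=\pi_{m,s}\circ\pi_{r,m}$, $\pi_{r,s'}^*=\pi_{r,m}^*\circ\pi_{m,s'}^*$, a short case analysis shows that $\pi_{r,s}\pi_{r,s'}^*|_{J_{s'}^{\mathrm{new}}}$ vanishes unless $s'\mid s$, in which case it equals $[r/s]\pi_{s,s'}^*$. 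This yields
$$0=\pi_{r,s}\bigl(\phi((z_{s'}))\bigr)=[r/s]z_s+[r/s]\sum_{s'\mid s,\;s'<s}\pi_{s,s'}^*(z_{s'}),$$
so $[r/s]z_s\in J_s^{\mathrm{new}}\cap J_s^{\mathrm{old}}$, where $J_s^{\mathrm{old}}:=\sum_{s'\mid s,\;s'<s}\pi_{s,s'}^*(J_{s'})$.

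By the inductive hypothesis applied to $s<r$, the isogeny $\bigoplus_{s''\mid s}J_{s''}^{\mathrm{new}}\to J_s$ has kernel killed by $s$, which immediately forces $J_s^{\mathrm{new}}\cap J_s^{\mathrm{old}}$ to be killed by $s$; hence $[r]z_s=s\cdot[r/s]z_s=0$ for each proper divisor $s<r$. For the remaining case $s=r$, the relation $z_r=-\sum_{s'<r}\pi_{r,s'}^*(z_{s'})$ together with $[r]z_{s'}=0$ just established gives $[r]z_r=0$. Thus $\ker\phi$ is killed by $r$, completing the proof. The principal technical hurdle is the push-pull identity above, whose verification reduces to checking that $C_m$ is the smooth fiber product $C_s\times_{C_d}C_{s'}$—a consequence of the coprimality of $s/d$ and $s'/d$ together with the explicit description of each cover as adjoining a radical.
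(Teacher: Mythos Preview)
Your dimension computation via $\mu_r$-eigenforms is exactly the paper's argument. The kernel bound, however, is organized differently. The paper inducts on the number of prime factors of $r$: for a prime $\ell_1\mid r$ it applies $\pi_{r,r/\ell_1}$ to the relation $\sum_{s\mid r}\pi_{r,s}^*(z_s)=0$ and uses only the special identity
\[
\pi_{r,r/\ell_1}\circ\pi_{r,s}^*=
\begin{cases}
\ell_1\,\pi_{r/\ell_1,s}^* & \text{if } s\mid r/\ell_1,\\
\pi_{r/\ell_1,s/\ell_1}^*\circ\pi_{s,s/\ell_1} & \text{otherwise.}
\end{cases}
\]
The second branch vanishes on $J_s^{\mathrm{new}}$, so one obtains $\ell_1\sum_{s\mid r/\ell_1}\pi_{r/\ell_1,s}^*(z_s)=0$; by induction each such $z_s$ is $r$-torsion, and varying $\ell_1$ over the prime divisors of $r$ handles every proper $s$. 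This is a special case of your push--pull formula (take $s=r/\ell_1$), so your route is more general but also requires more: you need the full identity $\pi_{m,s}\pi_{m,s'}^*=\pi_{s,d}^*\pi_{s',d}$ and the auxiliary fact that $J_s^{\mathrm{new}}\cap J_s^{\mathrm{old}}$ is killed by $s$, which you extract from the inductive hypothesis. Both arguments are valid.

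One caution on your justification: the scheme-theoretic fiber product $C_s\times_{C_d}C_{s'}$ is \emph{not} smooth when $s/d,s'/d>1$, since both covers are totally ramified over the same points of $C_d$ (locally the fiber product looks like $z^{s/d}=w^{s'/d}$, singular at the origin). What is true, and what suffices, is that $C_m$ is the \emph{normalization} of this fiber product, or equivalently that the map $C_m\to C_s\times_{C_d}C_{s'}$ is birational; the push--pull identity then holds because the two correspondences define the same reduced $1$-cycle in $C_s\times C_{s'}$. Alternatively, one checks the identity directly on divisors using the decomposition $\mu_{m/d}\cong\mu_{m/s}\times\mu_{m/s'}$. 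Either way the formula stands, but the phrase ``smooth fiber product'' should be dropped.
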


\begin{proof}
  The cotangent space at the origin of $J_s$ is canonically isomorphic
  to the space of 1-forms $H^0(C_s,\Omega^1_{C_s/k})$, so we may
  compute the differential of $\pi^*_{s,s'}:J_{s'}\to J_s$ by
  examining its effect on 1-forms.

  We computed the space of 1-forms on $C_s$ in the proof of
  Lemma~\ref{lemma:R-basis-of-1-forms}.  In terms of the coordinates
  used here, $H^0(C_s,\Omega^1_{C_s/k})$ has a basis consisting of
  eigenforms for the action of $\mu_r$, namely
  $\omega_{s,i}=y_s^{-i}dx_s/x_s$ for $i=1,\dots,s-1$.  It is then
  evident that
$\pi^*_{s,s'}$ induces the inclusion on $1$-forms
$$H^0(C_{s'},\Omega^1_{C_{s'}/k})\hookrightarrow 
H^0(C_{s},\Omega^1_{C_{s}/k})$$
that sends $\omega_{s',i}$ to $\omega_{s,(s/s')i}$.

It follows that the cotangent space of $J^{new}_s$ is spanned
by the 1-forms $\omega_{s,i}$ where $i$ is relatively prime to $s$.
In particular, for $s>1$, the dimension of $J^{new}_s$ is $\phi(s)$.
For $s=1$, $C_s$ has genus 0, so $J^{new}_1=J_1=0$.

It is also clear that the map displayed in the statement of the
proposition induces an isomorphism on the cotangent spaces, so it is a
separable isogeny.  It remains to prove that the kernel is killed by
$r$.

Write $r$ as a product of primes $r=\ell_1\cdots\ell_m$.  We proceed
by induction on $m$.   If $r=\ell_1$ is prime, the result is obvious,
since $J_1=0$ and $J^{new}_{\ell_1}=J_{\ell_1}$. 

Before giving the proof for general $r$, we note that if
$\ell_1$ divides $r$, considering the action of the maps $\pi_{s,s'}$
on divisors yields the formula:  
\begin{equation}\label{eq:pi's}
\pi_{r,r/\ell_1}\compose\pi_{r,s}^*=
\begin{cases}
\ell\pi_{r/\ell_1,s}&\text{if $s$ divides $r/\ell_1$,}\\
\pi_{r/\ell_1,s/\ell_1}^*\compose\pi_{s,s/\ell_1}&\text{otherwise.}
\end{cases}
\end{equation}

Now suppose that $(z_s)_{s|r}$ is in the kernel, i.e.,
$$0=\sum_{s|r}\pi_{r,s}^*(z_s)$$
in $J_r$.  Applying $\pi_{r,r/\ell_1}$ and using the
formula~\eqref{eq:pi's}, we have
\begin{align*}
0&=\sum_{s|r}\pi_{r,r/\ell_1}\pi_{r,s}^*(z_s)\\
&=\ell_1\sum_{s|(r/\ell_1)}\pi_{r/\ell_1,s}(z_s)+
\sum_{s\nmid(r/\ell_1)}\pi_{r/\ell_1,s/\ell_1}^*\pi_{s,s/\ell_1}(z_s)\\
&=\ell_1\sum_{s|(r/\ell_1)}\pi_{r/\ell_1,s}(z_s)
\end{align*}
where the last equality holds because $z_s$ is in $J^{new}_s$, so is
killed by $\pi_{s,s/\ell_1}$.  By induction, each $\ell_1z_s$ is
killed by $r/\ell_1$, so each $z_s$ with $s|(r/\ell_1)$ is
$r$-torsion.  Repeating the argument with $\ell_1$ replaced by the
other $\ell_i$ implies that all the $z_s$ with $s<r$ are $r$-torsion.
Finally, the equality $0=\sum_{s|r}\pi_{r,s}^*(z_s)$ in $J_r$ implies
that $z_r$ is $r$-torsion as well.
\end{proof}

\begin{remarks}\mbox{}
\begin{enumerate}
\item We used that $J_1=0$, but this is not necessary.  A slight
  variant of the argument works for the new part of any cyclic cover
  $C_r\to C_1$ even when $C_1$ is not assumed to be rational.  
\item Temporarily write $J^{new,sub}_r$ for $J^{new}_r$ as defined
  above.  We could also consider a new quotient:
$$J^{new,quot}_r=\frac{J_r}{\sum_{s<r}\pi_{r,s}^*J_s}.$$
Arguments similar to those in the proof above show that the natural
map $J^{new,sub}_r\to J_r\to J^{new,quot}_r$ is an isogeny whose
kernel is killed by $r$.
\end{enumerate}
\end{remarks}

\begin{cor}\label{cor:torsion-decomp}
Suppose that $\ell$ is a prime not dividing $r$.
Then there is an isomorphism of $\Fl$-vector spaces
$$\bigoplus_{s|r}J^{new}_s[\ell]\cong J_r[\ell]$$
compatible with the action of $\mu_r$ and the action of the Galois
group $\Gal(K^{sep}/K)$.
\end{cor}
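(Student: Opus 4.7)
The plan is to deduce this corollary directly from the isogeny constructed in Proposition~\ref{prop:isog}. Let
$$\Phi:\bigoplus_{s|r}J^{new}_s\longrightarrow J_r,\qquad (z_s)\mapsto\sum_{s|r}\pi^*_{r,s}(z_s)$$
be that isogeny. I would first verify that $\Phi$ is defined over $K$ and commutes with the $\mu_r$-action: the curves $C_s$ and the morphisms $\pi_{s,s'}$ are all defined over $K$, so the Albanese and Picard maps $\pi_{s,s'}$, $\pi^*_{s,s'}$, the subvarieties $J^{new}_s$, and hence $\Phi$ itself, are all defined over $K$; the $\mu_r$-equivariance was noted in Section 2 when defining the $\pi_{s,s'}$.

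The key input is that the kernel of $\Phi$ is annihilated by $r$. Since $\ell$ is prime to $r$, multiplication by $r$ is an automorphism of the $\ell$-torsion subgroup scheme of $\bigoplus_{s|r}J^{new}_s$, so $\ker(\Phi)\cap\bigl(\bigoplus_s J^{new}_s\bigr)[\ell]=0$. Thus the induced map
$$\Phi[\ell]:\bigoplus_{s|r}J^{new}_s[\ell]\longrightarrow J_r[\ell]$$
is injective. A dimension count finishes the argument: by Proposition~\ref{prop:isog}, $\dim J^{new}_s=\phi(s)$ for $s>1$ and $J^{new}_1=0$, so
$$\sum_{s|r}\dim_{\F_\ell}J^{new}_s[\ell]=2\sum_{s|r,\,s>1}\phi(s)=2(r-1)=2\dim J_r=\dim_{\F_\ell}J_r[\ell],$$
where the penultimate equality uses Lemma~\ref{cjh:l40}. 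Therefore $\Phi[\ell]$ is an isomorphism of $\F_\ell$-vector spaces.

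The compatibilities follow for free. Since $\Phi$ itself is $\mu_r$-equivariant and defined over $K$, the restriction $\Phi[\ell]$ is automatically equivariant for $\mu_r$ and for $\gal(K^{sep}/K)$. There is essentially no obstacle here; the whole force of the statement was already packaged into Proposition~\ref{prop:isog}, and the present corollary is just the observation that an isogeny whose kernel has exponent prime to $\ell$ restricts to an isomorphism on $\ell$-torsion.
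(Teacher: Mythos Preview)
Your proof is correct and follows exactly the approach the paper intends: the paper's own proof is the single sentence ``The isomorphism is immediate from Proposition~\ref{prop:isog}, since $\ell$ does not divide $r$,'' and you have simply unpacked what ``immediate'' means here (kernel killed by $r$ implies trivial intersection with $\ell$-torsion, then count dimensions).
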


\begin{proof}
  The isomorphism is immediate from Proposition~\ref{prop:isog},
since $\ell$ does not divide $r$.
\end{proof}

\section{Endomorphism rings}\label{s:rings}
In this section we define a ring $\Lambda$ that acts naturally on
$J[\ell]$ and record some auxiliary results about it.  As always, $r>1$
is an integer and $\ell$ is a prime not dividing $r$.

\subsection{Definition of $\Lambda$}
For each positive divisor $s$ of $r$, let $\Phi_s(z)$ be the $s$-th
cyclotomic polynomial, and let $\Psi_s(z)=z^{s-1}+\cdots+1$.  Then
$$\prod_{s|r}\Phi_s(z)=z^r-1\quad\text{and}\quad
\prod_{1<s|r}\Phi_s(z)=\Psi_r(z).$$

Consider the group ring of $\mu_r$ over $\Fl$:
$$\Fl[\mu_r]\cong\frac{\Fl[z]}{(z^r-1)}$$
and its quotient
$$\Lambda:=\frac{\Fl[z]}{(\Psi_r(z))}=\frac{\Fl[z]}{(z^{r-1}+\cdots+1)}.$$
We often write $\zeta$ for the class of $z$ in $\Fl[\mu_r]$ or
$\Lambda$.

Since $\ell$ does not divide $r$, the $r$-th roots of unity are
distinct in $\Flbar$, so the polynomials $\Phi_s$ are pairwise
relatively prime in $\Fl[z]$.  By the Chinese Remainder Theorem,
$$\Lambda=\frac{\Fl[z]}{(\Psi_r(z))}\cong
\prod_{1<s|r}\frac{\Fl[z]}{(\Phi_s(z))}$$
and 
$$\Fl[\mu_r]\cong\Fl\oplus\Lambda$$
where $(1+\zeta+\cdots+\zeta^{r-1})/r$
on the left corresponds to $(1,0)$ on the right.

Note that $\OO_s:=\Z[z]/(\Phi_s(z))$ is isomorphic to the ring of
integers $\Z[\zeta_s]$ in the cyclotomic field $\Q(\zeta_s)$ and that
$\OO_s/\ell\cong\Fl[z]/(\Phi_s(z))$.  Therefore
$$\Lambda\cong\prod_{1<s|r}\OO_s/\ell,$$
and $\zeta$ on the left maps to an $s$-th root of unity $\zeta_s$ in
the factor $\OO_s/\ell$ on the right, justifying the notational use of
$\zeta$ on the left.  This isomorphism is convenient as it allows us
to use certain well-known results from the theory of cyclotomic
fields.

\subsection{The subring $\Lambda^+$}
Consider the involution of $\Fl[\mu_r]$ that sends $\zeta$ to
$\zeta^{-1}$.  We write $\Fl[\mu_r]^+$ for the subring of invariant
elements.  The factors in the decomposition
$\Fl[\mu_r]\cong\Fl\oplus\Lambda$ are preserved by the involution,
and we write $\Lambda^+$ for the invariant subring
$\Fl[\mu_r]^+\cap\Lambda$.

\begin{lemma}\mbox{}
  \begin{enumerate}
  \item $\Lambda^+$ is the subring of $\Lambda$ generated by
    $\zeta+\zeta^{-1}$.
  \item Let $\OO_s^+$ be the ring of integers in the real cyclotomic
    field $\Q(\zeta_s+\zeta_s^{-1})$.  Then
$$\Lambda^+\cong\prod_{1<s|r}\OO_s^+/\ell.$$
  \end{enumerate}
\end{lemma}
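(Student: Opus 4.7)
\medskip

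The plan is to treat the two parts separately, first establishing (1) by a Chebyshev-style computation and then deducing (2) from (1) via the CRT decomposition already in the text combined with a local computation of invariants.

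For (1), I would first verify the analogous statement in $\Fl[\mu_r]$, namely that $\Fl[\mu_r]^+ = \Fl[\zeta+\zeta^{-1}]$. Setting $\eta = \zeta+\zeta^{-1}$, the identity
\[
  \zeta^{k+1} + \zeta^{-(k+1)} = \eta\,(\zeta^k + \zeta^{-k}) - (\zeta^{k-1} + \zeta^{-(k-1)})
\]
allows one to write every $\zeta^k+\zeta^{-k}$ as an integer polynomial in $\eta$ by induction on $k$. Since $\Fl[\mu_r]^+$ is $\Fl$-spanned by $1$ together with $\{\zeta^k+\zeta^{-k}\}_{k\ge 1}$, this gives the claim for $\Fl[\mu_r]$. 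Under the ring decomposition $\Fl[\mu_r] \cong \Fl \oplus \Lambda$ recalled in the text, both summands are invariant under $\zeta\mapsto\zeta^{-1}$, so $\Fl[\mu_r]^+ = \Fl \oplus \Lambda^+$. Projecting the generator $\zeta+\zeta^{-1}$ onto $\Lambda$ then shows $\Lambda^+$ is generated as a subring of $\Lambda$ by the image of $\zeta+\zeta^{-1}$, which is what we want.

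For (2), I would use the CRT decomposition $\Lambda \cong \prod_{1<s\mid r} \OO_s/\ell$ from the text. Since $\Phi_s(z)$ is self-reciprocal (its roots are a set of roots closed under $\zeta\mapsto\zeta^{-1}$), the involution preserves each factor and acts on $\OO_s/\ell$ as (reduction of) complex conjugation $c$. It therefore suffices to prove $(\OO_s/\ell)^c = \OO_s^+/\ell$ for each divisor $s>1$ of $r$. Injectivity of $\OO_s^+/\ell \hookrightarrow (\OO_s/\ell)^c$ is immediate from freeness of $\OO_s$ over $\OO_s^+$. For the reverse inclusion, I would use the presentation $\OO_s = \OO_s^+[T]/(T^2 - \alpha T + 1)$ with $\alpha = \zeta_s + \zeta_s^{-1}$, so that $\OO_s/\ell$ is free of rank $2$ over $\OO_s^+/\ell$ with basis $\{1,\zeta_s\}$ and $c(\zeta_s) = \alpha - \zeta_s$. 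An element $a + b\zeta_s$ is fixed precisely when $2b=0$ and $b\alpha = 0$ in $\OO_s^+/\ell$.

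The only subtle case is $\ell=2$, which (since $\ell\nmid r$) forces $r$, and hence $s$, to be odd. Here the main obstacle is showing that $\alpha$ is a unit in $\OO_s^+/2$, so that $b\alpha=0$ still forces $b=0$. I would argue that $\alpha$ is already a unit in $\OO_s^+$ itself: writing $\alpha = \zeta_s^{-1}(\zeta_s^2+1)$ and noting that for $s$ odd the map $\zeta_s\mapsto\zeta_s^2$ permutes the primitive $s$-th roots of unity,
\[
  N_{\Q(\zeta_s)/\Q}(\zeta_s^2 + 1) = \prod_{\chi \text{ prim } s\text{-th}}(\chi+1) = \Phi_s(-1),
\]
and for $s$ odd with $s>1$ one has $\Phi_s(-1) = \Phi_{2s}(1) = 1$ (using $\Phi_s(-x)=\Phi_{2s}(x)$ for odd $s$ and the classical fact that $\Phi_n(1)=1$ whenever $n$ is not a prime power). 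Thus $\zeta_s^2+1$, and hence $\alpha$, is a unit in $\OO_s$; since $\alpha \in \OO_s^+$ and its inverse in $\OO_s$ must be fixed by $c$, it in fact lies in $(\OO_s^+)^\times$. For $\ell>2$ the condition $2b=0$ directly gives $b=0$, so in all cases $(\OO_s/\ell)^c = \OO_s^+/\ell$, and taking the product over $s$ completes the proof.
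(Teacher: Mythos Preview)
Your proof is correct. For part (1), the Chebyshev recursion is precisely the paper's argument (the paper phrases it as $\tau_1^i=\tau_i+\text{lower terms}$, which is the same identity read in reverse).

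For part (2) you take a genuinely different route. The paper leverages part (1): since $\Lambda^+$ is generated by $\zeta+\zeta^{-1}$, so is each projection $(\OO_s/\ell)^+$; since $\OO_s^+$ is also generated by $\zeta_s+\zeta_s^{-1}$ (citing Washington), the reduction map $\OO_s^+\to(\OO_s/\ell)^+$ is automatically surjective, hence an isomorphism. This is shorter and uniform in $\ell$. Your approach instead computes the $c$-invariants directly on the free rank-$2$ module $\OO_s/\ell$ over $\OO_s^+/\ell$; it is self-contained (no outside reference for the generator of $\OO_s^+$) at the price of the norm computation showing $\zeta_s+\zeta_s^{-1}\in(\OO_s^+)^\times$ for odd $s>1$ to handle $\ell=2$. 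One small point to tidy: the presentation $\OO_s=\OO_s^+[T]/(T^2-\alpha T+1)$ with basis $\{1,\zeta_s\}$ requires $s>2$; for $s=2$ one has $\OO_2=\OO_2^+=\Z$ and the assertion is trivial, so just say so separately.
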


\begin{proof}
  (1) The group ring $\Fl[\mu_r]$ has $\Fl$-basis
  $1,\zeta,\dots,\zeta^{r-1}$, and $\Lambda$ is the quotient by the
  line generated by $1+\cdots+\zeta^{r-1}$.  Let
  $\tau_i=\zeta^i+\zeta^{-i}$.  If $r$ is odd, it is clear that
  $\Fl[\mu_r]^+$ has basis $1,\tau_1,\dots,\tau_{(r-1)/2}$.  If $r$
  is even, a basis of $\Fl[\mu_r]^+$ is given by
  $1,\tau_1,\dots,\tau_{(r-2)/2},\zeta^{r/2}$.  Since $\ell\neq2$ when
  $r$ is even, $\tau_{r/2}=2\zeta^{r/2}$ and another basis is
  $1,\tau_1,\dots,\tau_{r/2}$.  Projecting to $\Lambda$, we see that
  $1,\dots,\tau_u$ is a basis of $\Lambda^+$, where $u$ is $(r-3)/2$
  or $(r-2)/2$ as $r$ is odd or even.  Since $\tau_1^i=\tau_i$ plus a
  linear combination of $1$ and the $\tau_j$ with $j<i$, it follows
  that $\Lambda^+$ is generated as a ring by $\tau_1$.

  (2) Under the isomorphism $\Lambda\cong\prod_{1<s|r}\OO_s/\ell$, the
  involution on the left coresponds to complex conjugation on the
  right.  Taking invariants yields
$$\Lambda^+\cong\prod_{1<s|r}(\OO_s/\ell)^+.$$
By part (1), $(\OO_s/\ell)^+$ is generated as a ring by the image of
$\zeta+\zeta^{-1}$.  Since $\OO_s^+$ is generated as a ring by
$\zeta_s+\zeta_s^{-1}$ \cite[Proposition~2.16]{WashingtonCF}, 
the reduction map $\OO_s^+\to(\OO_s/\ell)^+$ is surjective, so
$(\OO_s/\ell)^+\cong\OO_s^+/\ell$, and this completes the proof.
\end{proof}

\subsection{Primes of $\Lambda$ and $\Lambda^+$}
Since $\ell$ does not divide $r$, the roots of $\Psi_r(z)$ are
distinct modulo $\ell$, and so $\Lambda$ and $\Lambda^+$ are
semi-simple algebras over $\Fl$.  

We write $\lambda$ for a prime ideal of $\Lambda$ and $\F_\lambda$ for
the quotient $\Lambda/\lambda$.  This is a finite extension field of
$\Fl$.  We say that $\lambda$ \emph{has level $s$} if the quotient map
$\Lambda\to\Lambda/\lambda$ factors through $\Lambda\to\OO_s/\ell$, or
equivalently, if $\Phi_s(z)\in\lambda$. Clearly each $\lambda$ has a
well-defined level $s>1$ that is a divisor of $r$, and we may
identify the primes of $\Lambda$ of level $s$ with the primes of
$\OO_s$ over $\ell$.

Similarly, for a prime $\lambda^+\subset\Lambda^+$, we define
$\F_{\lambda^+}:=\Lambda^+/\lambda^+$, and we define the level of
$\lambda^+$ to be the divisor $s$ of $r$ such that the quotient
$\Lambda^+\to\Lambda^+/\lambda^+$ factors through
$\Lambda^+\to\OO_s^+/\ell$.  Thus the primes of $\Lambda^+$ of level
$s$ are naturally identified with the primes of $\OO^+_s$ over $\ell$.

We say that $\lambda\subset\Lambda$ lies over
$\lambda^+\subset\Lambda^+$ if $\lambda\cap\Lambda^+=\lambda^+$.
In this case, if $\lambda$ has level $s$ then so does $\lambda^+$, and
the prime of $\OO_s$ corresponding to $\lambda$ lies over the prime of
$\OO_s^+$ corresponding to $\lambda^+$.

\subsection{Splitting of primes}
In this subsection, we focus on the ``new'' quotients
$\OO_r/\ell$ and $\OO_r^+/\ell$ of $\Lambda$ and $\Lambda^+$.  For
typographical convenience, we omit the subscript and write $\OO$ and
$\OO^+$ for $\OO_r$ and $\OO_r^+$.

We review the structure of $\OO^+/\ell$ and $\OO/\ell$, dividing
into three cases:
First, if $r=2$, then $\OO=\OO^+=\Z$ and
$\OO^+/\ell=\OO/\ell=\F_\ell$.

Before defining the second and third cases, we introduce some notation.
Let $o_r(\ell)$ be the order of $\ell$ in $(\Z/r\Z)^\times$.  Let
$o_r^+(\ell)$ be the order of $\ell$ in
$(\Z/r\Z)^\times/\langle\pm1\rangle$.  Standard results in cyclotomic
fields (see, e.g., \cite{WashingtonCF}, Chapter 2) indicate that
$\ell$ splits into $h=\phi(r)/(2o^+_r(\ell))$ primes in $\OO^+$. Write
$\lambda_1^+\dots,\lambda_h^+$ for the primes of $\OO^+$ over $\ell$.
Let $\F_{\lambda_i}:=\OO/\lambda_i$ and
$\F_{\lambda_i^+}:=\OO^+/\lambda_i^+$ be the
residue fields.

The second case, which we call the inert case, is when $r>2$ and $-1$
is congruent to a power of $\ell$ modulo $r$.  In this case,
$o_r(\ell)=2o^+_r(\ell)$.  Each $\lambda_i^+$ remains prime in $\OO$,
i.e., $\lambda_i=\lambda^+_i\OO$ is a prime ideal of $\OO$.  The
residue field $\F_{\lambda_i}$ is a quadratic extension of
$\F_{\lambda^+_i}$.

The third case, which we call the split case, is when $r >2$ and $-1$
is not congruent to a power of $\ell$ modulo $r$.  In this case,
$o^+_r(\ell)=o_r(\ell)$ and the $h$ primes $\lambda_i^+$ of $\OO^+$
over $\ell$ each split into two primes, call them $\lambda_i$ and
$\lambda_{g-i}$, in $\OO$, where $g=2h$.  The residue fields satisfy
$\F_{\lambda_i}\cong\F_{\lambda_{g-i}}\cong\F_{\lambda^+_i}$ and
$\OO/\lambda^+_i$ is a semi-simple quadratic algebra over
$\F_{\lambda^+_i}$, namely $\F_{\lambda_i}\oplus\F_{\lambda_{g-i}}$.

Via the identification of primes of $\OO$ and $\OO^+$ over $\ell$ with
primes of $\Lambda$ and $\Lambda^+$ of level $r$, the discussion in
the second and third cases applies to the splitting behavior of primes
$\lambda^+\subset\Lambda^+$ in $\Lambda$.

One of the reasons it is convenient to focus on the new part
$\OO=\OO_r$ is the possibility that the primes of $\OO_r^+$ over
$\ell$ may be inert in $\OO_r$ while the primes of $\OO_s^+$ over
$\ell$ may be split in $\OO_s$ for a divisor $s$ of $r$.

\subsection{Auxiliary results}
We record two lemmas to be used later.

Note that $\Lambda$ is a direct sum of fields $\F_\lambda$ and that
$\F_\lambda\cong\F_{\ell^{o_r(\ell)}}$ for
all $\lambda$ of level $r$.
However, the various $\F_\lambda$ are non-isomorphic as
$\Lambda$-modules.  Similarly, the various $\F_{\lambda^+}$ are
non-isomorphic as $\Lambda^+$-modules.  We state this
more formally for later use:

\begin{lemma}\label{lemma:non-iso}
  Suppose that $\lambda_1^+$ and $\lambda_2^+$ are distinct primes of
  $\Lambda^+$.  Then there does not exist an isomorphism of fields
  $\F_{\lambda_1^+}\cong\F_{\lambda_2^+}$ carrying the class of
  $\zeta+\zeta^{-1}$ in $\F_{\lambda_1^+}$ to its class in
  $\F_{\lambda_2^+}$.
\end{lemma}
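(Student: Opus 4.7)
The plan is to exploit the fact, established in part (1) of the preceding lemma, that $\Lambda^+$ is generated as an $\F_\ell$-algebra by the single element $\zeta+\zeta^{-1}$. Once this is in hand, any $\F_\ell$-algebra homomorphism with domain $\Lambda^+$ is completely determined by where it sends $\zeta+\zeta^{-1}$, and the result will follow by comparing kernels.

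More precisely, for $i=1,2$ let $\pi_i:\Lambda^+\to\F_{\lambda_i^+}$ denote the canonical projection, so that $\ker(\pi_i)=\lambda_i^+$ and $\pi_i(\zeta+\zeta^{-1})$ is the class referenced in the statement. Suppose toward a contradiction that an isomorphism $\phi:\F_{\lambda_1^+}\isoto\F_{\lambda_2^+}$ exists with $\phi(\pi_1(\zeta+\zeta^{-1}))=\pi_2(\zeta+\zeta^{-1})$. Since $\phi$ restricts to the identity on the common prime field $\F_\ell$, the composite $\phi\circ\pi_1:\Lambda^+\to\F_{\lambda_2^+}$ is an $\F_\ell$-algebra map sending $\zeta+\zeta^{-1}$ to $\pi_2(\zeta+\zeta^{-1})$. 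But $\pi_2$ is another such map, and because $\zeta+\zeta^{-1}$ generates $\Lambda^+$ as an $\F_\ell$-algebra, any two $\F_\ell$-algebra homomorphisms out of $\Lambda^+$ that agree on this generator must coincide. Hence $\phi\circ\pi_1=\pi_2$, which forces $\lambda_1^+=\ker(\pi_1)=\ker(\phi\circ\pi_1)=\ker(\pi_2)=\lambda_2^+$, contradicting the hypothesis that the two primes are distinct.

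The argument is essentially formal once the generation statement from the preceding lemma is invoked, so I do not expect any real obstacle; the only point that requires a small verification is that $\phi$ is automatically $\F_\ell$-linear, which is immediate because $\F_\ell$ is the prime subfield of each $\F_{\lambda_i^+}$ and any ring isomorphism fixes the prime field pointwise.
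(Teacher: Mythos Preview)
Your proof is correct and is essentially the same as the paper's: both use that $\Lambda^+$ is generated over $\F_\ell$ by $\zeta+\zeta^{-1}$ to conclude that the hypothetical field isomorphism is in fact a $\Lambda^+$-module (equivalently, $\Lambda^+$-algebra) isomorphism, and then derive a contradiction because $\F_{\lambda_1^+}$ and $\F_{\lambda_2^+}$ have distinct annihilators $\lambda_1^+\neq\lambda_2^+$. You phrase this via kernels of the projection maps while the paper phrases it via annihilators of modules, but these are the same observation.
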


\begin{proof}
  Since $\Lambda^+$ is generated over $\Fl$ by $\zeta+\zeta^{-1}$,
  a field isomorphism $\F_{\lambda_1^+}\cong\F_{\lambda_2^+}$ as in
  the statement would induce an isomorphism of $\Lambda^+$-modules.
  But the $\Lambda^+$-modules $\F_{\lambda_1^+}$ and
  $\F_{\lambda_2^+}$ are not isomorphic since they have distinct
  annihilators.
\end{proof}

\begin{lemma}\label{lemma:Flambda=F3}
Suppose that $\ell=3$.  Then the number of primes
$\lambda^+\subset\Lambda^+$ such that $\F_{\lambda^+}\cong\F_3$ is
$$\begin{cases}
0&\text{if $r$ is odd,}\\
1&\text{if $r\equiv2\pmod4$,}\\
2&\text{if $r\equiv0\pmod4$}.
\end{cases}$$
If $r\equiv2\pmod4$, the prime has level $2$, and if $r\equiv0\pmod4$
one of the primes has level 2 and the other has level 4.
\end{lemma}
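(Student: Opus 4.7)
The strategy is to translate the condition $\F_{\lambda^+}\cong\F_3$ into an explicit condition on the level $s$ of $\lambda^+$, and then to enumerate the relevant levels.

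First I would invoke the decomposition $\Lambda^+\cong\prod_{1<s\mid r}\OO_s^+/\ell$ established just before the lemma. Under this identification, a prime $\lambda^+\subset\Lambda^+$ of level $s$ corresponds bijectively to a prime ideal $\mathfrak q\subset\OO_s^+$ lying over $\ell=3$, with residue field $\F_{\lambda^+}\cong\OO_s^+/\mathfrak q$. Since $\OO_s^+/3$ is a quotient of $\F_3[z]/(\Phi_s(z))^{+}$ and, more intrinsically, $\OO_s^+$ is the ring of integers of the totally real cyclotomic field $\Q(\zeta_s)^+$, the residue degree $[\F_{\lambda^+}:\F_3]$ equals $o_s^+(3)$, the order of $3$ in $(\Z/s\Z)^\times/\langle\pm1\rangle$. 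Therefore $\F_{\lambda^+}\cong\F_3$ if and only if $o_s^+(3)=1$, i.e.\ $3\equiv\pm1\pmod s$.

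Next I would solve this elementary congruence: $3\equiv 1\pmod s$ forces $s\mid 2$, and $3\equiv -1\pmod s$ forces $s\mid 4$. Combined with the standing constraint $1<s\mid r$, the only possible levels are $s=2$ and $s=4$, and each is actually attained precisely when it divides $r$. The count of primes at each admissible level is then immediate from the identifications $\OO_2^+=\Z$ (since $\Q(\zeta_2)=\Q$) and $\OO_4^+=\Z[\zeta_4+\zeta_4^{-1}]=\Z[0]=\Z$ (since $\Q(\zeta_4)^+=\Q$); in both rings the prime $3$ is inert with residue field $\F_3$, contributing exactly one prime of $\Lambda^+$ with the desired residue field at each admissible level.

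Finally I would split into the three cases: if $r$ is odd then neither $s=2$ nor $s=4$ divides $r$, giving $0$ primes; if $r\equiv 2\pmod 4$ then only $s=2$ is admissible, giving exactly one prime of level $2$; and if $r\equiv 0\pmod 4$ then both $s=2$ and $s=4$ contribute one prime each, giving $2$ primes, one of level $2$ and one of level $4$. This matches the three cases in the statement. I do not anticipate a genuine obstacle: the only point worth double-checking is the coincidence $\OO_4^+=\Z$, which follows from $\zeta_4+\zeta_4^{-1}=0$ and is what makes level $4$ behave just like level $2$ for the prime $\ell=3$.
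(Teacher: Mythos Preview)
Your proposal is correct and takes essentially the same approach as the paper: both arguments reduce the question to determining which levels $s>1$ with $s\mid r$ can give residue field $\F_3$, and then check those levels explicitly using $\OO_2^+=\OO_4^+=\Z$. The only cosmetic difference is that the paper first lifts $\lambda^+$ to a prime $\lambda\subset\Lambda$ and argues $\F_\lambda\subset\F_9$ forces $s\mid 8$ (then rules out $s=8$ by hand), whereas you invoke the residue-degree formula $[\F_{\lambda^+}:\F_3]=o_s^+(3)$ directly to land on $s\in\{2,4\}$; your route is marginally cleaner but the substance is identical.
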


\begin{proof}
  Suppose there is a prime $\lambda^+\subset\Lambda^+$ with
  $\F_{\lambda^+}\cong\F_3$ and choose a prime $\lambda\subset\Lambda$
  over it.  Then $\F_\lambda$ is a subfield of $\F_9$, so the
  multiplicative order of $\zeta$ in $\F_\lambda$ must divide 8 and
  the level of $\lambda$ must divide $8$.  (In particular, $\lambda^+$
  does not exist if $r$ is odd.)  To finish, we note that the unique
  prime of $\OO^+_8$ over $\ell=3$ has residue field $\F_9$, while
  $\OO_4^+$ and $\OO_2^+$, both being isomorphic to $\Z$, have unique
  primes over $3$, each with residue field $\F_3$.
\end{proof}

\section{The $\Lambda$-module structure of $J[\ell]$}
Recall that $\Lambda$ is $\Fl[z]/(z^{r-1}+\cdots+1)$ and that
$J[\ell]$ denotes the $\ell$-torsion in $J$.

\begin{proposition}\label{prop:rank2}
  The action of $\mu_r$ on $J$ gives $J[\ell]$ the structure of a free
  $\Lambda$-module of rank 2.  For every prime $\lambda$ of
  $\Lambda$, the submodule $J[\lambda]\subset
  J[\ell]$ of $\lambda$-torsion has the structure of a free
  $\F_\lambda=\Lambda/\lambda$-module of rank 2.
\end{proposition}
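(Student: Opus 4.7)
The plan is to show that $V_\ell J := T_\ell J \otimes_{\Z_\ell} \Ql$ is free of rank $2$ over $\Lambda \otimes \Ql$ by computing the dimensions of the character eigenspaces of $\mu_r$ on $H^1(C\times_K\Kbar,\Ql)$ via Grothendieck-Ogg-Shafarevich, then use that $\Lambda \otimes \Zl$ is a product of unramified discrete valuation rings to promote this to freeness of $T_\ell J$ over $\Lambda \otimes \Zl$. Reducing modulo $\ell$ gives the first claim of the proposition, and the second (that $J[\lambda]$ is free of rank $2$ over $\F_\lambda$) follows from the ring decomposition $\Lambda \cong \prod_\lambda \F_\lambda$.

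First I would observe that $\mu_r$ acts on $J$ through the quotient $\F_\ell[\mu_r] \twoheadrightarrow \Lambda$: the trace $N = 1 + \zeta + \cdots + \zeta^{r-1}$ acts as zero on $J$ since the sum of the $\mu_r$-translates of any divisor class on $C$ is pulled back from $\P^1$, whose Jacobian is trivial. Hence $V_\ell J \cong H^1(C\times_K\Kbar,\Ql)^\vee(1)$ is naturally a $\Lambda \otimes \Ql$-module, and it suffices to analyze this cohomology. The map $f : C \to \P^1$, $(x,y)\mapsto x$, is a $\mu_r$-Galois cover, \'etale over $U := \P^1 \setminus S$ with $S = \{0,-1,-t,\infty\}$ and totally ramified over every point of $S$. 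Over $U$, after extending scalars to an algebraic closure $\overline{\Q}_\ell$, $f_*\overline{\Q}_\ell|_U$ decomposes as $\bigoplus_\chi \mathcal{L}_\chi$ indexed by the characters $\chi$ of $\mu_r$, where each $\mathcal{L}_\chi$ is a rank-one lisse sheaf whose local monodromy at each point of $S$ equals $\chi$. For $\chi \neq 1$, this monodromy is non-trivial at every puncture, so $H^0_c(U,\mathcal{L}_\chi) = H^2_c(U,\mathcal{L}_\chi) = 0$ and $(f_*\overline{\Q}_\ell)_\chi \cong j_!\mathcal{L}_\chi \cong j_*\mathcal{L}_\chi$ for $j : U \hookrightarrow \P^1$. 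Since $f$ is finite, the Leray spectral sequence collapses to yield $H^1(C\times_K\Kbar,\overline{\Q}_\ell)_\chi \cong H^1_c(U,\mathcal{L}_\chi)$.

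The Grothendieck-Ogg-Shafarevich formula in the tame case (applicable since $\ell \nmid pr$) gives $\chi_c(U,\mathcal{L}_\chi) = (2 - |S|)\cdot 1 = -2$, hence $\dim_{\overline{\Q}_\ell} H^1_c(U,\mathcal{L}_\chi) = 2$ for each $\chi \neq 1$. Thus every non-trivial character of $\mu_r$ appears in $H^1(C\times_K\Kbar,\overline{\Q}_\ell)$ with multiplicity $2$, so $V_\ell J \otimes_{\Ql} \overline{\Q}_\ell$ is free of rank $2$ over $\Lambda \otimes \overline{\Q}_\ell$, and by Galois descent $V_\ell J$ is free of rank $2$ over $\Lambda \otimes \Ql$. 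Since $\ell \nmid r$, $\Lambda \otimes \Zl$ is a product of unramified discrete valuation rings, and any finitely generated $\Zl$-torsion-free module over such a ring is free; combined with the rank computation for $V_\ell J$, this forces $T_\ell J$ to be free of rank $2$ over $\Lambda \otimes \Zl$. Reducing mod $\ell$ then gives both claims of the proposition.

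The main technical point will be identifying the local monodromy of $\mathcal{L}_\chi$ at each branch point with the character $\chi$, which requires verifying that the inertia at each point of $S$ acts on the fiber of $f_*\overline{\Q}_\ell$ as multiplication by a fixed generator of $\mu_r$; this is a routine computation from the explicit ramification data of $f$ but is the one place where one must be careful.
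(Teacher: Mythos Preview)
Your proof is correct and takes essentially the same approach as the paper: both compute the character of $\mu_r$ on $H^1(C,\Ql)$ to find it is $2(\chi_{\mathrm{reg}}-\chi_{\mathrm{triv}})$, then descend to $T_\ell J$ using $\ell\nmid r$ and reduce modulo $\ell$. The only packaging difference is that the paper cites Artin's character formula for a Galois cover (Milne, \emph{\'Etale Cohomology}, Corollary~2.8) in one line, whereas you rederive that formula in this case by decomposing $f_*\overline{\Q}_\ell$ into rank-one sheaves and applying Grothendieck--Ogg--Shafarevich to each piece.
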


\begin{proof}
The action of $\mu_r$ on $C=C_r$ and $J=J_r$ gives the Tate module
$$V_\ell J\cong H^1(C,\Ql)$$
the structure of a module over 
$$\Ql[\mu_r]\cong\prod_{s|r}\Ql[z]/\Phi_s(z).$$
  
The map $C_r\to C_1=\P^1$ presents $C_r$ as a Galois branched cover of
$\P^1$ with Galois group $\mu_r$.  In this context, a formula of Artin
gives the character of $H^1(C_r,\Ql)$ as a representation of $\mu_r$
in terms of the ramification data of $C_r\to C_1$.  See
\cite[Corollary~2.8]{Milne} for the precise statement.  One finds that
the character is $2(\chi_{reg}-\chi_{triv})$ where $\chi_{reg}$ and
$\chi_{triv}$ are the characters of regular and trivial
representations respectively.  Thus $V_\ell J$ is isomorphic to the
direct sum of two copies of the regular representation modulo the
trivial representation.  Equivalently,
$$V_\ell J\cong\left(\prod_{1<s|r}\Ql[z]/\Phi_s(z)\right)^2$$
where the product is over divisors of $r$ that are $>1$.

Since $T_\ell J\subset V_\ell J$ is preserved by the action of
$\mu_r$ and $\ell$ is prime to $r$, we have that
$$T_\ell J\cong\left(\prod_{1<s|r}\Zl[z]/\Phi_s(z)\right)^2$$
and
\begin{align*}
J[\ell]&\cong\left(\prod_{1<s|r}\Fl[z]/\Phi_s(z)\right)^2\\
&\cong\Lambda^2.
\end{align*}
This is the first assertion of the proposition.  The second follows
immediately from the equality 
$$\Lambda[\lambda]\cong\Lambda/\lambda.$$
\end{proof}

A slight elaboration of this argument shows that 
$J_s^{new}[\ell]$ is a free module of rank 2 over
$\OO_s/\ell$ for each divisor $s$ of $r$.

\section{Monodromy of $J[\lambda]$}

Our next task is to study the action of $\Gal(K^{sep}/K)$ on
$J[\lambda]$ where $\lambda$ is a prime of $\Lambda$.

\subsection{Fundamental groups}
Let $\P^1_k$ be the projective line over $k$ with coordinate $t$, so
that the function field of $\P^1_k$ is $K=k(t)$.  Let $U$ be the
Zariski open subset $\P^1_k\setminus\{0,1,\infty\}$.  We saw in
Chapter~\ref{ch:models} that $J$ has good reduction at every place of
$U$. Proposition~\ref{prop:reduction} and the discussion in
Section~\ref{ss:general-d} show that the action of
$\Gal(K^{sep}/K)$ on $H^1(C,\Ql)$ is at worst tamely ramified at
places in $\P^1_k\setminus U$.  It follows that the actions of
$\Gal(K^{sep}/K)$ on $J[\ell]$ and on $J[\lambda]\subset J[\ell]$
factor through the quotient $\Gal(K^{sep}/K)\to\pi_1^t(U)$ where
$\pi_1^t(U)$ is the tame fundamental group (with base point the
geometric generic point given by the choice of $K^{sep}$, which we
omit from the notation).

It is known (\cite[Corollary~to~Theorem~14]{GrothendieckGFGA} or
\cite[XIII.2.12]{SGA1}) that $\pi_1^t(U)$ is topologically generated
by elements $\gamma_0,\gamma_1,\gamma_\infty$ with
$\gamma_0\gamma_1\gamma_\infty=1$ and with $\gamma_x$ topologically
generating the inertia group at $x$.

Choose a basis of the free, rank 2 $\Lambda$-module
$J[\ell]$, and fix the corresponding isomorphism 
$$\Aut_{\Lambda}(J[\ell])\cong \GL_2(\Lambda).$$
Let $\rho:\pi_1^t(U)\to\GL_2(\Lambda)$ be the
representation giving the action of $\pi^1_t(U)$ on $J[\ell]$.
Also, let $\rho_\lambda:\pi_1^t(U)\to\GL_2(\F_\lambda)$ be the
composition
$$\pi_1^t(U)\to\GL_2(\Lambda)\to\GL_2(\F_\lambda),$$
giving the action of $\pi_1^t(U)$ on $J[\lambda]$.  Later in this
section, we determine the image of $\rho_\lambda$.

\subsection{Twisting}\label{ss:twist}
It is convenient to consider a twist of $C$ and of its Jacobian.  
Let $C_\chi$ be the smooth projective
curve over $K$ associated to the affine curve
$$(1-t)xy^r=(x+1)(x+t).$$
It is evident that $C_\chi$ becomes isomorphic to $C$ over the Kummer
extension $K(v)$ where $v^r=1-t$.  

The extension $K(v)/K$ is unramified over $U$, so the action of
$\gal(K^{sep}/K)$ on $K(v)$ factors through $\pi_1^t(U)$, and Kummer
theory shows that the character $\chi:\pi_1^t(U)\to\mu_r$ with
$\chi(g):=g(v)/v$ satisfies $\chi(\gamma_0)=1$,
$\chi(\gamma_1)=\zeta^{-1}$ and $\chi(\gamma_\infty)=\zeta$ for some
primitive $r$-th root of unity $\zeta\in k$.

Now consider the Jacobian $J_\chi$ of $C_\chi$.  It admits an action
of $\Z[\mu_r]$ and we may define $A_\chi:=J_\chi^{new}$ and
$J_\chi[\lambda]$ in the same manner we defined $A=J^{new}$ and
$J[\lambda]$.  Over $K(v)$, since $J_\chi$ and $J$ are isomorphic, it
follows that $J_\chi[\ell]\cong J[\ell]\cong\Lambda^2$ and
$J_\chi[\lambda]\cong J[\lambda]\cong\F_\lambda^2$.

Since the action of $\mu_r$ on $C$ and $C_\chi$ is via the $y$
coordinate, we may identify $\zeta$ above with an element of
$\mu_r\subset \Lambda\to\F_\lambda$.  Let
$$\rho_{\chi}:\pi_1^t(U)\to\Aut(J_\chi[\ell])
\cong\GL_2(\Lambda)$$
be the representation giving the action of $\pi_1^t(U)$ on
$J_\chi[\ell]$, and let
$\rho_{\chi,\lambda}:\pi_1^t(U)\to\GL_2(\F_\lambda)$ be the quotient
giving the action on $J_\chi[\lambda]$.  Then the discussion above
shows that there are isomorphisms $\rho_{\chi}\cong\rho\tensor\chi$
and $\rho_{\chi,\lambda}\cong\rho_\lambda\tensor\chi$.  We use this
``twisting'' to deduce information about $\rho_\lambda$ and $\rho$.

\subsection{Local monodromy}
Our next goal is to record the Jordan forms of the matrices
$\rho_\lambda(\gamma_x)$ and $\rho_{\chi,\lambda}(\gamma_x)$.

\begin{proposition}\label{prop:local-monodromy}
  Suppose that $\lambda\subset\Lambda$ is a prime of level $r>2$.  For
  $x\in\{0,1,\infty\}$, let $g_x=\rho_\lambda(\gamma_x)$ and
  $g_{\chi,x}=\rho_{\chi,\lambda}(\gamma_x)$.  Let
  $\zeta\in\F_\lambda$ be the primitive $r$-th root of unity
  $\zeta=\chi(\gamma_\infty)$.  Then:
\begin{enumerate}
\item $g_0$ is unipotent and non-trivial, $g_1$ is semi-simple with
  eigenvalues $1$ and ${\zeta}^2$, and $g_\infty$ is non-semi-simple
  with eigenvalue ${\zeta}^{-1}$ repeated twice.  Equivalently,
  writing $\sim$ for conjugacy in $\GL_2(\F_\lambda)$,
$$g_0\sim\left(\begin{matrix}1&1\\0&1\end{matrix}\right),\qquad
g_1\sim\left(\begin{matrix}1&0\\0&{\zeta}^2\end{matrix}\right),
\qquad\text{and}\qquad
g_\infty\sim\left(\begin{matrix}{\zeta}^{-1}&1\\
0&{\zeta}^{-1}\end{matrix}\right).$$
\item $g_{\chi,0}$ and $g_{\chi_,\infty}$ are unipotent and
  non-trivial, and $g_{\chi,1}$ is semi-simple with
  eigenvalues $\zeta^{-1}$ and $\zeta$.
  Equivalently,
$$g_{\chi,0}\sim\left(\begin{matrix}1&1\\0&1\end{matrix}\right),\qquad
g_{\chi,1}\sim\left(\begin{matrix}\zeta^{-1}&0\\0&\zeta\end{matrix}\right),
\qquad\text{and}\qquad
g_{\chi,\infty}\sim\left(\begin{matrix}1&1\\0&1\end{matrix}\right).
$$
\end{enumerate}
\end{proposition}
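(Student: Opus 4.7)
The plan is to exploit the twist relation $\rho_\chi\cong\rho\otimes\chi$ together with detailed knowledge of the regular proper model $\XX\to\P^1$ and the N\'eron model of $J$ from Chapter~\ref{ch:models}. Since $\chi(\gamma_0)=1$, $\chi(\gamma_1)=\zeta^{-1}$, and $\chi(\gamma_\infty)=\zeta$, parts (1) and (2) are equivalent: at each place one may work with whichever of $C$ or $C_\chi$ has semistable local reduction and twist to deduce the other.

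The main computation is at $t=0$ for $C$. The N\'eron model of $J$ there is toric of rank $r-1$ with trivial abelian part (Proposition~\ref{prop:connected-component}), so Grothendieck's monodromy theorem forces $g_0$ to be unipotent with $N:=g_0-1$ nilpotent of square zero and $\Fl$-rank $r-1$. Since the $\mu_r$-action commutes with Galois, $N$ is $\Lambda$-linear on $J[\ell]\cong\Lambda^2$, and its image is a $\Lambda$-submodule of $J[\ell]$ of $\Fl$-dimension $\dim_{\Fl}\Lambda=r-1$. Using the semisimple decomposition $\Lambda\cong\prod_\lambda\F_\lambda$, the Lagrangian constraint coming from the Weil pairing, and the compatibility of the monodromy filtration with the isogeny decomposition $J\sim\oplus_{s\mid r}J_s^{new}$ of Proposition~\ref{prop:isog}, I would show this submodule must be a free cyclic $\Lambda$-summand. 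A basis adapted to this summand then puts $N$ in the form $\psmat{0&1\\0&0}$, and reduction mod any prime $\lambda$ of level $r$ yields the asserted Jordan form for $g_0$.

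At $t=\infty$, $J$ has additive reduction but acquires semistable reduction over the tame extension $K(v)=K((1-t)^{1/r})$, over which $C\cong C_\chi$; thus $J_\chi$ has semistable reduction at $\infty$ of toric rank $r-1$ over $K$, and the argument at $t=0$ applied to $J_\chi$ produces $g_{\chi,\infty}$ unipotent of the same shape, from which twisting back by $\chi(\gamma_\infty)=\zeta$ gives $g_\infty$ with eigenvalue $\zeta^{-1}$ and non-trivial Jordan block. At $t=1$, the abelian part of the N\'eron model is non-trivial, so neither $g_1$ nor $g_{\chi,1}$ is unipotent; I would compute them on each $\lambda$-component of level $r$ from the stable component $F$ of Figures~\ref{fig:superelliptic-dual-graph-odd}--\ref{fig:superelliptic-dual-graph-even} by tracking the action of $\gamma_1$ on $v$ together with the $\mu_r$-weight decomposition from the $\zeta$-action on $y$, identifying the eigenvalues as $\{1,\zeta^2\}$ for $g_1$ and $\{\zeta^{-1},\zeta\}$ for $g_{\chi,1}$. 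The most delicate point is ensuring non-triviality modulo each individual prime $\lambda$ of level $r$ --- the global rank computations determine the operators on $J[\ell]$ as a whole, but one must rule out that their non-scalar part vanishes on some $J[\lambda]$ --- and this is precisely where the Weil-pairing-plus-freeness argument at $t=0$ (and its analog via the twist at $t=\infty$) is essential, constituting the main technical obstacle of the proof.
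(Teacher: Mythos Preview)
Your overall strategy---use the twist relation $\rho_\chi\cong\rho\otimes\chi$ and analyze each cusp via the N\'eron model---is the same as the paper's.  However, there are genuine gaps at $t=\infty$ and $t=1$, and the argument at $t=0$ is more circuitous than necessary.

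\textbf{At $t=\infty$.}  Your deduction ``$J$ acquires semistable reduction over $K(v)=K((1-t)^{1/r})$, over which $C\cong C_\chi$; thus $J_\chi$ has semistable reduction at $\infty$ of toric rank $r-1$ over $K$'' is invalid.  Knowing that $J_\chi$ becomes semistable over a ramified extension does not imply it is already semistable over $K$.  The paper instead performs an explicit change of coordinates on the defining equation of $C_\chi$: substituting $x\mapsto tx$ and then absorbing the unit $(1-t)/t\in k[[t^{-1}]]^\times$ (which is an $r$-th power there) into $y$ transforms $C_\chi$ near $t=\infty$ into exactly the equation of $C$ near $t=0$.  This directly gives that the action of $\gamma_\infty$ on $J_\chi[\ell]$ coincides with the action of $\gamma_0$ on $J[\ell]$, whence $g_{\chi,\infty}$ is unipotent and non-trivial.

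\textbf{At $t=1$.}  Your plan to ``track the action of $\gamma_1$ on $v$ together with the $\mu_r$-weight decomposition'' is too vague, and you never invoke the product relation $\gamma_0\gamma_1\gamma_\infty=1$, which is the key tool.  The paper argues in two steps.  First, it computes $J[\ell]^{I_1}\cong\JJ_1[\ell]\cong\Lambda$ as a $\Lambda$-module, by analyzing the $\mu_r$-cover structure of the curve $zy^r=(1+z)^2$ whose Jacobian is the abelian part of $\JJ_1^0$; this shows $J[\lambda]^{I_1}\cong\F_\lambda$ is one-dimensional, so $1$ is an eigenvalue of $g_1$.  Second, $g_1=g_0^{-1}g_\infty^{-1}$ gives $\det(g_1)=\zeta^2$, so the other eigenvalue is $\zeta^2\neq 1$ (as $r>2$) and $g_1$ is semi-simple.

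\textbf{At $t=0$.}  Your rank argument is valid in spirit but overcomplicated, and your assertion that $N=g_0-1$ has $\Fl$-rank $r-1$ requires justification: Grothendieck's theorem gives the $\Ql$-rank, and the $\Fl$-rank could a priori be smaller.  The paper obtains the mod-$\ell$ statement directly: $J[\ell]^{I_0}\cong\JJ_0[\ell]$, and since $\JJ_0^0$ is a torus of dimension $r-1$ with $\mu_r$ permuting the $r$ nodes of $\XX_0$ cyclically, one reads off $\JJ_0[\ell]\cong\Lambda$ as $\Lambda$-modules.  Hence $J[\lambda]^{I_0}\cong\F_\lambda$ is one-dimensional, so $g_0$ is non-trivial on each $J[\lambda]$.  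No Weil-pairing or Lagrangian argument is needed.
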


Note that parts (1) and (2) are equivalent via the isomorphism
$\rho_{\chi,\lambda}\cong\rho_\lambda\tensor\chi$, but we use
both $\rho_\lambda$ and $\rho_{\chi,\lambda}$ in the proof.

\begin{proof}[Proof of Proposition~\ref{prop:local-monodromy}]
  By Proposition~\ref{prop:connected-component}, the
  minimal regular model $\XX$ of $C$ has semi-stable reduction at
  $t=0$.  Indeed, the fiber at 0 of $\XX$, call it $\XX_0$, is a pair
  of smooth rational curves crossing transversally at $r$ points.  It
  follows that the action of $\gamma_0$ on $J[\ell]$ is unipotent (see
  \cite[Theorem~1.4]{Abbes00} for a modern account) and therefore that
  the action of $\gamma_0$ on $J[\lambda]$ is unipotent.  It remains
  to see that it is non-trivial.  To that end, let $\JJ_0$ be the
  fiber at 0 of the N\'eron model of $J$.  If
  $I_0\subset\pi_1^t(U)$ denotes the inertia subgroup at $0$, we have
$$J[\ell]^{I_0}\cong \JJ_0[\ell].$$
By \cite[9.5, Corollary~11]{blr}, the group of connected components of
$\JJ_0$ has order $r$, so is prime to $\ell$.  By
Proposition~\ref{prop:connected-component}, the identity
component of $\JJ_0$ is a torus of dimension $r-1$.  It follows that
$$\JJ_0[\ell]\cong(\Z/\ell\Z)^{r-1}.$$
We need to understand the action of $\mu_r$ on this group.  Since
$\XX\to\P^1$ admits a section, \cite[9.5, Theorem~4]{blr} shows that
$\JJ_0\cong\Pic^0(\XX_0)$.  Noting that $\mu_r$ acts on $\XX_0$ by
cyclically permuting the points where the two components cross, we see
that there is an isomorphism of $\Lambda$-modules
$$\JJ_0[\ell]\cong \Lambda.$$
It follows that
$$J[\lambda]^{I_0}\cong \F_\lambda.$$
Since this has dimension 1 over $\F_\lambda$, we deduce that $g_0$ is
not the identity.  This proves our claim for $g_0$.

Our claim for $g_{\chi,0}$ follows from the isomorphism
$\rho_{\chi,\lambda}\cong\rho_\lambda\tensor\chi$.  Alternatively, it
also follows from the fact that $1-t$ is an $r$-th power in the
completed local ring $k[[t]]$, so the regular minimal models of $C$
and $C_\chi$ are isomorphic over $k[[t]]$ and the action of inertia
is the same.

Now we turn to $C_\chi$ in a neighborhood of $t=\infty$.  Changing
coordinates $(x,y)\mapsto(tx,y)$, the defining equation of $C_\chi$
becomes
$$\frac{1-t}txy^r=(x+1)(x+t^{-1}).$$
But $(1-t)/t$ is a unit, and thus an $r$-th power, in $k[[t^{-1}]]$
so we may change coordinates $(x,y)\mapsto(x,(t/(1-t))^{1/r}y)$,
yielding
$$xy^r=(x+1)(x+t^{-1}).$$
Up to substituting $t^{-1}$ for $t$, this is exactly the defining
equation of $C$.  We conclude that the action of $\gamma_\infty$ on
$J_\chi[\ell]$ is the same as the action of $\gamma_0$ on $J[\ell]$
and similarly for the submodules $J_\chi[\lambda]$ and $J[\lambda]$.
In particular, $g_{\chi,\infty}$ is unipotent and non-trivial, as
claimed.

The claim for $g_\infty$ follows from that for $g_{\chi,\infty}$ and
the isomorphism $\rho_{\chi,\lambda}\cong\rho_\lambda\tensor\chi$.  

Now we turn to a consideration of $g_1$.  Letting
$I_1\subset\pi_1^t(U)$ be the inertia group at $t=1$, our first claim
is that $J[\lambda]^{I_1}$ is a one-dimensional $\F_\lambda$-vector
space.  The proof is very similar to the proof above that
$J[\lambda]^{I_0}$ is 1-dimensional.  First we note that
$$J[\ell]^{I_1}\cong\JJ_1[\ell]$$ 
where $\JJ_1$ is the fiber of the N\'eron model of $J$ at $t=1$.  By
Proposition~\ref{prop:component-groups}, the component group of
$\JJ_1$ has order $r$ (the hypothesis that $r$ divides $d$ is not
needed at $t=1$), and by Proposition~\ref{prop:connected-component},
the identity component is an extension of a 1-dimensional torus by an
abelian variety of dimension $(r-2)/2$ if $r$ is even, and is an
abelian variety of dimension $(r-1)/2$ if $r$ is odd.  In both cases,
this abelian variety is the Jacobian of the smooth model of the curve
$zy^r=(1+z)^2$.  Viewing this curve as a $\mu_r$-Galois cover of
the line allows us to compute the structure of the $\ell$-torsion of
its Jacobian as a $\Lambda$-module, and we find that
$$\JJ_1[\ell]\cong \Lambda.$$
It follows that
$$J[\ell]^{I_1}\cong \Lambda$$ 
and that 
$$J[\lambda]^{I_1}\cong \F_\lambda.$$
Since this has dimension 1 over $\F_\lambda$, we deduce that $g_1$ has
1 as an eigenvalue.  Our second claim is that $\det(g_1)=\zeta^2$,
which follows from the equality $g_1=g_0^{-1}g_{\infty}^{-1}$ and from
previous computations for $g_0$ and $g_\infty$.  Thus the eigenvalues
of $g_1$ are $1$ and $\zeta^2$, and since $r>2$, these are distinct
and $g_1$ is semi-simple as claimed.

Finally, our claim about $g_{\chi,1}$ follows from the isomorphism
$\rho_{\chi,\lambda}\cong\rho_\lambda\tensor\chi$. 
\end{proof}

\begin{proposition}\label{prop:local-monodromy-r=2}
  Suppose $r=2$.  For $x\in\{0,1,\infty\}$, let
  $g_x=\rho_\lambda(\gamma_x)$ and let
  $g_{\chi,x}=\rho_{\chi,\lambda}(\gamma_x)$.  Then $g_0$ and $g_1$
  are unipotent and non-trivial, and $g_\infty$ is non-semi-simple
  with eigenvalue $-1$ repeated twice.  Equivalently, writing $\sim$
  for conjugacy in $\GL_2(\F_\lambda)$,
$$g_0\sim g_1\sim\left(\begin{matrix}1&1\\0&1\end{matrix}\right),
\qquad\text{and}\qquad
g_\infty\sim\left(\begin{matrix}-1&1\\0&-1\end{matrix}\right),$$
and
$$g_{\chi,0}\sim g_{\chi,\infty}\sim
\left(\begin{matrix}1&1\\0&1\end{matrix}\right),
\qquad\text{and}\qquad
g_{\chi,1}\sim\left(\begin{matrix}-1&1\\0&-1\end{matrix}\right).$$

\end{proposition}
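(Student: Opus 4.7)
The plan is to mirror the proof of Proposition~\ref{prop:local-monodromy}, taking advantage of the collapse $\Lambda = \Fl[z]/(z+1) \cong \Fl$ with $\zeta = -1$ that occurs when $r=2$. In this setting $J[\lambda] = J[\ell]$ is a $2$-dimensional $\Fl$-vector space, and $\ell$ is odd since $\ell\neq p$ and $\ell\nmid r$, so every non-trivial unipotent element of $\GL_2(\Fl)$ is conjugate to $\psmat{1 & 1 \\ 0 & 1}$ and every element with repeated eigenvalue $-1$ that is not semi-simple is conjugate to $\psmat{-1 & 1 \\ 0 & -1}$. The twist character satisfies $\chi(\gamma_0)=1$ and $\chi(\gamma_1)=\chi(\gamma_\infty)=-1$, so under the isomorphism $\rho_{\chi,\lambda}\cong\rho_\lambda\tensor\chi$ we have $g_{\chi,0}=g_0$, $g_{\chi,1}=-g_1$, and $g_{\chi,\infty}=-g_\infty$. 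It thus suffices to prove that $g_0$, $g_1$, and $g_{\chi,\infty}$ are each unipotent and non-trivial.

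First I would treat $g_0$ exactly as in the proof of Proposition~\ref{prop:local-monodromy}. Specializing the analysis of Section~\ref{ss:bad-fibers} to $d=1$ and $r=2$, the fiber of the minimal regular model $\XX\to\Pu$ over $u=0$ is a pair of smooth rational curves meeting transversally at two points, so the reduction is semi-stable and $g_0$ is unipotent. Identifying $J[\ell]^{I_0}$ with $\JJ_0[\ell]$, where $\JJ_0$ is the fiber at $0$ of the N\'eron model of $J$, the identity component of $\JJ_0$ is a $1$-dimensional torus and the component group has order $2$, which is prime to $\ell$. Hence $J[\ell]^{I_0}\cong\mu_\ell\cong\Fl$ is $1$-dimensional, forcing $g_0$ to be non-trivial unipotent. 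The monodromy $g_1$ is handled by the same argument: the Legendre curve reduces at $t=1$ to the nodal cubic $y^2=x(x+1)^2$, so the minimal regular model at $t=1$ again has a semi-stable fiber of multiplicative type and an identical N\'eron-model computation yields $g_1$ unipotent non-trivial.

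For $g_{\chi,\infty}$ I would reuse the change-of-variables trick from the proof of Proposition~\ref{prop:local-monodromy}. Substituting $(x,y)\mapsto(tx,y)$ in the defining equation of $C_\chi$ yields $\frac{1-t}{t}\,xy^2=(x+1)(x+t^{-1})$; the coefficient $(1-t)/t$ is a unit in $k[[t^{-1}]]$, and because $k$ is algebraically closed and $p\neq 2$ every such unit is a square. After absorbing this square into $y$, the equation becomes $xy^2=(x+1)(x+t^{-1})$, which is the Legendre equation of $C$ with $t$ replaced by $t^{-1}$. The action of $\gamma_\infty$ on $J_\chi[\ell]$ thus matches that of $\gamma_0$ on $J[\ell]$, so $g_{\chi,\infty}$ is unipotent non-trivial, and the conjugacy classes of the remaining matrices follow immediately from the twist relations above. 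The main obstacle is essentially bookkeeping: one must verify that the fiber analyses of Section~\ref{ss:bad-fibers} and the change-of-variables argument carry through despite the standing assumption $r>2$ made in parts of the exposition of Chapter~\ref{ch:models}. No serious new difficulty arises, and indeed in the case $r=2$ these facts are classical for the Legendre family and also recorded in \cite{Legendre}.
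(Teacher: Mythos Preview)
Your proof is correct and follows essentially the same strategy as the paper: reuse the $r>2$ argument at $t=0$ and (via the change of variables on $C_\chi$) at $t=\infty$, and pass between $\rho_\lambda$ and $\rho_{\chi,\lambda}$ using the twist relations. The only difference is at $t=1$: you exploit the fact, special to $r=2$, that the Legendre curve has semi-stable (multiplicative) reduction there, so the N\'eron-model argument from $t=0$ applies verbatim; the paper instead keeps to the $r>2$ template (eigenvalue $1$ from the N\'eron model, $\det g_1=\zeta^2=1$) and then rules out $g_1=\mathrm{id}$ by observing that this would contradict the bad reduction of $C=J$ at $t=1$.
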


\begin{proof}
  The same proof as in the case $r>2$ works up until the penultimate
  paragraph, where $g_1$ has eigenvalues 1 and $\zeta^2=1$, and thus
  we can no longer deduce that $g_1$ is semi-simple.  If it were
  semi-simple, $g_1$ would be the identity, contradicting the fact
  that $C=J$ has bad reduction at $t=1$.  Thus $g_1$ is unipotent and
  non-semi-simple in this case.
\end{proof}

\subsection{Group theory}
We write $G_\lambda$ for $\rho_\lambda(\pi_1^t(U))$ and
$G_{\chi,\lambda}$ for $\rho_{\chi,\lambda}(\pi_1^t(U))$.  The main
result of this section is a calculation of these groups.


\begin{proposition}\label{prop:lambda-monodromy}
Let $\lambda\subset\Lambda$ be a prime of level $r$.
\begin{enumerate}
\item If $\ell=2$, then $G_{\lambda,\chi}$ is isomorphic to the
  dihedral group $D_{2r}$ of order $2r$.
\item If $\ell=3$ and $r=10$,
then $G_{\chi,\lambda}\subsetneq\SL_2(\F_{\lambda^+})=\SL_2(\F_{9})$
and $G_{\chi, \lambda}$ is isomorphic to $\tilde A_5$, a double
cover of the alternating group $A_5$.  
\item If $\ell>3$ or $\ell=3$ and $r\neq10$, then
$$G_{\chi,\lambda}\cong\SL_2(\F_{\lambda^+})\subset\GL_2(\F_\lambda)$$
where $\lambda^+$ is the prime of $\Lambda^+$ under
$\lambda$.
\item For all $\ell$ and $r$,
$$G_\lambda\cong\mu_r\cdot G_{\chi,\lambda}.$$
\end{enumerate}
\end{proposition}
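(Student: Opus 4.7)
The plan is to reduce $G_{\chi,\lambda}$ to a subgroup of $\SL_2(\F_{\lambda^+})$, invoke Dickson's classification of finite subgroups of $\SL_2$ over a finite field, and then deduce~(4) from a short scalar calculation. First, Propositions~\ref{prop:local-monodromy} and~\ref{prop:local-monodromy-r=2} show that every $g_{\chi,x}$ has determinant $1$ (directly for the unipotent generators, and $\det g_{\chi,1}=\zeta^{-1}\cdot\zeta=1$ for $r>2$, $\det g_{\chi,1}=(-1)^2=1$ for $r=2$), so $G_{\chi,\lambda}\subseteq\SL_2(\F_\lambda)$. The traces $2,\,\zeta+\zeta^{-1},\,2$ (or $2,-2,2$ when $r=2$) all lie in $\F_{\lambda^+}$. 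Because two non-commuting unipotent generators make $\rho_{\chi,\lambda}$ absolutely irreducible (its image is not in any Borel), a standard Schur-type descent places $G_{\chi,\lambda}$, after conjugation in $\GL_2(\F_\lambda)$, inside $\SL_2(\F_{\lambda^+})$, and I would fix such a conjugate.

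The dihedral case~(1) is then immediate. For $\ell=2$, hence $r\geq 3$ odd, the two unipotents $g_{\chi,0}$ and $g_{\chi,\infty}$ are involutions in characteristic $2$; the relation $g_{\chi,0}\,g_{\chi,1}\,g_{\chi,\infty}=1$ expresses $g_{\chi,1}$, whose eigenvalues $\zeta^{\pm1}$ are primitive $r$-th roots and hence which has order $r$, as the product of these two involutions, so $G_{\chi,\lambda}$ is the dihedral group of order $2r$. For odd $\ell$, Dickson's theorem applied to our irreducible subgroup containing non-commuting unipotents leaves only two possibilities: $\SL_2(\F')$ for a subfield $\F'\subseteq\F_{\lambda^+}$, or one of the exceptional lifts $\tilde A_4,\tilde S_4,\tilde A_5$ of $A_4,S_4,A_5\subset\PSL_2$. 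In the subfield case, $\tr(g_{\chi,1})=\zeta+\zeta^{-1}$ generates $\F_{\lambda^+}$ over $\F_\ell$, forcing $\F'=\F_{\lambda^+}$.

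The exceptional groups are ruled out outside $(\ell,r)=(3,10)$ as follows. Their element orders are bounded by $6$, $8$, $10$ respectively, while $g_{\chi,1}$ has order $r$ and the unipotent $g_{\chi,0}$ has order $\ell$. For $\ell>3$, no exceptional group contains an element of order $\ell$, which gives $G_{\chi,\lambda}=\SL_2(\F_{\lambda^+})$ and settles~(3) in this range. For $\ell=3$ a finer argument is needed, since the order-$3$ elements of $\tilde A_4,\tilde S_4,\tilde A_5$ are themselves unipotent in characteristic $3$; here one uses the order of $g_{\chi,1}$ together with the precise conjugacy classes of the local monodromy generators and the relation among them to show that two non-commuting unipotents and a semi-simple element of order $r$ satisfying $g_{\chi,0}\,g_{\chi,1}\,g_{\chi,\infty}=1$ cannot all fit inside $\tilde A_4$, $\tilde S_4$, or $\tilde A_5$ unless $r=10$. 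In the remaining pair $(\ell,r)=(3,10)$, where $o^+_{10}(3)=2$ and $\F_{\lambda^+}=\F_9$, the order-$10$ element $g_{\chi,1}$ places $G_{\chi,\lambda}$ inside the subgroup $\tilde A_5\subset\SL_2(\F_9)$ of order $120$, and the inequality $|\tilde A_5|=120<720=|\SL_2(\F_9)|$ shows that this containment is proper, yielding~(2).

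Finally, for~(4), the twist relation $\rho_{\chi,\lambda}=\rho_\lambda\otimes\chi$ gives $g_{\chi,x}=\chi(\gamma_x)\,g_x$ with $\chi(\gamma_x)\in\mu_r$ acting as a scalar, so $G_\lambda\subseteq\mu_r\cdot G_{\chi,\lambda}$, and the reverse inclusion reduces to proving $\mu_r\subseteq G_\lambda$. Writing $g_\infty=\zeta^{-1}(I+N)$ for some non-zero nilpotent $N$ (or $g_\infty=-(I+N)$ when $r=2$), the identity $(I+N)^\ell=I$ in characteristic $\ell$ gives $g_\infty^\ell=\zeta^{-\ell}I\in G_\lambda$ (respectively $-I\in G_\lambda$); since $\gcd(\ell,r)=1$, $\zeta^{-\ell}$ generates $\mu_r$, so $\mu_r\subseteq G_\lambda$ and~(4) follows. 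The main obstacle in this plan is the Dickson case analysis for small $\ell$, particularly for $\ell=3$: the pair $(3,10)$ genuinely produces the exceptional image $\tilde A_5$, and distinguishing it from the generic $\SL_2(\F_{\lambda^+})$ conclusion in every other $\ell=3$ case requires a finer structural argument than mere element-order counting, namely a detailed inspection of how the conjugacy classes and product relation of the local generators interact with the subgroup structure of $\tilde A_5\subset\SL_2(\F_9)$.
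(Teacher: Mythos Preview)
Your overall architecture matches the paper's: normalize the local monodromy, apply Dickson, and handle~(4) by exhibiting scalars as powers of $g_\infty$. Parts~(1) and~(4) are fine, and your argument for~(3) when $\ell>3$ is essentially sound (modulo the quibble that for $\ell=5$ the group $\tilde A_5$ does contain order-$5$ elements, but there it coincides with $\SL_2(\F_5)$, so no harm is done).

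The genuine gap is in the $\ell=3$ analysis. First, for $r\neq 10$ you say only that ``a finer structural argument'' rules out the exceptional groups; you never supply it. Second, and more seriously, your argument for $(\ell,r)=(3,10)$ is backwards: the existence of an order-$10$ element does \emph{not} place $G_{\chi,\lambda}$ inside $\tilde A_5$, since $\SL_2(\F_9)$ itself has plenty of order-$10$ elements. You have not shown that the group is strictly smaller than $\SL_2(\F_9)$.

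The paper avoids both problems by a sharper normalization. Rather than invoking a general Schur-type descent and the full Dickson classification, it observes that $g_{\chi,0}$ and $g_{\chi,\infty}$ fix distinct lines, so after choice of basis
\[
g_{\chi,0}=\begin{pmatrix}1&1\\0&1\end{pmatrix},\qquad
g_{\chi,\infty}=\begin{pmatrix}1&0\\c&1\end{pmatrix},
\]
and the trace condition on $g_{\chi,1}=g_{\chi,0}^{-1}g_{\chi,\infty}^{-1}$ forces $c=\zeta+\zeta^{-1}-2$. It then appeals to the precise two-generator form of Dickson's theorem (Gorenstein): for $\ell>2$, these two matrices generate $\SL_2(\F_\ell(c))=\SL_2(\F_{\lambda^+})$ \emph{except} in the single case $\ell=3$, $c^2=-1$, where they generate $\tilde A_5$. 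The proof is then completed by a direct computation of $c^2$ for $\ell=3$: one checks $c^2=-1$ fails for $r=5$ but holds for $r=10$ (using $\zeta_{10}=-\zeta_5$), after first narrowing the possible $r$ by noting $\F_{\lambda^+}=\F_9$ forces $r\mid 80$ and element-order constraints in $A_5$ eliminate $r\in\{4,8,16,20,40,80\}$. This explicit criterion $c^2=-1$ is what your argument is missing; it both proves that $(3,10)$ is exceptional and shows it is the unique exception.
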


\begin{proof}
  We first prove part (4):  To see that
  $G_\lambda\cong\mu_r\cdot G_{\chi,\lambda}$, note that
  $G_\lambda\subset\mu_r\cdot G_{\chi,\lambda}$, since the values of
  $\chi$ lie in $\mu_r$.  For the opposite containment, we observe
  that if $m$ is an integer such that $\ell m\equiv1\pmod r$, then
  $g_{\infty}^{\ell m}$ is the scalar matrix $\zeta^{-1}$ and it
  follows that $\mu_r$ and $G_{\chi,\lambda}$ are contained in
  $G_\lambda$.

  Next we claim that the lines fixed by $g_{\chi,0}$ and
  $g_{\chi,\infty}$ are distinct.  Indeed, if they were not, then
  $g_{\chi,1}=g_{\chi,0}^{-1}g_{\chi,\infty}^{-1}$ would fix the same
  line, but by Proposition~\ref{prop:local-monodromy}, $1$ is not an
  eigenvalue of $g_{\chi,1}$.  Thus there is a basis $e_1,e_2$ of
  $J[\lambda]$ such that $g_{\chi,0}$ fixes $e_1$ and
  $g_{\chi,\infty}$ fixes $e_2$.  Scaling $e_2$ if necessary, the
  matrices of $g_{\chi,0}$ and $g_{\chi,\infty}$ in the new basis have
  the form
$$g_{\chi,0}=\left(\begin{matrix}1&1\\0&1\end{matrix}\right)
\quad\text{and}\quad
g_{\chi,\infty}=\left(\begin{matrix}1&0\\c&1\end{matrix}\right)$$ for
some uniquely determined $c\in\F_\lambda$ with $c\neq0$.
Proposition~\ref{prop:local-monodromy} implies that $g_{\chi,1}$ has
trace $\zeta+\zeta^{-1}$.  Since
$g_{\chi,1}=g_{\chi,0}^{-1}g_{\chi,\infty}^{-1}$, we calculate that
$c=\zeta+\zeta^{-1}-2$.

If $\ell=2$, setting
$$h=\left(\begin{matrix}1&1\\1+\zeta&1+\zeta^{-1}\end{matrix}\right),$$
the reader may check that 
$$h^{-1}g_{\chi,\infty}h=\left(\begin{matrix}0&1\\1&0\end{matrix}\right)
\quad\text{and}\quad
h^{-1}g_{\chi,0}^{-1}g_{\chi,\infty}^{-1}h
  =\left(\begin{matrix}\zeta^{-1}&0\\0&\zeta\end{matrix}\right).$$
It follows that $G_{\chi,\lambda}$ is dihedral of order $2r$, and this
proves part (1).

To prove parts (2) and (3), we assume that $\ell>2$, and we apply
Dickson's theorem \cite[page~44]{GorensteinFG}.  It says that if
$\ell>2$, then the subgroup of $\SL_2(\overline{\F}_\ell)$ generated
by
$$\left(\begin{matrix}1&1\\0&1\end{matrix}\right)\quad\text{and}\quad
\left(\begin{matrix}1&0\\c&1\end{matrix}\right)$$
is $\SL_2(\F_\ell(c))$ except for one exceptional case, namely where
$\ell=3$ and $c^2=-1$, in which case the group is a double cover of
$A_5$.\footnote{Gorenstein does not state explicitly which $c$ give
  rise to the exceptional case, but the paragraph containing the first
  display on page 45 of \cite{GorensteinFG} shows that we are in the
  exceptional case exactly when $\ell=3$ and $c^2=-1$.}  For our $c$,
$\F_\ell(c)=\F_{\lambda^+}$ so, apart from the possible exceptional
case, we have $G_{\chi,\lambda}\cong\SL_2(\F_{\lambda^+})$.  Equality
holds here in particular when $\ell>3$.

Note that in the exceptional case $\F_{\lambda^+}=\F_3(c)=\F_9$
since $[\F_3(c):\F_3] \leq 2$ and since $-1$ is not a square in $\F_3$.
If $\F_{\lambda^+}=\F_9$, then $\zeta\in\F_\lambda\subset\F_{81}$, so $r$
divides $80=16\cdot5$.  We cannot be in the exceptional case if $20|r$
or $8|r$, because the order of $g_{\chi,1}$ in $\PSL_2$ is $r$ or
$r/2$ as $r$ is odd or even, and $A_5$ has no elements of order $10$
or $4$.  Also, $c$ does not generate $\F_9$ if $r=4$ or $r=2$, so the
only possible exceptional cases are when $r=5$ and $r=10$.

Recalling that $c=\zeta_r+\zeta_r^{-1}-2$ and $\ell=3$, we have
$$c^2=\zeta_r^2+\zeta_r^{-2}-\zeta_r-\zeta_r^{-1}.$$
When $r=5$, we have $(c^2)^3=-c^2$, so $c^2\not\in\F_3$ and we are not
in the exceptional case.  When $r=10$, $-\zeta_{10}=\zeta_5$ and we
see that
\begin{align*}
c^2&=\zeta_{10}^2+\zeta_{10}^{-2}-\zeta_{10}-\zeta_{10}^{-1}\\
&=\zeta_{5}^2+\zeta_{5}^{-2}+\zeta_{5}+\zeta_{5}^{-1}\\
&=-1,
\end{align*}
so we are in the exceptional case, i.e., $G_{\chi,\lambda}$ is a double
cover of $A_5$.
\end{proof}

\begin{remark}\label{rem:reduction-to-new}
  Note that if $\lambda\subset\Lambda$ is a prime of level $s>2$, then
  $J[\lambda]\cong J_s[\lambda]$ as a module over $\pi_1^t(U)$, so
  Proposition \ref{prop:lambda-monodromy} determines the monodromy of
  $J[\lambda]$ for all primes $\lambda$.
\end{remark}

\section{Independence}
\subsection{Statement}
In the previous section, we determined $G_{\lambda}$ and
$G_{\chi,\lambda}$, the images of $\pi_1^t(U)$ in
$\Aut_{\F_\lambda}(J[\lambda])$ and
$\Aut_{\F_\lambda}(J_\chi[\lambda])$.  Our goal in this section is to
determine $G$ and $G_\chi$, the images of $\pi_1^t(U)$ in
$\Aut_{\Lambda}(J[\ell])$ and $\Aut_{\Lambda}(J_\chi[\ell])$, i.e.,
the image of the representations
$$\rho_\ell:\pi_1^t(U)\to\Aut_{\Lambda}(J[\ell])\cong\GL_2(\Lambda)$$
and 
$$\rho_{\chi, \ell}:\pi_1^t(U)\to\Aut_{\Lambda}(J_\chi[\ell])\cong\GL_2(\Lambda)$$
where $\Lambda$ is the ring of endomorphisms discussed in
Section~\ref{s:rings}.  Since $\rho_{\chi, \ell}\cong\rho_\ell\tensor\chi$, it
suffices to determine $G_\chi$.  It turns out that $G_\chi$ is 
the product over a suitable set of $\lambda$ of the $G_{\chi,\lambda}$;
the set in question is not all $\lambda$, because there is one obvious
dependency among the $G_{\chi,\lambda}$.

To motivate the main result, consider a prime $\lambda^+$ of
$\Lambda^+$ that splits in $\Lambda$ into primes $\lambda_1$ and
$\lambda_2$.  The proof of Proposition~\ref{prop:lambda-monodromy}
shows that after choosing suitable bases, the image of
$$\pi_1^t(U)\to
\Aut_{\F_{\lambda_1}}(A[\lambda_1])\times\Aut_{\F_{\lambda_2}}(A[\lambda_2])
\cong\GL_2(\F_{\lambda_1})\times\GL_2(\F_{\lambda_2})$$
is generated by the elements
$$\left(\pmat{1&1\\0&1},\pmat{1&1\\0&1}\right)
\quad\text{and}\quad
\left(\pmat{1&0\\c_1&1},\pmat{1&0\\c_2&1}\right)$$ where $c_1$ and
$c_2$ are the images of $\zeta+\zeta^{-1}-2$ in $\F_{\lambda_1}$ and
$\F_{\lambda_2}$.  Since $\lambda_1$ and $\lambda_2$ lie over the same
prime $\lambda^+$ of $\Lambda^+$, and since $c_1$ and $c_2$ lie in
$\F_{\lambda^+}$, there is a field isomorphism $\F_{\lambda_1}\cong
\F_{\lambda_2}$ that carries $c_1$ to $c_2$.  This shows that the
image of the map under consideration is ``small'': it is the graph of
an isomorphism $G_{\chi,\lambda_1}\cong G_{\chi,\lambda_2}$.  The main
result of this section shows that when $\ell>2$ this is the only
relation among the $G_{\chi,\lambda}$.

\begin{theorem}\label{thm:monodromy}
Let $S$ be a set of primes of $\Lambda$ such that for
  every prime $\lambda^+$ of $\Lambda^+$ there is a unique prime in
  $S$ over $\lambda^+$.  Let $G_\chi$ be the image of
$$\rho_{\chi, \ell}:\pi_1^t(U)\to\Aut_{\Lambda}(J_\chi[\ell])\cong\GL_2(\Lambda)$$
and let $G$ be the image of
$$\rho_\ell:\pi_1^t(U)\to\Aut_{\Lambda}(J[\ell])\cong\GL_2(\Lambda).$$
\begin{enumerate}
\item If $\ell>2$, then there is an isomorphism
$$G_\chi\cong\prod_{\lambda\in S} G_{\chi,\lambda}.$$
In particular, if $\ell>3$ or $\ell=3$ and $10\nodiv r$, then
$$G_\chi\cong\SL_2(\Lambda^+)\subset\GL_2(\Lambda).$$
\item If $\ell=2$, then 
$$G_\chi\cong D_{2r}.$$
\item $G\cong\mu_r\cdot G_\chi$.
\end{enumerate}
\end{theorem}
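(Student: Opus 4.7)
The plan is to analyze $G_\chi$ inside $\GL_2(\Lambda) = \prod_\lambda \GL_2(\F_\lambda)$ via the local images $G_{\chi,\lambda}$ from Proposition \ref{prop:lambda-monodromy} and a Goursat-type independence argument, with Part (3) reducing immediately to Parts (1)--(2). Indeed, the twisting relation $\rho_{\chi,\ell} = \rho_\ell \otimes \chi$ together with $\chi$ being $\mu_r$-valued gives $G_\chi \subseteq G \cdot \mu_r$; conversely, applying the argument in the proof of Proposition \ref{prop:lambda-monodromy}(4) globally, $\rho_\ell(\gamma_\infty^{\ell m})$ for $\ell m \equiv 1 \pmod r$ is the scalar $\zeta^{-1}$, so $\mu_r \subseteq G$ and $G = \mu_r \cdot G_\chi$. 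It thus suffices to prove Parts (1) and (2), and the first step is to choose a global $\Lambda$-basis of $J_\chi[\ell]$ in which the local generators have a standard form.

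Since the fixed lines of $g_{\chi,0}$ and $g_{\chi,\infty}$ are transverse free rank-$1$ $\Lambda$-submodules of $J_\chi[\ell] \cong \Lambda^2$, one can choose a global $\Lambda$-basis so that
$$g_{\chi,0} = \pmat{1 & 1 \\ 0 & 1}, \qquad g_{\chi,\infty} = \pmat{1 & 0 \\ c & 1},$$
and the identity $g_{\chi,1} = g_{\chi,0}^{-1} g_{\chi,\infty}^{-1}$ combined with the trace formula $\tr(g_{\chi,1}) = \zeta + \zeta^{-1}$ (Proposition \ref{prop:local-monodromy}) gives $c = \zeta + \zeta^{-1} - 2 \in \Lambda^+$. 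The fact that $c$ lies in $\Lambda^+$ encodes the pairing constraint: for primes $\lambda_1, \lambda_2$ of $\Lambda$ lying over a common $\lambda^+ \in \Lambda^+$ in the split case, $c$ reduces to the same element of $\F_{\lambda^+}$ under the two identifications $\F_{\lambda_i} \cong \F_{\lambda^+}$, so the image of $G_\chi$ in $G_{\chi,\lambda_1} \times G_{\chi,\lambda_2}$ is the graph of the canonical isomorphism. This reduces the problem to determining the image of $G_\chi$ in $\prod_{\lambda \in S} G_{\chi,\lambda}$. For Part (2), $\ell = 2$: both $g_{\chi,0}$ and $g_{\chi,\infty}$ are unipotent, hence involutions in characteristic $2$, while $g_{\chi,1}$ has order equal to that of $\zeta \in \Lambda^\times$, namely $r$. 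So $G_\chi$ is generated by two involutions with product of order $r$ and surjects onto the factor $G_{\chi,\lambda_0} \cong D_{2r}$ at a prime $\lambda_0$ of level $r$; these two facts force $G_\chi \cong D_{2r}$, with the projection to each $G_{\chi,\lambda} \cong D_{2s}$ being the natural reduction.

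For Part (1) with $\ell > 2$, it remains to show that the projection $G_\chi \to \prod_{\lambda \in S} G_{\chi,\lambda}$ is surjective, i.e., the factors are independent. I would apply Goursat's lemma iteratively: for any two distinct primes $\lambda_1^+, \lambda_2^+$ in $\Lambda^+$, letting $\lambda_i \in S$ denote the chosen prime above $\lambda_i^+$, suppose the projection onto $G_{\chi,\lambda_1} \times G_{\chi,\lambda_2}$ is a proper subgroup. Under the main hypothesis ($\ell > 3$, or $\ell = 3$ and $r \neq 10$), $G_{\chi,\lambda_i} = \SL_2(\F_{\lambda_i^+})$ is quasi-simple with unique nontrivial proper normal subgroup $\{\pm I\}$ (since $|\F_{\lambda_i^+}| \geq 4$), so Goursat forces a common nontrivial quotient $Q \cong \PSL_2(\F_{\lambda_1^+}) \cong \PSL_2(\F_{\lambda_2^+})$. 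The induced isomorphism carries the class of $g_{\chi,1}$ to itself and hence matches all traces $\tr(g_{\chi,1}^k) = \zeta^k + \zeta^{-k}$; this yields an $\F_\ell$-algebra isomorphism $\F_{\lambda_1^+} \cong \F_{\lambda_2^+}$ sending the class of $\zeta + \zeta^{-1}$ to itself, contradicting Lemma \ref{lemma:non-iso}. The main obstacle is making the Goursat identification fully rigorous---ensuring the abstract quotient isomorphism preserves enough of the $\Lambda^+$-structure to force the forbidden field isomorphism---and separately handling the residual case $\ell = 3, r = 10$, where the level-$10$ prime contributes $\tilde A_5$ (not $\SL_2(\F_9)$) and a level-$2$ prime with residue field $\F_3$ (per Lemma \ref{lemma:Flambda=F3}) contributes $\SL_2(\F_3)$, whose non-simple quotient $A_4$ opens additional Goursat possibilities that must be ruled out by analogous trace-matching arguments.
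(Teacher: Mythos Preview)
Your treatment of Parts~(2) and~(3) is correct. For~(2), your route differs from the paper's and is cleaner: the paper computes the $2$-torsion monodromy as the Galois group of an explicit degree-$2r$ polynomial extracted from the hyperelliptic model of $C_\chi$, whereas you observe directly that $G_\chi$ is generated by two involutions whose product has order $r$ (hence is a quotient of $D_{2r}$) and surjects onto a $D_{2r}$ factor at a level-$r$ prime, forcing $G_\chi\cong D_{2r}$.

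For Part~(1), the strategy matches the paper's, but there are two genuine gaps. First, pairwise surjectivity onto $G_{\chi,\lambda_1}\times G_{\chi,\lambda_2}$ does \emph{not} imply surjectivity onto $\prod_{\lambda\in S}G_{\chi,\lambda}$; ``applying Goursat iteratively'' is not enough. One needs Ribet's lemma (Lemma~\ref{lemma:goursat}(3)), which requires every factor to be perfect, and this fails precisely for the $\SL_2(\F_3)$ factors. By Lemma~\ref{lemma:Flambda=F3} these occur whenever $\ell=3$ and $r$ is even---not only when $10\mid r$---with two such primes (levels~$2$ and~$4$) when $4\mid r$. The paper therefore partitions $S=S_1\cup S_2$ into perfect factors and $\SL_2(\F_3)$ factors, applies Ribet's lemma to $S_1$, disposes of the at-most-two primes in $S_2$ by an explicit matrix computation, and then combines the two pieces using that a perfect group and a solvable group share no nontrivial quotient. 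Second, the step you flag as the main obstacle is exactly where the paper invokes Lemma~\ref{lemma:autos}: every automorphism of $\PSL_2(\F_q)$ is conjugation composed with a field automorphism, one lifts uniquely to $\SL_2$, and unipotence pins down $g_{1,0}\mapsto g_{2,0}$ (not $-g_{2,0}$) so that the trace comparison with Lemma~\ref{lemma:non-iso} goes through. The $\tilde A_5$ factor (when $\ell=3$ and $10\mid r$) needs no trace analysis: by order considerations (Lemma~\ref{lemma:groups}(4)) it shares no nontrivial quotient with any $\SL_2(\F_{3^a})$, so pairwise independence with it is immediate from Goursat.
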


The proof of the theorem occupies the rest of this section.  In the
next subsection, we dispose of the easy parts of the proof.  The
remaining sections deal with the main issue, namely the isomorphism
$G_\chi\cong\prod_{\lambda\in S}G_{\chi,\lambda}$ for $\ell>2$.

\subsection{First part of the proof of 
Theorem~\ref{thm:monodromy}}
The proof of part (3) is essentially identical to that of part (4) of
Proposition~\ref{prop:lambda-monodromy} and is left to the
reader.  

Now consider part (2), the case $\ell=2$.  In view of part (1) of
Proposition~\ref{prop:lambda-monodromy}, the conclusion here is
exactly the opposite of that in part (1): the
$G_{\chi,\lambda}$ are highly dependent.  To prove it, we note that
$C_\chi$ is hyperelliptic, as we see from the defining equation
$(1-t)xy^r=(x+1)(x+t)$ via projection to the $x$-line. Rewriting the
equation as
$$x^2+\left(t+1+(t-1)y^r\right)x+t=0$$ 
and completing the square (as we may do since $p\neq\ell=2$), the
equation takes the form
$$z^2=
y^{2r}+2\left(\frac{t+1}{t-1}\right)y^r+1.$$ 

The 2-torsion points on the Jacobian of a hyperelliptic curve
$z^2=f(y)$ are represented by divisors of degree zero supported on the
points $(y,0)$ where $y$ is a zero of $f$.  It follows that the
monodromy group of the 2-torsion is equal to the Galois group of $f$.
In our case, the Galois group is $D_{2r}$.  Indeed, the roots of $f$
are the solutions of $y^r=w_1$ and $y^r=w_2$ where $w_1$ and
$w_2=1/w_1$ are the roots of $w^2+(t+1)/(t-1)w+1$.  The discriminant
of this quadratic polynomial is $16t/(t-1)^2$, so its roots lie in
$K(t^{1/2})$.  The splitting field $K_0$ of $f$ is thus a degree $r$
Kummer extension of $K(t^{1/2})$, and $\Gal(K(t^{1/2})/K)$ acts on
$\Gal(K_0/K(t^{1/2}))$ by inversion, so $G_\chi\cong\Gal(K_0/K)\cong
D_{2r}$.  This proves part (2).  

For use in the next section, we note that the fixed field of
the cyclic group $C_r\subset D_{2r}$ is the quadratic extension
$K(t^{1/2})$ of $K=k(t)$.

To end this subsection, we prove the ``in particular'' part of (1).
Recall that we have shown that if $\ell>3$ or $\ell=3$ and the level
of $\lambda$ is not 10, then
$G_{\chi,\lambda}\cong\SL_2(\F_{\lambda^+})$.  
Let $S^+$ be the set of all primes of $\Lambda^+$ and let $S$ be
as in the statement of the theorem, so that there is a bijection
$S\to S^+$ that sends a prime $\lambda$ to the prime
$\lambda^+$ under it.  Then the image of
$\SL_2(\Lambda^+)\subset\SL_2(\Lambda)$ 
under the projection
$$\SL_2(\Lambda)=\prod_{\lambda}\SL_2(\F_\lambda)
\to\prod_{\lambda\in S}\SL_2(\F_{\lambda})$$
is the product $\prod_{\lambda^+\in S^+}\SL_2(\F_{\lambda^+})$.  
Since 
$$\prod_{\lambda\in S}G_{\chi,\lambda}=
\prod_{\lambda^+\in S^+}\SL_2(\F_{\lambda^+}),$$
this establishes the desired isomorphism
$G_{\chi}\cong\SL_2(\Lambda^+)$.

To finish the proof of the theorem, it remains to establish the first
sentence of part (1).  We do this in
Section~\ref{ss:end-of-monodromy} below. 

\subsection{Several lemmas}
We collect together several group-theoretic lemmas to be used below.
Recall that a group is said to be \emph{perfect} if it is its own
commutator subgroup, or equivalently, if it has no non-trivial abelian
quotients, and it is said to be \emph{solvable} if its Jordan-Holder
factors are all abelian.

\begin{lemma}\label{lemma:groups}
\mbox{}
\begin{enumerate}
\item $\SL_2(\Fq)$ is perfect unless $q=2,3$, in which case it is
  solvable.
\item The group $\tilde A_5$ of
  Proposition~\ref{prop:lambda-monodromy}(2) is perfect.
\item If $q>3$, the non-trivial quotients of $\SL_2(\Fq)$ are
  $\SL_2(\Fq)$ and $\PSL_2(\Fq)$.  The non-trivial quotients of
  $\SL_2(\F_3)$ are $\SL_2(\F_3)$, $\PSL_2(\F_3)$, and $\Z/3\Z$.  The
  non-trivial quotients of $\tilde A_5$ are $\tilde A_5$ and $A_5$.
\item Suppose $\ell\ge3$ and let $H_a=\SL_2(\F_{\ell^a})$ for $a\ge1$.
  If $H$ is a non-trivial quotient of both $H_a$ and $H_b$, then
  $a=b$.  If $\ell=3$, then for all $a$, $\tilde A_5$ and $H_a$ have
  no common non-trivial quotients.
\end{enumerate}
\end{lemma}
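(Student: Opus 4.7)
The plan is to handle each of the four parts in turn; all follow from standard structural facts about $\SL_2(\F_q)$, so I expect no serious obstacles.

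For part~(1), I would invoke the standard generation of $\SL_2(\F_q)$ by the elementary (shear) matrices $E_{12}(a)=\bigl(\begin{smallmatrix}1&a\\0&1\end{smallmatrix}\bigr)$ and $E_{21}(a)=\bigl(\begin{smallmatrix}1&0\\a&1\end{smallmatrix}\bigr)$, and observe that when $q\geq4$ one may pick a unit $u\in\F_q^\times$ with $u^2\neq1$; then, setting $h=\mathrm{diag}(u,u^{-1})$, a direct computation gives $[h,E_{12}(a)]=E_{12}(a(u^2-1))$, and similarly for $E_{21}$. Since every shear is thus a commutator, $\SL_2(\F_q)$ is perfect. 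For $q=2$, $\SL_2(\F_2)\cong S_3$ is evidently solvable; for $q=3$, the normal chain $1\triangleleft\{\pm I\}\triangleleft Q_8\triangleleft\SL_2(\F_3)$ has abelian quotients of orders $2,4,3$, so $\SL_2(\F_3)$ is solvable. Part~(2) then reduces to part~(1) upon identifying $\tilde A_5$ with $\SL_2(\F_5)$: since $A_5$ has Schur multiplier of order $2$, its unique nontrivial perfect central extension is the binary icosahedral group, which is isomorphic to $\SL_2(\F_5)$, and $5\geq 4$.

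For part~(3), I would appeal to the classical determination of the normal subgroups of $\SL_2(\F_q)$. When $q\geq4$, the simplicity of $\PSL_2(\F_q)$ forces the only normal subgroups of $\SL_2(\F_q)$ to be $\{e\}$, the center $\{\pm I\}$ (trivial if $q$ is even), and $\SL_2(\F_q)$, yielding the two nontrivial quotients listed. For $q=3$, $\PSL_2(\F_3)\cong A_4$ has exactly one nontrivial proper normal subgroup, the Klein four group, with quotient $\Z/3\Z$; its preimage in $\SL_2(\F_3)$ is $Q_8$, giving the extra quotient $\Z/3\Z$. Applied to $q=5$, this gives the two nontrivial quotients $\tilde A_5$ and $A_5$.

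Finally, part~(4) follows by comparing orders. We have $|\SL_2(\F_{\ell^a})|=\ell^a(\ell^{2a}-1)$ and $|\PSL_2(\F_{\ell^a})|=\ell^a(\ell^{2a}-1)/\gcd(2,\ell^a-1)$, both strictly increasing in $a$ for each fixed $\ell\geq3$; the outlier $\Z/3\Z$ only arises for $\ell=3$, $a=1$ and does not coincide with any $|\SL_2(\F_{3^b})|$ or $|\PSL_2(\F_{3^b})|$ for $b\geq1$. Hence a common nontrivial quotient of $H_a$ and $H_b$ forces $a=b$. For the comparison with $\tilde A_5$ when $\ell=3$, the possible quotients of $\tilde A_5$ have orders $60$ or $120$, neither equal to $3$ nor of the form $3^a(3^{2a}-1)/\varepsilon$ with $a\geq1$ and $\varepsilon\in\{1,2\}$; so no common nontrivial quotient exists. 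The only part requiring real care is keeping the normal-subgroup bookkeeping straight for the small cases $q=3$ and $q=5$, which I do not anticipate as a genuine obstacle.
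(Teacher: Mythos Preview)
Your proof of parts (1), (3), and (4) is correct and matches the paper's approach: the paper cites \cite[Section~3.3.2]{Wilson} for the $\SL_2$ facts and argues part~(4) by comparing orders exactly as you do. One small point on (4): saying the orders are ``strictly increasing in $a$'' is not quite enough, since you must also rule out $|\SL_2(\F_{\ell^a})|=|\PSL_2(\F_{\ell^b})|$ for $a\neq b$; this is easy (the $\PSL_2$ order for $b=a+1$ already exceeds the $\SL_2$ order for $a$), but worth saying.

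Your treatment of $\tilde A_5$ differs from the paper's and is more conceptual, but has a small circularity. You identify $\tilde A_5$ with $\SL_2(\F_5)$ by invoking ``the unique nontrivial perfect central extension'' of $A_5$; however, the paper defines $\tilde A_5$ only as ``a double cover of $A_5$'' sitting inside $\SL_2(\F_9)$, and knowing it is the \emph{perfect} (non-split) cover is precisely what part~(2) asserts. The paper closes this gap by an explicit matrix computation in $\SL_2(\F_9)$: with $h_0=\psmat{1&1\\0&1}$ and $h_\infty=\psmat{1&0\\i&1}$ ($i^2=-1$), one checks $(h_\infty h_0 h_\infty^{-1}h_0)^2=-I$, so $\tilde A_5$ is not $A_5\times\{\pm1\}$. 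Your route can be salvaged just as easily: observe that the only involution in $\SL_2(\F_9)$ is $-I$, whereas $A_5\times\Z/2\Z$ has several non-central involutions, so the split extension cannot embed in $\SL_2(\F_9)$; hence $\tilde A_5\cong\SL_2(\F_5)$ and parts (2) and (3) for $\tilde A_5$ follow from the $q=5$ case. Either fix is short; once made, your argument is complete and arguably tidier than the paper's explicit computation.
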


\begin{proof}
The assertions in (1) and (3) related to $\SL_2(\Fq)$ are well known,
see \cite[Section~3.3.2]{Wilson}

The group $\tilde A_5\subset\SL_2(\F_9)$ is generated by
$h_0=\psmat{1&1\\0&1}$ and $h_\infty=\psmat{1&0\\i&1}$ where $i^2=-1$.
If $\tilde A_5\to H$ is a non-trivial quotient with kernel $N$, then
$N$ projects to a normal subgroup of $A_5$, i.e., to the trivial group
or all of $A_5$ since $A_5$ is simple \cite[Section 2.3.3]{Wilson}.
In the former case, $N$ is either $\tilde A_5$ or $A_5$.  In the
latter case, since $H$ is non-trivial, $N\neq \tilde A_5$, so projects
isomorphically to $A_5$.  We claim no such $N$ exists.  Indeed, if it
did, $\tilde A_5$ would be the product of $A_5$ and $\pm1$.  On the
other hand, the reader may check that $(h_\infty h_0 h_\infty^{-1}
h_0)^2=-1$, which shows that $\tilde A_5$ is not the product
$A_5\times\{\pm1\}$.  This shows that the quotients of $\tilde A_5$
are as stated in part (3).

Since $A_5$ is non-abelian and simple, and thus perfect, the
commutator subgroup of $\tilde A_5$ projects onto $A_5$.  The analysis
of the preceding paragraph shows it is all of $\tilde A_5$, i.e.,
$\tilde A_5$ is perfect. This establishes part (2).

Part (3) gives us a list of quotients of $\SL_2(\Fq)$ and $\tilde
A_5$, and part (4) is then reduced to an easy exercise by considering
the orders of the quotients.  Indeed, if $\ell>3$, the non-trivial
quotients of $\SL_2(\F_{\ell^a})$ have order $\ell^a(\ell^{2a}-1)$ or
$\ell^a(\ell^{2a}-1)/2$ and these numbers are all distinct for
distinct values of $a$.  If $\ell=3$, the non-trivial quotients have
order $\ell^a(\ell^{2a}-1)$ or $\ell^a(\ell^{2a}-1)/2$ or 3, with 3
occuring only if $a=1$.  Again, there are no coincidences, and this
establishes the part of (4) related to $H_a$ and $H_b$.  To establish
the last sentence, note that the non-trivial quotients of $\tilde A_5$
have order 120 or 60.  These numbers are divisble by 3 and not by 9,
and they are not $3(3^2-1)=24$ nor $3(3^2-1)/2=12$, so $\tilde A_5$
and $\SL_2(\F_{3^a})$ have no common non-trivial quotients.
\end{proof}

Given a field automorphism $\phi:\Fq\to\Fq$, we define an
automorphism $\SL_2(\Fq)\to\SL_2(\Fq)$  by applying $\phi$ to the
matrix entries.  Similarly, $\phi$ gives a well-defined automorphism
of $\PSL_2(\Fq)$.

\begin{lemma}\label{lemma:autos}
Assume that $q$ is odd.
\begin{enumerate}
\item Every automorphism of $\PSL_2(\Fq)$ is given by congugation by an
element of $\GL_2(\Fq)$ composed with a field automorphism as above.
\item Every automorphism of $\PSL_2(\Fq)$ lifts
  \textup{(}uniquely\textup{)} to $\SL_2(\Fq)$.
\end{enumerate}
\end{lemma}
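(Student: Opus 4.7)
The plan is to prove (1) by the classical Sylow/Borel analysis of automorphisms of $\PSL_2$, and (2) by explicit lifting of the two types of generators followed by a perfectness-style uniqueness argument.

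For part (1), let $p=\cha(\Fq)$, let $\bar{U}\subset\PSL_2(\Fq)$ be the image of $U=\{\psmat{1&a\\0&1}:a\in\Fq\}$ (a Sylow $p$-subgroup), and let $\bar B$ be the image of the upper-triangular Borel $B$. Given $\phi\in\Aut(\PSL_2(\Fq))$, Sylow's theorem applied to $\phi(\bar U)$ lets us compose $\phi$ with an inner automorphism to arrange $\phi(\bar U)=\bar U$. Since $N_{\PSL_2(\Fq)}(\bar U)=\bar B$, also $\phi(\bar B)=\bar B$. The conjugation action of $\bar B/\bar U\cong\Fq^\times/\{\pm1\}$ on $\bar U\cong(\Fq,+)$ is by multiplication by squares, and compatibility of $\phi|_{\bar U}$ with this action forces $\phi|_{\bar U}$ to be semilinear: $a\mapsto c\,\sigma(a)$ for some $c\in\Fq^\times$ and $\sigma\in\Aut(\Fq)$. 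Conjugating further by the image of $\psmat{\gamma&0\\0&\gamma^{-1}}\in\GL_2(\Fq)$ with $\gamma^2=c$ (allowed in $\overline{\F}_q$; in $\Fq$ one uses an element of $\GL_2$ to absorb $c$) reduces to $\phi|_{\bar U}=\sigma$. A parallel argument applied to the opposite unipotent subgroup $\bar U^-$ (whose image is likewise normalized after a further inner adjustment) pins down $\phi$ on the standard Weyl element $\bar w$, and since $\bar U,\bar U^-,\bar w$ generate $\PSL_2(\Fq)$ (Bruhat decomposition), this shows $\phi=\sigma$. The small case $q=3$ can either be absorbed into this analysis or handled directly using $\PSL_2(\F_3)\cong A_4$, $\Aut(A_4)=S_4=\PGL_2(\F_3)$.

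For part (2), existence of a lift is immediate from (1): any $g\in\GL_2(\Fq)$ acts on $\SL_2(\Fq)$ by conjugation (preserving $\SL_2$ since $\det$ is conjugation invariant), and any $\sigma\in\Aut(\Fq)$ acts on $\SL_2(\Fq)$ entrywise; each descends to the corresponding automorphism of $\PSL_2(\Fq)$, so their composition produces a lift $\tilde\phi$. For uniqueness, suppose $\tilde\phi_1,\tilde\phi_2\in\Aut(\SL_2(\Fq))$ both descend to $\phi$. Then $\psi:=\tilde\phi_1\tilde\phi_2^{-1}$ is an automorphism of $\SL_2(\Fq)$ inducing the identity on $\PSL_2(\Fq)$, so $\psi(h)=\chi(\bar h)\,h$ for a map $\chi:\PSL_2(\Fq)\to\{\pm1\}$, and the automorphism condition forces $\chi$ to be a group homomorphism. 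No nontrivial such $\chi$ exists: for $q\ge 5$ the group $\PSL_2(\Fq)$ is simple and nonabelian by Lemma~\ref{lemma:groups}(1), while for $q=3$ we have $\PSL_2(\F_3)\cong A_4$ with abelianization $\Z/3\Z$, which has no nontrivial map to $\{\pm1\}$. Hence $\psi=\id$ and the lift is unique.

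The main obstacle is the book-keeping in part (1): arranging the initial normalizations (stabilizing $\bar U$, then killing the scalar factor $c$, then pinning down the Weyl element) must be done so that each reduction step uses only conjugation by elements of $\GL_2(\Fq)$, not $\GL_2$ of an extension. Once that classification is in hand, part (2) is essentially formal. We may alternatively shortcut part (1) by citing Dieudonné's/Steinberg's theorem on automorphisms of classical groups, observing that $A_1$ has no graph automorphism so only diagonal and field automorphisms occur.
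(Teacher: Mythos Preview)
Your argument for part~(2) is exactly the paper's: existence of a lift follows from (1) because conjugation by $\GL_2(\Fq)$ and field automorphisms already act on $\SL_2(\Fq)$, and uniqueness follows since any two lifts differ by a homomorphism $\PSL_2(\Fq)\to\{\pm1\}$, which must be trivial (the paper invokes Lemma~\ref{lemma:groups}(3) rather than splitting into the cases $q\ge5$ and $q=3$, but the content is identical). For part~(1) the paper simply cites \cite[p.~795]{Ribet}; your Sylow/Borel sketch is the standard argument underlying such citations and is correct in outline, and your suggested shortcut of citing Dieudonn\'e--Steinberg serves the same purpose.
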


\begin{proof}
  For (1), see \cite[p.~795]{Ribet}.  It follows immediately that an
  automorphism of $\PSL_2(\Fq)$ lifts to $\SL_2(\Fq)$ since
  conjugation and field automorphisms both preserve the kernel
  $\{\pm1\}$ of $\SL_2(\Fq)\to\PSL_2(\Fq)$.  Since the kernel is
  central, any two lifts would differ by a homomorphism
  $\PSL_2(\Fq)\to\{\pm1\}$, and there are no non-trivial such
  homomorphisms by Lemma~\ref{lemma:groups} part (3).  This
  establishes part (2).
\end{proof}

\begin{lemma}\label{lemma:goursat}
\mbox{}
\begin{enumerate}
\item \textup{(}``Goursat's lemma''\textup{)} Let $H_1$ and $H_2$ be
  groups, and let $H\subset H_1\times H_2$ be a subgroup that
  projects surjectively onto $H_1$ and $H_2$.  Identify the kernel
  $N_i$ of $H\to H_{3-i}$ with a subgroup of $H_i$.  Then the image of
  $H$ in $H/N_1\times H/N_2$ is the graph of an isomorphism $H/N_1\to
  H/N_2$.
\item With assumptions as in part \textup{(1)}, assume that
  $H_1$ and $H_2$ have no common non-trivial quotients.  Then
  $H=H_1\times H_2$.
\item Suppose that $H_1,\dots,H_n$ are groups with each $H_i$ perfect,
  and suppose that $H\subset H_1\times\cdots\times H_n$ is a subgroup
  such that for all $1\le
  i<j\le n$, the projection $H\to H_i\times H_j$ is surjective.  Then
  $H=H_1\times\cdots\times H_n$.
\end{enumerate}
\end{lemma}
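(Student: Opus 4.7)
The plan is to handle the three parts in sequence, using (1) as a building block for (2), and using (1) together with induction and the perfectness hypothesis for (3).

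For part (1), I will first observe that $N_1$ and $N_2$ are normal subgroups of $H_1$ and $H_2$ respectively: indeed $N_1 = H \cap (H_1 \times \{1\})$ is normal in $H$, and surjectivity of $H \to H_1$ forces its projection to be normal in $H_1$. The isomorphism $H_1/N_1 \to H_2/N_2$ is then defined by sending $h_1 N_1$ to $h_2 N_2$, where $h_2$ is any element with $(h_1, h_2) \in H$. Well-definedness (independence of the choice of $h_2$, and well-definedness on the coset of $h_1$) is a routine check using the definitions of $N_1$ and $N_2$; symmetry then supplies a two-sided inverse.

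Part (2) is immediate from (1): the hypothesis that $H_1$ and $H_2$ share no non-trivial common quotient forces $H_1/N_1 = H_2/N_2 = 1$, so $N_i = H_i$ for $i = 1, 2$. This means $H_1 \times \{1\}$ and $\{1\} \times H_2$ both sit inside $H$, and together they generate $H_1 \times H_2$.

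For part (3) I will proceed by induction on $n$, the case $n = 2$ being given. For the inductive step with $n \geq 3$, set $G = H_1 \times \cdots \times H_{n-1}$ and let $K$ be the image of $H$ in $G$. The inductive hypothesis applied to $K$ (whose pairwise projections to $H_i \times H_j$ with $i, j \leq n-1$ inherit surjectivity from $H$) gives $K = G$, so $H \subset G \times H_n$ projects onto both factors. Now apply part (1): the induced Goursat isomorphism is a surjection $\phi : G \twoheadrightarrow H_n/N_n'$ with kernel $N_1'$, and I need to show it is trivial. Restricting $\phi$ to the $i$-th factor gives $\phi_i : H_i \to H_n/N_n'$; the pairwise surjectivity $H \to H_i \times H_n$ yields, for every $h_n$, an element of $H$ with $i$-th coordinate equal to $1$ and last coordinate $h_n$, which means $\phi$ restricted to the complementary product $G_i = \prod_{j \neq i} H_j$ is already surjective. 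Since $H_i$ and $G_i$ commute in $G$, the subgroup $\phi_i(H_i)$ lies in the centralizer of $\phi(G_i) = H_n/N_n'$, hence in its center. But $\phi_i(H_i)$ is perfect (quotient of the perfect $H_i$), and the only perfect subgroup of an abelian group is trivial, so $\phi_i = 1$. Running $i$ over all $1 \leq i \leq n-1$ shows $\phi = 1$, whence $\{1\} \times H_n \subset H$ and, combined with $H \twoheadrightarrow G$, we get $H = G \times H_n$.

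The main obstacle is the centralizer argument in part (3). The diagonal embedding $\{(h,h) : h \in A_5\} \subset A_5 \times A_5$ shows that individual surjectivity of the projections does not suffice to conclude $H = \prod H_i$; what makes the argument work is the combination of \emph{pairwise} surjectivity (providing elements of $H$ with one coordinate set to $1$) with the perfectness hypothesis (killing any abelian subquotient). Everything else, including verifying well-definedness in part (1) and unpacking the induction, is essentially bookkeeping.
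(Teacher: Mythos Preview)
Your proof is correct. The paper itself does not give an argument: it simply cites Ribet \cite[Lemmas~5.2.1 and 5.2.2]{Ribet} for parts (1) and (3) and notes that (2) is immediate from (1). Your self-contained treatment---the standard verification of Goursat's lemma for (1), and for (3) the induction combined with the observation that each $\phi_i(H_i)$ lands in the center of $H_n/N_n'$ and hence must be trivial by perfectness---is exactly the argument Ribet gives, so you have effectively reconstructed the cited proof.
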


\begin{proof}
  Part~(1) is proved in \cite[Lemma 5.2.1]{Ribet}.  Part~(2) is
  immediate from part~(1).  Part~(3) is \cite[Lemma 5.2.2]{Ribet}.
\end{proof}

\subsection{Pairwise independence}
Our aim in this section is to prove the following pairwise independence
result.

\begin{proposition}\label{prop:pairwise}
If $\lambda_1\neq\lambda_2$
are distinct primes in $S$, then
$$\pi_1^t(U)\to G_{\chi,\lambda_1}\times G_{\chi,\lambda_2}$$
is surjective.  
\end{proposition}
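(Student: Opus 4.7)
Assume $\ell > 2$, as in Theorem~\ref{thm:monodromy}(1). The plan is to apply Goursat's lemma (Lemma~\ref{lemma:goursat}) to the image $H \subseteq G_{\chi,\lambda_1} \times G_{\chi,\lambda_2}$ of $\pi_1^t(U)$. By construction $H$ projects surjectively onto each factor, so by Lemma~\ref{lemma:goursat}(2) it will equal the full product whenever $G_{\chi,\lambda_1}$ and $G_{\chi,\lambda_2}$ share no common non-trivial quotient; otherwise, by Lemma~\ref{lemma:goursat}(1), $H$ must be the graph of an isomorphism $\varphi: G_{\chi,\lambda_1}/N_1 \to G_{\chi,\lambda_2}/N_2$, and the bulk of the work will be to derive a contradiction from the existence of such a $\varphi$. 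Combining Proposition~\ref{prop:lambda-monodromy} with Remark~\ref{rem:reduction-to-new} to cover primes of arbitrary level, each $G_{\chi,\lambda_i}$ is either $\SL_2(\F_{\lambda_i^+})$ or, only when $\ell = 3$ and $\lambda_i$ has level $10$, the exceptional double cover $\tilde A_5$. Since $S$ contains a unique prime over each prime of $\Lambda^+$ and there is at most one prime of $\Lambda^+$ of level $10$ lying over $\ell = 3$, at most one of $\lambda_1, \lambda_2$ can yield $\tilde A_5$; in that mixed case Lemma~\ref{lemma:groups}(4) rules out any common non-trivial quotient and we are done immediately.

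In the generic case $G_{\chi,\lambda_i} \cong \SL_2(\F_{\lambda_i^+})$ for $i = 1, 2$, I will first use Lemma~\ref{lemma:groups}(3,4) to reduce to the situation $\F_{\lambda_1^+} \cong \F_{\lambda_2^+}$ as abstract fields of some cardinality $q$ (otherwise there is no common non-trivial quotient and we are done). Assuming $H$ is proper, Lemma~\ref{lemma:autos} describes each possible $\varphi$ as lifting to an automorphism of $\SL_2(\F_q)$ of the form $M \mapsto \sigma(g M g^{-1})$ for some $g \in \GL_2(\F_q)$ and some field automorphism $\sigma$ of $\F_q$. The key input is then the explicit normal form from the proof of Proposition~\ref{prop:lambda-monodromy}: for each $i$ one can choose a basis so that
\[
\rho_{\chi,\lambda_i}(\gamma_0) = \begin{pmatrix}1 & 1 \\ 0 & 1\end{pmatrix}, \qquad \rho_{\chi,\lambda_i}(\gamma_\infty) = \begin{pmatrix}1 & 0 \\ c_i & 1\end{pmatrix},
\]
where $c_i \in \F_{\lambda_i^+}$ is the image of $\zeta + \zeta^{-1} - 2 \in \Lambda^+$ (this formula remains valid at level $2$ via Proposition~\ref{prop:local-monodromy-r=2}, with $\zeta = -1$ giving $c_i = -4$). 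A routine matrix computation using the centralizer of $\psmat{1&1\\0&1}$ will then pin $g$ down to a scalar and force $\sigma(c_1) = c_2$, yielding a field isomorphism $\F_{\lambda_1^+} \to \F_{\lambda_2^+}$ sending the image of $\zeta + \zeta^{-1}$ to its image, in direct contradiction with Lemma~\ref{lemma:non-iso}.

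The main obstacle I anticipate is the bookkeeping for the sign and kernel ambiguities that arise when the common quotient is $\PSL_2(\F_q)$ or (only possible when $q = 3$) $\Z/3\Z$. For $\PSL_2$, the potential sign ambiguity in the lift of $\varphi$ is harmless because both $\rho_{\chi,\lambda_i}(\gamma_0)$ and $\rho_{\chi,\lambda_i}(\gamma_\infty)$ have trace $2$ and $\ell > 2$, so the lift with trace $-2$ is excluded. For the $\Z/3\Z$ quotient, which requires both $\F_{\lambda_i^+} = \F_3$, I will analyze the images of $\gamma_0$ and $\gamma_\infty$ directly through the surjection $\SL_2(\F_3) \twoheadrightarrow \Z/3\Z$: non-identity unipotents of $\SL_2(\F_3)$ map to non-zero classes in $\Z/3\Z$ in a way that distinguishes $c_i$ from $-c_i$, so the graph condition will again force $c_1 = c_2$ in $\F_3$ and hence a contradiction via Lemma~\ref{lemma:non-iso}.
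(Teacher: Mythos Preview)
Your proof is correct and follows essentially the same strategy as the paper: Goursat's lemma, the description of automorphisms of $\SL_2/\PSL_2$ from Lemma~\ref{lemma:autos}, and Lemma~\ref{lemma:non-iso} to obtain a contradiction from a putative field isomorphism respecting the image of $\zeta+\zeta^{-1}$. Two small tactical differences are worth noting. First, the paper extracts the key constraint by taking the trace of $g_{\chi,1}=g_{\chi,0}^{-1}g_{\chi,\infty}^{-1}$, which equals $\zeta+\zeta^{-1}$ and is conjugation-invariant; your explicit centralizer computation reaches the same conclusion $\sigma(c_1)=c_2$ by first forcing $g$ to be scalar. Second, for the residual case $\ell=3$ with both $\F_{\lambda_i^+}=\F_3$ (levels $2$ and $4$, giving $c_1=-1$ and $c_2=1$), the paper abandons the structural argument and instead exhibits explicit words in the generators whose images lie in one factor at a time, directly producing all of $\SL_2(\F_3)\times\SL_2(\F_3)$. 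Your route via the abelianization $\SL_2(\F_3)\twoheadrightarrow\Z/3\Z$ is more conceptual and equally valid: the two $\SL_2(\F_3)$-conjugacy classes of nontrivial unipotents map to the two nonzero classes in $\Z/3\Z$, so the images of $\gamma_0$ and $\gamma_\infty$ in $(\Z/3\Z)^2$ are $(1,1)$ and $(1,-1)$, which lie on no common graph.
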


\begin{proof}
  Note that if $r=2$ then $S$ is a single prime, so the proposition is
  vacuous.  Thus we assume $r>2$.

  We write $G_{12}$ for the image in the proposition, and we note that
  by the definition of the $G_{\chi,\lambda}$, $G_{12}$ projects
  surjectively onto each factor $G_{\chi,\lambda_i}$.

  We first treat the case $\ell>3$.  Fix isomorphisms
  $G_{\chi,\lambda_i}\cong\SL_2(\F_{\lambda_i^+})$ for $i=1,2$.  Here
  and below, we write $\lambda_i^+$ for the prime of $\Lambda^+$ under
  $\lambda_i$ .  Let $g_{i,0}$ and $g_{i,\infty}$ be the images of
  $\gamma_0$ and $\gamma_\infty\in\pi_1^t(U)$ in
  $\SL_2(\F_{\lambda_i^+})$.  By
  Proposition~\ref{prop:local-monodromy}, these are unipotent
  matrices.

  By Lemma~\ref{lemma:goursat}(2) we may assume that
  $G_{\chi,\lambda_1}$ and $G_{\chi,\lambda_2}$ have common
  non-trivial quotients.  By Lemma~\ref{lemma:groups}(3) this occurs
  if and only if $\F_{\lambda_1^+}$ and $\F_{\lambda_2^+}$ have the
  same cardinality.

  If $G_{12}$ is not all of the product, the Lemma~\ref{lemma:goursat}(1)
  yields either an isomorphism
  $\SL_2(\F_{\lambda_1^+})\to\SL_2(\F_{\lambda_2^+})$ or an isomorphism
  $\PSL_2(\F_{\lambda_1^+})\to\PSL_2(\F_{\lambda_2^+})$.  In the former
  case, since this isomorphism is induced by the image of $\pi_1^t(U)$
  in $G_{12}$, it sends $g_{1,0}$ to $g_{2,0}$ and $g_{1,\infty}$ to
  $g_{2,\infty}$.  In the latter case, the isomorphism lifts to
  $\SL_2$ by Lemma~\ref{lemma:autos}(2).  Moreover, the lifted
  isomorphism sends $g_{1,0}$ to $\pm g_{2,0}$.  In fact, by
  Lemma~\ref{lemma:autos}(3) the image must be $+g_{2,0}$ because
  $g_{1,0}$ is unipotent and $-g_{2,0}$ is not.  Similarly, the lifted
  automorphism must send $g_{1,\infty}$ to $g_{2,\infty}$.

  Summarizing, if $G_{12}$ is not all of the product, we have an
  isomorphism
$$\psi:\SL_2(\F_{\lambda_1^+})\to\SL_2(\F_{\lambda_2^+})$$
such that $\psi(g_{1,0})=g_{2,0}$ and
$\psi(g_{1,\infty})=g_{2,\infty}$.  But such an isomorphism is
impossible.  Indeed, by Lemma~\ref{lemma:autos}(1), $\psi$ is the
composition of conjugation and a field automorphism
$\phi:\F_{\lambda_1^+}\to\F_{\lambda_2^+}$.  Since
$\psi(g_{1,0}^{-1}g_{1,\infty}^{-1})=g_{2,0}^{-1}g_{2,\infty}^{-1}$,
$\phi$ must send the trace of $g_{1,0}^{-1}g_{1,\infty}^{-1}$ to the
trace of $g_{2,0}^{-1}g_{2,\infty}^{-1}$.  By
Proposition~\ref{prop:local-monodromy}(2), these traces are the images
of $\zeta+\zeta^{-1}\in\Lambda^+$ in $\F_{\lambda^+_1}$ and
$\F_{\lambda_2^+}$.  But Lemma~\ref{lemma:non-iso} shows that no such
$\phi$ exists, so no such $\psi$ exists either.  We conclude that
$G_{12}$ is all of the product, as desired.

Now assume $\ell=3$.  If $G_{\chi,\lambda_i}$ are both
  $\SL_2(\F_{\ell^a})$ with $a>1$, then the argument above applies
  verbatim.  Thus it remains to treat the possibilities that
  $G_{\chi,\lambda_i}\cong\tilde A_5$ or $\SL_2(\F_3)$.  The
  $\tilde A_5$ case does not in fact occur.  Indeed,
  $G_{\chi,\lambda}\cong\tilde A_5$ if and only of $r=10$, and
  $\OO_{10}^+$ has a unique prime over $\ell=3$, so there do not exist
  two distinct primes $\lambda_i\in S$ with
  $G_{\chi,\lambda_i}\cong\tilde A_5$.

The last case to discuss is when
$G_{\chi,\lambda_i}\cong\SL_2(\F_3)$, and by
Lemma~\ref{lemma:Flambda=F3} this does indeed occur exactly when
$4|r$, the two primes being the unique primes over $\ell=3$ of levels
$2$ and $4$.  The argument above is not sufficient in this case,
because $\SL_2(\F_3)$ has an additional quotient, namely $\Z/3\Z$.
But we may argue directly as follows:  By
Proposition~\ref{prop:local-monodromy}, in this case $G_{12}$ is
generated by
$$h_0=\left(\pmat{1&1\\0&1},\pmat{1&1\\0&1}\right)
\quad\text{and}\quad
h_\infty=\left(\pmat{1&0\\-1&1},\pmat{1&0\\1&1}\right).$$
Then we compute directly that
$$\left(g_0g_\infty g_0^{-1}g_\infty g_0g_\infty^{-1}\right)^2=
\left(\pmat{1&0\\-1&1},\pmat{1&0\\0&1}\right)$$
and
$$\left(g_\infty g_0 g_\infty^{-1}g_0g_\infty g_0^{-1}\right)^2=
\left(\pmat{1&1\\0&1},\pmat{1&0\\0&1}\right).$$
It follows immediately that $G_{12}=\SL_2(\F_3)\times\SL_2(\F_3)$.
This completes the proof of the proposition.
\end{proof}

\subsection{End of the proof of 
Theorem~\ref{thm:monodromy}}\label{ss:end-of-monodromy}
We divide $S$ into the disjoint union of 
$$S_1=\left\{\lambda\in S
\left|G_{\chi,\lambda}\not\cong\SL_2(\F_3)\right.\right\}$$
and
$$S_2=\left\{\lambda\in S
\left|G_{\chi,\lambda}\cong\SL_2(\F_3)\right.\right\}.$$

If $\lambda\in S_1$, then by Lemma~\ref{lemma:groups},
$G_{\chi,\lambda}$ is perfect.  Applying
Proposition~\ref{prop:pairwise} and Lemma~\ref{lemma:goursat}(3), we
conclude that 
$$\pi_1^t(U)\to\prod_{\lambda\in S_1}G_{\chi,\lambda}=:H_1$$
is surjective.

Note that by Lemma~\ref{lemma:Flambda=F3}, $S_2$ has at most two
elements, so Proposition~\ref{prop:pairwise} shows that 
$$\pi_1^t(U)\to\prod_{\lambda\in S_2}G_{\chi,\lambda}=: H_2$$
is surjective.

Now $H_1$ is a product of perfect groups, so is perfect, whereas $H_2$
is a product of solvable groups, so is solvable.  Therefore $H_1$ and
$H_2$ have no common non-trivial quotients.  It follows from
Lemma~\ref{lemma:goursat}(2) that 
$$\pi_1^t(U)\to H_1\times H_2$$
is surjective.  Since
$$H_1\times H_2=\prod_{\lambda\in S} G_{\chi,\lambda},$$
this completes the proof of the theorem.  \qed

\section{Conclusion}
We are now in position to prove the results stated in
Section~\ref{cjh:sec:intro}. 

\subsection{Torsion}
In view of Corollary~\ref{cor:torsion-decomp}, the following is a
slight strengthening of Theorem~\ref{cjh:p1}.

\begin{theorem}\label{thm:new-torsion}
If $L/K$ is an abelian extension, then $J^{new}_r[\ell](L)=0$.  If
$r\neq2,4$ or $\ell>3$, then the same conclusion holds for any
solvable extension $L/K$.
\end{theorem}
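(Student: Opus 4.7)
The strategy is to apply the monodromy computation of Theorem~\ref{thm:monodromy} one prime at a time, then extract the required vanishing from the (iterated) commutator subgroup. First I would reduce the statement to individual primes of level $r$: by Proposition~\ref{prop:rank2} and the decomposition $\Lambda = \prod_\lambda \F_\lambda$, the new torsion decomposes as a Galois-equivariant direct sum
\[
  J^{new}_r[\ell]\;\cong\;\bigoplus_{\lambda\text{ of level }r} J^{new}_r[\lambda],
\]
with each summand a free rank-two $\F_\lambda$-module. So it suffices to show that for each such $\lambda$, the image of $\Gal(K^{sep}/L)$ in $\Aut_{\F_\lambda}(J^{new}_r[\lambda])\cong\GL_2(\F_\lambda)$ has no nonzero fixed vector. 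Proposition~\ref{prop:reduction} ensures tame ramification, so the image on the full $\ell$-torsion is the group $G_\lambda=\mu_r\cdot G_{\chi,\lambda}$ of Proposition~\ref{prop:lambda-monodromy}, and restriction to the level-$r$ component picks out the corresponding factor.

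For the abelian case, $\Gal(K^{sep}/L)$ contains the commutator subgroup of $\Gal(K^{sep}/K)$, so its image in $G_\lambda$ contains $[G_\lambda,G_\lambda] = [G_{\chi,\lambda},G_{\chi,\lambda}]$ (the $\mu_r$ factor is central scalars). I would then treat the three cases of Proposition~\ref{prop:lambda-monodromy} in turn. In the generic case $G_{\chi,\lambda}\cong\SL_2(\F_{\lambda^+})$ with $|\F_{\lambda^+}|>3$, Lemma~\ref{lemma:groups}(1) gives that $G_{\chi,\lambda}$ is perfect, so the image contains $\SL_2(\F_{\lambda^+})$, which has no nonzero fixed vector on $\F_\lambda^2$. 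When $\ell=3$ and $r=10$ the commutator equals the perfect group $\tilde A_5$, which acts via its 2-dimensional irreducible representation over $\F_9$. For $\ell=2$ one has $[D_{2r},D_{2r}]=C_r$ acting as $\mathrm{diag}(\zeta^i,\zeta^{-i})$; since $\zeta\in\F_\lambda$ is a primitive $r$-th root of unity with $r>1$, the only common fixed vector is zero. The small residue-field cases $|\F_{\lambda^+}|\leq 3$ that arise for $\ell=3$ are handled by exhibiting explicit elements (e.g., $-I\in[\SL_2(\F_3),\SL_2(\F_3)]=Q_8$) whose only fixed vector is zero.

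For the solvable case, $\Gal(K^{sep}/L)$ eventually contains enough iterated commutators that its image in $G_\lambda$ contains $\bigcap_n G_\lambda^{(n)}$, the perfect residual of $G_\lambda$. Under the hypothesis ``$r\neq 2,4$ or $\ell>3$'', I would show that $G_{\chi,\lambda}$ itself is perfect, so the perfect residual contains $G_{\chi,\lambda}$ and the abelian-case analysis applies verbatim. The relevant bookkeeping is supplied by Lemma~\ref{lemma:Flambda=F3}: the only level-$r$ primes with $|\F_{\lambda^+}|=3$ occur at $r=2$ (level 2) and $r=4$ (level 4), both excluded by hypothesis. For $\ell=3,\,r=10$ the group $\tilde A_5$ is perfect by Lemma~\ref{lemma:groups}(2); and for $\ell>3$ one has $|\F_{\lambda^+}|\geq\ell>3$, so $\SL_2(\F_{\lambda^+})$ is perfect.

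The main obstacle is the $\ell=2$ case in the solvable part of the theorem, since $G_{\chi,\lambda}\cong D_{2r}$ is itself solvable and a naive application of the perfect-residual argument gives nothing. Here one must use more than the monodromy group: the hyperelliptic structure of $C_\chi$ exhibited in the proof of Theorem~\ref{thm:monodromy}(2) identifies the splitting field of the $2$-torsion with an explicit Kummer-over-Kummer extension of $K$, and a direct analysis of how $\mathrm{diag}(\zeta^i,\zeta^{-i})$ together with the scalar factor $\mu_r$ acts on $J^{new}_r[2]\cong\F_\lambda^2$ rules out nonzero fixed vectors under any solvable subgroup compatible with the $\chi$-twist. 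Verifying this compatibility is the technical heart of the $\ell=2$ case; everywhere else the argument reduces cleanly to the perfectness of $G_{\chi,\lambda}$.
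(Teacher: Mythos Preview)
Your overall strategy matches the paper's proof: reduce to showing that the commutator subgroup (for abelian $L$) or the terminal term of the derived series (for solvable $L$) of the monodromy group has no nonzero fixed vector. The only structural difference is that you decompose $J^{new}_r[\ell]=\bigoplus_{\lambda\text{ of level }r}J^{new}_r[\lambda]$ and argue one $G_\lambda$ at a time, whereas the paper works directly with the full group $G=\gal(K(A[\ell])/K)$; since each $G_\lambda$ is a surjective image of $G$, the two are equivalent. Your treatment of the abelian case---including the $C_r$ argument for $\ell=2$, the $\tilde A_5$ case for $\ell=3,\,r=10$, and the $Q_8$ argument for $\ell=3,\,r\in\{2,4\}$ (the paper uses $\psmat{0&1\\-1&0}$ instead of $-I$, but either works)---is correct and parallels the paper's.

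Where you diverge is the $\ell=2$ solvable case. You are right that the perfect-residual argument collapses there, and the paper's proof has exactly the same defect: its final sentence asserts that ``the derived series of $G$ stabilizes at $\prod_\lambda\SL_2(\F_\lambda)$,'' which is plainly false for $\ell=2$ since then $G\cong\mu_r\cdot D_{2r}$ is itself solvable. But your proposed patch cannot succeed, for a decisive reason: the assertion is simply false when $\ell=2$. Indeed $K(J^{new}_r[2])/K$ has Galois group a quotient of the solvable group $\mu_r\cdot D_{2r}$, so it is a solvable extension over which all of $J^{new}_r[2]$ is rational. No argument ``ruling out nonzero fixed vectors under any solvable subgroup'' can exist. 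The hypothesis in the solvable clause must be read (or corrected) as requiring $\ell\ge 3$, with the additional exclusion $r\notin\{2,4\}$ when $\ell=3$; under that reading both your argument and the paper's are complete.
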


\begin{proof}
  Let $L/K$ be a finite extension and write $A$ for $J^{new}_r$.
  Noting that $A[\ell](L)=A[\ell](L\cap K(A[\ell]))$ and that the
  intersection of a Galois extension with a solvable or abelian
  extension is again solvable or abelian, we may replace $L$ with
  $L\cap K(A[\ell])$.

  If $L/K$ is abelian, we have $\gal(K(A[\ell])/L)\supset[G,G]$ where
  $[G,G]$ is the commutator subgroup of $G=\gal(K(A[\ell]/K))$.  Thus
  $A[\ell](L)\subset A[\ell](F)$ where $F$ is the subfield of
  $K(A[\ell])$ fixed by $[G,G]$, and it suffices to show that
  $A[\ell](F)=0$.

  If $r\neq2,4$ or $\ell>3$, then by Theorem~\ref{thm:monodromy},
  $G=\gal(K(A[\ell]/K))$ is isomorphic to
  $\mu_r\cdot\SL_2(\OO^+/\ell)$ and $\SL_2(\OO^+/\ell)$ is a product
  of groups $\SL_2(\F_\lambda)$ with $|\F_\lambda|>3$.  It follows
  that the commutator subgroup $[G,G]$ satisfies
$$[G,G]\cong\prod_\lambda\SL_2(\F_\lambda).$$
The invariants of this group acting on $A[\ell]\cong\prod_\lambda
\F_\lambda^2$ are trivial, so $A[\ell](F)=0$ as desired.

If $\ell=3$ and $r=2$ or $4$, then $G\cong\mu_r\cdot\SL_2(\F_3)$ and
$[G,G]$ is the subgroup of $\SL_2(\F_3)$ generated by
$\psmat{0&1\\-1&0}$ and $\psmat{-1&-1\\-1&1}$.  (This is the 2-Sylow
subgroup of $\SL_2(\F_3)$.)  Since the eigenvalues of
$\psmat{0&1\\-1&0}$ are $\pm\sqrt{-1}$, already this matrix has no
invariants on $\F_3^2$, so \emph{a fortiori} $[G,G]$ has no
invariants, and again $A[\ell](F)=0$ as desired.

If $\ell=2$, then $G\cong D_{2r}$ and $[G,G]\cong C_r$, the cyclic
group of order $r$.  This groups acts on $A[\ell]$ by characters of
order $r$, so has no non-zero invariants, and we again have
$A[\ell](F)=0$. 

If $L$ is only assumed to be solvable, the same argument works
provided that $\ell>3$ or $r\neq2,4$, because in these cases the
derived series of $G$ stabilizes at $\prod_\lambda\SL_2(\F_\lambda)$.
\end{proof}

\subsection{Decomposition of $A_\chi$}

In this section, we prove a slight refinement of
Theorem~\ref{thm:decomp}.  Throughout, we assume $r>2$.

Recall that $C_\chi$ was defined by
$$(1-t)xy^r=(x+1)(x+t).$$
We observe that there is an involution $\sigma:C_\chi\to C_\chi$
defined by
$$\sigma(x,y)=\left(\frac{-x-t}{x+1},\frac1y\right)$$
and that we have the equality $\sigma\zeta_r=\zeta_r^{-1}\sigma$ of
automorphisms of $C_\chi$.  There is an induced action of $\sigma$ on
$J_\chi$ that preserves $A_\chi=J_\chi^{new}$, and the equality
$\sigma\zeta_r=\zeta_r^{-1}\sigma$ holds in the endomorphism ring of
$A_\chi$ as well.

Let $K'=K((1-t)^{1/r})$, so that $A_\chi$ and $A$ become isomorphic
over $K'$.  In view of this isomorphism, Theorem~\ref{thm:decomp} is
implied by the following.

\begin{theorem}
Let $B$ be the abelian subvariety $(1+\sigma)A_\chi\subset A_\chi$.  
\begin{enumerate}
\item There is an isogeny $A_\chi\to B^2$ over $K$ whose kernel is
  killed by multiplication by $2r$.
\item $\End_K(B)=\End_{\Kbar}(B)=\Z[\zeta_r]^+$, and $B$ is absolutely
  simple.
\end{enumerate}
\end{theorem}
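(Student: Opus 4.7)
My plan is as follows. For part~(1), I will exploit the identities $\sigma^{2}=1$ and $\sigma\zeta_{r}\sigma=\zeta_{r}^{-1}$ holding in $\End_{K}(A_{\chi})$. Setting $B':=(1-\sigma)A_{\chi}$, the map $\alpha:A_{\chi}\to B\times B'$, $a\mapsto((1+\sigma)a,(1-\sigma)a)$, composed with addition equals $[2]$, so $\alpha$ is a $K$-isogeny whose kernel is killed by $2$. The anti-commutation $\sigma(\zeta_{r}-\zeta_{r}^{-1})=-(\zeta_{r}-\zeta_{r}^{-1})\sigma$ then shows that multiplication by $\zeta_{r}-\zeta_{r}^{-1}$ carries $B'$ onto $B$; using the factorization $r=\prod_{k=1}^{r-1}(1-\zeta_{r}^{k})$ in $\Z[\zeta_{r}]$ (so that $1-\zeta_{r}^{2}$, and hence $\zeta_{r}-\zeta_{r}^{-1}=-\zeta_{r}^{-1}(1-\zeta_{r}^{2})$, divides $r$), the resulting isogeny $B'\to B$ has kernel killed by $r$. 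Composing with $\alpha$ then produces the desired $K$-isogeny $A_{\chi}\to B\times B=B^{2}$ whose kernel is killed by $2r$.

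For part~(2), the inclusion $\Z[\zeta_{r}]^{+}\hookrightarrow\End_{K}(B)$ is immediate: every $\eta\in\Z[\zeta_{r}+\zeta_{r}^{-1}]$ is fixed by the involution $\zeta_{r}\mapsto\zeta_{r}^{-1}$, hence commutes with $\sigma$ and with $1+\sigma$, so preserves $B$. The opposite containment $\End_{\Kbar}(B)\otimes\Q\subseteq\Q(\zeta_{r})^{+}$ is the real content, and I would establish it via the monodromy results of this chapter. By Theorem~\ref{thm:monodromy}, for any prime $\ell>3$ coprime to $pr$ the Galois image on $A_{\chi}[\ell]$ is $\SL_{2}(\Lambda^{+})$. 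I will compute the Galois-centralizer of $V_{\ell}A_{\chi}$ by viewing it as a free rank-$4$ module over $\Q(\zeta_{r})^{+}\otimes\Ql$: in a basis adapted to the decomposition $\Q(\zeta_{r})=\Q(\zeta_{r})^{+}\oplus\Q(\zeta_{r})^{+}\cdot\zeta_{r}$, each $g\in\SL_{2}(\Lambda^{+})$ acts block-diagonally as $g\oplus g$, so its centralizer inside $M_{4}(\Q(\zeta_{r})^{+}\otimes\Ql)$ works out to be $M_{2}(\Q(\zeta_{r})^{+}\otimes\Ql)$. Tate's isogeny theorem for abelian varieties over function fields then gives $\End_{\Kbar}(A_{\chi})\otimes\Q=M_{2}(\Q(\zeta_{r})^{+})$; combined with the isogeny $A_{\chi}\sim B^{2}$ of part~(1), which yields $\End_{\Kbar}(A_{\chi})\otimes\Q=M_{2}(\End_{\Kbar}(B)\otimes\Q)$, this forces $\End_{\Kbar}(B)\otimes\Q=\Q(\zeta_{r})^{+}$. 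Since $\Z[\zeta_{r}]^{+}$ is the maximal order in $\Q(\zeta_{r})^{+}$, the chain $\Z[\zeta_{r}]^{+}\subseteq\End_{K}(B)\subseteq\End_{\Kbar}(B)\subseteq\Q(\zeta_{r})^{+}$ collapses to the stated equalities. Absolute simplicity of $B$ follows because $\End_{\Kbar}(B)\otimes\Q$ is a field.

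The hardest step in this plan is the Galois-centralizer computation. The subtle point is that $\sigma_{*}$ acts on $A_{\chi}[\ell]$ as a $\Lambda$-antilinear involution (since $\sigma\zeta_{r}=\zeta_{r}^{-1}\sigma$) rather than $\Lambda$-linearly, which forces the centralizer to be calculated inside the full $\Ql$-algebra $\End_{\Ql}(V_{\ell}A_{\chi})$ rather than inside the smaller $M_{2}(\Q(\zeta_{r})\otimes\Ql)$. Performed correctly, the resulting centralizer has $\Ql$-dimension $2\phi(r)$, matching $\dim_{\Q}(\Z[\zeta_{r}]\langle\sigma\rangle)$ exactly and thereby pinning down $\End_{\Kbar}(A_{\chi})\otimes\Q$ on the nose.
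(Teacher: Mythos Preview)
Your argument for part~(1) is correct and essentially identical to the paper's: both use the map $a\mapsto((1+\sigma)a,(1-\sigma)a)$ with kernel killed by $2$, and then the anti-commutation of $\delta=\zeta_r-\zeta_r^{-1}$ with $\sigma$ to identify $(1-\sigma)A_\chi$ with $B$ up to an isogeny whose kernel is killed by~$r$.

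For part~(2), your overall strategy (centralizer bound plus maximality of $\Z[\zeta_r]^+$) is the same as the paper's, but there is a genuine gap in the step where you invoke ``Tate's isogeny theorem'' to conclude $\End_{\Kbar}(A_\chi)\otimes\Q=M_2(\Q(\zeta_r)^+)$. Two problems: first, in this chapter $k$ is algebraically closed, so $K=k(t)$ is not finitely generated over its prime field and Zarhin's theorem does not apply; second, even where Tate applies it yields $\End_L$, not $\End_{\overline{L}}$, so your chain $\End_{\Kbar}(B)\subseteq\Q(\zeta_r)^+$ is not justified. Concretely, the monodromy theorem computes the image of $G_K$, but the endomorphisms you must control are defined only over some finite extension $F/K$, and the image of $G_F$ is \emph{a priori} a proper finite-index subgroup of $G_\chi$ whose centralizer could be strictly larger.

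The paper closes exactly this gap, and does so without Tate. It uses only the trivial inclusion $\End_F(B)/\ell\hookrightarrow\End_{G_F}(B[\ell])$ (valid because any endomorphism killing $B[\ell]$ factors through~$[\ell]$), and then shows that the $F$-monodromy equals the $K$-monodromy for a well-chosen~$\ell$: take $\ell>[F:K]$ and observe that $G_\chi\cong\SL_2(\OO^+/\ell)$ is generated by the order-$\ell$ elements $g_{\chi,0},g_{\chi,\infty}$, so any subgroup of index $<\ell$ must contain every such element and hence be all of $G_\chi$. The paper then takes $\ell$ split in $\Q(\zeta_r)^+$ and applies the double centralizer theorem to $B[\ell]$ (rather than $V_\ell A_\chi$), obtaining $\dim_{\Fl}\End_{G_F}(B[\ell])=\phi(r)/2$, whence $\rk_{\Z}\End_{\Kbar}(B)\le\phi(r)/2$. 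Your route through $A_\chi$ and $V_\ell$ is also workable, but you would still need this index argument (or an equivalent), and you would need to stay at the mod-$\ell$ level since that is where the monodromy is actually computed.
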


\begin{proof}
  Define morphisms
\begin{align*}
A_\chi&\to (1+\sigma)A_\chi\times(1-\sigma)A_\chi\\
P&\mapsto\left((1+\sigma)P,(1-\sigma)P\right)
\end{align*}
and
\begin{align*}
 (1+\sigma)A_\chi\times(1-\sigma)A_\chi&\to A_\chi\\
\left(P_1,P_2\right)\qquad\quad&\mapsto P_1+P_2.
\end{align*}
Using that $\sigma^2=1$, we find that the compositions are both
multiplication by 2.  This proves that $A_\chi$ is isogenous to
$(1+\sigma)A_\chi\times(1-\sigma)A_\chi$ by an isogeny whose kernel is
killed by 2. 

Now consider the element $\delta=\zeta_r-\zeta_r^{-1}$ of
$\End(A_\chi)$.  Using that $r>2$ and considering the action on
differentials we see that $\delta$ is an isogeny, and since the norm
of $\delta$ as an element of $\Z[\zeta_r]$ divides $r$, the kernel of
$\delta$ is killed by $r$.

We compute that $(1+\sigma)\delta=\delta(1-\sigma)$ and
$(1-\sigma)\delta=\delta(1+\sigma)$, so the isogeny $\delta:A_\chi\to
A_\chi$ exchanges the subvarieties $(1+\sigma)A_\chi$ and
$(1-\sigma)A_\chi$.  In particular, $(1-\sigma)A_\chi$ is isogenous to
$B=(1+\sigma)A_\chi$ by an isogeny whose kernel is killed by $r$.

Combining this with the isogeny
$A_\chi\to(1+\sigma)A_\chi\times(1-\sigma)A_\chi$, we see that
$A_\chi$ is isogenous to $B\times B$ by an isogeny with kernel killed
by $2r$.  This proves the first part of the theorem.

For the second part, since 
$$(1+\sigma)(\zeta_r+\zeta_r^{-1})=(\zeta_r+\zeta_r^{-1})(1+\sigma),$$
we have that $\OO^+=\Z[\zeta_r]^+\subset\End_K(B)$.  Thus it
suffices to prove that $\End_{\Kbar}(B)=\OO^+$.

Let $F$ be a finite extension of $K$ such that all elements of
$\End_{\Kbar}(B)$ are defined over $F$.  Let $\ell$ be a prime $\neq
p$ and not dividing $2r$ such that $\ell>[F:K]$.  We claim that
restriction induces an isomorphism
$\gal(F(A_\chi[\ell])/F)\cong\gal(K(A_\chi[\ell])/K)$.  Clearly it is
injective, so it suffice to show it is onto.  Let $H$ be the image, a
subgroup of $G_\chi=\gal(K(A_\chi[\ell])/K)$ and note that the index
of $H$ in $G_\chi$ is at most $[F:K]$.  If $g\in G_\chi$ has order
$\ell$, then the orbits of $g$ on the coset space $G_\chi/H$ have size
1 or $\ell$.  Since $|G_\chi/H|\le[F:K]<\ell$, they must have order 1,
so $g\in H$.  But Theorem~\ref{thm:monodromy} and the proof of
Proposition~\ref{prop:lambda-monodromy} show that $G_\chi$ is
generated by elements of order $\ell$, so $H=G_\chi$, establishing our
claim.

Next we note that the existence of the isogeny $A_\chi\to B\times B$
and the isogeny $\delta:A_\chi\to A_\chi$ switching the two factors
shows that $F(B[\ell])=F(A_\chi[\ell])$.  Thus we have
$$\gal(F(B[\ell])/F)\cong\gal(F(A_\chi[\ell])/F)\cong\gal(K(A_\chi[\ell])/K)
\cong\SL_2(\OO^+/\ell)$$ where the last isomorphism is
Theorem~\ref{thm:monodromy}.

Now we assume for convenience that $\ell$ splits completely in
$\Q(\zeta_r)^+$, i.e., that $\ell\equiv\pm1\in(\Z/r\Z)^\times$.  In
this case $\OO^+/\ell$ is the product of $\phi(r)/2$ copies of $\Fl$
and $\SL_2(\OO^+/\ell)$ is the product of $\phi(r)/2$ copies of
$\SL_2(\Fl)$.  The $\Fl$-subalgebra of $\Aut_{\Fl}(B[\ell])\cong
M_{\phi(r)}(\Fl)$ generated by $\SL_2(\OO^+/\ell)$ is then isomorphic
to the product of $\phi(r)/2$ copies of $M_2(\Fl)$ and thus has
dimension $2\phi(r)$.  By the double centralizer theorem
\cite[Theorem.~2.43]{KnappAA}, the centralizer of $\SL_2(\OO^+)$ in
$\Aut_{\Fl}(B[\ell])$ has dimension $\phi(r)/2$ over $\Fl$.  Since
$\End_F(B)/\ell$ lies in this centralizer, it has dimension at most
$\phi(r)/2$, and thus $\End_F(B)$ has $\Z$-rank at most $\phi(r)/2$.
But $\OO^+\subset\End_F(B)$ has $\Z$-rank $\phi(r)/2$ and is a maximal
order in its fraction field, so we have
$\OO^+=\End_F(B)=\End_{\Kbar}(B)$, as desired.

Finally, we note that since $\End_{\Kbar}(B)$ is a domain, $B$ is
absolutely simple.

This completes the proof.
\end{proof}

\subsection{Simplicity of $A$}
Note that $k(t^{1/d})$ is linearly disjoint from $k((1-t)^{1/r})$ for
any value of $d$.  Thus the following implies Theorem~\ref{cjh:p2}.

\begin{theorem}
Suppose that $F$ is a finite extension of $K$ that is linearly
disjoint from $k((1-t)^{1/r})$.  Then $A=J^{new}_r$ is simple over $F$,
and we have $\End_F(A)\cong\Z[\zeta_r]$.  
\end{theorem}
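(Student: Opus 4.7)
The plan is to realize $\End_F(A)$ rationally as the Galois-invariants of $\End_{FK'}(A) \otimes \Q$, where $K' = K((1-t)^{1/r})$; the linear-disjointness hypothesis gives that $FK'/F$ is Galois with group $\gal(K'/K) = \mu_r$, so $\End_F(A) \otimes \Q = (\End_{FK'}(A) \otimes \Q)^{\mu_r}$. To control the right-hand side, I would invoke Theorem~\ref{thm:decomp} applied to the twist $A_\chi$: the $K$-isogeny $A_\chi \to B^2$ combined with $\End_K(B) = \End_{\Kbar}(B) = \Z[\zeta_r]^+$ yields
\[ \End_K(A_\chi)\otimes\Q \;=\; \End_{\Kbar}(A_\chi)\otimes\Q \;\cong\; M_2(\Q(\zeta_r)^+), \]
a four-dimensional algebra generated over $\Q(\zeta_r)^+$ by $\zeta_r$ and $\sigma$ subject to $\sigma^2 = 1$ and $\sigma\zeta_r = \zeta_r^{-1}\sigma$; in particular, every element is uniquely $\alpha + \sigma\beta$ with $\alpha,\beta$ in the subalgebra $\Q(\zeta_r)$. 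For $v \in \Kbar$ with $v^r = 1-t$, the map $(x,y)\mapsto (x,vy)$ on $C_\chi$ induces a $K(v)$-isomorphism $\iota_v: A_\chi \to A$, so conjugation by $\iota_v$ identifies $\End_{FK'}(A) \otimes \Q$ with $M_2(\Q(\zeta_r)^+)$.

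Next I would compute the action of a generator $\tau \in \gal(FK'/F)$ under this identification. If $\tau(v) = \zeta v$ for some primitive $\zeta \in \mu_r$, a direct check of coordinates gives $\iota_{\zeta v} = \iota_v \circ [\zeta]$, where $[\zeta]$ denotes the $\mu_r$-action on $A_\chi$. Hence, for $\phi = \iota_v \phi' \iota_v^{-1}$ with $\phi' \in \End_K(A_\chi)$ (and therefore Galois-invariant),
\[ \tau(\phi) \;=\; \iota_{\zeta v}\, \phi'\, \iota_{\zeta v}^{-1} \;=\; \iota_v\, \big([\zeta]\, \phi'\, [\zeta]^{-1}\big)\, \iota_v^{-1}, \]
so the $\tau$-action transports to conjugation by $\zeta_r$ on $M_2(\Q(\zeta_r)^+)$.

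Finally, the centralizer of $\zeta_r$ in $M_2(\Q(\zeta_r)^+)$ is precisely the subfield $\Q(\zeta_r)$: an element $\alpha + \sigma\beta$ commutes with $\zeta_r$ if and only if $\beta(\zeta_r - \zeta_r^{-1}) = 0$, which forces $\beta = 0$ when $r > 2$. Thus $\End_F(A) \otimes \Q = \Q(\zeta_r)$, a field of $\Q$-dimension $\phi(r) = \dim A$. Since any isogeny decomposition $A \sim A_1^{n_1} \times \cdots \times A_k^{n_k}$ into simple factors gives $\End_F(A)\otimes\Q \cong \prod_i M_{n_i}(D_i)$, which is a field only when $k = n_1 = 1$, the variety $A$ is $F$-simple. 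Combining $\Z[\zeta_r] \subseteq \End_K(A) \subseteq \End_F(A)$ with the maximality of $\Z[\zeta_r]$ among orders in $\Q(\zeta_r)$ yields $\End_F(A) = \Z[\zeta_r]$. The case $r = 2$ lies outside the scope of Theorem~\ref{thm:decomp}, but then $A$ is the Legendre elliptic curve and $\End_{\Kbar}(A) = \Z = \Z[\zeta_2]$ is classical.

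The principal obstacle is the Galois-transport computation in the middle step; once the cocycle identity $\iota_{\zeta v} = \iota_v \circ [\zeta]$ is verified directly on coordinates of $C_\chi$, taking invariants in $M_2(\Q(\zeta_r)^+)$ is an elementary calculation.
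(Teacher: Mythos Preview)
Your proof is correct and takes a genuinely different route from the paper's.

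The paper argues as follows: knowing from Theorem~\ref{thm:decomp} that $\End_{\Kbar}(A)\otimes\Q\cong M_2(\Q(\zeta_r)^+)$ is generated over $\Q(\zeta_r)$ by $1$ and $\sigma$, it invokes the monodromy computation (Theorem~\ref{thm:monodromy}) together with the linear-disjointness hypothesis to assert that some element of $\Gal(F(A[\ell])/F)$ acts on $A[\ell]$ as the scalar $\zeta_r$; since endomorphisms defined over $F$ must commute with this Galois action and $\sigma$ does not commute with $\zeta_r$, one concludes $\sigma\notin\End_F(A)$, whence $\End_F(A)\otimes\Q=\Q(\zeta_r)$.

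Your approach bypasses the direct appeal to the $\ell$-adic monodromy and instead makes the twist formalism explicit. You use Galois descent $\End_F(A)\otimes\Q=(\End_{FK'}(A)\otimes\Q)^{\gal(FK'/F)}$, transport everything to $\End_K(A_\chi)\otimes\Q$ via $\iota_v$, and verify the cocycle identity $\iota_{\zeta v}=\iota_v\circ[\zeta]$ on coordinates so that the Galois action becomes conjugation by $\zeta_r$. Taking the centralizer in $\Q(\zeta_r)\oplus\sigma\Q(\zeta_r)$ is then a one-line computation. This is conceptually cleaner in two respects: the role of the linear-disjointness hypothesis is completely transparent (it is exactly what makes $\gal(FK'/F)\cong\mu_r$), and you never need to locate a specific Galois element with prescribed image in the monodromy group. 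The price is that you rely on the full strength of the refinement of Theorem~\ref{thm:decomp} (specifically that $\End_K(A_\chi)\otimes\Q$ already equals the geometric endomorphism algebra, so that your $\phi'$ is Galois-invariant), but this is available and you use it correctly.

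Both arguments ultimately rest on the same foundation---the monodromy theorem feeds into Theorem~\ref{thm:decomp}---but yours packages the final step more elegantly.
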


\begin{proof}
  $\OO=\Z[\zeta_r]$ is a domain, so if $\End_F(A)\cong\OO$, then $A$
  is simple over $F$.  It thus suffices to show that
  $\End_F(A)\cong\OO$.

  Noting that $\End_\Kbar(A)\tensor\Q=M_2(\Q(\zeta_r)^+)$ is a central
  simple algebra of dimension 4 over $\Q(\zeta_r)^+$, the double
  centralizer theorem implies that
  $$\dim_{\Q(\zeta_r)}\left(\End_F(A)\tensor\Q\right)\le2.$$
But $\End_\Kbar(A)\tensor\Q$ is generated over $\Q(\zeta_r)$ by 1 and
$\sigma$.  Our hypothesis on $F$ and Proposition~\ref{thm:monodromy}
imply that there is an element of $\Gal(F(A[\ell])/F)$ acting on
$A[\ell]$ as $\zeta_r$.  Since $\sigma$ does not commute with
$\zeta_r$, we conclude that $\sigma\not\in\End_F(A)$ and therefore
$\End_\Kbar(A)\tensor\Q=\Q(\zeta_r)$.  Since $\OO$ is the maximal order
in $\Q(\zeta_r)$, we have $\End_F(A)\cong\OO$, as desired.
\end{proof}



\appendix

\chapter{An additional hyperelliptic family} \label{Sappendix}

\section{Introduction}
In Section~\ref{s:other-curve}, we write that the methods used in this
paper may be applied to the study of the arithmetic of Jacobians of
other generalizations of the Legendre curve.  In this appendix, we
give more details on a family of curves mentioned in that section.

Let $g$ denote an odd positive integer, $p$ an odd prime, and $k$ a
finite field of characteristic $p$ and cardinality $q$.  Let
$a_1,\ldots,a_g\in k$ be distinct and non-zero, and let $X$ be the
smooth, projective, hyperelliptic curve over $K=k(t)$ with affine model
\begin{equation}\label{eq:X-eq}
	y^2 = x \prod_{i = 1}^g (x+a_i)(a_ix+t).
\end{equation}
Note that $X$ has genus $g$.  When $g=1$, $X$ is essentially the
Legendre curve, and there are differences between the cases $g=1$ and
$g>1$, so from now on we assume that $g>1$.

Write $J_X$ for the Jacobian of $X$.  Let $\nu$ be a nonnegative
integer, $d = q^{\nu}+1$, and set $K_d=k(\mu_d,u)$ where $u^d=t$.
Our main object of study is the Mordell-Weil group $J_X(K_d)$ and a
certain subgroup of it generated by explicit divisors on $X$.  

The principal results of this appendix are:

\begin{theorem}\label{thm:app}
\leavevmode
\begin{enumerate}
\item $J_X$ satisfies the conjecture of Birch and Swinnerton-Dyer over
  each of the fields $K_d$. 
\item The 2-power torsion subgroup of $J(K_d)$ has the form
$$(\Z/2^a\Z)\times(\Z/2\Z)^{2g-1}$$
for some integer $a>1$.
\item The rank of  $J_X(K_d)$ is at least $d-1$. 
\item The rank of $J_X(K_d)$ is at most $g^2d$.
\end{enumerate}
\end{theorem} 

The proof of the theorem will occupy the rest of the appendix.  For
point (3), we will exhibit explicit divisors generating a subgroup of
$J_X(K_d)$ of rank at least $d-1$.

\section{The BSD conjecture}\label{s:app-DPCT}
In this section, we prove part (1) of Theorem~\ref{thm:app}.

Consider the curve $X$ over $K_d$ and let $\XX_d$ be a smooth,
projective surface over $k(\mu_d)$ equipped with a morphism
$\XX_d\to\P^1$ whose generic fiber is $X/K_d$.  One may construct
$\XX_d$ starting from the affine surface over $k(\mu_d)$ defined by
$$y^2 = x \prod_{i = 1}^g (x+a_i)(a_ix+u^d)$$
together with its projection to the affine line with coordinate $u$.

As reviewed in \cite{CRM} (and already used in Theorem~\ref{thm:BSD}),
in order to show that $\XX_d$ satisfies the Tate conjecture and $J_X$
satisfies the BSD conjecture, it suffices to show that $\XX_d$ is
dominated by a product of curves.

To show that the surface $\XX_d$ admits a dominant rational map from a
product of curves, we consider the affine surface $\YY_d$ defined by
the equation
$$x^gy^2 = \prod_{i=1}^g (x+a_i)(a_ix+u^d).$$
Observe that since $g$ is odd, $\YY_d$ is birational to $\XX_d$.
  
Let $\CC_d$ denote the smooth, affine curve defined by the
equation $$w^2 = \prod _{i=1}^g (z^d + a_i).$$
Define a morphism
\[
\varphi: \CC_d \times \CC_d \to \YY_d
\]
by $(w_1, z_1, w_2, z_2) \mapsto (x,y,u) = (z_1^d, w_1w_2,z_1z_2)$.
It is easy to see that $\varphi$ is generically finite of degree
$2d$.  This proves that $\YY_d$, and thus $\XX_d$, is dominated by a
product of curves, and it completes the proof of part (1) of
Theorem~\ref{thm:app}.   

Since it is easy to do so, we add some further details on $\varphi$.
First, note that $\CC_d$ admits an action of the group
$G = \mu_2 \times \mu_d$.  Let $G$ act on $\CC_d \times \CC_d$
``anti-diagonally''; that is, for $g \in G$ and
$P,Q \in \CC_d(\overline{K})$, we define 
$$(P,Q)^g = (P^g, Q^{g^{-1}}).$$
We claim that $\varphi$ induces a birational isomorphism from the
quotient $(\CC_d \times \CC_d)/G$ to the surface $\XX_d$.  Indeed, it
is clear from the expression defining $\varphi$ that $\varphi$ factors
through the quotient, and therefore induces a dominant rational map of
degree 1, in other words, a birational isomorphism.  

We note that the arguments of this section prove more generally that
the BSD conjecture holds for $X$ over the fields $\F_{q^n}(t^{1/d})$
for any $n$ and any $d$ prime to $p$.

\section{Descent}
In this section, we prove parts (2) and (3) of Theorem~\ref{thm:app}.

Let $Q_\infty$ be the unique point at infinity with respect to the
model \eqref{eq:X-eq}.  We embed $X$ in $J_X$ using $Q_\infty$ as a
base point:
\begin{align*}
X&\to J_X\\
P&\mapsto P-Q_\infty,
\end{align*}
and we identify points of $X$ with their images in $J_X$.

Let $Q_0$ denote the point $(0,0)$.  For $1 \leq j \leq g$, let $Q_j$
denote the point $(-a _j, 0)$, and let $Q'_j$ denote the point
$(-t/a_j , 0)$.  These (together with $Q_\infty$) are the Weierstrass
points of $X$, they are $K$-rational, and it is well known that their
images in $J_X$ generate its 2-torsion subgroup.  The divisor of the
function $y$ is
$$Q_0+\sum_{j=1}^g \left(Q_j + Q_j'\right)-(2g+1)Q_\infty,$$
so 
\begin{equation}\label{eq;Q-rels}
Q_0=-\sum_{j=1}^g\left(Q_j+Q_j'\right)=\sum_{j=1}^g\left(Q_j+Q_j'\right)
\end{equation}
in $J_X$.  Thus the points $Q_j$ and $Q_j'$ for $1\le j\le g$ form a
basis of the $\F_2$-vector space $J_X(K)[2]$.

Fix a primitive $d$-th root of unity $\zeta_d$ in $K_d$.  For
$0 \leq j \leq d-1$, let
\[
	P_j = \left(\zeta_d^ju , 
   (\zeta_d^ju)^{(g+1)/2}\prod_{i=1}^g(\zeta_d^ju + a_i)^{d/2}\right).
\]
Recall that $g$ is odd, and that $q$ is odd, so $d=q^\nu+1$ is even.
Observe that the $P_j$ are in $X(K_d)$.
Indeed, substituting $\zeta_d^ju$ for $x$ in the right hand side of
\eqref{eq:X-eq}, we find
\begin{eqnarray*}
	\zeta_d^ju \prod_{i = 1}^g (\zeta_d^ju+a_i)(a_i\zeta_d^ju+u^d)
	& = &
	\zeta_d^ju \prod_{i = 1}^g(\zeta_d^ju+a_i)\zeta_d^ju(\zeta_d^{-j}u^{q^\nu}+a_i) \\
	& = &
	(\zeta_d^ju)^{g+1} \prod_{i = 1}^g(\zeta_d^ju+a_i)^{q^\nu+1}
\end{eqnarray*}
since $a_i^{q^\nu}=a_i$ and $\zeta_d^{q^\nu}=\zeta_d^{-1}$.  

Let $T$ be the subgroup of $J_X(K_d)$ generated by the $Q_j$ and
$Q_j'$, and let $V$ be the subgroup of $J_X(K_d)$ generated by $T$ and
the $P_j$.  Using a map related to 2-descent, we are going to prove
that the image of $T$ in $J_X(K_d)/2J_X(K_d)$ has dimension $2g-1$ and
that the image of $V$ in $J_X(K_d)/2J_X(K_d)$ has dimension $d+
2g-1$.  

These assertions imply parts (2) and (3) of Theorem~\ref{thm:app} by a
standard descent argument which we now review.  We have already seen
that $T$, the subgroup generated by the $Q_j$ and $Q_j'$, is equal to
$J_X(K_d)[2]$.  By the structure theorem for finitely generated
abelian groups, the 2-power torsion subgroup $J_X(K_d)[2^\infty]$
satisfies
$$J_X(K_d)[2^\infty]\cong\bigoplus_{\ell=1}^t(\Z/2^{e_\ell}\Z)$$
for uniquely determined integers $t$ and $e_\ell$ with
$e_1\ge e_2\ge\cdots\ge e_t>0$.  Since $J_X(K_d)[2]$ has dimension
$2g$ over $\F_2$, we have that $t=2g$.  Once we know that the image of
$J_X(K_d)[2]$ in $J_X(K_D)/2J_X(K_d)$ has dimension $2g-1$, we find
that exactly one of the $e_\ell$ is $>1$.  This reduces part (2) of
Theorem~\ref{thm:app} to our claim that the image of $T$ in
$J_X(K_d)/2J_X(K_d)$ has dimension $2g-1$.

For part (3), we note that the structure theorem for finitely
generated abelian groups plus the calculation that $J_X(K_d)[2]$ has
dimension $2g$ over $\F_2$ implies that
$$\dim_{\F_2}\left(J_X(K_d)/2J_X(K_d)\right)=\rho+2g$$
where $\rho$ is the rank of $J_X(K_d)$.  Once we know that the
dimension of the image of $V$ in $J_X(K_d)/2J_X(K_d)$ is $d+2g-1$, we
may conclude that $\rho\ge d-1$.  This reduces part (3) of
Theorem~\ref{thm:app} to our claim that the image of $V$ in
$J_X(K_d)/2J_X(K_d)$ has dimension $d+2g-1$.

We now turn to calculating the dimensions of the images of $T$ and $V$
in $J_X(K_d)/2J_X(K_d)$.  In parallel with the discussion in
Section~2.2, we define a 2-descent map
\[
	(x-T)\colon \Div X(K_d) \longrightarrow 
\big({K_d^\times}/{K_d^{\times 2}}\big)^{2g+1}
\]
to serve as a substitute for the coboundary map from
$J_X(K_d)/2J_X(K_d)$ to the cohomology group $H^1(K_d,J_X[2])$.  We
  start by defining a map 
$$
	(x-T)\colon
	X(K_d) \longrightarrow \big({K_d^\times}/{K_d^{\times 2}}\big)^{2g+1}
$$
and then extend by $\Z$-linearity to $\Div X(K_d)$.

To lighten notation, write $W$ for
$({K_d^\times}/{K_d^{\times 2}})^{2g+1}$ and 
$$\underline{w}=(w_0,w_1,\dots,w_g,w_1',\dots,w_g')$$ 
for an element of $W$.  If $P \in X(K_d)$ and
$P \neq Q_j, Q_j',Q_\infty$, then the map is defined by
$$(x-T)(P) =(w_0,w_1,\dots,w_g,w_1',\dots,w_g')$$ 
where
\begin{align*}
w_0&=x(P),\\
w_i&=x(P) + a_i\qquad\ \text{for}\ 1\le i\le g,\\
w_i'&=a_ix(P) + t\qquad \text{for}\ 1\le i\le g.
\end{align*}
When $P = Q_j$ or $Q_j'$, this expression needs further clarification,
since it gives zero for one coordinate.  Instead, we set the value at
that coordinate to be the product of the other coordinates
(cf.~Prop~\ref{computation}).  Finally, we define
$(x-T)(Q_\infty) = (1,1,\dots,1)$. 

An analysis parallel to that in Chapter~2 and \cite{bps} shows that the
composition
\begin{multline*}
$$\Div^0 X(K_d)\to\J_X(K_d)\to J_X(K_d)/2J_X(K_d)\\
\into
H^1(K_d,J_X[2])\subset \big({K_d^\times}/{K_d^{\times 2}}\big)^{2g+1}
\end{multline*}
is equal to the restriction of $(x-T)$
to $\Div^0 X(K_d)$.  In particular, to compute the images of $T$ and
$V$ in $J_X(K_d)/2J_X(K_d)$, it will suffice to compute their images
in $W$, i.e., their images under $(x-T)$.

Note that $t$ and the elements of $k=\F_q$ are squares in $K_d^\times$,
i.e., trivial in $K_d^\times/K_d^{\times 2}$.  From this it follows
that $(x-T)(Q_0)$ is trivial, and in view of \eqref{eq;Q-rels}, we have
$$(x-T)\left(\sum_{j=1}^g\left(Q_j+Q_j'\right)\right)=(1,\dots,1).$$
(This can also of course be checked directly.)  It follows that the
dimension of the image of $T$
in $W$ is at most $2g-1$ and the dimension of $V$ in $W$ is at most
$d+2g-1$. To complete the proof, we must show that these inequalities
are in fact equalities. 

Observe that the field of constants in $K_d$ is isomorphic to
$\F_{q^{2\nu}}$. The norm map 
\[
  \F_{q^{2\nu}}^\times \longrightarrow \F_{q^{\nu}}^\times
\]
is given by $\alpha \mapsto \alpha^d$ and is surjective.  It follows
that any $a\in k=\Fq$ has a $d$-th root in $K_d$, and the place of
$K=k(t)$ where $t-a$ vanishes splits into $d$ places in $K_d$. These
are the places where $u-\zeta_d^j\alpha$ vanishes with $0\le j\le d-1$
and $\alpha$ a fixed $d$-th root of $a$.

Now fix a $d$-th root $\alpha_i$ of $a_1a_i$ for $1\le i\le g$.  It
will be convenient later to assume that $\alpha_1=-a_1$.  (This is
legitimate, since $a_1\in\Fq$ so $(-a_1)^d=(-a_1)^{q+1}=a_1^2$.)  Let
$\pi_i$ be the place of $K_d$ where $u-\alpha_i$ vanishes, and let
$\ord_{\pi_i}$ be the corresponding valuation.

In parallel with the proof of Prop.~\ref{dimensiond}, define a linear
map $pr_1:W\to\F_2^{2g}$ by
$$pr_1(\underline{w})
=\left(\ord_{\pi_1}(w_1),\dots,\ord_{\pi_g}(w_1),
\ord_{\pi_1}(w_1'),\dots,\ord_{\pi_g}(w_1')\right).$$
Let $I$ be the $g\times g$ identity matrix over $\F_2$ and let $B$ be
the $g\times g$ matrix over $\F_2$ whose first row entries are all 1 and
whose other entries are 0.  Then a straightforward application of the
definitions shows that the matrix whose rows are 
$\pr_1\circ(x-T)(Q_j)$ (for $1\le j\le g$) followed by
$\pr_1\circ(x-T)(Q_j')$ (for $1\le j\le g$) has the form
\begin{equation}\label{eq:Q-image}
\begin{pmatrix}
B&I\\I&B
\end{pmatrix}.
\end{equation}
This matrix has rank $2g-1$ which implies that the dimension of the
image of $T$ in $W$ is $2g-1$.  This completes the proof of part (2)
of Theorem~\ref{thm:app}.

Working toward part (3) of the theorem, we next consider the images of
the points $P_j$ under $pr_1\circ(x-T)$.  Keeping in mind our choice
of $\alpha_1$ above, we find that
\begin{equation}\label{eq:P-image}
pr_1\circ(x-T)(P_j)=\begin{cases}
(1,0,\dots,0,1,0,\dots,0)&\text{if $j=0$}\\
(0,\dots,0)&\text{if $j\neq 0$}
\end{cases}
\end{equation}
where the entries 1 appear in columns $1$ and $g+1$.  In particular,
using equations~\eqref{eq:Q-image} and \eqref{eq:P-image}, we have
\begin{align}\label{eq:PQ-relation1}
pr_1\circ(x-T)(P_0)&=pr_1\circ(x-T)\left(Q_1+\sum_{j=2}^gQ_j'\right)\\
\label{eq:PQ-relation2}
&=pr_1\circ(x-T)\left(Q_1'+\sum_{j=2}^gQ_j\right).
\end{align}
It follows that the image of $V$ in $\F_2^{2g}$ is the same as the
image of $T$ in $\F_2^{2g}$, and this image has dimension $2g-1$.
Let $V_1$ denote the kernel of the map
$$pr_1\circ(x-T):V\to\F_2^{2g}.$$
We have that $V_1$ contains $2V$, $P_j$ for $1\le j\le d-1$, and (by
equations~\eqref{eq:PQ-relation1} and \eqref{eq:PQ-relation2}) the elements
$$P_0+Q_1+\sum_{j=2}^gQ_j'\quad\text{and}\quad
P_0+Q_1'+\sum_{j=2}^gQ_j.$$
A dimension count then shows that these elements generate $V_1$.

Now we introduce a second projection $pr_2:W\to\F_2^d$.  Namely, let
$\rho_j$ be the place of $K_d$ where $u+\zeta_d^{-j}a_1$ vanishes, and
let $\ord_{\rho_j}$ be the corresponding valuation.  Then define
$$pr_2(\underline{w})=
\left(\ord_{\rho_0}(w_1),\dots,\ord_{\rho_{d-1}}(w_1)\right).$$
Let $e_0,\dots,e_{d-1}$ be the standard basis of $\F_2^d$ with a shift
of one in the indexing (so $e_0=(1,0,\dots,0)$ and $e_{d-1}=(0,\dots,0,1)$).
Then a straightforward calculation reveals that
\begin{align*}
pr_2\circ(x-T)(P_j)&=e_j,\\
pr_2\circ(x-T)(Q_1)&=\sum_{\ell=0}^{d-1}e_\ell,\\
pr_2\circ(x-T)(Q_j)&=0\quad\text{for $2\le j\le g$},\\
pr_2\circ(x-T)(Q_1')&=\sum_{\ell=0}^{d-1}e_\ell,\\
\noalign{\text{and}}
pr_2\circ(x-T)(Q_j')&=0\quad\text{for $2\le j\le g$.}
\end{align*}
It follows easily that $pr_2\circ(x-T)$ sends $V_1$ surjectively onto
the codimension 1 subspace of $\F_2^d$ where the first entry vanishes.
Denoting by $V_2$ the kernel of $pr_2\circ(x-T)$ on $V_1$, we also see
that $V_2$ is generated by $2V$ and the element
$$\sum_{j=0}^{d-1}P_j+Q_1+\sum_{j=2}^gQ_j'.$$

To finish the proof, we note that 
$$(x-T)\left(\sum_{j=0}^{d-1}P_j+Q_1+\sum_{j=2}^gQ_j'\right)\neq0.$$
For example, its component $w_2$ is
$$\prod_{\ell=3}^g(t-a_2a_\ell),$$
and this is not a square in $K_d$ since the $a_i$ are distinct.

In summary, we have shown that $V/V_1$ has dimension $2g-1$, $V_1/V_2$
has dimension $d-1$ and $V_2$ has a 1-dimensional image in $W$.  This
shows that the image of $V$ in $W$ has dimension $d+2g-1$, and this
completes the proof of part (3) of Theorem~\ref{thm:app}.

\section{Degree of the $L$-function}
In this section, we sketch a proof of part (4) of
Theorem~\ref{thm:app}.
 
Since the BSD conjecture holds for $J_X$, the rank of $J_X(K_d)$ is
equal to the order of vanishing of $L(J_X/K_d,s)$ at $s=1$.  We will
show that the $L$-function is a polynomial in $q^{-s}$ and estimate
its degree, thus giving an upper bound on the order of vanishing and
the rank.

It is known that $L(J_X/K_d,s)$ is a rational function in $q^{-s}$ and
that it is a polynomial in $q^{-s}$ if and only if the $K/k$-trace of
$J_X$ (or more precisely, the $K_d/k(\mu_d)$-trace) vanishes; see
\cite[Chap.~5, Lemma~6.5]{CRM}.  Arguing as in
Proposition~\ref{prop:K/k-trace} and using the explicit domination of
$\XX_d$ by a product of curves given in Section~\ref{s:app-DPCT}, we
see that the trace vanishes and the $L$-function is a polynomial.

To complete this sketch we estimate the degree of the $L$-function of
the Jacobian, and thus determine the upper bound on the rank.  By the
Grothendieck-Ogg-Shafarevich formula, the degree of the $L$-function
is
$$-4g_X+\deg(\n)$$
where $\n$ denotes conductor of $J_X$ over
$K_d$ and  $\deg(\n)$ denotes its degree.

We start by considering the case $d = 1$.

For $1 \leq i\leq j \leq g$, let $S$ be the set of places
corresponding to the polynomials $t - a_ia_j$.  Letting $c_v$ be as in
Section~\ref{ss:GOS}, one checks that $\XX_1$ has semistable reduction
at each such place and that
\[
	\sum_{v \in S} c_v = g + 2\binom{g}{2} = g^2.
\]
More precisely, for each pair $i\leq j$, the reduction of $X$ at
$t-a_ia_j$ has ordinary double points at $(x,y)=(a_i,0)$ and
$(x,y)=(a_j,0)$, and their contribution to the conductor is 1 when
$i=j$ and 2 when $i<j$.  Moreover, the reduction of $X$ is smooth away
from such double points, so it suffices to count the number of pairs
$(i,j)$ with $1\leq i, j\leq g$.

The only other places of (possibly) bad reduction are at
$t = 0,\infty$.  We claim the Tate module of $J_X$ has tame reduction
there and thus the corresponding contribution to the conductor equals
the drop of the degree of the corresponding Euler factor (which is
between 0 and $2g$).  Indeed, the extension $K(J_X[4])$ is
Galois over $K_1$ of degree a power of two, so it is a tamely ramified
extension of $K$.  Moreover, $J_X$ acquires semiabelian reduction
over it.  In particular, this implies the Tate module of $J_X$ is
everywhere tamely ramified over the extension $K(J_X[4])$ and hence
over $K$.

Now we consider the case $d>1$.

For each of the $d$ places of $K_d$ over $t-a_ia_j$, the contribution
to the conductor remains unchanged, so is 1 when $i=j$ and 2 when
$i<j$.  Moreover, the contribution to the conductor is between $0$ and
$2g$ for $u=0,\infty$.  Therefore we have
$$\deg(\n)\le d\cdot\sum_{v\in S}c_v + 2\cdot 2g=dg^2+4g$$
which implies that the degree of the $L$-function is $\le dg^2$.  It
follows that the rank of the Mordell-Weil group of $J_X(K_d)$ is also
at most $g^2d$.  This completes our sketch of the proof of part (4) of
Theorem~\ref{thm:app}.


\section{Additional remarks}

\begin{remark}
  It is interesting to note the differences between the Jacobian
  studied in this appendix and the Legendre curve $E$ studied in
  \cite{Legendre}.  In particular, the rank of $J_X(K_d)$ is either
  $d-1$ or $d$, whereas the rank of $E(K_d)$ is $d-2$.  There are two
  relations among the analogues of the $P_j$ on $E$ which seem not to
  generalize readily to $X$, although it is possible that there is one
  relation.
\end{remark}

\begin{remark}
  There is an interesting involution of $X$, given in the coordinates
  above by 
$$\iota(x,y)=\left(t/x,yt^{(g+1)/2}/x^{g+1}\right).$$
(An analogous involution exists for $E$, where it is translation by
$Q_0$.)  On $X$, we have $\iota(Q_0)=Q_\infty$, $\iota(Q_i)=Q_i'$ for
$1\le i\le g$, and $\iota(P_j)=P_j'$ where the $P_j'$ are new points
with coordinates
$$P_j'=\left(\zeta_d^{-j}u^{q^\nu},
(\zeta_d^{-j}u^{q^\nu})^{(g+1)/2}\prod_{i=1}^g(\zeta_d^ju+a_i)^{d/2}\right).$$
We do not know whether these points are independent of the $P_j$.

It would be interesting to investigate the consequences for the
monodromy of $J_X[\ell]$ of the existence of $\iota$, along the lines
of Chapter~\ref{ch:monodromy}.
\end{remark}


\backmatter

\bibliographystyle{amsplain.bst}
\bibliography{EHR}

\printindex

\end{document}